\newtheorem{theorem}{Theorem}[section]
\newtheorem*{theorem*}{Theorem}
\newtheorem{lemma}[theorem]{Lemma}
\newtheorem{lem}[theorem]{Lemma}
\newtheorem{definition}[theorem]{Definition}
\newtheorem{defn}[theorem]{Definition}
\newtheorem{proposition}[theorem]{Proposition}
\newtheorem{prop}[theorem]{Proposition}
\newtheorem{conjecture}[theorem]{Conjecture}
\newtheorem{corollary}[theorem]{Corollary}
\theoremstyle{remark}
\newtheorem{remark}[theorem]{Remark}
\theoremstyle{remark}
\newtheorem{rmk}[theorem]{Remark}
\newcommand{\eps}{\varepsilon}
\newcommand{\E}{\mathbb E}
\newcommand{\ii}{\mathbf i}
\newcommand{\ges}{{\scriptscriptstyle \geqslant}}
\newcommand{\Rho}{\mathcal{P}}
\newcommand{\dd}{\mathrm d}
\renewcommand{\H}{\mathcal H}
\newcommand{\Rd}{\mathcal R^d}
\newcommand{\In}{\mathcal I}
\newcommand{\A}{\mathbf A}
\newcommand{\B}{\mathbf B}
\newcommand{\C}{\mathbf C}
\newcommand{\D}{\mathbf D}
\renewcommand{\tikz}[2]{
    \begin{tikzpicture}[scale=#1,baseline=(current bounding box.center),>=stealth]
    #2
    \end{tikzpicture}}
\colorlet{lgray}{white!85!black}
\colorlet{lred}{white!85!red}
\colorlet{lgreen}{white!80!green}
\colorlet{dgreen}{black!30!green}
\definecolor{green}{rgb}{0.1,0.8,0.1}
\definecolor{yellow}{rgb}{1.0,0.85,0.25}
\def\wh{\widehat}
\def\ie{{\it i.e.}\/,}
\newcommand\fixin{\stackrel{\mathclap{\normalfont\mbox{f}}}{\in}}
\newcommand\sumin{\stackrel{\mathclap{\normalfont\mbox{s}}}{\in}}
\def\b{\bar}
\def\I{\bm{I}}
\newcommand{\Pp}{\mathbf P}
\newcommand{\As}[2]{\A_{[#1,#2]}}
\def\e{\bm{e}}
\numberwithin{equation}{section}
\title{Shift-invariance for vertex models and polymers}
\author{Alexei Borodin \and Vadim Gorin \and Michael Wheeler}
\begin{document}

\maketitle

\begin{abstract}

We establish a symmetry in a variety of integrable stochastic systems: Certain
multi-point distributions of natural observables are unchanged under a
shift of a subset of observation points. The property holds for stochastic vertex models,
(1+1)d directed polymers in random media, last passage percolation, the
Kardar-Parisi-Zhang equation, and the Airy sheet. In each instance it leads to computations of
previously inaccessible joint distributions. The proofs rely on a combination of the Yang-Baxter integrability of the inhomogeneous colored stochastic six-vertex model and Lagrange interpolation. We also show that a simplified (Gaussian) version of our theorems is related to the invariance in law of the local time of the Brownian bridge under the shift of the observation level.

\end{abstract}


\tableofcontents

\section{Introduction}

\subsection{Preface}

This work is about a simple-looking property of a variety of integrable probabilistic systems that includes stochastic vertex models, (1+1)d directed polymers in random media and last passage percolation with specific weights, as well as universal objects of the Kardar-Parisi-Zhang universality class -- the KPZ equation and the Airy sheet. The property says that joint distributions of certain multi-dimensional observables in the system are unchanged under
a shift of a subset of observation points.

It can be thought of as a far reaching generalization of the following known feature of the Brownian bridge: Fix $a<b$ and let $B(t)$, $0\le t\le 1$, be a Brownian bridge such that $B(0)=a$ and $B(1)=b$. Let $\mathcal L_c$ denote the local time that $B(t)$ spends at level $c$. Then as long as $a\le c \le b$, the distribution of $\mathcal L_c$ does not depend on the choice of $c$.

While the above property of the invariance of Brownian local times under the shifts of $c$ admits a bijective proof, we have not been able to find anything similar for the more complicated systems that we deal with. Instead, our proofs rely on much more advanced machinery of Yang-Baxter integrable vertex models.

Beyond intrinsic interest, the shift-invariance property yields explicit formulas for certain multi-dimensional distributions that were not accessible before. The basic idea is that shifts sometimes allow one to reduce complicated configurations of observation points to simpler ones, for which exact expressions are already known.

\bigskip

Let us present an example that involves Brownian directed last passage percolation.

The Brownian last passage time $\mathfrak Z_{(n',t')\to (n,t)}$ is a random function of four arguments $n,n'\in\mathbb Z$, $t,t'\in \mathbb R$. Let $\{B_n(t)\}_{n\in\mathbb Z}$ be a collection of independent standard Brownian motions on the real line.  For any $n\ge n'$, $t\ge t'$, we define the  passage time as the maximum of the increments of the Brownian motions over monotone grid paths between $(n',t')$ and $(n,t)$:
\begin{equation}
      \label{eq_Brownian_passage_time_intro}
        \mathfrak Z_{(n',t')\to (n,t)}=\max_{t'=t_0< t_1< \dots< t_{n-n'+1}=t} \left[\sum_{i=0}^{n-n'} \bigl(B_{i+n'}(t_{i+1})-B_{i+n'}(t_{i})\bigr) \right] .
\end{equation}
The law of $\mathfrak Z_{(n',t')\to (n,t)}$ is a well-studied object. Kuperberg \cite{Kuperberg} and Baryshnikov \cite{Bary} showed that for fixed $n,t,n',t'$ the one-dimensional distribution coincides with that of the largest eigenvalue of a random Hermitian matrix from the Gaussian Unitary Ensemble (GUE). In particular, this implied that the asymptotic behavior of $\mathfrak Z_{(n',t')\to (n,t)}$ as the points $(n,t)$ and $(n',t')$ move away from each other is governed by the celebrated Tracy--Widom distribution. Numerous subsequent works yielded a fairly complete description of the joint distribution of $\mathfrak Z_{(n',t')\to(\cdot,\cdot)}$ and its asymptotic behavior when the \emph{starting point is fixed} and the ending one varies, and we refer to Johansson-Rahman \cite{JR} for the most recent results.

Here is the simplest open question: Given a pair of points $A$ and $B$ in $\mathbb Z\times \mathbb R$ and a similar pair of points $C$ and $D$, what is the joint law of the passage times $(\mathfrak Z_{A\to B}, \mathfrak Z_{C\to D})$? See the left panel in Figure \ref{Fig_Brownian_polymer} for an illustration. Note that if the lattice rectangles with opposite vertices $A$, $B$ and with opposite vertices $C$, $D$ are disjoint, then the random variables $\mathfrak Z_{A\to B}$ and $\mathfrak Z_{C\to D}$ are independent and the question is trivial. We are able to provide an answer in an antipodal situation, when any monotone path between $A$ and $B$ and any monotone path between $C$ and $D$ must intersect. The key is the following property of shift-invariance.

 \begin{figure}[t]
\begin{center}
{\scalebox{0.75}{\includegraphics{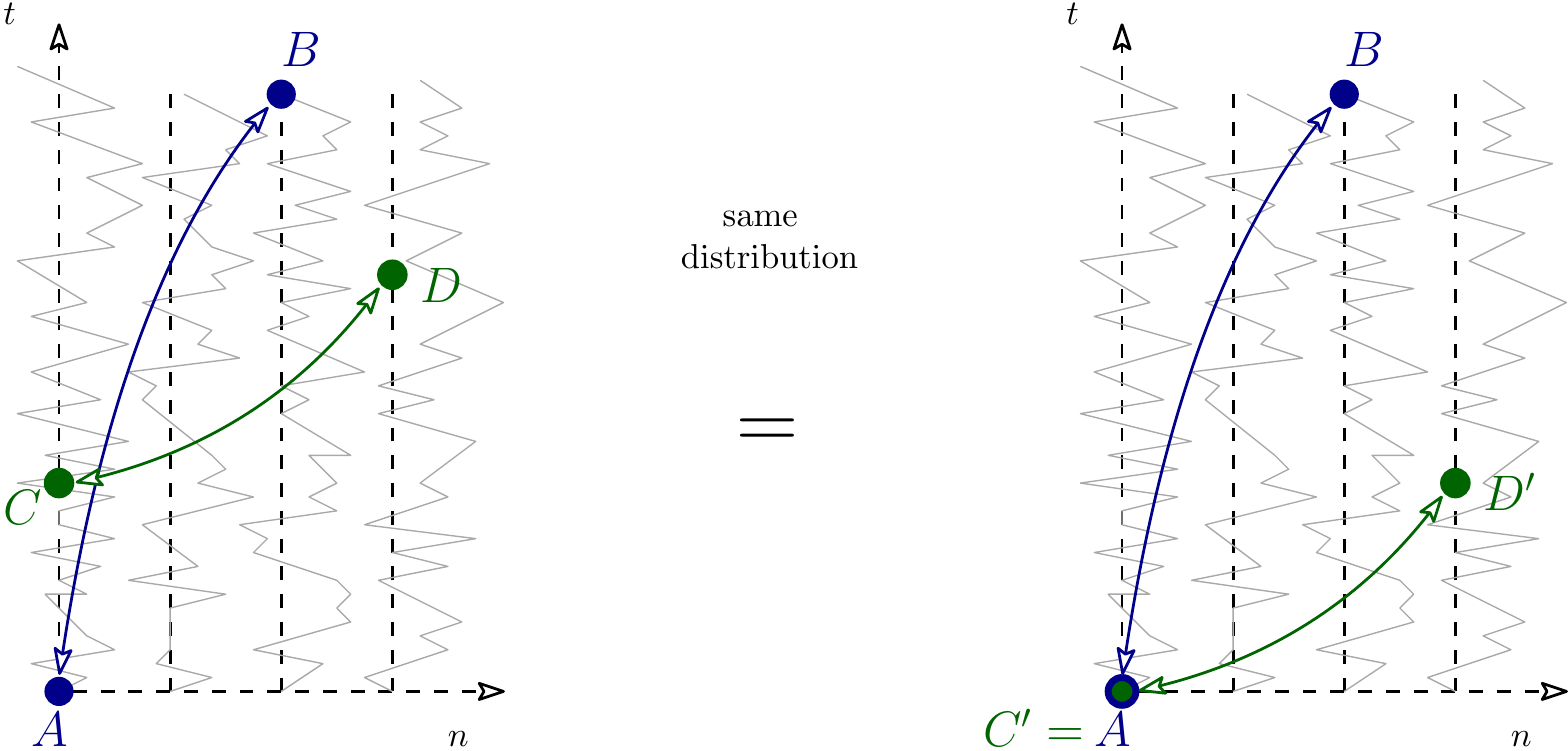}}}
 \caption{Brownian LPP: We have $(\mathfrak Z_{A\to B}, \mathfrak Z_{C\to D})\stackrel{d}{=} (\mathfrak Z_{A\to B}, \mathfrak Z_{C'\to D'})$.
 \label{Fig_Brownian_polymer}}
\end{center}
\end{figure}

\begin{theorem} \label{Theorem_Brownian_intro}
 Let $A,B,C,D$ be as above, take $\Delta\in\mathbb R$, and set $C'=C+(0,\Delta)$, $D'=D+(0,\Delta)$. Suppose that the following holds:
 \begin{itemize}
   \item The $n$-coordinates of points $A$, $C$, and $C'$ are the same, while the $t$--coordinate of $A$ is not larger than those of $C$ and $C'$.
   \item $B \succeq D$ and $B \succeq D'$, where $(n,t) \succeq (n',t')$ means $n\le n'$, $t\ge t'$.
 \end{itemize}
 Then we have a distributional identity $(\mathfrak Z_{A\to B}, \mathfrak Z_{C\to D})\stackrel{d}{=} (\mathfrak Z_{A\to B}, \mathfrak Z_{C'\to D'})$, cf.~ Figure \ref{Fig_Brownian_polymer}.
\end{theorem}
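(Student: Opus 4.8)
The plan is to deduce Theorem~\ref{Theorem_Brownian_intro} from a shift-invariance statement for the inhomogeneous colored stochastic six-vertex model --- the integrable backbone signalled in the abstract --- by passing through a chain of degenerations that ends at Brownian last passage percolation. The first point is to realize the pair $(\mathfrak Z_{A\to B},\mathfrak Z_{C\to D})$ as a scaling limit of a pair of \emph{colored height functions}: one introduces two colors of paths on the boundary of a vertex model, color $1$ encoding the starting point $A$ and color $2$ encoding $C$, and then $\mathfrak Z_{A\to B}$, $\mathfrak Z_{C\to D}$ record (after rescaling) how far the color-$1$, resp.\ color-$2$, paths travel before crossing the antidiagonal regions near $B$, resp.\ $D$. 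With this dictionary the hypothesis that $A,C,C'$ share an $n$-coordinate says the relevant colored paths all enter from the same vertical line, and $B\succeq D$, $B\succeq D'$ say the exit data are nested in a down-right fashion; these are exactly the combinatorial hypotheses under which vertex-model shift-invariance applies, the shift of $(C,D)$ by $(0,\Delta)$ corresponding to moving the color-$2$ observation data within the admissible region.

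On the vertex-model side the mechanism is as follows. The colored stochastic six-vertex model carries rapidity parameters attached to its rows and columns, and the Yang--Baxter equation allows one to transpose the rapidities of two adjacent rows (or columns) at the price of an explicit local modification of the boundary; iterating this produces a family of identities relating the joint law of a collection of colored height observables at one configuration of observation points to the joint law at another. Since these joint distributions are rational functions of the rapidities, Lagrange interpolation promotes such identities from special values of the parameters --- where they reduce to more transparent symmetries --- to the generality needed here. The upshot is an invariance in distribution of the joint law of the colored height observables under an admissible shift of a down-right-closed subset of observation points.

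It then remains to carry this invariance down to Brownian LPP through standard degenerations: colored stochastic six-vertex $\to$ colored ASEP $\to$ colored TASEP / colored exponential last passage percolation, under which the colored height functions converge to last passage times in the colored exponential model and shift-invariance descends to that level; and finally the classical Brownian scaling limit of exponential LPP --- fixing the number of rows and sending the number of columns to infinity with the diffusive normalization --- under which the exponential passage times converge to the Brownian passage times of \eqref{eq_Brownian_passage_time_intro}. Two things must be checked along the way: that the convergence is \emph{joint} for the pair $(\mathfrak Z_{A\to B},\mathfrak Z_{C\to D})$ and for the shifted pair (this follows from convergence of the underlying weight fields together with continuity of the LPP functional, plus routine tightness), and that the discrete analogues of the hypotheses on $A,B,C,D$ survive each limit, which is bookkeeping with coordinates.

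The main obstacle, conceptually, is the vertex-model shift-invariance itself: the delicate points are identifying the correct family of colored observables, the correct class of admissible shifts, and the correct specialization points for the Lagrange interpolation, and then verifying that at those special points the identity really does reduce to an elementary symmetry. Once that master statement is in hand the degenerations are technically lengthy but conceptually routine, the only genuine care being joint (rather than one-point) convergence and the matching of the geometric constraints through each limit. One could hope instead for a direct probabilistic proof in the Brownian setting, in the spirit of the Brownian-bridge local-time identity recalled in the introduction, but as the authors emphasize no such argument seems available at this level of generality.
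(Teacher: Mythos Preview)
Your high-level architecture matches the paper: establish a master shift theorem for the inhomogeneous colored stochastic six-vertex model using Yang--Baxter integrability together with Lagrange interpolation in the rapidities, then push it down a chain of degenerations ending at Brownian LPP. The paper's proof of the master theorem (their Theorem~\ref{theorema-egr}) indeed proceeds by induction on the domain, using vertex-splitting at coincident rapidities and interpolation, much as you sketch.

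Where you diverge is in the degeneration chain. You propose six-vertex $\to$ colored ASEP $\to$ colored TASEP/exponential LPP $\to$ Brownian LPP. The paper instead takes a longer polymer route: six-vertex $\to$ fused higher-spin model (Section~\ref{Section_higher_spin}) $\to$ analytic continuation in the spin parameters (Section~\ref{Section_analytic_continuation}) $\to$ Beta polymer (Section~\ref{Section_Beta}) $\to$ Gamma polymer $\to$ O'Connell--Yor semi-discrete polymer $\to$ Brownian LPP (Section~\ref{Section_BLPP}), this last step being a zero-temperature (Laplace-method) limit. The essential step where colors become polymer starting points is \emph{not} an ASEP limit but the $q\to 1$ limit of the analytically-continued fused model: the paper shows (Propositions~\ref{Proposition_colorless_limit}--\ref{Proposition_conditional_LLN}) that in this regime the multi-color vertex randomness concentrates onto a one-dimensional subspace governed by Beta variables, and the residual color-splitting becomes deterministic in a way that identifies $\exp(-\H^{\ges i})$ with a delayed Beta-polymer partition function (Proposition~\ref{Proposition_vertex_model_as_polymer}). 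This noise-degeneration phenomenon is what the authors single out as the new ingredient linking colors to endpoints.

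Your TASEP route is not obviously wrong---the paper acknowledges an ASEP-based path in the introduction---but it is not the one developed, and the identification of colored TASEP height functions with exponential last-passage times from multiple starting points would itself have to be established (joint convergence, correct encoding of the constraint that $A,C$ share an $n$-coordinate). The polymer route additionally buys shift-invariance for the entire Beta/Gamma/O'Connell--Yor hierarchy along the way, which is part of the paper's purpose.
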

Theorem \ref{Theorem_Brownian_intro} can be extended to more than two pairs of points and shifts in $n$-directions, see Section \ref{Section_BLPP} for details.
Note that we can now choose $C'=A$. Then the joint law of the vector $(\mathfrak Z_{A\to B}, \mathfrak Z_{A\to D'})$ is explicitly known (as the starting points are the same); it can also be related to the joint distribution for the largest eigenvalues of a GUE random matrix and its symmetric submatrices evolving according to the Dyson Brownian Motion.

\begin{figure}[t]
    \begin{center}
        {\scalebox{0.75}{\includegraphics{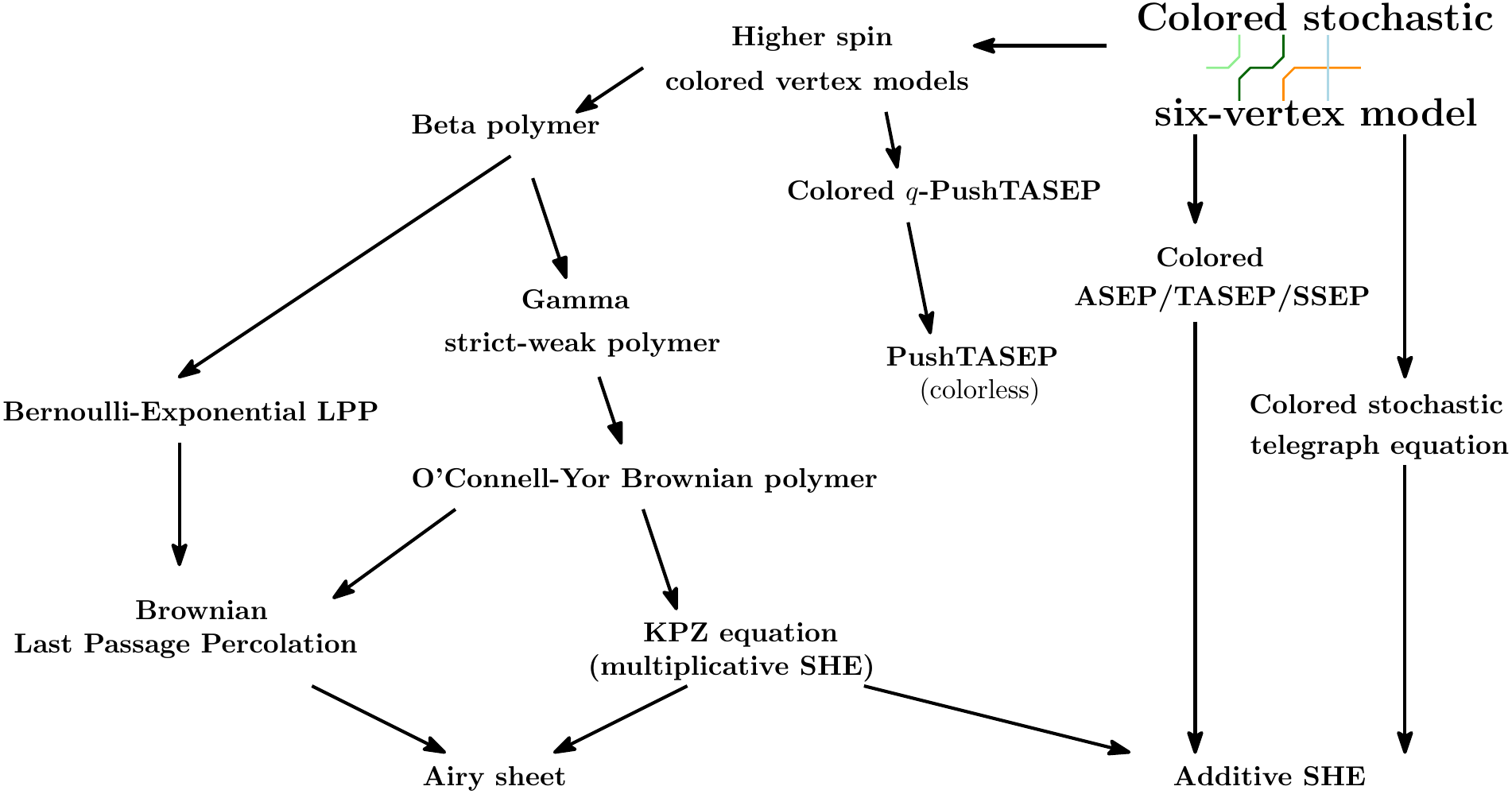}}}
        \caption{A chart of stochastic systems for which we can prove the shift--invariance.
            \label{Fig_Chart}}
    \end{center}
\end{figure}

The shift-invariance of Theorem \ref{Theorem_Brownian_intro} is not restricted to the Brownian last passage percolation. Far from it, the invariance extends to a whole class of related stochastic systems (all with specific ``integrable'' weights, though). The chart in Figure \ref{Fig_Chart} shows most of these systems and relations between them. Whenever two systems are linked by an arrow, we are able to deduce shift-invariance for one system from a similar statement for the other one. Exact mechanisms of deduction vary: We use stochastic fusion, analytic continuation, deterministic limits, and functional central limit theorems, as discussed in Sections \ref{Section_higher_spin} and \ref{Section_polymers}.

Our central result is a \emph{master theorem} that we prove for \emph{colored stochastic vertex models}, and we give its (simplified) formulation below. Not all of the implied statements are discussed in this paper as it is already quite long. But we do consider the implications for polymer models, the KPZ equation, and the Airy sheet; we will offer a few details about those in a later part of the introduction. A conjectural generalization of the proved shift-invariance will also be mentioned.

\subsection{Shift-invariance for colored stochastic vertex models}
\label{Section_intro_6v}

We start by describing a Markovian recipe to construct random colored up-right paths in the positive quadrant $\mathbb Z_{\ge 1}\times\mathbb Z_{\ge 1}$ with the colors labeled by natural numbers. In this section we assume that no lattice edge can be occupied by more than one path, although that restriction will be removed later.

The model depends on a quantization parameter $q\in (0,1)$ and real column and row \emph{rapidities} denoted by $u_1,u_2,\dots$ and $v_1,v_2,\dots$, respectively. The rapidities are assigned to the corresponding rows and columns, and we assume that $v_y\ge u_x\ge 0$ for all $x,y\ge 1$.

Along the boundary of the quadrant, we demand that no paths enter the quadrant from the bottom. On the other hand, a single path of color $i$ enters the quadrant from the left in row $i$ for each $i\ge 1$. Once the paths are specified along the boundary, they progress in the up-right direction within the quadrant using certain interaction probabilities, also known as \emph{vertex weights}.

For each vertex of the lattice, once we know the colors of the entering paths along the bottom and left adjacent edges, we decide on the colors of the exiting paths along the top and right edges according to those probabilities. They are given by the table of Figure \ref{Fig_colored_weights}, where $0\le i<j$ denote the colors of paths on the corresponding edges, and color $0$ encodes the absence of paths.

These vertex weights represent a stochastic version of the $R$-matrix for the quantum affine algebra $U_q(\widehat{\mathfrak{sl_{n+1}}})$ that goes back to mid-1980s works of Bazhanov \cite{Bazhanov}, Faddeev-Reshtikhin-Takhtadjan \cite{FRT}, and Jimbo \cite{Jimbo1,Jimbo2}. The stochastic version was introducted quite recently by Kuniba-Mangazeev-Maruyama-Okado in \cite{KMMO}, see also Bosnjak-Mangazeev \cite{BM}, Aggarwal-Borodin-Bufetov \cite{ABB}.\footnote{In the rank-1 or \emph{colorless} case, these objects are much older; the six-vertex model was introduced by Pauling in 1935 \cite{Pauling}, and its stochastic version was first considered by Gwa-Spohn in 1992 \cite{GwaSpohn}.}

A less graphical way of describing the same random ensemble of colored paths is by introducing \emph{colored height functions}. For each point $(X,Y)\in (\mathbb Z_{\ge 0}+\frac 12) \times (\mathbb Z_{\ge 0}+\frac 12)$ of the dual lattice and any $k\ge 1$, we define $\mathcal H^{\ges k}(X,Y)$ as the number of paths of colors $\geqslant k$ that pass below $(X,Y)$. See Figure \ref{Fig_colored_6v} for an illustration.

\begin{figure}[t]
    \begin{center}
        {\scalebox{0.9}{\includegraphics{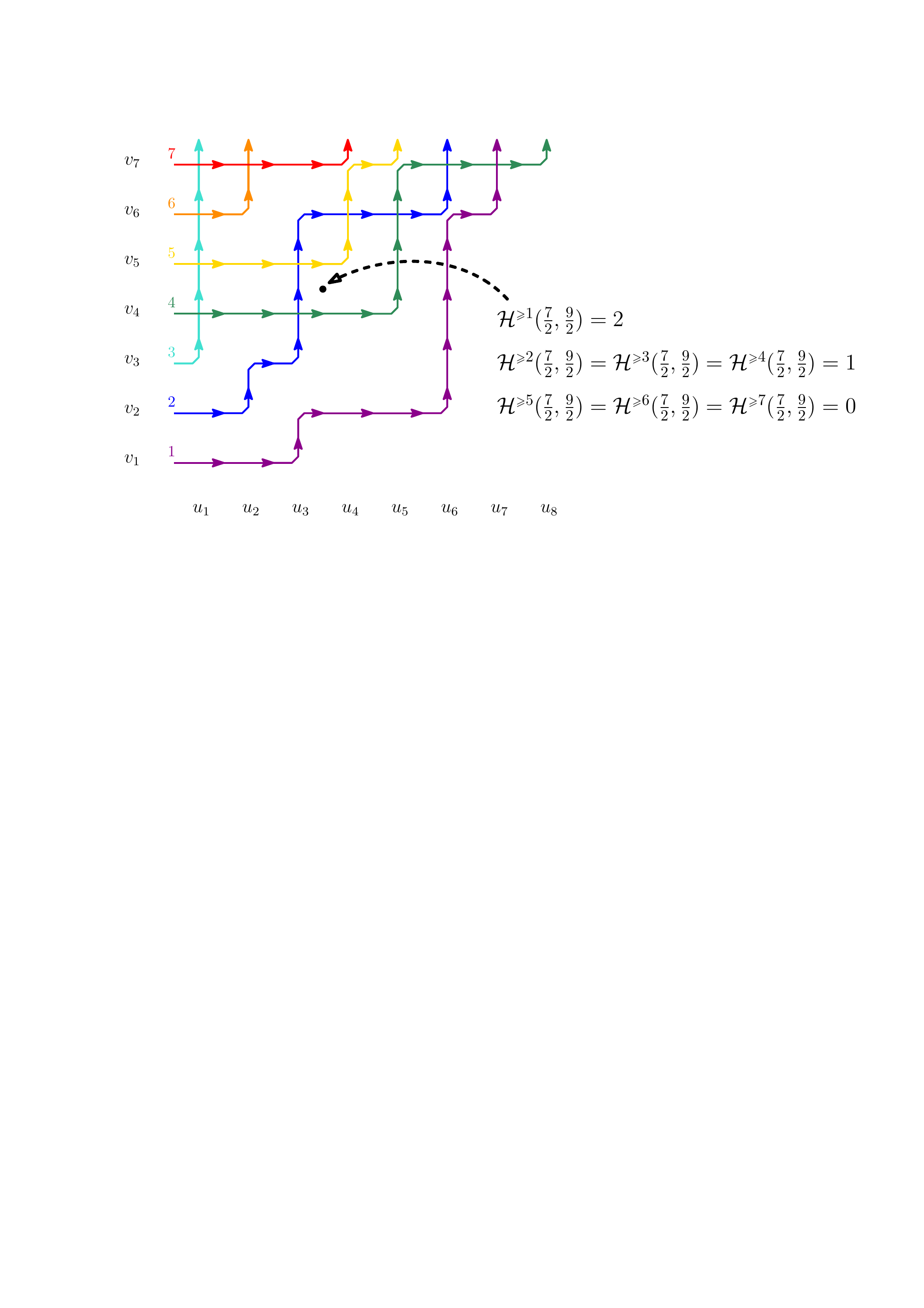}}
        \caption{A possible configuration of the colored vertex model.}
            \label{Fig_colored_6v}}
    \end{center}
\end{figure}

\begin{figure}[t]
    \begin{center}
        {\scalebox{0.7}{\includegraphics{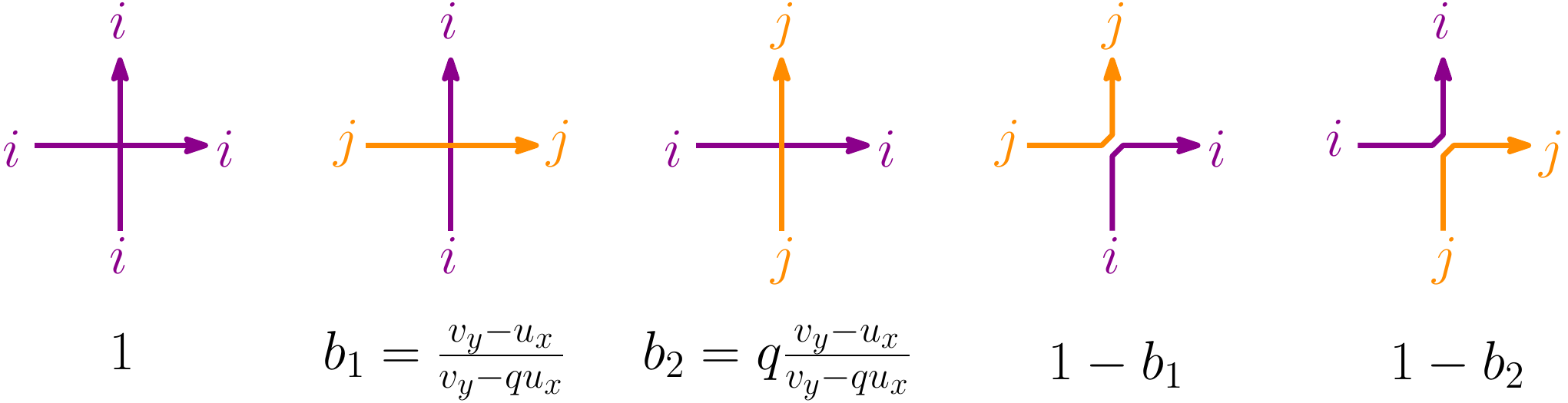}}
        \caption{When two paths of colors $i$ and $j$ with $0\le i \le j$ (color $0$ is identified with the absence of the path) meet at vertex $(x,y)$, they continue according to these weights.}
            \label{Fig_colored_weights}}
    \end{center}
\end{figure}

Here is what the simplest instance of shift-invariance for colored stochastic vertex models looks like.

As in Theorem \ref{Theorem_Brownian_intro}, for two points $\mathcal U=(x^{\mathcal U},y^{\mathcal U})$, $\mathcal V=(x^{\mathcal V},y^{\mathcal V})$ in the quadrant, we write $\mathcal U\succeq \mathcal V$ if $x^{\mathcal U}\le x^{\mathcal V}$ and $y^{\mathcal U}\ge y^{\mathcal V}$. In other words, $\mathcal V$ is in the down--right direction from $\mathcal U$.

\begin{theorem} \label{Theorem_6v_intro}
    In the above setting of the colored stochastic model in the quadrant, choose an index $\iota\ge 1$, color cutoff levels $k_1\dots,k_n\ge 1$, and a collection of points  $\{\mathcal U_i\}_{i=1}^n$. Set
    $$
    k_j'=\begin{cases} k_j,& j\ne \iota,\\ k_\iota+1, & j=\iota,\end{cases} \qquad \qquad \mathcal U'_j=\begin{cases}\mathcal U_j, & j\ne \iota, \\ \mathcal U_\iota + (0,1), & j=\iota. \end{cases}
    $$
    Assume that
    $$
    0\le k_1\le k_2\le \dots\le k_n, \qquad 0\le k'_1\le k'_2\le \dots\le k'_n,
    $$
    $$\mathcal U_1,\dots,\mathcal U_{\iota-1} \succeq \mathcal U_\iota \succeq\mathcal U_{\iota+1},\dots, \mathcal U_n, \qquad \mathcal U'_1,\dots,\mathcal U'_{\iota-1} \succeq \mathcal U'_\iota \succeq \mathcal U'_{\iota+1},\dots, \mathcal U'_n.
    $$
    Then the distribution of the vector of colored height functions
    $$
    \bigl(\H^{\ges k_1}(\mathcal U_1),\, \H^{\ges k_2}(\mathcal U_2),\, \dots, \H^{\ges k_n}(\mathcal U_n)\bigr)
    $$
    coincides with the distribution of a similar vector with shifted $\iota$-th point and cutoff
    $$
    \bigl(\H^{\ges k'_1}(\mathcal U'_1),\, \H^{\ges k'_2}(\mathcal U'_2),\, \dots, \H^{\ges k'_n}(\mathcal U'_n)\bigr),
    $$
under the condition that in the vertex model used to define the second vector one swaps the row rapidities $v_{k_\iota}$ and $v_{y}$, where $y$ is the row in which $\mathcal U'_\iota$ is located.
\end{theorem}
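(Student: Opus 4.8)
Throughout, the plan is to prove the statement only for the colored stochastic six-vertex model; the other systems in the chart of Figure~\ref{Fig_Chart} are reached from it by the degeneration procedures mentioned above. Since each $\H^{\ges k_i}(\mathcal U_i)$ is measurable with respect to the vertex configuration in a bounded rectangle and the vertex weights are stochastic, one first reduces to a finite lattice with $N$ rows and $M$ columns for all large $N,M$, so that every quantity in sight becomes a genuine finite partition function. Next I would repackage the joint law of $\bigl(\H^{\ges k_1}(\mathcal U_1),\dots,\H^{\ges k_n}(\mathcal U_n)\bigr)$ as a single observable of an enlarged colored vertex model: the hypotheses place the $\mathcal U_i$ on a down--right staircase with $\mathcal U_\iota$ at the apex and the cutoffs non-decreasing, which is exactly what lets the joint distribution be read off along one monotone cut of the lattice (e.g.\ via the generating function $\E\bigl[\prod_i z_i^{\H^{\ges k_i}(\mathcal U_i)}\bigr]$ encoded by boundary data along that cut). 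Using the colour-merging property -- identifying all colours in a block $[k_j,k_{j+1})$ changes none of the $\H^{\ges k_i}$ -- one may work with only $n+1$ colours, colour $j$ entering in the consecutive row-block $[k_j,k_{j+1})$; the two sides of the asserted identity then differ only in (i)~the colour carried by the single row $k_\iota$, which migrates from block $\iota$ to block $\iota-1$, (ii)~the transposition $v_{k_\iota}\leftrightarrow v_y$ of two row rapidities, and (iii)~the one-step vertical shift of the apex point.

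The engine is the Yang--Baxter equation for the inhomogeneous colored $R$-matrix. Transposing two row rapidities is implemented by inserting an $R$-matrix crossing on one side of the lattice and sliding it through, one row of vertices at a time, to the opposite boundary; on the ``open'' side the leftover crossing is summed away by stochasticity, while on the boundary side it acts on the prescribed incoming colours. The crux is to choose which rows to transpose and which colored path to reroute so that the resulting boundary correction, combined with the recolouring of row $k_\iota$ and the unit shift of $\mathcal U_\iota$, cancel and leave the law of the chosen observables unchanged; the staircase geometry is what makes this cancellation possible, since the rerouted path is invisible to every height function except the $\iota$-th, and for that one the exchange ``cutoff $k_\iota\!\to\!k_\iota+1$ versus vertical shift by one row'' is precisely what accounts for the path's trajectory across the swapped rows.

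Rather than tracking the boundary corrections explicitly, I would phrase the comparison as an identity between two partition functions viewed as rational functions of an auxiliary spectral parameter -- the rapidity being transported, or a generating-function variable attached to the observables -- with a~priori bounds on their degrees and on the positions of their poles; checking the identity at sufficiently many special values of that parameter (collisions with the other rapidities, where some vertex weights freeze to $0$ or $1$ and part of the configuration rigidifies, and the limits where the parameter tends to $0$ or $\infty$) then forces the two rational functions to coincide by Lagrange interpolation, and specializing back yields the equality of joint laws. The general $n$-point statement finally reduces to the single-shift case by an induction that peels off $\mathcal U_{\iota+1},\dots,\mathcal U_n$ and $\mathcal U_1,\dots,\mathcal U_{\iota-1}$ one at a time while keeping each intermediate configuration a legal staircase, using that below and above the apex the points and cutoffs are already monotone.

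The main obstacle I anticipate is exactly the middle step: pinning down the precise combinatorial dictionary under the Yang--Baxter move -- that the conjugation converts ``$\H^{\ges k_\iota}(\mathcal U_\iota)$ in the original model'' into ``$\H^{\ges k_\iota+1}(\mathcal U_\iota+(0,1))$ in the rapidity-swapped model'' and alters nothing else, with the one-step vertical displacement of the observation point being forced by which horizontal edges the rerouted path may occupy -- together with identifying the correct class of rational functions (right degree and admissible poles) so that the interpolation step has enough nodes to be conclusive. The finite-volume reduction, colour merging, the stochasticity argument on the open boundary, and the staircase induction should all be comparatively routine.
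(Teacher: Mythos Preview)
Your proposal assembles the right ingredients --- finite-lattice reduction, colour merging, Yang--Baxter, and Lagrange interpolation --- and you correctly flag the central difficulty. But the architecture differs from the paper's in a way that matters, and the piece that actually resolves your ``main obstacle'' is missing.

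The paper does not run a direct zipper that transports row $v_{k_\iota}$ past row $v_y$; the height-function indicator sitting between those rows is exactly what blocks that naive move. Instead the identity is recast as $\Phi_{P/Q}=\Psi_{P/Q}\big|_{x_\circ\leftrightarrow x_\bullet}$ on general down-right domains, where $\Phi$ and $\Psi$ differ only in the level and position of one height-function indicator. The induction is on the \emph{domain}, not on the number $n$ of observation points: one first isolates a Z-shaped subdomain whose outer rows carry $x_\circ,x_\bullet$, proves the \emph{two-row} base case by a hands-on combinatorial analysis (freezing plus colour merging, with a single Yang--Baxter symmetry step) --- this is where your ``dictionary'' is actually established, and it is not short --- and then removes one \emph{internal} row at a time. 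The interpolation variable is an internal rapidity $x_c$, neither $x_\circ$ nor $x_\bullet$; the nodes are the collisions $x_c=y_\gamma$ with \emph{column} rapidities, at which a vertex splits and Yang--Baxter repositioning collapses the domain by one row, plus $x_c=0$ or $x_c\to\infty$ when one extra node is needed to match the degree. A generic domain is then reduced to a Z by carving off four corner subdomains on which the two sides agree trivially. Two specific corrections to your plan: interpolating ``in the rapidity being transported'' is not what is done, and it is unclear that variable furnishes enough nodes of the right kind; and your final induction peeling off $\mathcal U_j$ for $j\ne\iota$ is unnecessary --- the statement is already a single shift, and the other points are handled all at once by choosing the outer path $P$ to pass below all of them.
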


A variant of Theorem \ref{Theorem_6v_intro} that allows for more general bottom-left boundary conditions that restore the symmetry between horizontal and vertical axes will be proved in Section \ref{sec:shift}.

Arguably, Theorem \ref{Theorem_6v_intro} is not the most intuitive statement of all, and our path to both its formulation and its proof was not straightforward.

It started with a special case, when in the \emph{homogeneous} model (all rapidities are equal) it was shown to be possible to shift a collection of points to two extreme positions -- when all $k_j$'s are equal, and when all $\mathcal U_j$'s coincide. The coincidence of these two extreme distributions was proved by Borodin-Wheeler \cite{BW}, see also Borodin-Bufetov \cite{BB_sym} for a simpler proof and more general boundaries.
The shift-invariance of Theorem \ref{Theorem_6v_intro}, however, is much more powerful than that; for example, the coincidence of the two extremes turns into a tautology in the Brownian percolation limit of Theorem \ref{Theorem_Brownian_intro}.

Next, in the Gaussian models (additive stochastic heat equation and colored stochastic telegraph equation in Figure \ref{Fig_Chart}; see Borodin-Gorin \cite{BG_tele} for a description of the telegraph limit in the colorless situation) we noticed that shift-invariance reduces, still nontrivially, to that for local times of Brownian bridges and numbers of intersections of two persistent random walks. This argument generalizes further  to proving equality of second moments of the height function vectors in Theorem \ref{Theorem_6v_intro}.  Alas, pushing beyond that in a similar fashion seemed prohibitively complicated.

This partial progress, however, turned out to be sufficient for inferring the complete statement for vertex models. Its proof that we give in the present paper is an intricate verification argument based on an induction in the size of the domain swept by the participating colored paths. The induction is powered by two essential tools -- the Yang-Baxter equation for the vertex weights of Figure \ref{Fig_colored_weights}, and Lagrange interpolation for the distributions viewed as polynomials in the rapidities. Thus, the inhomogeneity of the model was essential for our proof, as it furnishes us with sufficiently many interpolation points.

Another step for which the inhomogeneity was indispensable was extending Theorem \ref{Theorem_6v_intro} to more general vertex models obtained from the one above by \emph{fusion}. Fusion is a representation theoretic procedure that constructs more complicated solutions of the Yang-Baxter equation from simpler ones; it goes back to the work of Kulish-Reshetikhin-Sklyanin \cite{KRS} in early 1980s. We employ a stochastic version of this procedure whose detailed description can be found in Section \ref{Section_fusion} below; another exposition was earlier given by Kuan \cite{Kuan} generalizing the colorless statements by Corwin-Petrov \cite{CP}.

Fusion allows us to cluster adjacent $L\ge 1$ horizontal and $M\ge 1$ vertical lines of the lattice together, generating ``fat'' lines that can carry up to $L$, correspondingly $M$ paths. Further, resulting vertex weights in the fused model end up being rational in $q^L$ and $q^M$, allowing for analytic continuation in those quantities (they are the analogs of the \emph{spin} parameters in the rank 1 case). We remark that additional efforts are needed to ``analytically continue'' the boundary condition, see Section \ref{Section_analytic_continuation} for details. Previously, such analytic continuations in the boundary parameters were realized for the (colorless) six-vertex model  in Aggarwal-Borodin \cite{AB}, Aggarwal \cite{Ag_Duke}, and the ability to handle inhomogeneous and fused models is also essential for that. In a certain special colored situation, a similar procedure was performed in Borodin-Wheeler \cite[Section 12.3.4]{BW}.


At the end of this road we obtained a fully fused version of Theorem \ref{Theorem_6v_intro} (see Theorem \ref{Theorem_6v_invariance_infinite} below), which finally allowed us to descend into the polymer world.

\subsection{Shift-invariance for directed polymers in random media} It has been known for some time that directed random polymers in (1+1)d arise as limits of vertex models. One route to seeing this goes through the papers of Gwa-Spohn \cite{GwaSpohn}, Borodin-Corwin-Gorin \cite{BCG}, and Aggarwal \cite{Ag} which link the stochastic six-vertex model to the asymmetric simple exclusion process (ASEP). The latter has a limit to the continuous directed polymer, as shown by Bertini-Giacomin \cite{BeGi} and Amir-Corwin-Quastel \cite{ACQ}.\footnote{Combining these two steps into a single (more general) asymptotic statement is also possible, see Corwin-Tsai \cite{CT}, Corwin-Ghosal-Shen-Tsai \cite{CGST}, and also Borodin-Olshanski \cite[Section 12]{BO}.}

We follow another route, which leads to a much richer family of polymers. It is based on a recently
discovered fact (see Borodin \cite{Bor_rat}, Corwin-Petrov \cite{CP}, Borodin-Petrov \cite{BP1})
that $q$-deformed exclusion processes and Boson systems are special cases of the fully fused
colorless stochastic vertex models. This allows us to exploit numerous existing instances of
convergence of such processes to polymers. The one closest to our context is the appearance of
Beta-polymer as the $q\to 1$ limit of the $q$-Hahn particle system in the work of Barraquand-Corwin
\cite{Bar-Cor}.

The role of the colors in the $q\to 1$ limit transitions in the fully fused colored stochastic
models (first introduced in \cite{KMMO}) was not investigated before, and we are led to study it in
this paper. It turns out that the presence of colors translates into varying one of the end-points
of the polymer. This is a nontrivial statement, and it boils down to a certain degeneration of
noise that takes place -- the multi-dimensional distributions at each vertex in the first
approximation behave as one-dimensional ones, with randomness in the perpendicular directions being
of smaller order.\footnote{The authors are very grateful to Pierre Le Doussal, in discussions with
whom in May 2018 such an effect showed up for the first time.}

Different observation points $\{\mathcal U_i\}$ of the fused version of Theorem \ref{Theorem_6v_intro} provide a variation of the other end-point, and we thus gain access to information on polymers with both ends moving. Furthermore, the exponentiated colored height function $q^{\mathcal H^{\ges i}}$ tends to the partition function of the corresponding polymer, paving the way to joint distributions of those.

On this road we obtain in Section \ref{Section_polymers}  shift-invariance for the following models:
\begin{itemize}
    \item Directed polymers in Beta-random environment, first studied by Barraquand-Corwin \cite{Bar-Cor}, see Section \ref{Section_Beta};
    \item The Gamma or strict-weak polymer, first studied by Corwin-Sepp\"al\"ainen-Shen \cite{CSS} and  O'Connell-Ortmann \cite{OCO}, see Section \ref{Section_Gamma};
    \item The O'Connell--Yor semi-discrete Brownian directed polymer \cite{OCY}; Theorem \ref{Theorem_Brownian_intro} above is obtained as the infinite temperature limit of the shift-invariance for this polymer. See Section \ref{Section_Brownian_polymer} for details.
\end{itemize}
In each of these cases we prove a property that is very similar to what we described for the Brownian directed percolation above -- multi-dimensional distributions of partition functions are invariant under a shift of the two end-points of one of the polymers under certain intersection conditions.

Taking a further limit to polymers in fully continuous space-time leads to the KPZ equation discussed in the next section.

\subsection{Shift-invariance for universal objects}
All the polymer systems from the previous section are very special; we can prove shift-invariance for random directed polymers with specifically positioned independent Beta or Gamma weights, but not for any other weight distributions. In fact, at this point we do not even know whether the shift-invariance extends to (discrete) polymers with any other weights.

What we do know, however, is that the shift-invariance holds for two universal objects in the world of random polymers and interacting particle systems: the KPZ equation and the Airy sheet. Let us start from the former one.

Define a random function $\mathcal Z^{(y)}(t,x)$, $x\in\mathbb R$, $t\ge 0$, as a solution to the stochastic heat equation with multiplicative white noise and with
$\delta$-initial condition at point $y$ at $t=0$:
\begin{equation}
\label{eq_multi_SHE_intro}
 \mathcal Z^{(y)}_{t}=\tfrac12 \mathcal Z^{(y)}_{xx}+ \eta \mathcal Z^{(y)}, \quad t\ge 0, x\in\mathbb R;\qquad \mathcal Z^{(y)}(0,x)=\delta(x-y).
\end{equation}
Here $\eta$ is the space-time $2d$ white noise (the same for each $y$).
The Feynman-Kac representation for the solution to \eqref{eq_multi_SHE_intro} leads to its representation as a (regularized) integral of exponentiated noise over paths of Brownian bridges, thus clarifying the name Continuum Directed Random Polymer for $\mathcal Z^{(y)}(x,t)$, see Alberts-Khanin-Quastel \cite{AKQ1}, Quastel \cite{Q_CDM}. Computing (formally) the logarithm of $\mathcal Z$, one finds that $\mathcal H:=-\ln(\mathcal Z^{(y)})$ satisfies the KPZ equation:
\begin{equation}
 \mathcal H_t= \tfrac12 \mathcal H_{xx}-\tfrac12 (\mathcal H_x)^2-\eta.
\end{equation}
The KPZ equation is a universal scaling limit for a large family of stochastic systems such as directed polymers in intermediate disorder regime, cf.~Alberts-Khanin-Quastel \cite{AKQ2}; ASEP and its relatives, cf.~Bertini-Giacomin \cite{BeGi}, Amir-Corwin-Quastel \cite{ACQ}, Dembo-Tsai \cite{DT}, Corwin-Shen-Tsai \cite{CST}, Corwin-Shen \cite{CS}; stochastic vertex models, cf.~Corwin-Tsai \cite{CT}, Corwin-Ghosal-Shen-Tsai \cite{CGST}; and several SPDEs, cf.~Hairer-Shen \cite{HS}, Hairer-Quastel \cite{HQ}.  We also refer to Corwin \cite{Corwin}, Quastel-Spohn \cite{QS} for reviews.

Our shift-invariance leads to the following statement for $\mathcal Z^{(y)}$ (or, equivalently, for $\mathcal H$), cf.\ Figure \ref{Fig_KPZ_shift} and see Section \ref{Section_KPZ} for more details.

 \begin{figure}[t]
\begin{center}
{\scalebox{1.1}{\includegraphics{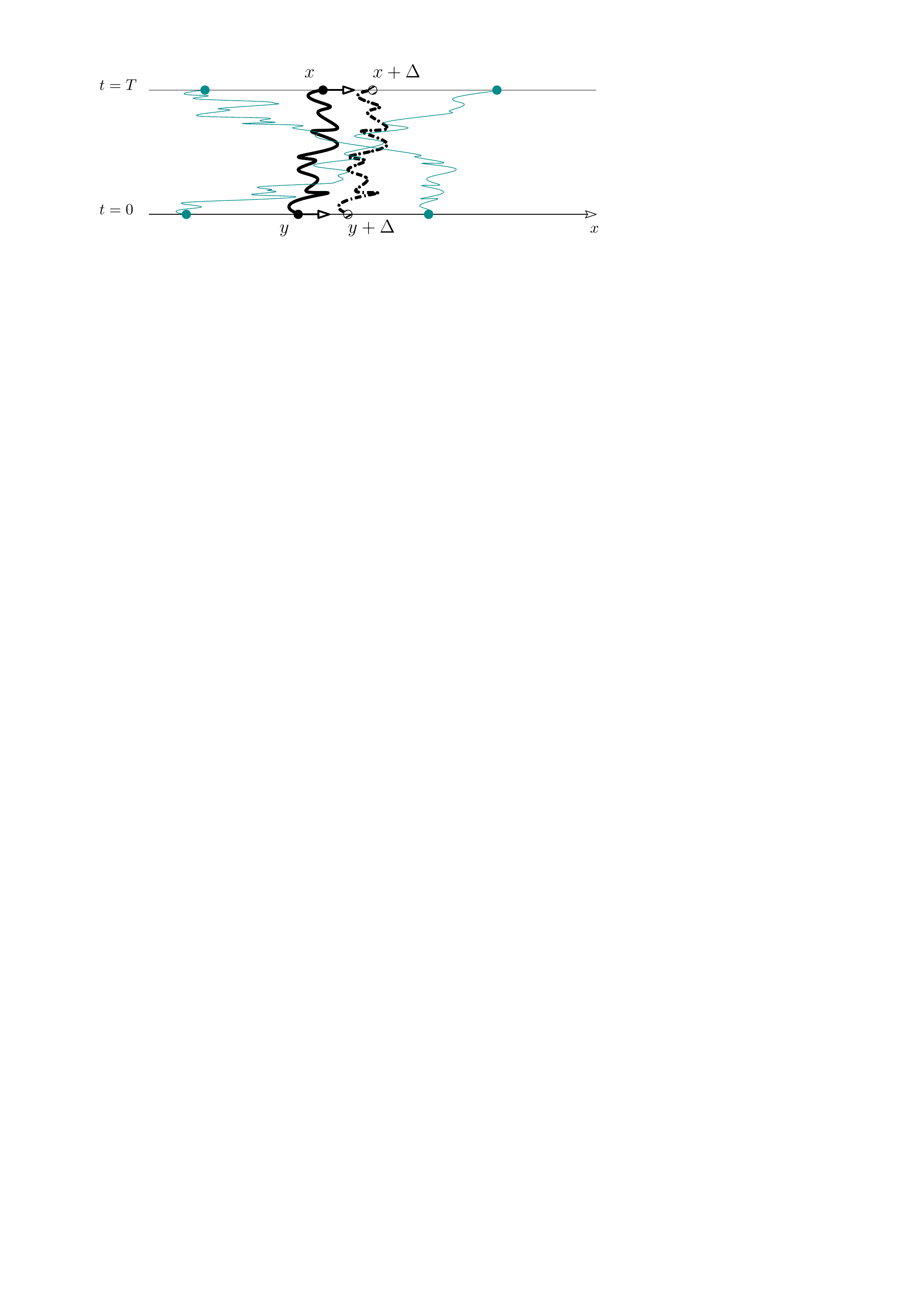}}}
 \caption{The law of  Continuum Directed Random Polymer $\mathcal Z^{(y)}(t,x)$ coincides with the law of $\mathcal Z^{(y+\Delta)}(t,x+\Delta)$, even when taken jointly with the laws of other polymers in the same noise, as long as the intersection condition holds.  \label{Fig_KPZ_shift}}
\end{center}
\end{figure}

\begin{theorem} \label{Theorem_KPZ_invariance_intro}
 Fix $t>0$, $\Delta>0$, $x,y\in\mathbb R$. In addition, choose a collection of points $(x_i,y_i)$, $i=1,\dots,n$, such that for each $i$ either $x_i\le x$, $y_i\ge y+\Delta$, or $x_i\ge x+\Delta$, $y_i\le y$. Then we have the following distributional identity:
 $$
   \bigl(\mathcal Z^{(y)}(t,x); \, \mathcal Z^{(y_i)}(t,x_i), i=1,\dots,n \bigr)\stackrel{d}{=}
   \bigl(\mathcal Z^{(y+\Delta)}(t,x+\Delta); \, \mathcal Z^{(y_i)}(t,x_i), i=1,\dots,n\bigr).
 $$
\end{theorem}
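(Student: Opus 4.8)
The plan is to obtain Theorem~\ref{Theorem_KPZ_invariance_intro} as a scaling limit of the fully fused colored shift-invariance (Theorem~\ref{Theorem_6v_invariance_infinite}, the fused version of Theorem~\ref{Theorem_6v_intro}), following the chain of arrows in Figure~\ref{Fig_Chart}: colored stochastic vertex model $\to$ Beta-polymer (the $q\to 1$ degeneration, via the $q$-Hahn system of Barraquand-Corwin) $\to$ Continuum Directed Random Polymer / KPZ. Concretely, I would first invoke the shift-invariance already established in Section~\ref{Section_Beta} for the Beta-polymer: multi-point distributions of its partition functions $Z_N^{\mathrm{Beta}}$ are invariant under a simultaneous shift of both endpoints of one of the polymers, under the stated intersection conditions. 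The key translation point, emphasized in the introduction, is that the \emph{color} of a path in the fused vertex model becomes, after the $q\to 1$ limit, one of the two endpoints of the polymer — so a color shift by one in Theorem~\ref{Theorem_6v_invariance_infinite} turns into a unit shift of the polymer's starting point, while the choice of observation point $\mathcal{U}_i$ supplies the shift of the ending point. The intersection condition $\mathcal{U}_1,\dots,\mathcal{U}_{\iota-1}\succeq \mathcal{U}_\iota\succeq\mathcal{U}_{\iota+1},\dots,\mathcal{U}_n$ is exactly what degenerates, in the diffusive scaling, into the dichotomy ``$x_i\le x$, $y_i\ge y+\Delta$'' or ``$x_i\ge x+\Delta$, $y_i\le y$'' appearing in the statement.

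Second, I would take the intermediate-disorder / weak-noise limit of the Beta-polymer to the stochastic heat equation. This is the route going through Barraquand-Corwin \cite{Bar-Cor}: with Beta parameters scaled appropriately with the system size $N$ and the lattice spacing sent to zero in the KPZ scaling, the (properly normalized and recentered) Beta-polymer partition function $Z_N^{\mathrm{Beta}}$ converges to $\mathcal{Z}^{(y)}(t,x)$ solving \eqref{eq_multi_SHE_intro}, with the starting point of the discrete polymer rescaling to the initial location $y$ of the $\delta$-mass and the ending point rescaling to the pair $(t,x)$. The crucial observation — and this is the place where the ``degeneration of noise'' footnote of the introduction does its work — is that the \emph{same} white noise $\eta$ governs all the $\mathcal{Z}^{(y_i)}(t,x_i)$ simultaneously, so I would need the convergence to hold jointly for the whole vector $(Z_N^{\mathrm{Beta}} \text{ at all the endpoints in question})$ built on one common environment. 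Granting that joint convergence, the distributional identity for the discrete vectors passes to the limit, and since a unit shift of the discrete starting point becomes an arbitrary real shift $\Delta$ of $y$ in the continuum (one takes the number of unit color-shifts to scale with $N$), one recovers precisely the claimed identity
$$
  \bigl(\mathcal Z^{(y)}(t,x); \, \mathcal Z^{(y_i)}(t,x_i)\bigr)\stackrel{d}{=}
  \bigl(\mathcal Z^{(y+\Delta)}(t,x+\Delta); \, \mathcal Z^{(y_i)}(t,x_i)\bigr).
$$
Finally, the statement for $\mathcal{H}=-\ln\mathcal{Z}$ is immediate by the continuous-mapping theorem, since $\mathcal{Z}^{(y)}>0$ almost surely.

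The main obstacle I anticipate is \emph{controlling the limit transition for the multi-point object on a common noise}, rather than for a single polymer. One-point and fixed-starting-point convergence of the Beta-polymer to the SHE is known, but here several partition functions with \emph{different} endpoints — some shifted, some not — must be shown to converge jointly to solutions of \eqref{eq_multi_SHE_intro} driven by one and the same white noise; this requires uniform moment bounds, tightness of the family of discrete partition functions indexed by all endpoints, and identification of all subsequential limits via the (coupled) martingale problem for the SHE. A secondary difficulty, flagged in the introduction, is that the extra randomness in the ``perpendicular'' direction (the fluctuation of the color-coordinate of the fused path) must be shown to be of lower order than the longitudinal KPZ fluctuations, so that in the limit the color genuinely collapses to a deterministic spatial shift of the endpoint rather than contributing its own noise; making this quantitative is the technical heart of the reduction. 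Once those analytic inputs are in place — and they are precisely what Section~\ref{Section_polymers} and Section~\ref{Section_KPZ} are set up to provide — the proof is just the assembly of the chain of limits described above.
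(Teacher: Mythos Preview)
Your overall strategy is the paper's: obtain the SHE shift-invariance as a continuum limit of the polymer shift-invariance, which in turn descends from Theorem~\ref{Theorem_6v_invariance_infinite} for the fused colored model. The translation you describe (colors become one endpoint, observation points become the other, the $\succeq$ ordering degenerates into the stated dichotomy on $(x_i,y_i)$) is exactly how Section~\ref{Section_polymers} proceeds.

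The one substantive difference is the choice of intermediate polymer. You propose passing directly from the Beta polymer to the SHE by an intermediate-disorder limit; the paper instead routes through the chain Beta $\to$ Gamma $\to$ O'Connell--Yor $\to$ SHE (Sections~\ref{Section_Gamma}--\ref{Section_KPZ}), taking the final limit \eqref{eq_convergence_to_KPZ} from the semi-discrete Brownian polymer. The paper's longer path buys an established convergence result: O'Connell--Yor $\to$ SHE is proved in \cite{Nica}, and the footnote in Section~\ref{Section_KPZ} explains how the $L^2$ coupling there upgrades pointwise convergence to joint convergence over all endpoints --- precisely the obstacle you correctly flag. Your direct Beta $\to$ SHE limit is plausible in principle, but it is not in \cite{Bar-Cor} as you suggest (that paper establishes integrability and Tracy--Widom asymptotics, not SHE convergence), so you would have to supply and control that limit yourself, including the joint-in-endpoints version. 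The paper, for its part, states Theorem~\ref{Theorem_KPZ_invariance} only ``modulo making convergence in \eqref{eq_convergence_to_KPZ} joint in $x$'s and $y$'s'', so the obstacle you identify is genuine and acknowledged rather than fully dispatched.
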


Conjecturally (see Corwin-Quastel-Remenik \cite{CQR}), the large time limit of KPZ (as a function of $x$ and $y$) is described, after proper centering and rescaling, by another prominent object called \emph{Airy sheet} $\mathcal A(x,y)$. The same object is believed to serve as a universal limit for directed polymers and last passage percolation models, with the only rigorous result available at this time being the convergence of the Brownian directed percolation established by Dauvergne-Ortmann-Virag \cite{DOV}. However, if we focus only on various marginals of $\mathcal A(x,y)$, rather than on the full law of the function of two variables, then much more is known. For instance, convergence to $\mathcal A(0,0)$ -- the Tracy-Widom distribution -- is proved for the KPZ equation and a variety of (integrable) polymers and percolation models. Moreover, for the latter one also typically proves  joint convergence to the function of one variable $\mathcal A(0,\cdot)$ known as the \emph{Airy process}.

The following statement can be found in Section \ref{Section_Airy_sheet} below.

\begin{theorem} \label{Theorem_Airy_invariance_intro}
 Fix $t>0$, $\Delta>0$, $x,y\in\mathbb R$. In addition, choose a collection of points $(x_i,y_i)$, $i=1,\dots,n$, such that for each $i$ either $x_i\le x$, $y_i\ge y+\Delta$, or $x_i\ge x+\Delta$, $y_i\le y$. Then we have the following distributional identity:
 $$
   \bigl(\mathcal A(x,y); \, \mathcal A(x_i,y_i), i=1,\dots,n \bigr)\stackrel{d}{=}
   \bigl(\mathcal A(x+\Delta,y+\Delta); \, \mathcal A(x_i,y_i), i=1,\dots,n\bigr).
 $$
\end{theorem}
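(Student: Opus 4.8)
The plan is to obtain the Airy-sheet identity as an asymptotic corollary of the shift-invariance already established for an integrable last passage percolation (or polymer) model that is known to converge to the Airy sheet. The only such rigorous convergence currently available is the one of Dauvergne--Ortmann--Vir\'ag \cite{DOV} for Brownian last passage percolation, so I would build the argument on Theorem~\ref{Theorem_Brownian_intro} (and its multi-point extension from Section~\ref{Section_BLPP}). First I would recall the precise form of the DOV scaling limit: under the $2/3$--$1/3$ KPZ rescaling of the starting and ending coordinates $(n',t')$ and $(n,t)$ of $\mathfrak Z_{(n',t')\to(n,t)}$, the centered and normalized passage times converge, jointly in all space parameters and uniformly on compacts, to $\mathcal A(x,y)$ (up to the standard parabolic shift and the diffusive correction coming from the along-the-diagonal coordinate). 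The key point is that this convergence holds for the whole two-parameter field, hence for any finite collection of points simultaneously.

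The second step is to set up the scaling so that a shift of the observation points in the Airy-sheet coordinates $(x,y)\mapsto(x+\Delta,y+\Delta)$ corresponds, at the prelimit level, to a shift of \emph{both} endpoints $A\to C'$ and $B\to D'$ of one of the Brownian LPP passage times, exactly of the type permitted in Theorem~\ref{Theorem_Brownian_intro}. Concretely: the Airy sheet variables $x$ and $y$ are (after rescaling) the deviations of the ending and starting transversal coordinates from the diagonal; moving $x\mapsto x+\Delta$ and $y\mapsto y+\Delta$ amounts, in the $(n,t)$-picture, to translating both the start point and the end point of that polymer in the $t$-direction by the same macroscopic amount $\widetilde\Delta\sim \Delta\cdot(\text{rescaling factor})$, which is precisely $C\to C'=C+(0,\widetilde\Delta)$, $D\to D'=D+(0,\widetilde\Delta)$ in the notation of Theorem~\ref{Theorem_Brownian_intro}. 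One must check that the hypotheses there are met: the $n$-coordinates of $A,C,C'$ agree (this is the requirement that the relevant polymers share a boundary line of the lattice, which is how the Airy sheet is extracted in \cite{DOV}), the $t$-coordinate of $A$ is not larger than those of $C,C'$, and $B\succeq D$, $B\succeq D'$. The intersection condition in the statement of Theorem~\ref{Theorem_Airy_invariance_intro} --- for each $i$, either $x_i\le x,\ y_i\ge y+\Delta$, or $x_i\ge x+\Delta,\ y_i\le y$ --- is exactly what translates, under the scaling, into the $\succeq$-conditions of the multi-point version of Theorem~\ref{Theorem_Brownian_intro} that guarantee any monotone path realizing the shifted polymer crosses, or is crossed by, the path realizing each $\mathfrak Z_{(n_i',t_i')\to(n_i,t_i)}$.

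The third step is to pass to the limit: for each fixed $N$ (scaling parameter) Theorem~\ref{Theorem_Brownian_intro} gives an \emph{exact} distributional identity between the vector of prelimit (rescaled, centered) passage times with unshifted parameters and the one with the shifted polymer; letting $N\to\infty$ and invoking the DOV convergence on both sides --- noting that the deterministic parabolic/diffusive corrections are the same on both sides up to an additive constant that can be absorbed, since the shift $(x,y)\to(x+\Delta,y+\Delta)$ is along the diagonal where the parabola $-(x-y)^2$ and the Airy-sheet stationarity structure are compatible --- yields the claimed identity for $\mathcal A$. I expect the main obstacle to be precisely this bookkeeping of centering constants: one has to verify that the asymmetry $\mathcal A(x+\Delta,y+\Delta)$ versus $\mathcal A(x,y)$ does not introduce a spurious shift, i.e.\ that after subtracting the model-dependent parabolic term the two sides of the prelimit identity converge to $\mathcal A$ evaluated at the respective points with \emph{no} residual deterministic discrepancy; this uses the known structure of the Airy sheet along diagonal translations (equivalently, the fact that $\mathcal A(x,y)+\tfrac{(x-y)^2}{t}$ is the right stationary object) together with the explicit form of the Brownian-LPP centering. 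A secondary, more routine point is upgrading the DOV convergence --- stated for the Airy sheet as a process --- to joint convergence of the specific finite collections of marginals appearing here together with the shifted marginal; this is immediate from uniform-on-compacts convergence once the prelimit points are chosen to converge to the desired limiting points. Alternatively, one could run the same scheme starting from the KPZ shift-invariance of Theorem~\ref{Theorem_KPZ_invariance_intro} and the conjectural large-time convergence of KPZ to the Airy sheet \cite{CQR}; since that convergence is not yet rigorous for the full two-parameter field, I would present the Brownian-LPP route as the main proof and mention the KPZ route only as an alternative contingent on \cite{CQR}.
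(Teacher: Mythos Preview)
Your proposal is correct and follows essentially the same route as the paper: the paper's argument in Section~\ref{Section_Airy_sheet} consists of the single sentence ``taking the limit of the shift invariance statement for the Brownian Last Passage Percolation, we arrive at a similar statement for the Airy sheet,'' invoking the DOV convergence \eqref{eq_Airy_sheet} exactly as you do. Your worry about the centering constants is easily resolved, since in \eqref{eq_Airy_sheet} the centering $-2n-2n^{2/3}(y-x)+(x-y)^2 n^{1/3}$ depends only on $y-x$, which is invariant under the diagonal shift $(x,y)\mapsto(x+\Delta,y+\Delta)$.
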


One corollary of Theorem \ref{Theorem_Airy_invariance_intro} is that it reduces a family of previously unknown joint laws of $\mathcal A(x,y)$ to those of the Airy process. For example, take $x_1<x_2< x_3$ and $y_1>y_2>y_3$, as in Figure \ref{Fig_Airy_shift}. Applying Theorem \ref{Theorem_Airy_invariance_intro} twice and using translation-invariance of $\mathcal A(x,y)$ with respect to simultaneous shift of all arguments, we obtain
\begin{multline*}
(\mathcal A(x_1,y_1), \mathcal A(x_2,y_2), \mathcal A(x_3,y_3))\stackrel{d}{=} (\mathcal A(x_2,y_1+x_2-x_1), \mathcal A(x_2,y_2), \mathcal A(x_2,y_3+x_2-x_3))\\ \stackrel{d}{=} (\mathcal A(0,y_1-x_1), \mathcal A(0,y_2-x_2), \mathcal A(0,y_3-x_3)).
\end{multline*}
The same computation can be done for $n$ pairs of points satisfying $x_1<x_2<\dots<x_n$, $y_1>y_2>\dots>y_n$.

 \begin{figure}[t]
\begin{center}
{\scalebox{0.7}{\includegraphics{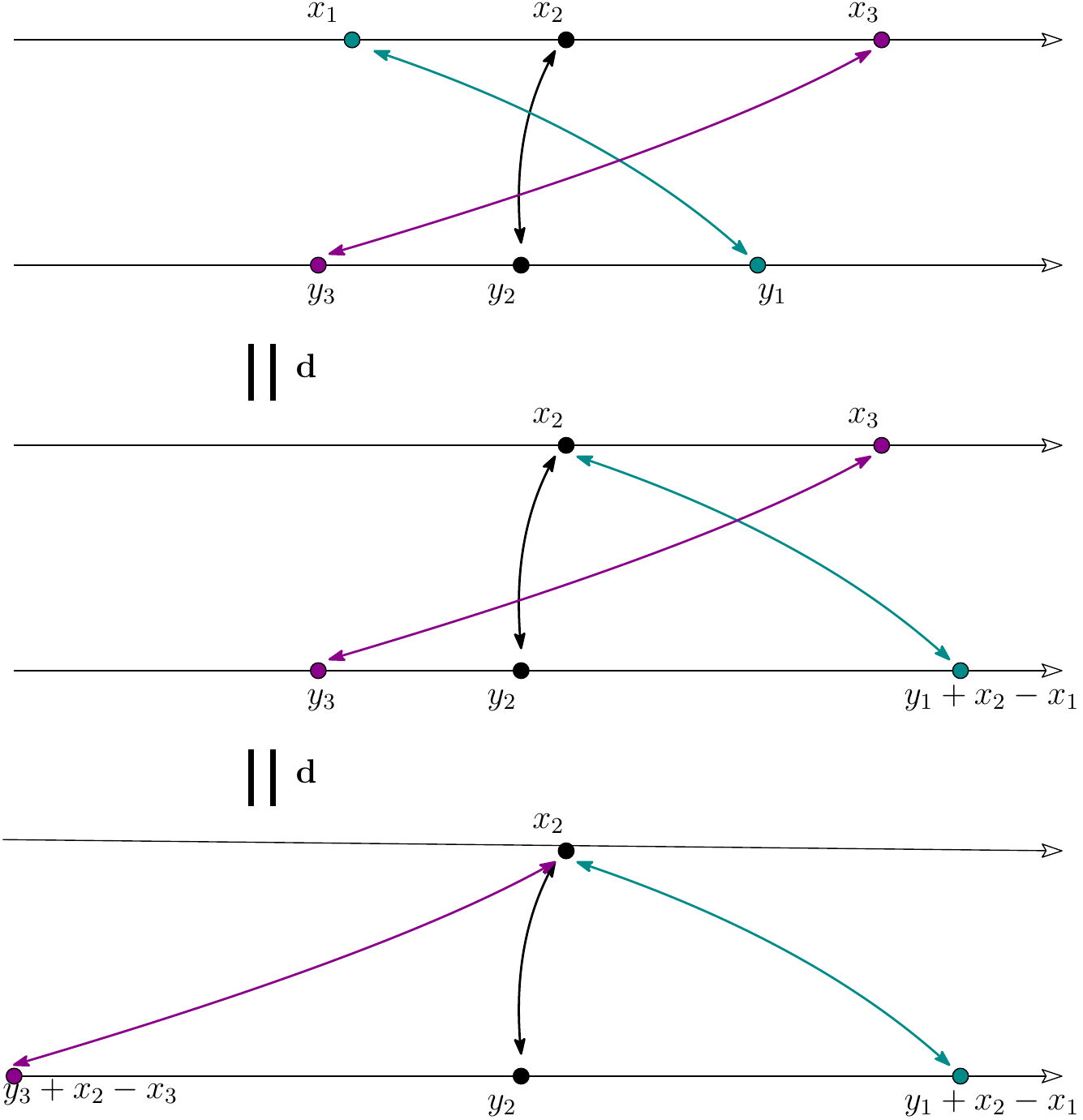}}}
 \caption{Airy sheet shifts: $(\mathcal A(x_1,y_1), \mathcal A(x_2,y_2), \mathcal A(x_3,y_3)) \stackrel{d}{=} (\mathcal A(x_2,y_1+x_2-x_1), \mathcal A(x_2,y_2), \mathcal A(x_2,y_3+x_2-x_3))$. \label{Fig_Airy_shift}}
\end{center}
\end{figure}

\subsection{Additive stochastic heat equation}
 While the large time limit of the KPZ equation is given by the Airy sheet, the small time limit is a much simpler Gaussian object, which is a version of the stochastic heat equation  \emph{with additive noise}\footnote{This should not be confused with the different stochastic heat equation related to KPZ}, see  \cite[Section 6.2]{ACQ}) and Section \ref{Section_back_to_SHE}. The same object can be also obtained as the fluctuating profile of the density of particles in the diffusive scaling limit of the colored symmetric simple exclusion process, as outlined in Section \ref{Section_6v_degenerations}. For the Gaussian processes the shift-invariance of the distributions is equivalent to the shift-invariance of the covariances and, therefore, it suffices to verify the latter. We show in Section \ref{Section_SHE} that shift-invariance of the covariances for the stochastic heat equation with additive noise becomes equivalent to the invariance of the expected intersection local times of two Brownian bridges under shifts of the end-points for one of them. In turn, the last invariance can be reduced to invariance of the expected local time at level $c$ for the Brownian bridge travelling from $a$ at time $0$ to $b$ at time $1$. The law of such a local time is known, see  Ray \cite{Ray}, Williams \cite{Williams}, Borodin \cite{ANBorodin}, Biane--Yor \cite{BianeYor}, Pitman \cite{Pitman}. Its density is proportional to
$$
  (|c-a|+|c-b|+y)\exp\left(-\tfrac{1}{2}(|c-a|+|c-b|+y)^2\right) \, dy, \qquad y>0,
$$
which is clearly independent of $c$ as long as $a<c<b$, thus, giving the desired shift-invariance.

In Section \ref{Section_6v} we show that the interplay between shift-invariance of the \emph{covariances} and intersection local times is preserved up to the level of the homogeneous version of our master statement for the colored stochastic six-vertex model. Our covariance computation is based on the four point relation for the model, which generalizes the colorless version in Borodin-Gorin \cite{BG_tele}. The Brownian bridges get replaced by a pair of discrete persistent random walks; it seems that shift-invariance of the latter has not appeared in the literature before, and we prove it in Section \ref{Section_local_times}.

\subsection{A conjecture} Numerical experiments and certain formulaic evidence indicate that shift-invariance should hold in a greater generality than what we have stated so far. Let us record this as a conjecture for the KPZ equation, extending Theorem \ref{Theorem_KPZ_invariance_intro}, cf.\ Figure \ref{Fig_KPZ_double_shift}. It is not hard to come up with similar conjectural statements for all the other models mentioned above.

 \begin{figure}[t]
\begin{center}
{\scalebox{1.1}{\includegraphics{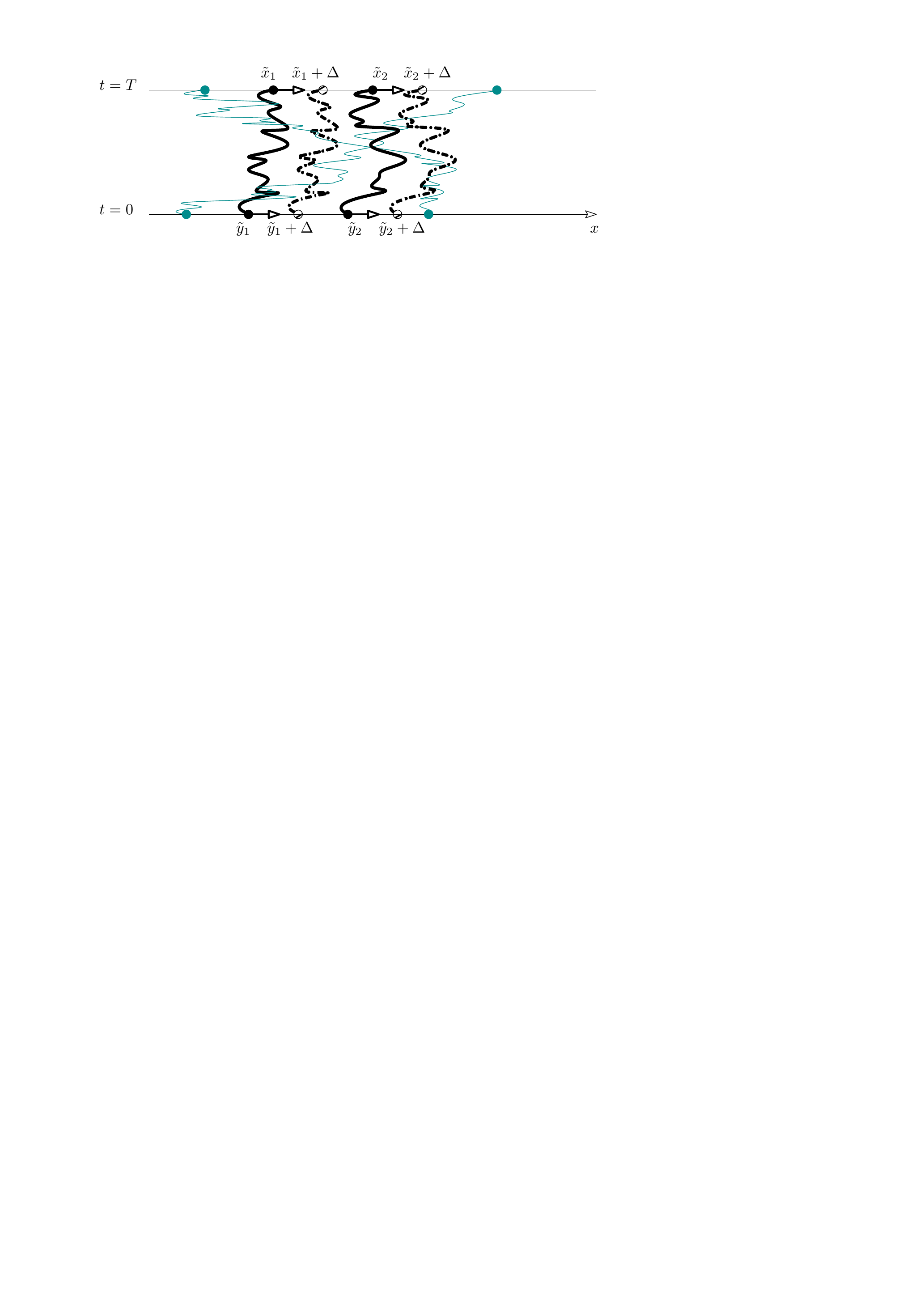}}}
 \caption{Conjecturally, two pairs of the end-points for Continuum Directed Random Polymer can also be shifted, while preserving the joint distribution with non-moving polymers. \label{Fig_KPZ_double_shift}}
\end{center}
\end{figure}

\begin{conjecture} \label{Conjecture_KPZ_invariance_intro}
 Fix $t>0$, $\Delta>0$, two segments $[X,X'],[Y,Y']\subset\mathbb R$, and four collections of real numbers $(x_1,\dots,x_n)$, $(y_1,\dots,y_n)$, $(\tilde x_1,\dots,\tilde x_k)$, $(\tilde y_1,\dots,\tilde y_k)$. Assume that for each $1\le j \le k$, $[\tilde x_j , \tilde x_j+\Delta]\subset[X,X']$ and $[\tilde y_j, \tilde y_j +\Delta]\subset [Y,Y']$. In addition, assume that for each $1\le i \le n$ either $x_i\le X$, $y_i\ge Y'$, or $x_i\ge X'$, $y_i\le Y$. Then we have the following distributional identity:
 \begin{multline*}
   \bigl(\mathcal Z^{(\tilde y_j)}(t,\tilde x_j), j=1,\dots,k; \, \mathcal Z^{(y_i)}(t,x_i), i=1,\dots,n \bigr)\\ \stackrel{d}{=}
   \bigl(\mathcal Z^{(\tilde y_j+\Delta)}(t,\tilde x_j+\Delta), j=1,\dots,k; \, \mathcal Z^{(y_i)}(t,x_i), i=1,\dots,n\bigr).
 \end{multline*}
\end{conjecture}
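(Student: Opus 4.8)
The plan is to follow the same route by which Theorem~\ref{Theorem_KPZ_invariance_intro} is reached in this paper: first promote the statement to a conjectural \emph{multi-point, multi-cutoff} shift-invariance for the fully fused inhomogeneous colored stochastic vertex model of Theorem~\ref{Theorem_6v_invariance_infinite}, and then transport it to the KPZ equation along the chain of degenerations in Figure~\ref{Fig_Chart} (stochastic fusion, analytic continuation in the spin parameters, the $q\to 1$ limit to the Beta/Gamma/O'Connell--Yor polymers, and finally the KPZ scaling limit). Since every link in that chain is already in place for the single-shift case, the only genuinely new input needed is the vertex-model statement; everything downstream should go through essentially verbatim, modulo the care with the boundary-condition continuation of Section~\ref{Section_analytic_continuation}.

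So the task reduces to: in the inhomogeneous colored stochastic vertex model, shift a \emph{collection} of observation points by $(0,1)$ and bump the corresponding cutoffs by $1$, with an appropriate product of row-rapidity swaps, under a nesting hypothesis generalizing the ``bowtie'' condition $\mathcal U_1,\dots,\mathcal U_{\iota-1}\succeq\mathcal U_\iota\succeq\mathcal U_{\iota+1},\dots,\mathcal U_n$ of Theorem~\ref{Theorem_6v_intro}. My first attempt would be the cheap one: iterate Theorem~\ref{Theorem_6v_intro}. If the indices to be shifted are consecutive and the corresponding points coincide, this succeeds, because each single shift leaves intact the ordering hypotheses needed for the next. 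The whole difficulty appears when there are two (or more) \emph{separated} clusters of moving points: the points of one cluster need not be $\succeq$-comparable to those of another, so there is no common pivot through which to run Theorem~\ref{Theorem_6v_intro}, and a naive iteration is simply not applicable. This matches the geometry of the conjecture: the endpoints of every moving polymer, both before and after the shift, lie in the box $[X,X']\times[Y,Y']$ and the non-moving polymers cross that whole box, but the moving polymers need bear no relation to one another.

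The honest approach is therefore to redo, from scratch, the inductive verification proof of Theorem~\ref{Theorem_6v_intro} --- induction on the size of the domain swept by the participating colored paths, powered by the Yang--Baxter equation for the weights of Figure~\ref{Fig_colored_weights} together with Lagrange interpolation of the height-function distributions viewed as polynomials in the rapidities --- but now allowing several pivots at once. \textbf{This is where I expect the main obstacle.} The single-pivot induction proceeds by peeling off vertices near $\mathcal U_\iota$ and matching the two sides after a single rapidity transposition; with two pivots the peeling procedures around the two clusters overlap and interfere, the relevant Yang--Baxter moves no longer commute cleanly, and --- more seriously --- the interpolation step threatens to break down, since two simultaneous swaps can turn the distributions into polynomials in the rapidities whose degree (or whose number of undetermined coefficients) exceeds the supply of interpolation points that the inhomogeneity provides. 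Finding a version of the induction whose bookkeeping closes under multiple shifts is, I believe, exactly the content that is currently missing.

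Finally, a word on why the obvious shortcuts are blocked, so that one is indeed forced onto this hard road. The moment method is unavailable: the mixed moments of $\mathcal Z^{(y)}(t,x)$ grow super-exponentially and do not determine the law, so even a complete understanding of joint moments --- obtainable for the second moment via the four-point relation and the Brownian intersection-local-time computation of Sections~\ref{Section_6v}--\ref{Section_SHE}, but apparently not beyond --- would not suffice. A ``restriction'' argument, localizing the noise to the box $[X,X']\times[Y,Y']$ and arguing that the shift acts only inside the box while the crossing polymers interact with it in a shift-covariant way, is heuristically appealing and explains the intersection condition, but the multiplicative stochastic heat equation has no clean spatial restriction that makes this interaction manifestly covariant, so by itself it does not yield a proof. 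Thus the realistic route remains: prove the multi-pivot vertex-model statement by the inductive Yang--Baxter/interpolation machinery, and then let it flow down the chart.
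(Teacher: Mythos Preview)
This statement is labeled a \emph{conjecture} in the paper, and the paper does not prove it. The only remark the paper makes is that when the moving points are totally ordered, $\tilde x_1<\tilde x_2<\dots<\tilde x_k$ and $\tilde y_1>\tilde y_2>\dots>\tilde y_k$, the statement follows by $k$ applications of Theorem~\ref{Theorem_KPZ_invariance_intro}; the conjecture is precisely that this ordering restriction can be dropped.

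Your proposal is not a proof and does not pretend to be one --- it is an honest assessment of where the difficulty lies, and on that score it is accurate. You correctly identify that iterating the single-shift theorem works only in the ordered case (this is exactly the paper's own observation), that the genuine obstacle is a multi-pivot version of the vertex-model Shift Theorem~\ref{theorema-egr}, and that the Yang--Baxter/interpolation induction of Section~\ref{Section_Shift_inhom_proof} does not obviously extend to several simultaneous pivots. Your diagnosis of why the shortcuts fail (moment indeterminacy for the multiplicative SHE, lack of a clean spatial restriction) is also sound. So there is no discrepancy to flag: both you and the paper regard this as open, and your analysis of the obstruction is consistent with the paper's framing.
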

Note that for $\tilde x_1<\tilde x_2<\dots<\tilde x_k$ and $\tilde y_1>\tilde y_2>\dots>\tilde y_k$, this statement is proved by $k$ applications of Theorem \ref{Theorem_KPZ_invariance_intro}. The conjecture says that such a $k$-dimensional shift-invariance also holds without these inequalities.

\subsection*{Acknowledgements} The authors are grateful to Pierre Le Doussal for discussions on limit transitions from colored models to polymers. A.~B.~ was partially supported by NSF grants DMS-1664619 and DMS-1853981, V.G.~ was partially supported by NSF grants DMS-1664619, DMS-1855458, by the NEC Corporation Fund for Research in Computers and Communications, and by the Office of the Vice Chancellor for Research and Graduate Education at the University of Wisconsin--Madison with funding from the Wisconsin Alumni Research Foundation.
 M.W.~ was partially supported by  ARC grant DP190102897.

\section{Shift-invariance for the stochastic heat equation}

In this section we provide a proof for the shift-invariance in the simplest Gaussian case --- for the stochastic heat equation with additive noise. Two ways to obtain this equation by degenerating other systems in the chart of Figure \ref{Fig_Chart} are outlined in Section \ref{Section_6v_degenerations} and Section \ref{Section_back_to_SHE}.

\label{Section_SHE}

\subsection{Statement}

The central objects of this section are an $(N+1)$--tuple of Gaussian fields $\eta^i(t,y)$, $i=0,\dots,N$, $t\ge 0$, $y\in\mathbb R$, and an $(N+1)$--tuple of deterministic fields $\rho^i(t,y)$. At each $(t,y)$, the sum of the Gaussian fields is identically zero, while the sum of deterministic fields is identically equal to one. The deterministic fields solve the heat equation with prescribed initial conditions at $t=0$. The Gaussian fields solve the stochastic heat equation with additive white (in $(t,y)$) noises in the right-hand side. At fixed $(t,y)$, the covariance of white noises can be identified with that of an $(N+1)$--dimensional vector of independent Gaussians with variances $\rho^i(t,y)$ conditioned on their sum being zero.

\smallskip

One possible interpretation is that $\rho^i(t,y)$, $i=0,\dots,N$, represent macroscopic densities of $N+1$ different species at time $t$ and position $y$. The densities are subject to the conservation law of total density $1$. Simultaneously, $\eta^i(t,y)$ represent random Gaussian fluctuations linked to these densities; as $\rho^i$ becomes larger, so does the white noise driving $\eta^i(t,y)$.

\bigskip

We now proceed to the formal definition. Take $N+1$ non-negative real functions $\rho^i_0(y)$, $i=0,1,\dots,N$, which sum up to $1$: $\rho^0_0(y)+\rho^1_0(y)+\dots+\rho^N_0(y)=1$, $y\in\mathbb R$.
 Define the functions $\rho^i:\mathbb R_{\ge 0} \times \mathbb R\to\mathbb R$, $i=1,\dots,N$, as the solutions to the heat equation
  \begin{equation}
 \label{eq_limit_shape_HE}
 \frac{\partial}{\partial t} \rho^i(t,y)-\frac{1}{2}\frac{\partial^2}{\partial y^2} \rho^i(t,y)=0,\quad t\ge 0,\, y\in\mathbb R, \quad \rho^i(0,y)=\rho^{i}_0(y).
 \end{equation}
 Since the fundamental solution of the heat equation is given by the Gaussian kernel, the functions $\rho^i$ can be expressed as partial integrals of this kernel. We further set
 $$
  \rho^{\geqslant i}(t,y)=\sum_{a=i}^{N} \rho^a(t,y),
 $$
 so that
 \begin{equation} \label{eq_SHE_ineq}1=\rho^{\ges 0}(t,y) \ge \rho^{\ges 1}(t,y) \ge \dots \ge\rho^{\ges (N-1)}(t,y)\ge \rho^{\ges N}(t,y)\ge 0.
 \end{equation}
  Since the heat equation is linear, functions $\rho^{\ges i}(t,y)$ are also its solutions.

Further, define $N+1$ random Gaussian functions $\eta^{i}(t,y)$, $i=0,1,\dots,N$,  which solve the stochastic heat equation
\begin{equation}
\label{eq_SHE_single}
 \frac{\partial}{\partial t} \eta^{i}(t,y)-\frac{1}{2}\frac{\partial^2}{\partial y^2} \eta^{i}(t,y)
  = \Rho^{i}, \quad t\ge 0,\, y\in\mathbb R,\qquad \eta^{i}(0,y)=0,
 \end{equation}
 where $\Rho^{i}$ are spatially uncorrelated centered generalized Gaussian noises with covariance  given by
\begin{equation}
\label{eq_covariance_noise_colors_SHE_single}
\E \Rho^{i}(t,y) \Rho^{j}(t',y')=\delta_{y=y'}\delta_{t=t'}
\cdot \begin{cases} -\rho^i (t,y) \rho^{j}(t,y),& i<j,\\
  \rho^{i}(t,y)(1-\rho^{i}(t,y)), & i=j.
  \end{cases}
\end{equation}
Note that the noises $\Rho^i(t,y)$ can be thought of as independent Gaussian white noises of variances $\rho^i(t,y)$ and conditioned to have zero sum:
$$\Rho^0(t,y)+\Rho^1(t,y)+\dots+\Rho^{N}(t,y)=0.$$
We also define
$$\eta^{\ges i}=\eta^i+\eta^{i+1}+\dots+\eta^N,\qquad  \Rho^{\ges i}=\Rho^i+\Rho^{i+1}+\dots+\Rho^N,\qquad i=1,\dots,N,$$
so that
\begin{equation}
\label{eq_SHE}
 \frac{\partial}{\partial t} \eta^{\ges i}(t,y)-\frac{1}{2}\frac{\partial^2}{\partial y^2} \eta^{\ges i}(t,y)
  = \Rho^{\ges i}, \quad t\ge 0,\, y\in\mathbb R, \qquad \eta^{\ges i}(0,y)=0,
 \end{equation}
 where $\Rho^{\ges i}$ can be equivalently defined as spatially uncorrelated centered generalized Gaussian noises with covariances for $i\le j$ given by
\begin{equation}
\label{eq_covariance_noise_colors_SHE}
\E \Rho^{\ges i}(t,y) \Rho^{\ges j}(t',y')=\delta_{y=y'}\delta_{t=t'}
\cdot
    (1- \rho^{\ges i}(t,y))  \rho^{\ges j}(t,y).
\end{equation}
The covariance in \eqref{eq_covariance_noise_colors_SHE} can be also given an interpretation. For that we use inequalities \eqref{eq_SHE_ineq} and think about $\rho^{\ges N}(t,y),\dots,\rho^{\ges 1}(t,y)$ as $N$ \emph{times} $0< s_1<\dots<s_N<1$ for the standard Brownian bridge, which travels from $0$ to $0$ in time $1$. Then the right-hand side of \eqref{eq_covariance_noise_colors_SHE} is  the covariance of the values for such a Brownian bridge.

\bigskip

Fix  $T>0$, two indices $1\le i<j \le N$ and two reals $B_i>B_j$. Consider the two-dimensional random vector
$$
 \xi_{\rho^{\ges i}_0, \rho^{\ges j}_0}(B_i,B_j)=(\eta^{\ges i}(T,B_i), \eta^{\ges j}(T,B_j)).
$$
For $\Delta\in\mathbb R$, let $S_\Delta$ denote the operator of shifting the argument of a function by $\Delta$:
$$
  [S_\Delta f](x)=f(x+\Delta).
$$

\begin{figure}[t]
\begin{center}
{\scalebox{1.2}{\includegraphics{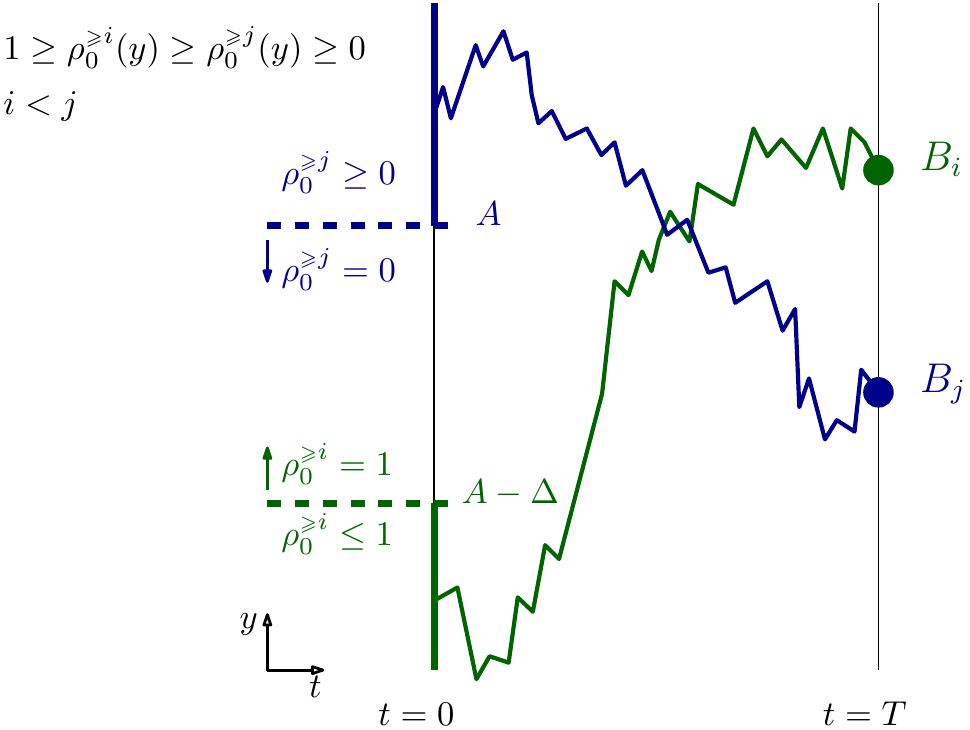}}}
 \caption{Restriction on parameters in Theorem \ref{Theorem_SHE_invariance} and two intersecting Brownian bridges appearing in its proof.
 \label{Fig_intersecting_BM}}
\end{center}
\end{figure}

\begin{theorem}
\label{Theorem_SHE_invariance}
  Fix $\Delta\ge 0$. Suppose that for some $i<j$ and $A\in\mathbb R$, $\rho^{\ges j}_0(y)=0$ on $[-\infty, A)$ and $\rho^{\ges i}_0(y)=1$ on $[A-\Delta,+\infty)$, cf.\ Figure \ref{Fig_intersecting_BM}. Then the following identity in distribution holds for every $B_i>B_j$:
  \begin{equation}
  \label{eq_SHE_invariance}
   \xi_{\rho^{\ges i}_0, \rho^{\ges j}_0}(B_i,B_j)\stackrel{d}{=} \xi_{\rho^{\ges i}_0, S_\Delta \rho^{\ges j}_0}(B_i,B_j-\Delta).
  \end{equation}
  In words, the simultaneous shift in the same direction and by the same $\Delta$ of the initial condition $\rho^{\ges j}_0$ and the observation point $B_{j}$ does not change the joint distribution of the two-dimensional vector.
\end{theorem}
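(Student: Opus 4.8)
The plan is to exploit the Gaussianity of the model and then reduce matters to a known one-dimensional fact about Brownian local times. Since $\eta^{\ges i}$ is the mild (Duhamel) solution of a linear SPDE with additive Gaussian noise and zero initial data, the pair $\xi_{\rho^{\ges i}_0,\rho^{\ges j}_0}(B_i,B_j)$ is a centered two-dimensional Gaussian vector, so \eqref{eq_SHE_invariance} is equivalent to equality of the two $2\times2$ covariance matrices. The variance of $\eta^{\ges i}(T,B_i)$ involves only the color-$i$ data and is untouched by the shift; the variance of $\eta^{\ges j}(T,B_j)$ computed from $(\rho^{\ges j}_0,B_j)$ equals that of $\eta^{\ges j}(T,B_j-\Delta)$ computed from $(S_\Delta\rho^{\ges j}_0,B_j-\Delta)$ by the translation invariance of the heat kernel and of space--time white noise (a change of variables in the Duhamel integral). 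Hence it suffices to prove that the cross-covariance $\mathrm{Cov}\bigl(\eta^{\ges i}(T,B_i),\eta^{\ges j}(T,B_j)\bigr)$ is unchanged under the shift.

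First I would write out this cross-covariance. By Duhamel, $\eta^{\ges i}(T,y)=\int_0^T\!\int_{\mathbb R}p_{T-s}(y-z)\,\Rho^{\ges i}(s,z)\,dz\,ds$ with $p_t$ the Gaussian heat kernel, so \eqref{eq_covariance_noise_colors_SHE} yields
$$
\mathrm{Cov}\bigl(\eta^{\ges i}(T,B_i),\eta^{\ges j}(T,B_j)\bigr)=\int_0^T\!\int_{\mathbb R}p_{T-s}(B_i-z)\,p_{T-s}(B_j-z)\,\bigl(1-\rho^{\ges i}(s,z)\bigr)\,\rho^{\ges j}(s,z)\,dz\,ds.
$$
Since the constant $1$ solves the heat equation, $1-\rho^{\ges i}(s,z)=\int p_s(z-w_1)\bigl(1-\rho^{\ges i}_0(w_1)\bigr)\,dw_1$ while $\rho^{\ges j}(s,z)=\int p_s(z-w_2)\,\rho^{\ges j}_0(w_2)\,dw_2$. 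Substituting and applying Fubini, the inner integral $\int_0^T\!\int_{\mathbb R}p_{T-s}(B_i-z)p_{T-s}(B_j-z)p_s(z-w_1)p_s(z-w_2)\,dz\,ds$ is recognized as $p_T(B_i-w_1)\,p_T(B_j-w_2)$ times the expected intersection local time $\mathcal I$ of two independent Brownian bridges on $[0,T]$, one running from $w_1$ to $B_i$ and one from $w_2$ to $B_j$ (indeed $p_s(z-w)p_{T-s}(B-z)/p_T(B-w)$ is the time-$s$ density of such a bridge, and integrating the product of the two densities along the diagonal $z=z$ and over $s$ gives the expected occupation of the diagonal). Thus the cross-covariance equals $\iint\bigl(1-\rho^{\ges i}_0(w_1)\bigr)\rho^{\ges j}_0(w_2)\,p_T(B_i-w_1)p_T(B_j-w_2)\,\mathcal I(w_1,B_i;w_2,B_j)\,dw_1dw_2$, and the change of variables $w_2\mapsto w_2+\Delta$ reduces the theorem to the geometric claim $\mathcal I(w_1,B_i;w_2,B_j)=\mathcal I(w_1,B_i;w_2-\Delta,B_j-\Delta)$ for all $w_1$ in the support of $1-\rho^{\ges i}_0$ and all $w_2$ in the support of $\rho^{\ges j}_0$.

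To prove this I would pass to the difference $Y=X_1-X_2$ of the two independent bridges: $Y$ is, up to a deterministic time change and a scaling by $\sqrt2$, a Brownian bridge from $w_1-w_2$ to $B_i-B_j$, and $\mathcal I$ equals a fixed constant times $\E L_0(Y)$, the expected local time of $Y$ at level $0$. The hypotheses force $\rho^{\ges j}_0$ to vanish on $(-\infty,A)$ and $\rho^{\ges i}_0\equiv1$ on $[A-\Delta,\infty)$, so on the relevant set $w_1<A-\Delta\le w_2$, whence $w_1-w_2<-\Delta\le0<B_i-B_j$ for the original data, and likewise $w_1-(w_2-\Delta)=w_1-w_2+\Delta<0<B_i-B_j+\Delta=B_i-(B_j-\Delta)$ for the shifted data. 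In both cases the level $0$ lies strictly between the two endpoints of the difference bridge, and by the classical description of the law of the local time of a Brownian bridge at a level between its endpoints (Ray, Williams, Biane--Yor, Pitman) that law --- hence its mean --- depends on the endpoints only through their difference. Since $(B_i-B_j)-(w_1-w_2)=(B_i-B_j+\Delta)-(w_1-w_2+\Delta)$, the two expected local times coincide, completing the reduction and proving \eqref{eq_SHE_invariance}.

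I expect the main obstacle to be the second step: identifying the four-heat-kernel integral with an expected intersection local time of two Brownian bridges, together with the routine but not entirely trivial justification of the Duhamel representation and of the Fubini interchanges for the white-noise integrals. Once that dictionary is in place the rest is a one-line change of variables plus the quoted one-dimensional local-time formula, and it is precisely there that the geometric constraints encoded by $A$ and $\Delta$ enter --- they guarantee that the crossing level stays strictly between the endpoints of the difference bridge both before and after the shift, which is exactly what makes the local-time law shift-independent.
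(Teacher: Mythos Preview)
Your proposal is correct and follows essentially the same route as the paper: reduce to the cross-covariance by Gaussianity, express it via the Duhamel formula as a four-heat-kernel integral against the initial data, recognize that integral as (a normalization times) the expected intersection local time of two independent Brownian bridges, and then invoke the classical shift-invariance of Brownian bridge local time at a level lying strictly between the endpoints. The paper carries out exactly this argument (with the same references to Ray, Williams, Biane--Yor, Pitman), differing only cosmetically in that it does not explicitly factor out the $p_T$ normalizations before passing to the bridges.
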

\begin{remark}
 Due to translation-invariance of the system in $y$--direction, the random variables of \eqref{eq_SHE_invariance} will also have the same distribution as $\displaystyle \xi_{S_{-\Delta}\rho^{\ges i}_0,\rho^{\ges j}_0}(B_i+\Delta,B_j)$.
\end{remark}

We can extend Theorem \ref{Theorem_SHE_invariance} to a statement involving $N$--dimensional vectors. Fix $T>0$, $\Delta\ge 0$, and an index $\iota\in\{1,2,\dots,N\}$. Take two real vectors $\vec{A}, \vec{B} \in\mathbb R^N$, such that
 \begin{equation}
 \label{eq_x14}
  A_1<A_2<\dots<A_{\iota-1}<A_{\iota}-\Delta<A_{\iota}<A_{\iota+1}<\dots<A_N,
 \end{equation}
 \begin{equation}
 \label{eq_x15}
  B_1,\dots,B_{\iota-1}> B_{\iota}> B_{\iota}-\Delta > B_{\iota+1},\dots,B_N.
 \end{equation}
 (Note that $B_1,\dots, B_{\iota-1}$ and $B_{\iota+1},\dots,B_N$ are not ordered.)
Given this data, we construct the initial conditions $\rho^{\ges i}(y)=\mathbf 1(y\ge A_i)$ and consider an $N$--dimensional random vector:
$$
 \xi_{\vec A}(\vec B)=(\eta^{\ges 1}(T, B_1),\dots,\eta^{\ges N}(T,B_N)).
$$
\begin{corollary}
\label{Corollary_SHE_invariance}
   Take $T,\Delta>0$, $\iota\in\{1,\dots,N\}$, and two vectors $\vec{A}, \vec{B} \in\mathbb R^N$ satisfying \eqref{eq_x14}, \eqref{eq_x15}.
 With the notation $\mathbf{e}_\iota$ for the $\iota$--th coordinate vector in $N$--dimensional space, the following distributional identity holds:
 \begin{equation}
 \label{eq_SHE_invariance_vector}
  \xi_{\vec{A}}(\vec{B})\stackrel{d}{=}
  \xi_{\vec{A}-\Delta \mathbf{e}_\iota}(\vec{B}-\Delta \mathbf{e}_\iota).
 \end{equation}
\end{corollary}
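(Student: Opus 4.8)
The plan is to derive the $N$-dimensional identity \eqref{eq_SHE_invariance_vector} from the two-point Theorem \ref{Theorem_SHE_invariance} together with translation-invariance in the $y$-variable. Since the fields $\eta^{\ges i}$ solve \eqref{eq_SHE} with zero initial data and centered noise, both vectors in \eqref{eq_SHE_invariance_vector} are centered Gaussian, so the identity is equivalent to the equality of their covariance matrices. The structural observation is that, by Duhamel's principle applied to \eqref{eq_SHE} and the explicit form of the noise covariance \eqref{eq_covariance_noise_colors_SHE}, the entry $\E[\eta^{\ges i}(T,B_i)\,\eta^{\ges j}(T,B_j)]$ (for $i\le j$) is a functional of the two fields $\rho^{\ges i},\rho^{\ges j}$ --- equivalently, of the two initial conditions $\rho^{\ges i}_0,\rho^{\ges j}_0$ --- of the two points $B_i,B_j$, and of $T$ alone; it involves none of the other colors. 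Hence every entry of the covariance matrix is governed by a two-dimensional configuration, and the proof reduces to a short case analysis on the position of $\iota$ relative to $\{i,j\}$, using that here $\rho^{\ges k}_0=\mathbf 1(\cdot\ge A_k)$ and that, by \eqref{eq_x14}, the shifted data $\vec A-\Delta\mathbf{e}_\iota$ still has strictly increasing coordinates, so both configurations are admissible in the sense of \eqref{eq_SHE_ineq}.

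If $\iota\notin\{i,j\}$, then neither $\rho^{\ges i}_0,\rho^{\ges j}_0$ nor $B_i,B_j$ changes when we pass from $(\vec A,\vec B)$ to $(\vec A-\Delta\mathbf{e}_\iota,\vec B-\Delta\mathbf{e}_\iota)$, so the entry is unchanged. If $i=j=\iota$, the entry is the variance of $\eta^{\ges\iota}(T,B_\iota)$, which is unchanged because replacing $\rho^{\ges\iota}_0=\mathbf 1(\cdot\ge A_\iota)$ by $\mathbf 1(\cdot\ge A_\iota-\Delta)$ and $B_\iota$ by $B_\iota-\Delta$ merely translates the entire configuration by $\Delta$ in $y$, and the additive noise is translation-invariant in law. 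The only substantive cases are those in which exactly one of $i,j$ equals $\iota$.

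When $i<j=\iota$, the required equality of the $(i,\iota)$-entry is precisely Theorem \ref{Theorem_SHE_invariance} applied to the index pair $(i,\iota)$: indeed $S_\Delta\rho^{\ges\iota}_0=\mathbf 1(\cdot\ge A_\iota-\Delta)$, and its hypotheses hold with $A=A_\iota$, since $\rho^{\ges\iota}_0$ vanishes on $(-\infty,A_\iota)$ while $\rho^{\ges i}_0\equiv 1$ on $[A_\iota-\Delta,\infty)$ because $A_i\le A_{\iota-1}<A_\iota-\Delta$ by \eqref{eq_x14}, and $B_i>B_\iota$ by \eqref{eq_x15}. When $\iota=i<j$, the shift lowers the \emph{smaller}-index point, and we invoke Theorem \ref{Theorem_SHE_invariance} in reverse via the Remark that follows it: applying that Remark to the pair $\bigl(S_\Delta\rho^{\ges\iota}_0,\rho^{\ges j}_0\bigr)=\bigl(\mathbf 1(\cdot\ge A_\iota-\Delta),\mathbf 1(\cdot\ge A_j)\bigr)$ observed at $(B_\iota-\Delta,B_j)$ identifies its law with that of $\bigl(\rho^{\ges\iota}_0,\rho^{\ges j}_0\bigr)$ observed at $(B_\iota,B_j)$; its hypotheses hold with $A=A_j$, since $\rho^{\ges j}_0$ vanishes on $(-\infty,A_j)$ while $\mathbf 1(\cdot\ge A_\iota-\Delta)\equiv 1$ on $[A_j-\Delta,\infty)$ because $A_\iota<A_j$ (as $\iota<j$, by \eqref{eq_x14}), and $B_\iota-\Delta>B_j$ by \eqref{eq_x15}. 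These cases exhaust all pairs $(i,j)$, yielding \eqref{eq_SHE_invariance_vector}.

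No serious analytic obstacle arises beyond Theorem \ref{Theorem_SHE_invariance} itself; the two points that genuinely require attention are the reduction to pairwise covariances, which hinges on the fact that the two-point covariance of the colored stochastic heat equation sees only two of the fields $\rho^{\ges k}$, and the bookkeeping showing that the separation inequalities \eqref{eq_x14}--\eqref{eq_x15} --- in particular the placement of the auxiliary point $A_\iota-\Delta$ --- supply precisely the hypotheses needed for the forward and the reverse applications of Theorem \ref{Theorem_SHE_invariance}.
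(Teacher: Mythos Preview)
Your proof is correct and follows essentially the same approach as the paper: reduce the distributional identity to equality of covariance matrices via Gaussianity, note that each entry depends only on the two colors involved, and then invoke Theorem~\ref{Theorem_SHE_invariance} (together with translation invariance) for the entries in which $\iota$ participates, the remaining entries being tautologically unchanged. The paper states this in one sentence; you have simply unpacked the case analysis and explicitly verified that the inequalities \eqref{eq_x14}--\eqref{eq_x15} supply the support hypotheses of Theorem~\ref{Theorem_SHE_invariance} in both the $j=\iota$ and $i=\iota$ subcases.
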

Since the vector \eqref{eq_SHE_invariance_vector} is Gaussian, the distributional identity \eqref{eq_SHE_invariance_vector} of $N$--dimensional vectors follows from the same identity for its pairs of coordinates, which is Theorem \ref{Theorem_SHE_invariance} when one of the coordinates is $\iota$, and a tautological statement otherwise. Our choice of $\rho^{\ges i}(y)$ as indicator functions of semi-infinite intervals guarantees that the conditions on the support of Theorem \ref{Theorem_SHE_invariance} holds. More complicated choices for $\rho^{\ges i}(y)$ are also possible, but we will not pursue this direction further.

\subsection{Proof of Theorem \ref{Theorem_SHE_invariance}}
\label{Section_SHE_proof}
Note that the individual distributions of the first components of the vectors in \eqref{eq_SHE_invariance} coincide by the definition. Similarly, the individual distributions of the second components coincide. Since we deal with Gaussian vectors, it remains to check that the covariances coincide as well.

Let $G(x,y; t)$ denote the Gaussian kernel:
$$
 G(x,y;t)=\frac{1}{\sqrt{2 \pi t}} \exp\left( -\frac{(x-y)^2}{2t}\right).
$$
The solution to \eqref{eq_limit_shape_HE} is written as
\begin{equation}
\rho^i(t,y)=\int_{z\in\mathbb R} \rho^i_0(z) G(z,y; t)\, \dd z,
\end{equation}
and the solution to \eqref{eq_SHE} is
\begin{equation}
 \eta^{\ges i}(t,y)=\int_{s=0}^t \int_{z\in\mathbb R} \Rho^{\ges i}(s,z) G(z,y; t-s)\, \dd z \dd s.
\end{equation}
Hence, the covariance of $\eta^{\ges i}(T,B_i)$ and $\eta^{\ges j}(T,B_j)$ for the left-hand side of  \eqref{eq_SHE_invariance} becomes
\begin{multline}
\label{eq_cov1}
 \E  \eta^{\ges i}(T,B_i) \eta^{\ges j}(T,B_j)\\=  \int_{t=0}^T \int_{y\in\mathbb R}  (1- \rho^{\ges i}(t,y)) \rho^{\ges j}(t,y)
 G(y,B_i; T-t) G(y,B_j; T-t)\,\dd y \dd t
 \\=  \int_{z_i\in \mathbb R} \int_{z_j\in\mathbb R} (1-\rho^{\ges i}_0(z_i))\rho^{\ges j}_0(z_j)\,  \dd z_j \dd z_i \\ \times \int_{t=0}^T \int_{y\in\mathbb R}   G(z_i,y;t) G(z_j,y;t)
 G(y,B_i; T-t) G(y,B_j; T-t)\, \dd y \dd t.
\end{multline}
Making the same computation for the right-hand side of \eqref{eq_SHE_invariance} we get a similar, yet different expression:
\begin{multline}
  \int_{z_i\in \mathbb R} \int_{z_j\in\mathbb R}  (1-\rho^{\ges i}_0(z_i))\rho^{\ges j}_0(z_j+\Delta)\, \dd z_j \dd z_i \\ \times \int_{t=0}^T \int_{y\in\mathbb R}   G(z_i,y;t) G(z_j,y;t)
 G(y,B_i; T-t) G(y,B_j-\Delta; T-t)\, \dd y \dd t.
\end{multline}
We shift $z_j$ by $\Delta$ to get
\begin{multline}
\label{eq_cov2}
  \int_{z_i\in \mathbb R} \int_{z_j\in\mathbb R}  (1-\rho^{\ges i}_0(z_i))\rho^{\ges j}_0(z_j) \dd z_j \dd z_i \\ \times \int_{t=0}^T \int_{y\in\mathbb R}  G(z_i,y;t) G(z_j-\Delta,y;t)
 G(y,B_i; T-t) G(y,B_j-\Delta; T-t)\, \dd y \dd t.
\end{multline}
We need to show that \eqref{eq_cov1} is the same as \eqref{eq_cov2}, which would follow from the $\Delta$--invariance of the double integral of the product of four Gaussian kernels in the last line of both formulas.

For that we notice a stochastic interpretation of the double integral over $t$ and $y$. Recall that the functions $G$ are transition probabilities for the Brownian motion. Imagine for a second that they were transition probabilities for discrete random walks instead. Then the product of four probabilities under the double integral over $y$ and $t$ in \eqref{eq_cov1} can be interpreted as the probability that random walk from $(z_j,0)$ to $(B_j,T)$ intersects another (independent) random walk from $(z_i,0)$ to $(B_i,T)$ at point $(y,t)$. The integral over all $(y,t)$ (or rather a sum, if we speak about discretization) then counts the total expected number of intersections. We show in Section \ref{Section_local_times} (see $b_1=b_2$ case of Theorem \ref{Theorem_intersection_RW}) that the distribution of the number of intersections is unchanged upon shifting $z_j$ and $B_j$ simultaneously by the same amount. It is crucial for the theorem that our two random walks necessarily intersect with probability $1$, which is implied by  $z_i<z_j-\Delta$ and $B_i>B_j$ (the first inequality follows from our restriction on the supports of $\rho^{\ges j}_0$ and $1-\rho^{\ges i}_0$). Taking the expectation of this distributional identity and performing a standard limit transition between discrete random walks and Brownian motions we arrive at the desired $\Delta$--independence of \eqref{eq_cov2}.

If we wanted to avoid discretizations, then we could have argued with Brownian motions directly. For that take two independent Brownian bridges $\mathfrak{Br}_i$, $\mathfrak{Br}_j$, where the first one is the standard Brownian motion conditioned to travel from $z_i$ to $B_i$ in time $1$, and the second one is the standard Brownian motion conditioned to travel from $z_j-\Delta$ to $B_j-\Delta$ in time $1$. Then our double integral computes the expected intersection time of $\mathfrak{Br}_i$ and $\mathfrak{Br}_j$, or, equivalently the local time at $0$ for the difference $\mathfrak{Br}_i-\mathfrak{Br}_j$. The key property, which guarantees the $\Delta$--independence, is that $z_i+\Delta<z_j$ and $B_i>B_j$. Therefore, the trajectories of the Brownian bridges $\mathfrak{Br}_i$ and $\mathfrak{Br}_j$ almost surely intersect, cf.\ Figure \ref{Fig_intersecting_BM}, while
$\mathfrak{Br}_i-\mathfrak{Br}_j$ starts below zero and ends above zero, so that its trajectory necessarily intersects the $0$ level. Now a shift by $\Delta$ is a translation of one of the Brownian bridges (equivalently, of their difference) by $\Delta$. At this point, we can use a well-known property of the local time of the Brownian bridge: the distribution of the local time at $c$ for a Brownian bridge travelling from $a$ to $b$ is independent of $c$ as long as $a<c<b$ (or $b<c<a$), see \cite{Pitman} and references therein. Since the distribution is unchanged under shifts, so is its expectation, and therefore, \eqref{eq_cov2} does not depend on $\Delta$, as desired.

\section{Homogeneous stochastic six-vertex model: covariance match}

\label{Section_6v}

\begin{figure}[t]
\begin{center}
{\scalebox{0.5}{\includegraphics{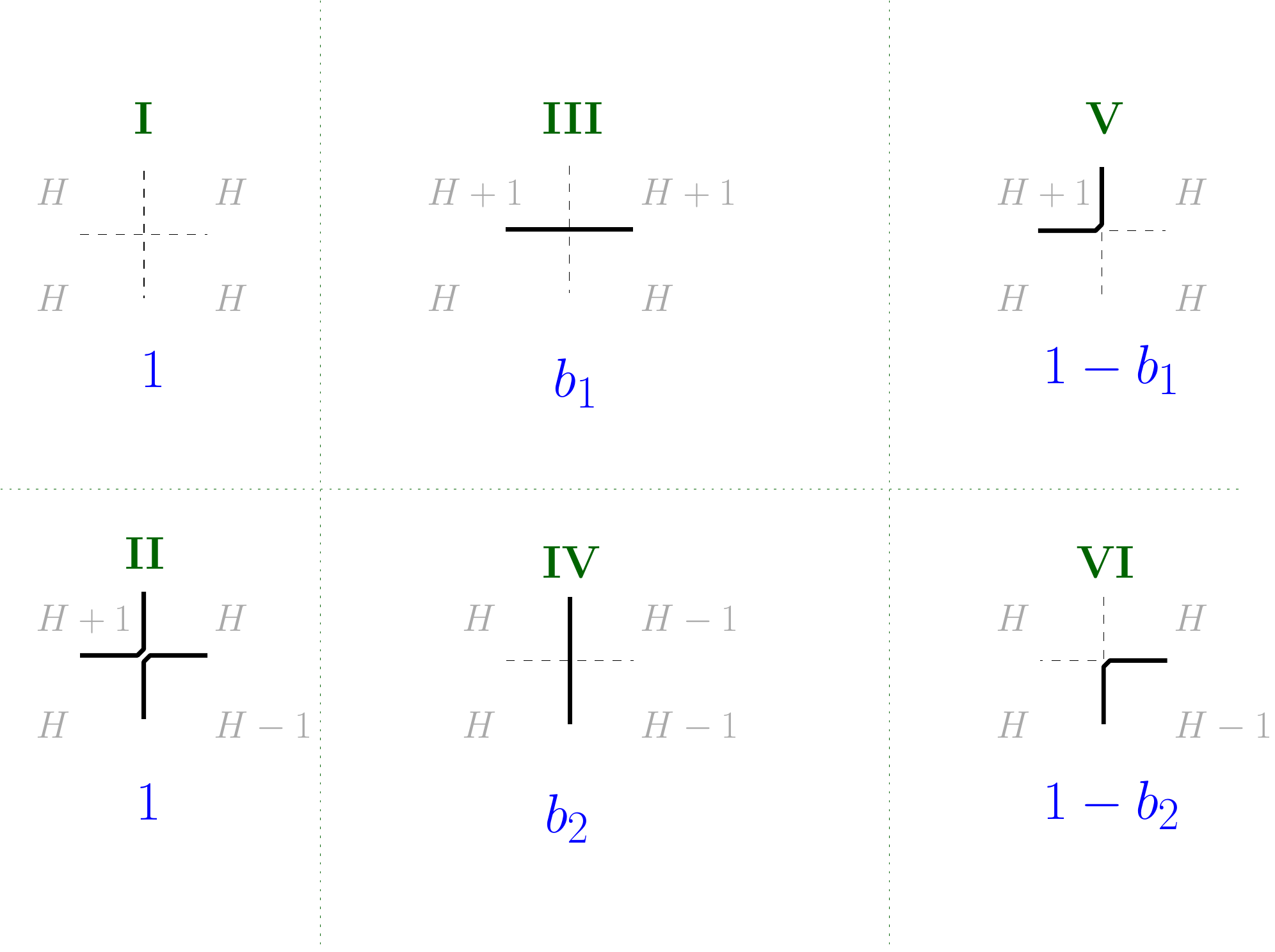}}}
 \caption{Basic weights of the six-vertex model. Same weights are used in the colored version upon the convention that the dashed (empty) edge corresponds to the lower of the two colors. Local changes of the height function $\H(x,y)$ are shown in gray.
 \label{Fig_6v}}
\end{center}
\end{figure}

Next, we switch to the homogeneous stochastic $(N+1)$--colored six-vertex model with colors $0,1,2,\dots,N$. Configurations of the model are colorings of all edges in (a part of) the square grid into $N+1$ colors, subject to the rule ``number of incoming edges = number of outgoing edges'' for each color and each vertex of the grid.

When $N=1$, we have only two colors, and if treat the color $0$ as the absence of the edge and color $1$ as the presence of the edge, then we get the standard (colorless) stochastic six-vertex model, cf.\ Figure \ref{Fig_6v}. We deal with the model in the quadrant $x\ge 1$, $y\ge 1$, and sample the vertices sequentially in the up-right direction (i.e., the first vertex to sample is in the corner of the quadrant) according to probabilities of Figure \ref{Fig_6v}. The probabilities depend on two parameters $0<b_1<1$, $0<b_2<1$, and we also set $q=\frac{b_2}{b_1}$. In the notations of Section \ref{Section_intro_6v}, all the rapidities $u_x$ and $v_y$ are taken to be independent of $x$ and $y$ and, therefore, the weights $b_1$ and $b_2$ of Figure \ref{Fig_colored_weights} do not change throughout the quadrant.

When we have $N+1$ colors, we still sample the vertices sequentially. Whenever we need to sample the vertex at $(x,y)$, it comes with the colors of two incoming edges: $i$ and $j$. We assume that $0\le i < j$ and make the sampling according to probabilities of Figure \ref{Fig_colored_weights} in Section \ref{Section_intro_6v}.
Note that if we treat the color $i$ as the absence of the edge, and the color $j$ as the presence of the edge, then the probabilities are the same as those of Figure \ref{Fig_6v} for sampling.

For example, if $N=2$, then we can think about absence of edges (color $0$), narrow black edges (color $1$), and bold red edges (color $2$).  All 15 types
of vertices and corresponding probabilities (computed by the rule from the previous paragraph) are shown in Figure \ref{Fig_weights_1}. We enumerate the
vertices by rows and columns, so that vertices A1, A2, A3 have weight 1, there is no vertex of type
A4, etc. The weights are stochastic in the sense that for each pair of inputs from the left and
from the bottom, the weights of possible vertices (outputs) sum up to $1$.

\begin{figure}[t]
\begin{center}
{\scalebox{1.2}{\includegraphics{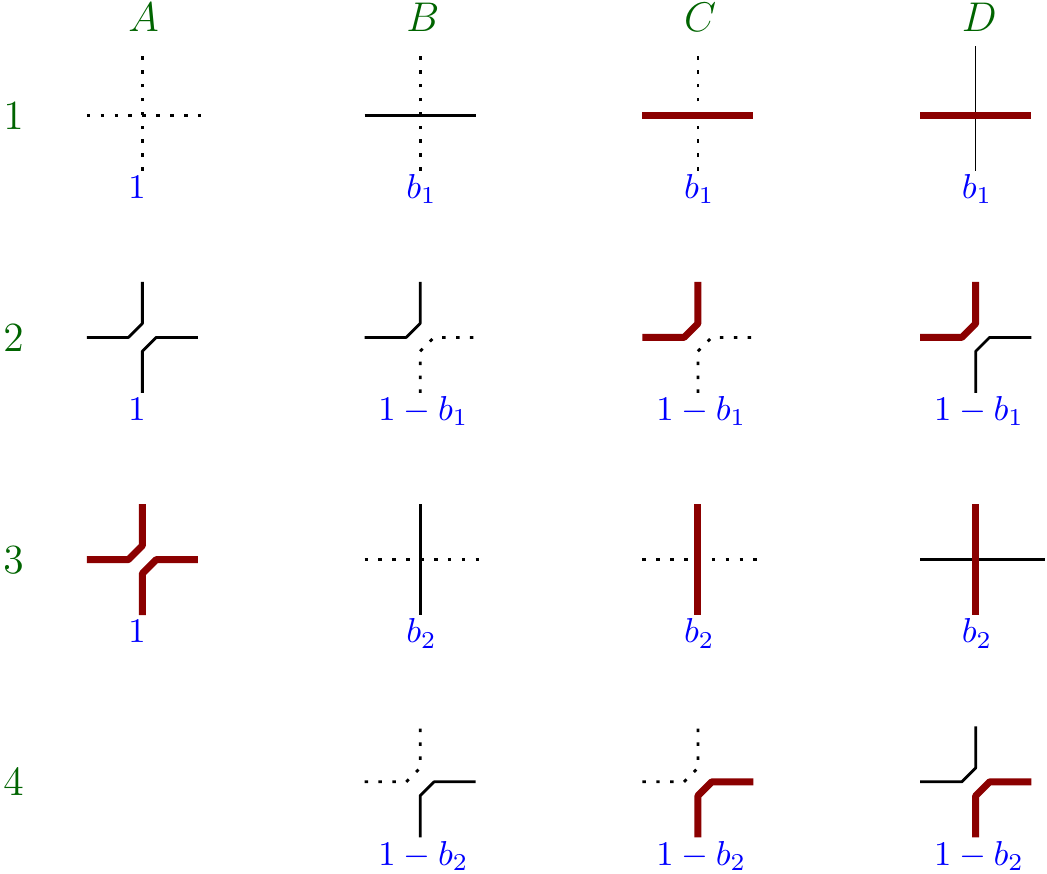}}}
 \caption{Weights of 15 types of vertices in the two-colored model.
 \label{Fig_weights_1}}
\end{center}
\end{figure}

We deal with the model in the quadrant, which means that the vertices have integer
coordinates $(x,y)$ satisfying $x>0$, $y>0$. Along the borders of the quadrant we allow arbitrary (deterministic) boundary conditions.
This means that for each vertex adjacent to the left boundary, we choose the color of the
incoming edge from the left. Similar
choices of color of edges are made for each vertex adjacent to the bottom boundary. The model is then defined by
stochastic sampling, treating the weights of Figure \ref{Fig_6v} (or Figure \ref{Fig_weights_1}) as probabilities: we first
sample a single vertex with $x+y=1$, then two vertices with $x+y=2$, then three vertices with
$x+y=3$, etc.

Each color $i$ comes with its \emph{height function} $\H^i(x,y)$, which is defined by setting $\H^{i}(\frac12,\frac12)=0$ and
$$
\H^{i}(x,y+1)-\H^{i}(x,y)=\begin{cases}1, & \text{there is an edge of color }i\text{ at } (x,y+\frac12),\\0,&\text{otherwise,}\end{cases}
$$
$$
\H^{i}(x+1,y)-\H^{i}(x,y)=\begin{cases}-1, & \text{there is an edge of color }i\text{ at } (x+\frac12,y),\\0,&\text{otherwise,}\end{cases}
$$
Note that in order to avoid ambiguities, the height function is defined not at vertices $(x,y)\in\mathbb Z\times \mathbb Z$, but at vertices of the dual grid  $(\mathbb Z+\frac12)\times(\mathbb Z+\frac12)$, which are in natural bijection with faces of the original grid. We also set
$$
 \H^{\ges i}(x,y)=\H^i(x,y)+\H^{i+1}(x,y)+\dots+\H^{N}(x,y).
$$
See Figure \ref{Fig_heights_1} for an illustration. Our definitions imply that
\begin{multline*}
0\le \H^{\ges N}(x,y+1)-\H^{\ges N}(x,y) \le \H^{\ges N-1}(x,y+1)-\H^{\ges N-1}(x,y)\\ \le \dots \le \H^{\ges 1}(x,y+1)-\H^{\ges 1}(x,y)\le \H^{\ges 0}(x,y+1)-\H^{\ges 0}(x,y)=1,
\end{multline*}
 and
\begin{multline*}
 0\ge \H^{\ges N}(x+1,y)-\H^{\ges N}(x,y) \ge \H^{\ges N-1}(x+1,y)-\H^{\ges N-1}(x,y)\\ \ge \dots \ge \H^{\ges 1}(x+1,y)-\H^{\ges 1}(x,y)\ge \H^{\ges 0}(x+1,y)-\H^{\ges 0}(x,y)=-1.
\end{multline*}
Note that specification of the boundary conditions is the same as specification of all the values of the height functions $\H^{\ges i}(x,y)$ for $i=1,2,\dots,N$, and either $x=\frac12$ or $y=\frac12$.

\begin{figure}[t]
\begin{center}
{\scalebox{0.6}{\includegraphics{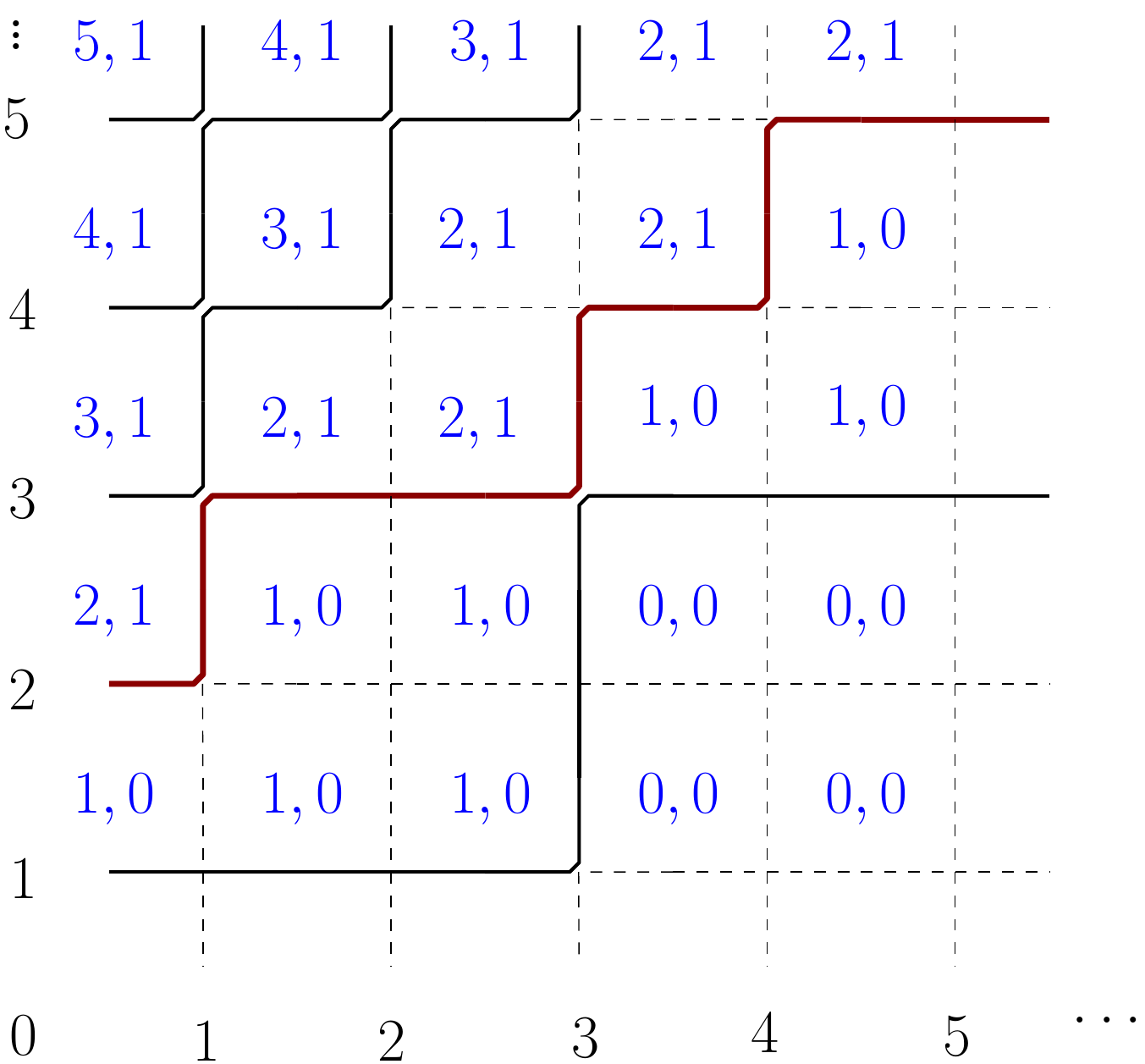}}}
 \caption{One possible configuration of the three-colored ($N=2$, shown with bold red paths, narrow black paths, and absence of paths) model in quadrant. The values of the vector of height functions $(\H^{\ges 1}(x,y),\H^{\ges 2}(x,y))$ are shown.
 \label{Fig_heights_1}}
\end{center}
\end{figure}

\bigskip

We now produce analogues of Theorem \ref{Theorem_SHE_invariance} and Corollary \ref{Corollary_SHE_invariance}. We start by specifying the boundary conditions via the functions $\rho^i_{b}(x,y)$, $i=0,1,\dots,N$, which are discrete derivatives of the height functions $\H^i$ along the boundary of the quadrant. Formally, for $(x,y)=(\frac12, n)_{n\in\mathbb Z_{>0}}$ and $(x,y)=(n,\frac12)_{n\in\mathbb Z_{>0}}$:
$$
 \rho^{i}_b\left(\tfrac12,y\right)=\begin{cases} 1,& \text{there is an incoming horizontal edge of color } i\text{ at }(\tfrac12,y),\\ 0,&\text{otherwise.} \end{cases}
$$
$$
 \rho^{i}_b\left(x,\tfrac12\right)=\begin{cases} 1,& \text{there is an incoming vertical edge of color } i\text{ at }(x,\tfrac12),\\ 0,&\text{otherwise.} \end{cases}
$$
We set $\rho^{\ges i}_b(x,y)=\rho^{i}_b(x,y)+\dots+\rho^{N}_b(x,y)$, so that
\begin{multline*}
 0\le \rho^{\ges N}_b(x,y)\le \rho^{\ges N-1}_b(x,y)\le \dots\le \rho^{\ges 1}_b(x,y)\le \rho^{\ges 0}_b(x,y)=1, \\ (x,y)\in \left\{\left(\tfrac12,n\right), \left(n,\tfrac12\right)\right\}_{n=1,2,\dots}.
\end{multline*}

Fix  $X\in\mathbb Z_{>0}+\frac12$, two indices $1\le i<j \le N$, and two numbers $B_i>B_j\in\mathbb Z+\frac12$. Consider the two-dimensional random vector
$$
 \xi_{\rho^{\ges i}_{b}, \rho^{\ges j}_{b}}(B_i,B_j)=\left(q^{\H^{\ges i}(X,B_i)}, q^{\H^{\ges j}(X,B_j)}\right), \qquad q=\frac{b_2}{b_1}.
$$
For $\Delta\in\mathbb Z$ let $S_\Delta$ denote the operator of shifting the second variable (i.e., $y$) of a function $f(x,y)$ by $\Delta$:
$$
[S_\Delta f](x,y)=f(x,y+\Delta).
$$

\begin{theorem}
\label{Theorem_6v_invariance}
  Fix $\Delta\in\mathbb Z_{\ge 0}$ and $A\in \mathbb Z_{>0}$. Suppose that for some $i<j$, $\rho^{\ges j}_b(x,y)=0$ for $y< A$ and $\rho^{\ges i}_b(x,y)=1$ for $y\ge A-\Delta$, cf.\ Figure \ref{Fig_intersecting_BM}. Then for every $B_i>B_j$ the following two random vectors have the same first and second moments:
  \begin{equation}
  \label{eq_6v_invariance}
   \xi_{\rho^{\ges i}_b, \rho^{\ges j}_b}(B_i,B_j) \stackrel{1,2 \text{ moments}}{=}  \xi_{\rho^{\ges i}_b,S_\Delta \rho^{\ges j}_b}(B_i,B_j-\Delta).
  \end{equation}
  In words, the simultaneous shift in the same direction and by the same $\Delta$ of the boundary condition $\rho^{\ges j}_b$ and the observation point $B^{j}$ does not change the first two moments of the two-dimensional vector.
\end{theorem}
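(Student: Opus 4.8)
The plan is to follow the template of the proof of Theorem~\ref{Theorem_SHE_invariance}. Since both sides of \eqref{eq_6v_invariance} are two-dimensional random vectors and only the first two moments are asserted to agree, it is enough to match the five numbers $\E\bigl[q^{\H^{\ges i}(X,B_i)}\bigr]$, $\E\bigl[q^{\H^{\ges j}(X,B_j)}\bigr]$, $\E\bigl[q^{2\H^{\ges i}(X,B_i)}\bigr]$, $\E\bigl[q^{2\H^{\ges j}(X,B_j)}\bigr]$ and the cross moment $\E\bigl[q^{\H^{\ges i}(X,B_i)}\,q^{\H^{\ges j}(X,B_j)}\bigr]$. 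For any $k$, the marginal law of the random function $\H^{\ges k}$ coincides with that of the height function of the colorless stochastic six-vertex model obtained by declaring colors $<k$ to be empty edges and colors $\ge k$ occupied; in particular it depends on the boundary data only through $\rho^{\ges k}_b$. Hence the four single-coordinate moments are unchanged by the substitution $\rho^{\ges j}_b\mapsto S_\Delta\rho^{\ges j}_b$, $B_j\mapsto B_j-\Delta$: this substitution merely translates the color-$\ge j$ sub-picture by $-\Delta$ in the $y$-direction, and the homogeneous model is translation-invariant; the hypothesis $\rho^{\ges j}_b(x,y)=0$ for $y<A$ ensures that no color-$\ge j$ path enters from the bottom boundary before or after the shift, so the translation makes sense. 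Everything thus reduces to the cross moment.

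For the cross moment I would establish a \emph{four-point relation}, generalizing the colorless identity of Borodin--Gorin~\cite{BG_tele}, of the shape
$$
\E\bigl[q^{\H^{\ges i}(X,B_i)}\,q^{\H^{\ges j}(X,B_j)}\bigr]
=\sum_{z_i,\,z_j} w^{\ges i}(z_i)\,w^{\ges j}(z_j)\cdot \mathcal I\!\left(z_i\!\to\!(X,B_i)\,;\,z_j\!\to\!(X,B_j)\right),
$$
in which $w^{\ges i}(\cdot)$ depends only on $\rho^{\ges i}_b$, $w^{\ges j}(\cdot)$ only on $\rho^{\ges j}_b$, and $\mathcal I(z_i\!\to\!(X,B_i);z_j\!\to\!(X,B_j))$ denotes the expected number of intersections of two independent \emph{persistent random walks} with the common step law built from the weights of Figure~\ref{Fig_6v}, issued from the boundary sites $z_i$ and $z_j$ and conditioned to pass through $(X,B_i)$ and $(X,B_j)$, respectively. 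This is the lattice counterpart of the final line of \eqref{eq_cov1}, with the product of four Gaussian kernels replaced by products of persistent-walk transition weights and the Gaussian initial weights replaced by $w^{\ges i},w^{\ges j}$. I would prove it either via a Markov self-duality of the colored stochastic six-vertex model with a suitable two-point duality functional, or by a direct induction over the vertices of the quadrant, matching the weights of Figure~\ref{Fig_6v} vertex by vertex; here one uses that $\{\text{color}\ge j\}\subset\{\text{color}\ge i\}$, so that the color-$\ge i$ and color-$\ge j$ observables live on two nested colorless models coupled through the common sampling, and that the $q$-exponentials $q^{\H^{\ges i}},q^{\H^{\ges j}}$ are exactly what collapse each height function to a single backward walker.

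Granting such a formula, the substitution $\rho^{\ges j}_b\mapsto S_\Delta\rho^{\ges j}_b$, $B_j\mapsto B_j-\Delta$ leaves $w^{\ges i}$ and the first walk alone and, after the reindexing $z_j\mapsto z_j+\Delta$ (exactly as in the passage from \eqref{eq_cov1} to \eqref{eq_cov2}), replaces $\mathcal I$ by the expected number of intersections of the unchanged first walk with an independent copy of the second walk translated by $(0,-\Delta)$. I would then invoke the shift-invariance of this intersection number, established in Section~\ref{Section_local_times} (Theorem~\ref{Theorem_intersection_RW}), whose hypothesis is precisely that the two persistent walks meet with probability one. That hypothesis is furnished by the support conditions of the theorem: $\rho^{\ges j}_b(x,y)=0$ for $y<A$ together with $\rho^{\ges i}_b(x,y)=1$ for $y\ge A-\Delta$ and the nesting $\{\text{color}\ge j\}\subset\{\text{color}\ge i\}$ force the color-$\ge j$ walk to start strictly ``above'' the color-$\ge i$ walk in a way that persists after the $\Delta$-shift, while $B_i>B_j$ forces it to end strictly ``below'' --- the discrete analogue of $z_i+\Delta<z_j$ and $B_i>B_j$ in the proof of Theorem~\ref{Theorem_SHE_invariance}. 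Hence $\mathcal I$ is unchanged, so the cross moment is unchanged, and \eqref{eq_6v_invariance} follows.

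The main obstacle is the four-point relation itself. One must not only produce an exact identity between the colored cross $q$-moment and a two-walk functional, but also check the feature on which the whole argument hinges: the two walks that appear carry the \emph{same} step law, so that the hypotheses of Theorem~\ref{Theorem_intersection_RW} really apply, and they interact \emph{only} through their starting measures $w^{\ges i},w^{\ges j}$. Keeping track of the two nested colorless models through the sequential sampling, and verifying that the vertex-weight recursion closes on this two-walk quantity rather than on a genuinely higher-body object, is where essentially all of the work lies; the reduction to the cross moment and the walk-intersection shift-invariance input are comparatively soft.
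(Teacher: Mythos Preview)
Your overall strategy is right, and it matches the paper's: reduce to the cross moment, invoke a four-point relation, and then appeal to the persistent-walk intersection invariance of Section~\ref{Section_local_times}. The gap is in the specific form of the four-point relation you posit. The clean formula
\[
\E\bigl[q^{\H^{\ges i}(X,B_i)}\,q^{\H^{\ges j}(X,B_j)}\bigr]
=\sum_{z_i,z_j} w^{\ges i}(z_i)\,w^{\ges j}(z_j)\cdot \mathcal I\bigl(z_i\!\to\!(X,B_i);\,z_j\!\to\!(X,B_j)\bigr)
\]
does not hold for the six-vertex model, and the recursion does \emph{not} close on a two-walk quantity.

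What actually happens is this. The colored four-point relation (Theorem~\ref{Theorem_4_point_colors}) expresses $q^{\H^{\ges k}}$ via a linear recurrence with noise $\xi^{\ges k}$, and the conditional covariance $\E[\xi^{\ges i}\xi^{\ges j}\mid\text{past}]$ is a \emph{quadratic} polynomial in $q^{\H^{\ges i}}$ and $q^{\H^{\ges j}}$ at nearby points (equation~\eqref{eq_4_point_covariance_colors}). Solving the recurrence with the discrete Riemann function $\Rd$ gives the cross-covariance as $\sum_{x,y}\E[\xi^{\ges i}(x,y)\xi^{\ges j}(x,y)]\cdot\Rd\cdot\Rd$, and when you expand the quadratic $\E[\xi^{\ges i}\xi^{\ges j}]$ you get products of expectations---which do reduce to two boundary sums and a four-$\Rd$ intersection term, exactly as you describe---\emph{plus} genuine covariance terms $\mathrm{Cov}(q^{\H^{\ges i}(x',y')},q^{\H^{\ges j}(x'',y'')})$ at interior points. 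In the SHE limit these extra covariances vanish (this is precisely why Theorem~\ref{Theorem_SHE_invariance} is easier), but in the discrete model they do not.

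The paper resolves this by iterating: each leftover covariance is again expanded via the four-point relation, producing at the $k$-th stage a sum of products of $2k+2$ Riemann functions, plus a remainder covariance supported on a strictly smaller domain. The $\Delta$-invariance of each such $(2k+2)$-fold product still follows from Corollary~\ref{Corollary_Rd_sum_shift} (only four of the $\Rd$ factors carry the $\Delta$-dependence; the rest are translation-invariant), and the iteration terminates because the domain shrinks to the boundary, where the covariances are zero. So your intuition that everything comes down to walk-intersection invariance is correct, but the object is a finite tower of nested intersection sums, not a single two-walk functional. Neither duality nor a vertex-by-vertex induction will produce the closed formula you wrote, because that formula is equivalent to the vanishing of the interior covariance terms, which is false away from the $q\to 1$ limit.
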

\begin{remark}
 Let us clarify the meaning of the boundary conditions $S_\Delta\rho^{\ges j}_b$.
 Note that since there are no edges of colors $\geqslant j$ underneath the line $y=A+\Delta$, we can simply shift down by $\Delta$ the function $\rho^{\ges j}$, i.e., all horizontal incoming edges of colors $\geqslant j$. Moreover, we can make this shift without affecting $\rho^{\ges i}(x,y)$, since it equals to $1$ everywhere where any changes happen, and, therefore, no contradiction with inequality $\rho^{\ges i}\ge \rho^{\ges j}$ can occur. There is also no need to change the quadrant $x>0$, $y>0$ in this transformation.
\end{remark}
\begin{remark}
 Since the definition of the stochastic colored six-vertex model is translationally invariant, we can also deal with $\displaystyle \xi_{S_{-\Delta}\rho^{\ges i}_b,\rho^{\ges j}_b}(B_i+\Delta,B_j)$, which is the same as $\xi_{\rho^{\ges i}_b,S_\Delta \rho^{\ges j}_b}(B_i,B_j-\Delta)$ by shifting everything (including the quadrant and the observation points $B_i$, $B_j$) by $\Delta$ in the vertical direction.
 \end{remark}
 \begin{remark}
  In the current form
  \eqref{eq_6v_invariance} is an empty statement at $q=1$. However, subtracting $1$ from all coordinates of the vectors in \eqref{eq_6v_invariance}, dividing by $q-1$ and sending $q\to 1$, we get a non-trivial (and valid) statement at $q=1$. We return to this in Section \ref{Section_6v_degenerations}.
 \end{remark}

We are going to prove Theorem \ref{Theorem_6v_invariance} by finding appropriate extensions of the two main ingredients of the proof of Theorem \ref{Theorem_SHE_invariance}:
\begin{itemize}
\item Expression of the covariance of the height functions of different colors as a $4$--fold integral with the main part of the integrand being the product of four transition probabilities of the Brownian motion.
\item Shift--invariance for the intersection local times for Brownian bridges.
\end{itemize}
Discrete analogues of these two steps are presented in Sections \ref{Section_4_point} and \ref{Section_local_times}, respectively. While for the heat equation, Theorem \ref{Theorem_SHE_invariance} immediately followed, for the discretization given by the colored six-vertex model additional efforts are necessary and we finish the proof of Theorem \ref{Theorem_6v_invariance} in Section \ref{Section_proof_of_inv}.

\bigskip

We can extend Theorem \ref{Theorem_6v_invariance} to a statement involving $N$--dimensional vectors. Fix $X\in\mathbb Z_{>0}+\frac12$, $\Delta=1,2,\dots$, and an index $\iota\in\{1,2,\dots,N\}$. Take two  vectors $\vec{A}\in\mathbb Z^N$, $\vec{B} \in (\mathbb Z+\frac12)^N$, such that
 \begin{equation}
 \label{eq_x16}
  0<A_1<A_2<\dots<A_{\iota-1}<A_{\iota}-\Delta<A_{\iota}<A_{\iota+1}<\dots<A_N,
 \end{equation}
 \begin{equation}
 \label{eq_x17}
  B_1,\dots,B_{\iota-1}> B_{\iota}> B_{\iota}-\Delta > B_{\iota+1},\dots,B_N>0.
 \end{equation}
 (Note that there are no inequalities between $B_1,\dots,B_{\iota-1}$ and $B_{\iota+1},\dots,B_N$.)
Given this data, we construct the initial conditions $\rho^{\ges i}(\frac12,y)=\mathbf 1(y\ge A_i)$, $i=1,\dots,N$, set $\rho^{\ges i}(x,\frac12)=0$, $i=1,\dots,N$, and consider an $N$--dimensional random vector:
$$
 \xi_{\vec A}(\vec B)=(q^{\H^{\ges 1}(X, B_1)},\dots,q^{\H^{\ges N}(X,B_N)}).
$$
\begin{corollary}
\label{Corollary_6v_invariance}
   Take $X\in\mathbb Z_{>0}+\frac12$, $\Delta=0,1,2,\dots$,  an index $\iota\in\{1,2,\dots,N\}$, and two vectors $\vec{A}, \vec{B}$ satisfying \eqref{eq_x16}, \eqref{eq_x17}.
 With the notation $\mathbf{e}_\iota$ for the $\iota$--th coordinate vector in $N$--dimensional space, the following identity of the first two moments holds:
 \begin{equation}
 \label{eq_6v_invariance_vector}
  \xi_{\vec{A}}(\vec{B})\stackrel{1,2 \text{ moments}}{=}
  \xi_{\vec{A}-\Delta \mathbf{e}_\iota}(\vec{B}-\Delta \mathbf{e}_\iota).
 \end{equation}
\end{corollary}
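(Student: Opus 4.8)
The plan is to run the same argument used for Corollary~\ref{Corollary_SHE_invariance}, reducing the $N$-dimensional statement to pairs of coordinates, the only new feature being that $\xi_{\vec A}(\vec B)$ is no longer Gaussian. The case $\Delta=0$ is vacuous, so I take $\Delta\ge 1$. First I would note that the assertion ``$\xi_{\vec A}(\vec B)$ and $\xi_{\vec A-\Delta\mathbf{e}_\iota}(\vec B-\Delta\mathbf{e}_\iota)$ have the same first two moments'' unpacks precisely into the scalar equalities of $\E\,q^{\H^{\ges k}(X,B_k)}$ for $1\le k\le N$ and of $\E\,q^{\H^{\ges k}(X,B_k)}q^{\H^{\ges l}(X,B_l)}$ for $1\le k\le l\le N$ (with $\vec B$ replaced by $\vec B-\Delta\mathbf{e}_\iota$ on the other side). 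Each of these is a functional of the joint law of at most two of the coordinates, so it suffices to show, for every $k\le l$, that the two-dimensional vector $(q^{\H^{\ges k}(X,B_k)},q^{\H^{\ges l}(X,B_l)})$ built from the step boundary $\rho^{\ges m}_b(\tfrac12,y)=\mathbf{1}(y\ge A_m)$ has the same first two moments as the analogous vector in the $\vec A-\Delta\mathbf{e}_\iota$ model, where $A_\iota$ and $B_\iota$ are each lowered by $\Delta$.

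For $\iota\notin\{k,l\}$ this should be immediate. By the colour-merging (Markovian projection) property of colored stochastic vertex models --- the same one that makes the notation $\xi_{\rho^{\ges i}_b,\rho^{\ges j}_b}$ of Theorem~\ref{Theorem_6v_invariance} meaningful --- the joint law of $(\H^{\ges k},\H^{\ges l})$ depends on the model only through $\rho^{\ges k}_b$ and $\rho^{\ges l}_b$. Passing from $\vec A$ to $\vec A-\Delta\mathbf{e}_\iota$ only recolours the left-boundary edges with $y\in[A_\iota-\Delta,A_\iota)$, from colour $\iota-1$ to colour $\iota$, and \eqref{eq_x16} puts both of these colours in the same one of the three merged classes $\{<k\}$, $\{k,\dots,l-1\}$, $\{\ge l\}$; hence $\rho^{\ges k}_b,\rho^{\ges l}_b$ (and $B_k,B_l$) are literally unchanged, and the two vectors have the same law. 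For the pairs that involve $\iota$ I would invoke Theorem~\ref{Theorem_6v_invariance}. If $k<l=\iota$, apply it with indices $(k,\iota)$ in place of $(i,j)$: the hypothesis ``$\rho^{\ges j}_b=0$ for $y<A$, $\rho^{\ges i}_b=1$ for $y\ge A-\Delta$'' holds with $A=A_\iota$ because \eqref{eq_x16} gives $A_k\le A_{\iota-1}<A_\iota-\Delta$, and ``$B_i>B_j$'' reads $B_k>B_\iota$, which is \eqref{eq_x17}; its conclusion is exactly the required matching. If $k=\iota<l$, the shifted coordinate carries the smaller colour index, so I would instead use the translation-invariant reformulation of Theorem~\ref{Theorem_6v_invariance} from the Remark following it --- that $\xi_{\rho^{\ges i}_b,\rho^{\ges j}_b}(B_i,B_j)$ and $\xi_{S_{-\Delta}\rho^{\ges i}_b,\rho^{\ges j}_b}(B_i+\Delta,B_j)$ agree in first two moments --- with $(i,j)=(\iota,l)$, with $\rho^{\ges i}_b,\rho^{\ges j}_b$ the $\vec A-\Delta\mathbf{e}_\iota$ boundary functions $\mathbf{1}(y\ge A_\iota-\Delta)$ and $\mathbf{1}(y\ge A_l)$, and $B_i=B_\iota-\Delta$, $B_j=B_l$; its hypotheses then reduce to $A_\iota\le A_l$ and $B_\iota-\Delta>B_l$, both given by \eqref{eq_x16}--\eqref{eq_x17}. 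The single-coordinate moments at $k=\iota$ are absorbed into the matching of one of these pairs when $N\ge2$ (Theorem~\ref{Theorem_6v_invariance} also matches the one-coordinate moments), and for $N=1$ they follow from vertical translation-invariance of the colourless model with empty bottom boundary, since raising the quadrant, the boundary $\mathbf{1}(y\ge A_\iota-\Delta)$ and the point $B_\iota-\Delta$ by $\Delta$ recovers the $\vec A$ model.

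The step I expect to be the real (and essentially the only) obstacle is the case $k=\iota<l$: there the configuration is the mirror image of the one Theorem~\ref{Theorem_6v_invariance} is designed for --- the shifted colour is the \emph{smaller} one and its observation point lies above the unshifted one --- so one must run that theorem backwards through its translation-invariant reflected form and re-verify that \eqref{eq_x16}--\eqref{eq_x17} still deliver the needed inequalities $A_\iota\le A_l$ and $B_\iota-\Delta>B_l$; in particular no extra spacing hypothesis such as $A_\iota+\Delta\le A_l$ is required, precisely because the theorem is applied in the reverse direction. Everything else is bookkeeping, and the overall shape parallels the proof of Corollary~\ref{Corollary_SHE_invariance}, with ``Gaussian, so reduce to pairs of distributions'' replaced by ``first two moments, so reduce to pairs of first two moments''.
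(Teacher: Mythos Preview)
Your proposal is correct and follows essentially the same approach as the paper, which simply notes that Corollary~\ref{Corollary_6v_invariance} follows from Theorem~\ref{Theorem_6v_invariance} in the same way that Corollary~\ref{Corollary_SHE_invariance} follows from Theorem~\ref{Theorem_SHE_invariance}. You have correctly unpacked the details the paper leaves implicit: the reduction of the first-two-moments claim to pairs of coordinates, the color-merging argument for pairs not involving $\iota$, and the use of the translation-invariant reformulation from the second Remark after Theorem~\ref{Theorem_6v_invariance} for the case $k=\iota<l$.
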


 Similarly to Corollary \ref{Corollary_SHE_invariance} and Theorem \ref{Theorem_SHE_invariance}, Corollary \ref{Corollary_6v_invariance} is a direct consequence of Theorem \ref{Theorem_6v_invariance}, and also more general choices of the boundary conditions are possible in the statement.

 In fact, not only the first two moments, but the full distributions of the vectors in \eqref{eq_6v_invariance_vector} coincide and Theorem \ref{Theorem_6v_intro} is an inhomogeneous version of such a statement. However, checking this is based on a different set of techniques (where the relation to intersection local times of random walks is no longer visible); it will be discussed in Sections \ref{Section_inhom_shift}, \ref{Section_Shift_inhom_proof}.

\subsection{Four point relation for the colored stochastic six-vertex model}
\label{Section_4_point}

The computation of the covariance for the vectors in \eqref{eq_6v_invariance} is based on the following relation generalising \cite[Theorem 3.1]{BG_tele}.

\begin{theorem}
\label{Theorem_4_point_colors}
 Consider the stochastic $(N+1)$--colored six--vertex model (with weights $b_1$, $b_2$, and $q=\frac{b_2}{b_1}$) in the quadrant with arbitrary (possibly, even random) boundary conditions.
 For each $x,y\in \mathbb Z_{\ge 0}+\frac12 $ we have the identities for $j=0,1,\dots,N$:
 \begin{multline}
 \label{eq_4_point_relation_colors}
  q^{\H^{\ges j}(x+1,y+1)}-b_1 \cdot q^{\H^{\ges j}(x,y+1)}
   - b_2 \cdot q^{\H^{\ges j}(x+1,y)}+(b_1+b_2-1) \cdot q^{\H^{\ges j}(x,y)}= \xi^{\ges j}(x+1,y+1),
 \end{multline}
 where the conditional expectation for fields $\xi^{\ges j}$, $j=0,1,\dots,N$, is
 \begin{equation}
 \label{eq_4_point_no_correlation_colors}
 \E\bigl[ \xi^{\ges j}(x+1,y+1) \mid \H^{\ges 1}(u,v), \dots, \H^{\ges N}(u,v), u\le x \text{ or } v\le y  \bigr]=0,
   \end{equation}
 and the conditional covariance is computed for $0\le i\le j\le N$ through
 \begin{multline}
 \label{eq_4_point_covariance_colors}
   \E \bigl[ \xi^{\ges i}(x+1,y+1) \xi^{\ges j} (x+1,y+1) \mid \H^{\ges 1}(u,v),\dots, \H^{\ges N}(u,v), u\le x \text{ or } v\le y  \bigr]\\=
   b_2(1-b_1)\Delta_x^{\ges i} \Delta_y^{\ges j}+b_1(1-b_2) \Delta_y^{\ges i} \Delta_x^{\ges j}
   \\
   - b_1(1-b_1)(1-q) q^{\H^{\ges i}(x,y)} \Delta_y^{\ges j}  + b_1(1-b_2)(1-q) q^{\H^{\ges i}(x,y)} \Delta_x^{\ges j},
 \end{multline}
 with
 $$
  \Delta_x^{\ges i}=q^{\H^{\ges i}(x+1,y)}-q^{\H^{\ges i}(x,y)},\quad \Delta_y^{\ges i}=q^{\H^{\ges i}(x,y+1)}-q^{\H^{\ges i}(x,y)},\quad i=1,\dots, N.
 $$
 \end{theorem}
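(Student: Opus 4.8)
The plan is to reduce the whole statement to a single-vertex computation. Note first that \eqref{eq_4_point_relation_colors} is merely the definition of the field $\xi^{\ges j}$, so what actually has to be proved is the vanishing of the conditional mean~\eqref{eq_4_point_no_correlation_colors} and the covariance formula~\eqref{eq_4_point_covariance_colors}. First, I would work at the single lattice vertex $v_0=(x+\tfrac12,y+\tfrac12)$, whose four surrounding dual-lattice points are $(x,y),(x{+}1,y),(x,y{+}1),(x{+}1,y{+}1)$. Conditioning on $\{\H^{\ges k}(u,v)\colon u\le x\text{ or }v\le y\}$ pins down the colours of the two edges entering $v_0$ from the left and from below, as well as all of $\H^{\ges k}(x,y)$, $\H^{\ges k}(x{+}1,y)$, $\H^{\ges k}(x,y{+}1)$ (this also absorbs any randomness in the boundary conditions); by the sequential up-right sampling rule the only remaining randomness is the choice of the two outgoing edges at $v_0$, drawn with the weights of Figure~\ref{Fig_colored_weights} given the incoming colours, and that choice is exactly what $\H^{\ges k}(x{+}1,y{+}1)$ records. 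Writing $a_k,b_k,c_k,d_k$ for the indicators that, respectively, the bottom-incoming, left-incoming, top-outgoing, right-outgoing edge at $v_0$ carries colour $\ge k$, and $Q_k:=q^{\H^{\ges k}(x,y)}$, colour conservation $a_k+b_k=c_k+d_k$ gives $q^{\H^{\ges k}(x{+}1,y)}=Q_kq^{-a_k}$, $q^{\H^{\ges k}(x,y{+}1)}=Q_kq^{b_k}$, $q^{\H^{\ges k}(x{+}1,y{+}1)}=Q_kq^{d_k-a_k}$, so that $\xi^{\ges k}(x{+}1,y{+}1)=Q_k\bigl(q^{d_k-a_k}-b_1q^{b_k}-b_2q^{-a_k}+b_1+b_2-1\bigr)$ is an affine function of the single Bernoulli variable $d_k$, every other ingredient being frozen by the conditioning.

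Next I would run the case analysis on $(a_k,b_k)\in\{0,1\}^2$, using that at most two colours meet at $v_0$ and that, by the defining convention for the weights (the lower colour plays the role of an empty edge, the higher colour that of an occupied edge), the relevant marginal of the vertex is literally a colourless stochastic six-vertex vertex. When $(a_k,b_k)\in\{(0,0),(1,1)\}$ the indicator $d_k$ is deterministic and $\xi^{\ges k}\equiv 0$; this is exactly where the identity $q=b_2/b_1$ enters, and the same identity makes every spurious term of~\eqref{eq_4_point_covariance_colors} vanish in those configurations. When $(a_k,b_k)=(1,0)$ (resp.\ $(0,1)$) there is a single path of colour $\ge k$ at $v_0$, which exits to the right with probability $1-b_2$ (resp.\ $b_1$); substituting into the displayed affine expression and using $q=b_2/b_1$ gives $\E[\xi^{\ges k}\mid\text{past}]=0$ by a one-line computation. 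Equivalently, since $\H^{\ges k}$ is a colour-merged colourless six-vertex height function, this is the colourless four-point relation of \cite[Theorem~3.1]{BG_tele}, which also delivers the diagonal case $i=j$ of~\eqref{eq_4_point_covariance_colors}.

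Finally, it remains to treat the off-diagonal covariance for $i<j$. Here monotonicity of thresholds gives $a_i\ge a_j$ and $b_i\ge b_j$, and a quick check shows $\xi^{\ges i}\xi^{\ges j}\not\equiv0$ forces $(a_i,b_i)=(a_j,b_j)\in\{(1,0),(0,1)\}$: in every mixed case (e.g.\ $(a_j,b_j)=(1,0)$, $(a_i,b_i)=(1,1)$) one of the two factors vanishes identically. In the two surviving configurations both cutoffs see the same turning path, so $d_i=d_j$ is one and the same Bernoulli variable, whence $\E[\xi^{\ges i}\xi^{\ges j}\mid\text{past}]$ equals the product of the two affine coefficients times that variable's variance, $b_2(1-b_2)$ or $b_1(1-b_1)$. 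Then I would rewrite $\Delta_x^{\ges k}=Q_k(q^{-a_k}-1)$, $\Delta_y^{\ges k}=Q_k(q^{b_k}-1)$, substitute $q=b_2/b_1$, and verify that this product matches the right-hand side of~\eqref{eq_4_point_covariance_colors} term by term, while in all remaining configurations (one or both of $\xi^{\ges i},\xi^{\ges j}$ vanishing) that right-hand side collapses to $0$ as well. I expect this last bookkeeping — matching the specific four-term shape of~\eqref{eq_4_point_covariance_colors} against the handful of non-degenerate cases and confirming the cancellations powered by $q=b_2/b_1$ — to be the only real work; the reduction to a single vertex and the reuse of the colourless relation are routine once the setup is in place.
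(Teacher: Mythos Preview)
Your proposal is correct and follows essentially the same approach as the paper: both reduce to a single-vertex computation at $(x+\tfrac12,y+\tfrac12)$, invoke the colourless four-point relation of \cite[Theorem~3.1]{BG_tele} for the diagonal case, and use that at most two colours meet at a vertex. The paper packages the last point as a reduction to $N=1$ (where the only off-diagonal pair involves $i=0$, for which $\H^{\ges 0}$ is deterministic and $\xi^{\ges 0}\equiv 0$), whereas you carry out the same case analysis explicitly via the indicators $(a_k,b_k)$; the content is identical.
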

\begin{proof}
 The identity is checked by sampling one vertex at $(x+\frac12,y+\frac12)$ with all possible configurations of colors of two incoming edges.

 First, suppose that $N=1$. When $i=j=1$, the statement of the theorem coincides with \cite[Theorem 3.1]{BG_tele} for the colorless six-vertex model. The proof given there involved only the local update rule (vertex weights) at $(x+\frac12,y+\frac12)$. It remains to deal with the cases when either $i$ or $j$ is equal to $0$. Note that since each edge has some color, $\H^0$ is a deterministic function, which grows linearly with slope $1$ in $y$ variable and decreases linearly with slope $-1$ in $x$ variable. Hence, for $j=0$, \eqref{eq_4_point_relation_colors} gives identical $0$ value for $\xi^{\ges 0}$. Therefore, $\xi^{\ges 0}$ satisfies \eqref{eq_4_point_no_correlation_colors} automatically. As for
 \eqref{eq_4_point_covariance_colors}, note that whenever $i=0$, the expression in the right-hand side of  \eqref{eq_4_point_covariance_colors} vanishes (as follows from $b_2 \Delta^{\ges 0}_x= b_1 (1-q) q^{\H^{\ges 0}}$ and $\Delta^{\ges 0}_y=(q-1) q^{\H^{\ges 0}}$) matching the vanishing of the left-hand side.

 The case of general $N>1$ is reduced to $N=1$, since each vertex has edges of at most two colors, which are sampled by the same rule as for $N=1$.
\end{proof}
\begin{remark}
 It is easy to see that there should be \emph{some} formula for the conditional covariance \eqref{eq_4_point_covariance_colors} because each vertex is being sampled from finitely many options. However, we do not have any \emph{a priori} reason for the formula to have the particularly simple quadratic form of \eqref{eq_4_point_covariance_colors}.
\end{remark}

We now recall how difference equations of the sort \eqref{eq_4_point_relation_colors} can be solved.
 Consider the following equation for an unknown function $\Phi(x,y)$, $x,y\in \mathbb Z_{\ge 0}+\frac12$:
\begin{equation}
\label{eq_discrete_PDE}
 \Phi(x+1,y+1)-b_1 \Phi(x,y+1)-b_2\Phi(x+1,y)+(b_1+b_2-1)\Phi(x,y)=u(x+1,y+1)
\end{equation}
with a given right-hand side $u$ and subject to boundary conditions
\begin{equation}
\label{eq_discrete_PDE_boundary}
\Phi\left(x,\tfrac12\right)=\chi(x), \quad  \Phi\left(\tfrac12,y\right)=\psi(y), \quad x,y=\tfrac12,\tfrac32,\tfrac52,\dots,\quad \chi\left(\tfrac12\right)=\psi\left(\tfrac12\right).
\end{equation}
We take $b_1$ and $b_2$ to be arbitrary distinct real numbers satisfying $0<b_1\ne b_2<1$.

Define the discrete Riemann function through
\begin{multline}
\label{eq_Discrete_R}
 \Rd(X,Y; x,y)=\frac{1}{2\pi \ii} \oint_{-\frac{1}{b_2(1-b_1)}} \left(\frac{1+  b_1(1-b_1)z}{1+ b_2(1-b_1) z}
  \right)^{X-x} \left( \frac{1+b_2 (1-b_2)z}{1+b_1(1-b_2)z} \right)^{Y-y}\\
  \times  \frac{ (b_2-b_1)\, dz}{(1+ b_2(1-b_1) z)(1+ b_1(1-b_2) z)}, \quad X\ge x, Y\ge y,
\end{multline}
where the integration goes in the positive direction and encircles $-\frac{1}{b_2(1-b_1)}$, but not $-\frac{1}{b_1(1-b_2)}$. Note that we can also integrate in the negative
direction around ${-\frac{1}{b_1(1-b_2)}}$ for the same result. For convenience, we also set $R(X,Y;x,y)=0$ whenever $X<x$ or $Y<y$.

\begin{theorem}\cite[Theorem 4.7]{BG_tele} \label{Theorem_discrete_PDE_through_integrals} The unique solution to \eqref{eq_discrete_PDE}, \eqref{eq_discrete_PDE_boundary} has the form
\begin{multline}
\label{eq_discrete_PDE_solution}
 \Phi(X,Y)=\chi\left(\tfrac12\right) \Rd\left(X,Y;\tfrac12,\tfrac12\right)+\sum_{y=\frac32}^Y \Rd\left(X,Y;\tfrac 12,y\right) \bigl(\psi(y)-b_2 \psi(y-1)\bigr)\\+\sum_{x=\tfrac32}^X \Rd\left(X,Y;x,\tfrac12\right) \bigl(\chi(x)-b_1 \chi(x-1)\bigr)
  +\sum_{x=\tfrac32}^X \sum_{y=\tfrac32}^Y \Rd(X,Y;x,y) u(x,y),
\end{multline}
where all summations run over the lattices of mesh $1$.
\end{theorem}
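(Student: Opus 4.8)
The claim is a discrete version of Riemann's method for a hyperbolic equation, and that is the route I would take. \emph{Uniqueness} is immediate: the coefficient of $\Phi(x+1,y+1)$ in \eqref{eq_discrete_PDE} is $1$, so the equation expresses $\Phi(x+1,y+1)$ through $\Phi(x,y+1)$, $\Phi(x+1,y)$, $\Phi(x,y)$ and $u(x+1,y+1)$; marching up-right from the two axes, the Goursat data $\chi,\psi$ determine $\Phi$ everywhere. It therefore suffices to check that the right-hand side of \eqref{eq_discrete_PDE_solution}, call it $\Phi_{\mathrm{RHS}}$, obeys \eqref{eq_discrete_PDE} and \eqref{eq_discrete_PDE_boundary}.

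The verification turns on two facts about $\Rd$. Writing the power part of the integrand in \eqref{eq_Discrete_R} as $\alpha^{X-x}\beta^{Y-y}$ with $\alpha=\frac{1+b_1(1-b_1)z}{1+b_2(1-b_1)z}$, $\beta=\frac{1+b_2(1-b_2)z}{1+b_1(1-b_2)z}$, a direct expansion yields the algebraic identity $\alpha\beta-b_1\beta-b_2\alpha+(b_1+b_2-1)\equiv 0$ (every coefficient in $z$, including the constant, cancels). Since applying the operator of \eqref{eq_discrete_PDE} in the variables $(X,Y)$ to $\alpha^{X-x}\beta^{Y-y}$ produces exactly $\alpha^{X-x}\beta^{Y-y}\bigl[\alpha\beta-b_1\beta-b_2\alpha+(b_1+b_2-1)\bigr]$ whenever $X\ge x$, $Y\ge y$, and since this vanishes pointwise in $z$ under the contour integral, $\Rd(X,Y;x,y)$ is annihilated by that operator in $(X,Y)$ for all $X\ge x$, $Y\ge y$. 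Second, a short residue computation at $z=-\tfrac1{b_2(1-b_1)}$ — pulling out $\alpha-b_1=\frac{(1-b_1)\left(1+b_1(1-b_2)z\right)}{1+b_2(1-b_1)z}$ and $\beta-b_2=\frac{(1-b_2)\left(1+b_2(1-b_1)z\right)}{1+b_1(1-b_2)z}$ — gives the ``characteristic'' values $\Rd(X,Y;X,y)=b_2^{\,Y-y}$ and $\Rd(X,Y;x,Y)=b_1^{\,X-x}$ for $x\le X$, $y\le Y$; in particular $\Rd(X,Y;X,Y)=1$. (The same computation shows $\Rd$ is annihilated by the transposed operator in $(x,y)$, the other half of the usual Riemann-function package; only the $(X,Y)$ statement will be used.)

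Next I would apply the operator of \eqref{eq_discrete_PDE} in $(X,Y)$ to each of the four summands of $\Phi_{\mathrm{RHS}}$. The corner term $\chi\!\left(\tfrac12\right)\Rd\!\left(X,Y;\tfrac12,\tfrac12\right)$ is killed because $\Rd(\cdot,\cdot;\tfrac12,\tfrac12)$ is annihilated for $X,Y\ge\tfrac12$. For the double sum $S=\sum_{x=3/2}^{X}\sum_{y=3/2}^{Y}\Rd(X,Y;x,y)u(x,y)$ one splits off the strips $x=X+1$ and $y=Y+1$ that enter after the shift: on the interior $x\le X$, $y\le Y$ all four shifted Riemann functions lie in the annihilation region and drop out, while the strip terms, controlled by the characteristic values together with $b_1\cdot b_1^{\,X-x}=b_1^{\,X+1-x}$ and $b_2\cdot b_2^{\,Y-y}=b_2^{\,Y+1-y}$, telescope down to $u(X+1,Y+1)$. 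The identical moving-limit analysis of $T_\chi=\sum_{x=3/2}^{X}\Rd(X,Y;x,\tfrac12)\bigl(\chi(x)-b_1\chi(x-1)\bigr)$ and $T_\psi=\sum_{y=3/2}^{Y}\Rd(X,Y;\tfrac12,y)\bigl(\psi(y)-b_2\psi(y-1)\bigr)$ yields $0$: the interior sums drop, and the boundary terms newly created by the shift carry the weights $b_2^{\,Y+1/2}-b_2\cdot b_2^{\,Y-1/2}=0$, resp.\ $b_1^{\,X+1/2}-b_1\cdot b_1^{\,X-1/2}=0$. So the operator sends $\Phi_{\mathrm{RHS}}$ to $u(X+1,Y+1)$. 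For the Goursat data, setting $Y=\tfrac12$ empties the double sum and $T_\psi$, and $\Rd\!\left(X,\tfrac12;x,\tfrac12\right)=b_1^{\,X-x}$ makes $\chi\!\left(\tfrac12\right)b_1^{\,X-1/2}+\sum_{x=3/2}^{X}b_1^{\,X-x}\bigl(\chi(x)-b_1\chi(x-1)\bigr)$ telescope to $\chi(X)$; the case $X=\tfrac12$ is symmetric, with $\Rd\!\left(\tfrac12,Y;\tfrac12,y\right)=b_2^{\,Y-y}$.

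The nontrivial conceptual inputs are the identity $\alpha\beta-b_1\beta-b_2\alpha+(b_1+b_2-1)\equiv0$ and the clean characteristic values of $\Rd$ — these are exactly what the integrand in \eqref{eq_Discrete_R} is engineered to deliver. The part most prone to error, which I expect to be the main obstacle, is the bookkeeping in the previous paragraph: since the summation ranges of $S$, $T_\chi$, $T_\psi$ move with $(X,Y)$, one must keep precise track of which strip and boundary terms occur in which of the four shifted copies of each sum and with what sign, and verify that everything other than the single term $u(X+1,Y+1)$ cancels. An equivalent and perhaps cleaner packaging is a single two-dimensional discrete Green (summation-by-parts) identity over the rectangle $\left[\tfrac12,X\right]\times\left[\tfrac12,Y\right]$, from which \eqref{eq_discrete_PDE_solution} can be read off directly; this is the content of \cite[Theorem 4.7]{BG_tele}. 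Throughout, the hypothesis $0<b_1\ne b_2<1$ is used only to make the two poles in \eqref{eq_Discrete_R} simple and distinct, so that $\Rd$ is well defined.
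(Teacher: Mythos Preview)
Your verification is correct and complete: the identity $(\alpha-b_1)(\beta-b_2)=(1-b_1)(1-b_2)$, which you use in the form $\alpha\beta-b_1\beta-b_2\alpha+(b_1+b_2-1)=0$, together with the boundary evaluations $\Rd(X,Y;X,y)=b_2^{\,Y-y}$ and $\Rd(X,Y;x,Y)=b_1^{\,X-x}$, is exactly what makes the strip bookkeeping telescope, and your split into interior, column $x=X+1$, and row $y=Y+1$ is the clean way to organise it. The only point to note is that the present paper does not actually prove this theorem --- it is quoted verbatim from \cite[Theorem~4.7]{BG_tele} and used as a black box --- so there is no ``paper's own proof'' to compare against; your direct Riemann-method verification is the natural argument and matches what one finds in the cited reference.
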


\subsection{Intersection local times for persistent random walks}
\label{Section_local_times}

The second ingredient of the proof of Theorem \ref{Theorem_6v_invariance} is  a result on shift-invariance of the distribution of intersection local times for persistent random walks. We could not locate the result in the literature and present it here.

We deal with a pair of independent random walks. The trajectory of each walker consists of straight segments and turns, its weight is the product of weights of elementary steps. The latter depend on two parameters $0<b_1<1$, $0<b_2<1$ and are given in Figure \ref{Fig_RW_weights}.

\begin{figure}[t]
\begin{center}
{\scalebox{1.2}{\includegraphics{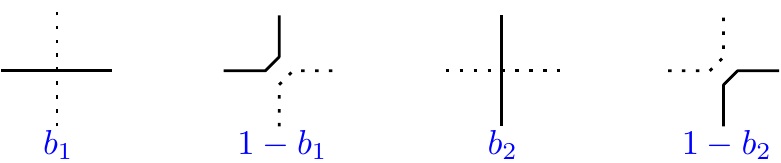}}}
 \caption{Weights for independent random walks
 \label{Fig_RW_weights}}
\end{center}
\end{figure}

We would like to count the number of intersections for two random walks. By an intersection we mean two paths passing through the same vertex. Since for each path there are four types of passages through a vertex (as in Figure \ref{Fig_RW_weights}), there are $4\cdot 4=16$ types of intersections --- at some of them the paths intersect transversally, at others they just touch each other, or they can even share the same edges of the lattice. We denote one of these 16 types by $\varpi$.

We consider a rectangle $S(A,B)$ drawn on the square grid with lower-left vertex $A$ and upper-right vertex $B$. Our fist random path $\pi_1$ is a random walk, which starts by entering horizontally from the left into vertex $A$ and travels inside the rectangle until exiting horizontally through the vertex $B$ to the right (this is a discrete version of the Brownian bridge of Section \ref{Section_SHE_proof}). The probability of a trajectory of $\pi_1$ is proportional to the product of the weights of Figure \ref{Fig_RW_weights} corresponding to its steps.

In addition, we take a point $C$ on the bottom side of $S(A,B)$ and a point $D$ on the top side of $S(A,B)$, such that $D$ is to the right (and up) from $C$. We consider the rectangle $S(C,D)$, and the second random path $\pi_2$ starts by entering $S(C,D)$ vertically through $C$ and ends by exiting $S(C,D)$ vertically through $D$. We refer to Figure \ref{Fig_RW_intersection} for an illustration  of $\pi_1$ and $\pi_2$.

For $\varpi$ being one of the 16 types of intersections, let $\In^{\varpi}(A,B;C,D)$ be the random variable, which counts the total number of intersections of $\pi_1$ and $\pi_2$ of type $\varpi$, cf.\ Figure \ref{Fig_RW_intersection}.

\begin{figure}[t]
\begin{center}
{\scalebox{1.2}{\includegraphics{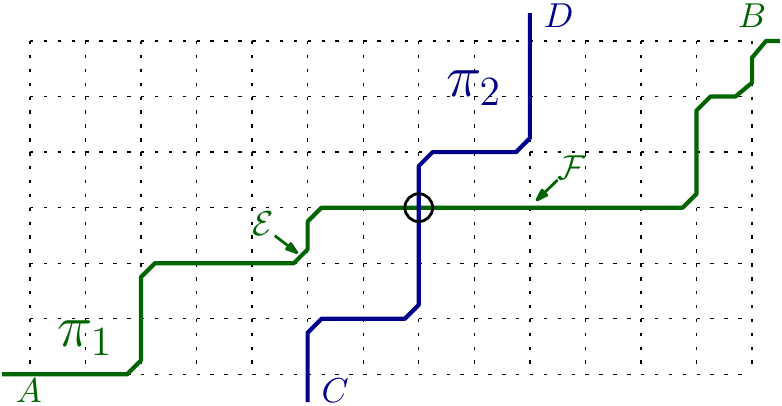}}}
 \caption{Intersections of two paths
 \label{Fig_RW_intersection}}
\end{center}
\end{figure}

\begin{theorem}
\label{Theorem_intersection_RW}
 Fix a type of intersection $\varpi$ and a rectangle $S(A,B)$. The distribution of $\In^{\varpi}(A,B;C,D)$ does not change when we shift $C,D$ simultaneously and by the same amount in $(1,0)$ direction.
\end{theorem}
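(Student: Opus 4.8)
\emph{Plan.} It is enough to prove the claim for a shift by one unit step, $C\mapsto C+(1,0)$ and $D\mapsto D+(1,0)$: a general shift is a composition of such, and equality in distribution is transitive. So fix the data, assuming that $C+(1,0)$ still lies on the bottom side of $S(A,B)$ and $D+(1,0)$ on its top side. The plan is to build a weight-preserving bijection between pairs of trajectories,
\[
(\pi_1,\pi_2)\ \longmapsto\ (\pi_1^\flat,\pi_2^\flat),
\]
where $\pi_1$ and $\pi_1^\flat$ run from $A$ to $B$ (with horizontal entry and exit), $\pi_2$ runs from $C$ to $D$ and $\pi_2^\flat$ from $C+(1,0)$ to $D+(1,0)$ (with vertical entry and exit), such that the whole vector $\bigl(\In^\varpi(\pi_1,\pi_2)\bigr)_\varpi$ of the sixteen intersection counts is unchanged. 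Since the map is then a weight-preserving bijection between the two full families of trajectory pairs which preserves each $\In^\varpi$, the weighted number of pairs with $\In^\varpi=m$ is the same on both sides; dividing by the common total weight gives $\P\bigl(\In^\varpi(A,B;C,D)=m\bigr)=\P\bigl(\In^\varpi(A,B;C+(1,0),D+(1,0))=m\bigr)$ for every $\varpi$ and $m$. One should not expect to take $\pi_1^\flat=\pi_1$: inspecting small configurations shows that the law of $\In^\varpi$ conditioned on $\pi_1$ can change under the shift --- for instance, $\pi_1$ may turn at a vertex that some path $\pi_2$ from $C$ passes through but that no path from $C+(1,0)$ can reach --- so both trajectories must be modified.

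\emph{Role of the hypotheses.} Because $\pi_1$ is a monotone path joining the left and right sides of $S(A,B)$ while $\pi_2$ joins its bottom and top sides, the two trajectories necessarily share at least one vertex, for purely topological reasons: $\pi_1$ separates the $A$-corner region of the rectangle from the $B$-corner region, $C$ lies on or below $\pi_1$, and $D$ lies on or above it. This is the discrete counterpart of the fact, used in the Brownian statement, that the difference of the two Brownian bridges is forced to cross level $0$ --- which is exactly what makes a shift of the level harmless there. The bijection will be organised around the \emph{crossing locus} of the pair: the staircase-shaped collection of vertices and edges common to $\pi_1$ and $\pi_2$, together with the turning vertices adjacent to it. The forced-crossing hypothesis guarantees that this locus is nonempty and leaves enough room for the surgery below, and it is what makes that surgery reversible.

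\emph{The surgery.} A rigid translation of $\pi_2$ by $(1,0)$ gives a monotone path from $C+(1,0)$ to $D+(1,0)$ of the same weight, but it destroys the intersection pattern with $\pi_1$. Instead one absorbs the translation into a \emph{local} rerouting of both paths near the crossing locus: schematically, one exchanges a suitable initial (or terminal) portion of the translated $\pi_2$ with a portion of $\pi_1$ along a stretch where the two run side by side, and reconnects the pieces so that the resulting trajectories are again monotone and have the prescribed endpoints and boundary orientations. The relevant bookkeeping is that the weight of a single trajectory depends only on its numbers of straight steps and of turns of each kind, so that the weight of a \emph{pair} depends only on the \emph{total} number of turns it contains; the reconnection is designed to redistribute turns between $\pi_1^\flat$ and $\pi_2^\flat$ without changing this total, and to move each shared vertex of the crossing locus by one unit in a way that sends a type-$\varpi$ intersection to a type-$\varpi$ intersection at a neighbouring vertex. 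Read off from the weights of Figure \ref{Fig_RW_weights}, the reconnection rule is an involution-type map, hence automatically invertible.

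\emph{Main obstacle.} The crux is to pin down the reconnection rule so that it is at once (a) well defined and invertible --- exactly where the forced-crossing hypothesis is used --- (b) neutral on the total number of turns of the pair, hence weight-preserving, and (c) compatible with all sixteen intersection types simultaneously. Point (c) is a finite but delicate local verification: one checks, vertex by vertex along the crossing locus and for each of the $4\times4$ ways the two paths can pass through a vertex, that the rerouting merely transports intersections of a fixed type to neighbouring vertices and neither creates nor destroys any. A more computational alternative would be to evaluate the joint generating function $\E\bigl[\prod_\varpi z_\varpi^{\In^\varpi}\bigr]$ in closed form --- in the spirit of the explicit Brownian-bridge local-time density recalled in the introduction --- and to observe that the horizontal coordinate of $C$ does not enter it; but producing such a formula seems to cost at least as much as constructing the bijection.
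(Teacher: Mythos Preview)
Your proposal is a plan, not a proof --- and you say as much in the ``Main obstacle'' paragraph. The observation that the weight of a pair $(\pi_1,\pi_2)$ with the given endpoints and boundary orientations depends only on the total number of turns $k_1+k_2$ is correct and pleasant, so weight-preservation is indeed a single integer constraint. But the bijection itself is never defined: ``exchange a suitable initial portion'' and ``involution-type map read off from the weights'' are placeholders, and requirement (c) --- that all sixteen intersection counts be preserved simultaneously --- is the whole difficulty. You offer no candidate rule to subject to the ``finite but delicate local verification''; in small examples one can match the distributions by ad hoc permutations of the $\pi_1$'s, but no uniform surgery suggests itself, and nothing in your write-up gives evidence that one exists.

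The paper's argument is completely different and avoids bijections. It is an induction on the area of $S(A,B)$: let $\mathcal E,\mathcal F$ be the (random) entry and exit points of $\pi_1$ on the left and right sides of $S(C,D)$, and write
\[
\P\bigl(\In^\varpi(A,B;C,D)=k\bigr)
=\sum_{E,F}\frac{w(-A\to-E)\,w(-E\to F+)\,w(F+\to B+)}{w(-A\to B+)}\;\P\bigl(\tilde\In^\varpi(C,D;E,F)=k\bigr),
\]
where $\tilde\In^\varpi(C,D;E,F)$ counts intersections of a vertical walk $C\to D$ with a horizontal walk $E\to F$. Reflected across the diagonal (swapping $b_1\leftrightarrow b_2$), this is an instance of the same theorem on the strictly smaller rectangle $S(C,D)$, so by the inductive hypothesis it depends on $E,F$ only through $r=y(F)-y(E)$. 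Grouping by $r$, the remaining shift-dependence sits in $\sum_{y(F)-y(E)=r}w(-A\to-E)\,w(F+\to B+)$; translating the $F\to B$ segment to start where the $A\to E$ segment ends turns this sum into the total weight of paths from $A$ to a single point depending only on $x(D)-x(C)$, which is manifestly shift-invariant. The idea you are missing is thus this inductive role-swap: condition on where one path crosses the other's rectangle, recognise the conditional problem as a smaller reflected instance, and reduce the residual shift-dependence to an elementary concatenation of path weights.
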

\begin{proof}
 We argue by induction in the area of $S(A,B)$. If $A=B$, then necessarily also $C=D=A$ and the statement is empty.

 For the general case, let $x(A), y(A)$ denote the coordinates of $A$, and similarly for the points $B$, $C$, and $D$. We want to show that the dependence of $\In^{\varpi}$ on $C$ and $D$ is only through the difference $x(D)-x(C)$. This difference can take the values $0,1,\dots, x(B)-x(A)$. If the value is maximal, then necessarily $A=C$, $B=D$, and the statement of the theorem is empty. Hence, is suffices to consider the case when $x(B)-x(A)>x(D)-x(C)$. This implies that the area of $S(A,B)$ is larger than the area of $S(C,D)$, which will allow us to use the induction hypothesis.

 Let $\mathcal E$ denote the random point where the path $\pi_1$ enters into the rectangle $S(C,D)$. Formally, $\mathcal E$ is the unique point with $x(\mathcal E)=x(C)$ such that the edge $(\mathcal E-(1,0), \mathcal E)$ belongs to $\pi_1$. Similarly, let $\mathcal F$ denote the random point where the path $\pi_1$ leaves the rectangle $S(C,D)$. Note that all the intersections of $\pi_1$ and $\pi_2$ necessarily happen inside $S(\mathcal E,\mathcal F)$. Therefore, we can write
 \begin{multline}
 \label{eq_path_decomposition}
  \mathrm{Prob} (\In^{\varpi}(A,B;C,D)=k)\\ =\sum_{E,F} \frac{w(-A\to -E) w(-E\to F+)   w(F+\to B+)}{w(-A\to B+)} \cdot \mathrm{Prob} (\tilde \In^{\varpi} (C,D; E,F)=k),
 \end{multline}
 where the summation goes over $E$ and $F$ representing all possible values for $\mathcal E$ and $\mathcal F$, respectively,   $w(-A\to -E)$ is the total weight of paths starting by $(A-(1,0),A)$ and ending by $(E-(1,0),E)$ (i.e., product of weights of involved vertices, including the one at $A$, but not the one at $E$), $w(-E\to F+)$ is the total weight of paths starting by $(E-(1,0),E)$ and ending by $(F,F+(1,0))$ (including vertices at both $E$ and $F$), $w(F+\to B+)$ is the total weight of paths starting by $(F,F+(1,0))$ and ending by $(B,B+(1,0))$ (including $B$, but not $F$), and $w(-A\to B+)$  is the total weight of paths starting by $(A-(1,0),A)$ and ending by $(B,B+(1,0))$ (including both $A$ and $B$). Finally, $\tilde \In^{\varpi} (C,D; E,F)$ counts the number of intersection points of a random walk from $C$ to $D$ with another random walk from $E$ to $F$.

 Note that $\tilde \In^{\varpi}$ is a random variable of the same type as $\In^{\varpi}$ but for domains and paths reflected with respect to the $x=y$ axis (which, in particular, interchanges the probabilities $b_1$ and $b_2$). Hence, we can use the induction hypothesis and conclude that $\mathrm{Prob} (\tilde \In^{\varpi} (C,D; E,F)=k)$ depends on $E$ and $F$ only through the difference $y(F)-y(E)$.

 We further perform the summation in \eqref{eq_path_decomposition} in two steps: first summing over $E$ and $F$ such that $y(F)-y(E)=r$ and then summing over all possible choices of $r$. Hence, we write
 \begin{equation}
 \label{eq_path_decomposition_2}
  \mathrm{Prob} (\In^{\varpi}(A,B;C,D)=k) =\sum_{r=0}^{y(B)-y(A)} \mathrm{Prob}(y(E)-y(F)=r) \mathrm{Prob} (\tilde \In^{\varpi} (C,D; E,F)=k),
 \end{equation}
 where as $E$ and $F$ in the last sum we can choose any two points on the vertical sides of $S(C,D)$ satisfying $y(E)-y(F)=r$. We are now ready prove the invariance with respect to the simultaneous horizontal shifts of $C$ and $D$. Note that the second factor in the sum \eqref{eq_path_decomposition_2} is invariant by its definition, and therefore, we only need to prove the invariance of the first factor. Following \eqref{eq_path_decomposition}, this factor is
 \begin{equation}
  \sum_{\begin{smallmatrix}E,F:\\ y(F)-y(E)=r\end{smallmatrix}} \frac{w(-A\to -E) w(-E\to F+)   w(F+\to B+)}{w(-A\to B+)}.
 \end{equation}
By definitions, the factors $w(-E\to F+)$ and $w(-A\to B+)$ are shift-invariant. Hence, it remains to prove the shift invariance of
 \begin{equation}
 \label{eq_sum_inv}
  \sum_{\begin{smallmatrix}E,F:\\ y(F)-y(E)=r\end{smallmatrix}} w(-A\to -E)   w(F+\to B+).
 \end{equation}
By translating the path going from $F$ to $B$ so that the edge $(F,F+(1,0))$ becomes $(E-(1,0),E)$, we see that \eqref{eq_sum_inv} is the total weight of paths staring by $(A-(1,0),A)$ and ending by $(\hat B,\hat B+(1,0))$, where $\hat B=B-(x(D)-x(C)-1 ,r)$. This weight depends on $C$ and $D$ only through $x(D)-x(C)$, as desired.
\end{proof}

The following extension of Theorem \ref{Theorem_intersection_RW} is obtained by following exactly the same argument, and we omit the proof. See Figure \ref{Fig_RW_intersection_2} for some of the cases covered by the extension.

\begin{theorem} \label{Theorem_intersection_RW_extended}
  In the setting of Theorem \ref{Theorem_intersection_RW}, we can allow the direction of the incoming path at $A$ to be vertical and/or the direction of the outgoing path at $B$ to be vertical.  Moreover, the point $C$ may be below the bottom border of $S(A,B)$, and in this case, the incoming path at $C$ is allowed to have horizontal direction. Similarly, the point $D$ may be above the top border of $S(A,B)$ and in this case, the outgoing path at $D$ is allowed to have horizontal direction.

  In all these cases, the shift invariance will hold, provided that the paths $\pi_1$ and $\pi_2$ still almost surely intersect both before and after the shift.
\end{theorem}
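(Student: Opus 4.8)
The plan is to deduce the extended statement from Theorem~\ref{Theorem_intersection_RW} by conditioning away the pieces of the two paths that the enlarged boundary data places outside $S(A,B)$, using the translation invariance of the weights in Figure~\ref{Fig_RW_weights} to absorb the shift. Take first the configuration in which $C$ lies below the bottom side of $S(A,B)$ and $D$ above its top side. Let $\mathcal G$ (resp.\ $\mathcal H$) be the a.s.\ unique vertex at which the monotone path $\pi_2$ crosses the horizontal line through $A$ (resp.\ through $B$). The parts of $\pi_2$ below $\mathcal G$ and above $\mathcal H$ cannot meet $\pi_1$, so $\In^\varpi(A,B;C,D)$ is a functional of $\pi_1$ and of the middle segment $\pi_2|_{[\mathcal G,\mathcal H]}$ alone, and, conditionally on $(\mathcal G,\mathcal H)$, the middle segment is exactly a persistent-random-walk bridge from $\mathcal G$ to $\mathcal H$ with $\mathcal G$ on the bottom side and $\mathcal H$ on the top side of $S(A,B)$ (using monotonicity and the standing a.s.-intersection hypothesis to see that $x(A)\le x(\mathcal G)\le x(\mathcal H)\le x(B)$). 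Thus conditionally on $(\mathcal G,\mathcal H)$ the law of $\In^\varpi$ is the one covered by Theorem~\ref{Theorem_intersection_RW}, which tells us that it depends on $(\mathcal G,\mathcal H)$ only through the difference $x(\mathcal H)-x(\mathcal G)$. Unconditioning, $\mathrm{Prob}(\In^\varpi=k)$ is a function of the law of $x(\mathcal H)-x(\mathcal G)$; but this is the horizontal increment of $\pi_2$ across the height band of $S(A,B)$, so shifting $(C,D)$ by $(1,0)$ replaces $\pi_2$ by a translate and leaves this law unchanged. This settles the case.

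The remaining enlarged configurations reduce to the one just treated by the same device. A vertical incoming direction at $A$, or a vertical outgoing direction at $B$, changes only the vertex weight at $A$, resp.\ $B$, which belongs to the data held fixed under the shift; it can be carried along without modification. If $\pi_2$ (or, after reflection across $x=y$, $\pi_1$) also starts or ends to the side of $S(A,B)$ rather than merely below or above it, apply the crossing-point conditioning additionally at the vertical sides of $S(A,B)$; this peels off further weight factors that do not see the other path and leaves a sub-configuration in which both marked points of each path sit on the boundary of $S(A,B)$, which is again in the scope of Theorem~\ref{Theorem_intersection_RW} (or of the case above). Throughout, the intersection type $\varpi$ is simply inherited by the unique sub-segment of each path that can realize an intersection of that type, so no new combinatorics of $\varpi$ arises.

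The steps that require genuine care---and which I expect to be the main obstacle---are verifying, in each of the reductions above, that the sub-configuration produced by the conditioning really does satisfy the hypotheses of Theorem~\ref{Theorem_intersection_RW} (correct sides for the conditioned endpoints; $\pi_1$ and the reduced $\pi_2$ still intersecting almost surely, which is where the standing hypothesis is used), and organizing the several boundary cases so that every one of them is reached. An alternative that avoids this case analysis is to rerun the induction in the area of $S(A,B)$ from the proof of Theorem~\ref{Theorem_intersection_RW} verbatim, now taking the entire enlarged family of configurations as the ambient class: the path decomposition \eqref{eq_path_decomposition}, the reflection across the line $x=y$ that interchanges $b_1$ and $b_2$, and the translation argument for the weight sum \eqref{eq_sum_inv} all go through without change, and the a.s.-intersection hypothesis again guarantees that the area parameter strictly decreases at each step.
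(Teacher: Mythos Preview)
Your proposal is correct. The alternative you sketch in the final paragraph—rerunning the area induction from the proof of Theorem~\ref{Theorem_intersection_RW} over the enlarged family of boundary configurations—is precisely the route the paper takes (the paper simply says the extension ``is obtained by following exactly the same argument'' and omits the details).

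Your first approach, conditioning on the crossing points $\mathcal G,\mathcal H$ to reduce to Theorem~\ref{Theorem_intersection_RW}, is a legitimate alternative for the case where $C$ lies below and $D$ lies above $S(A,B)$, and your use of the a.s.-intersection hypothesis to pin down $x(A)\le x(\mathcal G)\le x(\mathcal H)\le x(B)$ is correct (indeed, a.s.\ intersection forces $x(A)\le x(C)$ and $x(D)\le x(B)$, since otherwise an extremal $\pi_2$ misses $S(A,B)$ entirely). However, as you yourself flag, layering this reduction on top of the other boundary modifications (vertical entry/exit at $A$ or $B$) becomes awkward: the conditioning step still outputs a configuration that is not literally covered by Theorem~\ref{Theorem_intersection_RW} as stated, so you end up needing sub-cases of the very theorem you are proving. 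The direct induction avoids this circularity because none of the steps in the proof of Theorem~\ref{Theorem_intersection_RW}—the path decomposition \eqref{eq_path_decomposition}, the reflection across $x=y$, the translation argument for \eqref{eq_sum_inv}—make any use of the specific entry/exit directions at $A,B,C,D$; they go through verbatim for the enlarged class. That uniformity is what the paper's one-line proof is relying on.
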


\begin{figure}[t]
\begin{center}
{\scalebox{0.8}{\includegraphics{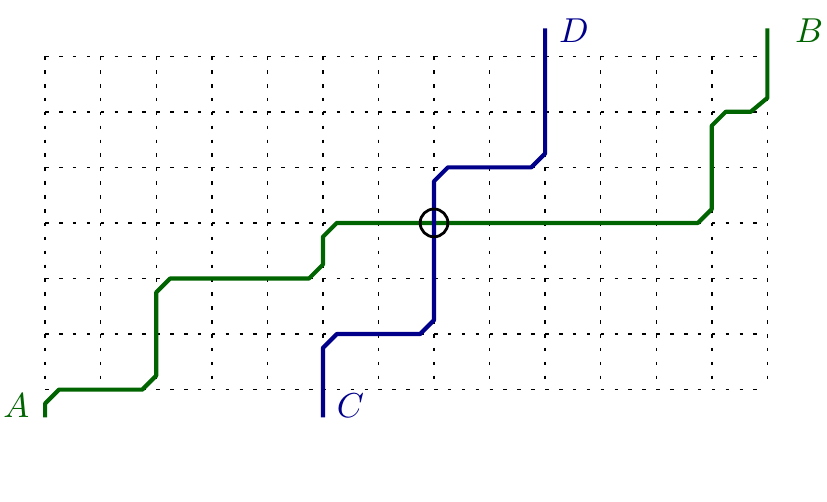}}}\qquad {\scalebox{0.8}{\includegraphics{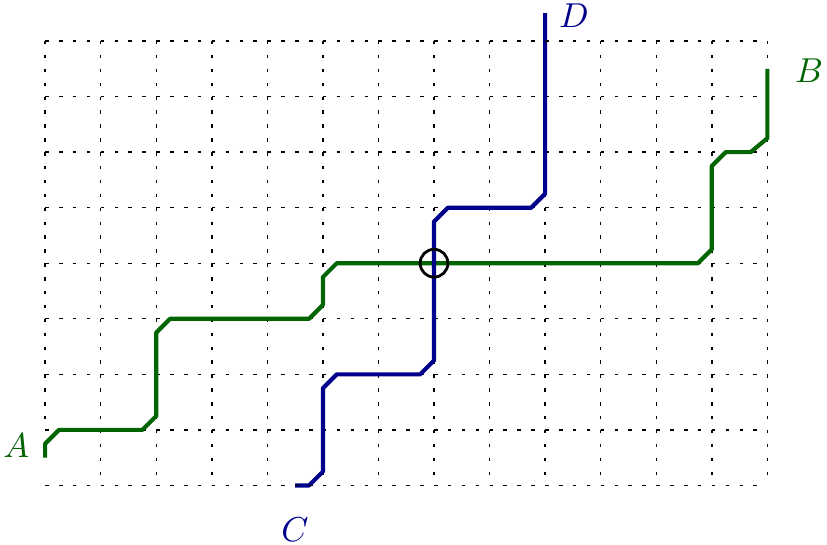}}}
 \caption{Two sample cases for Theorem \ref{Theorem_intersection_RW_extended}
 \label{Fig_RW_intersection_2}}
\end{center}
\end{figure}

Let us formulate an analytic corollary of the probabilistic statement of Theorem \ref{Theorem_intersection_RW_extended}. For two points $A,B\in\mathbb Z^2$, a monotone lattice path from $A$ to $B$ is a sequence of points $x_1,x_2,\dots,x_N$ with $x_1=A$, $x_N=B$ and each increment $x_{i+1}-x_i$ being either $(1,0)$ or $(0,1)$. In particular, in order for such a path to exist, $B-A$ must be a vector with non-negative coordinates and $N-1$ must be equal to the sum of these coordinates.

\begin{definition}
 We say that two pairs of points $A,B\in\mathbb Z^2$ and $C,D\in\mathbb Z^2$ are in \emph{intersecting position} if every monotone lattice path from $A$ to $B$ intersects (i.e., shares a vertex) with every monotone lattice path from $C$ to $D$.
\end{definition}

\begin{corollary} \label{Corollary_Rd_sum_shift}
 Take four points on the integer plane $A=(A_X,A_Y)$, $B=(B_X,B_Y)$, $C=(C_X,C_Y)$, $D=(D_X,D_Y)$ and let $\Delta=(\Delta_X,\Delta_Y)$ be an integer vector. Suppose that $A,B$ is in intersecting position both with $C,D$ and with $C+\Delta$, $D+\Delta$. Then
 \begin{multline}
 \label{eq_Rd_sum_shift}
  \sum_{x,y\in\mathbb Z} \Rd(x,y;A_X,A_Y) \Rd (B_X,B_Y;x,y) \Rd (x,y;C_X,C_Y) \Rd (D_X, D_Y;x,y)
\\= \sum_{x,y\in\mathbb Z} \Rd(x,y;A_X,A_Y) \Rd (B_X,B_Y;x,y)   \Rd (x,y;C_X+\Delta_X,C_Y+\Delta_Y) \\ \times \Rd (D_X+\Delta_X, D_Y+\Delta_Y;x,y).
 \end{multline}
\end{corollary}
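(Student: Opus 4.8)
The plan is to run the discrete analogue of the argument used for the stochastic heat equation in Section~\ref{Section_SHE_proof}: there the double integral of a product of four Gaussian kernels was recognized as the expected intersection local time of two Brownian bridges that necessarily cross, and the shift-invariance was read off from the corresponding property of Brownian local times. Here the four Gaussian kernels become four copies of the discrete Riemann function $\Rd$, and the intersection local time of Brownian bridges becomes the intersection count $\In^\varpi$ of two persistent random walks, whose shift-invariance is exactly Theorem~\ref{Theorem_intersection_RW_extended}.

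The first step is to record a probabilistic meaning of $\Rd$. Building on \cite[Theorem~4.7]{BG_tele} (quoted as Theorem~\ref{Theorem_discrete_PDE_through_integrals}), I would show that $\Rd(X,Y;x,y)$ is, up to a fixed choice of which half-edges and vertex weights to attach at the two ends, the total weight --- the product of the weights of Figure~\ref{Fig_RW_weights} over the visited vertices --- of persistent-walk trajectories that start at $(x,y)$ in a prescribed direction and end at $(X,Y)$ in a prescribed direction. This can be established either by checking that this path generating function satisfies the discrete hyperbolic equation~\eqref{eq_discrete_PDE} with a $\delta$-type right-hand side and the correct boundary values, so that Theorem~\ref{Theorem_discrete_PDE_through_integrals} identifies it with $\Rd$, or directly by expanding the contour integral~\eqref{eq_Discrete_R} as a geometric series and matching with the transfer-matrix enumeration of such walks. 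In particular $\Rd$ is translation invariant, $\Rd(X+v,Y+v;x+v,y+v)=\Rd(X,Y;x,y)$, and nonnegative.

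With this dictionary, for a fixed vertex $(x,y)$ the product $\Rd(x,y;A_X,A_Y)\,\Rd(B_X,B_Y;x,y)$ is the total weight of walk trajectories from $A$ to $B$ that pass through $(x,y)$ with a prescribed local configuration there, and $\Rd(x,y;C_X,C_Y)\,\Rd(D_X,D_Y;x,y)$ is the analogous quantity for trajectories from $C$ to $D$. Summing over $(x,y)$ turns the left-hand side of~\eqref{eq_Rd_sum_shift} into $\sum_{\pi_1:A\to B}\sum_{\pi_2:C\to D}w(\pi_1)w(\pi_2)$ times the number of vertices at which $\pi_1$ and $\pi_2$ simultaneously realize that local configuration; that is, it equals $Z(A,B)\,Z(C,D)\cdot\E[\In^{\varpi_0}(A,B;C,D)]$, where $Z(A,B),Z(C,D)$ are the path partition functions, $\E$ is the expectation under the normalized walk measures, and $\varpi_0$ is a single one of the sixteen intersection types of Section~\ref{Section_local_times}. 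The hypothesis that $A,B$ is in intersecting position with $C,D$ says precisely that $\pi_1$ and $\pi_2$ meet almost surely, which is what makes $\In^{\varpi_0}$ a bona fide counting variable (and is what the extended theorem requires); the right-hand side of~\eqref{eq_Rd_sum_shift} is the same expression with $(C,D)$ replaced by $(C+\Delta,D+\Delta)$. Now $Z(C+\Delta,D+\Delta)=Z(C,D)$ by translation invariance of the weights, while $\E[\In^{\varpi_0}(A,B;C,D)]=\E[\In^{\varpi_0}(A,B;C+\Delta,D+\Delta)]$ by Theorem~\ref{Theorem_intersection_RW_extended} --- applied, if needed, after the reflection $x\leftrightarrow y$ (which swaps $b_1\leftrightarrow b_2$ and transposes~\eqref{eq_Rd_sum_shift}, one checks $\Rd|_{b_1\leftrightarrow b_2}(X,Y;x,y)=\Rd(Y,X;y,x)$) to put the configuration into the form treated there, and iterated over the components of a general integer shift $\Delta$. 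This proves~\eqref{eq_Rd_sum_shift}.

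The main obstacle I expect is precisely the dictionary of the first two steps: fixing the half-edge and vertex-weight conventions at the endpoints of $\Rd$ so that the closed chain $\Rd(x,y;A)\,\Rd(B;x,y)\,\Rd(x,y;C)\,\Rd(D;x,y)$ glues, with no omitted or repeated vertex weight, into exactly one $A\to B$ trajectory and one $C\to D$ trajectory sharing the vertex $(x,y)$, with a local configuration there that is genuinely one of the sixteen intersection types of Section~\ref{Section_local_times} (and identifying which kinds of walks --- horizontally- versus vertically-oriented, and in which orientation of the weights --- the two chains $\Rd(x,y;A)\Rd(B;x,y)$ and $\Rd(x,y;C)\Rd(D;x,y)$ describe). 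Once that identification is in place, the conclusion is a direct application of Theorem~\ref{Theorem_intersection_RW_extended} together with translation invariance, mirroring the end of the proof of Theorem~\ref{Theorem_SHE_invariance}.
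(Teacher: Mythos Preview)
Your proposal is correct and follows essentially the same route as the paper: interpret $\Rd$ as persistent-random-walk transition probabilities, read the four-$\Rd$ sum as an expected intersection count for two independent walk bridges, and conclude via Theorem~\ref{Theorem_intersection_RW_extended} together with translation invariance. The paper packages the dictionary you flag as the main obstacle into a separate statement (Proposition~\ref{Proposition_R_as_transition}), which gives two identities $P_{-,|}=(1-b_1)\Rd$ and $P_{|,-}=(1-b_2)\Rd$; this yields the constant $(1-b_1)^2(1-b_2)^2$ relating both sides of~\eqref{eq_Rd_sum_shift} to expected intersections, and the two interpretations are exactly the freedom you anticipated --- the paper notes that different choices among them (and correspondingly different intersection types $\varpi$) are needed depending on the relative position of $A,B,C,D$, which is precisely the endpoint-convention bookkeeping you singled out.
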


 \noindent Note that since $\Rd$ vanishes for non-ordered arguments, both sums in \eqref{eq_Rd_sum_shift} are finite: the first runs over $(x,y)\in S(A,B)\cap S(C,D)$, while the second one runs over $(x,y)\in S(A,B)\cap S(C+\Delta,D+\Delta)$.
\begin{remark}
 Using \eqref{eq_Discrete_R}, the identity \eqref{eq_Rd_sum_shift} becomes an equality of two four-dimensional contour integrals. We are not aware of a more direct way for checking this equality.
\end{remark}

The proof of Corollary \ref{Corollary_Rd_sum_shift} is based on the following stochastic interpretation of the function $\Rd$. As before, we deal with persistent random walks, which travel in the up-right direction of the grid according to the probabilities of Figure \ref{Fig_RW_weights}.

\begin{proposition}
\label{Proposition_R_as_transition}
The probability that a path that started horizontally at
$(x_0+\frac12,y_0)$, ends vertically at $(X+1,Y+\frac12)$ (i.e.\ the path \emph{enters} into
$(X+1,Y+1)$ from below) is
\begin{equation}
\label{eq_x5}
 P_{-,|}(x_0,y_0; X,Y)=(1-b_1)\Rd(X,Y;x_0,y_0).
\end{equation}
The probability for a path, which started
vertically at $(x_0,y_0+\frac12)$, to end horizontally at $(X+\frac12,Y+1)$ (i.e.\ the path \emph{enters} into $(X+1,Y+1)$ from the left) is
\begin{equation}
\label{eq_x6}
 P_{|,-}(x_0,y_0; X,Y)=(1-b_2)\Rd(X,Y;x_0,y_0).
\end{equation}
\end{proposition}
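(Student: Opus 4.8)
The plan is to identify each side of \eqref{eq_x5}, regarded as a function of the endpoint $(X,Y)$, as the unique solution of one and the same homogeneous discrete hyperbolic equation with the same boundary data, and then to conclude by uniqueness; \eqref{eq_x6} will follow by the mirror-symmetric argument.

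For the left-hand side I would set up two bookkeeping quantities for the persistent random walk started on the horizontal edge into $(x_0+1,y_0)$: for $x\ge x_0+1$, $y\ge y_0$, let $H(x,y)$ be the total weight of trajectories that currently occupy the horizontal edge into the vertex $(x,y)$, and $V(x,y)$ the total weight of trajectories that currently occupy the vertical edge into $(x,y)$. Conditioning on the step taken at a single vertex and reading off the four elementary weights of Figure \ref{Fig_RW_weights} produces the coupled first-order recursions
\[
  H(x+1,y)=b_1\,H(x,y)+(1-b_2)\,V(x,y),\qquad V(x,y+1)=(1-b_1)\,H(x,y)+b_2\,V(x,y),
\]
with $H(x_0+1,y)=\mathbf 1(y=y_0)$ and $V(x,y_0)=0$; by construction $P_{-,|}(x_0,y_0;X,Y)=V(X+1,Y+1)$. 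Eliminating $H$ from the two recursions and using $b_1b_2-(1-b_1)(1-b_2)=b_1+b_2-1$ shows that $V$ is annihilated by the operator of \eqref{eq_discrete_PDE},
\[
  V(x+1,y+1)-b_1\,V(x,y+1)-b_2\,V(x+1,y)+(b_1+b_2-1)\,V(x,y)=0,
\]
and iterating the first-order recursions along the lines $x=x_0+1$ and $y=y_0+1$ gives the boundary values $V(x_0+1,y_0+1+m)=(1-b_1)\,b_2^{\,m}$ and $V(x_0+1+m,y_0+1)=(1-b_1)\,b_1^{\,m}$ for $m\ge 0$.

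For the right-hand side I would first check that $\Rd(X,Y;x_0,y_0)$ solves the same homogeneous equation in $(X,Y)$ for $X\ge x_0$, $Y\ge y_0$: applying the operator of \eqref{eq_discrete_PDE} to the integrand of \eqref{eq_Discrete_R} multiplies it by $(a-1)(c-1)+(1-b_2)(a-1)+(1-b_1)(c-1)$, where $a$ and $c$ denote the two rational factors (raised to the powers $X-x_0$, $Y-y_0$), and a short computation with $a-1=\tfrac{(b_1-b_2)(1-b_1)z}{1+b_2(1-b_1)z}$, $c-1=\tfrac{(b_2-b_1)(1-b_2)z}{1+b_1(1-b_2)z}$ shows this expression vanishes identically in $z$. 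It then remains to compute $\Rd$ on the two lines $Y=y_0$ and $X=x_0$. On $X=x_0$ the first rational factor disappears, the distinguished pole $z=-\tfrac1{b_2(1-b_1)}$ is simple, and a one-line residue gives $\Rd(x_0,y_0+m;x_0,y_0)=b_2^{\,m}$; on $Y=y_0$ the second factor disappears, and moving the contour to the other pole $z=-\tfrac1{b_1(1-b_2)}$ (legitimate by the observation following \eqref{eq_Discrete_R}, the integrand having no residue at infinity) again reduces to a single simple residue and gives $\Rd(x_0+m,y_0;x_0,y_0)=b_1^{\,m}$. Consequently $V(X+1,Y+1)$ and $(1-b_1)\,\Rd(X,Y;x_0,y_0)$ satisfy the same homogeneous equation on $\{X\ge x_0,\,Y\ge y_0\}$ and agree on the two coordinate lines $X=x_0$ and $Y=y_0$; since those data determine the solution uniquely by forward iteration, the two functions coincide, which is \eqref{eq_x5}.

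Finally, \eqref{eq_x6} is obtained by running the same argument for a walk started on the vertical edge into $(x_0,y_0+1)$, tracking now the weight of trajectories on horizontal edges; the underlying operator and the geometric boundary data $b_1^{\,m}$, $b_2^{\,m}$ are unchanged, and the only difference is that the initial vertex is traversed by a rightward turn of weight $1-b_2$ rather than an upward turn of weight $1-b_1$, which replaces the prefactor $(1-b_1)$ by $(1-b_2)$. The step I expect to demand the most care is the contour bookkeeping in \eqref{eq_Discrete_R}: at a generic $(X,Y)$ the distinguished pole has order $X-x_0+1$, so it is essential to first restrict to the boundary lines, where one of the two rational factors degenerates and the relevant pole becomes simple, before taking residues; one must also keep consistent track of the unit shift between the edge-indexed quantities $H,V$ and the vertex-indexed $\Rd$.
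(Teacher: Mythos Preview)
Your proof is correct and takes a genuinely different, more direct route than the paper's. The paper works with the \emph{cumulative} probability $F^-_{y_0}(X,Y)$ that the single path passes below $(X,Y)$, identifies it via Theorem~\ref{Theorem_4_point_colors} as a solution of \eqref{eq_discrete_PDE}, invokes the integral representation of Theorem~\ref{Theorem_discrete_PDE_through_integrals} to write $F^-_{y_0}$ as an explicit boundary sum of $\Rd$'s, takes a discrete $X$-derivative to extract $P_{-,|}$, and then collapses the resulting sum by a telescoping argument together with the relation $\Rd(X+1,0;0,0)=b_1\Rd(X,0;0,0)$. You instead track the edge-occupation weights $H,V$ directly, derive a coupled first-order system from the one-vertex transition probabilities, eliminate $H$ to see that $V$ is annihilated by the operator of \eqref{eq_discrete_PDE}, and match the geometric boundary data $b_1^m,b_2^m$ to $\Rd$ by a residue computation in \eqref{eq_Discrete_R}; equality then follows from uniqueness of the forward recursion, without ever appealing to the explicit solution formula. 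Your approach is more self-contained (it bypasses the six-vertex height-function interpretation and the telescoping simplification), while the paper's approach has the advantage of exercising the machinery of Theorems~\ref{Theorem_4_point_colors}--\ref{Theorem_discrete_PDE_through_integrals} in the simplest possible setting, the same machinery that is later essential for the covariance computation in Section~\ref{Section_proof_of_inv}.
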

\begin{proof}

We consider a particular case of the stochastic six--vertex model in the quadrant (with colorless weights) when we have only one
path. In this case the expectation of the height function has a simple probabilistic meaning: for $(X,Y)\in(\mathbb Z_{\ge 0}+\frac12) \times (\mathbb Z_{\ge 0}+\frac 12)$
\begin{align}
\label{eq_height_one_path}
 \E\left[ \frac{1-q^{H\left(X,Y\right)}}{1-q} \right] &=
 {\rm Prob}\bigl(\text{ the path passes to the right from }(X,Y)\, \bigr)
\\\notag &= {\rm Prob}\bigl(\text{ the path passes below }(X,Y)\, \bigr).
\end{align}
Suppose that the path enters the positive quadrant through the point $(1,y_0)$ coming from the left. Then by Theorem
\ref{Theorem_4_point_colors}, \eqref{eq_height_one_path} denoted as $F^-_{y_0}(X,Y)$ (the superscript
$^-$ indicates that the path enters horizontally) solves
\begin{equation}
\label{eq_x41}
 F^-_{y_0}(X+1,Y+1)-b_1 F^-_{y_0}(X,Y+1)-b_2 F^-_{y_0}(X+1,Y)+(b_1+b_2-1) F^-_{y_0}(X,Y)=0,
\end{equation}
with the boundary conditions
\begin{equation}
  F^-_{y_0}(X,\tfrac12)=0,\qquad F^-_{y_0}(\tfrac12,Y)=\begin{cases} 0,& Y<{y_0-\frac12},\\ 1, & Y\ge {y_0+\frac12}. \end{cases}
\end{equation}
 Theorem \ref{Theorem_discrete_PDE_through_integrals} gives a closed formula:
\begin{equation}
\label{eq_x42}
F^-_{y_0}(X,Y)=\Rd(X,Y;\tfrac12,y_0+\tfrac12)+(1-b_2) \sum_{y=y_0+\tfrac32}^Y \Rd(X,Y;0,y).
\end{equation}
Consider the difference in the $X$--direction:
$$
 P_{-,|}(0,y_0; X,Y):=F_{y_0}(X+\tfrac12,Y+\tfrac12)-F_{y_0}(X+\tfrac32,Y+\tfrac12), \quad X,Y\in\mathbb Z_{\ge 0}.
$$
\eqref{eq_height_one_path} implies that it computes the probability that the path, which started
horizontally at $(\frac12,y_0)$, ends vertically at $(X+1,Y+\frac12)$ (i.e.\ the path \emph{enters} into
$(X+1,Y+1)$ from below). Using \eqref{eq_x42} and invariance of $\Rd$ under simultaneous shifts of the first and third or of the second and fourth arguments, we compute
\begin{multline}
\label{eq_x1}
 P_{-,|}(0,y_0; X,Y)=\sum_{y=y_0+1}^{Y} (\Rd(X,Y;0,y)-\Rd(X+1,Y;0,y)) (1-b_2) \\ +\Rd(X,Y;0,y_0)-\Rd(X+1,Y;0,y_0).
\end{multline}
Let us investigate the sum in this formula using the shift--invariance of $\Rd$:
\begin{multline}
\label{eq_x7}
 \sum_{y=y_0+1}^{Y} (\Rd(X,Y;0,y)-\Rd(X+1,Y;0,y))\\=
 \sum_{y=y_0+1}^{Y} (\Rd(X,Y-y;0,0)-\Rd(X+1,Y-y;0,0))
 \\= \sum_{y=0}^{Y-y_0-1} (\Rd(X,y;0,0)-\Rd(X+1,y;0,0)).
\end{multline}
Using the definition \eqref{eq_Discrete_R} and omitting the third and fourth arguments being zeros, the $\Rd$ function satisfies
$$\Rd(X+1,Y')-b_1 \Rd(X,Y')-b_2\Rd(X+1,Y'-1) +(b_1+b_2-1) \Rd(X,Y'-1)=0.$$
Summing this formula for $Y'=1,\dots, Y-y_0$, we get
\begin{multline}
\label{eq_x2}
0= -(1-b_2) \sum_{y=1}^{Y-y_0-1} (\Rd(X,y;0,0)-\Rd(X+1,y;0,0))
 \\-b_2\Rd(X+1,0) +(b_1+b_2-1) \Rd(X,0)+\Rd(X+1,Y-y_0)-b_1 \Rd(X,Y-y_0).
\end{multline}
Hence, adding \eqref{eq_x2} to \eqref{eq_x1} and using \eqref{eq_x7}, we obtain
\begin{multline}
\label{eq_x3}
 P_{-,|}(0,y_0; X,Y)=(1-b_2)(\Rd(X,0;0,0)-\Rd(X+1,0;0,0))-b_2\Rd(X+1,0;0,0) \\+(b_1+b_2-1) \Rd(X,0;0,0)+\Rd(X+1,Y-y_0;0,0) -b_1 \Rd(X,Y-y_0;0,0) \\ +\Rd(X,Y-y_0;0,0)-\Rd(X+1,Y-y_0;0,0).
\end{multline}
Using $\Rd(X+1,0;0,0)=b_1 \Rd(X,0;0,0)$, we arrive at
\begin{equation}
\label{eq_x4}
 P_{-,|}(0,y_0; X,Y)=(1-b_1) \Rd(X,Y-y_0;0,0).
\end{equation}
By translation invariance, the same formula holds for the path which starts not by
entering from the left into $(1,y_0)$, but into an arbitrary point $(x_0+1,y_0)$, which yields
\eqref{eq_x5}.
By symmetry, we can also obtain similar formulas for the case when the path starts by entering  from below into a point $(x_0,y_0+1)$, arriving at \eqref{eq_x6}.
\end{proof}

\begin{proof}[Proof of Corollary \ref{Corollary_Rd_sum_shift}]
The idea of the proof is to use Proposition \ref{Proposition_R_as_transition} to interpret the sum in the left-hand side of \eqref{eq_Rd_sum_shift} (multiplied by $(1-b_1)^2 (1-b_2)^2$) as the expected number of intersections of two paths, and then to use Theorem \ref{Theorem_intersection_RW_extended} to match that to the similarly interpreted right-hand side of \eqref{eq_Rd_sum_shift}. Since there are two ways to interpret the functions $\Rd$ in Proposition \ref{Proposition_R_as_transition}, we have some freedom, and different choices are necessary for different configurations of the points $A$, $B$, $C$, $D$; similarly, we use Theorem \ref{Theorem_intersection_RW_extended} for different types of intersections.

\begin{figure}[t]
\begin{center}
{\scalebox{1.3}{\includegraphics{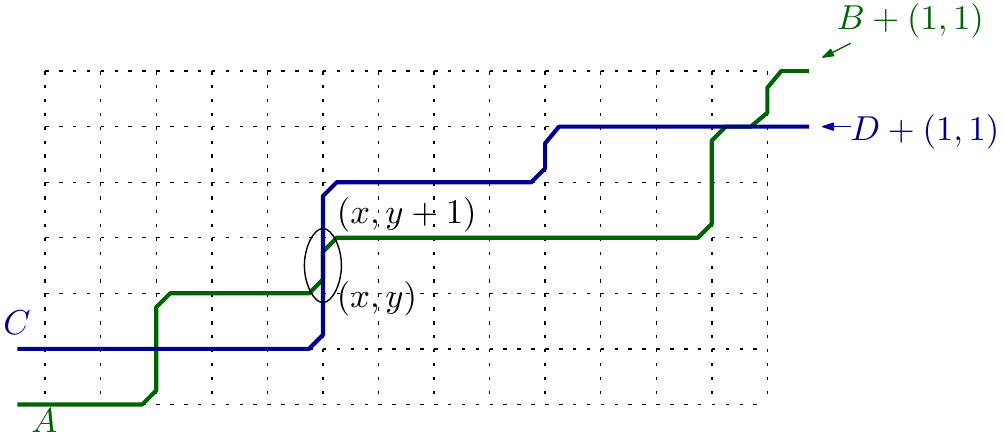}}}
 \caption{Two paths sharing $(x,y)-(x,y+1)$ edge in the proof of Corollary \ref{Corollary_Rd_sum_shift}.
 \label{Fig_RW_intersection_3}}
\end{center}
\end{figure}

Let us give more details for the cases when $A_X=C_X$, $B_X=D_X$, and $\Delta_X=0$, i.e.\ we shift in the vertical direction, cf.\ Figure \ref{Fig_RW_intersection_3}; all other cases are studied in the same way.
The product $ (1-b_1) (1-b_2) \Rd(x,y;A_X,A_Y) \Rd (B_X,B_Y;x,y)$ is the probability that a path which started by entering into the vertex $A$ from the left, passes through the edge $(x,y)-(x,y+1)$ and further enters into $B+(1,1)$ from the left.  Similarly, $(1-b_1) (1-b_2)\Rd (x,y;C_X,C_Y) \Rd (D_X, D_Y;x,y)$ is the probability that a path which started by entering into the vertex $C$ from the left, passes through the edge $(x,y)-(x,y+1)$ and further enters into $D+(1,1)$ from the left. Hence, $ (1-b_1)^2 (1-b_2)^2 \Rd(x,y;A_X,A_Y) \Rd (B_X,B_Y;x,y) \Rd (x,y;C_X,C_Y) \Rd (D_X, D_Y;x,y)$ computes the probability that two paths share an edge $(x,y)-(x,y+1)$, which is one of the allowed types of intersections in Theorem \ref{Theorem_intersection_RW_extended} (paths enter and exit $(x,y)$ vertically). Therefore, the sum over all $x$ and $y$ computes the expected number of such shared edges (equivalently, intersections). Since the right-hand side of \eqref{eq_Rd_sum_shift} admits the same interpretation as the expected number of intersections, Theorem \ref{Theorem_intersection_RW_extended} implies \eqref{eq_Rd_sum_shift}.
\end{proof}

\subsection{Proof of Theorem \ref{Theorem_6v_invariance}}
\label{Section_proof_of_inv}

The basic idea of the proof is similar to that for Theorem \ref{Theorem_SHE_invariance}. The expectations of each coordinate and each squared coordinate of the vectors in \eqref{eq_6v_invariance} coincide (by the definition of the system) between the left-hand side and the right-hand side, and we only need to show that the expectations of the product of coordinates are the same. For that we will write these expectations as a large sum, using Theorems \ref{Theorem_4_point_colors} and \ref{Theorem_discrete_PDE_through_integrals}. The invariance of this sum with respect to the shifts eventually will be a corollary of Theorem \ref{Theorem_intersection_RW_extended} (to be more precise, we rely on Corollary \ref{Corollary_Rd_sum_shift}), yet the reduction to this theorem needs some efforts.

We fix $\Delta\ge 0$ and study the covariance of the coordinates of the vector in the right-hand side of \eqref{eq_6v_invariance}. Our aim is to show that this covariance does not depend on the choice of $\Delta$. By Theorems \ref{Theorem_4_point_colors} and \ref{Theorem_discrete_PDE_through_integrals}, the covariance is
\begin{equation}
\label{eq_x18}
 \sum_{x=3/2}^X \, \sum_{y=3/2}^{B_j-\Delta}  \E \bigl[ \xi^{\ges i}(x,y) \xi^{\ges j} (x,y) \bigr] \cdot \Rd( X, B_i;x,y) \cdot \Rd(X, B_j-\Delta;x,y).
\end{equation}
We further use \eqref{eq_4_point_covariance_colors} to express the expectation in the last formula. Note that the colors $\geqslant j$ are distributed according to the boundary conditions $S_\Delta \rho_b^{\ge j}$, which implies that $\H^{\ges j}(x,y)$ is identically $0$ for $y\le A-\Delta-1/2$. Hence, the right-hand side of \eqref{eq_4_point_covariance_colors} vanishes at such points, and we conclude that the $y$--summation in \eqref{eq_x18} is $\sum_{y=A-\Delta+1/2}^{B_j-\Delta}$.

 The right-hand side of \eqref{eq_4_point_covariance_colors} is a sum of four quadratic expressions in $q^{\H^{\ges i}}$, $q^{\H^{\ges j}}$. We shift the $(x,y)$--coordinates by $1$ and rewrite  $\E \bigl[ \xi^{\ges i}(x,y) \xi^{\ges j} (x,y) \bigr]$  as the expectation of the sum of  two products:
 \begin{equation}
 \label{eq_x19}
  (1-b_1)\cdot  \E \left[ \Bigl( b_2 q^{\H^{\ges i}(x,y-1)}     - b_1 q^{\H^{\ges i}(x-1,y-1)}\Bigr) \cdot \Bigl( q^{\H{\ges j}(x-1,y)}- q^{\H{\ges j}(x-1,y-1)} \Bigr)\right]
 \end{equation}
 and
 \begin{equation}
 \label{eq_x20}
 (1-b_2)\cdot \E\left[ \Bigl( b_1q^{\H^{\ges i}(x-1,y)} -b_2 q^{\H^{\ges i}(x-1,y-1)}  \Bigr) \cdot
  \Bigl( q^{\H{\ges j}(x,y-1)}- q^{\H{\ges j}(x-1,y-1)} \Bigr)\right].
 \end{equation}
The first factors $(1-b_1)$, $(1-b_2)$ in \eqref{eq_x19}, \eqref{eq_x20} do not depend on $\Delta$, and we will be ignoring them. We split the expectation of the product in \eqref{eq_x19}, \eqref{eq_x20} into two further parts: product of expectations and covariance of the factors. We will first deal with the product of expectations and represent them as sums involving the boundary conditions $\rho_b^{\ges i}$, $\rho_b^{\ges j}$.

\begin{lemma} \label{Lemma_covariance_through_boundaries} For each $(x,y)\in (\mathbb Z_{>0}+\frac12)\times (\mathbb Z_{>0}+\frac 12)$ we have
 \begin{enumerate}
  \item $\displaystyle\E \Bigl[ q^{\H{\ges j}(x-1,y)}- q^{\H{\ges j}(x-1,y-1)} \Bigr]= \sum_{y'=A-\Delta +1/2}^{y} \psi_1^\Delta(y') \Rd\left(x,y; \frac 32, y'\right)$;
  \item $\displaystyle\E \Bigl[ q^{\H{\ges j}(x,y-1)}- q^{\H{\ges j}(x-1,y-1)} \Bigr]= \sum_{y'=A-\Delta +1/2}^{y} \psi_2^{\Delta}(y') \Rd\left(x,y; \frac 32, y'\right)$;

  \item $\displaystyle \E \Bigl[  b_2 q^{\H^{\ges i}(x,y-1)}     - b_1 q^{\H^{\ges i}(x-1,y-1)}\Bigr]=
   \sum_{y'=\frac 12}^{A-\Delta-\frac12} \psi_3(y') \Rd\left(x,y; \frac 32, y'\right) $ \\  $\displaystyle + \sum_{x'=\frac 12}^x \chi_3(x') \Rd\left(x,y; x', \frac32 \right)$;
   \item $\displaystyle \E \Bigl[  b_1q^{\H^{\ges i}(x-1,y)} -b_2 q^{\H^{\ges i}(x-1,y-1)}\Bigr]=
   \sum_{y'=\frac 12}^{A-\Delta-\frac12} \psi_4(y') \Rd\left(x,y; \frac 32, y'\right) $ \\  $\displaystyle + \sum_{x'=\frac 12}^x \chi_4(x') \Rd\left(x,y; x', \frac32 \right)$.
 \end{enumerate}
 The boundary functions $\psi_1^{\Delta},\psi_2^{\Delta}$ depend in an explicit way on $\rho^{\ges j}$; in particular, their dependence on $\Delta$ is in the linear shift of the argument: $\psi_{1/2}^{\Delta}=S_\Delta(\psi_{1/2})$. Other functions $\psi_3$, $\psi_4$, $\chi_3$, $\chi_4$ depend on $\rho^{\ges i}$ and do not depend on $\Delta$.
\end{lemma}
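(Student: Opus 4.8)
The plan is to recognize each of the four expectations as a fixed linear combination of shifts of the solution to the homogeneous version of the difference equation \eqref{eq_discrete_PDE}, and then to invoke the explicit solution formula of Theorem \ref{Theorem_discrete_PDE_through_integrals}. First I would take the full expectation of the four-point relation \eqref{eq_4_point_relation_colors}: since $\E[\xi^{\ges j}]=0$ by \eqref{eq_4_point_no_correlation_colors}, the function $g^{\ges j}(x,y):=\E\bigl[q^{\H^{\ges j}(x,y)}\bigr]$ solves \eqref{eq_discrete_PDE} with right-hand side $u\equiv0$ and boundary data equal to the deterministic values of $q^{\H^{\ges j}}$ along $\{x=\tfrac12\}\cup\{y=\tfrac12\}$, read off from the boundary condition $S_\Delta\rho^{\ges j}_b$; the same holds for $g^{\ges i}(x,y):=\E\bigl[q^{\H^{\ges i}(x,y)}\bigr]$ with the ($\Delta$-free) boundary condition $\rho^{\ges i}_b$, and for the purely deterministic field $q^{\H^{\ges 0}(x,y)}$ (for which $\xi^{\ges 0}\equiv0$, as noted in the proof of Theorem \ref{Theorem_4_point_colors}). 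Each combination in the Lemma is obtained from $g^{\ges j}$ or $g^{\ges i}$ by a fixed linear combination of two unit shifts of the arguments, so it again solves the homogeneous equation, now effectively on the quadrant $\{x\ge\tfrac32\}\times\{y\ge\tfrac32\}$ — which is exactly why $\Rd(x,y;\tfrac32,\cdot)$ and $\Rd(x,y;\cdot,\tfrac32)$ appear on the right.

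The next step is to locate the nontrivial boundary data. For the $\H^{\ges j}$-combinations, the hypothesis $\rho^{\ges j}_b(x,y)=0$ for $y<A$ (hence also along the bottom side, since $A\ge1$) gives $q^{\H^{\ges j}}\equiv1$ on $\{y=\tfrac12\}$ and on the part of $\{x=\tfrac12\}$ below level $A-\Delta$; subtracting the constant $1$ (itself a solution of the homogeneous equation) shows that $g^{\ges j}-1$ is supported only on the top part of the left boundary, so Theorem \ref{Theorem_discrete_PDE_through_integrals} collapses to one sum $\sum_{y'\ge A-\Delta+\frac12}\Rd(x,y;\tfrac12,y')\bigl(\psi'(y')-b_2\psi'(y'-1)\bigr)$ with $\psi'(y')=g^{\ges j}(\tfrac12,y')-1$, which is itself the $\Delta$-shift of the corresponding function at $\Delta=0$. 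For the $\H^{\ges i}$-combinations, the hypothesis $\rho^{\ges i}_b(x,y)=1$ for $y\ge A-\Delta$ forces $q^{\H^{\ges i}}$ to be a geometric profile along the top part of $\{x=\tfrac12\}$, and such a saturated semi-infinite boundary resums to a single $\Rd$-term — exactly as in the passage from \eqref{eq_x42} to \eqref{eq_x4} in the proof of Proposition \ref{Proposition_R_as_transition} — so that the genuinely free data lives only on $\{y=\tfrac12\}$ and on the part of $\{x=\tfrac12\}$ below level $A-\Delta$; this produces the two sums $\sum_{y'=\frac12}^{A-\Delta-\frac12}$ and $\sum_{x'}$ with $\Delta$-independent coefficients $\psi_{3},\psi_{4},\chi_{3},\chi_{4}$ built from $\rho^{\ges i}_b$ alone.

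Finally I would substitute these expansions into the four combinations and simplify the $\Rd$-sums using the two manipulations already used in the proof of Proposition \ref{Proposition_R_as_transition}: the invariance of $\Rd$ under a simultaneous translation of its first-and-third, or of its second-and-fourth, arguments, together with the difference equation satisfied by $\Rd$ in its first two arguments. The pure $y$-difference of part (1) converts termwise and telescopes in the fourth argument directly into $\sum_{y'}\psi^\Delta_1(y')\Rd(x,y;\tfrac32,y')$ with $\psi^\Delta_1=S_\Delta\psi_1$, while the $x$-difference combinations of parts (2) and (4) require one further application of the $\Rd$ recurrence to clear the stray shift in the first argument before telescoping; reading off coefficients gives the asserted boundary functions. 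The main obstacle is this last bookkeeping: confirming that the $\Rd$-recurrence rearrangements close up, that the resummation of the saturated $\H^{\ges i}$-boundary lands inside the truncated range so that no $\Delta$-dependence leaks beyond $S_\Delta$, and that the summation limits come out exactly as $A-\Delta\pm\tfrac12$ rather than off by one — routine but fiddly index chasing with no conceptual surprises.
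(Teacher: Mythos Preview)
Your overall strategy is right --- both you and the paper recognize that each of the four combinations satisfies the homogeneous four-point relation and invoke Theorem~\ref{Theorem_discrete_PDE_through_integrals}. The difference is where the solution formula is applied. You expand $g^{\ges j}$ and $g^{\ges i}$ first, then form the combinations and manipulate $\Rd$-sums afterward. The paper instead applies the formula \emph{directly to each combination} as a function on the shifted quadrant $\{x\ge\tfrac32\}\times\{y\ge\tfrac32\}$; the entire proof then reduces to checking that the boundary data of the combination vanishes on the unwanted part of the boundary. For (1)--(2) this is immediate since $\H^{\ges j}\equiv0$ there. For (4), on the saturated part of $\{x=\tfrac32\}$ one has $q^{\H^{\ges i}(\frac12,y)}=q\cdot q^{\H^{\ges i}(\frac12,y-1)}$, so $b_1 q^{\H^{\ges i}(\frac12,y)}-b_2 q^{\H^{\ges i}(\frac12,y-1)}=(b_1 q-b_2)\,q^{\H^{\ges i}(\frac12,y-1)}=0$. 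For (3) the relevant boundary coefficient is $\Phi(\tfrac32,y')-b_2\Phi(\tfrac32,y'-1)$ with $\Phi=b_2 g^{\ges i}(x,y-1)-b_1 g^{\ges i}(x-1,y-1)$; one extra use of the four-point relation itself reduces this to the same $b_1 q=b_2$ cancellation.

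The payoff is that no $\Rd$-manipulations are needed at all: the combinations $b_1\cdot\, -\, b_2\cdot$ and $b_2\cdot\, -\, b_1\cdot$ are precisely engineered so that their boundary data vanishes on the saturated strip. Your route --- going through $g$ and then telescoping via $\Rd$-recurrences \`a la Proposition~\ref{Proposition_R_as_transition} --- should also reach the goal, but the analogy is not exact (there the boundary is a step function and the combination a pure $x$-difference; here the saturated boundary is a geometric progression and the combinations carry the asymmetric weights $b_1,b_2$), so the ``resums to a single $\Rd$-term'' step is less routine than you suggest and would itself require something equivalent to the paper's boundary cancellation.
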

\begin{proof}
 The identity \eqref{eq_4_point_no_correlation_colors} implies that each of the expectations in the left-hand sides of $(1)-(4)$ satisfies the homogeneous four-point relation \eqref{eq_discrete_PDE} with $u=0$. Hence, Theorem \ref{Theorem_discrete_PDE_through_integrals} says that it can be written as a sum over boundaries of the quadrant $(\mathbb Z_{\ge 0}+\frac32)\times(\mathbb Z_{\ge 0}+\frac 32)$. What remains to show is that contributions of some parts of the boundaries vanish. For the first two cases involving $\psi_1^\Delta$ and $\psi_2^\Delta$ this is immediate, as the remaining boundary conditions for the colors $\geqslant j$ lead to identical $\H^{\ges j}\equiv 0$, and hence identically zero expectations $\E \Bigl[ q^{\H{\ges j}(x-1,y)}- q^{\H{\ges j}(x-1,y-1)} \Bigr]$ and $\E \Bigl[ q^{\H{\ges j}(x,y-1)}- q^{\H{\ges j}(x-1,y-1)} \Bigr]$ near the boundaries. For the fourth case we need to show that $\E \Bigl[  b_1q^{\H^{\ges i}(x-1,y)} -b_2 q^{\H^{\ges i}(x-1,y-1)}\Bigr]$ vanishes at $x=\frac 32$ and $y>A-\Delta$. This immediately follows from $\rho_b^{\ges i}$ being equal to $1$ for such points, as then $\H^{\ges i}(\frac32,y)$ linearly grows in $y$ with slope $1$ and two terms under the expectation cancel (recall that $\frac{b_2}{b_1}=q$).

 The third case needs a bit more care. Looking at the $y$--sum in \eqref{eq_discrete_PDE_solution}, we conclude that we need to show
\begin{equation}
\label{eq_x21}
  \E \Bigl[  b_2 q^{\H^{\ges i}(x,y-1)}     - b_1 q^{\H^{\ges i}(x-1,y-1)} -  b_2^2 q^{\H^{\ges i}(x,y-2)}     +b_1 b_2 q^{\H^{\ges i}(x-1,y-2)} \Bigr]\stackrel{?}{=}0
\end{equation}
 at $x=\frac 32$ and $y>A-\Delta$. Recall that the four-point relation \eqref{eq_4_point_no_correlation_colors} yields that for the same choice of $x$ and $y$:
 \begin{equation}
 \label{eq_x22}
  \E \Bigl[  q^{\H^{\ges i}(x,y-1)}     - b_1 q^{\H^{\ges i}(x-1,y-1)} -  b_2 q^{\H^{\ges i}(x,y-2)}     +(b_1+b_2-1) q^{\H^{\ges i}(x-1,y-2)} \Bigr]=0.
\end{equation}
Multiplying \eqref{eq_x22} by $b_2$ and subtracting from \eqref{eq_x21}, it remains to show
\begin{equation}
\label{eq_x23}
  \E \Bigl[      - b_1(1-b_2) q^{\H^{\ges i}(x-1,y-1)}      +b_2(1-b_2) q^{\H^{\ges i}(x-1,y-2)} \Bigr]\stackrel{?}{=}0,
\end{equation}
and the last identity holds at $x=\frac 32$, $y>A-\Delta$ due to linear, slope $1$ growth of $\H^{\ges i}$ in $y$ for such boundary points.
\end{proof}

We continue the proof of Theorem \ref{Theorem_6v_invariance} and deal with the first part of \eqref{eq_x19} --- product of the expectations of the factors. Combining \eqref{eq_x18} with the result of Lemma \ref{Lemma_covariance_through_boundaries}, we get the expression
\begin{multline}
\label{eq_x24}
 \sum_{x=3/2}^X \, \sum_{y=3/2}^{B_j-\Delta}  \E  \Bigl[ b_2 q^{\H^{\ges i}(x,y-1)}     - b_1 q^{\H^{\ges i}(x-1,y-1)}\Bigr] \E \Bigl[ q^{\H{\ges j}(x-1,y)}- q^{\H{\ges j}(x-1,y-1)} \Bigr] \\ \times  \Rd(X, B_i; x,y) \cdot \Rd(X, B_j-\Delta;x,y)
 \\=   \sum_{x=3/2}^X \, \sum_{y=3/2}^{B_j-\Delta} \left[\sum_{y'=A-\Delta +1/2}^{y} \psi_1^{\Delta}(y') \Rd\left(x,y; \frac 32, y'\right)\right]
 \\ \times \left[\sum_{y''=\frac 12}^{A-\Delta-\frac12} \psi_3(y'') \Rd\left(x,y; \frac 32, y''\right) + \sum_{x'=\frac 12}^x \chi_3(x') \Rd\left(x,y; x', \frac32 \right)\right]\\ \times  \Rd(X, B_i; x,y) \cdot \Rd( X, B_j-\Delta; x,y).
\end{multline}
Note that we can extend the summation in $y'$ up to $+\infty$, as the added terms do not contribute due to vanishing of the $\Rd$ function on unordered arguments. Similarly, we can extend  the summation in $x'$ up to $+\infty$. We can also think about the summation domain in $y''$ to be $\Delta$--independent, as $\psi_3$ (and the areas where it vanishes) actually does not depend on $\Delta$.

At this point we can replace $y'$ by $y'-\Delta$ in \eqref{eq_x24}, and for any fixed values of $y'$, $x'$, $y''$ apply Corollary \ref{Corollary_Rd_sum_shift} to the remaining sum in $x$ and $y$ to show its $\Delta$--independence. Note that $\psi_1^{\Delta}(y'-\Delta)=\psi_1(y')$ is $\Delta$--independent, cf.\ Lemma \ref{Lemma_covariance_through_boundaries}.

In the same way we deal with \eqref{eq_x20} and show that
\begin{multline}
\label{eq_x25}
 \sum_{x=3/2}^X \, \sum_{y=3/2}^{B_j-\Delta}  \E\Bigl[x b_1q^{\H^{\ges i}(x-1,y)} -b_2 q^{\H^{\ges i}(x-1,y-1)}  \Bigr] \E
  \Bigl[ q^{\H{\ges j}(x,y-1)}- q^{\H{\ges j}(x-1,y-1)} \Bigr]\\ \times  \Rd(X, B_i; x,y) \cdot \Rd(X, B_j-\Delta; x,y)
\end{multline}
also does not depend on $\Delta$.

Hence, it remains to deal with the covariances of the factors in \eqref{eq_x19}, \eqref{eq_x20}\footnote{This is the new part, as compared to Theorem \ref{Theorem_SHE_invariance} dealing with the stochastic heat equation. In the limit transition from the stochastic six-vertex model to the SHE such covariances vanish.}. These covariances are linear combinations of the covariances of $q^{\H^{\ges i}}$ and $q^{\H^{\ges j}}$ at points $(x,y-1)$, $(x-1,y)$, $(x-1,y-1)$. At this point we can iterate the previous procedure and use Theorems \ref{Theorem_4_point_colors} and \ref{Theorem_discrete_PDE_through_integrals} to write the covariance as a sum similar to \eqref{eq_x18}. For instance, the covariance of $q^{\H^{\ges j}(x-1,y-1)}$ and $q^{\H^{\ges i}(x-1,y-1)}$ is written as
\begin{multline}
\label{eq_x26}
\mathrm{Cov} (q^{\H^{\ges j}(x-1,y-1)},q^{\H^{\ges i}(x-1,y-1)})\\ =
 \sum_{\tilde x=3/2}^{x-1} \, \sum_{\tilde y=3/2}^{y-1}  \E \bigl[ \xi^{\ges i}(\tilde x,\tilde y) \xi^{\ges j} (\tilde x,\tilde y) \bigr] \cdot \Rd(x-1, y-1; \tilde x,\tilde y) \cdot \Rd( x-1, y-1; \tilde x,\tilde y).
\end{multline}
Proceeding as above, we express $ \E \bigl[ \xi^{\ges i}(\tilde x,\tilde y) \xi^{\ges j} (\tilde x,\tilde y) \bigr]$ using \eqref{eq_4_point_covariance_colors} as the sum of the expressions \eqref{eq_x19}, \eqref{eq_x20} with $(x,y)$ replaced by $(\tilde x, \tilde y)$, and further split each expectation of the product into the product of expectations and covariance. Next, we again use Lemma \ref{Lemma_covariance_through_boundaries}.
As a result, the part of \eqref{eq_x18} involving the covariance in \eqref{eq_x26} and the product of expectations in the expansion of this covariance,  becomes transformed into
\begin{multline}
\label{eq_x28}
 \sum_{x=3/2}^X \, \sum_{y=3/2}^{B_j-\Delta}  \Rd(X, B_i; x,y) \cdot \Rd(X, B_j-\Delta; x,y) \\ \times  \sum_{\tilde x=3/2}^{x-1} \, \sum_{\tilde y=3/2}^{y-1} \Rd(x-1, y-1; \tilde x,\tilde y) \cdot \Rd( x-1, y-1; \tilde x,\tilde y)
 \\ \times \left[\sum_{y'=A-\Delta +1/2}^{\tilde y} \psi_1^\Delta(y') \Rd\left(\tilde x,\tilde y; \frac 32, y'\right)\right]
 \\ \times \left[\sum_{y''=\frac 12}^{A-\Delta-\frac12} \psi_3(y'') \Rd\left(x,y; \frac 32, y''\right) + \sum_{x'=\frac 12}^x \chi_3(x') \Rd\left(x,y; x', \frac32 \right)\right].
\end{multline}
\begin{figure}[t]
\begin{center}
{\scalebox{1.0}{\includegraphics{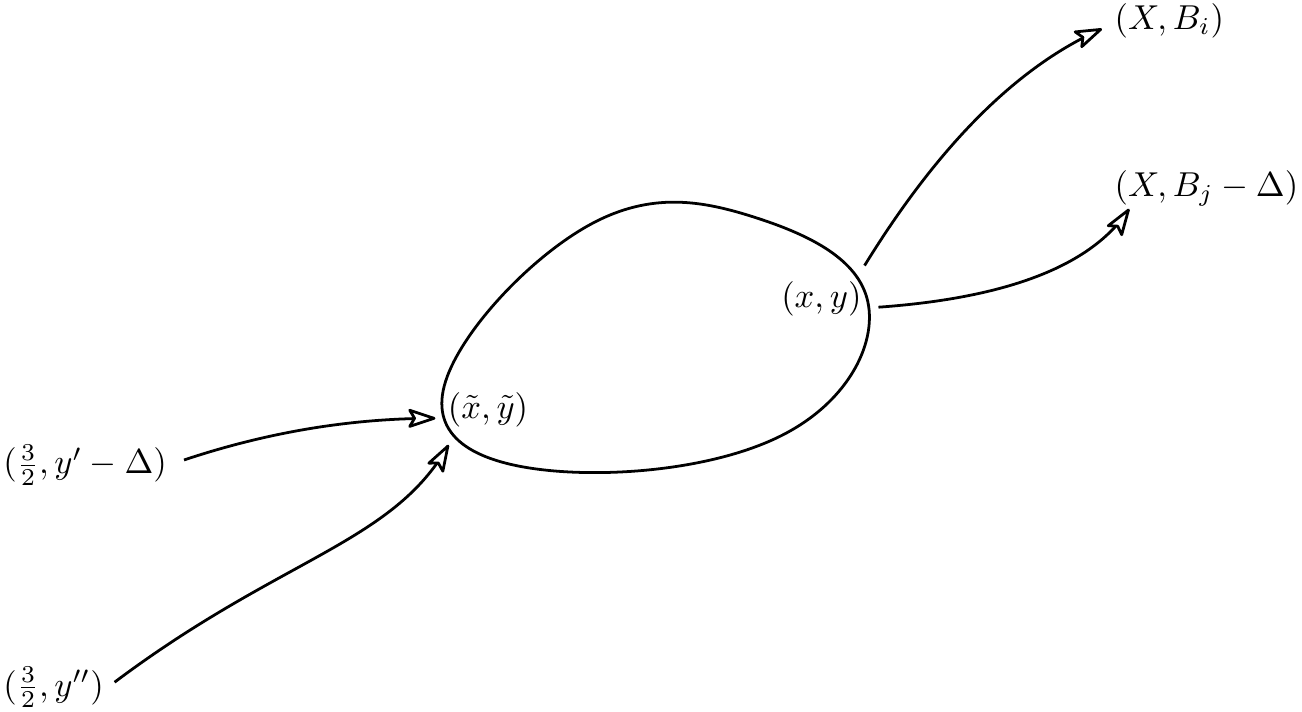}}}
 \caption{The relevant points (arguments of $\Rd$--functions) in summation \eqref{eq_x27}. For the subsequent iterations, the middle part becomes more complicated, but only these four pairs of points are relevant for the $\Delta$--independence.
 \label{Fig_sum_scheme}}
\end{center}
\end{figure}
The $\Delta$--independence of the last expression reduces to the $\Delta$--independence of the sum of products of six $\Rd$--functions, cf.\ Figure \ref{Fig_sum_scheme}:
\begin{multline}
\label{eq_x27}
 \sum_{x,y,\tilde x,\tilde y}  \Rd\left(\tilde x,\tilde y; \frac 32, y'-\Delta \right) \Rd\left(\tilde x,\tilde y; \frac 32, y''\right)\\ \times  \Rd(x-1, y-1; \tilde x,\tilde y) \cdot \Rd(x-1, y-1;\tilde x,\tilde y)\\ \times \Rd(X, B_i; x,y) \cdot \Rd(X, B_j-\Delta; x,y)
\end{multline}
and a similar expression involving $x'$. In order to see the $\Delta$--independence of \eqref{eq_x27}, we do the summation in two steps: first, we fix the differences $\delta x=x-\tilde x$, $\delta y=y-\tilde y$ and sum over all four-tuples with such differences, then we sum over all possible values for $\delta x$, $\delta y$.
Due to the invariance of the $\Rd$--function with respect to simultaneous shifts of the first and third arguments (as well as the second and the fourth arguments), the first summation already leads to $\Delta$--invariant expression by Corollary \ref{Corollary_Rd_sum_shift}. Hence, \eqref{eq_x27} is $\Delta$--invariant, and so is \eqref{eq_x28}.

It remains to deal with the covariance of $q^{\H^{\ges i}}$ and $q^{\H^{\ges j}}$ at points  $(\tilde x,\tilde y-1)$, $(\tilde x-1,\tilde y)$, $(\tilde x-1,\tilde y-1)$, and we can again iterate the previous arguments. In general, when we do $k$ iterations by computing $k$ times covariance until reaching the product of expectations, we get a combination of $(2k+3)$--fold summations (as in \eqref{eq_x28} for $k=1$), and its $\Delta$--invariance reduces to the $\Delta$--invariance of the sum of products of $(2k+2)$ $\Rd$--functions (as in \eqref{eq_x27} for $k=1$). This is schematically shown in Figure \ref{Fig_sum_scheme}. For the $\Delta$--invariance, only four of these $\Rd$--functions matters, as the rest is a translationally-invariant factor (as in the second line of \eqref{eq_x27} for $k=1$). We conclude that the $\Delta$--invariance follows from Corollary \ref{Corollary_Rd_sum_shift}.

It remains to show that we only need finitely many iterations. For that note that when we passed from $(x,y)$ to $(\tilde x, \tilde y)$,  at least one of the coordinates decreased; the same will happen in each next iteration. Hence, we will reach the boundary of the domain in finite number of steps, and there the statement becomes obvious, as the height functions become deterministic and the covariances vanish.

This finishes the proof of Theorem \ref{Theorem_6v_invariance}.

\subsection{Degeneration of the six-vertex model into SHE}

\label{Section_6v_degenerations}

In this section we outline how the shift-invariance results  for the colored stochastic six-vertex model degenerate into those for the stochastic heat equation in Section \ref{Section_SHE}.

The first step is to degenerate the six-vertex model into the asymmetric simple exclusion process (ASEP). For that we set $b_1=\eps \mathbf p$, $b_2=\eps \mathbf q$,  and observe the system in a finite neighborhood of the diagonal $(\eps^{-1} t, \eps^{-1} t)$, where $t$ plays the role of time. For the colorless ($N=1$) version in the limit $\eps\to 0$ one encounters the ASEP which is an interacting particle system on the integer lattice $\mathbb Z$ with each particle jumping to the right with intensity $\mathbf p$ and to the left with intensity $\mathbf q$; jumps to the sites already occupied by the particles are prohibited. We refer to \cite{BCG} and \cite{Ag} for the details on this limit transition.

The next step is to remove asymmetry by setting $\mathbf p=\mathbf q=1$. The result is the \emph{symmetric} simple exclusion process (SSEP). If our initial model had particles of $N+1$ different colors, then so does the SSEP. The evolution in this particle system on $\mathbb Z$ is quite simple: at each time each lattice spot $m\in\mathbb Z$ has one of the colors $0,1,\dots,N$, so that the configuration space of the model is $\{0,1,\dots,N\}^{\mathbb Z}$. Each lattice edge $(m,m+1)$ has an exponential clock of rate $1$ attached to it. Whenever the clock rings, the colors at $m$ and at $m+1$ are swapped. One can treat color $0$ as the absence of particles, and in this interpretation the $N=1$ case gives rise to the usual colorless SSEP. Note that our colored version is \emph{different} from that of \cite{Quastel}.

Finally, one can consider the large time diffusive scaling limit in the colored SSEP. The functions $\rho^i$ of Section \ref{Section_SHE} then arise as asymptotic (deterministic, but still evolving in time) densities of each of the colors $i=0,1,\dots,N$. The fields $\eta^i$ are Gaussian fluctuations of these densities. In particular, the conditions $\rho^0+\rho^1+\dots+\rho^N=1$ and $\eta^0+\eta^1+\dots+\eta^N=0$ are direct consequences of the fact that in the colored SSEP each lattice spot is necessarily occupied by exactly one of $N+1$ colors.

The limit transitions mentioned above are straightforward to prove in the model with one path, i.e.\ for the persistent random walks of Section \ref{Section_local_times}. For the full system in the quadrant and with infinitely many paths additional efforts are needed. The full proof of the transition from the stochastic six-vertex model to the colorless ASEP can be found in \cite{Ag}. The limit transition from another colored version of the SSEP to another colored version of SHE was proven in \cite{Quastel}. We expect that these proofs can be generalized to provide a rigorous justification of the limit transition from our colored stochastic six-vertex model to our version of the stochastic heat equation of Section \ref{Section_SHE}. Therefore, we treat Theorem \ref{Theorem_SHE_invariance} and Corollary \ref{Corollary_SHE_invariance} as simplified Gaussian versions of Theorem \ref{Theorem_6v_invariance} and Corollary \ref{Corollary_6v_invariance}.

\section{Inhomogeneous colored stochastic six-vertex model and Shift Theorem}

\label{Section_inhom_shift}

The aim of this section is to introduce the notations and give the statement of the general shift theorem for the colored six-vertex model, generalizing Theorems \ref{Theorem_6v_intro} and \ref{Theorem_6v_invariance}

\subsection{Stochastic $U_q(\wh{\mathfrak{sl}_{N+1}})$ vertex model}
\label{ssec:fundamental}

To begin, we recall the form of the $U_q(\wh{\mathfrak{sl}_{N+1}})$ $R$-matrix \cite{Jimbo1,Jimbo2,Bazhanov,FRT}. It acts in a tensor product $W_a \otimes W_b$ of two $(N+1)$-dimensional vector spaces, and takes the form
\begin{align}
\label{Rmat}
R_{ab}(z)
&=
\sum_{i=0}^{N}
\left(
R_z(i,i;i,i)
E^{(ii)}_a \otimes E^{(ii)}_b
\right)
\\
\nonumber
&+
\sum_{0 \leq i < j \leq N}
\left(
R_z(j,i;j,i)
E^{(ii)}_a \otimes E^{(jj)}_b
+
R_z(i,j;i,j)
E^{(jj)}_a \otimes E^{(ii)}_b
\right)
\\
\nonumber
&+
\sum_{0 \leq i < j \leq N}
\left(
R_z(j,i;i,j)
E^{(ij)}_a \otimes E^{(ji)}_b
+
R_z(i,j;j,i)
E^{(ji)}_a \otimes E^{(ij)}_b
\right)
\end{align}
where $E^{(ij)}_c \in {\rm End}(W_c)$ denotes the $(N+1) \times (N+1)$ elementary matrix with a $1$ at position $(i,j)$ and $0$ everywhere else, acting in $W_c \cong \mathbb{C}^{N+1}$. The matrix entries are rational functions of the spectral parameter $z$ and the quantization parameter $q$; they are given by
\begin{align}
\label{R-weights-a}
&
\left.
R_z(i,i;i,i)
=
1,
\quad
i \in \{0,1,\dots,N\},
\right.
\\
\nonumber
\\
\label{R-weights-bc}
&
\left.
\begin{array}{ll}
R_z(j,i;j,i)
=
\dfrac{q(1-z)}{1-qz},
&
\quad
R_z(i,j;i,j)
=
\dfrac{1-z}{1-qz}
\\ \\
R_z(j,i;i,j)
=
\dfrac{1-q}{1-qz},
&
\quad
R_z(i,j;j,i)
=
\dfrac{(1-q)z}{1-qz}
\end{array}
\right\}
\quad
i,j \in \{0,1,\dots,N\},
\quad  i<j.
\end{align}
All other matrix entries $R_z(i,j;k,\ell)$ which do not fall into a category listed above are by definition equal to $0$. The model described above differs slightly from the one listed in \cite{Jimbo2}, since its entries $R_{z}(j,i;j,i)$ and $R_{z}(i,j;i,j)$ are not symmetric for $i \not= j$. The asymmetric form that we use preserves the integrability of the model, and makes it stochastic\footnote{The weights that we use are non-negative for $0<q<1$, $0<z<1$, but might be negative for other values of parameters. In particular, in \eqref{unitarity} one of the matrices necessarily has negative matrix elements, as otherwise we would have a contradiction with \eqref{stoch}: two non-degenerate stochastic matrices can not be inverse to each other.}:
\begin{prop}
\label{prop:YB}
The $R$-matrix \eqref{Rmat} satisfies the Yang--Baxter equation and unitarity relations
\begin{align}
\label{YB}
&
R_{ab}(y/x) R_{ac}(z/x) R_{bc}(z/y)
=
R_{bc}(z/y) R_{ac}(z/x) R_{ab}(y/x),
\\
\label{unitarity}
&
R_{ab}(y/x) R_{ba}(x/y) = \mathrm{Id},
\end{align}
which hold as identities in ${\rm End}(W_a \otimes W_b \otimes W_c)$ and ${\rm End}(W_a \otimes W_b)$, respectively.
\end{prop}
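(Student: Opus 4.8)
\emph{Proof plan.} The Yang--Baxter equation and unitarity for the trigonometric $U_q(\wh{\mathfrak{sl}_{N+1}})$ $R$-matrix are classical \cite{Jimbo1,Jimbo2,Bazhanov,FRT} (the weights \eqref{R-weights-a}--\eqref{R-weights-bc} are a stochastic version of the standard ones, cf.~\cite{KMMO,BM,ABB}); my plan is to give a self-contained verification by reduction to the smallest ranks. First I would record the \emph{color-conservation} structure: every nonzero matrix entry $R_z(i,j;k,\ell)$ in \eqref{Rmat} satisfies $\{i,j\}=\{k,\ell\}$ as multisets, so both sides of \eqref{YB} preserve the decomposition of $W_a\otimes W_b\otimes W_c$ into subspaces spanned by the basis vectors with a fixed multiset of color labels, and likewise both sides of \eqref{unitarity} preserve the analogous decomposition of $W_a\otimes W_b$. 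Next I would observe that the entries in \eqref{R-weights-a}--\eqref{R-weights-bc} depend on the labels only through their \emph{order type} --- which labels coincide and in what order the distinct ones appear. Consequently, applying an order-preserving bijection to the (at most three) distinct colors occurring at a given triple of sites identifies the restriction of \eqref{YB} to the corresponding weight subspace with the \emph{same} identity for a smaller value of $N$; since three sites carry at most three distinct colors, it suffices to prove \eqref{YB} for $N=2$, and similarly \eqref{unitarity} for $N=1$.

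For \eqref{unitarity} with $N=1$: on the lines spanned by $e_0\otimes e_0$ and $e_1\otimes e_1$ both $R$-matrices are the identity, while on the plane spanned by $e_0\otimes e_1$ and $e_1\otimes e_0$ the operator $R_{ab}(z)$ is an explicit $2\times 2$ matrix read off from \eqref{R-weights-bc}; writing $R_{ba}(w)$ in terms of $R_{ab}(w)$ via the operator $P$ that transposes the two tensor factors, \eqref{unitarity} becomes a one-line $2\times 2$ computation in which the denominators cancel. For \eqref{YB} with $N=2$ I would split $(\mathbb{C}^{3})^{\otimes 3}$ into weight subspaces: three monochromatic lines, on which all operators in \eqref{YB} act as the identity; six $3$-dimensional subspaces in which exactly two colors occur, on each of which the weights coincide with those of the rank-$1$ stochastic six-vertex $R$-matrix, so that \eqref{YB} there is the classical six-vertex Yang--Baxter relation (a short $3\times 3$ check); and one $6$-dimensional subspace, spanned by the basis vectors $e_i\otimes e_j\otimes e_k$ with $\{i,j,k\}=\{0,1,2\}$ and indexed by $S_3$.

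The $6$-dimensional \emph{genuinely colored} sector is the only substantive step, and I expect it to be the main obstacle. There each factor in \eqref{YB} acts by a local sorting of the two relevant colors --- keeping them with the weights $R_z(j,i;j,i)$, $R_z(i,j;i,j)$, or swapping them with $R_z(j,i;i,j)$, $R_z(i,j;j,i)$ --- so the two sides of \eqref{YB} are $6\times 6$ matrices whose entries are products of three such weights, and matching them is a finite identity of rational functions in the rapidities $x,y,z$. I would organize the verification by tracking how a fixed ordered triple of distinct colors is permuted along each of the two sorting routes, which collapses the $6\times 6$ identity to a small number of scalar identities in the elementary weight functions $\tfrac{q(1-w)}{1-qw}$, $\tfrac{1-w}{1-qw}$, $\tfrac{1-q}{1-qw}$, $\tfrac{(1-q)w}{1-qw}$ evaluated at $w=y/x$, $w=z/x$, $w=z/y$; these are then checked by clearing denominators. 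Everything outside this sector is either trivial or the classical rank-$1$ computation, so this $S_3$-block is where all the care is needed. (Alternatively, one may bypass the direct computation by noting that \eqref{R-weights-a}--\eqref{R-weights-bc} define, up to an overall scalar, the evaluation $R$-matrix of $U_q(\wh{\mathfrak{sl}_{N+1}})$, whence \eqref{YB} and \eqref{unitarity} are inherited from the quantum-group origin as in \cite{Jimbo1,Jimbo2,Bazhanov,FRT,KMMO}.)
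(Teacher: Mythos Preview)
Your proposal is correct and considerably more detailed than what the paper does. The paper dispatches Proposition~\ref{prop:YB} in a single sentence: ``The proofs of the above two propositions are by a direct computation. Their equivalent forms are also contained in \cite{Jimbo1,Jimbo2,Bazhanov,FRT}.'' So the paper's ``proof'' is essentially a citation, whereas you have outlined an actual verification strategy.

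Your reduction-by-color-conservation-and-order-type argument is sound: since each $R_z(i,j;k,\ell)$ preserves the multiset $\{i,j\}$ and the weights \eqref{R-weights-a}--\eqref{R-weights-bc} depend only on whether the two labels are equal or, if distinct, which is larger, both sides of \eqref{YB} restricted to a weight subspace with at most three distinct colors are identified (via an order-preserving relabelling) with the same identity for $N\le 2$, and likewise \eqref{unitarity} reduces to $N\le 1$. The breakdown of the $N=2$ case into monochromatic, two-color (six-vertex), and the $6$-dimensional $S_3$-block is the standard and efficient way to organize the finite check. Your alternative of invoking the quantum-group origin is exactly the paper's citation route. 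Either way, there is no gap; you have simply written out what the paper leaves implicit.
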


\begin{prop}
\label{prop:Rstoch}
For any fixed $i,j \in \{0,1,\dots,N\}$ there holds
\begin{align}
\label{stoch}
\sum_{k=0}^{N}
\sum_{\ell=0}^{N}
R_z(i,j;k,\ell)
=
1.
\end{align}
Equivalently, all rows of the matrix \eqref{Rmat} sum to $1$.
\end{prop}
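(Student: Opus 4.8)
The plan is to prove Proposition~\ref{prop:Rstoch} by a direct case analysis on the source pair $(i,j)$. This is the natural approach because all nonzero entries of $R_{ab}(z)$ are listed explicitly in \eqref{R-weights-a}--\eqref{R-weights-bc}, every other entry $R_z(i,j;k,\ell)$ being zero by definition, and the listed patterns are disjoint in their source pairs. Hence for each fixed $(i,j)$ the sum $\sum_{k,\ell} R_z(i,j;k,\ell)$ contains at most two nonzero terms, and the task reduces to checking that in each case those terms add up to $1$.

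First I would dispose of the diagonal case $i=j$: scanning the three families in \eqref{Rmat}, the only entry whose source is $(i,i)$ is $R_z(i,i;i,i)$, which equals $1$ by \eqref{R-weights-a}, so the row sum is $1$ at once. Next, the case $i<j$: the entries with source $(i,j)$ appearing in \eqref{R-weights-bc} are $R_z(i,j;i,j)=\frac{1-z}{1-qz}$ and $R_z(i,j;j,i)=\frac{(1-q)z}{1-qz}$, and one computes $\frac{1-z}{1-qz}+\frac{(1-q)z}{1-qz}=\frac{1-qz}{1-qz}=1$. Symmetrically, for $i>j$ the relevant entries are $R_z(i,j;i,j)=\frac{q(1-z)}{1-qz}$ and $R_z(i,j;j,i)=\frac{1-q}{1-qz}$, whose sum is $\frac{q(1-z)+(1-q)}{1-qz}=\frac{1-qz}{1-qz}=1$. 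These three possibilities exhaust all source pairs, which establishes \eqref{stoch} and hence that every row of the matrix \eqref{Rmat} sums to $1$.

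There is essentially no obstacle here: the statement is a finite algebraic identity, true precisely because of the asymmetric ``stochastic gauge'' adopted in \eqref{R-weights-bc} (as opposed to the symmetric normalization of \cite{Jimbo2}). The only point that warrants a moment of care is verifying that for each fixed source pair no further nonzero entries exist beyond the one or two identified above; this follows immediately from the convention that any $R_z(i,j;k,\ell)$ not matching one of the listed patterns vanishes, together with the fact that those patterns have pairwise distinct source pairs. I would record the computation exactly in the three-case form above.
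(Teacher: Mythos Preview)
Your proof is correct and is exactly the direct computation the paper alludes to; the paper itself records no more than ``by a direct computation,'' so your three-case analysis simply spells out what that computation is.
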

The proofs of the above two propositions are by a direct computation. Their equivalent forms are also contained in \cite{Jimbo1,Jimbo2,Bazhanov,FRT}.

We shall denote the entries of the $R$-matrix pictorially using vertices. A vertex is the intersection of an oriented horizontal and vertical line, with a state variable $i \in \{0,1,\dots,N\}$ assigned to each of the connected horizontal and vertical line segments. The $R$-matrix entries are identified with such vertices as shown below:
\begin{align}
\label{R-vert}
R_z(i,j; k,\ell)
=
\tikz{0.5}{
\draw[lgray,line width=1.5pt,->] (-1,0) -- (1,0);
\draw[lgray,line width=1.5pt,->] (0,-1) -- (0,1);
\node[left] at (-1,0) {\tiny $j$};\node[right] at (1,0) {\tiny $\ell$};
\node[below] at (0,-1) {\tiny $i$};\node[above] at (0,1) {\tiny $k$};
},
\quad
i,j,k,\ell \in \{0,1,\dots,N\},
\end{align}
where the dependence on the spectral parameter\footnote{We will use both of the terms {\it spectral parameter} and {\it rapidity} in this work, but for slightly different purposes. The variable attached to a lattice line will be termed ``rapidity'' whereas the argument of an $R$-matrix, which is the ratio of the vertical and horizontal rapidities passing through that vertex, will be termed ``spectral parameter''.} $z$ is implicit on the right hand side. One can interpret the above figure as the propagation of colored lattice paths through a vertex: each edge label $i \geq 1$ represents a colored path superimposed over that edge, while the case $i=0$ indicates that no path is present. The {\it incoming} paths are those situated at the left and bottom edges of the vertex; those at the right and top are called {\it outgoing}. The weight of the vertex, $R_z(i,j; k,\ell)$, vanishes identically unless the total flux of colors through the vertex is preserved, \ie\ unless the ensemble of incoming colors is the same as the ensemble of outgoing colors:
\begin{align}
\label{conserve}
R_{z}(i,j;k,\ell) = 0,
\quad
\text{unless}
\
i = k,\ j=\ell
\quad
\text{and/or}
\quad
i = \ell,\ j=k.
\end{align}
This gives rise to five categories of non-vanishing vertices, as shown in Figure \ref{fund-vert}.
\begin{figure}[t]
\begin{align*}
\begin{tabular}{|c|c|c|}
\hline
\quad
\tikz{0.5}{
    \draw[lgray,line width=1.5pt,->] (-1,0) -- (1,0);
    \draw[lgray,line width=1.5pt,->] (0,-1) -- (0,1);
    \node[left] at (-1,0) {\tiny $i$};\node[right] at (1,0) {\tiny $i$};
    \node[below] at (0,-1) {\tiny $i$};\node[above] at (0,1) {\tiny $i$};
}
\quad
&
\quad
\tikz{0.5}{
    \draw[lgray,line width=1.5pt,->] (-1,0) -- (1,0);
    \draw[lgray,line width=1.5pt,->] (0,-1) -- (0,1);
    \node[left] at (-1,0) {\tiny $i$};\node[right] at (1,0) {\tiny $i$};
    \node[below] at (0,-1) {\tiny $j$};\node[above] at (0,1) {\tiny $j$};
}
\quad
&
\quad
\tikz{0.5}{
    \draw[lgray,line width=1.5pt,->] (-1,0) -- (1,0);
    \draw[lgray,line width=1.5pt,->] (0,-1) -- (0,1);
    \node[left] at (-1,0) {\tiny $i$};\node[right] at (1,0) {\tiny $j$};
    \node[below] at (0,-1) {\tiny $j$};\node[above] at (0,1) {\tiny $i$};
}
\quad
\\[1.3cm]
\quad
$1$
\quad
&
\quad
$\dfrac{q(1-z)}{1-qz}$
\quad
&
\quad
$\dfrac{1-q}{1-qz}$
\quad
\\[0.7cm]
\hline
&
\quad
\tikz{0.5}{
    \draw[lgray,line width=1.5pt,->] (-1,0) -- (1,0);
    \draw[lgray,line width=1.5pt,->] (0,-1) -- (0,1);
    \node[left] at (-1,0) {\tiny $j$};\node[right] at (1,0) {\tiny $j$};
    \node[below] at (0,-1) {\tiny $i$};\node[above] at (0,1) {\tiny $i$};
}
\quad
&
\quad
\tikz{0.5}{
    \draw[lgray,line width=1.5pt,->] (-1,0) -- (1,0);
    \draw[lgray,line width=1.5pt,->] (0,-1) -- (0,1);
    \node[left] at (-1,0) {\tiny $j$};\node[right] at (1,0) {\tiny $i$};
    \node[below] at (0,-1) {\tiny $i$};\node[above] at (0,1) {\tiny $j$};
}
\quad
\\[1.3cm]
&
\quad
$\dfrac{1-z}{1-qz}$
\quad
&
\quad
$\dfrac{(1-q)z}{1-qz}$
\quad
\\[0.7cm]
\hline
\end{tabular}
\end{align*}

\caption{Five types of vertices and corresponding weights. We assume that $0 \leq i < j \leq N$.
These are the pictorial representations of the five types of weights in \eqref{R-weights-a}, \eqref{R-weights-bc}. \label{fund-vert}
}
\end{figure}

Having set up these vertex notations, the relations of Propositions \ref{prop:YB} and \ref{prop:Rstoch} then have simple graphical interpretations. The Yang--Baxter equation \eqref{YB} becomes
\begin{align}
\label{graph-YB}
\sum_{0 \leq k_1,k_2,k_3 \leq N}
\tikz{0.6}{
\draw[lgray,line width=1.5pt,->]
(-2,1) node[above,scale=0.6] {\color{black} $i_1$} -- (-1,0) node[below,scale=0.6] {\color{black} $k_1$} -- (1,0) node[right,scale=0.6] {\color{black} $j_1$};
\draw[lgray,line width=1.5pt,->]
(-2,0) node[below,scale=0.6] {\color{black} $i_2$} -- (-1,1) node[above,scale=0.6] {\color{black} $k_2$} -- (1,1) node[right,scale=0.6] {\color{black} $j_2$};
\draw[lgray,line width=1.5pt,->]
(0,-1) node[below,scale=0.6] {\color{black} $i_3$} -- (0,0.5) node[scale=0.6] {\color{black} $k_3$} -- (0,2) node[above,scale=0.6] {\color{black} $j_3$};
\node[left] at (-2.2,1) {$x \rightarrow$};
\node[left] at (-2.2,0) {$y \rightarrow$};
\node[below] at (0,-1.4) {$\uparrow$};
\node[below] at (0,-2) {$z$};
}
\quad
=
\quad
\sum_{0 \leq k_1,k_2,k_3 \leq N}
\tikz{0.6}{
\draw[lgray,line width=1.5pt,->]
(-1,1) node[left,scale=0.6] {\color{black} $i_1$} -- (1,1) node[above,scale=0.6] {\color{black} $k_1$} -- (2,0) node[below,scale=0.6] {\color{black} $j_1$};
\draw[lgray,line width=1.5pt,->]
(-1,0) node[left,scale=0.6] {\color{black} $i_2$} -- (1,0) node[below,scale=0.6] {\color{black} $k_2$} -- (2,1) node[above,scale=0.6] {\color{black} $j_2$};
\draw[lgray,line width=1.5pt,->]
(0,-1) node[below,scale=0.6] {\color{black} $i_3$} -- (0,0.5) node[scale=0.6] {\color{black} $k_3$} -- (0,2) node[above,scale=0.6] {\color{black} $j_3$};
\node[left] at (-1.5,1) {$x \rightarrow$};
\node[left] at (-1.5,0) {$y \rightarrow$};
\node[below] at (0,-1.4) {$\uparrow$};
\node[below] at (0,-2) {$z$};
}
\end{align}
for all fixed indices $i_1,i_2,i_3,j_1,j_2,j_3 \in \{0,1,\dots,N\}$; the unitarity relation \eqref{unitarity} becomes
\begin{align}
\label{graph-unitarity}
\sum_{0 \leq k_1,k_2 \leq N}
\tikz{0.6}{
\draw[lgray,line width=1.5pt,->,rounded corners] (-1,0) node[left,scale=0.6] {\color{black} $i_1$} -- (1,0) node[below,scale=0.6] {\color{black} $k_1$} -- (1,2) node[above,scale=0.6] {\color{black} $j_1$};
\draw[lgray,line width=1.5pt,->,rounded corners] (0,-1) node[below,scale=0.6] {\color{black} $i_2$} -- (0,1) node[above,scale=0.6] {\color{black} $k_2$} -- (2,1) node[right,scale=0.6] {\color{black} $j_2$};
\node[left] at (-1.5,0) {$x \rightarrow$};
\node[below] at (0,-1.4) {$\uparrow$};
\node[below] at (0,-2) {$y$};
}
=
{\bm 1}_{i_1=j_1} {\bm 1}_{i_2=j_2},
\end{align}
for all fixed indices $i_1,i_2,j_1,j_2 \in \{0,1,\dots,N\}$; and the stochasticity relation \eqref{stoch} reads
\begin{align}
\label{stoch-graph}
\sum_{k=0}^{N}
\sum_{\ell=0}^{N}
\tikz{0.5}{
\draw[lgray,line width=1.5pt,->] (-1,0) -- (1,0);
\draw[lgray,line width=1.5pt,->] (0,-1) -- (0,1);
\node[left] at (-1,0) {\tiny $j$};\node[right] at (1,0) {\tiny $\ell$};
\node[below] at (0,-1) {\tiny $i$};\node[above] at (0,1) {\tiny $k$};
}
=
1,
\end{align}
for all fixed indices $i,j \in \{0,1,\dots,N\}$. We will use these graphical identities frequently throughout the text.

\subsection{Vertex splitting}
\label{ssec:split}

A basic property of the $R$-matrix \eqref{Rmat} is that when its spectral parameter is set to $1$, it reduces to a permutation matrix. More precisely, analyzing the matrix entries \eqref{R-weights-a} and \eqref{R-weights-bc}, we see that both $R_z(j,i;j,i)$ and $R_z(i,j;i,j)$ vanish at $z=1$, while all remaining entries assume the value $1$ at $z=1$. From a graphical point of view, this can be understood as a ``splitting'' of the vertex:
\begin{align}
\label{R-split}
R_{1}(i,j; k,\ell)
=
{\bm 1}_{i=\ell}
\cdot
{\bm 1}_{j=k}
=
\tikz{0.5}{
\draw[lgray,line width=1.5pt,->] plot coordinates {(-1,0)(-0.25,0)(0,0.25)(0,1)};
\draw[lgray,line width=1.5pt,->] plot coordinates {(0,-1)(0,-0.25)(0.25,0)(1,0)};
\node[left] at (-1,0) {\tiny $j$};\node[right] at (1,0) {\tiny $\ell$};
\node[below] at (0,-1) {\tiny $i$};\node[above] at (0,1) {\tiny $k$};
},
\quad
i,j,k,\ell \in \{0,1,\dots,N\}.
\end{align}

\subsection{Reflection symmetry of vertex weights}
\label{ssec:reflect}

Another elementary property of the $U_q(\widehat{\mathfrak{sl}_{N+1}})$ vertex model is the invariance of its Boltzmann weights under the combined operation of (i) reflecting the vertex about its SW--NE diagonal, and (ii) replacing all colors $i$ that appear by their conjugate $N-i$. More precisely, one has the symmetry
\begin{align}
\label{reflect}
\tikz{0.5}{
\draw[lgray,line width=1.5pt,->] (-1,0) -- (1,0);
\draw[lgray,line width=1.5pt,->] (0,-1) -- (0,1);
\node[left] at (-1,0) {\tiny $j$};\node[right] at (1,0) {\tiny $\ell$};
\node[below] at (0,-1) {\tiny $i$};\node[above] at (0,1) {\tiny $k$};
}
=
\tikz{0.5}{
\draw[lgray,line width=1.5pt,->] (-1,0) -- (1,0);
\draw[lgray,line width=1.5pt,->] (0,-1) -- (0,1);
\node[left] at (-1,0) {\tiny $N-i$};\node[right] at (1,0) {\tiny $N-k$};
\node[below] at (0,-1) {\tiny $N-j$};\node[above] at (0,1) {\tiny $N-\ell$};
},
\quad
i,j,k,\ell \in \{0,1,\dots,N\}.
\end{align}
This is immediately verified by consulting the table of Figure \ref{fund-vert} of vertices that have non-vanishing weights.

\subsection{Color-merging}

\label{Section_merging}

Let us now discuss an important property of the colored six-vertex model with weights of Figure \ref{fund-vert} which will be indispensable in our future proofs. We will refer to it as {\it color-merging}; it can be viewed as a refinement of the stochasticity property \eqref{stoch-graph}.

In what follows, let $\mathcal{C} = \{c,c+1,\dots\} \subset \{0,1,\dots,N\}$ denote a contiguous subset of colors, where $0 \leq c \leq N$ and the number of elements in $\mathcal{C}$ is arbitrary. We denote by $\bar{\mathcal{C}}$ the complementary set:
\begin{align*}
\bar{\mathcal{C}} = \{0,1,\dots,N\} \backslash \mathcal{C}.
\end{align*}
Given such a subset $\mathcal{C}$, we define an associated color projection:
\begin{align}
\label{merge-A}
[k]_{\mathcal{C}} =
\left\{
\begin{array}{ll}
c, & \qquad k \in \mathcal{C},
\\ \\
k, & \qquad k \in \bar{\mathcal{C}},
\end{array}
\right.
\end{align}
for all $k \in \{0,1,\dots,N\}$.
\begin{prop}
\label{prop-merge}
We have the following vertex relations, obtained by constraining the value of the color at one of the outgoing edges to lie in the complement of $\mathcal{C}$, while summing the color at the remaining outgoing edge over all values in $\mathcal{C}$:
\begin{align}
\label{merge1}
\sum_{k \in \mathcal{C}}
\tikz{0.5}{
\draw[lgray,line width=1.5pt,->] (-1,0) -- (1,0);
\draw[lgray,line width=1.5pt,->] (0,-1) -- (0,1);
\node[left] at (-1,0) {\tiny $j$};\node[right] at (1,0) {\tiny $\ell \in \bar{\mathcal{C}}$};
\node[below] at (0,-1) {\tiny $i$};\node[above] at (0,1) {\tiny $k$};
}
=
\tikz{0.5}{
\draw[lgray,line width=1.5pt,->] (-1,0) -- (1,0);
\draw[lgray,line width=1.5pt,->] (0,-1) -- (0,1);
\node[left] at (-1,0) {\tiny $[j]_{\mathcal{C}}$};\node[right] at (1,0) {\tiny $\ell$};
\node[below] at (0,-1) {\tiny $[i]_{\mathcal{C}}$};\node[above] at (0,1) {\tiny $c$};
},
\quad\quad
\sum_{\ell \in \mathcal{C}}
\tikz{0.5}{
\draw[lgray,line width=1.5pt,->] (-1,0) -- (1,0);
\draw[lgray,line width=1.5pt,->] (0,-1) -- (0,1);
\node[left] at (-1,0) {\tiny $j$};\node[right] at (1,0) {\tiny $\ell$};
\node[below] at (0,-1) {\tiny $i$};\node[above] at (0,1)
{\tiny $k \in \bar{\mathcal{C}}$};
}
=
\tikz{0.5}{
\draw[lgray,line width=1.5pt,->] (-1,0) -- (1,0);
\draw[lgray,line width=1.5pt,->] (0,-1) -- (0,1);
\node[left] at (-1,0) {\tiny $[j]_{\mathcal{C}}$};\node[right] at (1,0) {\tiny $c$};
\node[below] at (0,-1) {\tiny $[i]_{\mathcal{C}}$};\node[above] at (0,1) {\tiny $k$};
}.
\end{align}
By summing both of the outgoing edges over values in the set $\mathcal{C}$, one has
\begin{align}
\label{merge2}
\sum_{k \in \mathcal{C}}
\sum_{\ell \in \mathcal{C}}
\tikz{0.5}{
\draw[lgray,line width=1.5pt,->] (-1,0) -- (1,0);
\draw[lgray,line width=1.5pt,->] (0,-1) -- (0,1);
\node[left] at (-1,0) {\tiny $j$};\node[right] at (1,0) {\tiny $\ell$};
\node[below] at (0,-1) {\tiny $i$};\node[above] at (0,1) {\tiny $k$};
}
=
\tikz{0.5}{
\draw[lgray,line width=1.5pt,->] (-1,0) -- (1,0);
\draw[lgray,line width=1.5pt,->] (0,-1) -- (0,1);
\node[left] at (-1,0) {\tiny $[j]_{\mathcal{C}}$};\node[right] at (1,0) {\tiny $c$};
\node[below] at (0,-1) {\tiny $[i]_{\mathcal{C}}$};\node[above] at (0,1) {\tiny $c$};
}
=
\mathbf{1}_{i \in \mathcal{C}}
\cdot
\mathbf{1}_{j \in \mathcal{C}}.
\end{align}
\end{prop}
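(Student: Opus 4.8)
The plan is to prove all three identities by direct inspection of the five vertex types in Figure~\ref{fund-vert}; no integrability input is needed beyond the explicit weights and the conservation rule \eqref{conserve}. First I would record the two structural features of these weights that the argument uses. By \eqref{conserve}, $R_z(a,b;k,\ell)=0$ unless $\{a,b\}=\{k,\ell\}$ as multisets; writing $a$ for the incoming vertical (bottom) color and $b$ for the incoming horizontal (left) color, this means that when $a\ne b$ only the ``transmission'' output $(k,\ell)=(a,b)$ and the ``swap'' output $(k,\ell)=(b,a)$ can be nonzero, and when $a=b$ only $(k,\ell)=(a,a)$, with weight $1$. Moreover, reading Figure~\ref{fund-vert}, the transmission weight equals $\tfrac{q(1-z)}{1-qz}$ when $a>b$ and $\tfrac{1-z}{1-qz}$ when $a<b$, while the swap weight equals $\tfrac{1-q}{1-qz}$ when $a>b$ and $\tfrac{(1-q)z}{1-qz}$ when $a<b$. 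Thus a nonzero weight is determined by the pair (transmission or swap, $\mathrm{sign}(a-b)$).

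The only arithmetic fact I would then isolate is a consequence of the contiguity of $\mathcal C$: if $m\in\mathcal C$ and $\ell\in\bar{\mathcal C}$, then $\ell$ does not lie strictly between two elements of $\mathcal C$, so $\mathrm{sign}(m-\ell)$ takes the same value for all $m\in\mathcal C$, and in particular $\mathrm{sign}(m-\ell)=\mathrm{sign}(c-\ell)$. This is precisely what guarantees that collapsing $\mathcal C$ onto its minimum $c$ preserves the order relation that selects between the two possible transmission (resp.\ swap) weights.

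With these in hand the verifications are short. For the first identity of \eqref{merge1}, fix $\ell\in\bar{\mathcal C}$ and sum $R_z(i,j;k,\ell)$ over $k\in\mathcal C$. A summand is nonzero only if $\{i,j\}=\{k,\ell\}$, which (as $k\in\mathcal C\not\ni\ell$) forces $i\ne j$ with exactly one of $i,j$ in $\mathcal C$ and the other equal to $\ell$, so a single $k\in\mathcal C$ contributes. If $i\in\mathcal C$ and $j=\ell$, the left side is the transmission weight with $(a,b)=(i,\ell)$ and the right side $R_z(c,\ell;c,\ell)$ is the transmission weight with $(a,b)=(c,\ell)$, and these agree because $\mathrm{sign}(i-\ell)=\mathrm{sign}(c-\ell)$; if $j\in\mathcal C$ and $i=\ell$, both sides are the corresponding swap weights, with $(a,b)=(\ell,j)$ and $(\ell,c)$, which agree for the same reason. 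In the remaining cases --- $i,j$ both in $\mathcal C$, or both in $\bar{\mathcal C}$ --- both sides vanish, using $[i]_{\mathcal C}=[j]_{\mathcal C}=c\ne\ell$ in the first and $[i]_{\mathcal C}=i$, $[j]_{\mathcal C}=j\in\bar{\mathcal C}\not\ni c$ in the second. The second identity of \eqref{merge1} follows by the same case analysis with the two outgoing edges interchanged. For \eqref{merge2}, the sum $\sum_{k,\ell\in\mathcal C}R_z(i,j;k,\ell)$ is nonzero only if $i,j\in\mathcal C$, and in that case every nonzero term already has $\{k,\ell\}=\{i,j\}\subseteq\mathcal C$, so the restriction is vacuous and the sum equals $1$ by stochasticity \eqref{stoch}; this matches $R_z([i]_{\mathcal C},[j]_{\mathcal C};c,c)=\mathbf 1_{i\in\mathcal C}\mathbf 1_{j\in\mathcal C}$.

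There is no substantial obstacle here: the proof is a finite case check against the table of Figure~\ref{fund-vert}. The one place where something genuine is being used --- and the reason contiguity of $\mathcal C$ cannot be dropped --- is the sign identity $\mathrm{sign}(m-\ell)=\mathrm{sign}(c-\ell)$ for $m\in\mathcal C$, $\ell\in\bar{\mathcal C}$; everything else is bookkeeping.
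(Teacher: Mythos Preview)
Your proof is correct and follows the same approach as the paper: both arguments observe that the sums in \eqref{merge1} have at most one nonzero term (and \eqref{merge2} at most two), and then match weights by direct inspection of Figure~\ref{fund-vert}. Your write-up is more explicit than the paper's one-line sketch, and your isolation of the order-preservation fact $\mathrm{sign}(m-\ell)=\mathrm{sign}(c-\ell)$ for $m\in\mathcal C$, $\ell\in\bar{\mathcal C}$ pinpoints exactly where the contiguity hypothesis on $\mathcal C$ enters.
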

For the proof it suffices to notice that in each sum in \eqref{merge1}  there is a single non-zero element in the left-hand side. Similarly, in \eqref{merge2} there are at most two non-zero elements in the left-hand side.

In what follows, we make use of two variants of the notation \eqref{merge-A}. Fix an integer $0 \leq m \leq N$. When $\mathcal{C} = \{m,m+1,\dots,N\}$ we define
\begin{align}
\label{merge-up}
[k]_m
=
\left\{
\begin{array}{ll}
k, \qquad\qquad & k \in \{0,1,\dots,m-1\},
\\
\\
m, \qquad\qquad & k \in \{m,m+1,\dots,N\}.
\end{array}
\right.
\end{align}
In a similar vein, when $\mathcal{C} = \{0,1,\dots,m\}$ we define
\begin{align}
\label{merge-down}
[k]^m
=
\left\{
\begin{array}{ll}
0, \qquad\qquad & k \in \{0,1,\dots,m\},
\\
\\
k, \qquad\qquad & k \in \{m+1,\dots,N\}.
\end{array}
\right.
\end{align}

\subsection{Down-right paths and domains}

\begin{defn}[Down-right paths]
A {\it down-right path} $P$ of length $M$ is a sequence of lattice points $P = (a_1,b_1) \rightarrow (a_2,b_2) \rightarrow \cdots \rightarrow (a_{M+1},b_{M+1})$, such that for all $1 \leq k \leq M$ one has
\begin{align}
(a_{k+1},b_{k+1}) = (a_k+1,b_k),
\qquad
\text{or}
\qquad
(a_{k+1},b_{k+1}) = (a_k,b_k-1).
\end{align}
\end{defn}

In any down-right path, an edge of the form $(a_k,b_k) \rightarrow (a_k+1,b_k)$ is called {\it horizontal}, while edges of the form $(a_k,b_k) \rightarrow (a_k,b_k-1)$ are called {\it vertical}.

For each coordinate $(a_k,b_k)$ of a down-right path, the quantity $a_k+b_k$ tells us on which anti-diagonal of the lattice we are situated; see the left panel of Figure \ref{fig:domains}. Given two down-right paths
\begin{align*}
P &=
(a_1,b_1) \rightarrow
(a_2,b_2) \rightarrow
\cdots
\rightarrow (a_{M+1},b_{M+1}),
\\
\tilde{P} &=
(\tilde{a}_1,\tilde{b}_1) \rightarrow
(\tilde{a}_2,\tilde{b}_2) \rightarrow
\cdots \rightarrow (\tilde{a}_{M+1},\tilde{b}_{M+1})
\end{align*}
with the same starting point, $(a_1,b_1)=(\tilde a_1,\tilde b_1)$,
we say that $P \geq \tilde{P}$ if $a_k + b_k \geq \tilde{a}_k + \tilde{b}_k$ for all $1 < k < M+1$; that is, path $P$ can never move to an anti-diagonal which is strictly below that of path $\tilde{P}$. Equivalently, $P$ and $\tilde{P}$ are allowed to touch or overlap along edges of the lattice, but never to cross each other. See the right panel of Figure \ref{fig:domains} for an illustration.

\begin{figure}[t]
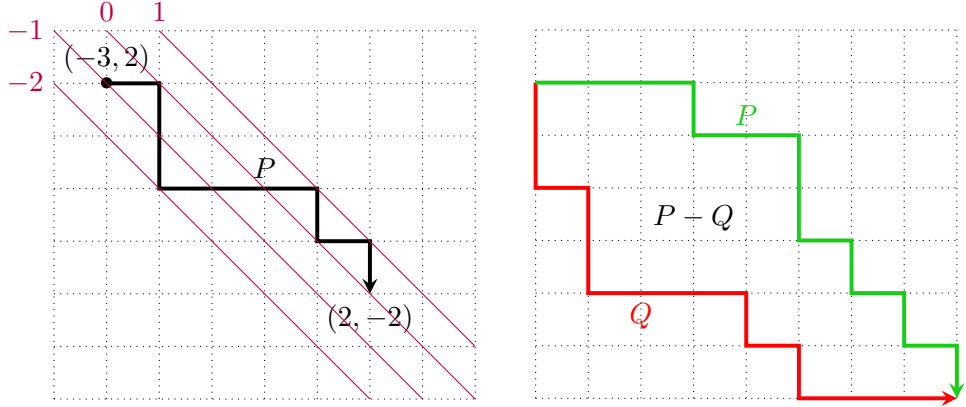

\begin{align*}
\tikz{0.7}{
\foreach\x in {0,...,8}{
\draw[dotted] (\x,-4) -- (\x,3);
}
\foreach\y in {-4,...,3}{
\draw[dotted] (0,\y) -- (8,\y);
}
\node at (1,2) {$\bullet$};
\node[above] at (1,2) {$(-3,2)$};
\node[below] at (6,-2) {$(2,-2)$};
\node[above] at (4,0) {$P$};
\draw[line width=1.5pt,->] (1,2) -- (2,2) -- (2,0) -- (5,0) -- (5,-1) -- (6,-1) -- (6,-2);
\draw[purple] (0,2) -- (6,-4); \node[left,purple] at (0,2) {$-2$};
\draw[purple] (0,3) -- (7,-4); \node[left,purple] at (0,3) {$-1$};
\draw[purple] (1,3) -- (8,-4); \node[above,purple] at (1,3) {$0$};
\draw[purple] (2,3) -- (8,-3); \node[above,purple] at (2,3) {$1$};
}
\qquad
\tikz{0.7}{
\foreach\x in {0,...,8}{
\draw[dotted] (\x,-4) -- (\x,3);
}
\foreach\y in {-4,...,3}{
\draw[dotted] (0,\y) -- (8,\y);
}
\node[above] at (3,-1) {$P-Q$};
\node[above,green] at (4,1) {$P$};
\node[below,red] at (2,-2) {$Q$};
\draw[line width=1.5pt,->,red] (0,2) -- (0,0) -- (1,0) -- (1,-2) -- (4,-2) -- (4,-3) -- (5,-3) -- (5,-4) -- (8,-4);
\draw[line width=1.5pt,->,green] (0,2) -- (3,2) -- (3,1) -- (5,1) -- (5,-1) -- (6,-1) -- (6,-2) -- (7,-2) -- (7,-3) -- (8,-3) -- (8,-4);
\phantom{
\node[above,purple] at (1,3) {$1$};
}
}
\end{align*}
\caption{Left panel: a down-right path $P$ of length $9$, given by the sequence $P = (-3,2) \rightarrow (-2,2) \rightarrow (-2,1) \rightarrow (-2,0) \rightarrow (-1,0) \rightarrow (0,0) \rightarrow (1,0) \rightarrow (1,-1) \rightarrow (2,-1) \rightarrow (2,-2)$. The anti-diagonals of the lattice are indicated on the picture. Right panel: two down-right paths, $P$ (shown in green) and $Q$ (shown in red), beginning and ending at the same points, and satisfying $P\geq Q$. The region cut out by the two paths is the down-right domain $P-Q$.}
\label{fig:domains}
\end{figure}

\begin{defn}[Down-right domains]
Let $M \geq 2$ be a positive integer. Fix two points in the lattice $(a,b)$ and $(c,d)$ such that $a < c$, $b > d$, and $c-a+b-d = M$. We denote the set of all down-right paths beginning at $(a,b)$ and terminating at $(c,d)$ by $\mathfrak{P}_M\{ (a,b) \rightarrow (c,d) \}$; the index ``$M$'' is due to the fact that any such path has length $M$.

Choose two down-right paths $P,Q \in \mathfrak{P}_M\{ (a,b) \rightarrow (c,d) \}$ such that $P \geq Q$. The corresponding {\it down-right domain} is the region framed by paths $P$ and $Q$, we denote it by $P-Q$; see the right panel of Figure \ref{fig:domains}.
\end{defn}

\begin{defn}[Concatenation]
\label{defn:concat}
Let $P_1 \in \mathfrak{P}_{M_1}\{ (a,b) \rightarrow (c,d) \}$ and $P_2 \in \mathfrak{P}_{M_2}\{ (c,d) \rightarrow (e,f) \}$ be two down-right paths of length $M_1$ and $M_2$, respectively, which share $(c,d)$ as a common end/start point. We define a down-right path of length $M_1+M_2$ by concatenating the two paths, and denote this by $P_1 \cup P_2$.
\end{defn}

\subsection{Partition functions on down-right domains}

Given a down-right domain $P-Q$, we now put some decorations on the paths $P$ and $Q$ which will allow us to view the cut-out region as a partition function in the colored six-vertex model with weights of Figure \ref{fund-vert}.

\begin{defn}[Colored down-right paths]
Let $P$ be a down-right path of length $M$. A {\it coloring} of $P$ is an assignment of a nonnegative integer $i_k$, $1 \leq k \leq M$, to each of the $M$ edges traced out by $P$. We denote such a coloring by
\begin{align}
\Big[P; (i_1,\dots,i_M) \Big]
=
(a_1,b_1) \xrightarrow{i_1}
(a_2,b_2) \xrightarrow{i_2}
\cdots
\xrightarrow{i_M} (a_{M+1},b_{M+1}).
\end{align}
\end{defn}

\begin{defn}[Rapidity assignments]
\label{def:rapidity}
Let $P$ be the down-right path
\begin{align*}
P =
(a_1,b_1) \rightarrow
(a_2,b_2) \rightarrow
\cdots
\rightarrow (a_{M+1},b_{M+1}),
\end{align*}
and fix another down-right path $Q \in \mathfrak{P}_{M}\{(a_1,b_1) \rightarrow (a_{M+1},b_{M+1})\}$ such that $P > Q$.

For each vertical edge $(a_i,b_i) \rightarrow (a_{i+1},b_{i+1}) \in Q$, there is a well-defined row of squares in $P-Q$ which lie to the right of this edge (with the same vertical coordinate). We assign that row a complex rapidity $x_i$. Similarly, for each horizontal edge $(a_j,b_j) \rightarrow (a_{j+1},b_{j+1}) \in Q$, there is a well-defined column of squares in $P-Q$ which lie above this edge (with the same horizontal coordinate). We assign this column a complex rapidity $y_j$.

Performing this assignment for all edges in $Q$, every square in $P-Q$ acquires a unique horizontal and vertical rapidity. We call the resulting labelled domain the {\it rapidity assignment} of $P-Q$; see the right panel of Figure \ref{fig:PF} for an illustration.
\end{defn}

\begin{figure}[t]
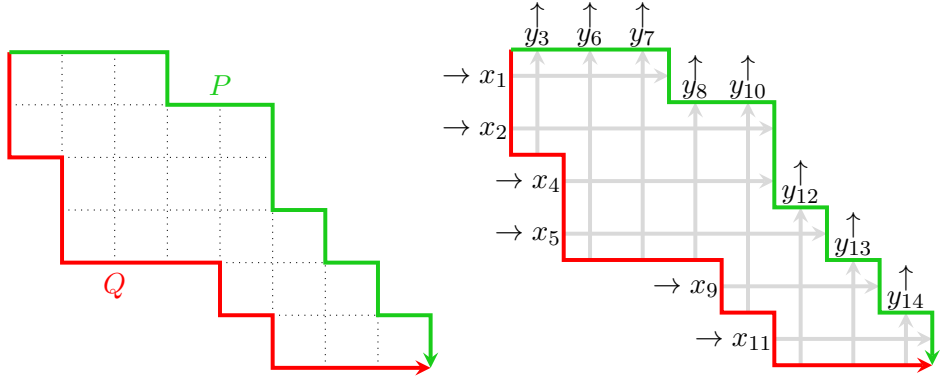

\begin{align*}
\tikz{0.7}{
\draw[dotted] (0,1) -- (3,1);
\draw[dotted] (1,0) -- (5,0);
\draw[dotted] (1,-1) -- (5,-1);
\draw[dotted] (4,-2) -- (6,-2);
\draw[dotted] (5,-3) -- (7,-3);
\draw[dotted] (1,0) -- (1,2);
\draw[dotted] (2,-2) -- (2,2);
\draw[dotted] (3,-2) -- (3,1);
\draw[dotted] (4,-2) -- (4,1);
\draw[dotted] (5,-3) -- (5,-1);
\draw[dotted] (6,-4) -- (6,-2);
\draw[dotted] (7,-4) -- (7,-3);
\node[above,green] at (4,1) {$P$};
\node[below,red] at (2,-2) {$Q$};
\draw[line width=1.5pt,->,red] (0,2) -- (0,0) -- (1,0) -- (1,-2) -- (4,-2) -- (4,-3) -- (5,-3) -- (5,-4) -- (8,-4);
\draw[line width=1.5pt,->,green] (0,2) -- (3,2) -- (3,1) -- (5,1) -- (5,-1) -- (6,-1) -- (6,-2) -- (7,-2) -- (7,-3) -- (8,-3) -- (8,-4);
\phantom{
\node[below] at (0.5,3.2) {$\uparrow$}; \node[below] at (0.5,2.7) {$y_1$};
}
}
\tikz{0.7}{
\draw[lgray,line width=1.5pt,->] (0,1.5) -- (3,1.5);
\draw[lgray,line width=1.5pt,->] (0,0.5) -- (5,0.5);
\draw[lgray,line width=1.5pt,->] (1,-0.5) -- (5,-0.5);
\draw[lgray,line width=1.5pt,->] (1,-1.5) -- (6,-1.5);
\draw[lgray,line width=1.5pt,->] (4,-2.5) -- (7,-2.5);
\draw[lgray,line width=1.5pt,->] (5,-3.5) -- (8,-3.5);
\draw[lgray,line width=1.5pt,->] (0.5,0) -- (0.5,2);
\draw[lgray,line width=1.5pt,->] (1.5,-2) -- (1.5,2);
\draw[lgray,line width=1.5pt,->] (2.5,-2) -- (2.5,2);
\draw[lgray,line width=1.5pt,->] (3.5,-2) -- (3.5,1);
\draw[lgray,line width=1.5pt,->] (4.5,-3) -- (4.5,1);
\draw[lgray,line width=1.5pt,->] (5.5,-4) -- (5.5,-1);
\draw[lgray,line width=1.5pt,->] (6.5,-4) -- (6.5,-2);
\draw[lgray,line width=1.5pt,->] (7.5,-4) -- (7.5,-3);
\node[below] at (0.5,3.1) {$\uparrow$}; \node[below] at (0.5,2.6) {$y_3$};
\node[below] at (1.5,3.1) {$\uparrow$}; \node[below] at (1.5,2.6) {$y_6$};
\node[below] at (2.5,3.1) {$\uparrow$}; \node[below] at (2.5,2.6) {$y_7$};
\node[below] at (3.5,2.1) {$\uparrow$}; \node[below] at (3.5,1.6) {$y_8$};
\node[below] at (4.5,2.1) {$\uparrow$}; \node[below] at (4.5,1.6) {$y_{10}$};
\node[below] at (5.5,0.1) {$\uparrow$}; \node[below] at (5.5,-0.4) {$y_{12}$};
\node[below] at (6.5,-0.9) {$\uparrow$}; \node[below] at (6.5,-1.4) {$y_{13}$};
\node[below] at (7.5,-1.9) {$\uparrow$}; \node[below] at (7.5,-2.4) {$y_{14}$};
\node[right] at (-1.5,1.45) {$\rightarrow x_1$};
\node[right] at (-1.5,0.45) {$\rightarrow x_2$};
\node[right] at (-0.5,-0.55) {$\rightarrow x_4$};
\node[right] at (-0.5,-1.55) {$\rightarrow x_5$};
\node[right] at (2.5,-2.55) {$\rightarrow x_9$};
\node[right] at (3.3,-3.55) {$\rightarrow x_{11}$};
\draw[line width=1.5pt,->,red] (0,2) -- (0,0) -- (1,0) -- (1,-2) -- (4,-2) -- (4,-3) -- (5,-3) -- (5,-4) -- (8,-4);
\draw[line width=1.5pt,->,green] (0,2) -- (3,2) -- (3,1) -- (5,1) -- (5,-1) -- (6,-1) -- (6,-2) -- (7,-2) -- (7,-3) -- (8,-3) -- (8,-4);
}
\end{align*}
\caption{A down-right domain $P-Q$ (on the left), and its conversion to a collection of vertices (on the right) with the rapidity assignment described in Definition \ref{def:rapidity}. The spectral parameter at any given vertex in $P-Q$ is the ratio $y_j/x_i$ of the rapidity variables passing through it.}
\label{fig:PF}
\end{figure}

\begin{defn}[Partition functions]
\label{def:PF}
Let $P$ and $Q$ be two down-right paths of length $M$ which form a down-right domain $P-Q$. We place vertices in the centers of the elementary squares of the lattice on which $P$ and $Q$ are defined, thus forming a subset of the dual square lattice (see the right panel of Figure \ref{fig:PF}). Assume that $P$ and $Q$ carry the colorings $[P; (i_1,\dots,i_M)]$ and $[Q;(j_1,\dots,j_M)]$. The domain $P-Q$ (viewed from the dual lattice perspective) then bears a color $i_k$ at its $k$-th outgoing edge and a color $j_k$ at its
$k$-th incoming edge, for all $1 \leq k \leq M$, and has a well-defined rapidity assignment for each horizontal and vertical line; it can thus be viewed as a partition function in the colored six-vertex model by choosing in the weights of Figure \ref{fund-vert}, $z=y_i/x_j$ for the intersection of the lines with rapidities $x_j$ and $y_i$. We denote this partition function by
\begin{align}
\label{pf-def}
Z_{P/Q}\Big[(x),(y); (i_1,\dots,i_M) \Big| (j_1,\dots,j_M) \Big]
\equiv
Z_{P/Q}\Big[ i_1,\dots,i_M \Big| j_1,\dots,j_M \Big].
\end{align}
\end{defn}


\subsection{Probabilistic interpretation}

In view of the stochasticity \eqref{stoch-graph} of the weights in Figure \ref{fund-vert}, in the case when $q$ and rapidities are so that all the weights are positive, we can interpret $Z_{P/Q}[i_1,\dots,i_M | j_1,\dots,j_M]$ as the probability of seeing the color sequence $(i_1,\dots,i_M)$ as one walks along the down-right path $P$, given that the color sequence $(j_1,\dots,j_M)$ was previously observed by performing a similar walk along $Q$. We have
\begin{align}
\label{sum-to-1}
\sum_{(i_1,\dots,i_M)}
Z_{P/Q}\Big[
i_1,\dots,i_M \Big| j_1,\dots,j_M
\Big]
=1,
\end{align}
where the sum is taken over all vectors $(i_1,\dots,i_M) \in
(\mathbb{Z}_{\geq 0})^M$. On the other hand, it follows from the local conservation property \eqref{conserve} that $Z_{P/Q}[ i_1,\dots,i_M | j_1,\dots,j_M ]$ is zero unless the outgoing color sequence $(i_1,\dots,i_M)$ is a permutation of the incoming sequence $(j_1,\dots,j_M)$. We can therefore simplify \eqref{sum-to-1} as follows:
\begin{align*}
\sum_{
(i_1,\dots,i_M) \in \mathfrak{S}(j_1,\dots,j_M)}
Z_{P/Q}\Big[
i_1,\dots,i_M \Big| j_1,\dots,j_M
\Big]
=1,
\end{align*}
where the summation is over all elements in the set
$\mathfrak{S}(j_1,\dots,j_M)$, defined as
\begin{align}
\label{perms}
\mathfrak{S}(j_1,\dots,j_M)
=
\Big\{ (i_1,\dots,i_M) \in (\mathbb{Z}_{\geq 0})^M:
\exists\ \sigma \in \mathfrak{S}_M\
\text{such that}\ \sigma(i_1,\dots,i_M) = (j_1,\dots,j_M) \Big\}.
\end{align}

\subsection{Colored height functions}

The key random variables in the setting of the colored six-vertex model with weights of Figure \ref{fund-vert} are the {\it colored height functions}:
\begin{defn}[Colored height functions]
\label{def:height}
Let $[P;(i_1,\dots,i_M)]$ be a colored down-right path
\begin{align*}
\Big[P; (i_1,\dots,i_M) \Big]
=
(a_1,b_1) \xrightarrow{i_1}
(a_2,b_2) \xrightarrow{i_2}
\cdots
\xrightarrow{i_M} (a_{M+1},b_{M+1}).
\end{align*}
Fix two integers $1 \leq k \leq M$ and $m \geq 0$. We define the height function $\mathcal{H}^{\ges m}(P;k)$ of {\it level} $m$, situated at the lattice point $(a_k,b_k) \in P$, as follows:
\begin{align}
\label{height}
\mathcal{H}^{\ges m}(P;k)
=
|\{ \ell \geq k : i_{\ell} \geq m \}|.
\end{align}
In other words, $\mathcal{H}^{\ges m}(P;k)$ counts the number of colors $i_{\ell}$ with value $m$ or greater assigned to the edges $(a_{\ell},b_{\ell}) \rightarrow (a_{\ell+1},b_{\ell+1}) \in P$, where $\ell$ ranges over all values
$k \leq \ell \leq M$.
\end{defn}

Given an initial, colored down-right path $[Q;(j_1,\dots,j_M)]$ and another down-right path $P > Q$, we will be interested in the probability that $\mathcal{H}^{\ges m}(P;k) = h$, where $h \geq 0$ is some fixed nonnegative integer. This is computed as
\begin{align}
\mathbb{P}
\left[
\mathcal{H}^{\ges m}(P;k)
=
h
\right]
=
\sum_{(i_1,\dots,i_M) \in \mathfrak{S}(j_1,\dots,j_M)}
Z_{P/Q}\Big[ i_1,\dots,i_M \Big| j_1,\dots,j_M \Big]
{\bm 1}\Big( \mathcal{H}^{\ges m}(P;k) = h \Big).
\end{align}
More generally, one can consider multi-point versions of such probabilities:
\begin{multline}
\mathbb{P}
\left[
\mathcal{H}^{\ges m_1}(P;k_1)
=
h_1,
\dots,
\mathcal{H}^{\ges m_p}(P;k_p)
=
h_p
\right]
\\
=
\sum_{(i_1,\dots,i_M) \in \mathfrak{S}(j_1,\dots,j_M)}
Z_{P/Q}\Big[ i_1,\dots,i_M \Big| j_1,\dots,j_M \Big]
\prod_{\ell=1}^{p}
{\bm 1}\Big( \mathcal{H}^{\ges m_{\ell}}(P;k_{\ell}) = h_{\ell} \Big).
\end{multline}

\subsection{Color-merging of partition functions}

The result of Proposition \ref{prop-merge} naturally extends to partition functions on down-right domains. Consider the $U_q(\widehat{\mathfrak{sl}_{N+1}})$ vertex model on a domain $P-Q$, where $P$ and $Q$ are down-right paths of length $M$. Fix a subset $\mathcal{A} \subset \{1,\dots,M\}$; this subset will indicate the positions along the path $P$ where its associated colors assume some definite value. Write $\bar{\mathcal{A}} = \{1,\dots,M\} \backslash \mathcal{A}$ for the complementary set. Let $\mathcal{C} = \{c,c+1,\dots\} \subset \{0,1,\dots,N\}$ be a contiguous subset of the colors, and $\bar{\mathcal{C}} = \{0,1,\dots,N\} \backslash \mathcal{C}$ its complement, as before.
\begin{prop}
\label{prop:merge-PQ}
For all $\alpha \in \mathcal{A}$, let $i_{\alpha} \in \bar{\mathcal{C}}$. One then has the equality
\begin{align}
\label{PQ-merge}
\sum_{
(i_{\alpha \in \bar{\mathcal{A}}}) \in \mathcal{C}
}
Z_{P/Q}
\Big[
i_1,\dots,i_M
\Big|
j_1,\dots,j_M
\Big]
=
Z_{P/Q}
\Big[
a_1,\dots,a_M
\Big|
[j_1]_{\mathcal{C}},\dots,[j_M]_{\mathcal{C}}
\Big],
\end{align}
where (on the left hand side) each color $i_{\alpha}$ such that $\alpha \in \bar{\mathcal{A}}$ is summed over all values in $\mathcal{C}$, and where (on the right hand side) we have defined
\begin{align}
\label{ap}
a_p
=
\left\{
\begin{array}{ll}
i_{p},
\qquad\qquad
&
p \in \mathcal{A},
\\
\\
c,
\qquad\qquad
&
p \in \bar{\mathcal{A}}.
\end{array}
\right.
\end{align}
\end{prop}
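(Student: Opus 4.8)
The plan is to prove~\eqref{PQ-merge} by induction on the number of vertices in the down-right domain $P-Q$. The base case is $P=Q$: then the domain is empty and $Z_{P/Q}[i_1,\dots,i_M\mid j_1,\dots,j_M]=\prod_{k=1}^{M}\mathbf 1_{i_k=j_k}$, and splitting according to whether each $j_k$ lies in $\mathcal C$ or in $\bar{\mathcal C}$ one checks directly that both sides of~\eqref{PQ-merge} equal $\prod_{k=1}^{M}\mathbf 1\big(a_k=[j_k]_{\mathcal C}\big)$ (using that $a_\alpha=i_\alpha\in\bar{\mathcal C}$ for $\alpha\in\mathcal A$, while $a_\alpha=c$ and $i_\alpha$ is summed over $\mathcal C$ for $\alpha\in\bar{\mathcal A}$).

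For the inductive step, assume $P>Q$. Then $P-Q$ is nonempty, so the outer path $P$ has a corner where it steps right and then down, lying strictly above $Q$ (e.g.\ pick such a corner on the highest anti-diagonal visited by $P$; the fact that at least one such corner exists is the elementary geometric input). Let $v^{\ast}$ be the six-vertex-model vertex in the unit square whose top-right corner is that corner of $P$: it lies in $P-Q$, and its two outgoing (top and right) edges are two consecutive edges of $P$, carrying colours $i_k,i_{k+1}$ say. Flipping the corner to ``down, then right'' yields a down-right path $P'$ with $P>P'\ge Q$, one fewer vertex in $P'-Q$, and whose two new edges are exactly the two incoming edges of $v^{\ast}$. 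Peeling $v^{\ast}$ off the top of the domain gives
\begin{align*}
Z_{P/Q}\big[\dots,i_k,i_{k+1},\dots\mid j_1,\dots,j_M\big]
=\sum_{i',i''}W_{v^{\ast}}(i',i'';i_k,i_{k+1})\,Z_{P'/Q}\big[\dots,i',i'',\dots\mid j_1,\dots,j_M\big],
\end{align*}
where $W_{v^{\ast}}$ is the weight of Figure~\ref{fund-vert} and $i',i''$ are the colours on the two new edges of $P'$.

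Now carry out the summation on the left side of~\eqref{PQ-merge}. All $\bar{\mathcal A}$-positions other than possibly $k,k+1$ are disjoint from $v^{\ast}$, so we may first sum over whichever of $i_k,i_{k+1}$ are indexed by $\bar{\mathcal A}$. By Proposition~\ref{prop-merge} --- \eqref{merge2} if both $k,k+1\in\bar{\mathcal A}$, the appropriate form of~\eqref{merge1} if exactly one is, and colour conservation~\eqref{conserve} if neither is --- this replaces $W_{v^{\ast}}(i',i'';i_k,i_{k+1})$ by $W_{v^{\ast}}\big([i']_{\mathcal C},[i'']_{\mathcal C};a_k,a_{k+1}\big)$, where $a_k,a_{k+1}$ are the entries of the $a$-vector~\eqref{ap} ($a_\bullet=c$ at a $\bar{\mathcal A}$-position, $a_\bullet=i_\bullet$ at an $\mathcal A$-position); a short case check against Figure~\ref{fund-vert} confirms this even in the ``neither summed'' case. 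Regrouping the remaining sum over $i',i''$ by the values $u'=[i']_{\mathcal C}$, $u''=[i'']_{\mathcal C}$, the inner sum over $\{i':[i']_{\mathcal C}=u'\}$, $\{i'':[i'']_{\mathcal C}=u''\}$ together with the remaining $\bar{\mathcal A}$-sums is precisely a colour-merge sum on the smaller domain $P'-Q$: the new edge-positions with $u\in\bar{\mathcal C}$ play the role of $\mathcal A'$-positions carrying the fixed colour $u\in\bar{\mathcal C}$, and those with $u=c$ play the role of $\bar{\mathcal A}'$-positions. The inductive hypothesis collapses the inner sum to $Z_{P'/Q}[\dots,u',u'',\dots\mid [j_1]_{\mathcal C},\dots,[j_M]_{\mathcal C}]$, the $a$-vector entry at each new position being $u'$ (resp.\ $u''$) in every case; summing back over $u',u''$ against $W_{v^{\ast}}$ re-attaches $v^{\ast}$ and produces $Z_{P/Q}[a_1,\dots,a_M\mid [j_1]_{\mathcal C},\dots,[j_M]_{\mathcal C}]$, which is the right side of~\eqref{PQ-merge}.

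The computation above is essentially forced, so the main obstacle is the bookkeeping: one must verify that the partition $\mathcal A'\sqcup\bar{\mathcal A}'$ fed into the inductive hypothesis --- which depends on the summation variables $(u',u'')$ --- is the correct one, that all the fixed colours on its $\mathcal A'$-positions really lie in $\bar{\mathcal C}$, and that the resulting $a$-vector on $P'$ matches the one obtained after $v^{\ast}$ is re-attached on $P$. The regrouping by the projection $[\,\cdot\,]_{\mathcal C}$ is exactly where the special form of Proposition~\ref{prop-merge} (projection acting on the \emph{incoming} edges of a vertex) meshes with the induction; the only genuinely geometric point is the existence of the corner vertex $v^{\ast}$ lying strictly above $Q$, which holds for any nonempty down-right domain.
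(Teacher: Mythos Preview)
Your proof is correct and follows precisely the approach the paper indicates: the paper's own proof is the single line ``Straightforward induction on the number of vertices in $P-Q$,'' and what you have written is exactly that induction spelled out, peeling off a corner vertex and invoking Proposition~\ref{prop-merge} at each step. One small point you could make more explicit: when you re-attach $v^{\ast}$ at the end, your sum over $(u',u'')$ ranges only over $\bar{\mathcal C}\cup\{c\}$ rather than all of $\{0,\dots,N\}$, but the missing terms with $u'\in\mathcal C\setminus\{c\}$ or $u''\in\mathcal C\setminus\{c\}$ vanish by color conservation in $Z_{P'/Q}$, since the projected incoming colors $[j_k]_{\mathcal C}$ all lie in $\bar{\mathcal C}\cup\{c\}$.
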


\begin{proof}
Straightforward induction on the number of vertices in $P-Q$.
\end{proof}

\begin{rmk}
When $\mathcal{A}$ is chosen to be equal to the empty set, we have
$\bar{\mathcal{A}} = \{1,\dots,M\}$ and \eqref{PQ-merge} becomes
\begin{align}
\label{Z-stoch}
\sum_{(i_1,\dots,i_M) \in \mathcal{C}}
Z_{P/Q}
\Big[
i_1,\dots,i_M
\Big|
j_1,\dots,j_M
\Big]
=
Z_{P/Q}
\Big[
c,\dots,c
\Big|
[j_1]_{\mathcal{C}},\dots,[j_M]_{\mathcal{C}}
\Big]
=
\prod_{\alpha=1}^{M}
\bm{1}_{j_{\alpha} \in \mathcal{C}}.
\end{align}
If, in addition, $\mathcal{C} = \{0,1,\dots,N\}$, this reproduces the stochasticity result \eqref{sum-to-1}. In this way, one can view \eqref{PQ-merge} as a generalization of the sum-to-unity property \eqref{sum-to-1}, where we restrict which outgoing indices are summed (via the set $\bar{\mathcal{A}}$) and the colors that the sum is taken over (via the set $\mathcal{C}$).
\end{rmk}

\subsection{Shift Theorem}

\label{sec:shift}

The Shift Theorem is about the equality of two different partition functions in the colored six-vertex model. These partition functions are built from the following data:
\begin{itemize}
\item A nonnegative integer $m \leq N$, which labels a distinguished color in the vertex model with weights of Figure \eqref{fund-vert};

\item A nonnegative integer $h$, used to specify the value of certain height functions that appear;

\item Two down-right paths $P$ and $Q$ of length $M$ (with the same beginning/ending points) such that $P > Q$, used to specify the down-right domain on which the partition functions live;

\item Two integers $1 \leq k,\ell \leq M$ such that the $k$-th step of path $P$ and the $\ell$-th step of $Q$ are both downward\footnote{We could also consider the scenario where the $k$-th step of path $P$ and the $\ell$-th step of $Q$ are both rightward, as we discuss in Remark \ref{rmk:reflect}.};

\item Two sets $\mathcal{A} \subset \{1,\dots,k-1\}$ and
$\mathcal{B} \subset \{k+1,\dots,M\}$, used to label positions along the path $P$ where the outgoing colors assume definite values.

\item Their complements, denoted by $\bar{\mathcal{A}} = \{1,\dots,k-1\} \backslash \mathcal{A}$ and $\bar{\mathcal{B}} = \{k+1,\dots,M\} \backslash \mathcal{B}$;

\item Two vectors $(i_1,\dots,i_M)$ and $(j_1,\dots,j_M)$ which specify the outgoing and incoming colors of the down-right domain, respectively.
\end{itemize}
The outgoing and incoming colors, $(i_1,\dots,i_M)$ and $(j_1,\dots,j_M)$, require further specification. In both partition functions that we consider, we will assume that the outgoing colors satisfy
\begin{align}
\label{out-conditions}
i_{\alpha}
\left\{
\begin{array}{ll}
\fixin \{0,1,\dots,m-1\},
& \quad \alpha \in \mathcal{A},
\\ \\
\sumin \{m,m+1,\dots,N\},
& \quad \alpha \in \bar{\mathcal{A}},
\end{array}
\right.
\qquad\quad
i_{\beta}
\left\{
\begin{array}{ll}
\fixin \{m+1,m+2,\dots,N\},
& \quad \beta \in \mathcal{B},
\\ \\
\sumin \{0,1,\dots,m\},
& \quad \beta \in \bar{\mathcal{B}},
\end{array}
\right.
\end{align}
with the remaining color $i_k$ satisfying $i_k \sumin \{0,1,\dots,N\}$, where we write ``f'' or ``s'' over the ``$\in$'' symbol to keep track of which indices will be kept fixed, and which ones will be summed over the corresponding set, respectively. In a similar vein, in both partition functions to be considered we assume that the incoming colors are chosen such that
\begin{align}
\label{in-conditions}
j_{\alpha}
\left\{
\begin{array}{ll}
\fixin \{m+1,m+2,\dots,N\},
& \quad \alpha \in \{1,\dots,\ell-1\},
\\ \\
= m, & \quad \alpha = \ell,
\\ \\
\fixin \{0,1,\dots,m-1\},
& \quad \alpha \in \{\ell+1,\dots,M\}.
\end{array}
\right.
\end{align}
We now introduce our two partition functions.\footnote{To save space when writing these functions, we will not write the explicit dependence on $m$, $h$, $k$, or $\ell$, treating these integers as fixed in all quantities.} They are defined formally below; for a pictorial representation, see Figure \ref{fig:phi}. The first partition function is denoted $\Phi_{P/Q}$; it depends on a vector of incoming colors $(j_1,\dots,j_M)$ which satisfy the constraints \eqref{in-conditions}, two sets
$\mathcal{A} \subset \{1,\dots,k-1\}$ and $\mathcal{B} \subset \{k+1,\dots,M\}$ which label positions along $P$ where outgoing colors are fixed, and two vectors $(i_{\alpha \in \mathcal{A}})$ and $(i_{\beta \in \mathcal{B}})$ satisfying \eqref{out-conditions} which specify the colors at those locations. We define
\begin{multline}
\label{phi}
\Phi_{P/Q}
\Big(
\mathcal{A},\mathcal{B};
(i_{\alpha \in \mathcal{A}}),
(i_{\beta \in \mathcal{B}});
(j_1,\dots,j_M)
\Big)
\\
=
\sum_{i_k}
\sum_{
\substack{(i_{\alpha \in \bar{\mathcal{A}}})
\\ \vspace{-0.1cm} \\
(i_{\beta \in \bar{\mathcal{B}}})
}}
Z_{P/Q}\Big[ i_1,\dots,i_M \Big| j_1,\dots,j_M \Big]
{\bm 1}\Big( \mathcal{H}^{\ges m}(P;k+1) = h \Big)
\end{multline}
where $i_k$ is summed over all values $\{0,1,\dots,N\}$, while
$(i_{\alpha \in \bar{\mathcal{A}}})$ and $(i_{\beta \in \bar{\mathcal{B}}})$ are summed according to the constraints \eqref{out-conditions}. In the summand,
$Z_{P/Q}$ denotes the partition function of Definition \ref{def:height}, while $\mathcal{H}^{\ges m}(P;k+1)$ is the level $m$ height function situated at the
$(k+1)$-th lattice point of $P$.

\begin{figure}[t]
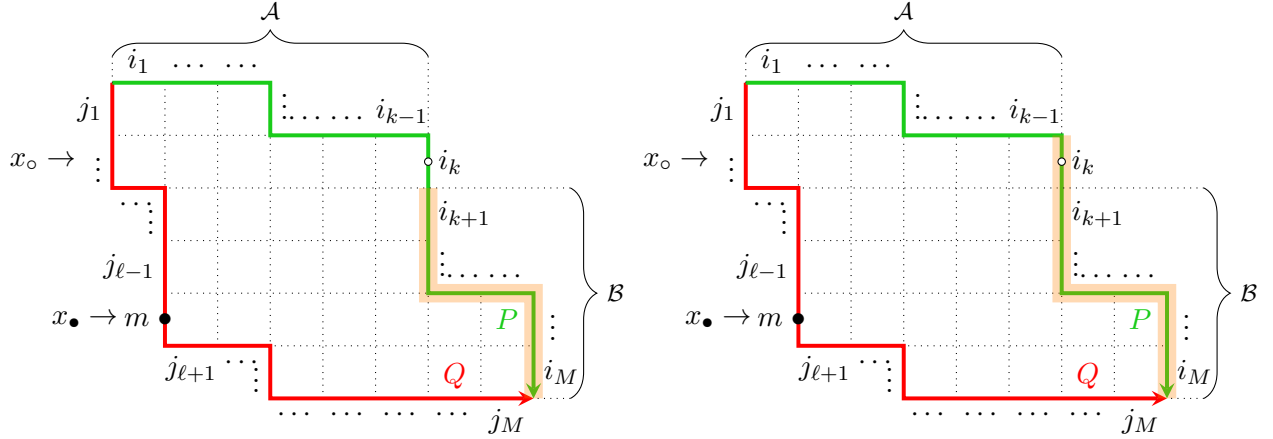

\begin{align*}
\tikz{0.7}{
\draw[dotted] (0,1) -- (3,1);
\draw[dotted] (0,0) -- (6,0);
\draw[dotted] (1,-1) -- (6,-1);
\draw[dotted] (1,-2) -- (8,-2);
\draw[dotted] (2,-3) -- (8,-3);
\draw[dotted] (1,-2) -- (1,2);
\draw[dotted] (2,-3) -- (2,2);
\draw[dotted] (3,-4) -- (3,1);
\draw[dotted] (4,-4) -- (4,1);
\draw[dotted] (5,-4) -- (5,1);
\draw[dotted] (6,-4) -- (6,0);
\draw[dotted] (7,-4) -- (7,-2);
\node[left,green] at (7.9,-2.5) {$P$};
\node[above,red] at (6.5,-4) {$Q$};
\draw[line width=1.5pt,->,red] (0,2) -- (0,0) -- (1,0) -- (1,-2) -- (1,-3) -- (3,-3) -- (3,-4) -- (8,-4);
\draw[line width=1.5pt,->,green] (0,2) -- (3,2) -- (3,1) -- (6,1) -- (6,-2) -- (8,-2) -- (8,-4);
\node[left] at (0,1.5) {$j_1$};
\node[left] at (0,0.5) {$\vdots$};
\node[below] at (0.5,0) {$\cdots$};
\node[left] at (1,-0.5) {$\vdots$};
\node[left] at (1,-1.5) {$j_{\ell-1}$};
\node[left] at (0.9,-2.5) {$m$}; \node at (1,-2.5) {$\bullet$};
\node[below] at (1.5,-3) {$j_{\ell+1}$};
\node[left] at (3,-3.5) {$\vdots$};
\node[below] at (2.5,-3) {$\cdots$};
\node[below] at (3.5,-4) {$\cdots$};
\node[below] at (4.5,-4) {$\cdots$};
\node[below] at (5.5,-4) {$\cdots$};
\node[below] at (6.5,-4) {$\cdots$};
\node[below] at (7.5,-4) {$j_M$};
\node[above] at (0.5,2) {$i_1$};
\node[above] at (1.5,2) {$\cdots$};
\node[above] at (2.5,2) {$\cdots$};
\node[right] at (3,1.7) {$\vdots$};
\node[above] at (3.7,1) {$\cdots$};
\node[above] at (4.5,1) {$\cdots$};
\node[above] at (5.5,1) {$i_{k-1}$};
\node[right] at (6,0.5) {$i_k$}; \filldraw[draw=black,fill=white] (6,0.5) circle (2pt);
\node[right] at (6,-0.5) {$i_{k+1}$};
\node[right] at (6,-1.3) {$\vdots$};
\node[above] at (6.7,-2) {$\cdots$};
\node[above] at (7.5,-2) {$\cdots$};
\node[right] at (8.1,-2.5) {$\vdots$};
\node[right] at (8,-3.5) {$i_M$};
\draw[decorate,decoration={brace,amplitude=10pt}] (0,2.5) -- (6,2.5)
node [black,midway,yshift=0.6cm] {\footnotesize $\mathcal{A}$};
\draw[dotted] (0,2.5) -- (0,2);
\draw[dotted] (6,2.5) -- (6,1);
\draw[decorate,decoration={brace,amplitude=10pt}] (8.7,0) -- (8.7,-4)
node [black,midway,xshift=0.6cm] {\footnotesize $\mathcal{B}$};
\draw[dotted] (8.7,0) -- (6,0);
\draw[dotted] (8.7,-4) -- (8,-4);
\draw[line width=7,orange,opacity=0.3] (6,0) -- (6,-2) -- (8,-2) -- (8,-4);
\node[left] at (-0.5,0.5) {$x_{\circ} \rightarrow$};
\node[left] at (0.3,-2.5) {$x_{\bullet} \rightarrow$};
}
\tikz{0.7}{
\draw[dotted] (0,1) -- (3,1);
\draw[dotted] (0,0) -- (6,0);
\draw[dotted] (1,-1) -- (6,-1);
\draw[dotted] (1,-2) -- (8,-2);
\draw[dotted] (2,-3) -- (8,-3);
\draw[dotted] (1,-2) -- (1,2);
\draw[dotted] (2,-3) -- (2,2);
\draw[dotted] (3,-4) -- (3,1);
\draw[dotted] (4,-4) -- (4,1);
\draw[dotted] (5,-4) -- (5,1);
\draw[dotted] (6,-4) -- (6,0);
\draw[dotted] (7,-4) -- (7,-2);
\node[left,green] at (7.9,-2.5) {$P$};
\node[above,red] at (6.5,-4) {$Q$};
\draw[line width=1.5pt,->,red] (0,2) -- (0,0) -- (1,0) -- (1,-2) -- (1,-3) -- (3,-3) -- (3,-4) -- (8,-4);
\draw[line width=1.5pt,->,green] (0,2) -- (3,2) -- (3,1) -- (6,1) -- (6,-2) -- (8,-2) -- (8,-4);
\draw[line width=7,orange,opacity=0.3] (6,1) -- (6,-2) -- (8,-2) -- (8,-4);
\node[left] at (0,1.5) {$j_1$};
\node[left] at (0,0.5) {$\vdots$};
\node[below] at (0.5,0) {$\cdots$};
\node[left] at (1,-0.5) {$\vdots$};
\node[left] at (1,-1.5) {$j_{\ell-1}$};
\node[left] at (0.9,-2.5) {$m$}; \node at (1,-2.5) {$\bullet$};
\node[below] at (1.5,-3) {$j_{\ell+1}$};
\node[left] at (3,-3.5) {$\vdots$};
\node[below] at (2.5,-3) {$\cdots$};
\node[below] at (3.5,-4) {$\cdots$};
\node[below] at (4.5,-4) {$\cdots$};
\node[below] at (5.5,-4) {$\cdots$};
\node[below] at (6.5,-4) {$\cdots$};
\node[below] at (7.5,-4) {$j_M$};
\node[above] at (0.5,2) {$i_1$};
\node[above] at (1.5,2) {$\cdots$};
\node[above] at (2.5,2) {$\cdots$};
\node[right] at (3,1.7) {$\vdots$};
\node[above] at (3.7,1) {$\cdots$};
\node[above] at (4.5,1) {$\cdots$};
\node[above] at (5.5,1) {$i_{k-1}$};
\node[right] at (6,0.5) {$i_k$}; \filldraw[draw=black,fill=white] (6,0.5) circle (2pt);
\node[right] at (6,-0.5) {$i_{k+1}$};
\node[right] at (6,-1.3) {$\vdots$};
\node[above] at (6.7,-2) {$\cdots$};
\node[above] at (7.5,-2) {$\cdots$};
\node[right] at (8.1,-2.5) {$\vdots$};
\node[right] at (8,-3.5) {$i_M$};
\draw[decorate,decoration={brace,amplitude=10pt}] (0,2.5) -- (6,2.5)
node [black,midway,yshift=0.6cm] {\footnotesize $\mathcal{A}$};
\draw[dotted] (0,2.5) -- (0,2);
\draw[dotted] (6,2.5) -- (6,1);
\draw[decorate,decoration={brace,amplitude=10pt}] (8.7,0) -- (8.7,-4)
node [black,midway,xshift=0.6cm] {\footnotesize $\mathcal{B}$};
\draw[dotted] (8.7,0) -- (6,0);
\draw[dotted] (8.7,-4) -- (8,-4);
\node[left] at (-0.5,0.5) {$x_{\circ} \rightarrow$};
\node[left] at (0.3,-2.5) {$x_{\bullet} \rightarrow$};
}
\end{align*}
\caption{Left panel: the partition function $\Phi_{P/Q}$. A vector of colors $(j_1,\dots,j_M)$, satisfying the constraints \eqref{in-conditions}, enter the lattice along the edges traced out by $Q$. Colors $(i_1,\dots,i_M)$, satisfying the constraints \eqref{out-conditions}, exit the lattice along the edges traced out by $P$; some of these colors, namely $(i_{\alpha \in \mathcal{A}})$ and $(i_{\beta \in \mathcal{B}})$, assume fixed values, while the rest are summed over. The shaded part of the path $P$ indicates the extra height function constraint that we impose; there must be exactly $h$ colors of value $m$ or greater passing through these edges. Right panel: the partition function $\Psi_{P/Q}$.
The sole difference is that now the shaded part of $P$ is extended by one additional vertical step, and we require that there are exactly $h$ colors of value $m+1$ or greater passing through these edges.}
\label{fig:phi}
\end{figure}

The second partition function, denoted $\Psi_{P/Q}$, is defined in a very similar manner. In fact, it differs from $\Phi_{P/Q}$ only in the way that we condition on the value of the height function in the summand. We define
\begin{multline}
\label{psi}
\Psi_{P/Q}
\Big(
\mathcal{A},\mathcal{B};
(i_{\alpha \in \mathcal{A}}),
(i_{\beta \in \mathcal{B}});
(j_1,\dots,j_M)
\Big)
\\
=
\sum_{i_k}
\sum_{
\substack{(i_{\alpha \in \bar{\mathcal{A}}})
\\ \vspace{-0.1cm} \\
(i_{\beta \in \bar{\mathcal{B}}})
}}
Z_{P/Q}\Big[ i_1,\dots,i_M \Big| j_1,\dots,j_M \Big]
{\bm 1}\Big( \mathcal{H}^{\ges m+1}(P;k) = h \Big),
\end{multline}
where $i_k$ is summed over all values $\{0,1,\dots,N\}$, while
$(i_{\alpha \in \bar{\mathcal{A}}})$ and $(i_{\beta \in \bar{\mathcal{B}}})$ are once again summed according to the constraints \eqref{out-conditions}. This time, in contrast, $\mathcal{H}^{\ges m+1}(P;k)$ is the level $m+1$ height function situated at the $k$-th lattice point of $P$ (both the position and the level were shifted).

Both quantities $\Phi_{P/Q}$ and $\Psi_{P/Q}$ depend implicitly on a collection of horizontal $(x)$ and vertical rapidities $(y)$, and we will usually suppress this dependence. Two of these parameters, however, will play a distinguished role in the sequel. Since (by assumption) the $k$-th step of path $P$ and $\ell$-th step of path $Q$ are both downward, it follows that $\Phi_{P/Q}$ and $\Psi_{P/Q}$ both depend on the horizontal rapidities $x_{\circ}$ and $x_{\bullet}$, where $x_{\circ}$ is the rapidity that passes transversally through the $k$-th step of $P$, and $x_{\bullet}$ is the rapidity passing transversally through the $\ell$-th step of $Q$. The rows where these variables appear are indicated in Figure \ref{fig:phi}.

%
\begin{theorem}
\label{theorema-egr}
Let $P$ and $Q$ be two down-right paths of length $M$, which frame a down-right domain $P-Q$. Fix two integers $1 \leq k,\ell \leq M$. Then for all sets $\mathcal{A} \subset \{1,\dots,k-1\}$ and $\mathcal{B} \subset \{k+1,\dots,M\}$, fixed outgoing colors $(i_{\alpha \in \mathcal{A}})$ and $(i_{\beta \in \mathcal{B}})$ satisfying \eqref{out-conditions}, and fixed incoming colors $(j_1,\dots,j_M)$ satisfying \eqref{in-conditions}, we have
\begin{align}
\label{main-result}
\Phi_{P/Q}
\Big(
\mathcal{A},\mathcal{B};
(i_{\alpha \in \mathcal{A}}),
(i_{\beta \in \mathcal{B}});
(j_1,\dots,j_M)
\Big)
=
\Psi_{P/Q}
\Big(
\mathcal{A},\mathcal{B};
(i_{\alpha \in \mathcal{A}}),
(i_{\beta \in \mathcal{B}});
(j_1,\dots,j_M)
\Big)
\Big|_{x_{\circ} \leftrightarrow x_{\bullet}}
\end{align}
where we have permuted the two rapidity variables $x_{\circ}$ and $x_{\bullet}$ on the right hand side of the equation.
\end{theorem}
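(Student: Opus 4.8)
The plan is to prove Theorem~\ref{theorema-egr} by induction on the number of vertices in the down-right domain $P-Q$, using the Yang--Baxter equation \eqref{YB} as the engine that moves one rapidity line past another, and the color-merging results of Propositions~\ref{prop-merge} and~\ref{prop:merge-PQ} to reconcile the two different height-function constraints appearing in $\Phi_{P/Q}$ and $\Psi_{P/Q}$. The base case is when $P=Q$ (no vertices), where both partition functions are products of Kronecker deltas matching incoming to outgoing colors, and the statement is a tautology; more generally, the base of the induction handles the degenerate configurations where $k$ or $\ell$ sits at an extreme position so that the ``$\circ$'' and ``$\bullet$'' rows coincide or one of the distinguished steps is forced.

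For the inductive step I would first observe that both $\Phi_{P/Q}$ and $\Psi_{P/Q}$ differ only in how the height constraint is imposed along the orange-shaded portion of $P$ near the $k$-th step: $\Phi$ counts colors $\ges m$ through $M-k$ edges starting just below the $k$-th step, whereas $\Psi$ counts colors $\ges m+1$ through $M-k+1$ edges starting at the $k$-th step. The key local observation is that the $k$-th outgoing edge of $P$ carries a color $i_k$ summed over all of $\{0,1,\dots,N\}$, and that on the incoming side of $Q$ the $\ell$-th step carries the pinned color $m$, with colors $>m$ above it and colors $<m$ below it by \eqref{in-conditions}. One then wants to compare the effect of the height functions after color-merging: merge all colors in $\{m+1,\dots,N\}$ to $m+1$ and all colors in $\{0,\dots,m\}$ to some reference (using \eqref{merge-up}, \eqref{merge-down} and Proposition~\ref{prop:merge-PQ}), so that the relevant degrees of freedom collapse to an effectively rank-$1$ situation where a single path of ``color $m$'' (or the boundary between two color blocks) is being tracked. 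In this reduced picture the height-function shift from $\mathcal{H}^{\ges m}(P;k+1)=h$ to $\mathcal{H}^{\ges m+1}(P;k)=h$ is exactly the statement that shifting the observation point down by one step and the cutoff up by one level tracks the same quantity --- the number of the distinguished path's crossings --- which is a combinatorial identity on the colored-path configuration.

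The mechanism for producing the swap $x_\circ \leftrightarrow x_\bullet$ is the following: insert an $R$-matrix at spectral parameter $x_\bullet/x_\circ$ between the $\circ$-row and the $\bullet$-row at the appropriate edge, use vertex splitting \eqref{R-split} (when the spectral parameter degenerates to $1$, which happens when the rows carry matching colors) or the full Yang--Baxter move to drag this $R$-matrix across the entire domain from the $Q$-side to the $P$-side, and on the $P$-side absorb it using stochasticity \eqref{stoch-graph} or again splitting. Along the way the two rows with rapidities $x_\circ$ and $x_\bullet$ get interchanged, which is precisely the operation $|_{x_\circ \leftrightarrow x_\bullet}$ on the right-hand side; the price is that the domain $P-Q$ gets modified into a smaller (or differently-shaped) one to which the induction hypothesis applies. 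Concretely, dragging the $R$-vertex peels off one vertex of $P-Q$ at a time, each application using YB to commute it past the local vertex weight, so after the $R$-vertex has traversed the domain one is left with a partition function on a strictly smaller down-right domain, to which Theorem~\ref{theorema-egr} applies by induction.

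The main obstacle I anticipate is bookkeeping the interaction between the \emph{height-function constraint} and the \emph{moving $R$-vertex}: the Yang--Baxter move changes the colors flowing along edges near the $k$-th step of $P$, so one must check that the indicator $\1(\mathcal{H}^{\ges m}(P;k+1)=h)$ in $\Phi$ transforms correctly into $\1(\mathcal{H}^{\ges m+1}(P;k)=h)$ in $\Psi$ under the combined operation. This is where the specific choice of merging thresholds at level $m$ and $m+1$, and the pinning $j_\ell = m$ in \eqref{in-conditions}, must be used in an essential way: the color-merging identities guarantee that, after the merge, the $R$-vertex that is being dragged is a genuine \emph{permutation} of two color blocks, so that moving it past the shaded region of $P$ literally shifts the window over which the height function is computed by one edge while simultaneously raising the cutoff. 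Verifying this compatibility at the single vertex where the $R$-matrix crosses the $k$-th step of $P$ --- i.e.\ that the local weight times the height-indicator is symmetric under the relevant color relabeling --- is the crux; once it is established, the rest is the routine YB propagation plus the induction hypothesis. A secondary technical point is ensuring that the sets $\mathcal{A}, \mathcal{B}$ and the fixed colors $(i_{\alpha\in\mathcal{A}}), (i_{\beta\in\mathcal{B}})$ are preserved (up to the obvious relabeling) as the domain shrinks, which follows from Proposition~\ref{prop:merge-PQ} applied with $\mathcal{A}$ respectively $\bar{\mathcal{A}}$ as the fixed/summed index sets.
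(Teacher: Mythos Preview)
Your proposal has a genuine gap: it is missing the central mechanism that the paper uses, namely \emph{Lagrange interpolation in the rapidity variables}. The paper's proof does not proceed by a plain induction on the number of vertices in $P-Q$ together with dragging an $R$-matrix through the domain. Instead it first establishes the result for two-row domains by a direct (but nontrivial) computation with freezing and color-merging, then for Z-shaped domains by showing that the difference $\Phi-\Psi|_{x_\circ\leftrightarrow x_\bullet}$, after clearing denominators, is a polynomial in one of the internal horizontal rapidities $x_c$ of known degree, and that this polynomial vanishes at sufficiently many interpolation points $x_c=y_\gamma$ (plus possibly $x_c=0$ or $x_c\to\infty$). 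At each such specialization, vertex splitting \eqref{R-split} collapses one internal row, and the induction hypothesis (on Z-shaped domains with one fewer internal row) applies. Only at the very end is a general down-right domain reduced to a Z-shaped one by carving out four auxiliary sub-partition-functions that do not depend on $x_\circ,x_\bullet$.

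Your proposed alternative---inserting $R(x_\bullet/x_\circ)$ and dragging it from the $Q$-side to the $P$-side---faces exactly the obstacle you yourself flag and then leave unresolved: the height-function indicator ${\bm 1}(\mathcal H^{\ges m}(P;k+1)=h)$ does not commute with the $R$-matrix action in any simple way, because the $\circ$-row and $\bullet$-row are in general separated by many intermediate rows, and the inserted $R$-vertex would have to pass through all of them while somehow transmuting the indicator at level $m$, position $k+1$ into one at level $m+1$, position $k$. Your claim that after color-merging the $R$-vertex becomes a ``genuine permutation of two color blocks'' that ``literally shifts the window'' is not correct as stated: the intermediate rows carry colors both above and below $m$, and the merged $R$-vertex acts nontrivially on them. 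The paper's authors remark explicitly that the match is ``a fundamentally \emph{non-local} phenomenon'' precisely because the two distinguished rows can be far apart, and that the inhomogeneity (hence the availability of interpolation points) was essential to their argument. Without either the interpolation idea or a concrete verification of the local compatibility you identify as ``the crux,'' your outline does not constitute a proof.
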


\begin{rmk}
\label{rmk:reflect}
There is a direct analogue of Theorem \ref{theorema-egr} which applies in the case where the $k$-th step of $P$ and the $\ell$-th step of $Q$ are both rightward (instead of downward). In that situation, the rapidity variables that get switched are no longer associated to horizontal lattice lines, but rather vertical lattice lines. In particular, letting $y_{\circ}$ be the rapidity that passes transversally through the $k$-th step of $P$ and $y_{\bullet}$ the rapidity that passes transversally through the $\ell$-th step of $Q$, we have
\begin{align}
\label{main-result'}
\Phi_{P/Q}
\Big(
\mathcal{A},\mathcal{B};
(i_{\alpha \in \mathcal{A}}),
(i_{\beta \in \mathcal{B}});
(j_1,\dots,j_M)
\Big)
=
\Psi_{P/Q}
\Big(
\mathcal{A},\mathcal{B};
(i_{\alpha \in \mathcal{A}}),
(i_{\beta \in \mathcal{B}});
(j_1,\dots,j_M)
\Big)
\Big|_{y_{\circ} \leftrightarrow y_{\bullet}},
\end{align}
where $\Phi_{P/Q}$ and $\Psi_{P/Q}$ are still given by \eqref{phi} and \eqref{psi}, respectively.

The statement \eqref{main-result'} about rightward steps is an easy corollary of the statement \eqref{main-result} about downward steps. Indeed, one may easily check that the statement \eqref{main-result'} transforms under the reflection symmetry \eqref{reflect} to a statement of the form \eqref{main-result}. Therefore, in what follows, we will focus on proving the version of Theorem \ref{theorema-egr} that applies for downward steps; we may do so without any loss of generality.
\end{rmk}

\begin{rmk}
Theorem \ref{theorema-egr} is our ``Shift Theorem'', referring to the shift in the position where we measure the height function (from the $(k+1)$-th to the $k$-th position along $P$) combined with the shift in its level (from $m$ to $m+1$), as we compare $\Phi_{P/Q}$ and $\Psi_{P/Q}$.

The small difference between \eqref{phi} and \eqref{psi} appears innocuous at first glance, and it seems as though it could be understood via some local transformation mapping $\Phi_{P/Q}$ to $\Psi_{P/Q}$. In fact this turns out not to be the case, and that the match of Theorem \ref{theorema-egr} is a fundamentally {\it non-local} phenomenon. One way of seeing this is to note the switch of rapidities $x_{\circ}\leftrightarrow x_{\bullet}$ that accompanies the statement, since these variables can in practice belong to two rows which are widely separated in the lattice.
\end{rmk}

\subsection{Shift invariance of height function joint distributions}

Before moving on to the proof of Theorem \ref{theorema-egr}, we examine one important corollary of it which will be essential in Section \ref{Section_higher_spin} and the shift-invariance results for polymers in Section \ref{Section_polymers}. Up to a slight change in notations, this is the same result as Theorem \ref{Theorem_6v_intro}.

\begin{theorem}
\label{Theorem_6v_main_text_ver}
    Consider the stochastic six-vertex model with inhomogeneous weights of Figure \ref{fund-vert} in the top-right quadrant, choose an index $\iota\ge 1$, color cutoff levels $m_1\dots,m_n\ge 1$, and a collection of points  $\{\mathcal U_j\}_{j=1}^n$. Set
    $$
    m_j'=\begin{cases} m_j,& j\ne \iota,\\ m_\iota+1, & j=\iota,\end{cases} \qquad \qquad \mathcal U'_j=\begin{cases}\mathcal U_j, & j\ne \iota, \\ \mathcal U_\iota + (0,1), & j=\iota. \end{cases}
    $$
    Assume that
    $$
    0\le m_1\le m_2\le \dots\le m_n, \qquad 0\le m'_1\le m'_2\le \dots\le m'_n,
    $$
    and
    $$\mathcal U_1,\dots,\mathcal U_{\iota-1} \succeq \mathcal U_\iota \succeq\mathcal U_{\iota+1},\dots, \mathcal U_n, \qquad \mathcal U'_1,\dots,\mathcal U'_{\iota-1} \succeq \mathcal U'_\iota \succeq \mathcal U'_{\iota+1},\dots, \mathcal U'_n,
    $$ where we recall that for two points $\mathcal U=(a^{\mathcal U},b^{\mathcal U})$, $\mathcal V=(a^{\mathcal V},b^{\mathcal V})$ in the quadrant, we write $\mathcal U\succeq \mathcal V$ if $a^{\mathcal U}\le a^{\mathcal V}$ and $b^{\mathcal U}\ge b^{\mathcal V}$. Then the distribution of the vector of colored height functions
    \begin{align}
    \label{vector2}
    \bigl(\H^{\ges m_1}(\mathcal U_1),\, \H^{\ges m_2}(\mathcal U_2),\, \dots, \H^{\ges m_n}(\mathcal U_n)\bigr)
    \end{align}
    coincides with the distribution of a similar vector with shifted $\iota$-th point and cutoff
    \begin{align}
    \label{shift-vector2}
    \bigl(\H^{\ges m'_1}(\mathcal U'_1),\, \H^{\ges m'_2}(\mathcal U'_2),\, \dots, \H^{\ges m'_n}(\mathcal U'_n)\bigr),
    \end{align}
under the condition that in the vertex model used to define the second vector one swaps the row rapidities $x_{m_\iota}$ and $x_{p}$, where $p$ is the vertical coordinate of $\mathcal U'_\iota$.
\end{theorem}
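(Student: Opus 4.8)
The plan is to deduce Theorem~\ref{Theorem_6v_main_text_ver} from the Shift Theorem (Theorem~\ref{theorema-egr}) by an appropriate choice of the down-right paths $P$ and $Q$ and a careful translation of colored height function events into the partition-function language of $\Phi_{P/Q}$ and $\Psi_{P/Q}$. First I would reduce to the case $n$ arbitrary but with the shifted point $\mathcal U_\iota$ being the ``highest'' one on its anti-diagonal: the hypotheses $\mathcal U_1,\dots,\mathcal U_{\iota-1}\succeq\mathcal U_\iota\succeq\mathcal U_{\iota+1},\dots,\mathcal U_n$ mean precisely that there is a single down-right path $P$ passing (downward) through a lattice point adjacent to $\mathcal U_\iota$ such that every $\mathcal U_j$ lies on $P$; take $Q$ to be the boundary of the quadrant (suitably truncated so that $P>Q$ and the domain $P-Q$ contains all relevant vertices). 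With $Q$ on the boundary, the incoming colors $(j_1,\dots,j_M)$ are exactly the boundary data of Section~\ref{Section_intro_6v} — a single path of each color $i\ge1$ entering in row $i$ — so the constraints \eqref{in-conditions} are automatically met after choosing $m=m_\iota$ and $\ell$ to be the boundary step in row $m$.

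Next I would encode the event in \eqref{vector2}. The key point is that $\mathcal H^{\ges m_\iota}(\mathcal U_\iota)$ counts colors $\ge m_\iota$ crossing the tail of $P$ starting at the step just above $\mathcal U_\iota$; using the color-merging identity of Proposition~\ref{prop:merge-PQ}/\eqref{PQ-merge} with $\mathcal C=\{m_\iota,\dots,N\}$, the joint event $\{\mathcal H^{\ges m_j}(\mathcal U_j)=h_j\ \text{all }j\}$ can be rewritten as a sum of partition functions $Z_{P/Q}$ in which the outgoing colors at positions $\alpha\in\mathcal A$ (those $j\ne\iota$ with $m_j<m_\iota$, roughly the points $\mathcal U_1,\dots,\mathcal U_{\iota-1}$) are fixed into $\{0,\dots,m_\iota-1\}$, those at positions $\beta\in\mathcal B$ (the points with larger cutoff, roughly $\mathcal U_{\iota+1},\dots,\mathcal U_n$) are fixed into $\{m_\iota+1,\dots,N\}$, and the rest are summed — this is exactly the shape of \eqref{out-conditions}. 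Under this translation the left side of \eqref{vector2} becomes $\Phi_{P/Q}$ (with $k$ the $P$-step at $\mathcal U_\iota$ and $h=\mathcal H^{\ges m_\iota}(\mathcal U_\iota)$-value), while the right side of \eqref{shift-vector2}, where the $\iota$-th point moves up by $(0,1)$ and its level is raised to $m_\iota+1$, becomes $\Psi_{P/Q}$ with the shaded region extended by one vertical step — precisely the $\Psi$ of \eqref{psi}. Theorem~\ref{theorema-egr} then equates $\Phi_{P/Q}$ with $\Psi_{P/Q}\big|_{x_\circ\leftrightarrow x_\bullet}$, and since $x_\circ$ is the rapidity of the row through the $k$-th step of $P$ (the row of $\mathcal U'_\iota$, i.e.\ $x_p$) and $x_\bullet$ is the rapidity through the $\ell$-th step of $Q$ (the boundary step in row $m=m_\iota$, i.e.\ $x_{m_\iota}$), this is exactly the asserted swap of $x_{m_\iota}$ and $x_p$. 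Summing over the fixed values $h_2,\dots$ and over the color values at the fixed positions recovers the full joint distributional identity.

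The main obstacle I expect is the bookkeeping in the color-merging step: one must verify that the various ``$\succeq$'' hypotheses, which only say the non-$\iota$ points are comparable to $\mathcal U_\iota$ (and not to each other), really do let all the $\mathcal U_j$ sit on one common down-right path $P$, and then that the cutoff inequalities $m_1\le\cdots\le m_n$ and $m'_1\le\cdots\le m'_n$ translate exactly into the partition of $\{1,\dots,M\}\setminus\{k\}$ into $\mathcal A$ and $\mathcal B$ required by \eqref{out-conditions}, with no index falling outside the allowed ranges $\{1,\dots,k-1\}$ and $\{k+1,\dots,M\}$. A secondary subtlety is handling points $\mathcal U_j$ that share an anti-diagonal with $\mathcal U_\iota$ or lie off $P$ on lattice edges where $P$ moves rightward rather than downward; for the latter one invokes the reflected version of the Shift Theorem from Remark~\ref{rmk:reflect} (equation \eqref{main-result'}) on the relevant portion, or equivalently observes that the height-function event there is unchanged. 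Once the dictionary between height-function events and $(\mathcal A,\mathcal B,h)$-data is pinned down, the rest is a direct application of Theorem~\ref{theorema-egr} together with \eqref{PQ-merge}, followed by summation over the auxiliary fixed data.
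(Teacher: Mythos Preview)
Your overall architecture is right: choose $Q$ to be the quadrant boundary, build a suitable $P$ through the step at $\mathcal U_\iota\to\mathcal U'_\iota$, translate the height-function event into $\Phi_{P/Q}$ versus $\Psi_{P/Q}$, and read off the rapidity swap $x_{m_\iota}\leftrightarrow x_p$ from Theorem~\ref{theorema-egr}. That part matches the paper.

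The genuine gap is the claim you flag but do not resolve: that the $\succeq$ hypotheses force all the $\mathcal U_j$ to lie on a single down-right path $P$. They do not. The assumptions only say each $\mathcal U_j$ is comparable to $\mathcal U_\iota$, so two points among $\mathcal U_1,\dots,\mathcal U_{\iota-1}$ (say $(1,10)$ and $(2,11)$) can be $\succeq$-incomparable, and then no down-right path contains both. Your reduction therefore breaks in the generic case, and this is not a bookkeeping issue; it is structural.

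The paper does not try to put the $\mathcal U_j$ on $P$. Instead it draws $P=P_1\cup\{\downarrow\}\cup P_2$ so that $P_1$ passes \emph{below} all of $\mathcal U_1,\dots,\mathcal U_{\iota-1}$ and $P_2$ passes \emph{below} all of $\mathcal U_{\iota+1},\dots,\mathcal U_n$ (with $P_2$ staying at height $>m_\iota$ until it has cleared those points). It then applies Theorem~\ref{theorema-egr} to get the distributional match for the full vector of height functions $\mathcal H^{\ges a_j}(P;j)$ along $P$ (with $a_j\le m_\iota$ on $P_1$ and $a_j>m_\iota$ on $P_2$), as in \eqref{vector}--\eqref{shift-vector}. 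The extra step you are missing is a conditioning argument: by color conservation, each $\mathcal H^{\ges m_j}(\mathcal U_j)$ for $j\ne\iota$ is a function of the height data along $P$ together with the randomness in the region between $P$ and $\mathcal U_j$; that region lies entirely in rows whose rapidities are neither $x_{m_\iota}$ nor $x_p$, so conditioning on the $P$-data makes the law of $\mathcal H^{\ges m_j}(\mathcal U_j)$ insensitive to the swap. This is what lifts the match from \eqref{vector}--\eqref{shift-vector} to \eqref{vector2}--\eqref{shift-vector2}. Your color-merging route via Proposition~\ref{prop:merge-PQ} is not needed here; the paper's argument goes directly through the height-function interpretation of the constraints \eqref{out-conditions}.
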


\begin{proof}
Throughout the proof, assume that $P$ and $Q$ are down-right paths that both begin at the lattice point $(0,N)$ and end at $(M-N,0)$, for some $M \geq N$.

We examine the constraints imposed on colored height functions which are measured at each of the $M$ edges of the down-right path $P$ in Theorem \ref{theorema-egr}.  In both of the quantities $\Phi_{P/Q}$ and $\Psi_{P/Q}$, by virtue of the conditions \eqref{out-conditions} that we impose on the colors exiting through $P$, we see that all of the height functions
\begin{align*}
\mathcal{H}^{\ges a}(P;1),
\dots,
\mathcal{H}^{\ges a}(P;k),
\qquad
0 \leq a \leq m,
\end{align*}
and
\begin{align*}
\mathcal{H}^{\ges b}(P;k+1),
\dots,
\mathcal{H}^{\ges b}(P;M),
\qquad
m+1 \leq b \leq N,
\end{align*}
have their values completely specified. Further, they assume identical values in both $\Phi_{P/Q}$ and $\Psi_{P/Q}$. In $\Phi_{P/Q}$ we additionally condition on the value of $\mathcal{H}^{\ges m}(P;k+1)$ (via the indicator function present in \eqref{phi}), while in $\Psi_{P/Q}$ we additionally condition on the value of $\mathcal{H}^{\ges m+1}(P;k)$ (via the indicator function present in \eqref{psi}).

Now choosing the lower (colored) down-right path $Q$ to be of the form
\begin{align*}
(0,N)
\xrightarrow{N}
(0,N-1)
\xrightarrow{N-1}
\cdots
\xrightarrow {1}
(0,0)
\xrightarrow{0}
(1,0)
\xrightarrow{0}
\cdots
\xrightarrow{0}
(M-N,0),
\end{align*}
(so that $Q$ frames a truncated version of the top-right quadrant) the match \eqref{main-result} from Theorem \ref{theorema-egr} implies that the distribution of the vector
\begin{align}
\label{vector}
(\mathcal{H}^{\ges a_1}(P;1),
\dots,
\mathcal{H}^{\ges a_k}(P;k),
\mathcal{H}^{\ges m}(P;k+1),
\mathcal{H}^{\ges a_{k+1}}(P;k+1),
\dots,
\mathcal{H}^{\ges a_M}(P;M))
\end{align}
coincides with the distribution of the vector
\begin{align}
\label{shift-vector}
(\mathcal{H}^{\ges a_1}(P;1),
\dots,
\mathcal{H}^{\ges a_k}(P;k),
\mathcal{H}^{\ges m+1}(P;k),
\mathcal{H}^{\ges a_{k+1}}(P;k+1),
\dots,
\mathcal{H}^{\ges a_M}(P;M)),
\end{align}
for any $0 \leq a_1 \leq \cdots \leq a_M$ and $m$ such that $a_k \leq m < a_{k+1}$, where in the vertex model used to produce the vector \eqref{shift-vector} we have made the switch of rapidities $x_{\circ} \leftrightarrow x_{\bullet}$.

To conclude the proof, we wish to upgrade the distribution match of \eqref{vector} and \eqref{shift-vector} to a more general collection of points $\{\mathcal{U}_j\}_{j=1}^{n}$, which do not all necessarily lie on a down-right path. By assumption, we know that for all $j \leq \iota-1$ we have $\mathcal{U}_j \succeq \mathcal{U}_{\iota}$ and for all $j \geq \iota+1$ we have $\mathcal{U}_{\iota} \succeq \mathcal{U}_j$. Furthermore:
\begin{itemize}
\item All of the points $\mathcal{U}_1,\dots,\mathcal{U}_{\iota-1}$ must have a vertical coordinate strictly greater than the vertical coordinate of $\mathcal{U}_{\iota}$, for if not, the assumption $\mathcal{U}'_1,\dots,\mathcal{U}'_{\iota-1} \succeq \mathcal{U}'_{\iota}$ would be violated;

\medskip

\item Since $m'_{\iota} \leq m'_{\iota+1}$, we know that $m_{\iota} < m_{\iota+1} \leq m_{\iota+2} \leq \cdots \leq m_n$. Then without loss of generality, each of $\mathcal{U}_{\iota+1}, \dots, \mathcal{U}_n$ should have vertical coordinate $m_{\iota}+1$ or greater, as any point $\mathcal{U}_j$ which violates this condition leads to the deterministic result $\mathcal{H}^{\ges m_j}(\mathcal{U}_j)=0$.
\end{itemize}
The collection of points $\{\mathcal{U}_j\}_{j=1}^{n}$ can then be illustrated by the following picture:
\begin{align*}
\tikz{0.7}{
\foreach\x in {0,...,8}{
\draw[dotted] (\x,0) -- (\x,8);
}
\foreach\y in {0,...,8}{
\draw[dotted] (0,\y) -- (8,\y);
}
\draw[line width=1.5pt,green,->] (0,7) -- (1,7) -- (1,6) -- (4,6) -- (4,3) -- (8,3) -- (8,0);
\node at (2,8) {$\star$};
\node at (2,7) {$\star$};
\node at (3,7) {$\star$};
\node at (4,5) {$\bullet$};
\node at (5,5) {$\ast$};
\node at (7,4) {$\ast$};
\node at (6,4) {$\ast$};
\node[left] at (0,0.5) {$1$};
\node[left] at (0,1.5) {$\vdots$};
\node[left] at (0,2.5) {$m_{\iota}$};
\node[left] at (0,3.5) {$m_{\iota}+1$};
\node[left] at (0,4.5) {$\vdots$};
\node[left] at (0,5.5) {$p$};
\node[left] at (0,6.5) {$\vdots$};
\node[left] at (0,7.5) {$\vdots$};
\node[below] at (0.5,0) {$0$};
\node[below] at (1.5,0) {$0$};
\node[below] at (2.5,0) {$0$};
\node[below] at (3.5,0) {$\cdots$};
\node[below] at (4.5,0) {$\cdots$};
\node[below] at (5.5,0) {$\cdots$};
\draw[ultra thick,->] (0,0) -- (9,0);
\draw[ultra thick,->] (0,0) -- (0,9);
%
\node at (13,6) {$\star$: points $\mathcal{U}_1,\dots,\mathcal{U}_{\iota-1}$};
\node at (13,5) {$\bullet$: point $\mathcal{U}_{\iota}$};
\node at (13,4) {$\ast$: points $\mathcal{U}_{\iota+1},\dots,\mathcal{U}_n$};
}
\end{align*}
We draw a down-right path $P$ (shown above) which can be realised as the concatenation of three shorter paths: $P = P_1 \cup \{\downarrow\} \cup P_2$. Here
$\downarrow$ is the path of length $1$ which passes through both of the points $\mathcal{U}_{\iota}$ and $\mathcal{U}'_{\iota}$; $P_1$ is any down-right path which begins on the vertical axis of the quadrant and ends at $\mathcal{U}'_{\iota}$ while passing under each of the points $\mathcal{U}_1,\dots,\mathcal{U}_{\iota-1}$, and $P_2$ is any down-right path which begins at $\mathcal{U}_{\iota}$ and ends on the horizontal axis of the quadrant, while passing under each of the points $\mathcal{U}_{\iota+1},\dots,\mathcal{U}_n$ and not attaining a vertical coordinate less than $m_{\iota}$ until the path has passed to the right of all of these points. Assume, without loss of generality, that $P_1$ has length $k-1$ and $P$ itself has length $M$.

Now (by the color conservation property of the underlying vertex model) it is easy to see that the distribution of $\mathcal{H}^{\ges m_j}(\mathcal{U}_j)$, $1 \leq j \leq \iota-1$ is completely determined by the distribution of $\mathcal{H}^{\ges a_j}(P;j)$, $a_j \leq m_{\iota}$, $1 \leq j \leq k$ (which lie along $P_1$), while the distribution of $\mathcal{H}^{\ges m_j}(\mathcal{U}_j)$, $\iota+1 \leq j \leq n$ is completely determined by the distribution of $\mathcal{H}^{\ges a_j}(P;j)$, $a_j > m_{\iota}$, $k+1 \leq j \leq M$ (which lie along $P_2$). Furthermore, once one conditions in this way on the values of the height functions along $P_1$ and $P_2$, the distribution of the height functions $\mathcal{H}^{\ges m_j}(\mathcal{U}_j)$, $1 \leq j \leq \iota-1$ and $\mathcal{H}^{\ges m_j}(\mathcal{U}_j)$, $\iota+1 \leq j \leq n$ is unaffected by switching the rapidities $x_{m_{\iota}}$ and $x_p$. It follows immediately that the distribution match of \eqref{vector} and \eqref{shift-vector} implies the distribution match of \eqref{vector2} and \eqref{shift-vector2}.
\end{proof}

\section{Proof of the Shift Theorem \ref{theorema-egr}}

\label{Section_Shift_inhom_proof}

Our proof of the Shift Theorem is long, and so we divide it into three parts.

\subsection{Part one: proof for two-row partition functions}
\label{ssec:two-row}

In this subsection we prove Theorem \ref{theorema-egr} in the case of down-right domains that consist of two rows. The result remains highly nontrivial even at this simplified level; in fact, the proof of the two-row case is essentially the ``kernel'' of the proof for generic down-right domains.

As previously, fix an integer $M$ and two other integers $k$, $\ell$ such that $1 \leq k, \ell \leq M$. Introduce two subsets $\mathcal{A} \subset \{1,\dots,k-1\}$ and $\mathcal{B} \subset \{k+1,\dots,M\}$. Our two-row partition functions are inherited from the previous definitions. We write
\begin{multline}
\label{phi-2}
\Phi_{{\rm two-row}}
\Big(\mathcal{A},\mathcal{B};
(i_{\alpha \in \mathcal{A}}),
(i_{\beta \in \mathcal{B}});
(j_1,\dots,j_M)
\Big)
=
\\
\tikz{0.9}{
\draw[dotted] (3,0) -- (6,0);
\draw[dotted] (1,0) -- (1,1);
\draw[dotted] (2,0) -- (2,1);
\draw[dotted] (3,0) -- (3,1);
\draw[dotted] (4,-1) -- (4,1);
\draw[dotted] (5,-1) -- (5,1);
\draw[dotted] (6,-1) -- (6,0);
\draw[dotted] (7,-1) -- (7,0);
\draw[dotted] (8,-1) -- (8,0);
\draw[dotted] (9,-1) -- (9,0);
\node[right,green] at (10,-1) {$P$};
\node[below,red] at (10,-1) {$Q$};
\draw[line width=1.5pt,->,red] (0,1) -- (0,0) -- (3,0) -- (3,-1) -- (10,-1);
\draw[line width=1.5pt,->,green] (0,1) -- (6,1) -- (6,0) -- (10,0) -- (10,-1);
\draw[line width=7,orange,opacity=0.3]  (6,0) -- (10,0) -- (10,-1);
\node[left] at (0,0.5) {$j_1$};
\node[below] at (0.5,0) {$\cdots$};
\node[below] at (1.5,0) {$\cdots$};
\node[below] at (2.5,0) {$\cdots$};
\node[left] at (3,-0.5) {$m$}; \node at (3,-0.5) {$\bullet$};
\node[below] at (3.5,-1) {$j_{\ell+1}$};
\node[below] at (4.5,-1) {$\cdots$};
\node[below] at (5.5,-1) {$\cdots$};
\node[below] at (6.5,-1) {$\cdots$};
\node[below] at (7.5,-1) {$\cdots$};
\node[below] at (8.5,-1) {$\cdots$};
\node[below] at (9.5,-1) {$j_M$};
\node[above] at (0.5,1) {$i_1$};
\node[above] at (1.5,1) {$\cdots$};
\node[above] at (2.5,1) {$\cdots$};
\node[above] at (3.5,1) {$\cdots$};
\node[above] at (4.5,1) {$\cdots$};
\node[above] at (5.5,1) {$i_{k-1}$};
\node[right] at (6,0.6) {$i_k$}; \filldraw[draw=black,fill=white] (6,0.5) circle (2pt);
\node[above] at (6.5,0) {$\cdots$};
\node[above] at (7.5,0) {$\cdots$};
\node[above] at (8.5,0) {$\cdots$};
\node[above] at (9.5,0) {$\cdots$};
\node[right] at (10,-0.5) {$i_M$};
\node[left] at (-0.5,0.5) {$x_1 \rightarrow$};
\node[left] at (-0.5,-0.5) {$x_{\ell} \rightarrow$};
\node[below] at (0.5,2) {$y_2$}; \node at (0.5,2.3) {$\uparrow$};
\node[below] at (1.5,2) {$\cdots$};
\node[below] at (2.5,2) {$y_{\ell-1}$}; \node at (2.5,2.3) {$\uparrow$};
\node[below] at (3.5,2) {$y_{\ell+1}$}; \node at (3.5,2.3) {$\uparrow$};
\node[below] at (4.5,2) {$\cdots$};
\node[below] at (5.5,2) {$y_{k+1}$}; \node at (5.5,2.3) {$\uparrow$};
\node[below] at (6.5,2) {$y_{k+2}$}; \node at (6.5,2.3) {$\uparrow$};
\node[below] at (7.5,2) {$\cdots$};
\node[below] at (8.5,2) {$\cdots$};
\node[below] at (9.5,2) {$y_{M}$}; \node at (9.5,2.3) {$\uparrow$};
}
\end{multline}
where $(j_1,\dots,j_M)$ is a fixed vector of colors satisfying \eqref{in-conditions}, and the outgoing colors $(i_1,\dots,i_M)$ are either summed or fixed, as specified by \eqref{out-conditions}. We also require that the number of colors of value $m$ or greater passing through the shaded edges is equal to $h$.

Similarly, we write
\begin{multline}
\label{psi-2}
\Psi_{{\rm two-row}}
\Big(\mathcal{A},\mathcal{B};
(i_{\alpha \in \mathcal{A}}),
(i_{\beta \in \mathcal{B}});
(j_1,\dots,j_M)
\Big)
=
\\
\tikz{0.9}{
\draw[dotted] (3,0) -- (6,0);
\draw[dotted] (1,0) -- (1,1);
\draw[dotted] (2,0) -- (2,1);
\draw[dotted] (3,0) -- (3,1);
\draw[dotted] (4,-1) -- (4,1);
\draw[dotted] (5,-1) -- (5,1);
\draw[dotted] (6,-1) -- (6,0);
\draw[dotted] (7,-1) -- (7,0);
\draw[dotted] (8,-1) -- (8,0);
\draw[dotted] (9,-1) -- (9,0);
\node[right,green] at (10,-1) {$P$};
\node[below,red] at (10,-1) {$Q$};
\draw[line width=1.5pt,->,red] (0,1) -- (0,0) -- (3,0) -- (3,-1) -- (10,-1);
\draw[line width=1.5pt,->,green] (0,1) -- (6,1) -- (6,0) -- (10,0) -- (10,-1);
\draw[line width=7,orange,opacity=0.3]  (6,1) -- (6,0) -- (10,0) -- (10,-1);
\node[left] at (0,0.5) {$j_1$};
\node[below] at (0.5,0) {$\cdots$};
\node[below] at (1.5,0) {$\cdots$};
\node[below] at (2.5,0) {$\cdots$};
\node[left] at (3,-0.5) {$m$}; \node at (3,-0.5) {$\bullet$};
\node[below] at (3.5,-1) {$j_{\ell+1}$};
\node[below] at (4.5,-1) {$\cdots$};
\node[below] at (5.5,-1) {$\cdots$};
\node[below] at (6.5,-1) {$\cdots$};
\node[below] at (7.5,-1) {$\cdots$};
\node[below] at (8.5,-1) {$\cdots$};
\node[below] at (9.5,-1) {$j_M$};
\node[above] at (0.5,1) {$i_1$};
\node[above] at (1.5,1) {$\cdots$};
\node[above] at (2.5,1) {$\cdots$};
\node[above] at (3.5,1) {$\cdots$};
\node[above] at (4.5,1) {$\cdots$};
\node[above] at (5.5,1) {$i_{k-1}$};
\node[right] at (6,0.6) {$i_k$}; \filldraw[draw=black,fill=white] (6,0.5) circle (2pt);
\node[above] at (6.5,0) {$\cdots$};
\node[above] at (7.5,0) {$\cdots$};
\node[above] at (8.5,0) {$\cdots$};
\node[above] at (9.5,0) {$\cdots$};
\node[right] at (10,-0.5) {$i_M$};
\node[left] at (-0.5,0.5) {$x_1 \rightarrow$};
\node[left] at (-0.5,-0.5) {$x_{\ell} \rightarrow$};
\node[below] at (0.5,2) {$y_2$}; \node at (0.5,2.3) {$\uparrow$};
\node[below] at (1.5,2) {$\cdots$};
\node[below] at (2.5,2) {$y_{\ell-1}$}; \node at (2.5,2.3) {$\uparrow$};
\node[below] at (3.5,2) {$y_{\ell+1}$}; \node at (3.5,2.3) {$\uparrow$};
\node[below] at (4.5,2) {$\cdots$};
\node[below] at (5.5,2) {$y_{k+1}$}; \node at (5.5,2.3) {$\uparrow$};
\node[below] at (6.5,2) {$y_{k+2}$}; \node at (6.5,2.3) {$\uparrow$};
\node[below] at (7.5,2) {$\cdots$};
\node[below] at (8.5,2) {$\cdots$};
\node[below] at (9.5,2) {$y_{M}$}; \node at (9.5,2.3) {$\uparrow$};
}
\end{multline}
where $(j_1,\dots,j_M)$ is a fixed vector of colors satisfying \eqref{in-conditions}, and the outgoing colors $(i_1,\dots,i_M)$ are either summed or fixed, as specified by \eqref{out-conditions}. This time we require that the number of colors of value $m+1$ or greater passing through the shaded edges is equal to $h$.

\begin{theorem}
\label{lem:2row}
There holds
\begin{align*}
\Phi_{{\rm two-row}}
\Big(\mathcal{A},\mathcal{B};
(i_{\alpha \in \mathcal{A}}),
(i_{\beta \in \mathcal{B}});
(j_1,\dots,j_M)
\Big)
=
\Psi_{{\rm two-row}}
\Big(\mathcal{A},\mathcal{B};
(i_{\alpha \in \mathcal{A}}),
(i_{\beta \in \mathcal{B}});
(j_1,\dots,j_M)
\Big)
\Big|_{x_1 \leftrightarrow x_{\ell}}.
\end{align*}
\end{theorem}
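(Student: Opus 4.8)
The two-row case involves a partition function on a strip of height $2$, so I would first set up coordinates so that the $\ell$-th step of $Q$ is the downward step into row $\ell$ (carrying rapidity $x_\ell$) and the $k$-th step of $P$ is a downward step (carrying rapidity $x_1$, i.e.\ the rapidity of the top row in this two-row slice). The strategy is to realize the difference between $\Phi_{\rm two-row}$ and $\Psi_{\rm two-row}$ — namely, where and at what level the height-function indicator sits, at position $k+1$ level $m$ versus position $k$ level $m+1$ — as the effect of inserting a single extra vertex, and then to transport that vertex across the whole strip using the Yang--Baxter equation \eqref{graph-YB}. The indicator ${\bm 1}(\mathcal H^{\ges m}(P;k+1)=h)$ vs.\ ${\bm 1}(\mathcal H^{\ges m+1}(P;k)=h)$ can be rewritten using color-merging: by \eqref{merge1}--\eqref{merge2} (in the forms \eqref{merge-up}, \eqref{merge-down}), summing colors into the contiguous block $\{m,m+1,\dots,N\}$ versus $\{m+1,\dots,N\}$ on the outgoing edges past position $k$ collapses all the summed $i_\alpha$ down to a single color, so that each of $\Phi$, $\Psi$ becomes a partition function with a merged (two- or three-valued) alphabet near $P$, with an honest height function of a single color. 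This is the key reformulation that makes the extra indicator ``local'' enough to be absorbed into a vertex.

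**The core computation.** Once both sides are written as merged partition functions, I would introduce an auxiliary horizontal line carrying a spectral parameter $z$ and an $R$-matrix $R_z$, attach it at the left boundary between the two rows, and use the Yang--Baxter equation repeatedly to drag it rightward through every column of the two-row strip. Each application of \eqref{graph-YB} moves the auxiliary vertex one column to the right at the cost of conjugating the local configuration; because the strip has only two rows, the auxiliary line only ever interacts with two lattice lines at a time, which is exactly the setting of \eqref{graph-YB}. At the special value where $R_z$ degenerates — using the vertex-splitting \eqref{R-split} at $z=1$, or rather the value that exchanges the two row rapidities — the auxiliary line acts as a transposition of rows $1$ and $\ell$ on one end and as the identity (after unitarity \eqref{graph-unitarity}) on the other, and the net effect is precisely to (i) swap $x_1 \leftrightarrow x_\ell$ and (ii) move the height-function marker from $(k+1,m)$ to $(k,m+1)$. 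The bookkeeping of which incoming colors \eqref{in-conditions} allow this — in particular the constraint $j_\ell = m$ and the blocks $j_\alpha \in \{m+1,\dots,N\}$ for $\alpha<\ell$, $j_\alpha\in\{0,\dots,m-1\}$ for $\alpha>\ell$ — is exactly what is needed so that the auxiliary line can pass cleanly through the bottom boundary of the strip.

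**Main obstacle.** The hard part will be making the ``drag the $R$-matrix through and take the degeneration'' argument actually produce the asymmetric conditioning in \eqref{phi} versus \eqref{psi} rather than just a symmetric rapidity swap. The subtlety is that the height functions $\mathcal H^{\ges m}(P;k+1)$ and $\mathcal H^{\ges m+1}(P;k)$ differ by whether the single color on the $k$-th edge (the circled $i_k$) is $\geq m$, $=m$, or $<m$, and one must check that after the YB moves this distinction is matched correctly term by term with the conditions \eqref{out-conditions} on the fixed colors in $\mathcal A$ and $\mathcal B$ — i.e.\ that the fixed outgoing colors to the left of $k$ (values $<m$) and to the right of $k$ (values $>m$) remain fixed and consistent under the transformation, and that the merged sums $(i_{\alpha\in\bar{\mathcal A}})$, $(i_{\beta\in\bar{\mathcal B}})$ go to merged sums. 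I expect this to require a careful case analysis of the local weights in Figure \ref{fund-vert} at the special spectral parameter, together with the color-merging identities \eqref{merge1}--\eqref{merge2} applied on both the incoming and outgoing sides, and it is the step where the precise form of the constraints \eqref{out-conditions}, \eqref{in-conditions} is genuinely used rather than just carried along.
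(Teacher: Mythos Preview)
Your plan has a genuine gap at exactly the point you flag as the ``hard part,'' and I do not think the mechanism you propose can close it. A Yang--Baxter zipper through a two-row strip does one thing: it swaps the two horizontal rapidities while leaving the boundary data (incoming colors along $Q$, outgoing colors along $P$, and any indicator on those outgoing colors) unchanged. So dragging an auxiliary $R$-vertex through will give you $\Phi_{\rm two-row}(\dots) = \Phi_{\rm two-row}(\dots)|_{x_1\leftrightarrow x_\ell}$ for the \emph{same} height indicator, not $\Phi = \Psi|_{x_1\leftrightarrow x_\ell}$. The shift of the indicator from $\mathcal H^{\ges m}(P;k+1)$ to $\mathcal H^{\ges m+1}(P;k)$ lives on the outgoing path $P$ and involves a change of \emph{level} ($m$ versus $m+1$); nothing in \eqref{graph-YB} or \eqref{R-split} acts on color levels, only on positions of lattice lines. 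Your sentence ``the net effect is precisely to \dots\ (ii) move the height-function marker from $(k+1,m)$ to $(k,m+1)$'' is asserted, not derived, and I do not see a way to make it true by YB moves alone.

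The paper's proof proceeds quite differently and uses structural facts about the two-row geometry that your outline does not mention. First, color conservation in the bottom row forces $i_{k+1},\dots,i_M\in\{0,\dots,m\}$, so $\mathcal B=\varnothing$ and only $h\in\{0,1\}$ are possible. Then for each of $h=1$ and $h=0$ separately, the conditions \eqref{in-conditions}, \eqref{out-conditions} force large portions of both $\Phi$ and $\Psi$ to \emph{freeze} (the color-$m$ path, or the merged color-$(m{+}1)$ path, must run straight across a row), which factors out an explicit product $\prod_\alpha \frac{x-y_\alpha}{x-qy_\alpha}$ with $x=x_1$ on one side and $x=x_\ell$ on the other. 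After this freezing and repeated use of the color-merging relations \eqref{merge1}--\eqref{Z-stoch}, both $\Phi$ and $\Psi$ reduce to the \emph{same} residual one-row partition function times those explicit prefactors, making the $x_1\leftrightarrow x_\ell$ match manifest. The Yang--Baxter equation appears only once, in the $h=0$ case, to show symmetry in $x_1,x_\ell$ of a specific two-row block with identical left boundary colors (see \eqref{x-sym}); it is not used as a global zipper. The level shift $m\to m+1$ is produced not by YB but by applying color-merging with $\mathcal C=\{m+1,\dots,N\}$ on the $\Psi$ side versus $\mathcal C=\{m,\dots,N\}$ on the $\Phi$ side, which is exactly what makes the two freezings land on matching residual objects.
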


\begin{proof}
Let us consider the outgoing colors $(i_{k+1},\dots,i_M)$ (in both partition functions). Color conservation stipulates that these colors can only assume values in the set $\{m\} \cup \{j_{\ell+1},\dots,j_M\}$. Given the constraints \eqref{in-conditions} on $(j_{\ell+1},\dots,j_M)$, it follows that $(i_{k+1},\dots,i_M)$ can only assume values in $\{0,1,\dots,m\}$. Therefore, both $\Phi_{{\rm two-row}}$ and $\Psi_{{\rm two-row}}$ vanish identically unless $\mathcal{B} = \varnothing$; we will assume this throughout the rest of the proof.

Now given that $\mathcal{B} = \varnothing$ we have
$\bar{\mathcal{B}} = \{k+1,\dots,M\}$, and consulting \eqref{out-conditions}, the outgoing colors $(i_{k+1},\dots,i_M)$ are all summed over values in
$\{0,1,\dots,m\}$. Of these colors, only color $m$ can contribute to the height function
$\mathcal{H}^{\ges m}(P;k+1)$ in $\Phi_{{\rm two-row}}$, while none of these colors contribute to the height function $\mathcal{H}^{\ges m+1}(P;k)$ in $\Psi_{{\rm two-row}}$. It follows that both quantities vanish identically unless $h=1$ or $h=0$; we now focus on these cases separately.

\bigskip
\underline{The case $h=1$.} We begin with $\Phi_{{\rm two-row}}$, assuming that
$\mathcal{H}^{\ges m}(P;k+1)=1$. With the height function thus fixed, we can more precisely specify the values of the outgoing colors:
\begin{align}
\label{i-values_h=1}
i_{\alpha}
\left\{
\begin{array}{ll}
\fixin \{0,1,\dots,m-1\},
& \quad \alpha \in \mathcal{A},
\\ \\
\sumin \{m+1,m+2,\dots,N\},
& \quad \alpha \in \bar{\mathcal{A}},
\end{array}
\right.
\qquad\quad
i_{\beta} \sumin \{0,1,\dots,m\},
\quad \beta \in \{k+1,\dots,M\},
\end{align}
and $i_k \sumin \{0,1,\dots,m-1\} \cup \{m+1,m+2,\dots,N\}$. Several simplifications of $\Phi_{{\rm two-row}}$ then ensue. Noting the possible values of the indices \eqref{i-values_h=1}, the color $m$ can only exit the partition function \eqref{phi-2} via one of its shaded edges. This leads to the freezing of a portion of the partition function \eqref{phi-2}, as shown below:
\begin{multline*}
\Phi_{{\rm two-row}}
\Big(\mathcal{A},\varnothing;
(i_{\alpha \in \mathcal{A}});
(j_1,\dots,j_M)
\Big)
=
\\
\tikz{0.9}{
\filldraw[fill=lgray,draw=lgray] (3.05,-0.95) -- (3.05,-0.05) -- (5.95,-0.05) -- (5.95,-0.95) -- (3.05,-0.95);
\draw[dotted] (3,0) -- (6,0);
\draw[dotted] (1,0) -- (1,1);
\draw[dotted] (2,0) -- (2,1);
\draw[dotted] (3,0) -- (3,1);
\draw[dotted] (4,-1) -- (4,1);
\draw[dotted] (5,-1) -- (5,1);
\draw[dotted] (6,-1) -- (6,0);
\draw[dotted] (7,-1) -- (7,0);
\draw[dotted] (8,-1) -- (8,0);
\draw[dotted] (9,-1) -- (9,0);
\node[right,green] at (10,-1) {$P$};
\node[below,red] at (10,-1) {$Q$};
\draw[line width=1.5pt,->,red] (0,1) -- (0,0) -- (3,0) -- (3,-1) -- (10,-1);
\draw[line width=1.5pt,->,green] (0,1) -- (6,1) -- (6,0) -- (10,0) -- (10,-1);
\draw[line width=7,orange,opacity=0.3]  (6,0) -- (10,0) -- (10,-1);
\node[left] at (0,0.5) {$j_1$};
\node[below] at (0.5,0) {$\cdots$};
\node[below] at (1.5,0) {$\cdots$};
\node[below] at (2.5,0) {$\cdots$};
\node[left] at (3,-0.5) {$m$}; \node at (3,-0.5) {$\bullet$};
\node[below] at (3.5,-1) {$j_{\ell+1}$};
\node[below] at (4.5,-1) {$\cdots$};
\node[below] at (5.5,-1) {$j_{k+1}$};
\node[below] at (6.5,-1) {$\cdots$};
\node[below] at (7.5,-1) {$\cdots$};
\node[below] at (8.5,-1) {$\cdots$};
\node[below] at (9.5,-1) {$j_M$};
\node[left] at (6,-0.5) {$m$};
\node[above] at (3.5,0) {$j_{\ell+1}$};
\node[above] at (4.5,0) {$\cdots$};
\node[above] at (5.5,0) {$j_{k+1}$};
\node[above] at (0.5,1) {$i_1$};
\node[above] at (1.5,1) {$\cdots$};
\node[above] at (2.5,1) {$\cdots$};
\node[above] at (3.5,1) {$\cdots$};
\node[above] at (4.5,1) {$\cdots$};
\node[above] at (5.5,1) {$i_{k-1}$};
\node[right] at (6,0.6) {$i_k$}; \filldraw[draw=black,fill=white] (6,0.5) circle (2pt);
\node[above] at (6.5,0) {$\cdots$};
\node[above] at (7.5,0) {$\cdots$};
\node[above] at (8.5,0) {$\cdots$};
\node[above] at (9.5,0) {$\cdots$};
\node[right] at (10,-0.5) {$i_M$};
\node[left] at (-0.5,0.5) {$x_1 \rightarrow$};
\node[left] at (-0.5,-0.5) {$x_{\ell} \rightarrow$};
}.
\end{multline*}
In all the vertices of the frozen (gray) area the edges of color $m$ are necessarily horizontal.
Computing the weight of the frozen portion (it is simply a product of bottom middle weights in Figure \ref{fund-vert}, since $m > j_{\alpha}$ for all $\alpha \in \{\ell+1,\dots,k+1\}$), \eqref{phi-2} factorizes into two disjoint one-row partition functions:
\begin{multline}
\label{xy-pieces}
\Phi_{{\rm two-row}}
\Big(\mathcal{A},\varnothing;
(i_{\alpha})_{\alpha \in \mathcal{A}};
(j_1,\dots,j_M)
\Big)
=
\prod_{\alpha=\ell+1}^{k+1}
\frac{x_{\ell}-y_{\alpha}}{x_{\ell}-qy_{\alpha}}
\times
\\
\left(
\tikz{0.9}{
\draw[dotted] (3,0) -- (6,0);
\draw[dotted] (1,0) -- (1,1);
\draw[dotted] (2,0) -- (2,1);
\draw[dotted] (3,0) -- (3,1);
\draw[dotted] (4,0) -- (4,1);
\draw[dotted] (5,0) -- (5,1);
\draw[line width=1.5pt,->,red] (0,1) -- (0,0) -- (6,0);
\draw[line width=1.5pt,->,green] (0,1) -- (6,1) -- (6,0);
\node[left] at (0,0.5) {$j_1$};
\node[below] at (0.5,0) {$\cdots$};
\node[below] at (1.5,0) {$\cdots$};
\node[below] at (2.5,0) {$j_{\ell-1}$};
\node[below] at (3.5,0) {$j_{\ell+1}$};
\node[below] at (4.5,0) {$\cdots$};
\node[below] at (5.5,0) {$j_{k+1}$};
\node[above] at (0.5,1) {$i_1$};
\node[above] at (1.5,1) {$\cdots$};
\node[above] at (2.5,1) {$\cdots$};
\node[above] at (3.5,1) {$\cdots$};
\node[above] at (4.5,1) {$\cdots$};
\node[above] at (5.5,1) {$i_{k-1}$};
\node[right] at (6,0.5) {$i_k$};
\node[left] at (-0.5,0.5) {$x_1 \rightarrow$};
}
\right)
\left(
\tikz{0.9}{
\draw[dotted] (3,0) -- (6,0);
\draw[dotted] (3,0) -- (3,1);
\draw[dotted] (4,0) -- (4,1);
\draw[dotted] (5,0) -- (5,1);
\draw[line width=1.5pt,->,red] (2,1) -- (2,0) -- (6,0);
\draw[line width=1.5pt,->,green] (2,1) -- (6,1) -- (6,0);
\node[left] at (2,0.5) {$m$};
\node[below] at (2.5,0) {$j_{k+2}$};
\node[below] at (3.5,0) {$\cdots$};
\node[below] at (4.5,0) {$\cdots$};
\node[below] at (5.5,0) {$j_M$};
\node[above] at (2.5,1) {$i_{k+1}$};
\node[above] at (3.5,1) {$\cdots$};
\node[above] at (4.5,1) {$\cdots$};
\node[above] at (5.5,1) {$\cdots$};
\node[right] at (6,0.5) {$i_M$};
\node[left] at (1.5,0.5) {$x_{\ell} \rightarrow$};
}
\right)
\end{multline}
where the indices $(i_1,\dots,i_M)$ are specified by \eqref{i-values_h=1}. Let us refer to these one-row partition functions as the {\it $x_1$ piece} and the {\it $x_{\ell}$ piece}. Given that $i_{k+1},\dots,i_M \sumin \{0,1,\dots,m\}$, we can invoke equation \eqref{Z-stoch} with $\mathcal{C} = \{0,1,\dots,m\}$ to conclude that the $x_{\ell}$ piece is equal to $1$, irrespective of the values of $j_{k+2},\dots,j_M \fixin \{0,1,\dots,m-1\}$. Considering the outgoing indices $i_1,\dots,i_k$ of the $x_1$ piece we see that they satisfy the necessary criteria to invoke Proposition \ref{prop:merge-PQ} with $\mathcal{C} = \{m+1,m+2,\dots,N\}$. We thus have, after performing everywhere the color relabelling $m+1 \rightarrow m$,
\begin{align}
\label{y-piece1}
\tikz{0.9}{
\draw[dotted] (3,0) -- (6,0);
\draw[dotted] (1,0) -- (1,1);
\draw[dotted] (2,0) -- (2,1);
\draw[dotted] (3,0) -- (3,1);
\draw[dotted] (4,0) -- (4,1);
\draw[dotted] (5,0) -- (5,1);
\draw[line width=1.5pt,->,red] (0,1) -- (0,0) -- (6,0);
\draw[line width=1.5pt,->,green] (0,1) -- (6,1) -- (6,0);
\node[left] at (0,0.5) {$j_1$};
\node[below] at (0.5,0) {$\cdots$};
\node[below] at (1.5,0) {$\cdots$};
\node[below] at (2.5,0) {$j_{\ell-1}$};
\node[below] at (3.5,0) {$j_{\ell+1}$};
\node[below] at (4.5,0) {$\cdots$};
\node[below] at (5.5,0) {$j_{k+1}$};
\node[above] at (0.5,1) {$i_1$};
\node[above] at (1.5,1) {$\cdots$};
\node[above] at (2.5,1) {$\cdots$};
\node[above] at (3.5,1) {$\cdots$};
\node[above] at (4.5,1) {$\cdots$};
\node[above] at (5.5,1) {$i_{k-1}$};
\node[right] at (6,0.5) {$i_k$};
\node[left] at (-0.5,0.5) {$x_1 \rightarrow$};
}
=
\tikz{0.9}{
\draw[dotted] (3,0) -- (6,0);
\draw[dotted] (1,0) -- (1,1);
\draw[dotted] (2,0) -- (2,1);
\draw[dotted] (3,0) -- (3,1);
\draw[dotted] (4,0) -- (4,1);
\draw[dotted] (5,0) -- (5,1);
\draw[line width=1.5pt,->,red] (0,1) -- (0,0) -- (6,0);
\draw[line width=1.5pt,->,green] (0,1) -- (6,1) -- (6,0);
\node[left] at (0,0.5) {$m$};
\node[below] at (0.5,0) {$\cdots$};
\node[below] at (1.5,0) {$\cdots$};
\node[below] at (2.5,0) {$m$};
\node[below] at (3.5,0) {$j_{\ell+1}$};
\node[below] at (4.5,0) {$\cdots$};
\node[below] at (5.5,0) {$j_{k+1}$};
\node[above] at (0.5,1) {$p_1$};
\node[above] at (1.5,1) {$\cdots$};
\node[above] at (2.5,1) {$\cdots$};
\node[above] at (3.5,1) {$\cdots$};
\node[above] at (4.5,1) {$\cdots$};
\node[above] at (5.5,1) {$p_{k-1}$};
\node[right] at (6,0.5) {$p_k$};
\node[left] at (-0.5,0.5) {$x_1 \rightarrow$};
}
\end{align}
where we have replaced each of the incoming indices $j_1,\dots,j_{\ell-1} \fixin \{m+1,m+2,\dots,N\}$ by $m$ and have defined
\begin{align}
\label{p-indices}
p_{\alpha} =
\left\{
\begin{array}{ll}
i_{\alpha}, & \qquad \alpha \in \mathcal{A},
\\ \\
m, & \qquad \alpha \not\in \b{\mathcal{A}},
\end{array}
\right.
\qquad
\text{and}
\qquad
p_k \sumin \{0,1,\dots,m\}.
\end{align}
Examining the right hand side of \eqref{y-piece1}, by color-conservation arguments we see that it vanishes identically unless $p_1 = \cdots = p_{\ell-2} = m$ (equivalently, $1,\dots,\ell-2 \in \b{\mathcal{A}}$). Provided that these criteria are met, we have
\begin{align}
\nonumber
\tikz{0.9}{
\draw[dotted] (3,0) -- (6,0);
\draw[dotted] (1,0) -- (1,1);
\draw[dotted] (2,0) -- (2,1);
\draw[dotted] (3,0) -- (3,1);
\draw[dotted] (4,0) -- (4,1);
\draw[dotted] (5,0) -- (5,1);
\draw[line width=1.5pt,->,red] (0,1) -- (0,0) -- (6,0);
\draw[line width=1.5pt,->,green] (0,1) -- (6,1) -- (6,0);
\node[left] at (0,0.5) {$j_1$};
\node[below] at (0.5,0) {$\cdots$};
\node[below] at (1.5,0) {$\cdots$};
\node[below] at (2.5,0) {$j_{\ell-1}$};
\node[below] at (3.5,0) {$j_{\ell+1}$};
\node[below] at (4.5,0) {$\cdots$};
\node[below] at (5.5,0) {$j_{k+1}$};
\node[above] at (0.5,1) {$i_1$};
\node[above] at (1.5,1) {$\cdots$};
\node[above] at (2.5,1) {$\cdots$};
\node[above] at (3.5,1) {$\cdots$};
\node[above] at (4.5,1) {$\cdots$};
\node[above] at (5.5,1) {$i_{k-1}$};
\node[right] at (6,0.5) {$i_k$};
\node[left] at (-0.5,0.5) {$x_1 \rightarrow$};
}
&=
\tikz{0.9}{
\filldraw[fill=lgray,draw=lgray] (0.05,0.05) -- (0.05,0.95) -- (2.95,0.95) -- (2.95,0.05) -- (0.05,0.05);
\draw[dotted] (3,0) -- (6,0);
\draw[dotted] (1,0) -- (1,1);
\draw[dotted] (2,0) -- (2,1);
\draw[dotted] (3,0) -- (3,1);
\draw[dotted] (4,0) -- (4,1);
\draw[dotted] (5,0) -- (5,1);
\draw[line width=1.5pt,->,red] (0,1) -- (0,0) -- (6,0);
\draw[line width=1.5pt,->,green] (0,1) -- (6,1) -- (6,0);
\node[left] at (0,0.5) {$m$};
\node[below] at (0.5,0) {$\cdots$};
\node[below] at (1.5,0) {$\cdots$};
\node[below] at (2.5,0) {$m$};
\node[below] at (3.5,0) {$j_{\ell+1}$};
\node[below] at (4.5,0) {$\cdots$};
\node[below] at (5.5,0) {$j_{k+1}$};
\node[above] at (0.5,1) {$m$};
\node[above] at (1.5,1) {$\cdots$};
\node[above] at (2.5,1) {$m$};
\node[above] at (3.5,1) {$p_{\ell-1}$};
\node[above] at (4.5,1) {$\cdots$};
\node[above] at (5.5,1) {$p_{k-1}$};
\node[right] at (6,0.5) {$p_k$};
\node[left] at (3.5,0.5) {$m$};
\node[left] at (-0.5,0.5) {$x_1 \rightarrow$};
}
\\
\label{y-piece2}
&=
\tikz{0.9}{
\draw[dotted] (3,-1) -- (3,0);
\draw[dotted] (4,-1) -- (4,0);
\draw[dotted] (5,-1) -- (5,0);
\draw[line width=1.5pt,->,red] (2,0) -- (2,-1) -- (5,-1);
\draw[line width=1.5pt,->,green] (2,0) -- (5,0) -- (5,-1);
\node[left] at (2,-0.5) {$m$};
\node[below] at (2.5,-1) {$j_{\ell+1}$};
\node[below] at (3.5,-1) {$\cdots$};
\node[below] at (4.5,-1) {$j_{k+1}$};
\node[above] at (2.5,0) {$p_{\ell-1}$};
\node[above] at (3.5,0) {$\cdots$};
\node[above] at (4.5,0) {$p_{k-1}$};
\node[right] at (5,-0.5) {$p_{k}$};
\node[left] at (1.5,-0.5) {$x_1 \rightarrow$};
}
\end{align}
Combining all of these results in \eqref{xy-pieces}, we have shown that
\begin{align}
\label{phi-ans-h1}
\Phi_{{\rm two-row}}
\Big(\mathcal{A},\varnothing;
(i_{\alpha \in \mathcal{A}});
(j_1,\dots,j_M)
\Big)
=
\prod_{\alpha=1}^{\ell-2}
\bm{1}_{\alpha \in \bar{\mathcal{A}}}
\prod_{\alpha=\ell+1}^{k+1}
\frac{x_{\ell}-y_{\alpha}}{x_{\ell}-qy_{\alpha}}
\left(
\tikz{0.9}{
\draw[dotted] (3,-1) -- (3,0);
\draw[dotted] (4,-1) -- (4,0);
\draw[dotted] (5,-1) -- (5,0);
\draw[line width=1.5pt,->,red] (2,0) -- (2,-1) -- (5,-1);
\draw[line width=1.5pt,->,green] (2,0) -- (5,0) -- (5,-1);
\node[left] at (2,-0.5) {$m$};
\node[below] at (2.5,-1) {$j_{\ell+1}$};
\node[below] at (3.5,-1) {$\cdots$};
\node[below] at (4.5,-1) {$j_{k+1}$};
\node[above] at (2.5,0) {$p_{\ell-1}$};
\node[above] at (3.5,0) {$\cdots$};
\node[above] at (4.5,0) {$p_{k-1}$};
\node[right] at (5,-0.5) {$p_{k}$};
\node[left] at (1.5,-0.5) {$x_1 \rightarrow$};
}
\right).
\end{align}

Now let us compare this against $\Psi_{{\rm two-row}}$, assuming that
$\mathcal{H}^{\ges m+1}(P;k)=1$. This leads to the following specification of the outgoing colors:
\begin{align*}
i_{\alpha}
\left\{
\begin{array}{ll}
\fixin \{0,1,\dots,m-1\},
& \quad \alpha \in \mathcal{A},
\\ \\
\sumin \{m,m+1,\dots,N\},
& \quad \alpha \in \bar{\mathcal{A}},
\end{array}
\right.
\qquad\qquad
i_{\beta} \sumin \{0,1,\dots,m\},
\quad \beta \in \{k+1,\dots,M\},
\end{align*}
and $i_k \sumin \{m+1,m+2,\dots,N\}$ (note that, in particular, we cannot allow $i_k \sumin \{0,1,\dots,m\}$, since for such values of $i_k$ the height function constraint $\mathcal{H}^{\ges m+1}(P;k)=1$ is violated). We observe that the outgoing indices $i_1,\dots,i_M$ satisfy the necessary criteria to invoke Proposition \ref{prop:merge-PQ} with $\mathcal{C} = \{m+1,m+2,\dots,N\}$, and we thus have
\begin{multline}
\label{psi-22}
\Psi_{{\rm two-row}}
\Big(\mathcal{A},\varnothing;
(i_{\alpha \in \mathcal{A}});
(j_1,\dots,j_M)
\Big)
=
\\
\tikz{0.9}{
\draw[dotted] (3,0) -- (6,0);
\draw[dotted] (1,0) -- (1,1);
\draw[dotted] (2,0) -- (2,1);
\draw[dotted] (3,0) -- (3,1);
\draw[dotted] (4,-1) -- (4,1);
\draw[dotted] (5,-1) -- (5,1);
\draw[dotted] (6,-1) -- (6,0);
\draw[dotted] (7,-1) -- (7,0);
\draw[dotted] (8,-1) -- (8,0);
\draw[dotted] (9,-1) -- (9,0);
\node[right,green] at (10,-1) {$P$};
\node[below,red] at (10,-1) {$Q$};
\draw[line width=1.5pt,->,red] (0,1) -- (0,0) -- (3,0) -- (3,-1) -- (10,-1);
\draw[line width=1.5pt,->,green] (0,1) -- (6,1) -- (6,0) -- (10,0) -- (10,-1);
\draw[line width=7,orange,opacity=0.3]  (6,1) -- (6,0) -- (10,0) -- (10,-1);
\node[left] at (0,0.5) {\tiny $m+1$};
\node[below] at (0.5,0) {$\cdots$};
\node[below] at (1.5,0) {$\cdots$};
\node[below] at (2.5,0) {\tiny $m+1$};
\node[right] at (3,-0.5) {$m$}; \node at (3,-0.5) {$\bullet$};
\node[below] at (3.5,-1) {$j_{\ell+1}$};
\node[below] at (4.5,-1) {$\cdots$};
\node[below] at (5.5,-1) {$\cdots$};
\node[below] at (6.5,-1) {$\cdots$};
\node[below] at (7.5,-1) {$\cdots$};
\node[below] at (8.5,-1) {$\cdots$};
\node[below] at (9.5,-1) {$j_M$};
\node[above] at (0.5,1) {$r_1$};
\node[above] at (1.5,1) {$\cdots$};
\node[above] at (2.5,1) {$\cdots$};
\node[above] at (3.5,1) {$\cdots$};
\node[above] at (4.5,1) {$\cdots$};
\node[above] at (5.5,1) {$r_{k-1}$};
\node[right] at (6,0.5) {\tiny $m+1$};
\filldraw[draw=black,fill=white] (6,0.5) circle (2pt);
\node[below] at (6.5,0) {$i_{k+1}$};
\node[above] at (7.5,0) {$\cdots$};
\node[above] at (8.5,0) {$\cdots$};
\node[above] at (9.5,0) {$\cdots$};
\node[right] at (10,-0.5) {$i_M$};
\node[left] at (-1,0.5) {$x_1 \rightarrow$};
\node[left] at (-1,-0.5) {$x_{\ell} \rightarrow$};
}
\end{multline}
where we have replaced each of the incoming indices $j_1,\dots,j_{\ell-1} \fixin \{m+1,m+2,\dots,N\}$ by $m+1$ and have defined
\begin{align}
\label{r-indices}
r_{\alpha}
\left\{
\begin{array}{ll}
=
i_{\alpha}, & \qquad \alpha \in \mathcal{A},
\\ \\
\sumin \{m,m+1\}, & \qquad \alpha \not\in \b{\mathcal{A}}.
\end{array}
\right.
\end{align}
By color conservation arguments, we see that \eqref{psi-22} vanishes unless $r_1 = \cdots = r_{\ell-2} = m+1$ (which stipulates that $1,\dots,\ell-2 \in \b{\mathcal{A}}$). After performing this replacement, $\ell-1$ of the outgoing edges have already been assigned the value $m+1$; on the other hand, exactly $\ell-1$ of the incoming edges assume the value $m+1$. It follows that none of the indices $r_{\ell-1},\dots,r_{k-1}$ can take the value $m+1$ (this would violate color conservation). Taking into account the preceding requirements, the $x_1$-dependent row of \eqref{psi-22} freezes, as shown below:
\begin{multline}
\Psi_{{\rm two-row}}
\Big(\mathcal{A},\varnothing;
(i_{\alpha \in \mathcal{A}});
(j_1,\dots,j_M)
\Big)
=
\\
\tikz{0.9}{
\filldraw[fill=lgray,draw=lgray] (0.05,0.05) -- (0.05,0.95) -- (5.95,0.95) -- (5.95,0.05) -- (0.05,0.05);
\draw[dotted] (3,0) -- (6,0);
\draw[dotted] (1,0) -- (1,1);
\draw[dotted] (2,0) -- (2,1);
\draw[dotted] (3,0) -- (3,1);
\draw[dotted] (4,-1) -- (4,1);
\draw[dotted] (5,-1) -- (5,1);
\draw[dotted] (6,-1) -- (6,0);
\draw[dotted] (7,-1) -- (7,0);
\draw[dotted] (8,-1) -- (8,0);
\draw[dotted] (9,-1) -- (9,0);
\node[right,green] at (10,-1) {$P$};
\node[below,red] at (10,-1) {$Q$};
\draw[line width=1.5pt,->,red] (0,1) -- (0,0) -- (3,0) -- (3,-1) -- (10,-1);
\draw[line width=1.5pt,->,green] (0,1) -- (6,1) -- (6,0) -- (10,0) -- (10,-1);
\draw[line width=7,orange,opacity=0.3]  (6,1) -- (6,0) -- (10,0) -- (10,-1);
\node[left] at (0,0.5) {\tiny $m+1$};
\node[below] at (0.5,0) {$\cdots$};
\node[below] at (1.5,0) {$\cdots$};
\node[below] at (2.5,0) {\tiny $m+1$};
\node[right] at (3,-0.5) {$m$}; \node at (3,-0.5) {$\bullet$};
\node[below] at (3.5,-1) {$j_{\ell+1}$};
\node[below] at (4.5,-1) {$\cdots$};
\node[below] at (5.5,-1) {$\cdots$};
\node[below] at (6.5,-1) {$\cdots$};
\node[below] at (7.5,-1) {$\cdots$};
\node[below] at (8.5,-1) {$\cdots$};
\node[below] at (9.5,-1) {$j_M$};
\node[above] at (0.5,1) {\tiny $m+1$};
\node[above] at (1.5,1) {$\cdots$};
\node[above] at (2.5,1) {\tiny $m+1$};
\node[above] at (3.5,1) {$p_{\ell-1}$};
\node[above] at (4.5,1) {$\cdots$};
\node[above] at (5.5,1) {$p_{k-1}$};
\node[above] at (3.5,0) {$p_{\ell-1}$};
\node[above] at (4.5,0) {$\cdots$};
\node[above] at (5.5,0) {$p_{k-1}$};
\node[right] at (6,0.5) {\tiny $m+1$};
\filldraw[draw=black,fill=white] (6,0.5) circle (2pt);
\node[below] at (6.5,0) {$i_{k+1}$};
\node[above] at (7.5,0) {$\cdots$};
\node[above] at (8.5,0) {$\cdots$};
\node[above] at (9.5,0) {$\cdots$};
\node[right] at (10,-0.5) {$i_M$};
\node[left] at (-1,0.5) {$x_1 \rightarrow$};
\node[left] at (-1,-0.5) {$x_{\ell} \rightarrow$};
}
\end{multline}
where the indices $p_{\ell-1},\dots,p_{k-1}$ have the same definition as in \eqref{p-indices}. The gray area splits into the left part where both horizontal and vertical edges are of color $m+1$ and right part where horizontal edges are of color $m+1$. The weight of the frozen row can be easily computed (it is a product of top left and bottom middle weights in Figure \ref{fund-vert}, since $m+1 > p_{\alpha}$ for all $\alpha \in \{\ell-1,\dots,k-1\}$), and we thus obtain the equation
\begin{multline}
\Psi_{{\rm two-row}}
\Big(\mathcal{A},\varnothing;
(i_{\alpha \in \mathcal{A}});
(j_1,\dots,j_M)
\Big)
=
\prod_{\alpha=1}^{\ell-2}
\bm{1}_{\alpha \in \bar{\mathcal{A}}}
\prod_{\alpha=\ell+1}^{k+1}
\frac{x_1-y_{\alpha}}{x_1-qy_{\alpha}}
\times
\\
\left(
\tikz{0.9}{
\draw[dotted] (3,0) -- (6,0);
\draw[dotted] (4,-1) -- (4,0);
\draw[dotted] (5,-1) -- (5,0);
\draw[dotted] (6,-1) -- (6,0);
\draw[dotted] (7,-1) -- (7,0);
\draw[dotted] (8,-1) -- (8,0);
\draw[dotted] (9,-1) -- (9,0);
\draw[line width=1.5pt,->,red]  (3,0) -- (3,-1) -- (10,-1);
\draw[line width=1.5pt,->,green] (3,0) -- (10,0) -- (10,-1);
\node[left] at (3,-0.5) {$m$};
\node[below] at (3.5,-1) {$j_{\ell+1}$};
\node[below] at (4.5,-1) {$\cdots$};
\node[below] at (5.5,-1) {$\cdots$};
\node[below] at (6.5,-1) {$\cdots$};
\node[below] at (7.5,-1) {$\cdots$};
\node[below] at (8.5,-1) {$\cdots$};
\node[below] at (9.5,-1) {$j_M$};
\node[above] at (3.5,0) {$p_{\ell-1}$};
\node[above] at (4.5,0) {$\cdots$};
\node[above] at (5.5,0) {$p_{k-1}$};
\node[above] at (6.5,0) {$i_{k+1}$};
\node[above] at (7.5,0) {$\cdots$};
\node[above] at (8.5,0) {$\cdots$};
\node[above] at (9.5,0) {$\cdots$};
\node[right] at (10,-0.5) {$i_M$};
\node[left] at (2.5,-0.5) {$x_{\ell} \rightarrow$};
}
\right).
\end{multline}
We conclude by dividing the remaining one-row partition function into two segments:
\begin{multline}
\Psi_{{\rm two-row}}
\Big(\mathcal{A},\varnothing;
(i_{\alpha \in \mathcal{A}});
(j_1,\dots,j_M)
\Big)
=
\prod_{\alpha=1}^{\ell-2}
\bm{1}_{\alpha \in \bar{\mathcal{A}}}
\prod_{\alpha=\ell+1}^{k+1}
\frac{x_1-y_{\alpha}}{x_1-qy_{\alpha}}
\times
\\
\sum_{p_k = 0}^{m}
\left(
\tikz{0.9}{
\draw[dotted] (3,0) -- (6,0);
\draw[dotted] (4,-1) -- (4,0);
\draw[dotted] (5,-1) -- (5,0);
\draw[dotted] (8,-1) -- (8,0);
\draw[dotted] (9,-1) -- (9,0);
\draw[dotted] (10,-1) -- (10,0);
\draw[dotted] (11,-1) -- (11,0);
\draw[line width=1.5pt,->,red]  (3,0) -- (3,-1) -- (6,-1);
\draw[line width=1.5pt,->,green] (3,0) -- (6,0) -- (6,-1);
\draw[line width=1.5pt,->,red] (8,0) -- (8,-1) -- (12,-1);
\draw[line width=1.5pt,->,green] (8,0) -- (12,0) -- (12,-1);
\node[left] at (3,-0.5) {$m$};
\node[below] at (3.5,-1) {$j_{\ell+1}$};
\node[below] at (4.5,-1) {$\cdots$};
\node[below] at (5.5,-1) {$j_{k+1}$};
\node[below] at (8.5,-1) {$j_{k+2}$};
\node[below] at (9.5,-1) {$\cdots$};
\node[below] at (10.5,-1) {$\cdots$};
\node[below] at (11.5,-1) {$j_M$};
\node[above] at (3.5,0) {$p_{\ell-1}$};
\node[above] at (4.5,0) {$\cdots$};
\node[above] at (5.5,0) {$p_{k-1}$};
\node[above] at (8.5,0) {$i_{k+1}$};
\node[above] at (9.5,0) {$\cdots$};
\node[above] at (10.5,0) {$\cdots$};
\node[above] at (11.5,0) {$\cdots$};
\node[right] at (12,-0.5) {$i_M$};
\node[right] at (6,-0.5) {$p_k$};
\node[left] at (8,-0.5) {$p_k$};
\node[left] at (2.5,-0.5) {$x_{\ell} \rightarrow$};
\node at (7,-0.5) {$\times$};
}
\right).
\end{multline}
Now since the indices $i_{k+1},\dots,i_M$ are summed over all values $\{0,1,\dots,m\}$, we can use \eqref{Z-stoch} with $\mathcal{C} = \{0,1,\dots,m\}$ to conclude that the right segment is equal to $1$ irrespective of the values of $p_k$ and $j_{k+2},\dots,j_M$. We have thus shown that
\begin{align}
\label{psi-ans-h1}
\Psi_{{\rm two-row}}
\Big(\mathcal{A},\varnothing;
(i_{\alpha \in \mathcal{A}});
(j_1,\dots,j_M)
\Big)
=
\prod_{\alpha=1}^{\ell-2}
\bm{1}_{\alpha \in \bar{\mathcal{A}}}
\prod_{\alpha=\ell+1}^{k+1}
\frac{x_1-y_{\alpha}}{x_1-qy_{\alpha}}
\left(
\tikz{0.9}{
\draw[dotted] (3,-1) -- (3,0);
\draw[dotted] (4,-1) -- (4,0);
\draw[dotted] (5,-1) -- (5,0);
\draw[line width=1.5pt,->,red] (2,0) -- (2,-1) -- (5,-1);
\draw[line width=1.5pt,->,green] (2,0) -- (5,0) -- (5,-1);
\node[left] at (2,-0.5) {$m$};
\node[below] at (2.5,-1) {$j_{\ell+1}$};
\node[below] at (3.5,-1) {$\cdots$};
\node[below] at (4.5,-1) {$j_{k+1}$};
\node[above] at (2.5,0) {$p_{\ell-1}$};
\node[above] at (3.5,0) {$\cdots$};
\node[above] at (4.5,0) {$p_{k-1}$};
\node[right] at (5,-0.5) {$p_{k}$};
\node[left] at (1.5,-0.5) {$x_{\ell} \rightarrow$};
}
\right),
\end{align}
where $p_{\ell-1},\dots,p_k$ are given by \eqref{p-indices}. Comparing \eqref{phi-ans-h1} and \eqref{psi-ans-h1}, we see that they are indeed equal up the exchange $x_1 \leftrightarrow x_{\ell}$.

\bigskip
\underline{The case $h=0$.} We move on to study $\Phi_{\rm two-row}$ of \eqref{phi-2}, assuming that $\mathcal{H}^{\ges m}(P;k+1)=0$. Fixing the height function to this value leads to the following specification of the outgoing colors:
\begin{align}
\label{i-values_h=0}
i_{\alpha}
\left\{
\begin{array}{ll}
\fixin \{0,1,\dots,m-1\},
& \quad \alpha \in \mathcal{A},
\\ \\
\sumin \{m,m+1,\dots,N\},
& \quad \alpha \in \bar{\mathcal{A}},
\end{array}
\right.
\qquad\qquad
i_{\beta} \sumin \{0,1,\dots,m-1\},
\quad \beta \in \{k+1,\dots,M\},
\end{align}
and $i_k \sumin \{0,1,\dots,N\}$. (Note that we have used the fact that  $\mathcal{H}^{\ges m}(P;k+1)=0$ to eliminate the possibility that $i_{\beta} = m$ for $\beta \in \{k+1,\dots,M\}$.) We begin by subdividing the bottom row of $\Phi_{\rm two-row}$ as follows:
\begin{multline*}
\Phi_{{\rm two-row}}
\Big(\mathcal{A},\varnothing;
(i_{\alpha \in \mathcal{A}});
(j_1,\dots,j_M)
\Big)
=
\\
\sum_{r_{k+1} = 0}^{m-1}
\left(
\tikz{0.9}{
\draw[dotted] (3,0) -- (6,0);
\draw[dotted] (1,0) -- (1,1);
\draw[dotted] (2,0) -- (2,1);
\draw[dotted] (3,0) -- (3,1);
\draw[dotted] (4,-1) -- (4,1);
\draw[dotted] (5,-1) -- (5,1);
\draw[dotted] (8.5,-1) -- (8.5,0);
\draw[dotted] (9.5,-1) -- (9.5,0);
\draw[dotted] (10.5,-1) -- (10.5,0);
\draw[dotted] (11.5,-1) -- (11.5,0);
\draw[line width=1.5pt,->,red] (0,1) -- (0,0) -- (3,0) -- (3,-1) -- (6,-1);
\draw[line width=1.5pt,->,green] (0,1) -- (6,1) -- (6,-1);
\draw[line width=1.5pt,->,red] (8.5,0) -- (8.5,-1) -- (12.5,-1);
\draw[line width=1.5pt,->,green] (8.5,0) -- (12.5,0) -- (12.5,-1);
\node[left] at (0,0.5) {$j_1$};
\node[below] at (0.5,0) {$\cdots$};
\node[below] at (1.5,0) {$\cdots$};
\node[below] at (2.5,0) {$\cdots$};
\node[left] at (3,-0.5) {$m$};
\node[below] at (3.5,-1) {$j_{\ell+1}$};
\node[below] at (4.5,-1) {$\cdots$};
\node[below] at (5.5,-1) {$j_{k+1}$};
\node[below] at (9,-1) {$j_{k+2}$};
\node[below] at (10,-1) {$\cdots$};
\node[below] at (11,-1) {$\cdots$};
\node[below] at (12,-1) {$j_M$};
\node[above] at (0.5,1) {$i_1$};
\node[above] at (1.5,1) {$\cdots$};
\node[above] at (2.5,1) {$\cdots$};
\node[above] at (3.5,1) {$\cdots$};
\node[above] at (4.5,1) {$\cdots$};
\node[above] at (5.5,1) {$i_{k-1}$};
\node[right] at (6,0.5) {$i_k$};
\node[above] at (9,0) {$i_{k+1}$};
\node[above] at (10,0) {$\cdots$};
\node[above] at (11,0) {$\cdots$};
\node[above] at (12,0) {$\cdots$};
\node[right] at (12.5,-0.5) {$i_M$};
\node[right] at (6,-0.5) {$r_{k+1}$};
\node[left] at (8.5,-0.5) {$r_{k+1}$};
\node[left] at (-0.5,0.5) {$x_1 \rightarrow$};
\node[left] at (-0.5,-0.5) {$x_{\ell} \rightarrow$};
\node at (7.25,-0.5) {$\times$};
}
\right).
\end{multline*}
The rightmost segment of this expression has outgoing colors $i_{k+1},\dots,i_M \sumin \{0,1,\dots,m-1\}$; we can thus employ \eqref{Z-stoch} with $\mathcal{C} = \{0,1,\dots,m-1\}$ to show that it is equal to $1$ regardless of the values of the incoming colors $r_{k+1},j_{k+2},\dots,j_M \fixin \{0,1,\dots,m-1\}$. Deleting this segment, we are led to the equation
\begin{align*}
\Phi_{{\rm two-row}}
\Big(\mathcal{A},\varnothing;
(i_{\alpha \in \mathcal{A}});
(j_1,\dots,j_M)
\Big)
=
\tikz{0.9}{
\draw[dotted] (3,0) -- (6,0);
\draw[dotted] (1,0) -- (1,1);
\draw[dotted] (2,0) -- (2,1);
\draw[dotted] (3,0) -- (3,1);
\draw[dotted] (4,-1) -- (4,1);
\draw[dotted] (5,-1) -- (5,1);
\draw[line width=1.5pt,->,red] (0,1) -- (0,0) -- (3,0) -- (3,-1) -- (6,-1);
\draw[line width=1.5pt,->,green] (0,1) -- (6,1) -- (6,-1);
\node[left] at (0,0.5) {$j_1$};
\node[below] at (0.5,0) {$\cdots$};
\node[below] at (1.5,0) {$\cdots$};
\node[below] at (2.5,0) {$\cdots$};
\node[left] at (3,-0.5) {$m$};
\node[below] at (3.5,-1) {$j_{\ell+1}$};
\node[below] at (4.5,-1) {$\cdots$};
\node[below] at (5.5,-1) {$j_{k+1}$};
\node[above] at (0.5,1) {$i_1$};
\node[above] at (1.5,1) {$\cdots$};
\node[above] at (2.5,1) {$\cdots$};
\node[above] at (3.5,1) {$\cdots$};
\node[above] at (4.5,1) {$\cdots$};
\node[above] at (5.5,1) {$i_{k-1}$};
\node[right] at (6,0.5) {$i_k$};
\node[right] at (6,-0.5) {$r_{k+1}$};
\node[left] at (-0.5,0.5) {$x_1 \rightarrow$};
\node[left] at (-0.5,-0.5) {$x_{\ell} \rightarrow$};
}
\end{align*}
where we note that the outgoing colors $i_1,\dots,i_k$ and $r_{k+1} \sumin \{0,1,\dots,m-1\}$ allow us to invoke Proposition \ref{prop:merge-PQ} with $\mathcal{C} = \{m,m+1,\dots,N\}$. We obtain
\begin{align}
\label{phi-25}
\Phi_{{\rm two-row}}
\Big(\mathcal{A},\varnothing;
(i_{\alpha \in \mathcal{A}});
(j_1,\dots,j_M)
\Big)
=
\tikz{0.9}{
\draw[dotted] (3,0) -- (6,0);
\draw[dotted] (1,0) -- (1,1);
\draw[dotted] (2,0) -- (2,1);
\draw[dotted] (3,0) -- (3,1);
\draw[dotted] (4,-1) -- (4,1);
\draw[dotted] (5,-1) -- (5,1);
\draw[line width=1.5pt,->,red] (0,1) -- (0,0) -- (3,0) -- (3,-1) -- (6,-1);
\draw[line width=1.5pt,->,green] (0,1) -- (6,1) -- (6,-1);
\node[left] at (0,0.5) {$m$};
\node[above] at (0.5,0) {$\cdots$};
\node[above] at (1.5,0) {$\cdots$};
\node[above] at (2.5,0) {$m$};
\node[left] at (3,-0.5) {$m$};
\node[below] at (3.5,-1) {$j_{\ell+1}$};
\node[below] at (4.5,-1) {$\cdots$};
\node[below] at (5.5,-1) {$j_{k+1}$};
\node[above] at (0.5,1) {$p_1$};
\node[above] at (1.5,1) {$\cdots$};
\node[above] at (2.5,1) {$p_{\ell-2}$};
\node[above] at (3.5,1) {$p_{\ell-1}$};
\node[above] at (4.5,1) {$\cdots$};
\node[above] at (5.5,1) {$p_{k-1}$};
\node[right] at (6,0.5) {$p_k$};
\node[right] at (6,-0.5) {$r_{k+1}$};
\node[left] at (-0.5,0.5) {$x_1 \rightarrow$};
\node[left] at (-0.5,-0.5) {$x_{\ell} \rightarrow$};
}
\end{align}
where we have replaced each of the incoming indices $j_1,\dots,j_{\ell-1} \fixin \{m+1,m+2,\dots,N\}$ by $m$, and have defined $p_1,\dots,p_k$ in the same way as in \eqref{p-indices}. Examining the right hand side of \eqref{phi-25}, by color-conservation arguments we see that it vanishes identically unless $p_1 = \cdots = p_{\ell-2} = m$ (equivalently, $1,\dots,\ell-2 \in \b{\mathcal{A}}$). Provided that these criteria are met, we have
\begin{align*}
\Phi_{{\rm two-row}}
\Big(\mathcal{A},\varnothing;
(i_{\alpha \in \mathcal{A}});
(j_1,\dots,j_M)
\Big)
&=
\tikz{0.9}{
\filldraw[fill=lgray,draw=lgray] (0.05,0.05) -- (0.05,0.95) -- (2.95,0.95) -- (2.95,0.05) -- (0.05,0.05);
\draw[dotted] (3,0) -- (6,0);
\draw[dotted] (1,0) -- (1,1);
\draw[dotted] (2,0) -- (2,1);
\draw[dotted] (3,0) -- (3,1);
\draw[dotted] (4,-1) -- (4,1);
\draw[dotted] (5,-1) -- (5,1);
\draw[line width=1.5pt,->,red] (0,1) -- (0,0) -- (3,0) -- (3,-1) -- (6,-1);
\draw[line width=1.5pt,->,green] (0,1) -- (6,1) -- (6,-1);
\node[left] at (0,0.5) {$m$};
\node[above] at (0.5,0) {$\cdots$};
\node[above] at (1.5,0) {$\cdots$};
\node[above] at (2.5,0) {$m$};
\node[left] at (3,-0.5) {$m$};
\node[below] at (3.5,-1) {$j_{\ell+1}$};
\node[below] at (4.5,-1) {$\cdots$};
\node[below] at (5.5,-1) {$j_{k+1}$};
\node[right] at (2.9,0.5) {$m$};
\node[above] at (0.5,1) {$m$};
\node[above] at (1.5,1) {$\cdots$};
\node[above] at (2.5,1) {$m$};
\node[above] at (3.5,1) {$p_{\ell-1}$};
\node[above] at (4.5,1) {$\cdots$};
\node[above] at (5.5,1) {$p_{k-1}$};
\node[right] at (6,0.5) {$p_k$};
\node[right] at (6,-0.5) {$r_{k+1}$};
\node[left] at (-0.5,0.5) {$x_1 \rightarrow$};
\node[left] at (-0.5,-0.5) {$x_{\ell} \rightarrow$};
}
\\
&
=
\tikz{0.9}{
\draw[dotted] (3,0) -- (6,0);
\draw[dotted] (3,0) -- (3,1);
\draw[dotted] (4,-1) -- (4,1);
\draw[dotted] (5,-1) -- (5,1);
\draw[line width=1.5pt,->,red] (3,1) -- (3,-1) -- (6,-1);
\draw[line width=1.5pt,->,green] (3,1) -- (6,1) -- (6,-1);
\node[left] at (3,-0.5) {$m$};
\node[below] at (3.5,-1) {$j_{\ell+1}$};
\node[below] at (4.5,-1) {$\cdots$};
\node[below] at (5.5,-1) {$j_{k+1}$};
\node[left] at (3,0.5) {$m$};
\node[above] at (3.5,1) {$p_{\ell-1}$};
\node[above] at (4.5,1) {$\cdots$};
\node[above] at (5.5,1) {$p_{k-1}$};
\node[right] at (6,0.5) {$p_k$};
\node[right] at (6,-0.5) {$r_{k+1}$};
\node[left] at (2.5,0.5) {$x_1 \rightarrow$};
\node[left] at (2.5,-0.5) {$x_{\ell} \rightarrow$};
}.
\end{align*}
At this stage, we observe that $p_k \sumin \{0,1,\dots,m\}$, while $r_{k+1} \sumin \{0,1,\dots,m-1\}$. We will replace $r_{k+1}$ by a new summation index $p_{k+1} \sumin \{0,1,\dots,m\}$, at the expense of subtracting away the $p_{k+1} = m$ term that was not present before:
\begin{multline}
\label{phi-27}
\Phi_{{\rm two-row}}
\Big(\mathcal{A},\varnothing;
(i_{\alpha \in \mathcal{A}});
(j_1,\dots,j_M)
\Big)
=
\\
\left(
\tikz{0.9}{
\draw[dotted] (3,0) -- (6,0);
\draw[dotted] (3,0) -- (3,1);
\draw[dotted] (4,-1) -- (4,1);
\draw[dotted] (5,-1) -- (5,1);
\draw[line width=1.5pt,->,red] (3,1) -- (3,-1) -- (6,-1);
\draw[line width=1.5pt,->,green] (3,1) -- (6,1) -- (6,-1);
\node[left] at (3,-0.5) {$m$};
\node[below] at (3.5,-1) {$j_{\ell+1}$};
\node[below] at (4.5,-1) {$\cdots$};
\node[below] at (5.5,-1) {$j_{k+1}$};
\node[left] at (3,0.5) {$m$};
\node[above] at (3.5,1) {$p_{\ell-1}$};
\node[above] at (4.5,1) {$\cdots$};
\node[above] at (5.5,1) {$p_{k-1}$};
\node[right] at (6,0.5) {$p_k$};
\node[right] at (6,-0.5) {$p_{k+1}$};
\node[left] at (2.5,0.5) {$x_1 \rightarrow$};
\node[left] at (2.5,-0.5) {$x_{\ell} \rightarrow$};
}
\right)
-
\left(
\tikz{0.9}{
\filldraw[fill=lgray,draw=lgray] (3.05,-0.95) -- (3.05,-0.05) -- (5.95,-0.05) -- (5.95,-0.95) -- (3.05,-0.95);
\draw[dotted] (3,0) -- (6,0);
\draw[dotted] (3,0) -- (3,1);
\draw[dotted] (4,-1) -- (4,1);
\draw[dotted] (5,-1) -- (5,1);
\draw[line width=1.5pt,->,red] (3,1) -- (3,-1) -- (6,-1);
\draw[line width=1.5pt,->,green] (3,1) -- (6,1) -- (6,-1);
\node[left] at (3,-0.5) {$m$};
\node[below] at (3.5,-1) {$j_{\ell+1}$};
\node[below] at (4.5,-1) {$\cdots$};
\node[below] at (5.5,-1) {$j_{k+1}$};
\node[left] at (3,0.5) {$m$};
\node[below] at (3.5,0) {$j_{\ell+1}$};
\node[below] at (4.5,0) {$\cdots$};
\node[below] at (5.5,0) {$j_{k+1}$};
\node[above] at (3.5,1) {$p_{\ell-1}$};
\node[above] at (4.5,1) {$\cdots$};
\node[above] at (5.5,1) {$p_{k-1}$};
\node[right] at (6,0.5) {$p_k$};
\node[right] at (6,-0.5) {$m$};
\node[left] at (2.5,0.5) {$x_1 \rightarrow$};
\node[left] at (2.5,-0.5) {$x_{\ell} \rightarrow$};
}
\right)
\end{multline}
where the bottom row of the subtracted term is frozen (it gives rise to a product of bottom middle weights in Figure \ref{fund-vert}, since $m > j_{\alpha}$ for all $\alpha \in \{\ell+1,\dots,k+1\}$). Since both $p_k,p_{k+1} \sumin \{0,1,\dots,m\}$, the first term on the right hand side of \eqref{phi-27} is symmetric under the interchange $x_1 \leftrightarrow x_{\ell}$,\footnote{The symmetry in $x_1,x_{\ell}$ follows from a standard idea in integrable lattice models, namely, the Yang--Baxter equation \eqref{graph-YB} applied to rows of vertices:
\begin{align}
\label{x-sym}
\tikz{0.6}{
\draw[lgray,line width=1.5pt,->] (2,0.5) -- (3,-0.5);
\draw[lgray,line width=1.5pt,->] (2,-0.5) -- (3,0.5);
\draw[lgray,line width=1.5pt,->] (3,0.5) -- (6,0.5);
\draw[lgray,line width=1.5pt,->] (3,-0.5) -- (6,-0.5);
\draw[lgray,line width=1.5pt,->] (3.5,-1) -- (3.5,1);
\draw[lgray,line width=1.5pt,->] (4.5,-1) -- (4.5,1);
\draw[lgray,line width=1.5pt,->] (5.5,-1) -- (5.5,1);
\node[left] at (2,-0.5) {$m$};
\node[below] at (3.5,-1) {$j_{\ell+1}$};
\node[below] at (4.5,-1) {$\cdots$};
\node[below] at (5.5,-1) {$j_{k+1}$};
\node[left] at (2,0.5) {$m$};
\node[above] at (3.5,1) {$p_{\ell-1}$};
\node[above] at (4.5,1) {$\cdots$};
\node[above] at (5.5,1) {$p_{k-1}$};
\node[right] at (6,0.5) {$p_k$};
\node[right] at (6,-0.5) {$p_{k+1}$};
\node[left] at (1.5,0.5) {$x_1 \rightarrow$};
\node[left] at (1.5,-0.5) {$x_{\ell} \rightarrow$};
}
=
\tikz{0.6}{
\draw[lgray,line width=1.5pt,->] (6,0.5) -- (7,-0.5);
\draw[lgray,line width=1.5pt,->] (6,-0.5) -- (7,0.5);
\draw[lgray,line width=1.5pt,->] (3,0.5) -- (6,0.5);
\draw[lgray,line width=1.5pt,->] (3,-0.5) -- (6,-0.5);
\draw[lgray,line width=1.5pt,->] (3.5,-1) -- (3.5,1);
\draw[lgray,line width=1.5pt,->] (4.5,-1) -- (4.5,1);
\draw[lgray,line width=1.5pt,->] (5.5,-1) -- (5.5,1);
\node[left] at (3,-0.5) {$m$};
\node[below] at (3.5,-1) {$j_{\ell+1}$};
\node[below] at (4.5,-1) {$\cdots$};
\node[below] at (5.5,-1) {$j_{k+1}$};
\node[left] at (3,0.5) {$m$};
\node[above] at (3.5,1) {$p_{\ell-1}$};
\node[above] at (4.5,1) {$\cdots$};
\node[above] at (5.5,1) {$p_{k-1}$};
\node[right] at (7,0.5) {$p_k$};
\node[right] at (7,-0.5) {$p_{k+1}$};
\node[left] at (2.5,0.5) {$x_1 \rightarrow$};
\node[left] at (2.5,-0.5) {$x_{\ell} \rightarrow$};
}
\end{align}
where on both sides of the equation we take $p_k,p_{k+1} \sumin \{0,1,\dots,m\}$. The diagonally placed vertices in \eqref{x-sym} can both be dropped from the equation; on the left hand side this is possible because this vertex is frozen to the top left type in Figure \ref{fund-vert}, while on the right hand side this is possible by invoking the stochasticity property \eqref{stoch-graph}. After dropping the diagonally placed vertices, the claimed symmetry in $x_1,x_{\ell}$ is immediate.
} and we thus have
\begin{multline}
\label{phi-277}
\Phi_{{\rm two-row}}
\Big(\mathcal{A},\varnothing;
(i_{\alpha \in \mathcal{A}});
(j_1,\dots,j_M)
\Big)
=
\\
\left(
\tikz{0.9}{
\draw[dotted] (3,0) -- (6,0);
\draw[dotted] (3,0) -- (3,1);
\draw[dotted] (4,-1) -- (4,1);
\draw[dotted] (5,-1) -- (5,1);
\draw[line width=1.5pt,->,red] (3,1) -- (3,-1) -- (6,-1);
\draw[line width=1.5pt,->,green] (3,1) -- (6,1) -- (6,-1);
\node[left] at (3,-0.5) {$m$};
\node[below] at (3.5,-1) {$j_{\ell+1}$};
\node[below] at (4.5,-1) {$\cdots$};
\node[below] at (5.5,-1) {$j_{k+1}$};
\node[left] at (3,0.5) {$m$};
\node[above] at (3.5,1) {$p_{\ell-1}$};
\node[above] at (4.5,1) {$\cdots$};
\node[above] at (5.5,1) {$p_{k-1}$};
\node[right] at (6,0.5) {$p_k$};
\node[right] at (6,-0.5) {$p_{k+1}$};
\node[left] at (2.5,0.5) {$x_{\ell} \rightarrow$};
\node[left] at (2.5,-0.5) {$x_1 \rightarrow$};
}
\right)
-
\prod_{\alpha=\ell+1}^{k+1}
\frac{x_{\ell}-y_{\alpha}}{x_{\ell}-qy_{\alpha}}
\left(
\tikz{0.9}{
\draw[dotted] (3,-1) -- (3,0);
\draw[dotted] (4,-1) -- (4,0);
\draw[dotted] (5,-1) -- (5,0);
\draw[line width=1.5pt,->,red] (2,0) -- (2,-1) -- (5,-1);
\draw[line width=1.5pt,->,green] (2,0) -- (5,0) -- (5,-1);
\node[left] at (2,-0.5) {$m$};
\node[below] at (2.5,-1) {$j_{\ell+1}$};
\node[below] at (3.5,-1) {$\cdots$};
\node[below] at (4.5,-1) {$j_{k+1}$};
\node[above] at (2.5,0) {$p_{\ell-1}$};
\node[above] at (3.5,0) {$\cdots$};
\node[above] at (4.5,0) {$p_{k-1}$};
\node[right] at (5,-0.5) {$p_{k}$};
\node[left] at (1.5,-0.5) {$x_1 \rightarrow$};
}
\right).
\end{multline}

Finally, let us compare this against $\Psi_{\rm two-row}$ of \eqref{psi-2}, assuming that $\mathcal{H}^{\ges m+1}(P;k)=0$. In this case the outgoing colors are given by
\begin{align*}
i_{\alpha}
\left\{
\begin{array}{ll}
\fixin \{0,1,\dots,m-1\},
& \quad \alpha \in \mathcal{A},
\\ \\
\sumin \{m,m+1,\dots,N\},
& \quad \alpha \in \bar{\mathcal{A}},
\end{array}
\right.
\qquad\qquad
i_{\beta} \sumin \{0,1,\dots,m\},
\quad \beta \in \{k+1,\dots,M\},
\end{align*}
and $i_k \sumin \{0,1,\dots,m\}$, noting that $i_k$ cannot assume any value greater than $m$ (this would violate the assumption that $\mathcal{H}^{\ges m+1}(P;k)=0$). Similarly to the calculation that we just performed, we begin by subdividing the bottom row of $\Psi_{\rm two-row}$ as follows:
\begin{multline*}
\Psi_{{\rm two-row}}
\Big(\mathcal{A},\varnothing;
(i_{\alpha \in \mathcal{A}});
(j_1,\dots,j_M)
\Big)
=
\\
\sum_{p_{k+1} = 0}^{m}
\left(
\tikz{0.9}{
\draw[dotted] (3,0) -- (6,0);
\draw[dotted] (1,0) -- (1,1);
\draw[dotted] (2,0) -- (2,1);
\draw[dotted] (3,0) -- (3,1);
\draw[dotted] (4,-1) -- (4,1);
\draw[dotted] (5,-1) -- (5,1);
\draw[dotted] (8.5,-1) -- (8.5,0);
\draw[dotted] (9.5,-1) -- (9.5,0);
\draw[dotted] (10.5,-1) -- (10.5,0);
\draw[dotted] (11.5,-1) -- (11.5,0);
\draw[line width=1.5pt,->,red] (0,1) -- (0,0) -- (3,0) -- (3,-1) -- (6,-1);
\draw[line width=1.5pt,->,green] (0,1) -- (6,1) -- (6,-1);
\draw[line width=1.5pt,->,red] (8.5,0) -- (8.5,-1) -- (12.5,-1);
\draw[line width=1.5pt,->,green] (8.5,0) -- (12.5,0) -- (12.5,-1);
\node[left] at (0,0.5) {$j_1$};
\node[below] at (0.5,0) {$\cdots$};
\node[below] at (1.5,0) {$\cdots$};
\node[below] at (2.5,0) {$\cdots$};
\node[left] at (3,-0.5) {$m$};
\node[below] at (3.5,-1) {$j_{\ell+1}$};
\node[below] at (4.5,-1) {$\cdots$};
\node[below] at (5.5,-1) {$j_{k+1}$};
\node[below] at (9,-1) {$j_{k+2}$};
\node[below] at (10,-1) {$\cdots$};
\node[below] at (11,-1) {$\cdots$};
\node[below] at (12,-1) {$j_M$};
\node[above] at (0.5,1) {$i_1$};
\node[above] at (1.5,1) {$\cdots$};
\node[above] at (2.5,1) {$\cdots$};
\node[above] at (3.5,1) {$\cdots$};
\node[above] at (4.5,1) {$\cdots$};
\node[above] at (5.5,1) {$i_{k-1}$};
\node[right] at (6,0.5) {$i_k$};
\node[above] at (9,0) {$i_{k+1}$};
\node[above] at (10,0) {$\cdots$};
\node[above] at (11,0) {$\cdots$};
\node[above] at (12,0) {$\cdots$};
\node[right] at (12.5,-0.5) {$i_M$};
\node[right] at (6,-0.5) {$p_{k+1}$};
\node[left] at (8.5,-0.5) {$p_{k+1}$};
\node[left] at (-0.5,0.5) {$x_1 \rightarrow$};
\node[left] at (-0.5,-0.5) {$x_{\ell} \rightarrow$};
\node at (7.25,-0.5) {$\times$};
}
\right).
\end{multline*}
The rightmost segment of this expression has outgoing colors $i_{k+1},\dots,i_M \sumin \{0,1,\dots,m\}$; we can therefore use \eqref{Z-stoch} with $\mathcal{C} = \{0,1,\dots,m\}$ to compute it as $1$ regardless of the values of the incoming colors $p_{k+1} \fixin \{0,1,\dots,m\}$ and $j_{k+2},\dots,j_M \fixin \{0,1,\dots,m-1\}$. Deleting this segment, we have
\begin{align*}
\Psi_{{\rm two-row}}
\Big(\mathcal{A},\varnothing;
(i_{\alpha \in \mathcal{A}});
(j_1,\dots,j_M)
\Big)
=
\tikz{0.9}{
\draw[dotted] (3,0) -- (6,0);
\draw[dotted] (1,0) -- (1,1);
\draw[dotted] (2,0) -- (2,1);
\draw[dotted] (3,0) -- (3,1);
\draw[dotted] (4,-1) -- (4,1);
\draw[dotted] (5,-1) -- (5,1);
\draw[line width=1.5pt,->,red] (0,1) -- (0,0) -- (3,0) -- (3,-1) -- (6,-1);
\draw[line width=1.5pt,->,green] (0,1) -- (6,1) -- (6,-1);
\node[left] at (0,0.5) {$j_1$};
\node[below] at (0.5,0) {$\cdots$};
\node[below] at (1.5,0) {$\cdots$};
\node[below] at (2.5,0) {$\cdots$};
\node[left] at (3,-0.5) {$m$};
\node[below] at (3.5,-1) {$j_{\ell+1}$};
\node[below] at (4.5,-1) {$\cdots$};
\node[below] at (5.5,-1) {$j_{k+1}$};
\node[above] at (0.5,1) {$i_1$};
\node[above] at (1.5,1) {$\cdots$};
\node[above] at (2.5,1) {$\cdots$};
\node[above] at (3.5,1) {$\cdots$};
\node[above] at (4.5,1) {$\cdots$};
\node[above] at (5.5,1) {$i_{k-1}$};
\node[right] at (6,0.5) {$i_k$};
\node[right] at (6,-0.5) {$p_{k+1}$};
\node[left] at (-0.5,0.5) {$x_1 \rightarrow$};
\node[left] at (-0.5,-0.5) {$x_{\ell} \rightarrow$};
}
\end{align*}
where we note that the outgoing colors $i_1,\dots,i_k$ and $p_{k+1} \sumin \{0,1,\dots,m\}$ allow us to invoke Proposition \ref{prop:merge-PQ} with $\mathcal{C} = \{m+1,m+2,\dots,N\}$. We thus have
\begin{align}
\label{psi-25}
\Psi_{{\rm two-row}}
\Big(\mathcal{A},\varnothing;
(i_{\alpha \in \mathcal{A}});
(j_1,\dots,j_M)
\Big)
=
\tikz{0.9}{
\draw[dotted] (3,0) -- (6,0);
\draw[dotted] (1,0) -- (1,1);
\draw[dotted] (2,0) -- (2,1);
\draw[dotted] (3,0) -- (3,1);
\draw[dotted] (4,-1) -- (4,1);
\draw[dotted] (5,-1) -- (5,1);
\draw[line width=1.5pt,->,red] (0,1) -- (0,0) -- (3,0) -- (3,-1) -- (6,-1);
\draw[line width=1.5pt,->,green] (0,1) -- (6,1) -- (6,-1);
\node[left] at (0,0.5) {\tiny $m+1$};
\node[above] at (0.5,0) {$\cdots$};
\node[above] at (1.5,0) {$\cdots$};
\node[above] at (2.5,0) {\tiny $m+1$};
\node[left] at (3,-0.5) {$m$};
\node[below] at (3.5,-1) {$j_{\ell+1}$};
\node[below] at (4.5,-1) {$\cdots$};
\node[below] at (5.5,-1) {$j_{k+1}$};
\node[above] at (0.5,1) {$r_1$};
\node[above] at (1.5,1) {$\cdots$};
\node[above] at (2.5,1) {$r_{\ell-2}$};
\node[above] at (3.5,1) {$r_{\ell-1}$};
\node[above] at (4.5,1) {$\cdots$};
\node[above] at (5.5,1) {$r_{k-1}$};
\node[right] at (6,0.5) {$i_k$};
\node[right] at (6,-0.5) {$p_{k+1}$};
\node[left] at (-1,0.5) {$x_1 \rightarrow$};
\node[left] at (-1,-0.5) {$x_{\ell} \rightarrow$};
}
\end{align}
where we have replaced all incoming colors $j_1,\dots,j_{\ell-1} \fixin \{m+1,m+2,\dots,N\}$ by $m+1$, and where the outgoing indices $r_1,\dots,r_{k-1}$ are given by \eqref{r-indices}. Examining the right hand side of \eqref{psi-25}, by color-conservation arguments we see that it vanishes identically unless $r_1 = \cdots = r_{\ell-2} = m+1$ (equivalently $1,\dots,\ell-2 \in \b{\mathcal{A}}$). If these criteria are met, we have
\begin{align*}
\Psi_{{\rm two-row}}
\Big(\mathcal{A},\varnothing;
(i_{\alpha \in \mathcal{A}});
(j_1,\dots,j_M)
\Big)
&=
\tikz{0.9}{
\filldraw[fill=lgray,draw=lgray] (0.05,0.05) -- (0.05,0.95) -- (2.95,0.95) -- (2.95,0.05) -- (0.05,0.05);
\draw[dotted] (3,0) -- (6,0);
\draw[dotted] (1,0) -- (1,1);
\draw[dotted] (2,0) -- (2,1);
\draw[dotted] (3,0) -- (3,1);
\draw[dotted] (4,-1) -- (4,1);
\draw[dotted] (5,-1) -- (5,1);
\draw[line width=1.5pt,->,red] (0,1) -- (0,0) -- (3,0) -- (3,-1) -- (6,-1);
\draw[line width=1.5pt,->,green] (0,1) -- (6,1) -- (6,-1);
\node[left] at (0,0.5) {\tiny $m+1$};
\node[below] at (0.5,0) {$\cdots$};
\node[below] at (1.5,0) {$\cdots$};
\node[below] at (2.5,0) {\tiny $m+1$};
\node[right] at (3,-0.5) {$m$};
\node[below] at (3.5,-1) {$j_{\ell+1}$};
\node[below] at (4.5,-1) {$\cdots$};
\node[below] at (5.5,-1) {$j_{k+1}$};
\node[right] at (2.7,0.5) {\tiny $m+1$};
\node[above] at (0.5,1) {\tiny $m+1$};
\node[above] at (1.5,1) {$\cdots$};
\node[above] at (2.5,1) {\tiny $m+1$};
\node[above] at (3.5,1) {$r_{\ell-1}$};
\node[above] at (4.5,1) {$\cdots$};
\node[above] at (5.5,1) {$r_{k-1}$};
\node[right] at (6,0.5) {$i_k$};
\node[right] at (6,-0.5) {$p_{k+1}$};
\node[left] at (-1,0.5) {$x_1 \rightarrow$};
\node[left] at (-1,-0.5) {$x_{\ell} \rightarrow$};
}
\\
&=
\tikz{0.9}{
\draw[dotted] (3,0) -- (6,0);
\draw[dotted] (3,0) -- (3,1);
\draw[dotted] (4,-1) -- (4,1);
\draw[dotted] (5,-1) -- (5,1);
\draw[line width=1.5pt,->,red] (3,1) -- (3,-1) -- (6,-1);
\draw[line width=1.5pt,->,green] (3,1) -- (6,1) -- (6,-1);
\node[left] at (3,-0.5) {$m$};
\node[below] at (3.5,-1) {$j_{\ell+1}$};
\node[below] at (4.5,-1) {$\cdots$};
\node[below] at (5.5,-1) {$j_{k+1}$};
\node[left] at (3,0.5) {\tiny $m+1$};
\node[above] at (3.5,1) {$r_{\ell-1}$};
\node[above] at (4.5,1) {$\cdots$};
\node[above] at (5.5,1) {$r_{k-1}$};
\node[right] at (6,0.5) {$i_k$};
\node[right] at (6,-0.5) {$p_{k+1}$};
\node[left] at (2,0.5) {$x_1 \rightarrow$};
\node[left] at (2,-0.5) {$x_{\ell} \rightarrow$};
}.
\end{align*}
Now let us replace the index $i_k \sumin \{0,1,\dots,m\}$ by $r_k \sumin \{0,1,\dots,m+1\}$, at the expense of subtracting away the $m+1$ term:
\begin{multline}
\label{psi-27}
\Psi_{{\rm two-row}}
\Big(\mathcal{A},\varnothing;
(i_{\alpha \in \mathcal{A}});
(j_1,\dots,j_M)
\Big)
=
\\
\left(
\tikz{0.9}{
\draw[dotted] (3,0) -- (6,0);
\draw[dotted] (3,0) -- (3,1);
\draw[dotted] (4,-1) -- (4,1);
\draw[dotted] (5,-1) -- (5,1);
\draw[line width=1.5pt,->,red] (3,1) -- (3,-1) -- (6,-1);
\draw[line width=1.5pt,->,green] (3,1) -- (6,1) -- (6,-1);
\node[left] at (3,-0.5) {$m$};
\node[below] at (3.5,-1) {$j_{\ell+1}$};
\node[below] at (4.5,-1) {$\cdots$};
\node[below] at (5.5,-1) {$j_{k+1}$};
\node[left] at (3,0.5) {\tiny $m+1$};
\node[above] at (3.5,1) {$r_{\ell-1}$};
\node[above] at (4.5,1) {$\cdots$};
\node[above] at (5.5,1) {$r_{k-1}$};
\node[right] at (6,0.5) {$r_k$};
\node[right] at (6,-0.5) {$p_{k+1}$};
\node[left] at (2,0.5) {$x_1 \rightarrow$};
\node[left] at (2,-0.5) {$x_{\ell} \rightarrow$};
}
\right)
-
\left(
\tikz{0.9}{
\draw[dotted] (3,0) -- (6,0);
\draw[dotted] (3,0) -- (3,1);
\draw[dotted] (4,-1) -- (4,1);
\draw[dotted] (5,-1) -- (5,1);
\draw[line width=1.5pt,->,red] (3,1) -- (3,-1) -- (6,-1);
\draw[line width=1.5pt,->,green] (3,1) -- (6,1) -- (6,-1);
\node[left] at (3,-0.5) {$m$};
\node[below] at (3.5,-1) {$j_{\ell+1}$};
\node[below] at (4.5,-1) {$\cdots$};
\node[below] at (5.5,-1) {$j_{k+1}$};
\node[left] at (3,0.5) {\tiny $m+1$};
\node[above] at (3.5,1) {$r_{\ell-1}$};
\node[above] at (4.5,1) {$\cdots$};
\node[above] at (5.5,1) {$r_{k-1}$};
\node[right] at (6,0.5) {\tiny $m+1$};
\node[right] at (6,-0.5) {$p_{k+1}$};
\node[left] at (2,0.5) {$x_1 \rightarrow$};
\node[left] at (2,-0.5) {$x_{\ell} \rightarrow$};
}
\right).
\end{multline}
Both objects appearing in \eqref{psi-27} can then be simplified further. The object on the left has outgoing indices which allow us to invoke Proposition \ref{prop:merge-PQ} with $\mathcal{C} = \{m,m+1\}$. The object on the right has a lone $m+1$ among its incoming colors, and it follows that none of $r_{\ell-1},\dots,r_{k-1}$ can assume this value (since $m+1$ already exits the partition function via the right edge of the top row). We conclude that the whole top row of the second term is frozen, and its weight is computed as a product of bottom middle weights in Figure \ref{fund-vert}:
\begin{multline*}
\Psi_{{\rm two-row}}
\Big(\mathcal{A},\varnothing;
(i_{\alpha \in \mathcal{A}});
(j_1,\dots,j_M)
\Big)
=
\\
\left(
\tikz{0.9}{
\draw[dotted] (3,0) -- (6,0);
\draw[dotted] (3,0) -- (3,1);
\draw[dotted] (4,-1) -- (4,1);
\draw[dotted] (5,-1) -- (5,1);
\draw[line width=1.5pt,->,red] (3,1) -- (3,-1) -- (6,-1);
\draw[line width=1.5pt,->,green] (3,1) -- (6,1) -- (6,-1);
\node[left] at (3,-0.5) {$m$};
\node[below] at (3.5,-1) {$j_{\ell+1}$};
\node[below] at (4.5,-1) {$\cdots$};
\node[below] at (5.5,-1) {$j_{k+1}$};
\node[left] at (3,0.5) {$m$};
\node[above] at (3.5,1) {$p_{\ell-1}$};
\node[above] at (4.5,1) {$\cdots$};
\node[above] at (5.5,1) {$p_{k-1}$};
\node[right] at (6,0.5) {$p_k$};
\node[right] at (6,-0.5) {$p_{k+1}$};
\node[left] at (2.5,0.5) {$x_1 \rightarrow$};
\node[left] at (2.5,-0.5) {$x_{\ell} \rightarrow$};
}
\right)
-
\left(
\tikz{0.9}{
\filldraw[fill=lgray,draw=lgray] (3.05,0.05) -- (3.05,0.95) -- (5.95,0.95) -- (5.95,0.05) -- (3.05,0.05);
\draw[dotted] (3,0) -- (6,0);
\draw[dotted] (3,0) -- (3,1);
\draw[dotted] (4,-1) -- (4,1);
\draw[dotted] (5,-1) -- (5,1);
\draw[line width=1.5pt,->,red] (3,1) -- (3,-1) -- (6,-1);
\draw[line width=1.5pt,->,green] (3,1) -- (6,1) -- (6,-1);
\node[left] at (3,-0.5) {$m$};
\node[below] at (3.5,-1) {$j_{\ell+1}$};
\node[below] at (4.5,-1) {$\cdots$};
\node[below] at (5.5,-1) {$j_{k+1}$};
\node[left] at (3,0.5) {\tiny $m+1$};
\node[above] at (3.5,0) {$p_{\ell-1}$};
\node[above] at (4.5,0) {$\cdots$};
\node[above] at (5.5,0) {$p_{k-1}$};
\node[above] at (3.5,1) {$p_{\ell-1}$};
\node[above] at (4.5,1) {$\cdots$};
\node[above] at (5.5,1) {$p_{k-1}$};
\node[right] at (6,0.5) {\tiny $m+1$};
\node[right] at (6,-0.5) {$p_{k+1}$};
\node[left] at (2,0.5) {$x_1 \rightarrow$};
\node[left] at (2,-0.5) {$x_{\ell} \rightarrow$};
}
\right),
\end{multline*}
where the indices $p_{\ell-1},\dots,p_k$ are given by \eqref{p-indices}. More explicitly, we now read the identity
\begin{multline}
\label{psi-277}
\Psi_{{\rm two-row}}
\Big(\mathcal{A},\varnothing;
(i_{\alpha \in \mathcal{A}});
(j_1,\dots,j_M)
\Big)
=
\\
\left(
\tikz{0.9}{
\draw[dotted] (3,0) -- (6,0);
\draw[dotted] (3,0) -- (3,1);
\draw[dotted] (4,-1) -- (4,1);
\draw[dotted] (5,-1) -- (5,1);
\draw[line width=1.5pt,->,red] (3,1) -- (3,-1) -- (6,-1);
\draw[line width=1.5pt,->,green] (3,1) -- (6,1) -- (6,-1);
\node[left] at (3,-0.5) {$m$};
\node[below] at (3.5,-1) {$j_{\ell+1}$};
\node[below] at (4.5,-1) {$\cdots$};
\node[below] at (5.5,-1) {$j_{k+1}$};
\node[left] at (3,0.5) {$m$};
\node[above] at (3.5,1) {$p_{\ell-1}$};
\node[above] at (4.5,1) {$\cdots$};
\node[above] at (5.5,1) {$p_{k-1}$};
\node[right] at (6,0.5) {$p_k$};
\node[right] at (6,-0.5) {$p_{k+1}$};
\node[left] at (2.5,0.5) {$x_1 \rightarrow$};
\node[left] at (2.5,-0.5) {$x_{\ell} \rightarrow$};
}
\right)
-
\prod_{\alpha=\ell+1}^{k+1}
\frac{x_1-y_{\alpha}}{x_1-qy_{\alpha}}
\left(
\tikz{0.9}{
\draw[dotted] (3,-1) -- (3,0);
\draw[dotted] (4,-1) -- (4,0);
\draw[dotted] (5,-1) -- (5,0);
\draw[line width=1.5pt,->,red] (2,0) -- (2,-1) -- (5,-1);
\draw[line width=1.5pt,->,green] (2,0) -- (5,0) -- (5,-1);
\node[left] at (2,-0.5) {$m$};
\node[below] at (2.5,-1) {$j_{\ell+1}$};
\node[below] at (3.5,-1) {$\cdots$};
\node[below] at (4.5,-1) {$j_{k+1}$};
\node[above] at (2.5,0) {$p_{\ell-1}$};
\node[above] at (3.5,0) {$\cdots$};
\node[above] at (4.5,0) {$p_{k-1}$};
\node[right] at (5,-0.5) {$p_{k+1}$};
\node[left] at (1.5,-0.5) {$x_{\ell} \rightarrow$};
}
\right),
\end{multline}
and we see that \eqref{phi-277} and \eqref{psi-277} are indeed equal up to the exchange $x_1 \leftrightarrow x_{\ell}$ (note that both $p_k \sumin \{0,1,\dots,m\}$ and $p_{k+1} \sumin \{0,1,\dots,m\}$). This completes the proof of Theorem \ref{lem:2row}.
\end{proof}

\subsection{Part two: proof for Z-shaped domains}
\label{ssec:z-shape}

We now elevate our proof of Theorem \ref{theorema-egr} to a more general class of domains. We call these domains ``Z-shaped'' because of their appearance; one may view them as coming from the partition functions in Section \ref{ssec:two-row} by inserting ``internal'' rows that now drive apart the two rows that were initially present.

Recall that the two-row partition functions in Section \ref{ssec:two-row} depend implicitly on two integers $k,\ell$; knowing these integers, as well as $M$ (the number of steps of the two down-right paths $P$ and $Q$), allows one to draw the precise down-right domain that one is dealing with. The Z-shaped domains in this subsection will be specified in terms of $k,\ell$ and a supplementary integer $2 \leq c \leq \ell$. The extra integer allows us to keep track of the number of ``internal'' rows in our domain; we assume that there are $\ell-c$ such rows. When $c=\ell$, we return to the case studied in Section \ref{ssec:two-row}.

The two quantities that we study are phrased in terms of the following class of partition functions (a subclass of the partition functions in Definition \ref{def:PF}):
\begin{align}
\label{Zc}
Z^{c}_{k,\ell}
\Big[
i_1,\dots,i_M
\Big|
j_1,\dots,j_M
\Big]
:=
\tikz{0.8}{
\draw[dotted] (3,0) -- (7,0);
\draw[dotted] (3,-1) -- (7,-1);
\draw[dotted] (3,-2) -- (7,-2);
\draw[dotted] (3,-3) -- (7,-3);
\draw[dotted] (3,-4) -- (7,-4);
\draw[dotted] (1,0) -- (1,1);
\draw[dotted] (2,0) -- (2,1);
\draw[dotted] (3,0) -- (3,1);
\draw[dotted] (4,-5) -- (4,1);
\draw[dotted] (5,-5) -- (5,1);
\draw[dotted] (6,-5) -- (6,1);
\draw[dotted] (7,-5) -- (7,-4);
\draw[dotted] (8,-5) -- (8,-4);
\draw[dotted] (9,-5) -- (9,-4);
\draw[dotted] (10,-5) -- (10,-4);
\node[right,green] at (11,-5) {$P$};
\node[below,red] at (11,-5) {$Q$};
\draw[line width=1.5pt,->,red] (0,1) -- (0,0) -- (3,0) -- (3,-5) -- (11,-5);
\draw[line width=1.5pt,->,green] (0,1) -- (7,1) -- (7,-4) -- (11,-4) -- (11,-5);
\node[left] at (0,0.5) {$j_1$};
\node[below] at (0.5,0) {$\cdots$};
\node[below] at (1.5,0) {$\cdots$};
\node[below] at (2.5,0) {$\cdots$};
\node[left] at (3,-0.6) {$j_c$};
\node[left] at (3,-1.5) {$\vdots$};
\node[left] at (3,-2.5) {$\vdots$};
\node[left] at (3,-3.5) {$j_{\ell-1}$};
\node[left] at (3,-4.5) {$j_{\ell}$}; \node at (3,-4.5) {$\bullet$};
\node[below] at (3.5,-5) {$j_{\ell+1}$};
\node[below] at (4.5,-5) {$\cdots$};
\node[below] at (5.5,-5) {$\cdots$};
\node[below] at (6.5,-5) {$\cdots$};
\node[below] at (7.5,-5) {$\cdots$};
\node[below] at (8.5,-5) {$\cdots$};
\node[below] at (9.5,-5) {$\cdots$};
\node[below] at (10.5,-5) {$j_M$};
\node[above] at (0.5,1) {$i_1$};
\node[above] at (1.5,1) {$\cdots$};
\node[above] at (2.5,1) {$\cdots$};
\node[above] at (3.5,1) {$\cdots$};
\node[above] at (4.5,1) {$\cdots$};
\node[above] at (5.5,1) {$\cdots$};
\node[above] at (6.5,1) {$i_{k-1}$};
\node[right] at (7,0.5) {$i_k$}; \filldraw[draw=black,fill=white] (7,0.5) circle (2pt);
\node[right] at (7,-0.5) {$i_{k+1}$};
\node[right] at (7,-1.3) {$\vdots$};
\node[right] at (7,-2.3) {$\vdots$};
\node[right] at (7,-3.3) {$\vdots$};
\node[above] at (7.5,-4) {$\cdots$};
\node[above] at (8.5,-4) {$\cdots$};
\node[above] at (9.5,-4) {$\cdots$};
\node[above] at (10.5,-4) {$\cdots$};
\node[right] at (11,-4.5) {$i_M$};
\node[left] at (-0.5,0.5) {$x_1 \rightarrow$};
\node[left] at (-0.5,-0.5) {$x_c \rightarrow$};
\node[left] at (-0.5,-3.5) {$x_{\ell-1} \rightarrow$};
\node[left] at (-0.5,-4.5) {$x_{\ell} \rightarrow$};
\node[below] at (0.5,2.1) {$y_2$}; \node at (0.5,2.4) {$\uparrow$};
\node[below] at (1.5,2.1) {$\cdots$};
\node[below] at (2.5,2.1) {$y_{c-1}$}; \node at (2.5,2.4) {$\uparrow$};
\node[below] at (3.5,2.1) {$y_{\ell+1}$}; \node at (3.5,2.4) {$\uparrow$};
\node[below] at (4.5,2.1) {$\cdots$};
\node[below] at (5.5,2.1) {$\cdots$};
\node[below] at (6.5,2.1) {$y_{d}$}; \node at (6.5,2.4) {$\uparrow$};
\node[below] at (7.5,2.1) {$y_{d+1}$}; \node at (7.5,2.4) {$\uparrow$};
\node[below] at (8.5,2.1) {$\cdots$};
\node[below] at (9.5,2.1) {$\cdots$};
\node[below] at (10.5,2.1) {$y_{M}$}; \node at (10.5,2.4) {$\uparrow$};
}
\end{align}
where $(i_1,\dots,i_M)$ and $(j_1,\dots,j_M)$ are colorings of the down-right paths $P$ and $Q$, respectively; we have also introduced the shorthand
$d = k+\ell-c+1$.

For the proof of the Shift Theorem we use induction in $\ell-c$. The main idea is to treat the equality of two partition functions as a polynomial identity in rapidities, which then holds true as soon as we can prove it in a large enough collection of values for the rapidities. For the values we are going to choose the points $x_i=y_j$, for which the vertex splitting of Section \ref{ssec:split} occurs. The partition functions for the split domains can be then modified using the Yang-Baxter relations to the ones for which the Shift Theorem is already known by the induction assumption.

We begin the argument by proving some properties of the partition function \eqref{Zc}.

\begin{prop}
\label{prop:properties}
Fix an integer $c$ such that $2 \leq c \leq \ell-1$. The partition function $Z^{c}_{k,\ell}$ satisfies the following properties:
\begin{enumerate}[label=(\roman*)]

\item The quantity
\begin{align*}
\bar{Z}^{c}_{k,\ell}\Big[ i_1,\dots,i_M \Big| j_1,\dots,j_M \Big]
:=
\prod_{\alpha=\ell+1}^{d} (x_c-q y_\alpha)
Z^{c}_{k,\ell}\Big[ i_1,\dots,i_M \Big| j_1,\dots,j_M \Big]
\end{align*}
is a polynomial in $x_c$.

\medskip

\item The indices $j_c$ and $i_{k+1}$ are the left and right edge states of the row of the partition function that carries the parameter $x_c$. If $j_c \leq i_{k+1}$, the polynomial $\bar{Z}^{c}_{k,\ell}[ i_1,\dots,i_M | j_1,\dots,j_M ]$ has degree $d-\ell$ in $x_c$. If $j_c > i_{k+1}$, $\bar{Z}^{c}_{k,\ell}[ i_1,\dots,i_M | j_1,\dots,j_M ]$ has degree $d-\ell-1$ in $x_c$.

\medskip

\item Setting $x_c = y_\gamma$, $\gamma \in \{\ell+1,\dots,d\}$ produces the following recursion:
\begin{multline}
\label{prop3}
Z^{c}_{k,\ell}\Big[ i_1,\dots,i_M \Big| j_1,\dots,j_M \Big]
\Big|_{x_c = y_{\gamma}}
=
\sum_{(i'_1,\dots,i'_M)}
\sum_{(j'_1,\dots,j'_M)}
\xi_{\gamma}
\Big[
j'_1,\dots,j'_M \Big| j_1,\dots,j_M
\Big]
\\
\times
\eta_{\gamma}
\Big[
i_1,\dots,i_M \Big| i'_1,\dots,i'_M
\Big]
Z^{c+1}_{k,\ell}
\Big[ i'_1,\dots,i'_M
\Big| j'_1,\dots,j'_M \Big]
\end{multline}
where $Z^{c+1}_{k,\ell}$ is a partition function of the type \eqref{Zc}, whose rapidity assignment needs to be modified from the canonical conventions; see Figure \ref{fig:recursion-Ztilde}.
The coefficients $\xi_{\gamma}$ and $\eta_{\gamma}$ are independent of $x_c$ and satisfy certain color conservation properties, detailed in \eqref{xi-conserve1}--\eqref{eta-conserve2} below.
%

\begin{figure}[t]
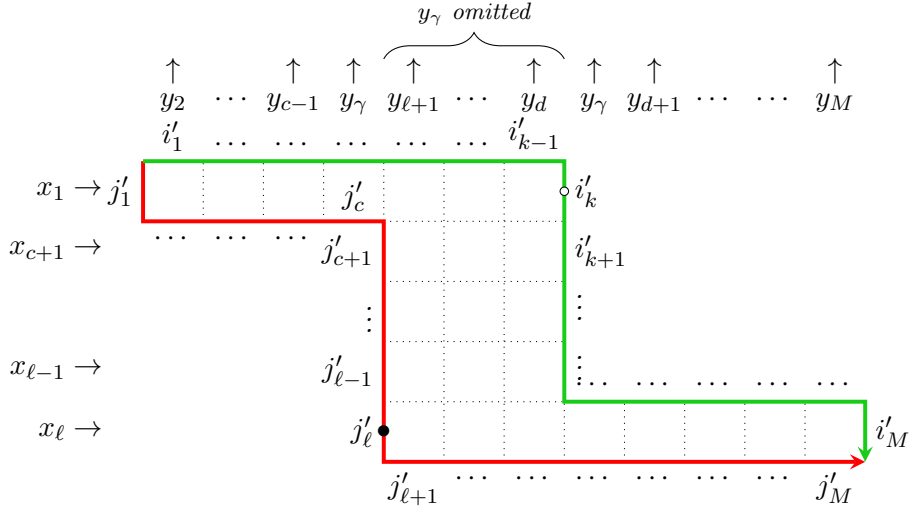

\tikz{0.8}{
\draw[dotted] (4,0) -- (7,0);
\draw[dotted] (4,-1) -- (7,-1);
\draw[dotted] (4,-2) -- (7,-2);
\draw[dotted] (4,-3) -- (7,-3);
\draw[dotted] (4,-4) -- (7,-4);
\draw[dotted] (1,0) -- (1,1);
\draw[dotted] (2,0) -- (2,1);
\draw[dotted] (3,0) -- (3,1);
\draw[dotted] (4,-4) -- (4,1);
\draw[dotted] (5,-4) -- (5,1);
\draw[dotted] (6,-4) -- (6,1);
\draw[dotted] (7,-4) -- (7,-3);
\draw[dotted] (8,-4) -- (8,-3);
\draw[dotted] (9,-4) -- (9,-3);
\draw[dotted] (10,-4) -- (10,-3);
\draw[dotted] (11,-4) -- (11,-3);
\draw[line width=1.5pt,->,red] (0,1) -- (0,0) -- (4,0) -- (4,-4) -- (12,-4);
\draw[line width=1.5pt,->,green] (0,1) -- (7,1) -- (7,-3) -- (12,-3) -- (12,-4);
\node[left] at (0,0.5) {$j'_1$};
\node[below] at (0.5,0) {$\cdots$};
\node[below] at (1.5,0) {$\cdots$};
\node[below] at (2.5,0) {$\cdots$};
\node[above] at (3.5,0) {$j'_c$};
\node[left] at (4,-0.5) {$j'_{c+1}$};
\node[left] at (4,-1.5) {$\vdots$};
\node[left] at (4,-2.5) {$j'_{\ell-1}$};
\node[left] at (4,-3.5) {$j'_{\ell}$}; \node at (4,-3.5) {$\bullet$};
\node[below] at (4.5,-4) {$j'_{\ell+1}$};
\node[below] at (5.5,-4) {$\cdots$};
\node[below] at (6.5,-4) {$\cdots$};
\node[below] at (7.5,-4) {$\cdots$};
\node[below] at (8.5,-4) {$\cdots$};
\node[below] at (9.5,-4) {$\cdots$};
\node[below] at (10.5,-4) {$\cdots$};
\node[below] at (11.5,-4) {$j'_M$};
\node[above] at (0.5,1) {$i'_1$};
\node[above] at (1.5,1) {$\cdots$};
\node[above] at (2.5,1) {$\cdots$};
\node[above] at (3.5,1) {$\cdots$};
\node[above] at (4.5,1) {$\cdots$};
\node[above] at (5.5,1) {$\cdots$};
\node[above] at (6.5,1) {$i'_{k-1}$};
\node[right] at (7,0.5) {$i'_k$}; \filldraw[draw=black,fill=white] (7,0.5) circle (2pt);
\node[right] at (7,-0.5) {$i'_{k+1}$};
\node[right] at (7,-1.3) {$\vdots$};
\node[right] at (7,-2.3) {$\vdots$};
\node[above] at (7.5,-3) {$\cdots$};
\node[above] at (8.5,-3) {$\cdots$};
\node[above] at (9.5,-3) {$\cdots$};
\node[above] at (10.5,-3) {$\cdots$};
\node[above] at (11.5,-3) {$\cdots$};
\node[right] at (12,-3.5) {$i'_M$};
\node[left] at (-0.5,0.5) {$x_1 \rightarrow$};
\node[left] at (-0.5,-0.5) {$x_{c+1} \rightarrow$};
\node[left] at (-0.5,-2.5) {$x_{\ell-1} \rightarrow$};
\node[left] at (-0.5,-3.5) {$x_{\ell} \rightarrow$};
\node[below] at (0.5,2.3) {$y_2$}; \node at (0.5,2.5) {$\uparrow$};
\node[below] at (1.5,2.3) {$\cdots$};
\node[below] at (2.5,2.3) {$y_{c-1}$}; \node at (2.5,2.5) {$\uparrow$};
\node[below] at (3.5,2.3) {$y_{\gamma}$}; \node at (3.5,2.5) {$\uparrow$};
\node[below] at (4.5,2.3) {$y_{\ell+1}$}; \node at (4.5,2.5) {$\uparrow$};
\node[below] at (5.5,2.3) {$\cdots$};
\node[below] at (6.5,2.3) {$y_{d}$}; \node at (6.5,2.5) {$\uparrow$};
\node[below] at (7.5,2.3) {$y_{\gamma}$}; \node at (7.5,2.5) {$\uparrow$};
\node[below] at (8.5,2.3) {$y_{d+1}$}; \node at (8.5,2.5) {$\uparrow$};
\node[below] at (9.5,2.3) {$\cdots$};
\node[below] at (10.5,2.3) {$\cdots$};
\node[below] at (11.5,2.3) {$y_{M}$}; \node at (11.5,2.5) {$\uparrow$};
\draw[decorate,decoration={brace,amplitude=10pt}] (4,2.7) -- (7,2.7)
node [black,midway,yshift=0.6cm] {\footnotesize $y_{\gamma}$\ omitted};
}
\caption{The quantity $Z^{c+1}_{k,\ell}[i'_1,\dots,i'_M | j'_1,\dots,j'_M]$, in which the vertical rapidities have a modified labelling.
}
\label{fig:recursion-Ztilde}
\end{figure}

\item If $j_c < i_{k+1}$, then setting $x_c=0$ we have
%
$
Z^{c}_{k,\ell}\Big[ i_1,\dots,i_M \Big| j_1,\dots,j_M \Big]
\Big|_{x_c = 0}
=
0.
$

\medskip

\item If $j_c = i_{k+1}$, then sending $x_c \rightarrow \infty$ we have
\begin{multline}
\label{x=0}
\lim_{x_c \rightarrow \infty}
Z^{c}_{k,\ell}\Big[ i_1,\dots,i_M \Big| j_1,\dots,j_M \Big]
=
q^{[\#(i_1,\dots,i_k) > j_c]-[\#(j_1,\dots,j_{c-1}) > j_c]}
\\
\times
Z^{c}_{k,\ell-1}
\Big[ i_1,\dots,i_k,i_{k+2},\dots,i_M
\Big| j_1,\dots,j_{c-1},j_{c+1},\dots,j_M \Big],
\end{multline}
where the right hand side of \eqref{x=0} denotes the partition function obtained by deleting the $x_c$ row from $Z^{c}_{k,\ell}[ i_1,\dots,i_M | j_1,\dots,j_M ]$; see Figure \ref{fig:recursion-Zhat}.

\end{enumerate}

\begin{figure}[t]
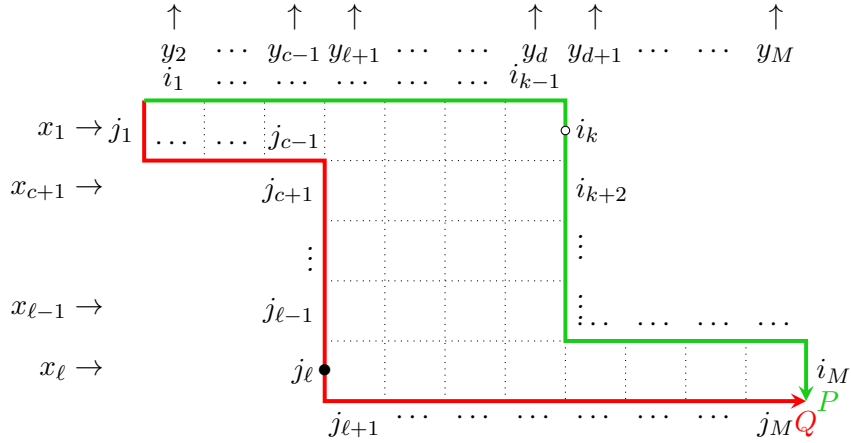

\begin{align*}
\tikz{0.8}{
\draw[dotted] (3,0) -- (7,0);
\draw[dotted] (3,-1) -- (7,-1);
\draw[dotted] (3,-2) -- (7,-2);
\draw[dotted] (3,-3) -- (7,-3);
\draw[dotted] (1,0) -- (1,1);
\draw[dotted] (2,0) -- (2,1);
\draw[dotted] (3,0) -- (3,1);
\draw[dotted] (4,-4) -- (4,1);
\draw[dotted] (5,-4) -- (5,1);
\draw[dotted] (6,-4) -- (6,1);
\draw[dotted] (7,-3) -- (7,-4);
\draw[dotted] (8,-3) -- (8,-4);
\draw[dotted] (9,-3) -- (9,-4);
\draw[dotted] (10,-3) -- (10,-4);
\node[right,green] at (11,-4) {$P$};
\node[below,red] at (11,-4) {$Q$};
\draw[line width=1.5pt,->,red] (0,1) -- (0,0) -- (3,0) -- (3,-4) -- (11,-4);
\draw[line width=1.5pt,->,green] (0,1) -- (7,1) -- (7,-3) -- (11,-3) -- (11,-4);
\node[left] at (0,0.5) {$j_1$};
\node[above] at (0.5,0) {$\cdots$};
\node[above] at (1.5,0) {$\cdots$};
\node[above] at (2.5,0) {$j_{c-1}$};
\node[left] at (3,-0.5) {$j_{c+1}$};
\node[left] at (3,-1.5) {$\vdots$};
\node[left] at (3,-2.5) {$j_{\ell-1}$};
\node[left] at (3,-3.5) {$j_{\ell}$}; \node at (3,-3.5) {$\bullet$};
\node[below] at (3.5,-4) {$j_{\ell+1}$};
\node[below] at (4.5,-4) {$\cdots$};
\node[below] at (5.5,-4) {$\cdots$};
\node[below] at (6.5,-4) {$\cdots$};
\node[below] at (7.5,-4) {$\cdots$};
\node[below] at (8.5,-4) {$\cdots$};
\node[below] at (9.5,-4) {$\cdots$};
\node[below] at (10.5,-4) {$j_M$};
\node[above] at (0.5,1) {$i_1$};
\node[above] at (1.5,1) {$\cdots$};
\node[above] at (2.5,1) {$\cdots$};
\node[above] at (3.5,1) {$\cdots$};
\node[above] at (4.5,1) {$\cdots$};
\node[above] at (5.5,1) {$\cdots$};
\node[above] at (6.5,1) {$i_{k-1}$};
\node[right] at (7,0.5) {$i_k$}; \filldraw[draw=black,fill=white] (7,0.5) circle (2pt);
\node[right] at (7,-0.5) {$i_{k+2}$};
\node[right] at (7,-1.3) {$\vdots$};
\node[right] at (7,-2.3) {$\vdots$};
\node[above] at (7.5,-3) {$\cdots$};
\node[above] at (8.5,-3) {$\cdots$};
\node[above] at (9.5,-3) {$\cdots$};
\node[above] at (10.5,-3) {$\cdots$};
\node[right] at (11,-3.5) {$i_M$};
\node[left] at (-0.5,0.5) {$x_1 \rightarrow$};
\node[left] at (-0.5,-0.5) {$x_{c+1} \rightarrow$};
\node[left] at (-0.5,-2.5) {$x_{\ell-1} \rightarrow$};
\node[left] at (-0.5,-3.5) {$x_{\ell} \rightarrow$};
\node[below] at (0.5,2.1) {$y_2$}; \node at (0.5,2.4) {$\uparrow$};
\node[below] at (1.5,2.1) {$\cdots$};
\node[below] at (2.5,2.1) {$y_{c-1}$}; \node at (2.5,2.4) {$\uparrow$};
\node[below] at (3.5,2.1) {$y_{\ell+1}$}; \node at (3.5,2.4) {$\uparrow$};
\node[below] at (4.5,2.1) {$\cdots$};
\node[below] at (5.5,2.1) {$\cdots$};
\node[below] at (6.5,2.1) {$y_{d}$}; \node at (6.5,2.4) {$\uparrow$};
\node[below] at (7.5,2.1) {$y_{d+1}$}; \node at (7.5,2.4) {$\uparrow$};
\node[below] at (8.5,2.1) {$\cdots$};
\node[below] at (9.5,2.1) {$\cdots$};
\node[below] at (10.5,2.1) {$y_{M}$}; \node at (10.5,2.4) {$\uparrow$};
}
\end{align*}
\caption{The quantity
$Z^{c}_{k,\ell-1} [i_1,\dots,i_k,i_{k+2},\dots,i_M
| j_1,\dots,j_{c-1},j_{c+1},\dots,j_M]$;
one may view this as the partition function obtained by deleting the $x_c$-bearing row from $Z^{c}_{k,\ell}[ i_1,\dots,i_M | j_1,\dots,j_M ]$.}
\label{fig:recursion-Zhat}
\end{figure}
\end{prop}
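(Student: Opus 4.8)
The plan is to treat all of (i)--(v) as statements about the dependence of $Z^{c}_{k,\ell}$ on the single rapidity $x_c$, which enters only through the weights of the one row it labels: that row has left boundary color $j_c$, right boundary color $i_{k+1}$, and crosses the $d-\ell$ columns carrying rapidities $y_{\ell+1},\dots,y_d$, with spectral parameters $y_\alpha/x_c$. After clearing the denominator $1-qy_\alpha/x_c$ from the weights \eqref{R-weights-a}--\eqref{R-weights-bc}, each weight of this row becomes affine in $x_c$; inspecting the five types one sees that this affine function has degree exactly $1$ except for the vertex of type $R_z(i,j;j,i)$ (weight $\tfrac{(1-q)z}{1-qz}$), for which it is the nonzero constant $(1-q)y_\alpha$. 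Since nothing else in the partition function depends on $x_c$, multiplying by $\prod_{\alpha=\ell+1}^{d}(x_c-qy_\alpha)$ clears all (simple) poles, which proves (i) and the bound $\deg_{x_c}\bar{Z}^{c}_{k,\ell}\le d-\ell$.

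For (ii), the coefficient of $x_c^{d-\ell}$ in $\bar{Z}^{c}_{k,\ell}$ is obtained by replacing each weight of the $x_c$-row by the leading coefficient of its cleared affine form; these leading coefficients are exactly the $z\to 0$ specializations $R_0(\cdot)$ of the weights (so $R_0(i,j;j,i)=0$), whence the top coefficient equals $\lim_{x_c\to\infty}Z^{c}_{k,\ell}$, which is the object computed in (v). A short color-conservation check on the single row shows that a nonvanishing $z=0$ configuration with left color $j_c$ and right color $i_{k+1}$ exists precisely when $j_c\le i_{k+1}$: if $j_c>i_{k+1}$ the left-incoming strand would be forced to turn upward as the higher color, which is the forbidden vertex at $z=0$, so the top coefficient vanishes and one further degree is lost; the next coefficient is then nonzero, as one sees by exhibiting the configuration using a single $R_z(i,j;j,i)$ vertex at the last column, giving degree exactly $d-\ell-1$.

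Parts (iv) and (v) are both ``degenerate-row'' arguments powered by the vanishing of one weight. At $x_c=0$ the weight $R_z(j,i;i,j)=\tfrac{1-q}{1-qz}$ vanishes, so inside the $x_c$-row no color entering from below as the higher of the two may turn right while the lower color from the left turns up. When $j_c<i_{k+1}$ the nonzero strand of color $i_{k+1}$ exiting on the right cannot have entered from the left, hence turns up from below at some vertex; tracing the chain of strictly increasing colors that must each enter from the left and turn up in order to ``let it pass'' yields a contradiction, since the chain is bounded by $N$ and must terminate either at the left boundary (forcing some color $\le j_c<i_{k+1}$ to exceed $i_{k+1}$) or at a forbidden vertex. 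This gives $Z^{c}_{k,\ell}\big|_{x_c=0}=0$. As $x_c\to\infty$ the weight $R_z(i,j;j,i)$ vanishes instead; with matching boundary colors $j_c=i_{k+1}$ a symmetric trace forces the left-incoming $j_c$-strand straight through the row, which forbids every turn in the row, so the $x_c$-row becomes vertically transparent. Its residual weight is $q$ to the number of its columns carrying a color exceeding $j_c$, and color conservation in the top row $x_1$ --- whose entering colors are $j_1,\dots,j_{c-1}$ together with those transparent column colors and whose exiting colors are $i_1,\dots,i_k$ --- identifies that exponent with $[\#(i_1,\dots,i_k)>j_c]-[\#(j_1,\dots,j_{c-1})>j_c]$; deleting the now inert row leaves $Z^{c}_{k,\ell-1}$, which is \eqref{x=0}.

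Finally (iii): setting $x_c=y_\gamma$ with $\gamma\in\{\ell+1,\dots,d\}$ makes the spectral parameter equal to $1$ at the single vertex where the $x_c$-row meets column $\gamma$, so that vertex splits as in \eqref{R-split} and Section \ref{ssec:split}. One then applies the Yang--Baxter equation \eqref{graph-YB} repeatedly to drag the now partially detached $x_c$-line out through the boundary of the domain; the vertices it sweeps on entering and leaving assemble into the two $x_c$-independent strips $\xi_\gamma$ (on the $Q$ side) and $\eta_\gamma$ (on the $P$ side), whose color-conservation properties are read directly off the weights of Figure \ref{fund-vert}, while the remaining interior is a partition function of type \eqref{Zc} with the vertical-rapidity relabelling of Figure \ref{fig:recursion-Ztilde} (so that $y_\gamma$ now appears on two columns), i.e.\ $Z^{c+1}_{k,\ell}$; this is \eqref{prop3}. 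I expect this Yang--Baxter line-extraction in (iii) --- verifying that the line can be pulled out without disturbing the rest of the domain and that the pieces collected are genuinely of the stated form --- together with the exact-degree half of (ii), to be the only delicate points; (i), (iv) and (v) reduce to routine one-row path combinatorics.
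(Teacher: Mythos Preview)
Your approach is the same as the paper's: isolate $x_c$ to its row of $d-\ell$ vertices, clear denominators for (i)--(ii), use the vertex split \eqref{R-split} plus Yang--Baxter (and unitarity) repositioning for (iii), and identify the forced vanishing vertex type for (iv)--(v). Two small remarks. First, the paper proves only the \emph{upper} degree bounds in (ii), which is all the subsequent Lagrange interpolation needs; your single-configuration argument for exact degree does not control the full sum over domain configurations, so that extra claim is not justified (nor needed). Second, for (iv) the paper's argument is more direct than your chain-trace: the horizontal edge state along the $x_c$-row runs from $j_c$ on the left to $i_{k+1}>j_c$ on the right, hence must strictly increase at some vertex, and any such increase is a vertex of weight $\tfrac{(1-q)x_c}{x_c-qy_\alpha}$, which vanishes at $x_c=0$.
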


\begin{proof}
We establish the five properties using the vertex description of the Z-shaped domain; see Figure \ref{fig:Z-vertex}.
%
\begin{figure}[t]
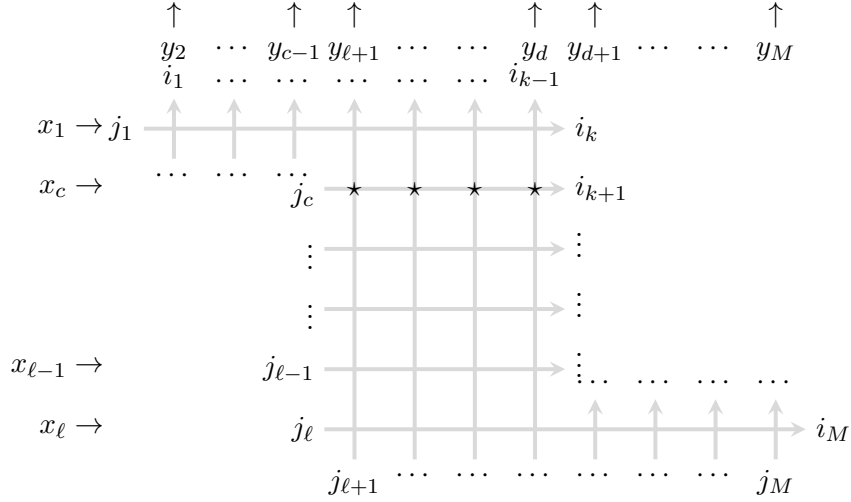

\tikz{0.8}{
\draw[lgray,line width=1.5pt,->] (0,0.5) -- (7,0.5);
\draw[lgray,line width=1.5pt,->] (3,-0.5) -- (7,-0.5);
\draw[lgray,line width=1.5pt,->] (3,-1.5) -- (7,-1.5);
\draw[lgray,line width=1.5pt,->] (3,-2.5) -- (7,-2.5);
\draw[lgray,line width=1.5pt,->] (3,-3.5) -- (7,-3.5);
\draw[lgray,line width=1.5pt,->] (3,-4.5) -- (11,-4.5);
\draw[lgray,line width=1.5pt,->] (0.5,0) -- (0.5,1);
\draw[lgray,line width=1.5pt,->] (1.5,0) -- (1.5,1);
\draw[lgray,line width=1.5pt,->] (2.5,0) -- (2.5,1);
\draw[lgray,line width=1.5pt,->] (3.5,-5) -- (3.5,1);
\draw[lgray,line width=1.5pt,->] (4.5,-5) -- (4.5,1);
\draw[lgray,line width=1.5pt,->] (5.5,-5) -- (5.5,1);
\draw[lgray,line width=1.5pt,->] (6.5,-5) -- (6.5,1);
\draw[lgray,line width=1.5pt,->] (7.5,-5) -- (7.5,-4);
\draw[lgray,line width=1.5pt,->] (8.5,-5) -- (8.5,-4);
\draw[lgray,line width=1.5pt,->] (9.5,-5) -- (9.5,-4);
\draw[lgray,line width=1.5pt,->] (10.5,-5) -- (10.5,-4);
\node[left] at (0,0.5) {$j_1$};
\node[below] at (0.5,0) {$\cdots$};
\node[below] at (1.5,0) {$\cdots$};
\node[below] at (2.5,0) {$\cdots$};
\node[left] at (3,-0.6) {$j_c$};
\node[left] at (3,-1.5) {$\vdots$};
\node[left] at (3,-2.5) {$\vdots$};
\node[left] at (3,-3.5) {$j_{\ell-1}$};
\node[left] at (3,-4.5) {$j_{\ell}$}; 
\node[below] at (3.5,-5) {$j_{\ell+1}$};
\node[below] at (4.5,-5) {$\cdots$};
\node[below] at (5.5,-5) {$\cdots$};
\node[below] at (6.5,-5) {$\cdots$};
\node[below] at (7.5,-5) {$\cdots$};
\node[below] at (8.5,-5) {$\cdots$};
\node[below] at (9.5,-5) {$\cdots$};
\node[below] at (10.5,-5) {$j_M$};
\node[above] at (0.5,1) {$i_1$};
\node[above] at (1.5,1) {$\cdots$};
\node[above] at (2.5,1) {$\cdots$};
\node[above] at (3.5,1) {$\cdots$};
\node[above] at (4.5,1) {$\cdots$};
\node[above] at (5.5,1) {$\cdots$};
\node[above] at (6.5,1) {$i_{k-1}$};
\node[right] at (7,0.5) {$i_k$}; 
\node[right] at (7,-0.5) {$i_{k+1}$};
\node[right] at (7,-1.3) {$\vdots$};
\node[right] at (7,-2.3) {$\vdots$};
\node[right] at (7,-3.3) {$\vdots$};
\node[above] at (7.5,-4) {$\cdots$};
\node[above] at (8.5,-4) {$\cdots$};
\node[above] at (9.5,-4) {$\cdots$};
\node[above] at (10.5,-4) {$\cdots$};
\node[right] at (11,-4.5) {$i_M$};
\node at (3.5,-0.5) {$\star$};
\node at (4.5,-0.5) {$\star$};
\node at (5.5,-0.5) {$\star$};
\node at (6.5,-0.5) {$\star$};
\node[left] at (-0.5,0.5) {$x_1 \rightarrow$};
\node[left] at (-0.5,-0.5) {$x_c \rightarrow$};
\node[left] at (-0.5,-3.5) {$x_{\ell-1} \rightarrow$};
\node[left] at (-0.5,-4.5) {$x_{\ell} \rightarrow$};
\node[below] at (0.5,2.1) {$y_2$}; \node at (0.5,2.4) {$\uparrow$};
\node[below] at (1.5,2.1) {$\cdots$};
\node[below] at (2.5,2.1) {$y_{c-1}$}; \node at (2.5,2.4) {$\uparrow$};
\node[below] at (3.5,2.1) {$y_{\ell+1}$}; \node at (3.5,2.4) {$\uparrow$};
\node[below] at (4.5,2.1) {$\cdots$};
\node[below] at (5.5,2.1) {$\cdots$};
\node[below] at (6.5,2.1) {$y_{d}$}; \node at (6.5,2.4) {$\uparrow$};
\node[below] at (7.5,2.1) {$y_{d+1}$}; \node at (7.5,2.4) {$\uparrow$};
\node[below] at (8.5,2.1) {$\cdots$};
\node[below] at (9.5,2.1) {$\cdots$};
\node[below] at (10.5,2.1) {$y_{M}$}; \node at (10.5,2.4) {$\uparrow$};
}
\caption{The partition function $Z^{c}_{k,\ell}[i_1,\dots,i_M | j_1,\dots,j_M]$, expressed in terms of vertices. The vertices marked by $\star$ are the ones which depend on the parameter $x_c$.}
\label{fig:Z-vertex}
\end{figure}

\begin{enumerate}[label=(\roman*)]

\item We note that the only vertices in $Z^{c}_{k,\ell}[i_1,\dots,i_M | j_1,\dots,j_M]$ which depend on $x_c$ are the $d-\ell$ vertices marked by $\star$ in Figure \ref{fig:Z-vertex}. From Figure \ref{fund-vert} (recalling that $z=y/x$, where $x$ and $y$ are the horizontal and vertical rapidities passing through a vertex, respectively) we see that the Boltzmann weights of these vertices are given by the table
\begin{align}
\label{rational-wt}
\begin{tabular}{|c|c|c|}
\hline
\quad
\tikz{0.5}{
    \draw[lgray,line width=1.5pt,->] (-1,0) -- (1,0);
    \draw[lgray,line width=1.5pt,->] (0,-1) -- (0,1);
    \node[left] at (-1,0) {\tiny $i$};\node[right] at (1,0) {\tiny $i$};
    \node[below] at (0,-1) {\tiny $i$};\node[above] at (0,1) {\tiny $i$};
}
\quad
&
\quad
\tikz{0.5}{
    \draw[lgray,line width=1.5pt,->] (-1,0) -- (1,0);
    \draw[lgray,line width=1.5pt,->] (0,-1) -- (0,1);
    \node[left] at (-1,0) {\tiny $i$};\node[right] at (1,0) {\tiny $i$};
    \node[below] at (0,-1) {\tiny $j$};\node[above] at (0,1) {\tiny $j$};
}
\quad
&
\quad
\tikz{0.5}{
    \draw[lgray,line width=1.5pt,->] (-1,0) -- (1,0);
    \draw[lgray,line width=1.5pt,->] (0,-1) -- (0,1);
    \node[left] at (-1,0) {\tiny $i$};\node[right] at (1,0) {\tiny $j$};
    \node[below] at (0,-1) {\tiny $j$};\node[above] at (0,1) {\tiny $i$};
}
\quad
\\[1.3cm]
\quad
$1$
\quad
&
\quad
$\dfrac{q(x_c-y_{\alpha})}{x_c-q y_{\alpha}}$
\quad
&
\quad
$\dfrac{(1-q) x_c}{x_c-q y_{\alpha}}$
\quad
\\[0.7cm]
\hline
&
\quad
\tikz{0.5}{
    \draw[lgray,line width=1.5pt,->] (-1,0) -- (1,0);
    \draw[lgray,line width=1.5pt,->] (0,-1) -- (0,1);
    \node[left] at (-1,0) {\tiny $j$};\node[right] at (1,0) {\tiny $j$};
    \node[below] at (0,-1) {\tiny $i$};\node[above] at (0,1) {\tiny $i$};
}
\quad
&
\quad
\tikz{0.5}{
    \draw[lgray,line width=1.5pt,->] (-1,0) -- (1,0);
    \draw[lgray,line width=1.5pt,->] (0,-1) -- (0,1);
    \node[left] at (-1,0) {\tiny $j$};\node[right] at (1,0) {\tiny $i$};
    \node[below] at (0,-1) {\tiny $i$};\node[above] at (0,1) {\tiny $j$};
}
\quad
\\[1.3cm]
&
\quad
$\dfrac{x_c-y_{\alpha}}{x_c-q y_{\alpha}}$
\quad
&
\quad
$\dfrac{(1-q)y_{\alpha}}{x_c-q y_{\alpha}}$
\quad
\\[0.7cm]
\hline
\end{tabular}
\end{align}
for all $0 \leq i < j$, and where $\alpha \in \{\ell+1,\dots,d\}$. After multiplying
$Z^{c}_{k,\ell}[i_1,\dots,i_M | j_1,\dots,j_M]$ by
$\prod_{\alpha=\ell+1}^{d} (x_c-q y_{\alpha})$ and distributing the $d-\ell$ terms in this product over the $d-\ell$ vertices marked by $\star$ in Figure \ref{fig:Z-vertex}, we effectively clear the denominators present in the weights \eqref{rational-wt}, and it is then clear that $\bar{Z}^{c}_{k,\ell}[i_1,\dots,i_M | j_1,\dots,j_M]$ is a polynomial in $x_c$.

\medskip

\item To compute the degree of $\bar{Z}^{c}_{k,\ell}[i_1,\dots,i_M | j_1,\dots,j_M]$ in $x_c$, we simply count the number of starred vertices in Figure \ref{fig:Z-vertex}, since each of these vertices has a weight which (after the above change of normalization) is a polynomial in $x_c$ of degree at most $1$. We conclude that $\bar{Z}^{c}_{k,\ell}[i_1,\dots,i_M | j_1,\dots,j_M]$ has degree at most $d-\ell$ in $x_c$, which is the best bound we can obtain for $j_c \leq i_{k+1}$. We can obtain a slightly sharper bound for $j_c > i_{k+1}$, since in that situation at least one of the starred vertices must take the form
\tikz{0.3}{
\draw[lgray,line width=1.5pt,->] (-1,0) -- (1,0);
\draw[lgray,line width=1.5pt,->] (0,-1) -- (0,1);
\node[left] at (-1,0) {\tiny $j$};\node[right] at (1,0) {\tiny $i$};
\node[below] at (0,-1) {\tiny $i$};\node[above] at (0,1) {\tiny $j$};
}
with $j > i$, and these vertices (after the change in normalization) are constant with respect to $x_c$. Hence for $j_c > i_{k+1}$ we see that $\bar{Z}^{c}_{k,\ell}[i_1,\dots,i_M | j_1,\dots,j_M]$ has degree at most $d-\ell-1$ in $x_c$.

\medskip

\item 

We begin from Figure \ref{fig:Z-vertex} and observe that after setting $x_c = y_{\gamma}$, $\gamma \in \{\ell+1,\dots,d\}$, we produce a vertex splitting of the form \eqref{R-split} at the position where the $x_c$ and $y_{\gamma}$ parameters cross; see Figure \ref{fig:Z-split}.

\begin{figure}[t]
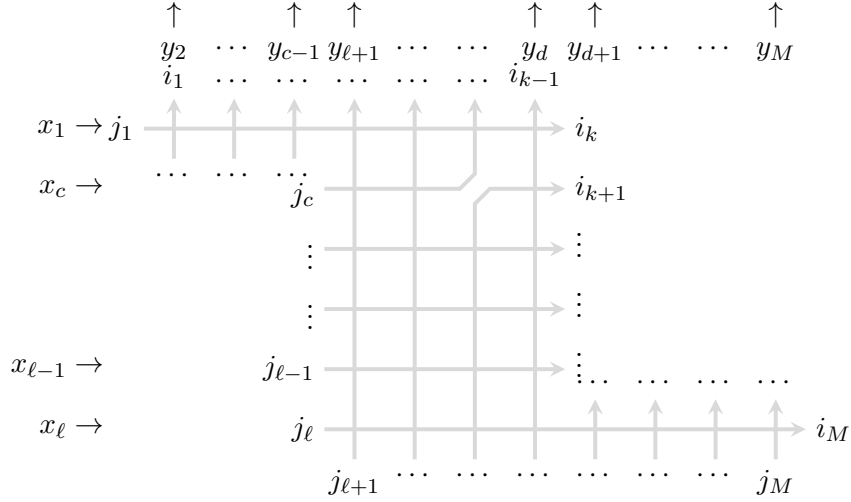

\tikz{0.8}{
\draw[lgray,line width=1.5pt,->] (0,0.5) -- (7,0.5);
\draw[lgray,line width=1.5pt,->] (3,-0.5) -- (5.25,-0.5) -- (5.5,-0.25) -- (5.5,1);
\draw[lgray,line width=1.5pt,->] (3,-1.5) -- (7,-1.5);
\draw[lgray,line width=1.5pt,->] (3,-2.5) -- (7,-2.5);
\draw[lgray,line width=1.5pt,->] (3,-3.5) -- (7,-3.5);
\draw[lgray,line width=1.5pt,->] (3,-4.5) -- (11,-4.5);
\draw[lgray,line width=1.5pt,->] (0.5,0) -- (0.5,1);
\draw[lgray,line width=1.5pt,->] (1.5,0) -- (1.5,1);
\draw[lgray,line width=1.5pt,->] (2.5,0) -- (2.5,1);
\draw[lgray,line width=1.5pt,->] (3.5,-5) -- (3.5,1);
\draw[lgray,line width=1.5pt,->] (4.5,-5) -- (4.5,1);
\draw[lgray,line width=1.5pt,->] (5.5,-5) -- (5.5,-0.75) -- (5.75,-0.5) -- (7,-0.5);
\draw[lgray,line width=1.5pt,->] (6.5,-5) -- (6.5,1);
\draw[lgray,line width=1.5pt,->] (7.5,-5) -- (7.5,-4);
\draw[lgray,line width=1.5pt,->] (8.5,-5) -- (8.5,-4);
\draw[lgray,line width=1.5pt,->] (9.5,-5) -- (9.5,-4);
\draw[lgray,line width=1.5pt,->] (10.5,-5) -- (10.5,-4);
\node[left] at (0,0.5) {$j_1$};
\node[below] at (0.5,0) {$\cdots$};
\node[below] at (1.5,0) {$\cdots$};
\node[below] at (2.5,0) {$\cdots$};
\node[left] at (3,-0.6) {$j_c$};
\node[left] at (3,-1.5) {$\vdots$};
\node[left] at (3,-2.5) {$\vdots$};
\node[left] at (3,-3.5) {$j_{\ell-1}$};
\node[left] at (3,-4.5) {$j_{\ell}$}; 
\node[below] at (3.5,-5) {$j_{\ell+1}$};
\node[below] at (4.5,-5) {$\cdots$};
\node[below] at (5.5,-5) {$\cdots$};
\node[below] at (6.5,-5) {$\cdots$};
\node[below] at (7.5,-5) {$\cdots$};
\node[below] at (8.5,-5) {$\cdots$};
\node[below] at (9.5,-5) {$\cdots$};
\node[below] at (10.5,-5) {$j_M$};
\node[above] at (0.5,1) {$i_1$};
\node[above] at (1.5,1) {$\cdots$};
\node[above] at (2.5,1) {$\cdots$};
\node[above] at (3.5,1) {$\cdots$};
\node[above] at (4.5,1) {$\cdots$};
\node[above] at (5.5,1) {$\cdots$};
\node[above] at (6.5,1) {$i_{k-1}$};
\node[right] at (7,0.5) {$i_k$}; 
\node[right] at (7,-0.5) {$i_{k+1}$};
\node[right] at (7,-1.3) {$\vdots$};
\node[right] at (7,-2.3) {$\vdots$};
\node[right] at (7,-3.3) {$\vdots$};
\node[above] at (7.5,-4) {$\cdots$};
\node[above] at (8.5,-4) {$\cdots$};
\node[above] at (9.5,-4) {$\cdots$};
\node[above] at (10.5,-4) {$\cdots$};
\node[right] at (11,-4.5) {$i_M$};
\node[left] at (-0.5,0.5) {$x_1 \rightarrow$};
\node[left] at (-0.5,-0.5) {$x_c \rightarrow$};
\node[left] at (-0.5,-3.5) {$x_{\ell-1} \rightarrow$};
\node[left] at (-0.5,-4.5) {$x_{\ell} \rightarrow$};
\node[below] at (0.5,2.1) {$y_2$}; \node at (0.5,2.4) {$\uparrow$};
\node[below] at (1.5,2.1) {$\cdots$};
\node[below] at (2.5,2.1) {$y_{c-1}$}; \node at (2.5,2.4) {$\uparrow$};
\node[below] at (3.5,2.1) {$y_{\ell+1}$}; \node at (3.5,2.4) {$\uparrow$};
\node[below] at (4.5,2.1) {$\cdots$};
\node[below] at (5.5,2.1) {$\cdots$};
\node[below] at (6.5,2.1) {$y_{d}$}; \node at (6.5,2.4) {$\uparrow$};
\node[below] at (7.5,2.1) {$y_{d+1}$}; \node at (7.5,2.4) {$\uparrow$};
\node[below] at (8.5,2.1) {$\cdots$};
\node[below] at (9.5,2.1) {$\cdots$};
\node[below] at (10.5,2.1) {$y_{M}$}; \node at (10.5,2.4) {$\uparrow$};
}
\caption{The partition function $Z^{c}_{k,\ell}[i_1,\dots,i_M | j_1,\dots,j_M]$ after setting $x_c=y_{\gamma}$. The vertex at the intersection of the $x_c$ and $y_{\gamma}$ lines gets split, in the same sense as in Section \ref{ssec:split}.}
\label{fig:Z-split}
\end{figure}

After creating this splitting we reposition the affected lattice lines; by the Yang--Baxter \eqref{graph-YB} and unitarity \eqref{graph-unitarity} relations of the model, these lines may be repositioned in any way that preserves their starting and ending points. One such repositioning is illustrated in Figure \ref{fig:Z-reposition}; this repositioning is obtained by dragging one of the split lines as far upwards as possible, and the other line as far downwards as possible.

\begin{figure}[t]
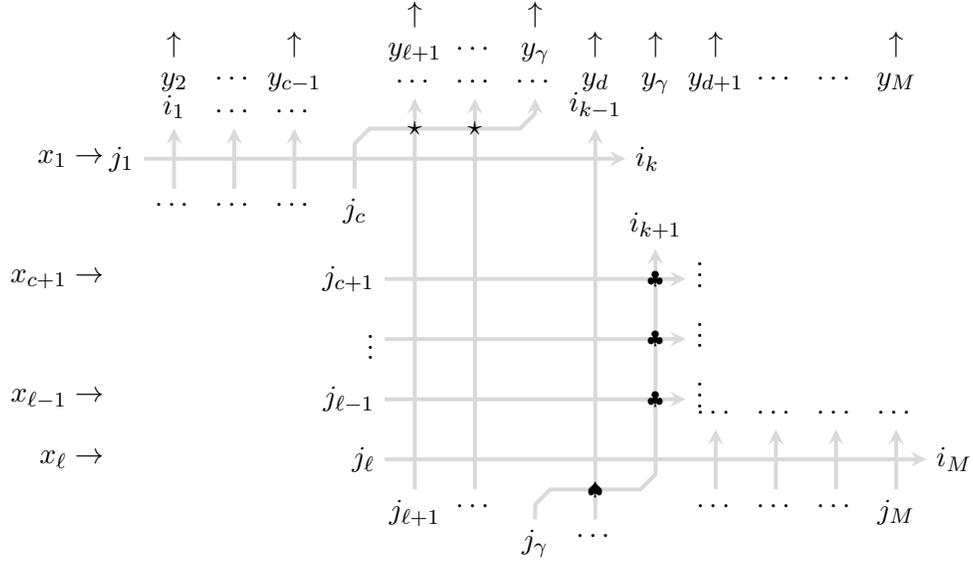

\tikz{0.8}{
\draw[lgray,line width=1.5pt,->] (-1,0.5) -- (7,0.5);
\draw[lgray,line width=1.5pt,->] (2.5,0) -- (2.5,0.75) -- (2.75,1) -- (5.25,1) -- (5.5,1.25) -- (5.5,1.5);
\draw[lgray,line width=1.5pt,->] (3,-1.5) -- (8,-1.5);
\draw[lgray,line width=1.5pt,->] (3,-2.5) -- (8,-2.5);
\draw[lgray,line width=1.5pt,->] (3,-3.5) -- (8,-3.5);
\draw[lgray,line width=1.5pt,->] (3,-4.5) -- (12,-4.5);
\draw[lgray,line width=1.5pt,->] (-0.5,0) -- (-0.5,1);
\draw[lgray,line width=1.5pt,->] (0.5,0) -- (0.5,1);
\draw[lgray,line width=1.5pt,->] (1.5,0) -- (1.5,1);
\draw[lgray,line width=1.5pt,->] (3.5,-5) -- (3.5,1.5);
\draw[lgray,line width=1.5pt,->] (4.5,-5) -- (4.5,1.5);
\draw[lgray,line width=1.5pt,->] (5.5,-5.5) -- (5.5,-5.25) -- (5.75,-5) -- (7.25,-5) -- (7.5,-4.75) -- (7.5,-1);
\draw[lgray,line width=1.5pt,->] (6.5,-5.5) -- (6.5,1);
\draw[lgray,line width=1.5pt,->] (8.5,-5) -- (8.5,-4);
\draw[lgray,line width=1.5pt,->] (9.5,-5) -- (9.5,-4);
\draw[lgray,line width=1.5pt,->] (10.5,-5) -- (10.5,-4);
\draw[lgray,line width=1.5pt,->] (11.5,-5) -- (11.5,-4);
\node[left] at (-1,0.5) {$j_1$};
\node[below] at (-0.5,0) {$\cdots$};
\node[below] at (0.5,0) {$\cdots$};
\node[below] at (1.5,0) {$\cdots$};
\node[below] at (2.5,0) {$j_c$};
\node[left] at (3,-1.5) {$j_{c+1}$};
\node[left] at (3,-2.5) {$\vdots$};
\node[left] at (3,-3.5) {$j_{\ell-1}$};
\node[left] at (3,-4.5) {$j_{\ell}$}; 
\node[below] at (3.5,-5) {$j_{\ell+1}$};
\node[below] at (4.5,-5) {$\cdots$};
\node[below] at (5.5,-5.5) {$j_{\gamma}$};
\node[below] at (6.5,-5.5) {$\cdots$};
\node[below] at (8.5,-5) {$\cdots$};
\node[below] at (9.5,-5) {$\cdots$};
\node[below] at (10.5,-5) {$\cdots$};
\node[below] at (11.5,-5) {$j_M$};
\node[above] at (-0.5,1) {$i_1$};
\node[above] at (0.5,1) {$\cdots$};
\node[above] at (1.5,1) {$\cdots$};
\node[above] at (3.5,1.5) {$\cdots$};
\node[above] at (4.5,1.5) {$\cdots$};
\node[above] at (5.5,1.5) {$\cdots$};
\node[above] at (6.5,1) {$i_{k-1}$};
\node[right] at (7,0.5) {$i_k$}; 
\node[above] at (7.5,-1) {$i_{k+1}$};
\node[right] at (8,-1.3) {$\vdots$};
\node[right] at (8,-2.3) {$\vdots$};
\node[right] at (8,-3.3) {$\vdots$};
\node[above] at (8.5,-4) {$\cdots$};
\node[above] at (9.5,-4) {$\cdots$};
\node[above] at (10.5,-4) {$\cdots$};
\node[above] at (11.5,-4) {$\cdots$};
\node[right] at (12,-4.5) {$i_M$};
\node at (3.5,1) {$\star$};
\node at (4.5,1) {$\star$};
\node at (6.5,-5) {\tiny$\spadesuit$};
\node at (7.5,-1.5) {\tiny$\clubsuit$};
\node at (7.5,-2.5) {\tiny$\clubsuit$};
\node at (7.5,-3.5) {\tiny$\clubsuit$};
\node[left] at (-1.5,0.5) {$x_1 \rightarrow$};
\node[left] at (-1.5,-1.5) {$x_{c+1} \rightarrow$};
\node[left] at (-1.5,-3.5) {$x_{\ell-1} \rightarrow$};
\node[left] at (-1.5,-4.5) {$x_{\ell} \rightarrow$};
\node[below] at (-0.5,2.1) {$y_2$}; \node at (-0.5,2.4) {$\uparrow$};
\node[below] at (0.5,2.1) {$\cdots$};
\node[below] at (1.5,2.1) {$y_{c-1}$}; \node at (1.5,2.4) {$\uparrow$};
\node[below] at (3.5,2.6) {$y_{\ell+1}$}; \node at (3.5,2.9) {$\uparrow$};
\node[below] at (4.5,2.6) {$\cdots$};
\node[below] at (5.5,2.6) {$y_{\gamma}$};  \node at (5.5,2.9) {$\uparrow$};
\node[below] at (6.5,2.1) {$y_{d}$}; \node at (6.5,2.4) {$\uparrow$};
\node[below] at (7.5,2.1) {$y_{\gamma}$}; \node at (7.5,2.4) {$\uparrow$};
\node[below] at (8.5,2.1) {$y_{d+1}$}; \node at (8.5,2.4) {$\uparrow$};
\node[below] at (9.5,2.1) {$\cdots$};
\node[below] at (10.5,2.1) {$\cdots$};
\node[below] at (11.5,2.1) {$y_{M}$}; \node at (11.5,2.4) {$\uparrow$};
}
\caption{The partition function $Z^{c}_{k,\ell}[i_1,\dots,i_M | j_1,\dots,j_M]$ after setting $x_c=y_{\gamma}$, followed by repositioning the lines affected by the splitting. If we neglect all vertices marked by $\star$, {\tiny$\spadesuit$} and {\tiny$\clubsuit$}, we see that this effectively leads to a Z-shaped domain with $\ell-c-1$ internal rows; one less than we started with.}
\label{fig:Z-reposition}
\end{figure}

Examining the partition function in Figure \ref{fig:Z-reposition}, it is clear that an expansion of the form \eqref{prop3} must exist; it remains only to compute the coefficients $\xi_{\gamma}$ and $\eta_{\gamma}$. These coefficients come from expanding over all possible configurations of the vertices marked by $\star$, {\tiny$\spadesuit$} and {\tiny$\clubsuit$} in Figure \ref{fig:Z-reposition}; we may compute them explicitly in terms of single-row or single-column partition functions. For our purposes, it will be useful to note the following factorized form of the coefficients:
\begin{multline}
\label{xi-factor}
\xi_{\gamma}
\Big[
j'_1,\dots,j'_M \Big| j_1,\dots,j_M
\Big]
=
\\
\xi_{\gamma}
\Big[
j'_1,\dots,j'_{\ell-1} \Big| j_1,\dots,j_{\ell-1}
\Big]
\cdot
\left(
{\bm 1}_{j_{\ell}=j'_{\ell}=m}
\right)
\cdot
\xi_{\gamma}
\Big[
j'_{\ell+1},\dots,j'_M \Big| j_{\ell+1},\dots,j_M
\Big],
\end{multline}
where
\begin{align}
\label{xi-left}
\xi_{\gamma}
\Big[
j'_1,\dots,j'_{\ell-1} \Big| j_1,\dots,j_{\ell-1}
\Big]
=
\prod_{\alpha=1}^{\ell-1}
{\bm 1}_{j_{\alpha} = j'_{\alpha}},
\end{align}
and
\begin{align}
\label{xi-right}
\xi_{\gamma}
\Big[
j'_{\ell+1},\dots,j'_M \Big| j_{\ell+1},\dots,j_M
\Big]
=
\prod_{\alpha=\ell+1}^{\gamma-1}
{\bm 1}_{j_{\alpha} = j'_{\alpha}}
\cdot
\left[
\tikz{0.9}{
\draw[lgray,line width=1.5pt,->] (8,-1) -- (8,-0.75) -- (8.25,-0.5) -- (12.75,-0.5) -- (13,-0.25) -- (13,0);
\foreach\x in {9,10,11,12}{
\draw[lgray,line width=1.5pt,->] (\x,-1) -- (\x,0);
}
\node[below] at (9,-1) {$\cdots$};
\node[below] at (10,-1) {$\cdots$};
\node[below] at (11,-1) {$\cdots$};
\node[below] at (12,-1) {$j_d$};
\node[above] at (9,0) {$j'_{\gamma}$};
\node[above] at (10,0) {$\cdots$};
\node[above] at (11,0) {$\cdots$};
\node[above] at (12,0) {$\cdots$};
\node[above] at (13,0) {$j'_d$};
\node[below] at (8,-1) {$j_{\gamma}$};
\node at (9,-0.5) {\tiny$\spadesuit$};
\node at (10,-0.5) {\tiny$\spadesuit$};
\node at (11,-0.5) {\tiny$\spadesuit$};
\node at (12,-0.5) {\tiny$\spadesuit$};
\node[left] at (8,-0.5) {$y_{\gamma} \rightarrow$};
\node[below] at (9,1.3) {$y_{\gamma+1}$}; \node at (9,1.6) {$\uparrow$};
\node[below] at (10,1.3) {$\cdots$};
\node[below] at (11,1.3) {$\cdots$};
\node[below] at (12,1.3) {$y_{d}$}; \node at (12,1.6) {$\uparrow$};
}
\right]
\cdot
\prod_{\alpha=d+1}^{M}
{\bm 1}_{j_{\alpha} = j'_{\alpha}}.
\end{align}
In a similar vein, we have
\begin{multline}
\label{eta-factor}
\eta_{\gamma}
\Big[
i_1,\dots,i_M \Big| i'_1,\dots,i'_M
\Big]
=
\\
\eta_{\gamma}
\Big[
i_1,\dots,i_{k-1} \Big| i'_1,\dots,i'_{k-1}
\Big]
\cdot
\left(
{\bm 1}_{i_k=i'_k}
\right)
\cdot
\eta_{\gamma}
\Big[
i_{k+1},\dots,i_M \Big| i'_{k+1},\dots,i'_M
\Big],
\end{multline}
where
\begin{align}
\label{eta-left}
\eta_{\gamma}
\Big[
i_1,\dots,i_{k-1} \Big| i'_1,\dots,i'_{k-1}
\Big]
=
\prod_{\alpha=1}^{c-2}
{\bm 1}_{i_{\alpha} = i'_{\alpha}}
\cdot
\left[
\tikz{0.9}{
\draw[lgray,line width=1.5pt,->] (8,-1) -- (8,-0.75) -- (8.25,-0.5) -- (12.75,-0.5) -- (13,-0.25) -- (13,0);
\foreach\x in {9,10,11,12}{
\draw[lgray,line width=1.5pt,->] (\x,-1) -- (\x,0);
}
\node[below] at (9,-1) {$\cdots$};
\node[below] at (10,-1) {$\cdots$};
\node[below] at (11,-1) {$\cdots$};
\node[below] at (12.5,-1) {$i'_{k-d+\gamma-1}$};
\node[above] at (9,0) {$i_{c-1}$};
\node[above] at (10,0) {$\cdots$};
\node[above] at (11,0) {$\cdots$};
\node[above] at (12,0) {$\cdots$};
\node[above] at (13.2,0) {$i_{k-d+\gamma-1}$};
\node[below] at (8,-1) {$i'_{c-1}$};
\node at (9,-0.5) {$\star$};
\node at (10,-0.5) {$\star$};
\node at (11,-0.5) {$\star$};
\node at (12,-0.5) {$\star$};
\node[left] at (7.7,-0.5) {$y_{\gamma} \rightarrow$};
\node[below] at (9,1.3) {$y_{\ell+1}$}; \node at (9,1.6) {$\uparrow$};
\node[below] at (10,1.3) {$\cdots$};
\node[below] at (11,1.3) {$\cdots$};
\node[below] at (12,1.3) {$y_{\gamma-1}$}; \node at (12,1.6) {$\uparrow$};
}
\right]
\cdot
\prod_{\alpha=k-d+\gamma}^{k-1}
{\bm 1}_{i_{\alpha} = i'_{\alpha}}
\end{align}
and
\begin{align}
\label{eta-right}
\eta_{\gamma}
\Big[
i_{k+1},\dots,i_M \Big| i'_{k+1},\dots,i'_M
\Big]
=
\left[
\tikz{0.9}{
\foreach\x in {0.5,-0.5,-1.5,-2.5}{
\draw[lgray,line width=1.5pt,->] (3,\x) -- (4,\x);
}
\draw[lgray,line width=1.5pt,->] (3.5,-3) -- (3.5,1);
\node[left] at (3,0.5) {$i'_{k+1}$};
\node[left] at (3,-0.5) {$\vdots$};
\node[left] at (3,-1.5) {$\vdots$};
\node[left] at (3,-2.5) {$\vdots$};
\node[below] at (3.5,-3) {$i'_{d-1}$};
\node[above] at (3.5,1) {$i_{k+1}$};
\node[right] at (4,0.5) {$\vdots$};
\node[right] at (4,-0.5) {$\vdots$};
\node[right] at (4,-1.5) {$\vdots$};
\node[right] at (4,-2.5) {$i_{d-1}$};
\node at (3.5,0.5) {\tiny $\clubsuit$};
\node at (3.5,-0.5) {\tiny $\clubsuit$};
\node at (3.5,-1.5) {\tiny $\clubsuit$};
\node at (3.5,-2.5) {\tiny $\clubsuit$};
\node[left] at (2,0.5) {$x_{c+1} \rightarrow$};
\node[left] at (2,-0.5) {$\vdots$};
\node[left] at (2,-1.5) {$\vdots$};
\node[left] at (2,-2.5) {$x_{\ell-1} \rightarrow$};
\node[below] at (3.5,2.2) {$y_{\gamma}$}; \node at (3.5,2.5) {$\uparrow$};
}
\right]
\cdot
\prod_{\alpha=d}^{M}
{\bm 1}_{i_{\alpha} = i'_{\alpha}}.
\end{align}
From the explicit form of the coefficients \eqref{xi-left}, \eqref{xi-right}, \eqref{eta-left} and \eqref{eta-right}, we immediately see that they satisfy the basic conservation properties
\begin{align}
\label{xi-conserve1}
\xi_{\gamma}
\Big[
j'_1,\dots,j'_{\ell-1} \Big| j_1,\dots,j_{\ell-1}
\Big]
&=
0,
\qquad
\text{unless}
\quad
(j'_1,\dots,j'_{\ell-1}) \in \mathfrak{S}(j_1,\dots,j_{\ell-1}),
\\
\label{xi-conserve2}
\xi_{\gamma}
\Big[
j'_{\ell+1},\dots,j'_M \Big| j_{\ell+1},\dots,j_M
\Big]
&=
0,
\qquad
\text{unless}
\quad
(j'_{\ell+1},\dots,j'_M) \in \mathfrak{S}(j_{\ell+1},\dots,j_M),
\\
\label{eta-conserve1}
\eta_{\gamma}
\Big[
i_1,\dots,i_{k-1} \Big| i'_1,\dots,i'_{k-1}
\Big]
&=
0,
\qquad
\text{unless}
\quad
(i'_1,\dots,i'_{k-1}) \in \mathfrak{S}(i_1,\dots,i_{k-1}),
\\
\label{eta-conserve2}
\eta_{\gamma}
\Big[
i_{k+1},\dots,i_M \Big| i'_{k+1},\dots,i'_M
\Big]
&=
0,
\qquad
\text{unless}
\quad
(i'_{k+1},\dots,i'_M) \in \mathfrak{S}(i_{k+1},\dots,i_M).
\end{align}
Note that all coefficients $\xi_{\gamma}$ and $\eta_{\gamma}$ are also manifestly independent of $x_c$.

\medskip

\item When $j_c < i_{k+1}$, at least one of the starred vertices in Figure \ref{fig:Z-vertex} must take the form
\tikz{0.3}{
\draw[lgray,line width=1.5pt,->] (-1,0) -- (1,0);
\draw[lgray,line width=1.5pt,->] (0,-1) -- (0,1);
\node[left] at (-1,0) {\tiny $i$};\node[right] at (1,0) {\tiny $j$};
\node[below] at (0,-1) {\tiny $j$};\node[above] at (0,1) {\tiny $i$};
}
with $i < j$. As can be seen from \eqref{rational-wt}, the weight of this vertex vanishes when $x_c=0$; since no other weight is singular under the limit $x_c \rightarrow 0$, we immediately conclude that $Z^{c}_{k,\ell} |_{x_c=0} = 0$.

\medskip

\item When $j_c = i_{k+1}$, there are two possibilities for the ensemble of starred vertices in Figure \ref{fig:Z-vertex}. The first is that a vertex of the form
\tikz{0.3}{
\draw[lgray,line width=1.5pt,->] (-1,0) -- (1,0);
\draw[lgray,line width=1.5pt,->] (0,-1) -- (0,1);
\node[left] at (-1,0) {\tiny $j$};\node[right] at (1,0) {\tiny $i$};
\node[below] at (0,-1) {\tiny $i$};\node[above] at (0,1) {\tiny $j$};
}
appears among them, for some $j>i$. The weight of this vertex vanishes after taking the limit $x_c \rightarrow \infty$; since all other weights in the table \eqref{rational-wt} have a well-defined and non-vanishing limit as $x_c \rightarrow \infty$, we can neglect all lattice configurations which feature this vertex among the ensemble of starred vertices. We are left with the second possibility, that the vertex
\tikz{0.3}{
\draw[lgray,line width=1.5pt,->] (-1,0) -- (1,0);
\draw[lgray,line width=1.5pt,->] (0,-1) -- (0,1);
\node[left] at (-1,0) {\tiny $j$};\node[right] at (1,0) {\tiny $i$};
\node[below] at (0,-1) {\tiny $i$};\node[above] at (0,1) {\tiny $j$};
}
and (consequently) the vertex
\tikz{0.3}{
\draw[lgray,line width=1.5pt,->] (-1,0) -- (1,0);
\draw[lgray,line width=1.5pt,->] (0,-1) -- (0,1);
\node[left] at (-1,0) {\tiny $i$};\node[right] at (1,0) {\tiny $j$};
\node[below] at (0,-1) {\tiny $j$};\node[above] at (0,1) {\tiny $i$};
}
do not appear among the starred vertices, for any $i<j$. The absence of these vertices causes the row of starred vertices to have a frozen configuration; if we label the colors of the $d-\ell$ bottom and top edges of this row by $(j'_{\ell+1},\dots,j'_d)$ and $(i'_{\ell+1},\dots,i'_d)$ respectively, we find that
\begin{align}
\label{eq-430}
\lim_{x_c \rightarrow \infty}
\tikz{0.9}{
\draw[lgray,line width=1.5pt,->] (3,-0.5) -- (7,-0.5);
\draw[lgray,line width=1.5pt,->] (3.5,-1) -- (3.5,0);
\draw[lgray,line width=1.5pt,->] (4.5,-1) -- (4.5,0);
\draw[lgray,line width=1.5pt,->] (5.5,-1) -- (5.5,0);
\draw[lgray,line width=1.5pt,->] (6.5,-1) -- (6.5,0);
\node[left] at (3,-0.5) {$j_c$};
\node[below] at (3.5,-1) {$j'_{\ell+1}$};
\node[below] at (4.5,-1) {$\cdots$};
\node[below] at (5.5,-1) {$\cdots$};
\node[below] at (6.5,-1) {$j'_d$};
\node[above] at (3.5,0) {$i'_{\ell+1}$};
\node[above] at (4.5,0) {$\cdots$};
\node[above] at (5.5,0) {$\cdots$};
\node[above] at (6.5,0) {$i'_d$};
\node[right] at (7,-0.5) {$j_c$};
}
=
\prod_{\alpha = \ell+1}^{d}
\bm{1}_{i'_{\alpha}=j'_{\alpha}}
\cdot
\lim_{x_c \rightarrow \infty}
\tikz{0.9}{
\draw[lgray,line width=1.5pt,->] (3,-0.5) -- (7,-0.5);
\draw[lgray,line width=1.5pt,->] (3.5,-1) -- (3.5,0);
\draw[lgray,line width=1.5pt,->] (4.5,-1) -- (4.5,0);
\draw[lgray,line width=1.5pt,->] (5.5,-1) -- (5.5,0);
\draw[lgray,line width=1.5pt,->] (6.5,-1) -- (6.5,0);
\node[left] at (3,-0.5) {$j_c$};
\node at (4,-0.5) {$j_c$};
\node at (5,-0.5) {$j_c$};
\node at (6,-0.5) {$j_c$};
\node[below] at (3.5,-1) {$i'_{\ell+1}$};
\node[below] at (4.5,-1) {$\cdots$};
\node[below] at (5.5,-1) {$\cdots$};
\node[below] at (6.5,-1) {$i'_d$};
\node[above] at (3.5,0) {$i'_{\ell+1}$};
\node[above] at (4.5,0) {$\cdots$};
\node[above] at (5.5,0) {$\cdots$};
\node[above] at (6.5,0) {$i'_d$};
\node[right] at (7,-0.5) {$j_c$};
}.
\end{align}
We see that every vertex that appears on the right hand side of \eqref{eq-430} is of the form
\tikz{0.3}{
\draw[lgray,line width=1.5pt,->] (-1,0) -- (1,0);
\draw[lgray,line width=1.5pt,->] (0,-1) -- (0,1);
\node[left] at (-1,0) {\tiny $j$};\node[right] at (1,0) {\tiny $j$};
\node[below] at (0,-1) {\tiny $j$};\node[above] at (0,1) {\tiny $j$};
},
\tikz{0.3}{
\draw[lgray,line width=1.5pt,->] (-1,0) -- (1,0);
\draw[lgray,line width=1.5pt,->] (0,-1) -- (0,1);
\node[left] at (-1,0) {\tiny $j$};\node[right] at (1,0) {\tiny $j$};
\node[below] at (0,-1) {\tiny $i$};\node[above] at (0,1) {\tiny $i$};
} or
\tikz{0.3}{
\draw[lgray,line width=1.5pt,->] (-1,0) -- (1,0);
\draw[lgray,line width=1.5pt,->] (0,-1) -- (0,1);
\node[left] at (-1,0) {\tiny $i$};\node[right] at (1,0) {\tiny $i$};
\node[below] at (0,-1) {\tiny $j$};\node[above] at (0,1) {\tiny $j$};
}, with $i<j$. As $x_c \rightarrow \infty$, the weights of these vertices have the limits $1$, $1$ and $q$ respectively, and we thus have
\begin{align}
\label{eq-431}
\lim_{x_c \rightarrow \infty}
\tikz{0.9}{
\draw[lgray,line width=1.5pt,->] (3,-0.5) -- (7,-0.5);
\draw[lgray,line width=1.5pt,->] (3.5,-1) -- (3.5,0);
\draw[lgray,line width=1.5pt,->] (4.5,-1) -- (4.5,0);
\draw[lgray,line width=1.5pt,->] (5.5,-1) -- (5.5,0);
\draw[lgray,line width=1.5pt,->] (6.5,-1) -- (6.5,0);
\node[left] at (3,-0.5) {$j_c$};
\node[below] at (3.5,-1) {$j'_{\ell+1}$};
\node[below] at (4.5,-1) {$\cdots$};
\node[below] at (5.5,-1) {$\cdots$};
\node[below] at (6.5,-1) {$j'_d$};
\node[above] at (3.5,0) {$i'_{\ell+1}$};
\node[above] at (4.5,0) {$\cdots$};
\node[above] at (5.5,0) {$\cdots$};
\node[above] at (6.5,0) {$i'_d$};
\node[right] at (7,-0.5) {$j_c$};
}
=
\prod_{\alpha = \ell+1}^{d}
\bm{1}_{i'_{\alpha}=j'_{\alpha}}
\cdot
q^{\#[(i'_{\ell+1},\dots,i'_d) > j_c]}.
\end{align}
The net effect of taking the limit $x_c \rightarrow \infty$ is thus the deletion of the starred vertices from the lattice in Figure \ref{fig:Z-vertex}, at the expense of an overall power of $q$. To match with the power of $q$ appearing in \eqref{x=0}, we note that because of color-conservation through the top row of vertices in Figure \ref{fig:Z-vertex}, we must have
\begin{align*}
\#[(i'_{\ell+1},\dots,i'_d) > j_c]
=
[\#(i_1,\dots,i_k) > j_c]
-
[\#(j_1,\dots,j_{c-1}) > j_c];
\end{align*}
substituting this into \eqref{eq-431}, we then immediately recover \eqref{x=0}.
\end{enumerate}
\end{proof}

Next, we state a simple auxiliary result that we need at various stages in the rest of the argument.

\begin{lem}
\label{lem:diff}
Consider a quantity $\Delta_{k}[i_1,\dots,i_M]$ which depends on a set of colors
$(i_1,\dots,i_M)$. Suppose that, for any partitioning $\mathcal{A}\cup\bar{\mathcal{A}} = \{1,\dots,k-1\}$ and $\mathcal{B}\cup\bar{\mathcal{B}} = \{k+1,\dots,M\}$ there holds
\begin{align}
\label{sum-to-0}
\sum_{i_k}
\sum_{
\substack{(i_{\alpha \in \bar{\mathcal{A}}})
\\ \vspace{-0.1cm} \\
(i_{\beta \in \bar{\mathcal{B}}})
}}
\Delta_{k}[i_1,\dots,i_M]
=
0,
\end{align}
where the summation being performed satisfies
$$
i_{\alpha} \sumin \{m,m+1,\dots,N\},\quad \text{for}\ \alpha \in \bar{\mathcal{A}},
\quad\quad
i_{\beta} \sumin \{0,1,\dots,m\},\quad \text{for}\ \beta \in \bar{\mathcal{B}},
\quad\quad
i_k \sumin \{0,1,\dots,N\};
$$
all other colors are fixed, with
$$
i_{\alpha} \fixin \{0,1,\dots,m-1\},\quad \text{for}\ \alpha \in \mathcal{A},
\quad\quad
i_{\beta} \fixin \{m+1,m+2,\dots,N\},\quad \text{for}\ \beta \in \mathcal{B}.
$$
Also suppose that the coefficients $C(i_1,\dots,i_M )$ satisfy
\begin{align}
\label{color-blind}
C(i_1,\dots,i_M)
=
C([i_1]_m,\dots,[i_{k-1}]_m,0,[i_{k+1}]^m,\dots,[i_M]^m)
\end{align}
for all $i_1,\dots,i_M \geq 0$, where we use the notations \eqref{merge-up}, \eqref{merge-down}. Then
\begin{align}
\label{COB}
\sum_{i_1,\dots,i_M}
C(i_1,\dots,i_M)
\Delta_{k}[i_1,\dots,i_M]
=
0,
\end{align}
with unrestricted summation over $(i_1,\dots,i_M)$.
\end{lem}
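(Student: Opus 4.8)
The plan is to reduce the unrestricted sum \eqref{COB} to the family of constrained sums \eqref{sum-to-0} by grouping the summation variables according to the ``coarsened'' colors that $C$ actually sees. First I would observe that the hypothesis \eqref{color-blind} says that the coefficient $C(i_1,\dots,i_M)$ depends only on the projected data $([i_1]_m,\dots,[i_{k-1}]_m,0,[i_{k+1}]^m,\dots,[i_M]^m)$: for the first $k-1$ slots it only matters whether $i_\alpha < m$ (and which value, if so) or $i_\alpha \geq m$; for the last $M-k$ slots it only matters whether $i_\beta > m$ (and which value, if so) or $i_\beta \leq m$; and $i_k$ is completely irrelevant. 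So I would partition the index set $\{1,\dots,k-1\}$ into $\mathcal A = \{\alpha : i_\alpha \le m-1\}$ and $\bar{\mathcal A} = \{\alpha : i_\alpha \ge m\}$, and $\{k+1,\dots,M\}$ into $\mathcal B = \{\beta : i_\beta \ge m+1\}$ and $\bar{\mathcal B} = \{\beta : i_\beta \le m\}$; for fixed such partitions, together with fixed values $(i_{\alpha \in \mathcal A}) \in \{0,\dots,m-1\}$ and $(i_{\beta \in \mathcal B}) \in \{m+1,\dots,N\}$, the coefficient $C$ is a constant, call it $C_0 = C_0(\mathcal A,\mathcal B,(i_{\alpha\in\mathcal A}),(i_{\beta\in\mathcal B}))$.

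Next I would rewrite the unrestricted sum over $(i_1,\dots,i_M)$ as a sum over all choices of $(\mathcal A,\bar{\mathcal A})$, $(\mathcal B,\bar{\mathcal B})$, all fixed values $(i_{\alpha\in\mathcal A})$, $(i_{\beta\in\mathcal B})$, and then an inner sum over the remaining (``summed'') variables $i_k \in \{0,\dots,N\}$, $i_\alpha \in \{m,\dots,N\}$ for $\alpha\in\bar{\mathcal A}$, $i_\beta \in \{0,\dots,m\}$ for $\beta\in\bar{\mathcal B}$. This is exactly a partition of the cube $(\mathbb Z_{\ge 0})^M$ truncated at $N$ in each coordinate, so no terms are double-counted or omitted. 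On each such block the constant $C_0$ factors out of the inner sum, leaving precisely $\sum_{i_k} \sum_{(i_{\alpha\in\bar{\mathcal A}}),(i_{\beta\in\bar{\mathcal B}})} \Delta_k[i_1,\dots,i_M]$, which vanishes by \eqref{sum-to-0}. Therefore every block contributes zero and \eqref{COB} follows.

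The only point requiring a little care — and the one I'd regard as the main (albeit modest) obstacle — is the bookkeeping of the ranges at the boundary value $i=m$: in the partition for slots $\alpha \le k-1$ the value $m$ is placed in $\bar{\mathcal A}$ (summed), while in the partition for slots $\beta \ge k+1$ the value $m$ is placed in $\bar{\mathcal B}$ (summed), consistently with the ``s'' ranges in the hypothesis of Lemma \ref{lem:diff} and with the projections $[\cdot]_m$, $[\cdot]^m$ from \eqref{merge-up}, \eqref{merge-down}; one must check that with these conventions $C$ is genuinely constant on each block (it is, because $[m]_m = m$ lies in the merged class $\{m,\dots,N\}$ and $[m]^m = 0$ lies in the merged class $\{0,\dots,m\}$, both of which $C$ cannot distinguish). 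Once this matching is in place the proof is a one-line reindexing argument, and I would present it as such.
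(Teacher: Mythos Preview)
Your proposal is correct and follows essentially the same argument as the paper: decompose the unrestricted sum into blocks indexed by the partitions $(\mathcal{A},\bar{\mathcal{A}})$ of $\{1,\dots,k-1\}$ and $(\mathcal{B},\bar{\mathcal{B}})$ of $\{k+1,\dots,M\}$ together with the fixed values $(i_{\alpha\in\mathcal{A}})$, $(i_{\beta\in\mathcal{B}})$, observe via \eqref{color-blind} that $C$ is constant on each block, and then kill the inner sum with \eqref{sum-to-0}. Your remark about the placement of the boundary value $m$ is exactly the one subtlety, and you handle it correctly.
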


\begin{proof}
The summation over $(i_1,\dots,i_M)$ in \eqref{COB} can be broken up in the following way. Focusing on the indices $(i_1,\dots,i_{k-1})$, we choose a subset of these (with labels in $\mathcal{A} \subset \{1,\dots,k-1\}$) to be summed over $\{0,1,\dots,m-1\}$, while the remainder are summed over $\{m,m+1,\dots,N\}$; we also sum over all ways to choose such subsets $\mathcal{A}$. Likewise, focusing on the indices $(i_{k+1},\dots,i_M)$, we choose a subset of these (with labels in $\mathcal{B} \subset \{k+1,\dots,M\}$) to be summed over $\{m+1,m+2,\dots,N\}$, while the remainder get summed over $\{0,1,\dots,m\}$; again, we sum over all ways to choose such subsets $\mathcal{B}$. This leads us to the following equation:
\begin{multline*}
\sum_{i_1,\dots,i_M}
C(i_1,\dots,i_M)
\Delta_{k}[i_1,\dots,i_M]
=
\\
\sum_{\mathcal{A} \subset \{1,\dots,k-1\}}
\sum_{\mathcal{B} \subset \{k+1,\dots,M\}}
\sum_{\substack{
0 \\ (i_{\alpha\in\mathcal{A}})
}}^{m-1}
\sum_{\substack{
m \\ (i_{\alpha\in\bar{\mathcal{A}}})
}}^{N}
\sum_{\substack{
m+1 \\ (i_{\beta\in\mathcal{B}})
}}^{N}
\sum_{\substack{
0 \\ (i_{\beta\in\bar{\mathcal{B}}})
}}^{m}
\sum_{i_k}
C(i_1,\dots,i_M)
\Delta_{k}[i_1,\dots,i_M],
\end{multline*}
where we do not refine the sum over $i_k$ in any way. Making use of the property \eqref{color-blind}, we find that it is effectively possible to interchange the order of the coefficient $C$ and the sums over $i_k$, $(i_{\alpha\in\bar{\mathcal{A}}})$, $(i_{\beta\in\bar{\mathcal{B}}})$. More precisely, we have
\begin{multline}
\label{interchange-C}
\sum_{i_1,\dots,i_M}
C(i_1,\dots,i_M)
\Delta_{k}[i_1,\dots,i_M]
=
\\
\sum_{\mathcal{A} \subset \{1,\dots,k-1\}}
\sum_{\mathcal{B} \subset \{k+1,\dots,M\}}
\sum_{\substack{
0 \\ (i_{\alpha\in\mathcal{A}})
}}^{m-1}
\sum_{\substack{
m+1 \\ (i_{\beta\in\mathcal{B}})
}}^{N}
C(a_1,\dots,a_M)
\sum_{\substack{
m \\ (i_{\alpha\in\bar{\mathcal{A}}})
}}^{N}
\sum_{\substack{
0 \\ (i_{\beta\in\bar{\mathcal{B}}})
}}^{m}
\sum_{i_k}
\Delta_{k}[i_1,\dots,i_M],
\end{multline}
where we have defined
\begin{align*}
a_{p}
=
\left\{
\begin{array}{ll}
i_p,
\qquad\quad
&
p \in \mathcal{A} \cup \mathcal{B},
\\ \\
m, \qquad\quad
&
p \in \bar{\mathcal{A}},
\\ \\
0, \qquad\quad
&
p \in \{k\} \cup \bar{\mathcal{B}}.
\end{array}
\right.
\end{align*}
By the assumption \eqref{sum-to-0}, the final three sums in \eqref{interchange-C} satisfy
\begin{align*}
\sum_{\substack{
m \\ (i_{\alpha\in\bar{\mathcal{A}}})
}}^{N}
\sum_{\substack{
0 \\ (i_{\beta\in\bar{\mathcal{B}}})
}}^{m}
\sum_{i_k}
\Delta_{k}[i_1,\dots,i_M]
=
0,
\end{align*}
for all subsets $\bar{\mathcal{A}}$ and $\bar{\mathcal{B}}$. Hence the right hand side of \eqref{interchange-C} vanishes term-by-term, and the proof of \eqref{COB} is complete.
\end{proof}

We are now ready to prove Theorem \ref{theorema-egr} in the case of Z-shaped domains. Let us first restate it in terms of the notation being used in this section. Fixing two sets $\mathcal{A} \subset \{1,\dots,k-1\}$ and $\mathcal{B} \subset \{k+1,\dots,M\}$, and a vector of incoming colors $(j_1,\dots,j_M)$ satisfying \eqref{in-conditions}, we quote from equation \eqref{phi} to define
\begin{multline}
\label{phiZ}
\Phi^{c}_{k,\ell}
\Big(
\mathcal{A},\mathcal{B};
(i_{\alpha \in \mathcal{A}}),
(i_{\beta \in \mathcal{B}});
(j_1,\dots,j_M)
\Big)
\\
=
\sum_{i_k}
\sum_{
\substack{(i_{\alpha \in \bar{\mathcal{A}}})
\\ \vspace{-0.1cm} \\
(i_{\beta \in \bar{\mathcal{B}}})
}}
Z^{c}_{k,\ell}\Big[ i_1,\dots,i_M \Big| j_1,\dots,j_M \Big]
{\bm 1}\Big( \mathcal{H}^{\ges m}(P;k+1) = h \Big)
\end{multline}
where $i_k$ is summed over all values $\{0,1,\dots,N\}$, while
$(i_{\alpha \in \bar{\mathcal{A}}})$ and $(i_{\beta \in \bar{\mathcal{B}}})$ are summed according to the constraints \eqref{out-conditions}.
Similarly, quoting from \eqref{psi}, we define
\begin{multline}
\label{psiZ}
\Psi^{c}_{k,\ell}
\Big(
\mathcal{A},\mathcal{B};
(i_{\alpha \in \mathcal{A}}),
(i_{\beta \in \mathcal{B}});
(j_1,\dots,j_M)
\Big)
\\
=
\sum_{i_k}
\sum_{
\substack{(i_{\alpha \in \bar{\mathcal{A}}})
\\ \vspace{-0.1cm} \\
(i_{\beta \in \bar{\mathcal{B}}})
}}
Z^{c}_{k,\ell}\Big[ i_1,\dots,i_M \Big| j_1,\dots,j_M \Big]
{\bm 1}\Big( \mathcal{H}^{\ges m+1}(P;k) = h \Big),
\end{multline}
where $i_k$ is summed over all values $\{0,1,\dots,N\}$, while
$(i_{\alpha \in \bar{\mathcal{A}}})$ and $(i_{\beta \in \bar{\mathcal{B}}})$ are summed according to the constraints \eqref{out-conditions}.

\begin{theorem}
\label{thm:Z}
Let $\Phi^{c}_{k,\ell}$ and $\Psi^{c}_{k,\ell}$ be defined as in \eqref{phiZ} and \eqref{psiZ}. For all $2 \leq c \leq \ell$, we have
\begin{align}
\label{z-shaped-claim}
\Phi^{c}_{k,\ell}
\Big(
\mathcal{A},\mathcal{B};
(i_{\alpha \in \mathcal{A}}),
(i_{\beta \in \mathcal{B}});
(j_1,\dots,j_M)
\Big)
=
\Psi^{c}_{k,\ell}
\Big(
\mathcal{A},\mathcal{B};
(i_{\alpha \in \mathcal{A}}),
(i_{\beta \in \mathcal{B}});
(j_1,\dots,j_M)
\Big)
\Big|_{x_1 \leftrightarrow x_{\ell}}.
\end{align}
\end{theorem}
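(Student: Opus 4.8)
The plan is to prove Theorem \ref{thm:Z} by induction on the number of internal rows $\ell - c$, with the base case $c = \ell$ being precisely Theorem \ref{lem:2row} already established in Part one. So assume $2 \le c \le \ell-1$ and that \eqref{z-shaped-claim} holds for $c+1$ in place of $c$ (on any Z-shaped domain of the relevant shape). The key idea, as flagged in the text preceding Proposition \ref{prop:properties}, is to treat both sides of \eqref{z-shaped-claim} as polynomials — after clearing denominators — in the rapidity variable $x_c$ (for $\Phi^c_{k,\ell}$) and in $x_1$ (for $\Psi^c_{k,\ell}$ with $x_1 \leftrightarrow x_\ell$ applied, which makes $x_\ell$ the relevant variable after relabelling), and to show they agree at enough points to conclude equality. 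Concretely, I would fix all other rapidities generically and regard $\prod_{\alpha=\ell+1}^{d}(x_c - q y_\alpha)\, \Phi^c_{k,\ell}$ as a polynomial in $x_c$; by Proposition \ref{prop:properties}(i),(ii) its degree is at most $d-\ell = k - c + 1$. The interpolation points will be $x_c = y_\gamma$ for $\gamma \in \{\ell+1,\dots,d\}$ (that is $d - \ell = k-c+1$ values), together with the behaviour at $x_c = 0$ and $x_c \to \infty$ given by Proposition \ref{prop:properties}(iv),(v). Counting: $d - \ell$ finite special values plus the leading coefficient (from $x_c \to \infty$) plus the value at $x_c = 0$ gives more than enough data to pin down a degree $\le d - \ell$ polynomial, with the $j_c \lessgtr i_{k+1}$ dichotomy in (ii) ensuring the degree count matches in each case.

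The heart of the argument is therefore to verify that $\Phi^c_{k,\ell}$ and $\Psi^c_{k,\ell}|_{x_1 \leftrightarrow x_\ell}$ agree at each interpolation point. For $x_c = y_\gamma$, Proposition \ref{prop:properties}(iii) expresses $Z^c_{k,\ell}|_{x_c = y_\gamma}$ as a sum over intermediate colorings of $\xi_\gamma \cdot \eta_\gamma \cdot Z^{c+1}_{k,\ell}$, where $Z^{c+1}_{k,\ell}$ is a Z-shaped partition function with one fewer internal row (but with a modified vertical rapidity assignment, Figure \ref{fig:recursion-Ztilde}). The coefficients $\xi_\gamma, \eta_\gamma$ are independent of $x_c$ and factorize as in \eqref{xi-factor}--\eqref{eta-right}, with the crucial features that $\xi_\gamma$ splits across the $\ell$-th incoming edge (forced to carry color $m$) and $\eta_\gamma$ splits across the $k$-th outgoing edge. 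This factorization means that the height-function indicator $\mathbf 1(\mathcal H^{\ges m}(P;k+1) = h)$ in $\Phi$ — which only sees outgoing colors at positions $\ge k+1$ — and the indicator $\mathbf 1(\mathcal H^{\ges m+1}(P;k) = h)$ in $\Psi$ — which only sees positions $\le k$ — interact cleanly with the $\eta_\gamma$ sum. I would push the $\xi_\gamma$ and $\eta_\gamma$ sums outside, so that the $x_c = y_\gamma$ specialization of $\Phi^c_{k,\ell}$ becomes a weighted sum of $\Phi^{c+1}_{k,\ell}$'s (and similarly $\Psi^c_{k,\ell}$ becomes a weighted sum of $\Psi^{c+1}_{k,\ell}$'s), with the same weights. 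Here Lemma \ref{lem:diff} is the technical device: it lets one move the (color-merging-compatible) coefficients $\xi_\gamma \eta_\gamma$ through the constrained summations over $(i_{\alpha \in \bar{\mathcal A}})$, $(i_{\beta \in \bar{\mathcal B}})$, $i_k$, because the color-conservation properties \eqref{xi-conserve1}--\eqref{eta-conserve2} give exactly the ``color-blind'' structure \eqref{color-blind} required. Invoking the induction hypothesis for $c+1$ then matches the two weighted sums, after accounting for the $x_\circ \leftrightarrow x_\bullet$ swap, which in the recursion becomes the swap of the appropriate horizontal rapidities at level $c+1$.

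The remaining interpolation data is handled by Proposition \ref{prop:properties}(iv),(v). When $j_c < i_{k+1}$ both partition functions vanish at $x_c = 0$ (and at $x_1 = 0$ for $\Psi$ after relabelling), matching trivially. When $j_c = i_{k+1}$ the limit $x_c \to \infty$ reduces $Z^c_{k,\ell}$ to $Z^c_{k,\ell-1}$ (delete the $x_c$-row, Figure \ref{fig:recursion-Zhat}) up to an explicit power of $q$; this again reduces the claim, via the induction hypothesis applied at one lower value of $\ell$ (equivalently, one fewer internal row while keeping $c$), to an already-known match — and one must check the power of $q$ is the same on both sides, which it is, since the combinatorial exponent $[\#(i_1,\dots,i_k) > j_c] - [\#(j_1,\dots,j_{c-1}) > j_c]$ in \eqref{x=0} depends only on boundary data common to $\Phi$ and $\Psi$, hence is unaffected by the $x_1 \leftrightarrow x_\ell$ swap. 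Finally, the case $j_c \ne i_{k+1}$ being excluded by color conservation in one of the two sub-cases needs a brief check that both $\Phi$ and $\Psi$ vanish identically there, which follows from \eqref{conserve} applied to the $x_c$-row.

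I expect the main obstacle to be the bookkeeping in the reduction step at $x_c = y_\gamma$: one must verify carefully that the height-function constraints in $\Phi^c_{k,\ell}$ and $\Psi^c_{k,\ell}$, after the vertex splitting and line repositioning of Figures \ref{fig:Z-split}--\ref{fig:Z-reposition}, translate into the \emph{same} height-function constraints on the reduced domain $Z^{c+1}_{k,\ell}$ — i.e. that the splitting does not disturb the edges of $P$ over which the height functions $\mathcal H^{\ges m}(P;k+1)$ and $\mathcal H^{\ges m+1}(P;k)$ are measured — and that the rapidity swap $x_\circ \leftrightarrow x_\bullet$ is compatible with the induction hypothesis's swap under the modified rapidity labelling. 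This is conceptually straightforward given Proposition \ref{prop:properties} and Lemma \ref{lem:diff}, but requires attentive tracking of which vertical and horizontal rapidities land where after repositioning, and of the fact that the degree bound in (ii) is \emph{tight} in the appropriate case so that the interpolation is not over-determined in a way that would require checking consistency of an overdetermined system.
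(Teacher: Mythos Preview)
Your approach is essentially the paper's: induction on $\ell-c$ with base case Theorem~\ref{lem:2row}, Lagrange interpolation in $x_c$ using the points $x_c=y_\gamma$ from Proposition~\ref{prop:properties}(iii) together with Lemma~\ref{lem:diff}, and the auxiliary data at $x_c=0$ or $x_c\to\infty$ from (iv),(v). Two small corrections are worth making. First, the parenthetical about treating $\Psi^c_{k,\ell}|_{x_1\leftrightarrow x_\ell}$ as a polynomial ``in $x_1$'' is a slip: the swap $x_1\leftrightarrow x_\ell$ does not touch $x_c$, so both sides of \eqref{z-shaped-claim} have the \emph{same} $x_c$-row and the same denominator $\prod_{\alpha=\ell+1}^{d}(x_c-qy_\alpha)$; the paper simply works with the difference $\Upsilon^c_{k,\ell}=\Phi^c_{k,\ell}-\Psi^c_{k,\ell}|_{x_1\leftrightarrow x_\ell}$ as a single polynomial in $x_c$. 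Second, your final sentence about ``$j_c\ne i_{k+1}$ being excluded by color conservation'' is incorrect: nothing forces $j_c=i_{k+1}$ on the $x_c$-row. The correct case split (which the paper makes explicit) is on whether $k+1\in\mathcal B$: if $k+1\notin\mathcal B$ then $i_{k+1}\sumin\{0,\dots,m\}$ while $j_c\in\{m+1,\dots,N\}$, so $j_c>i_{k+1}$ always, the degree drops to $d-\ell-1$, and the $d-\ell$ points $y_\gamma$ already suffice; if $k+1\in\mathcal B$ then $i_{k+1}$ is fixed in $\{m+1,\dots,N\}$ and one uses either $x_c=0$ (when $j_c<i_{k+1}$) or $x_c\to\infty$ (when $j_c=i_{k+1}$) as the single extra interpolation point, with $j_c>i_{k+1}$ again needing no extra point.
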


\begin{proof}
We proceed by induction on the number of internal rows in the Z-shaped domain. In Section \ref{ssec:two-row}, we proved that \eqref{z-shaped-claim} is true when $c=\ell$; that is, when there are no internal rows.

Now assume that the claim is true for all Z-shaped domains with $\ell-c-1$ internal rows. We shall consider the quantity
\begin{align*}
\Upsilon^{c}_{k,\ell}
:=
\Phi^{c}_{k,\ell}
\Big(
\mathcal{A},\mathcal{B};
(i_{\alpha \in \mathcal{A}}),
(i_{\beta \in \mathcal{B}});
(j_1,\dots,j_M)
\Big)
-
\Psi^{c}_{k,\ell}
\Big(
\mathcal{A},\mathcal{B};
(i_{\alpha \in \mathcal{A}}),
(i_{\beta \in \mathcal{B}});
(j_1,\dots,j_M)
\Big)
\Big|_{x_1 \leftrightarrow x_{\ell}}
\end{align*}
with the goal of showing that it vanishes. Using the definitions \eqref{phiZ}, \eqref{psiZ} and Property (i) of Proposition \ref{prop:properties}, we see that
\begin{align*}
\bar{\Upsilon}^{c}_{k,\ell}
:=
\prod_{\alpha=\ell+1}^{d} (x_c-q y_{\alpha})
\Upsilon^{c}_{k,\ell}
\Big(
\mathcal{A},\mathcal{B};
(i_{\alpha \in \mathcal{A}}),
(i_{\beta \in \mathcal{B}});
(j_1,\dots,j_M)
\Big)
\end{align*}
is a polynomial in $x_c$. We will show that this quantity vanishes at sufficiently many interpolating points, and is therefore identically zero. To determine how many points we require, there are two separate cases to consider:
\begin{enumerate} \item $k+1 \not\in \mathcal{B}$, which implies that $i_{k+1} \sumin \{0,1,\dots,m\}$ is summed;
\item $k+1 \in \mathcal{B}$, which implies that $i_{k+1} \fixin \{m+1,m+2,\dots,N\}$ is fixed.
\end{enumerate}

\medskip
\underline{{\bf Case 1:} $k+1 \not\in \mathcal{B}$.}

\medskip

In this situation, $i_{k+1}$ is summed over the values $\{0,1,\dots,m\}$. Using Property (ii) of Proposition \ref{prop:properties}, we see that in this case (since $j_c > m \geq i_{k+1}$, irrespective of the value of $i_{k+1}$) $\bar{\Upsilon}^{c}_{k,\ell}$ has degree $d-\ell-1$ in $x_c$. We therefore need $d-\ell$ interpolating values for $x_c$, and these are provided by Property (iii) of Proposition \ref{prop:properties}, which allows us to compute $\bar{\Upsilon}^{c}_{k,\ell}$ at each of the points $x_c = y_{\ell+1},\dots,y_d$:
\begin{multline}
\label{d-l-points}
\Upsilon^{c}_{k,\ell}
\Big|_{x_c = y_{\gamma}}
=
\sum_{(i'_1,\dots,i'_M)}
\sum_{(j'_1,\dots,j'_M)}
\sum_{i_k}
\sum_{
\substack{(i_{\alpha \in \bar{\mathcal{A}}})
\\ \vspace{-0.1cm} \\
(i_{\beta \in \bar{\mathcal{B}}})
}}
\xi_{\gamma}
\Big[
j'_1,\dots,j'_M \Big| j_1,\dots,j_M
\Big]
\eta_{\gamma}
\Big[
i_1,\dots,i_M \Big| i'_1,\dots,i'_M
\Big]
\\
\times
\left\{
Z^{c+1}_{k,\ell}
\Big[ i'_1,\dots,i'_M
\Big| j'_1,\dots,j'_M \Big]
{\bm 1}\Big( [\#(i_{k+1},\dots,i_M) \geq m ] = h \Big)
\right.
\\
\left.
-
Z^{c+1}_{k,\ell}
\Big[ i'_1,\dots,i'_M
\Big| j'_1,\dots,j'_M \Big]
\Big|_{x_1\leftrightarrow x_{\ell}}
{\bm 1}\Big( [\#(i_{k},\dots,i_M) \geq m+1 ] = h \Big)
\right\}.
\end{multline}
Now we use several of the properties of the coefficients $\xi_{\gamma}$, $\eta_{\gamma}$. First, in view of the factorization \eqref{eta-factor} of $\eta_{\gamma}$ and the conservation property \eqref{eta-conserve2}, we are able to replace the two indicator functions appearing in \eqref{d-l-points} by
\begin{equation}
\label{indicator-ii'}
\begin{split}
{\bm 1}\Big( [\#(i_{k+1},\dots,i_M) \geq m ] = h \Big)
&=
{\bm 1}\Big( [\#(i'_{k+1},\dots,i'_M) \geq m ] = h \Big),
\\
{\bm 1}\Big( [\#(i_{k},\dots,i_M) \geq m+1 ] = h \Big)
&=
{\bm 1}\Big( [\#(i'_{k},\dots,i'_M) \geq m+1 ] = h \Big).
\end{split}
\end{equation}
Hence the sums over $i_k$, $(i_{\alpha \in \bar{\mathcal{A}}})$, $(i_{\beta \in \bar{\mathcal{B}}})$ that appear on the right hand side of \eqref{d-l-points} localize over the coefficients $\eta_{\gamma}[i_1,\dots,i_M | i'_1,\dots,i'_M]$. Second, using again the factorized form \eqref{eta-factor} of these coefficients, we have
\begin{multline*}
\sum_{i_k}
\sum_{
\substack{(i_{\alpha \in \bar{\mathcal{A}}})
\\ \vspace{-0.1cm} \\
(i_{\beta \in \bar{\mathcal{B}}})
}}
\eta_{\gamma}
\Big[
i_1,\dots,i_M \Big| i'_1,\dots,i'_M
\Big]
=
\\
\sum_{(i_{\alpha \in \bar{\mathcal{A}}})}
\eta_{\gamma}
\Big[
i_1,\dots,i_{k-1} \Big| i'_1,\dots,i'_{k-1}
\Big]
\sum_{(i_{\beta \in \bar{\mathcal{B}}})}
\eta_{\gamma}
\Big[
i_{k+1},\dots,i_M \Big| i'_{k+1},\dots,i'_M
\Big]
\sum_{i_k}
{\bm 1}_{i_k=i'_k}.
\end{multline*}
These sums can be computed using the color-merging result of Proposition \ref{prop:merge-PQ}. We find that
\begin{multline}
\label{eta-collapse}
\sum_{i_k}
\sum_{
\substack{(i_{\alpha \in \bar{\mathcal{A}}})
\\ \vspace{-0.1cm} \\
(i_{\beta \in \bar{\mathcal{B}}})
}}
\eta_{\gamma}
\Big[
i_1,\dots,i_M \Big| i'_1,\dots,i'_M
\Big]
=
\\
\eta_{\gamma}
\Big[
a_1,\dots,a_{k-1} \Big| [i'_1]_m,\dots,[i'_{k-1}]_m
\Big]
\eta_{\gamma}
\Big[
a_{k+1},\dots,a_M \Big| [i'_{k+1}]^m,\dots,[i'_M]^m
\Big],
\end{multline}
where
\begin{align*}
a_p
=
\left\{
\begin{array}{ll}
i_p, & \qquad\quad p \in \mathcal{A} \cup \mathcal{B},
\\ \\
m, & \qquad\quad p \in \bar{\mathcal{A}},
\\ \\
0, & \qquad\quad p \in \bar{\mathcal{B}}.
\end{array}
\right.
\end{align*}
Substituting \eqref{indicator-ii'} and \eqref{eta-collapse} into \eqref{d-l-points}, we obtain
\begin{multline}
\label{d-l-points2}
\Upsilon^{c}_{k,\ell}
\Big|_{x_c = y_{\gamma}}
=
\sum_{(i'_1,\dots,i'_M)}
\eta_{\gamma}
\Big[
a_1,\dots,a_{k-1} \Big| [i'_1]_m,\dots,[i'_{k-1}]_m
\Big]
\eta_{\gamma}
\Big[
a_{k+1},\dots,a_M \Big| [i'_{k+1}]^m,\dots,[i'_M]^m
\Big]
\\
\times
\sum_{(j'_1,\dots,j'_M)}
\xi_{\gamma}
\Big[
j'_1,\dots,j'_M \Big| j_1,\dots,j_M
\Big]
\Bigg\{
Z^{c+1}_{k,\ell}
\Big[ i'_1,\dots,i'_M
\Big| j'_1,\dots,j'_M \Big]
{\bm 1}\Big( [\#(i'_{k+1},\dots,i'_M) \geq m ] = h \Big)
\\
-
Z^{c+1}_{k,\ell}
\Big[ i'_1,\dots,i'_M
\Big| j'_1,\dots,j'_M \Big]
\Big|_{x_1\leftrightarrow x_{\ell}}
{\bm 1}\Big( [\#(i'_k,\dots,i'_M) \geq m+1 ] = h \Big)
\Bigg\}.
\end{multline}
We may now apply Lemma \ref{lem:diff} (with all indices appearing in that result replaced by primed ones) to the right hand side of \eqref{d-l-points2}. In order to do this, we need to make some identification with the quantities appearing in the statement of Lemma \ref{lem:diff}.

We identify the terms in the second and third lines of \eqref{d-l-points2} with the quantity $\Delta_k[i'_1,\dots,i'_M]$ from Lemma \ref{lem:diff}; this is justified by noting that for any partitioning $\mathcal{A} \cup \bar{\mathcal{A}} = \{1,\dots,k-1\}$ and $\mathcal{B} \cup \bar{\mathcal{B}} = \{k+1,\dots,M\}$, one has
\begin{multline}
\label{d-l-points3}
\sum_{i'_k}
\sum_{
\substack{(i'_{\alpha \in \bar{\mathcal{A}}})
\\ \vspace{-0.1cm} \\
(i'_{\beta \in \bar{\mathcal{B}}})
}}
\Delta_k[i'_1,\dots,i'_M]
=
\sum_{(j'_1,\dots,j'_M)}
\xi_{\gamma}
\Big[
j'_1,\dots,j'_M \Big| j_1,\dots,j_M
\Big]
\\
\times
\sum_{i'_k}
\sum_{
\substack{(i'_{\alpha \in \bar{\mathcal{A}}})
\\ \vspace{-0.1cm} \\
(i'_{\beta \in \bar{\mathcal{B}}})
}}
\Bigg\{
Z^{c+1}_{k,\ell}
\Big[ i'_1,\dots,i'_M
\Big| j'_1,\dots,j'_M \Big]
{\bm 1}\Big( [\#(i'_{k+1},\dots,i'_M) \geq m ] = h \Big)
\\
-
Z^{c+1}_{k,\ell}
\Big[ i'_1,\dots,i'_M
\Big| j'_1,\dots,j'_M \Big]
\Big|_{x_1\leftrightarrow x_{\ell}}
{\bm 1}\Big( [\#(i'_k,\dots,i'_M) \geq m+1 ] = h \Big)
\Bigg\}.
\end{multline}
Factorization \eqref{xi-factor} and conservation properties \eqref{xi-conserve1}, \eqref{xi-conserve2} of the $\xi_{\gamma}$ coefficients then ensure that the non-zero terms in the sum over $(j'_1,\dots,j'_M)$ satisfy $j'_1,\dots,j'_{\ell-1} \in \{m+1,m+2,\dots,N\}$, $j'_{\ell} = m$, and $j'_{\ell+1},\dots,j'_M \in \{0,1,\dots,m-1\}$; the $(j'_1,\dots,j'_M)$ values are therefore constrained to the form \eqref{out-conditions}. We can thus write \eqref{d-l-points3} as
\begin{multline*}
\sum_{i'_k}
\sum_{
\substack{(i'_{\alpha \in \bar{\mathcal{A}}})
\\ \vspace{-0.1cm} \\
(i'_{\beta \in \bar{\mathcal{B}}})
}}
\Delta_k[i'_1,\dots,i'_M]
=
\sum_{(j'_1,\dots,j'_M)}
\xi_{\gamma}
\Big[
j'_1,\dots,j'_M \Big| j_1,\dots,j_M
\Big]
\times
\\
\Bigg\{
\Phi^{c+1}_{k,\ell}
\Big(
\mathcal{A},\mathcal{B};
(i'_{\alpha \in \mathcal{A}}),
(i'_{\beta \in \mathcal{B}});
(j'_1,\dots,j'_M)
\Big)
-
\Psi^{c+1}_{k,\ell}
\Big(
\mathcal{A},\mathcal{B};
(i'_{\alpha \in \mathcal{A}}),
(i'_{\beta \in \mathcal{B}});
(j'_1,\dots,j'_M)
\Big)
\Big|_{x_1 \leftrightarrow x_{\ell}}
\Bigg\}
=
0,
\end{multline*}
where the vanishing of the sum is ensured by the inductive assumption (for Z-shaped domains with $\ell-c-1$ internal rows). Hence the quantity $\Delta_k[i'_1,\dots,i'_M]$ satisfies the required sum-to-zero property \eqref{sum-to-0}.

Similarly, we identify the terms in the first line of \eqref{d-l-points2} with the coefficients $C(i'_1,\dots,i'_M)$ from Lemma \ref{lem:diff}; that is, we define
\begin{align*}
C(i'_1,\dots,i'_M)
=
\eta_{\gamma}
\Big[
a_1,\dots,a_{k-1} \Big| [i'_1]_m,\dots,[i'_{k-1}]_m
\Big]
\eta_{\gamma}
\Big[
a_{k+1},\dots,a_M \Big| [i'_{k+1}]^m,\dots,[i'_M]^m
\Big].
\end{align*}
These coefficients clearly obey the requirement \eqref{color-blind}. With these identifications, we may thus apply Lemma \ref{lem:diff} to the right hand side of \eqref{d-l-points2}, yielding
\begin{align*}
\Upsilon^{c}_{k,\ell}
\Big|_{x_c = y_{\gamma}}
=
0,
\qquad\quad
\forall\
\gamma \in \{\ell+1,\dots,d\}.
\end{align*}
Hence $\Upsilon^{c}_{k,\ell}=0$ at $d-\ell$ independent values of $x_c$, and we conclude that $\Upsilon^{c}_{k,\ell} = 0$ identically.

\medskip
\underline{\bf Case 2: $k+1 \in \mathcal{B}$.}

\medskip

In this situation, $i_{k+1}$ assumes a fixed value in the set $\{m+1,m+2,\dots,N\}$. Depending on which value it takes, all three outcomes $j_c > i_{k+1}$, $j_c = i_{k+1}$ or $j_c < i_{k+1}$ are possible. As we saw in the analysis of {\bf Case  1}, $j_c > i_{k+1}$ means that $\bar{\Upsilon}^{c}_{k,\ell}$ has degree $d-\ell-1$ in $x_c$, and we may proceed in exactly the same way to show that $\bar{\Upsilon}^{c}_{k,\ell}$ vanishes at the $d-\ell$ values $x_c \in \{y_{\ell+1},\dots,y_d\}$.

On the other hand, recalling Property (ii) of Proposition \ref{prop:properties}, for $j_c \leq i_{k+1}$ we conclude that $\bar{\Upsilon}^{c}_{k,\ell}$ has degree $d-\ell$ in $x_c$; we therefore need one additional interpolating point. This extra point is either $x_c=0$ or $x_c \rightarrow \infty$, supplied to us by the Properties (iv) and (v) of Proposition \ref{prop:properties}. When $j_c < i_{k+1}$, from Property (iv) we see directly that $\Upsilon^{c}_{k,\ell} |_{x_c=0} = 0$. When $j_c = i_{k+1}$, from Property (v) we see that
\begin{multline}
\label{eq-439}
\lim_{x_c \rightarrow \infty}
\Upsilon^{c}_{k,\ell}
=
\sum_{i_k}
\sum_{
\substack{(i_{\alpha \in \bar{\mathcal{A}}})
\\ \vspace{-0.1cm} \\
(i_{\beta \in \bar{\mathcal{B}}})
}}
q^{[\#(i_1,\dots,i_k) > j_c]-[\#(j_1,\dots,j_{c-1}) > j_c]}
\\
\times
\Big\{
Z^{c}_{k,\ell-1}
\Big[ i_1,\dots,\widehat{i_{k+1}},\dots,i_M
\Big| j_1,\dots,\widehat{j_c},\dots,j_M \Big]
{\bm 1}\Big( [\#(i_{k+1},\dots,i_M) \geq m ] = h \Big)
\\
-
Z^{c}_{k,\ell-1}
\Big[ i_1,\dots,\widehat{i_{k+1}},\dots,i_M
\Big| j_1,\dots,\widehat{j_c},\dots,j_M \Big]
\Big|_{x_1\leftrightarrow x_{\ell-1}}
{\bm 1}\Big( [\#(i_{k},\dots,i_M) \geq m+1 ] = h \Big)
\Big\}.
\end{multline}
It turns out that the $q$-dependent factor which stands in the summand of \eqref{eq-439} is actually a constant with respect to the summation. To demonstrate this, we note that
\begin{align}
\label{count>}
[\#(i_1,\dots,i_k) > j_c]
=
[\#(j_1,\dots,j_M) > j_c]
-
[\#(i_{k+1},\dots,i_M) > j_c],
\end{align}
by color-conservation through the original Z-shaped domain. Furthermore, since $i_{\beta} \sumin \{0,1,\dots,m\}$ for all $\beta \in \bar{\mathcal{B}}$ and $j_{\ell+1},\dots,j_M \fixin \{0,1,\dots,m-1\}$, none of these colors can exceed the value $j_c$ (because $j_c \geq m+1$); we may therefore exclude them from the count in \eqref{count>}, yielding
\begin{align}
\label{count>2}
[\#(i_1,\dots,i_k) > j_c]
=
[\#(j_1,\dots,j_{\ell-1}) > j_c]
-
[\#(i_{\beta \in \mathcal{B}}) > j_c],
\end{align}
which is now clearly independent of the summation being taken in \eqref{eq-439}. We conclude that
\begin{multline}
\label{eq-444}
\lim_{x_c \rightarrow \infty}
\Upsilon^{c}_{k,\ell}
=
q^{[\#(j_{c+1},\dots,j_{\ell-1}) > j_c]-[\#(i_{\beta \in \mathcal{B}}) > j_c]}
\\
\times
\sum_{i_k}
\sum_{
\substack{(i_{\alpha \in \bar{\mathcal{A}}})
\\ \vspace{-0.1cm} \\
(i_{\beta \in \bar{\mathcal{B}}})
}}
\Big\{
Z^{c}_{k,\ell-1}
\Big[ i_1,\dots,\widehat{i_{k+1}},\dots,i_M
\Big| j_1,\dots,\widehat{j_c},\dots,j_M \Big]
{\bm 1}\Big( [\#(i_{k+1},\dots,i_M) \geq m ] = h \Big)
\\
-
Z^{c}_{k,\ell-1}
\Big[ i_1,\dots,\widehat{i_{k+1}},\dots,i_M
\Big| j_1,\dots,\widehat{j_c},\dots,j_M \Big]
\Big|_{x_1\leftrightarrow x_{\ell-1}}
{\bm 1}\Big( [\#(i_{k},\dots,i_M) \geq m+1 ] = h \Big)
\Big\}.
\end{multline}
Color $i_{k+1}$ only appears in the indicator functions in \eqref{eq-444}, and since $i_{k+1} \fixin \{m+1,m+2,\dots,N\}$, it contributes equally to the two height parameters appearing in that expression. We deduce that
\begin{multline*}
\lim_{x_c \rightarrow \infty}
\Upsilon^{c}_{k,\ell}
=
q^{[\#(j_{c+1},\dots,j_{\ell-1}) > j_c]-[\#(i_{\beta \in \mathcal{B}}) > j_c]}
\\
\times
\sum_{i_k}
\sum_{
\substack{(i_{\alpha \in \bar{\mathcal{A}}})
\\ \vspace{-0.1cm} \\
(i_{\beta \in \bar{\mathcal{B}}})
}}
\Big\{
Z^{c}_{k,\ell-1}
\Big[ i_1,\dots,\widehat{i_{k+1}},\dots,i_M
\Big| j_1,\dots,\widehat{j_c},\dots,j_M \Big]
{\bm 1}\Big( [\#(i_{k+2},\dots,i_M) \geq m ] = h-1 \Big)
\\
-
Z^{c}_{k,\ell-1}
\Big[ i_1,\dots,\widehat{i_{k+1}},\dots,i_M
\Big| j_1,\dots,\widehat{j_c},\dots,j_M \Big]
\Big|_{x_1\leftrightarrow x_{\ell-1}}
{\bm 1}\Big( [\#(i_k,i_{k+2},\dots,i_M) \geq m+1 ] = h-1 \Big)
\Big\},
\end{multline*}
or equivalently,
\begin{align*}
\lim_{x_c \rightarrow \infty}
\Upsilon^{c}_{k,\ell}
&=
q^{[\#(j_{c+1},\dots,j_{\ell-1}) > j_c]-[\#(i_{\beta \in \mathcal{B}}) > j_c]}
\times
\Big\{
\Phi^{c}_{k,\ell-1}
\Big(
\mathcal{A},\mathcal{B}';
(i_{\alpha \in \mathcal{A}}),
(i_{\beta \in \mathcal{B}'});
( j_1,\dots,\widehat{j_c},\dots,j_M)
\Big)
\\
&-
\Psi^{c}_{k,\ell-1}
\Big(
\mathcal{A},\mathcal{B}';
(i_{\alpha \in \mathcal{A}}),
(i_{\beta \in \mathcal{B}'});
( j_1,\dots,\widehat{j_c},\dots,j_M)
\Big)
\Big|_{x_1 \leftrightarrow x_{\ell-1}}
\Big\},
\end{align*}
where $\mathcal{B}' = \mathcal{B} \backslash \{k+1\}$ and with $h$ replaced by $h-1$. The final quantity vanishes by the inductive assumption (about Z-shaped domains with $\ell-c-1$ internal rows), and therefore $\lim_{x_c \rightarrow \infty} \Upsilon^{c}_{k,\ell} = 0$. Hence $\Upsilon^{c}_{k,\ell} = 0$ at $d-\ell+1$ independent values of $x_c$, and we conclude that $\Upsilon^{c}_{k,\ell} = 0$ identically.

This concludes the proof that \eqref{z-shaped-claim} holds in the case of $\ell-c$ internal rows, and therefore generically, by induction on the number of internal rows.
\end{proof}

\subsection{Part three: proof for generic down-right domains}

We conclude the proof of Theorem \ref{theorema-egr} by upgrading from Z-shaped to generic down-right domains. Generalizing the statement turns out to be relatively straightforward; the key is to recognise an underlying Z-shaped domain embedded within a generic down-right domain, as illustrated in Figure \ref{fig:join-domains}. We are going to show that the distinction between $\Phi_{P/Q}$ and $\Psi_{P/Q}$ can be attributed to the changes inside this Z-shaped domain.

\begin{figure}
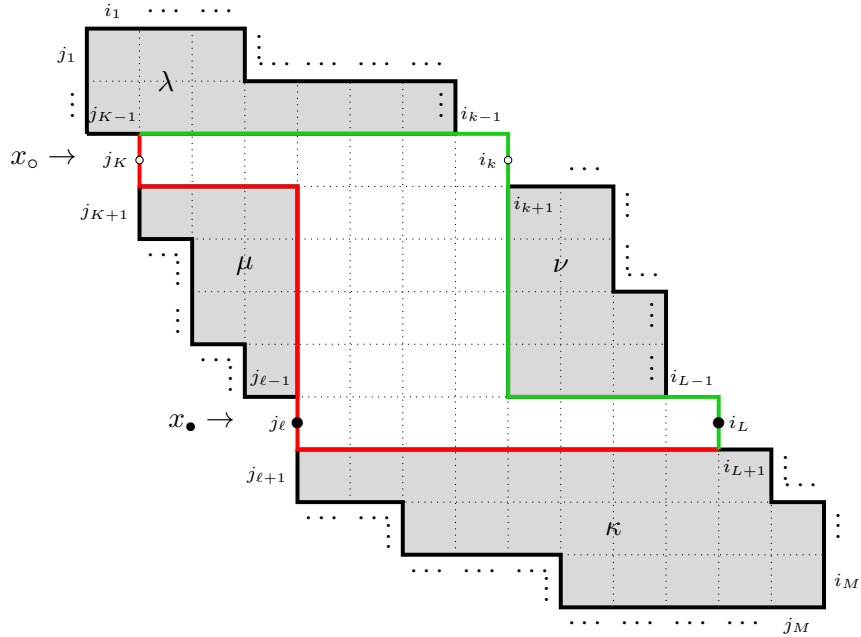

\begin{center}
\tikz{0.7}{
\filldraw[draw=black,line width=1.5pt,fill=lgray]
(7,0) -- (9,0) -- (9,-2) -- (10,-2) -- (10,-4) -- (7,-4) -- (7,0);
\filldraw[draw=black,line width=1.5pt,fill=lgray]
(0,0) -- (0,-1) -- (1,-1) -- (1,-3) -- (2,-3) -- (2,-4) -- (3,-4) -- (3,0) -- (0,0);
\filldraw[draw=black,line width=1.5pt,fill=lgray]
(3,-5) -- (3,-6) -- (5,-6) -- (5,-7) -- (8,-7) -- (8,-8) -- (13,-8) -- (13,-6) -- (12,-6) -- (12,-5) -- (3,-5);
\filldraw[draw=black,line width=1.5pt,fill=lgray]
(-1,1) -- (-1,3) -- (2,3) -- (2,2) -- (6,2) -- (6,1) -- (-1,1);
\draw[dotted] (-1,2) -- (2,2);
\draw[dotted] (0,1) -- (6,1);
\draw[dotted] (0,0) -- (9,0);
\draw[dotted] (0,-1) -- (9,-1);
\draw[dotted] (1,-2) -- (9,-2);
\draw[dotted] (1,-3) -- (10,-3);
\draw[dotted] (2,-4) -- (10,-4);
\draw[dotted] (3,-5) -- (11,-5);
\draw[dotted] (3,-6) -- (12,-6);
\draw[dotted] (5,-7) -- (13,-7);
\draw[dotted] (0,1) -- (0,3);
\draw[dotted] (1,-1) -- (1,3);
\draw[dotted] (2,-3) -- (2,2);
\draw[dotted] (3,-4) -- (3,2);
\draw[dotted] (4,-6) -- (4,2);
\draw[dotted] (5,-6) -- (5,2);
\draw[dotted] (6,-7) -- (6,1);
\draw[dotted] (7,-7) -- (7,0);
\draw[dotted] (8,-7) -- (8,0);
\draw[dotted] (9,-8) -- (9,-2);
\draw[dotted] (10,-8) -- (10,-3);
\draw[dotted] (11,-8) -- (11,-5);
\draw[dotted] (12,-8) -- (12,-6);
\draw[line width=1.5pt,red] (0,1) -- (0,0) -- (3,0) -- (3,-5) -- (11,-5);
\draw[line width=1.5pt,green] (0,1) -- (7,1) -- (7,-4) -- (11,-4) -- (11,-5);
\node at (0.5,2) {$\lambda$};
\node at (2,-1.5) {$\mu$};
\node at (8,-1.5) {$\nu$};
\node at (9,-6.5) {$\kappa$};
\node[left] at (-1,2.5) {\tiny $j_1$};
\node[left] at (-1,1.7) {$\vdots$};
\node[above] at (-0.5,1) {\tiny $j_{K-1}$};
\node[left] at (-1,0.5) {$x_{\circ} \rightarrow$};
\node[left] at (0,0.5) {\tiny $j_K$}; \filldraw[draw=black,fill=white] (0,0.5) circle (2pt);
\node[left] at (0,-0.5) {\tiny $j_{K+1}$};
\node[below] at (0.5,-1) {$\cdots$};
\node[left] at (1,-1.5) {$\vdots$};
\node[left] at (1,-2.4) {$\vdots$};
\node[below] at (1.5,-3) {$\cdots$};
\node[left] at (2,-3.5) {$\vdots$};
\node[above] at (2.5,-4) {\tiny $j_{\ell-1}$};
\node[left] at (2,-4.5) {$x_{\bullet} \rightarrow$};
\node[left] at (3,-4.5) {\tiny $j_{\ell}$};
\node at (3,-4.5) {$\bullet$};
\node[left] at (3,-5.5) {\tiny $j_{\ell+1}$};
\node[below] at (3.5,-6) {$\cdots$};
\node[below] at (4.5,-6) {$\cdots$};
\node[left] at (5,-6.5) {$\vdots$};
\node[below] at (5.5,-7) {$\cdots$};
\node[below] at (6.5,-7) {$\cdots$};
\node[below] at (7.5,-7) {$\cdots$};
\node[left] at (8,-7.5) {$\vdots$};
\node[below] at (8.5,-8) {$\cdots$};
\node[below] at (9.5,-8) {$\cdots$};
\node[below] at (10.5,-8) {$\cdots$};
\node[below] at (11.5,-8) {$\cdots$};
\node[below] at (12.5,-8) {\tiny $j_M$};
\node[above] at (-0.5,3) {\tiny $i_1$};
\node[above] at (0.5,3) {$\cdots$};
\node[above] at (1.5,3) {$\cdots$};
\node[right] at (2,2.8) {$\vdots$};
\node[above] at (2.6,2) {$\cdots$};
\node[above] at (3.5,2) {$\cdots$};
\node[above] at (4.5,2) {$\cdots$};
\node[above] at (5.5,2) {$\cdots$};
\node[left] at (6,1.7) {$\vdots$};
\node[above] at (6.5,1) {\tiny $i_{k-1}$};
\node[left] at (7,0.5) {\tiny $i_k$}; \filldraw[draw=black,fill=white] (7,0.5) circle (2pt);
\node[below] at (7.5,0) {\tiny $i_{k+1}$};
\node[above] at (8.5,0) {$\cdots$};
\node[right] at (9,-0.2) {$\vdots$};
\node[right] at (9,-1.2) {$\vdots$};
\node[above] at (9.6,-2) {$\cdots$};
\node[left] at (10,-2.3) {$\vdots$};
\node[left] at (10,-3.3) {$\vdots$};
\node[above] at (10.5,-4) {\tiny $i_{L-1}$};
\node[right] at (11,-4.5) {\tiny$i_L$}; \node at (11,-4.5) {$\bullet$};
\node[below] at (11.5,-5) {\tiny $i_{L+1}$};
\node[right] at (12,-5.2) {$\vdots$};
\node[above] at (12.6,-6) {$\cdots$};
\node[right] at (13,-6.3) {$\vdots$};
\node[right] at (13,-7.5) {\tiny $i_M$};
}
\end{center}
\caption{A generic down-right domain can always be deconstructed in terms of a Z-shaped domain and four other down-right domains $\lambda,\mu,\nu,\kappa$ (shown in grey). Starting from a generic down-right domain, one begins by identifying the subset of boxes that form the unique Z-shaped domain that (i) takes the $x_{\circ}$ and $x_{\bullet}$-bearing rows as its ``outer'' rows, and (ii) completely saturates those rows. After identifying this underlying Z-shaped domain, the remaining boxes are then split into four disjoint down-right domains. Importantly, none of the subdomains $\lambda,\mu,\nu,\kappa$ depend on $x_{\circ}$ or $x_{\bullet}$.}
\label{fig:join-domains}
\end{figure}

Consider the partition function
$Z_{P/Q} \equiv Z_{P/Q}[i_1,\dots,i_M | j_1,\dots,j_M]$, inherited from the down right domain $P-Q$, where $P,Q$ are down-right paths of length $M$. Label the outgoing edges of the domain by colors $i_1,\dots,i_M$ as usual, where the $k$-th of these, $i_k$, is assigned to a down step of $P$. Assume that the opposing edge on $Q$ carries the label $j_K$. Label the incoming edges of the domain by colors $j_1,\dots,j_M$, where the $\ell$-th of these, $j_{\ell}$, is assigned to a down step of $Q$. Assume that the opposing edge on $P$ carries the label $i_L$. The two distinguished horizontal rapidities of our domain are thus $x_{\circ} = x_K$ and $x_{\bullet} = x_{\ell}$. As usual, the incoming colors $(j_1,\dots,j_M)$ are chosen according to the conditions \eqref{in-conditions}, while the outgoing colors $(i_1,\dots,i_M)$ are fixed/summed according to the rules \eqref{out-conditions}.

Proceeding as in Figure \ref{fig:join-domains}, we then subdivide into five smaller partition functions. We do this by extracting the largest possible Z-shaped subdomain which takes the $x_{\circ}$ and $x_{\bullet}$ carrying rows as its ``outer'' rows; this then naturally isolates four disjoint down-right subdomains (labelled $\lambda,\mu,\nu,\kappa$). We end up with the following, brute-force expansion of $Z_{P/Q}$ over these five sub-partition functions:
\begin{align}
\label{brute-force}
Z_{P/Q}
=
\sum_{\substack{\text{primed}\\ \text{indices}}}
&
X_{\lambda}\Big[ i_1,\dots,i_{k-1}\Big| j_1,\dots,j_{K-1},i'_K,\dots,i'_{k-1}\Big]
\\
\nonumber
\times
&
X_{\mu}\Big[j'_{K+1},\dots,j'_{\ell-1}\Big| j_{K+1},\dots,j_{\ell-1} \Big]
X_{\nu}\Big[ i_{k+1},\dots,i_{L-1}\Big| i'_{k+1},\dots,i'_{L-1} \Big]
\\
\nonumber
\times
&
X_{\kappa}
\Big[ j'_{\ell+1},\dots,j'_L,i_{L+1},\dots,i_M
\Big| j_{\ell+1},\dots,j_M\Big]
\\
\nonumber
\times
&
Z\Big[ i'_K,\dots,i'_{k-1},i_k,i'_{k+1},\dots,i'_{L-1},i_L
\Big| j_K,j'_{K+1},\dots,j'_{\ell-1},j_{\ell},j'_{\ell+1},\dots,j'_L\Big],
\end{align}
where the summation is performed over all primed indices that parametrize the colors along the boundaries of the cut-out subdomains and the Z-shaped one.

\subsubsection{Analysis of $\Phi_{P/Q}$}

We begin by considering $\Phi_{P/Q}$, which is given by
\begin{align}
\Phi_{P/Q}
=
\sum_{i_k}
\sum_{
\substack{(i_{\alpha \in \bar{\mathcal{A}}})
\\ \vspace{-0.1cm} \\
(i_{\beta \in \bar{\mathcal{B}}})
}}
\sum_{\substack{\text{primed}\\ \text{indices}}}
&
{\bm 1}\Big( [\#(i_{k+1},\dots,i_M) \geq m ] = h \Big)
X_{\lambda}\Big[ i_1,\dots,i_{k-1}\Big| j_1,\dots,j_{K-1},i'_K,\dots,i'_{k-1}\Big]
\nonumber
\\
\times
&
X_{\mu}\Big[j'_{K+1},\dots,j'_{\ell-1}\Big| j_{K+1},\dots,j_{\ell-1} \Big]
X_{\nu}\Big[ i_{k+1},\dots,i_{L-1}\Big| i'_{k+1},\dots,i'_{L-1} \Big]
\nonumber
\\
\times
&
X_{\kappa}
\Big[ j'_{\ell+1},\dots,j'_L,i_{L+1},\dots,i_M
\Big| j_{\ell+1},\dots,j_M\Big]
\label{phi1}
\\
\times
&
Z\Big[ i'_K,\dots,i'_{k-1},i_k,i'_{k+1},\dots,i'_{L-1},i_L
\Big| j_K,j'_{K+1},\dots,j'_{\ell-1},j_{\ell},j'_{\ell+1},\dots,j'_L\Big],
\nonumber
\end{align}
where we have introduced summation over $i_k$ and $(i_{\alpha \in \bar{\mathcal{A}}})$, $(i_{\beta \in \bar{\mathcal{B}}})$, subject to the constraints \eqref{out-conditions}. As we will now see, it is possible to apply the color-merging result of Proposition \ref{prop:merge-PQ} to two of the pieces in \eqref{phi1}; namely, $X_{\lambda}$ and $X_{\nu}$.

Starting with the indices $i_{\alpha}$, $\alpha \in \{1,\dots,k-1\}$, recall that they either satisfy $i_{\alpha} \fixin \{0,1,\dots,m-1\}$ if $\alpha \in \mathcal{A}$, or $i_{\alpha} \sumin \{m,m+1,\dots,N\}$ if $\alpha \in \bar{\mathcal{A}}$. This leads to a color-merged version of $X_{\lambda}$:
\begin{align}
\nonumber
\sum_{
(i_{\alpha \in \bar{\mathcal{A}}})
}
X_{\lambda}\Big[ i_1,\dots,i_{k-1} &\Big| j_1,\dots,j_{K-1},i'_K,\dots,i'_{k-1}\Big]
=
X_{\lambda}\Big[ a_1,\dots,a_{k-1}\Big|
[j_1]_m,\dots,[j_{K-1}]_m,[i'_K]_m,\dots,[i'_{k-1}]_m\Big]
\\
&=
X_{\lambda}\Big[ a_1,\dots,a_{k-1}\Big| m,\dots,m,[i'_K]_m,\dots,[i'_{k-1}]_m\Big],
\label{collapse1}
\end{align}
where we have defined
\begin{align}
\label{a}
a_{\alpha} =
\left\{
\begin{array}{cc}
i_{\alpha}, & \quad \alpha \in \mathcal{A},
\\ \\
m, & \quad \alpha \in \bar{\mathcal{A}},
\end{array}
\right.
\end{align}
and noted that since $j_1,\dots,j_{K-1} \fixin \{m+1,m+2,\dots,N\}$, we must have
$[j_1]_m = \cdots = [j_{K-1}]_m = m$.

Next, given that $j_{\ell+1},\dots,j_M \fixin \{0,1,\dots,m-1\}$ and making use of the color-conservation of
\begin{align*}
X_{\kappa}[ j'_{\ell+1},\dots,j'_L,i_{L+1},\dots,i_M | j_{\ell+1},\dots,j_M ],
\end{align*}
we see that necessarily $i_{L+1},\dots,i_M \sumin \{0,1,\dots,m-1\}$ (which means that $L+1,\dots,M \in \bar{\mathcal{B}}$). This means that none of the colors $i_{L+1},\dots,i_M$ can contribute to the value of the height function
$\mathcal{H}^{\ges m}(P;k+1)$. Combining this with the color-conservation of
$X_{\nu}[i_{k+1},\dots,i_{L-1} | i'_{k+1},\dots,i'_{L-1}]$, we deduce that
\begin{align}
{\bm 1}\Big( [\#(i_{k+1},\dots,i_M) \geq m ] = h \Big)
&=
{\bm 1}\Big( [\#(i_{k+1},\dots,i_L) \geq m ] = h \Big)
=
{\bm 1}\Big( [\#(i'_{k+1},\dots,i'_{L-1},i_L) \geq m ] = h \Big).
\label{ii'}
\end{align}
Equation \eqref{ii'} turns out to be important in obtaining the color-merged version of $X_{\nu}$, as we will now show. Turning to the indices $i_{\beta}$, $\beta \in \{k+1,\dots,L-1\}$, let us recall that they can either satisfy $i_{\beta} \fixin \{m+1,m+2,\dots,N\}$ if $\beta \in \mathcal{B}$, or $i_{\beta} \sumin \{0,1,\dots,m\}$ if $\beta \in \bar{\mathcal{B}}$. Writing $\breve{\mathcal{B}} = \bar{\mathcal{B}} \cap \{k+1,\dots,L-1\}$, we make use of \eqref{ii'} and Proposition \ref{prop:merge-PQ} to deduce that
\begin{multline}
\label{collapse2}
\sum_{
(i_{\beta \in \breve{\mathcal{B}}})
}
{\bm 1}\Big( [\#(i_{k+1},\dots,i_M) \geq m ] = h \Big)
X_{\nu}\Big[ i_{k+1},\dots,i_{L-1}\Big| i'_{k+1},\dots,i'_{L-1} \Big]
\\
=
{\bm 1}\Big( [\#(i'_{k+1},\dots,i'_{L-1},i_L) \geq m ] = h \Big)
X_{\nu}\Big[ b_{k+1},\dots,b_{L-1}\Big| [i'_{k+1}]^m,\dots,[i'_{L-1}]^m \Big]
\end{multline}
where we have defined
\begin{align}
\label{b}
b_{\beta} =
\left\{
\begin{array}{cc}
i_{\beta}, & \quad \beta \in \mathcal{B},
\\ \\
0, & \quad \beta \in \bar{\mathcal{B}}.
\end{array}
\right.
\end{align}
We now combine our two color-merged results \eqref{collapse1} and \eqref{collapse2} in equation \eqref{phi1}:
\begin{align}
\label{phi2}
\Phi_{P/Q}
=
\sum_{i_k}
\sum_{
(i_{\beta \in \bar{\mathcal{D}}})
}
\sum_{\substack{\text{primed}\\ \text{indices}}}
&
X_{\lambda}\Big[ a_1,\dots,a_{k-1}\Big| m,\dots,m,[i'_K]_m,\dots,[i'_{k-1}]_m\Big]
\\
\times
&
X_{\mu}\Big[j'_{K+1},\dots,j'_{\ell-1}\Big| j_{K+1},\dots,j_{\ell-1} \Big]
\nonumber
X_{\nu}\Big[ b_{k+1},\dots,b_{L-1}\Big| [i'_{k+1}]^m,\dots,[i'_{L-1}]^m \Big]
\nonumber
\\
\times
&
X_{\kappa}
\Big[ j'_{\ell+1},\dots,j'_L,i_{L+1},\dots,i_M
\Big| j_{\ell+1},\dots,j_M\Big]
{\bm 1}\Big( [\#(i'_{k+1},\dots,i'_{L-1},i_L) \geq m ] = h \Big)
\nonumber
\\
\times
&
Z\Big[ i'_K,\dots,i'_{k-1},i_k,i'_{k+1},\dots,i'_{L-1},i_L
\Big| j_K,j'_{K+1},\dots,j'_{\ell-1},j_{\ell},j'_{\ell+1},\dots,j'_L\Big],
\nonumber
\end{align}
where we have defined $\bar{\mathcal{D}} = \{L+1,\dots,M\}$ in the case $L \in \mathcal{B}$, and $\bar{\mathcal{D}} = \{L,L+1,\dots,M\}$ in the case $L \in \bar{\mathcal{B}}$. Crucially, we note that in this final equation, the only dependence on $x_{\bullet}$ and $x_{\circ}$ is via the Z-shaped domain.

\subsubsection{Analysis of $\Psi_{P/Q}$}

Now we perform a similar analysis on $\Psi_{P/Q}$, which is given by
\begin{align}
\Psi_{P/Q}
=
\sum_{i_k}
\sum_{
\substack{(i_{\alpha \in \bar{\mathcal{A}}})
\\ \vspace{-0.1cm} \\
(i_{\beta \in \bar{\mathcal{B}}})
}}
\sum_{\substack{\text{primed}\\ \text{indices}}}
&
{\bm 1}\Big( [\#(i_k,\dots,i_M) \geq m+1 ] = h \Big)
X_{\lambda}\Big[ i_1,\dots,i_{k-1}\Big| j_1,\dots,j_{K-1},i'_K,\dots,i'_{k-1}\Big]
\nonumber
\\
\times
&
X_{\mu}\Big[j'_{K+1},\dots,j'_{\ell-1}\Big| j_{K+1},\dots,j_{\ell-1} \Big]
X_{\nu}\Big[ i_{k+1},\dots,i_{L-1}\Big| i'_{k+1},\dots,i'_{L-1} \Big]
\nonumber
\\
\times
&
X_{\kappa}
\Big[ j'_{\ell+1},\dots,j'_L,i_{L+1},\dots,i_M
\Big| j_{\ell+1},\dots,j_M\Big]
\label{psi1}
\\
\times
&
Z\Big[ i'_K,\dots,i'_{k-1},i_k,i'_{k+1},\dots,i'_{L-1},i_L
\Big| j_K,j'_{K+1},\dots,j'_{\ell-1},j_{\ell},j'_{\ell+1},\dots,j'_L\Big]
\nonumber
\end{align}
where we have introduced summation over $i_k$ and $(i_{\alpha \in \bar{\mathcal{A}}})$, $(i_{\beta \in \bar{\mathcal{B}}})$, subject to the constraints \eqref{out-conditions}. Not much changes in the analysis, except that the height function whose value we condition is now $\mathcal{H}^{\ges m+1}(P;k)$. This means that \eqref{collapse1}, \eqref{a} still hold as stated; on the other hand, the height function relation \eqref{ii'} gets replaced with
\begin{align}
\nonumber
{\bm 1}\Big( [\#(i_{k},\dots,i_M) \geq m+1 ] = h \Big)
&=
{\bm 1}\Big( [\#(i_{k},\dots,i_L) \geq m+1 ] = h \Big)
\\
&=
{\bm 1}\Big( [\#(i_k,i'_{k+1},\dots,i'_{L-1},i_L) \geq m+1 ] = h \Big),
\label{ii'2}
\end{align}
meaning that in place of the color-merged relation \eqref{collapse2}, we now have
\begin{multline}
\label{collapse3}
\sum_{
(i_{\beta \in \breve{\mathcal{B}}})
}
{\bm 1}\Big( [\#(i_{k},\dots,i_M) \geq m+1 ] = h \Big)
X_{\nu}\Big[ i_{k+1},\dots,i_{L-1}\Big| i'_{k+1},\dots,i'_{L-1} \Big]
\\
=
{\bm 1}\Big( [\#(i_k,i'_{k+1},\dots,i'_{L-1},i_L) \geq m+1 ] = h \Big)
X_{\nu}\Big[ b_{k+1},\dots,b_{L-1}\Big| [i'_{k+1}]^m,\dots,[i'_{L-1}]^m \Big]
\end{multline}
with $\breve{\mathcal{B}} = \bar{\mathcal{B}} \cap \{k+1,\dots,L-1\}$ as previously, and $b_{k+1},\dots,b_{L-1}$ as defined in \eqref{b}. We then combine the color-merged results \eqref{collapse1} and \eqref{collapse3} in equation \eqref{psi1}:
\begin{align}
\label{psi2}
\Psi_{P/Q}
=
\sum_{i_k}
\sum_{
(i_{\beta \in \bar{\mathcal{D}}})
}
\sum_{\substack{\text{primed}\\ \text{indices}}}
&
X_{\lambda}\Big[ a_1,\dots,a_{k-1}\Big| m,\dots,m,[i'_K]_m,\dots,[i'_{k-1}]_m\Big]
\\
\times
&
X_{\mu}\Big[j'_{K+1},\dots,j'_{\ell-1}\Big| j_{K+1},\dots,j_{\ell-1} \Big]
X_{\nu}\Big[ b_{k+1},\dots,b_{L-1}\Big| [i'_{k+1}]^m,\dots,[i'_{L-1}]^m \Big]
\nonumber
\\
\times
&
X_{\kappa}
\Big[ j'_{\ell+1},\dots,j'_L,i_{L+1},\dots,i_M
\Big| j_{\ell+1},\dots,j_M\Big]
\nonumber
{\bm 1}\Big( [\#(i_k,i'_{k+1},\dots,i'_{L-1},i_L) \geq m+1 ] = h \Big)
\\
\times
&
Z\Big[ i'_K,\dots,i'_{k-1},i_k,i'_{k+1},\dots,i'_{L-1},i_L
\Big| j_K,j'_{K+1},\dots,j'_{\ell-1},j_{\ell},j'_{\ell+1},\dots,j'_L\Big],
\nonumber
\end{align}
with the same definition of $\bar{\mathcal{D}}$ as in equation \eqref{phi2}. Once again, $x_{\bullet}$ and $x_{\circ}$ appear only via the final Z-shaped domain.

\subsubsection{Equality of $\Phi_{P/Q}$ and $\Psi_{P/Q}$} To conclude, we compute
\begin{align*}
\Phi_{P/Q} - \Psi_{P/Q} \Big|_{x_{\circ} \leftrightarrow x_{\bullet}}
\end{align*}
using the formulae \eqref{phi2} and \eqref{psi2}. We find that (after a change of summation indices) we can write
\begin{align}
\label{final-quantity}
\Phi_{P/Q} - \Psi_{P/Q} \Big|_{x_{\circ} \leftrightarrow x_{\bullet}}
=
&
\sum_{i_K'',\dots,i_L''}
\sum_{j_K'',\dots,j_L''}
C(i_K'',\dots,i_L'')
D(j_K'',\dots,j_L'')
\\
\nonumber
&
\times
\left\{
{\bm 1}\Big( [\#(i''_{k+1},\dots,i''_L) \geq m ] = h \Big)
\right.
\\
\nonumber
&
\times
Z\Big[ i''_K,\dots,i''_{k-1},i''_k,i''_{k+1},\dots,i''_L
\Big| j''_K,\dots,j''_{\ell-1},j''_{\ell},j''_{\ell+1},\dots,j''_L\Big]
\\
\nonumber
&
-
{\bm 1}\Big( [\#(i''_k,\dots,i''_L) \geq m+1 ] = h \Big)
\\
\nonumber
&
\times
\left.
Z\Big[ i''_K,\dots,i''_{k-1},i''_k,i''_{k+1},\dots,i''_L
\Big|j''_K,\dots,j''_{\ell-1},j''_{\ell},j''_{\ell+1},\dots,j''_L
\Big]_{x_{\circ} \leftrightarrow x_{\bullet}}
\right\},
\end{align}
where the coefficients $C(i_K'',\dots,i_L'')$ are given by
\begin{multline*}
C(i_K'',\dots,i_L'')
=
X_{\lambda}\Big[ a_1,\dots,a_{k-1}
\Big| m,\dots,m,[i''_K]_m,\dots,[i''_{k-1}]_m\Big]
\\
\times
X_{\nu}\Big[ b_{k+1},\dots,b_{L-1}
\Big| [i''_{k+1}]^m,\dots,[i''_{L-1}]^m \Big]
\left({\bm 1}_{i''_L \leq m}\right)
\end{multline*}
if $L \in \bar{\mathcal{B}}$, and by
\begin{multline*}
C(i_K'',\dots,i_L'')
=
X_{\lambda}\Big[ a_1,\dots,a_{k-1}
\Big| m,\dots,m,[i''_K]_m,\dots,[i''_{k-1}]_m\Big]
\\
\times
X_{\nu}\Big[ b_{k+1},\dots,b_{L-1}
\Big| [i''_{k+1}]^m,\dots,[i''_{L-1}]^m \Big]
\left({\bm 1}_{i''_L = i_L}\right)
\end{multline*}
if $L \in \mathcal{B}$. The coefficients $D(j_K'',\dots,j_L'')$ are given by
\begin{multline*}
D(j_K'',\dots,j_L'')
=
\left({\bm 1}_{j''_K=j_K}\right)
X_{\mu}
\Big[j''_{K+1},\dots,j''_{\ell-1}
\Big| j_{K+1},\dots,j_{\ell-1} \Big]
\\
\times
\left({\bm 1}_{j''_{\ell}=m}\right)
\sum_{i_{L+1},\dots,i_M =0}^{m-1}
X_{\kappa}
\Big[ j''_{\ell+1},\dots,j''_L,i_{L+1},\dots,i_M
\Big| j_{\ell+1},\dots,j_M\Big].
\end{multline*}
The precise form of these coefficients is not of essential importance. What chiefly concerns us is that they satisfy
\begin{align*}
C(i_K'',\dots,i_{k-1}'',i_k'',i_{k+1}'',\dots,i_L'')
=
C([i_K'']_m,\dots,[i_{k-1}'']_m,
0,
[i_{k+1}'']^m,\dots,[i_L'']^m)
\end{align*}
for all $i_K'',\dots,i_L'' \geq 0$, and
\begin{align}
\label{D-coeff}
D(j_K'',\dots,j_{\ell-1}'',j_\ell'',j_{\ell+1}'',\dots,j_L'')
=
0,
\qquad
\text{if either}
\qquad
\left\{
\begin{array}{ll}
j_K'',\dots,j_{\ell-1}'' \leq m,
\\
\\
j''_{\ell} \not= m,
\\
\\
j''_{\ell+1},\dots,j''_L \geq m.
\end{array}
\right.
\end{align}
We may therefore invoke Lemma \ref{lem:diff} with
\begin{align*}
\Delta_k[i''_K,\dots,i''_L]
=
&
\sum_{j_K'',\dots,j_L''}
D(j_K'',\dots,j_L'')
\left\{
{\bm 1}\Big( [\#(i''_{k+1},\dots,i''_L) \geq m ] = h \Big)
\right.
\\
&
\times
Z\Big[ i''_K,\dots,i''_{k-1},i''_k,i''_{k+1},\dots,i''_L
\Big| j''_K,\dots,j''_{\ell-1},j''_{\ell},j''_{\ell+1},\dots,j''_L\Big]
\\
&
-
{\bm 1}\Big( [\#(i''_k,\dots,i''_L) \geq m+1 ] = h \Big)
\\
&
\times
\left.
Z\Big[ i''_K,\dots,i''_{k-1},i''_k,i''_{k+1},\dots,i''_L
\Big|j''_K,\dots,j''_{\ell-1},j''_{\ell},j''_{\ell+1},\dots,j''_L
\Big]_{x_{\circ} \leftrightarrow x_{\bullet}}
\right\}.
\end{align*}
The fact that this choice obeys the sum-to-zero requirements \eqref{sum-to-0} is immediate from \eqref{D-coeff} (which ensures that the incoming colors of the Z-shaped domain fall into the category \eqref{in-conditions}, for which we proved all our statements) and Theorem \ref{thm:Z}. We may thus employ Lemma \ref{lem:diff} to conclude that the quantity \eqref{final-quantity} is identically zero. This completes the proof of Theorem \ref{theorema-egr} for arbitrary down-right domains.

\section{Higher spin colored six-vertex model}

\label{Section_higher_spin}

\subsection{Finite spin model}


The inhomogeneous six-vertex model can be turned into its higher spin version by a procedure known as (stochastic) fusion. The basic idea is to take an array of vertices lying at the intersections of adjacent $L$ rows and adjacent $M$ columns, and to replace it by a single vertex with the same collections of incoming and outgoing colors. Before explaining the procedure in more detail, let us write down the resulting weight of the \emph{fused} vertex (cf.\ Figure \ref{Fig_higher_vertex}).

There are two groups of colored paths on the incoming edges. A collection $\A=(A_1,\dots,A_N)$ enters from below, which means that there are $A_1$ paths of color $1$, $A_2$ paths of color $2$,\dots, $A_N$ paths of color $N$. We are not keeping track of paths of color $0$ explicitly, but we assume $|\A|=A_1+\dots+A_N\le M$, which corresponds to the fact that the model arose from fusing $M$ columns with paths of colors from $\{0,1,\dots,N\}$. A second collection of colored paths $\B=(B_1,\dots,B_N)$ enters from the left, and this time $|\B|\le L$.

Similarly, there are two groups of colored paths on the outgoing edges: $\C$ exists vertically and $\D$ exits horizontally. We use the following \emph{mnemonic rule} in the notations: the colors on four edges adjacent to a vertex are always listed in the clockwise order, starting from the bottom edge. Projecting on the alphabet, we get precisely the $\A$, $\B$, $\C$, $\D$ notation.

With coordinate-wise addition operation, the conservation law says
\begin{equation}
\label{eq_conseravation_law}
 \A+\B=\C+\D.
\end{equation}
In addition, there are exactly $M$ outgoing paths in the vertical direction and $L$ outgoing paths in the horizontal direction. Since we do not keep track of paths of color $0$, this implies  $|\C|=C_1+\dots+C_N\le M$ and $|\D|\le L$.

\begin{figure}[t]
\begin{center}
{\scalebox{0.75}{\includegraphics{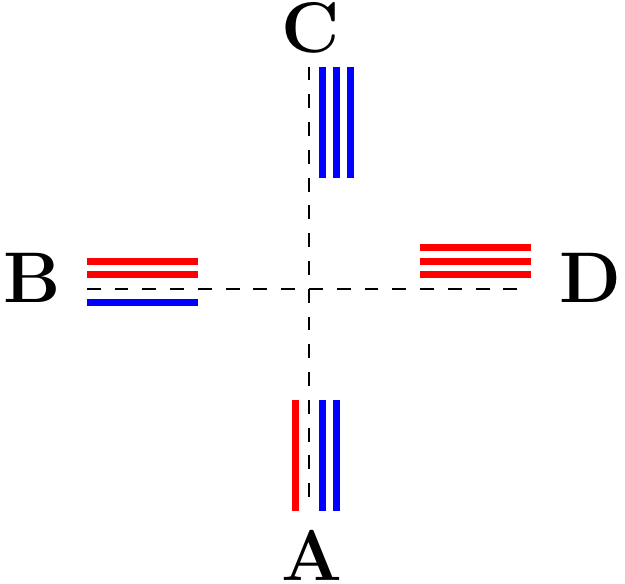}}}
 \caption{A possible vertex for the higher spin colored model with $N=2$. Here $\A=(2,1)$, $\B=(1,2)$, $\C=(3,0)$, $\D=(0,3)$.
 \label{Fig_higher_vertex}}
\end{center}
\end{figure}

The weight of a vertex of type $(\A, \B; \C, \D)$ satisfying \eqref{eq_conseravation_law} depends on two (generally speaking, complex) parameters $z$ and $q$. Following \cite{KMMO}, \cite{BM}, it is given by
\begin{multline}\label{eq_higher_spin_weight}
 W_{L,M}(z,q;  \A, \B; \C, \D)= z^{|\D|-|\B|} q^{|\A|L-|\D|M}
 \\ \times \sum_{P} \Phi(\C-P, \C+\D-P; q^{L-M} z, q^{-M} z) \Phi(P,\B; q^{-L}/z, q^{-L}),
\end{multline}
where the summation goes over $P=(P_1,\dots,P_N)$ with $0\le P_i \le \min(B_i,C_i)$, and
\begin{equation}
\label{eq_Phi}
 \Phi(\lambda,\mu; u,v)=\frac{(u;q)_{|\lambda|} (v/u;q)_{|\mu|-|\lambda|}}{(v;q)_{|\mu|}} \left(\frac{v}{u}\right)^{|\lambda|} q^{\sum_{i<j} (\mu_i-\lambda_i) \lambda_j} \prod_{i\ge 1} {\mu_i\choose \lambda_i}_q.
\end{equation}
We use the $q$-Binomial coefficient defined for $a\ge b\ge 0$ through
$$
 {a\choose b}_{q}=\frac{(1-q^a)(1-q^{a-1})\cdots (1-q^{a-b+1})}{(1-q)(1-q^2)\cdots (1-q^b)}=\frac{(q;q)_{a}}{(q;q)_b (q;q)_{a-b}}.
$$
Note that the number of colors $N$ does not enter directly into the formula \eqref{eq_higher_spin_weight}, and as long as $|\A|$, $|\B|$, $|\C|$, $|\D|$ are finite, we can assume without loss of generality that $N=\infty$.
\begin{remark}
\label{Remark_color_merging}
 An important feature of \eqref{eq_higher_spin_weight} is that if we make paths of two neighboring colors $i$ and $i+1$ indistinguishable (combine them into a single color), then the formula remains the same. More precisely,
 \begin{equation}
 \label{eq_color_projection}
  \sum_{\begin{smallmatrix} C_i=0,1,\dots,\tilde C_i, \\ C_{i+1}=\tilde C_i-C_i,\\ \D=\A+\B-\C, \end{smallmatrix}}  W_{L,M}(z,q;  \A, \B; \C, \D)=  W_{L,M}(z,q;  \tilde \A, \tilde \B; \tilde \C, \tilde \D),
 \end{equation}
 where
 $$
  \tilde \A=(A_1,A_2,\dots,A_{i-1}, A_i+A_{i+1}, A_{i+2},\dots,A_N),
 $$
 and similarly for $\tilde \B$, $\tilde \C$, $\tilde \D$. A direct proof of this fact can  be obtained by applying summation identities for the $q$-Binomial coefficients, cf.\ Lemma \ref{Lemma_q_Binomial_split}.  Another way to deduce \eqref{eq_color_projection} is by combining the following two observations:
 \begin{itemize}
  \item For the colored six-vertex model this is  immediate, cf.\ Section \ref{Section_merging}. Indeed, by the definition, whenever paths of colors $i$ and $j$ enter into a vertex, the stochastic rule of their evolution depends only on the order of $i$ and $j$, but not on the exact values of $i$ and $j$.
  \item \eqref{eq_higher_spin_weight} is obtained from the colored six-vertex model by  fusion, which commutes with combination of two colors into one.
 \end{itemize}
\end{remark}

We would like to consider the model in the quadrant with rows $1, 2, 3, \dots$, and columns $0,1,2,\dots$. Later on, column $0$ will become special, while the rest of the system will be set to be homogeneous. The boundary conditions are as follows: along the bottom of the quadrant there are no entering paths of positive colors, i.e., all incoming paths are of color $0$. Along the left boundary of the quadrant, all $L$ left paths entering in row $i$ are of color $i$, see Figure \ref{Fig_fused_bc}.  The integral parameter $L$, as well as $0<q<1$, are assumed to be fixed throughout the system. On the other hand, we allow the number of paths on vertical edges, $M$, to vary across columns. We thus choose a sequence of positive integers $M_0, M_1, M_2,\dots$ and set $M=M_x$ for the vertices in column $x$. Similarly, we fix spectral parameters $z_0, z_1, z_2,\dots$, corresponding to the columns of the quadrant.

\begin{figure}[t]
\begin{center}
{\scalebox{0.7}{\includegraphics{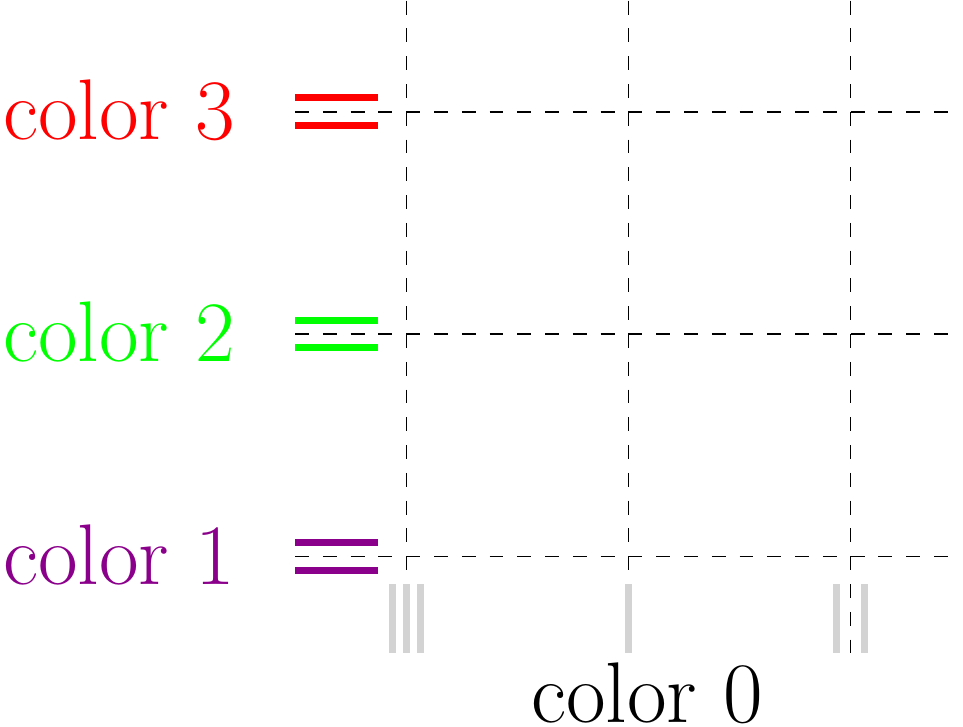}}}
 \caption{Boundary conditions for the fused vertex model in the quadrant and with $L=2$, $M_0=3$, $M_1=1$, $M_2=2$.
 \label{Fig_fused_bc}}
\end{center}
\end{figure}

 The model is sampled sequentially, starting from the vertex at $(0,1)$, then proceeding to $(0,2)$ and $(1,1)$, then to $(0,3)$, $(1,2)$, $(2,1)$, etc. At each step we use \eqref{eq_higher_spin_weight} to sample the colors of the outgoing paths given the colors of the incoming paths. For any choice of spectral parameters the probabilities sum up to $1$, see \cite{BM}, \cite[(C.1.5)]{BW},  and we tacitly assume that $z_0, z_1,\dots$ are chosen so that all probabilities are non-negative.

As for the colored six-vertex model, we use height functions to describe the configurations. For each color $i\ge 1$, the \emph{height function} $\H^{\ges i}(x,y)$ describes colors $\geqslant i$. It is defined by setting $\H^{\ges i}(-\frac12,\frac12)=0$ and
$$
\H^{\ges i}(x,y+1)-\H^{\ges i}(x,y)=\text{ number of paths of colors } \geqslant i\text{ at } (x,y+\tfrac12),
$$
$$
\H^{\ges i}(x+1,y)-\H^{\ges i}(x,y)=\text{ number of paths of colors } \geqslant i\text{ at } (x+\tfrac12,y).
$$
In other words, $\H^{\ges i}(x,y)$ counts the total number of paths of colors $\geqslant i$  below the point $(x,y)$.

The following theorem explains how the higher spin version can be obtained from the ordinary six-vertex model.

\begin{theorem} \label{Theorem_fusion}
 Consider the inhomogeneous colored six-vertex model in the quadrant $\mathbb Z_{>0}\times \mathbb Z_{>0}$ with the following specialization of the parameters:
 \begin{itemize}
 \item The row rapidites $v_1, v_2, v_3,\dots$ are periodic with period $L$ and are given by
 $$
  1,q,q^2,\dots,q^{L-1},1,q,q^2,\dots q^{L-1},\dots.
 $$
 \item The incoming paths along the left boundary are split into adjacent groups of $L$ paths of the same color. From bottom to top it reads:
 $$
  \underbrace{1,\dots,1}_L, \underbrace{2,\dots,2}_L, \dots.
 $$
 \item The column rapidities $u_1, u_2, u_3,\dots$ form subsequent geometric sequences of lengths $M_0$, $M_1$, \dots and are given by
     $$
      z_0, q z_0,\dots, q^{M_0-1} z_0, z_1, q z_1,\dots,q^{M_1-1} z_1, z_2, qz_2,\dots, q^{M_2-1} z_2,\dots
     $$
 \item Incoming paths along the bottom boundary all have color $0$.
 \end{itemize}
 Let $\H^{\ges i}_{6v}(x,y)$, $i=1,2,\dots$ denote the height functions of the resulting vertex model and let $\H^{\ges i}_{\text{fused}}(x,y)$, $i=1,2,\dots$ denote the height functions of the higher spin model defined above the theorem. Then we have an identity of finite-dimensional distributions for the height functions:
 $$
  \H^{\ges i}_{\text{fused}}\left(-\tfrac{1}{2}+x,\tfrac{1}{2}+y\right)= \H^{\ges i}_{6v}\left(\tfrac{1}{2}+(M_0+M_1+\dots+M_{x-1}),\tfrac{1}{2}+L y\right),\qquad i\in \mathbb Z_{>0}; x,y\in\mathbb Z_{\ge 0}.
 $$
\end{theorem}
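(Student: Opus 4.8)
The plan is to prove Theorem \ref{Theorem_fusion} by the standard stochastic fusion argument: one checks that the fused vertex weights \eqref{eq_higher_spin_weight} arise from blocks of $L\times M$ fundamental vertices of Figure \ref{fund-vert} with the indicated geometric specialization of rapidities and symmetrized boundary data, and then one observes that this identification is compatible with the sequential (Markovian) sampling that defines both models. Throughout, ``color-blindness within a group'' (Remark \ref{Remark_color_merging}, equation \eqref{eq_color_projection}, and its fundamental-model counterpart in Section \ref{Section_merging}) is the mechanism that makes the reduction exact rather than merely in distribution.

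First I would recall the single-vertex fusion statement. Consider an $L\times M$ rectangular block of fundamental vertices in which the $L$ horizontal lines carry rapidities $v, qv, \dots, q^{L-1}v$ (from bottom to top) and the $M$ vertical lines carry rapidities $u, qu, \dots, q^{M-1}u$ (from left to right), so that every internal vertex has spectral parameter equal to a ratio of the form $q^{a}(u/v)$. The classical fusion computation of Kulish--Reshetikhin--Sklyanin \cite{KRS}, in the stochastic normalization used in \cite{BM,CP,Kuan}, shows two things: (i) if the $L$ incoming left edges of the block are required to carry a color pattern that is a permutation of a fixed multiset $\B$ with $|\B|\le L$, and likewise the $M$ incoming bottom edges a permutation of $\A$ with $|\A|\le M$, then summing the block partition function over all such incoming permutations (with appropriate $q$-weights) and over all outgoing permutations compatible with fixed outgoing multisets $\C$ (vertical) and $\D$ (horizontal) produces exactly $W_{L,M}(z,q;\A,\B;\C,\D)$ with $z=u/v$; and (ii) the resulting weights are stochastic, $\sum_{\C,\D}W_{L,M}=1$, consistent with \cite{BM} and \cite[(C.1.5)]{BW}. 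I would cite the fusion construction and the explicit formula \eqref{eq_higher_spin_weight}--\eqref{eq_Phi} rather than rederive it; the only point requiring care is matching normalization conventions (the powers $z^{|\D|-|\B|}q^{|\A|L-|\D|M}$ and the choice of which diagonal of rapidities is ``attached'' to which line), which is a bookkeeping check against Figure \ref{fund-vert}.

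Next I would globalize. Lay out the inhomogeneous six-vertex model in the quadrant with the row rapidities $v_y$ periodic of period $L$ equal to $1,q,\dots,q^{L-1}$ repeated, the column rapidities $u_x$ given by the concatenated geometric runs $z_0,qz_0,\dots,q^{M_0-1}z_0,z_1,\dots$, and left boundary colors grouped as $\underbrace{1,\dots,1}_L,\underbrace{2,\dots,2}_L,\dots$ with color-$0$ bottom boundary. Group the horizontal lines into consecutive blocks of $L$ (the $y$-th block being physical rows $L(y-1)+1,\dots,Ly$) and the vertical lines into the blocks of sizes $M_0,M_1,\dots$ (the $x$-th block being columns $M_0+\dots+M_{x-1}+1,\dots,M_0+\dots+M_x$). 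Each resulting rectangular block of fundamental vertices is exactly of the type analyzed in the previous paragraph, with $u/v$ equal to $z_x/1=z_x$ after accounting for the shift by the row-offset inside the period (which is a power of $q$ absorbed into the definition of $W_{L,M}$). The left-boundary grouping guarantees that the colors entering the $y$-th row-block from the left are precisely $L$ copies of color $y$, i.e., a fixed multiset, so hypothesis (i) of the block statement holds at the left edge; propagating rightward, the incoming data on every internal block edge is automatically an (unordered) multiset as far as the fused model is concerned. The key structural observation is that the fused model's Markov step at $(x,y)$ --- sampling $(\C,\D)$ from $(\A,\B)$ via $W_{L,M_x}$ --- coincides, after summing out the internal edges and the order of colors within each group, with the joint law of the corresponding $L\times M_x$ block of fundamental vertices given its incoming multisets; here one uses that the fundamental-model dynamics depends on colors only through their relative order (Section \ref{Section_merging}), so the within-group order is a nuisance variable that can be integrated out without affecting anything downstream. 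Assembling these block-by-block identifications in the sequential order in which both models are sampled, and matching height functions --- the fused $\H^{\ges i}_{\text{fused}}(-\tfrac12+x,\tfrac12+y)$ counts paths of color $\ge i$ below the top-right corner of the $(x,y)$ block, which is the fundamental-model corner $(\tfrac12+(M_0+\dots+M_{x-1}),\tfrac12+Ly)$ --- yields the claimed equality of finite-dimensional distributions.

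The main obstacle I anticipate is the bookkeeping of the fused weight's normalization and the verification that the block-decomposition is genuinely compatible with the \emph{sequential} sampling of the six-vertex model: one must argue that sampling the fundamental model vertex-by-vertex in the up-right order and then coarse-graining gives the same joint law as sampling the fused model block-by-block, which requires that the internal edges of a block, once revealed, are irrelevant to the conditional law of later blocks. This follows from the Yang--Baxter integrability (the fused weights solve YBE, so the construction is consistent independent of the order in which one fuses) together with stochasticity, but spelling it out cleanly --- essentially a consistency/compatibility statement for the fusion functor in the stochastic setting, as in \cite{CP,Kuan} --- is the part that demands genuine care rather than routine computation. The color-merging identity \eqref{eq_color_projection}, proved either via $q$-binomial summation (Lemma \ref{Lemma_q_Binomial_split}) or via fusion commuting with color identification, is what makes the ``integrate out the within-group order'' step legitimate, so I would make sure that lemma is in place before invoking it.
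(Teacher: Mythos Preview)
Your block-by-block strategy matches the paper's, but you misidentify the mechanism that makes the gluing work. The tool you invoke, color-merging (Section~\ref{Section_merging}, Remark~\ref{Remark_color_merging}, identity~\eqref{eq_color_projection}), is about replacing several \emph{distinct} colors by a single one; it says nothing about the distribution of \emph{positions} of colored paths across a bundle of $L$ (or $M$) parallel edges, which is what fusion actually sums over. Your claim that ``the within-group order is a nuisance variable that can be integrated out without affecting anything downstream'' is not true in general: the conditional law of the outgoing multisets of a block given only the incoming multisets depends on the \emph{distribution} of the incoming ordering, and the fused weight $W_{L,M}$ is defined (see \eqref{M-fus}, \eqref{MN-fus}) via a specific $q^{\mathrm{inv}}$-weighted sum over incoming orderings, not an arbitrary one.

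The paper's proof closes this gap with the notion of \emph{$q$-exchangeability}: if the joint law of the incoming colors on the $L$ horizontal (and $M$ vertical) edges of a block is $q$-exchangeable --- meaning that swapping two adjacent edges multiplies the probability by $q^{\pm 1}$ according to whether the inversion count goes up or down --- then (i) the multiset-to-multiset transition is exactly $W_{L,M}$ (this is the summation identity proved in Section~\ref{Section_fusion}), and (ii) the outgoing law is again $q$-exchangeable, by \cite[Proposition~B.2.2]{BW} applied in both directions. Point (ii) is what lets you propagate from block to block; it is not a consequence of Yang--Baxter or stochasticity alone. At the quadrant boundary the incoming colors are deterministic (all $L$ copies of the same color on the left, all $0$'s on the bottom), hence trivially $q$-exchangeable, which seeds the induction. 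Your proposal should replace the appeals to color-merging and YBE with this $q$-exchangeability propagation argument.
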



\begin{proof}
We shall present the proof in the case of one-dimensional distributions, as the proof for joint distributions is the same.

The proof for one-dimensional distributions boils down to equating two partition functions; the first one in the unfused six-vertex model with weights of Figure \ref{fund-vert}, and the second one in the doubly-fused model with weighs \eqref{eq_higher_spin_weight}. By its very definition, we note that the distribution of
$\mathcal{H}^{\ges i}_{6v}$ can be computed as follows:
\begin{align}
\label{unfused-pf}
\mathbb{P}\Big[
\mathcal{H}^{\ges i}_{6v}
\left(\tfrac{1}{2}+(M_0+M_1+\dots+M_{x-1}),\tfrac{1}{2}+L y\right)
= h\Big]
=
\tikz{0.45}{
\foreach\y in {1,...,3}{
\draw[lgray,line width=1.5pt,->] (0,\y) -- (13,\y);
}
\foreach\y in {5,...,7}{
\draw[lgray,line width=1.5pt,->] (0,\y) -- (13,\y);
}
\foreach\y in {9,...,11}{
\draw[lgray,line width=1.5pt,->] (0,\y) -- (13,\y);
}
\foreach\x in {1,...,4}{
\draw[lgray,line width=1.5pt,->] (\x,0) -- (\x,12);
}
\foreach\x in {6,...,8}{
\draw[lgray,line width=1.5pt,->] (\x,0) -- (\x,12);
}
\foreach\x in {10,...,12}{
\draw[lgray,line width=1.5pt,->] (\x,0) -- (\x,12);
}
\node[left] at (0,1) {$1$}; \node[left] at (0,2) {$1$};  \node[left] at (0,3) {$1$};
\node[left] at (0,5) {$2$}; \node[left] at (0,6) {$2$};  \node[left] at (0,7) {$2$};
\node at (0,8.3) {$\vdots$};
\node[left] at (0,9) {$y$}; \node[left] at (0,10) {$y$};  \node[left] at (0,11) {$y$};
\node[right] at (13,1) {$\ell_1$};
\node[right] at (13,2) {$\ell_2$};
\node at (13.5,6.3) {$\vdots$};
\node[right] at (13,11) {$\ell_{yL}$};
\node[above] at (1,12) {$k_1$};
\node[above] at (2,12) {$k_2$};
\node[above] at (7,12) {$\cdots$};
\node[above] at (12,12) {$k_{\tilde{M}}$};
\node[below] at (1,0) {$0$}; \node[below] at (2,0) {$0$}; \node[below] at (3,0) {$0$}; \node[below] at (4,0) {$0$};
\node[below] at (6,0) {$0$}; \node[below] at (7,0) {$0$}; \node[below] at (8,0) {$0$};
\node at (9,0) {$\cdots$};
\node[below] at (10,0) {$0$}; \node[below] at (11,0) {$0$}; \node[below] at (12,0) {$0$};
}
\end{align}
where we have defined $\tilde{M} = M_0 + M_1 + \cdots + M_{x-1}$, and where the outgoing indices $(k_1,\dots,k_{\tilde{M}})$ and $(\ell_1,\dots,\ell_{yL})$ are summed over all colors, subject to the constraint $[\#(\ell_1,\dots,\ell_{yL}) \geq i] = h$. We recall that the row rapidities are periodic with period $L$ and set to a geometric progression in $q$ with base $1$; the column rapidities in the $j$-th cluster of vertical lines are given by $z_j,qz_j,\dots,q^{M_j-1} z_j$, where $0 \leq j \leq x-1$.

On the other hand, $\H^{\ges i}_{\text{fused}}$ can be computed as follows:
\begin{align}
\label{fused-pf}
\mathbb{P}\Big[
 \H^{\ges i}_{\text{fused}}\left(-\tfrac{1}{2}+x,\tfrac{1}{2}+y\right)
 =
 h
\Big]
=
\tikz{0.6}{
\foreach\x in {1,3,5}{
\draw[lgray,line width=4pt,->] (\x,0) -- (\x,6.5);
}
\foreach\y in {1,3,5}{
\draw[lgray,line width=4pt,->] (0,\y) -- (6.5,\y);
}
\node[left] at (0,1) {$L \cdot {\bm e}_1$};
\node[left] at (0,3) {$L \cdot {\bm e}_2$};
\node at (0,4.3) {$\vdots$};
\node[left] at (0,5) {$L \cdot {\bm e}_y$};
\node[right] at (6.5,1) {$\bm{L}^{(1)}$};
\node[right] at (6.5,3) {$\bm{L}^{(2)}$};
\node at (6.5,4.3) {$\vdots$};
\node[right] at (6.5,5) {$\bm{L}^{(y)}$};
\node[below] at (1,0) {${\bm 0}$};
\node[below] at (3,0) {${\bm 0}$};
\node at (4,0) {$\cdots$};
\node[below] at (5,0) {${\bm 0}$};
\node[above] at (1,6.5) {$\bm{K}^{(0)}$};
\node[above] at (3,6.5) {$\bm{K}^{(1)}$};
\node at (4,6.5) {$\cdots$};
\node[above] at (5.5,6.5) {$\bm{K}^{(x-1)}$};
}
\end{align}
where $\bm{e}_j$ denotes the $j$-th Euclidean unit vector, while each $\bm{L}^{(j)}$ is a composition whose total weight is at most $L$ and each $\bm{K}^{(j)}$ is a composition whose total weight is at most $M_j$. Here the outgoing compositions $\bm{K}^{(j)}$, $\bm{L}^{(j)}$ are assumed to be summed over all possible choices, such that the total number of colors of value $i$ or greater present in the compositions $\bm{L}^{(1)},\dots,\bm{L}^{(y)}$ is equal to $h$. The spectral parameter associated to each vertex in the $j$-th column of the lattice is $z_j$, $0 \leq j \leq x-1$.

We can now fuse each $M_x\times L$ block of vertices in \eqref{unfused-pf} into a single vertex in \eqref{fused-pf}. This is done sequentially in the order of growing $x$ and $y$ coordinates. So we take $M_x\times L$ block of vertices, fix some distribution of $M_x+L$ of colors of the incoming (from below and from the left) edges, and sample all the vertices in the block. As a result we get a new distribution on $M_x+L$ colors of the outgoing (to the right and up) edges. The procedure is now based on two ingredients:
\begin{itemize}
 \item Suppose that the distribution of colors of incoming $M_x+L$ edges is $q$--exchangeable, which means that if we interchange colors of two incoming from below edges, then the probability of such configuration of colors is multiplied by $q$ if we increase the number of inversions (from positions to color numbers) and is multiplied by $q^{-1}$ if we decrease the number of inversions; and similarly for the incoming from the left colors of edges. Then the distribution of colors of outgoing $M_x+L$ edges is also $q$--exchangeable. This is proven by applying the result of \cite[Proposition B.2.2]{BW} in both the horizontal and vertical directions.
 \item If the distribution of the incoming colors is $q$--exchangeable and we ignore the positions of edges of different colors, but only keep track of the number of incoming/outgoing edges for each color and vertical/horizontal directions, then the transition from incoming colors to outgoing colors is given by the higher spin weight \eqref{eq_higher_spin_weight}. This is a certain summation identity, which we prove in Section \ref{Section_fusion}.
\end{itemize}
Note that for our boundary conditions in the quadrant there is no way to exchange the positions of incoming colors on the border of $M_x\times L$ block in a non-trivial way. Hence, the deterministic distribution of incoming colors is $q$--exchangeable and we can proceed with sampling $M_x\times L$ blocks one by one, establishing $q$--exchangeability along the boundaries of the blocks and identifying with higher spin model on each step.
\end{proof}

\bigskip

We can now state the shift-invariance theorem for the higher spin model.
For two points $\mathcal U=(x^{\mathcal U},y^{\mathcal U})$, $\mathcal V=(x^{\mathcal V},y^{\mathcal V})$ in the quadrant, we write $\mathcal U\succeq \mathcal V$ if $x^{\mathcal U}\le x^{\mathcal V}$ and $y^{\mathcal U}\ge y^{\mathcal V}$. In other words, $\mathcal V$ is in the down--right direction from $\mathcal U$. Fix a collection of numbers $0\le k_1\le k_2\le \dots\le k_n$ and a collection of points in the quadrant $\{\mathcal U_i\}$.
\begin{theorem} \label{Theorem_6v_invariance_fused}
 In the above setting of the fused colored stochastic model in quadrant with vertex weights \eqref{eq_higher_spin_weight}, with $z$ and $M$ parameters depending on the column in $\{0,1,2,\dots\}$ (but not on the row) and with fixed $q$, choose an index $1\le \iota \le n$ and an integer $\Delta>0$. Set
 $$
  k_j'=\begin{cases} k_j,& j\ne \iota,\\ k_\iota+\Delta, & j=\iota,\end{cases} \qquad \qquad \mathcal U'_j=\begin{cases}\mathcal U_j, & j\ne \iota \\ \mathcal U_\iota + (0,\Delta), & j=\iota. \end{cases}
 $$
Suppose that
 $$
  0\le k_1\le k_2\le \dots\le k_n, \qquad 0\le k'_1\le k'_2\le \dots\le k'_n,
 $$
 $$\mathcal U_1,\dots,\mathcal U_{\iota-1} \succeq \mathcal U_\iota \succeq\mathcal U_{\iota+1},\dots, \mathcal U_n, \qquad \mathcal U'_1,\dots,\mathcal U'_{\iota-1} \succeq \mathcal U'_\iota \succeq \mathcal U'_{\iota+1},\dots, \mathcal U'_n.
 $$
 Then the distribution of the vector of the height functions
 $$
  \bigl(\H^{\ges k_1}(\mathcal U_1),\, \H^{\ges k_2}(\mathcal U_2),\, \dots, \H^{\ges k_n}(\mathcal U_n)\bigr)
 $$
 coincides with the distribution of the vector with shifted $\iota$-th coordinate
 $$
  \bigl(\H^{\ges k'_1}(\mathcal U'_1),\, \H^{\ges k'_2}(\mathcal U'_2),\, \dots, \H^{\ges k'_n}(\mathcal U'_n)\bigr).
 $$
\end{theorem}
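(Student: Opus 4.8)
The plan is to deduce Theorem~\ref{Theorem_6v_invariance_fused} from the shift-invariance of the unfused inhomogeneous colored six-vertex model, Theorem~\ref{Theorem_6v_main_text_ver}, via the fusion correspondence of Theorem~\ref{Theorem_fusion}. The key observation is that a height function in the fused model equals, on the nose, a height function in an unfused model in which the rows have been clustered into groups of $L$ with periodic rapidities $1,q,\dots,q^{L-1}$ and the columns into geometric clusters of lengths $M_0,M_1,\dots$ with bases $z_0,z_1,\dots$. So every statement about joint distributions of fused height functions translates to a statement about joint distributions of unfused height functions at the corresponding ``coarsened'' lattice points.

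The main steps, in order. First, I would apply Theorem~\ref{Theorem_fusion} (in its joint-distribution form, as asserted there) to identify the fused vector $(\H^{\ges k_1}_{\text{fused}}(\mathcal U_1),\dots,\H^{\ges k_n}_{\text{fused}}(\mathcal U_n))$ with the unfused vector $(\H^{\ges k_1}_{6v}(\tilde{\mathcal U}_1),\dots,\H^{\ges k_n}_{6v}(\tilde{\mathcal U}_n))$, where $\tilde{\mathcal U}_j$ is the image of $\mathcal U_j$ under $(x,y)\mapsto(\tfrac12+(M_0+\dots+M_{x-1}),\tfrac12+Ly)$. Second, I would check that the shift $\mathcal U_\iota\mapsto\mathcal U_\iota+(0,\Delta)$ in the fused picture corresponds, under this map, to the vertical shift $\tilde{\mathcal U}_\iota\mapsto\tilde{\mathcal U}_\iota+(0,L\Delta)$, i.e.\ a shift by $L\Delta$ lattice units — which is exactly $\Delta$ blocks of $L$ rows each. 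Third, and this is the crux, I would apply Theorem~\ref{Theorem_6v_main_text_ver} \emph{iteratively}: each single application moves the observation point up by one row and the color cutoff up by one, together with a swap of two row rapidities. Performing $L\Delta$ such applications moves $\tilde{\mathcal U}_\iota$ up by $L\Delta$ and the cutoff from $k_\iota$ (in the coarse labeling) through a sequence of intermediate values. The point is that after $L$ consecutive applications one full period of rapidities has been traversed, so the net effect of the rapidity swaps is trivial and the cutoff advances by exactly $1$ in the coarsened color count; iterating $\Delta$ times gives the cutoff $k_\iota+\Delta$ with no leftover rapidity permutation.

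The subtle point — and the step I expect to be the main obstacle — is verifying that the intermediate configurations arising during these $L\Delta$ single-row shifts all satisfy the ordering hypotheses of Theorem~\ref{Theorem_6v_main_text_ver}, and that the successive rapidity swaps genuinely compose to the identity. Concretely: after a partial sequence of shifts the ``point $\iota$'' sits at an intermediate row whose rapidity is some $q^r$, $0\le r\le L-1$, and the relevant cutoff is $k_\iota$ scaled into the $Ly$-indexed color count; one must confirm that for all $j\ne\iota$ we still have $\tilde{\mathcal U}_1,\dots,\tilde{\mathcal U}_{\iota-1}\succeq\tilde{\mathcal U}_\iota\succeq\tilde{\mathcal U}_{\iota+1},\dots$ (which follows because the hypotheses $\mathcal U_j'\succeq\mathcal U_\iota'$ and $\mathcal U_j\succeq\mathcal U_\iota$ in the fused model force strict separation in the $y$-coordinate for $j<\iota$, hence by at least $L$ rows after coarsening, so the intermediate rows stay below) and that the cutoff ordering $0\le \tilde k_1\le\dots\le\tilde k_n$ persists at each intermediate stage. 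Once this bookkeeping is in place, the $L\Delta$-fold composition of the rapidity swaps is a product of transpositions within each length-$L$ periodic block, and because the blocks are filled with the same geometric progression $1,q,\dots,q^{L-1}$ in every period, cycling through a whole block returns the rapidity assignment to its original state; thus the final unfused model — and hence, re-fusing via Theorem~\ref{Theorem_fusion}, the final fused model — has exactly the original $z,M,q$ parameters. Applying Theorem~\ref{Theorem_fusion} once more in reverse converts the resulting unfused identity back into the claimed fused identity, completing the proof. A secondary technical point worth isolating as a lemma is that the coarsening map is compatible with the $\succeq$ relation and with the color-cutoff inequalities; I would state and dispatch this first so the iteration argument reads cleanly.
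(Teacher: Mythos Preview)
Your approach matches the paper's: reduce to the unfused model via Theorem~\ref{Theorem_fusion}, iterate the single-row shift of Theorem~\ref{Theorem_6v_main_text_ver} $L$ times per unit of $\Delta$ (the paper first reduces to $\Delta=1$), observe that the rapidity swaps over a full length-$L$ period return the periodic rapidity sequence to itself, and re-fuse. The one step the paper makes explicit that you leave only implicit is the passage, via color-merging, from the grouped-color boundary of Theorem~\ref{Theorem_fusion} (rows $L(i-1)+1,\dots,Li$ all carry color $i$) to the rainbow boundary on which Theorem~\ref{Theorem_6v_main_text_ver} is actually stated, and back again; your remarks about ``coarse'' versus fine cutoff labeling show you have this in mind, and the step is routine once named.
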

\begin{remark}
  The chosen ordering of points guarantees that the segments $(0,k_j)-\mathcal U_j$ intersect with $(0,k_\iota)-\mathcal U_\iota$ for each $j\ne\iota$.
\end{remark}

\begin{remark}
 It is natural to ask whether one can make the statement of Theorem \ref{Theorem_6v_invariance_fused} inhomogeneous in the vertical direction in the sense that the parameter $L$ will not be fixed, but will vary with the row number. We do not know, as our proofs do not extend to such generality. However, computer simulations indicate that this should be the case.
\end{remark}
\begin{remark}
\label{Remark_analogy_binomial}
 If we restrict our attention to a rectangle, then the state space of  the fused colored stochastic model is finite, and, therefore, Theorem \ref{Theorem_6v_invariance_fused} is an identity between two finite sums.

 Let us draw a vague analogy here. If we think about the colored six-vertex model as being an analogue of a Bernoulli random variable, then the fused version is an analogue of a binomial random variable, as we obtain the fused model by taking several instances of the vertices of the six-vertex model and then ignoring a part of the data (cf.\ taking several Bernoulli random variables and looking only at their sum to get the binomial distribution). The Binomial distribution has an \emph{analytic continuation} in which the state space is no longer finite --- this is the negative binomial distribution. Similarly, in the following sections we are going to study analytic continuations of Theorem \ref{Theorem_6v_invariance_fused} leading to an infinite state space and culminating in Theorem \ref{Theorem_6v_invariance_infinite}.
\end{remark}

\begin{proof}[Proof of Theorem \ref{Theorem_6v_invariance_fused}]
 It suffices to prove the theorem for $\Delta=1$, which we do. Consider the colored six-vertex model appearing in Theorem \ref{Theorem_fusion}. It can be obtained from the six-vertex model with rainbow boundary condition $1,2,3,4,\dots$ along the left boundary of the quadrant (as in Theorems \ref{Theorem_6v_intro} and \ref{Theorem_6v_main_text_ver}) by merging the colors:
 $$
  i\mapsto \left\lfloor \frac{i-1}{L}\right\rfloor +1.
 $$
 Recall that merging the adjacent colors leads to the model of the same kind and with the same rapidities, cf.\ Section \ref{Section_merging} and Remark \ref{Remark_color_merging}. Hence, the distributional identity of Theorem \ref{Theorem_6v_invariance_fused} is obtained through the following five steps:
 \begin{enumerate}
  \item The vector  $\bigl(\H^{\ges k_1}(\mathcal U_1),\, \H^{\ges k_2}(\mathcal U_2),\, \dots, \H^{\ges k_n}(\mathcal U_n)\bigr)$ is replaced by a vector of height functions in the six-vertex model of Theorem \ref{Theorem_fusion}.
   \item The six-vertex model of Theorem \ref{Theorem_fusion} is identified with color merging of the six-vertex model with rainbow boundary condition.
   \item We apply Theorem \ref{Theorem_6v_intro} (or Theorem \ref{Theorem_6v_main_text_ver}) $L$ times to the latter. As a result, the observation point is shifted by $L$ in the vertical direction. Note that since $L$ was the length of the period for the row rapidities, after $L$ swaps of rapidities, we get the same periodic sequence of length $L$ geometric series.
   \item For the vector of height functions with shifted observation points, we again do color merging, arriving back at the six-vertex model of Theorem \ref{Theorem_fusion}.
   \item Applying Theorem \ref{Theorem_fusion} second time, we get $\bigl(\H^{\ges k'_1}(\mathcal U'_1),\, \H^{\ges k'_2}(\mathcal U'_2),\, \dots, \H^{\ges k'_n}(\mathcal U'_n)\bigr)$. \qedhere
 \end{enumerate}
\end{proof}

\subsection{Analytic continuation}

\label{Section_analytic_continuation}

In the setting of Theorem \ref{Theorem_6v_invariance_fused} the total number of colored paths on a horizontal/vertical lattice edge is bounded by $L$/$M$. It turns out that we can get rid of this restriction through an analytic continuation of the weight \eqref{eq_higher_spin_weight} in $q^L$ and $q^M$. This procedure, however, requires some care due to the boundary condition we use: densely packed collection of paths enters the quadrant from the left, which would not make sense if we naively put $L=\infty$.

\bigskip

We start from the analytic continuation in $M$. Note that the expression \eqref{eq_higher_spin_weight} is a rational function in $q^{-M}$. The restrictions $|\A|$, $|\C|\le M$ become irrelevant as long as $M$ is large enough, since our boundary conditions imply that a vertex at $(x,y)$ can have at most $y L$ paths of positive colors.

Hence, we can simply replace $q^{-M}$ by a complex number $\mathfrak m$ and get a formula for the weight:
 \begin{multline}\label{eq_higher_spin_weight_continued_1}
 W_{L,\infty,\mathfrak m}(z,q;  \A, \B; \C, \D)= z^{|\D|-|\B|} q^{|\A|L} \mathfrak m^{|\D|}
 \\ \times \sum_{P} \Phi(\C-P, \C+\D-P; \mathfrak m q^{L} z, \mathfrak m z) \Phi(P,\B; q^{-L}/z, q^{-L}),
\end{multline}
where $\infty$ in the subscript indicates that there are no restrictions on the number of vertical edges (of positive colors, with multiplicities, and we are only keeping track of those). The following statement is a direct analytic continuation of the identity of Theorem \ref{Theorem_6v_invariance_fused}, cf.\ Remark \ref{Remark_analogy_binomial}.
\begin{corollary}
\label{Corollary_higher_spin_inv}
 With weights \eqref{eq_higher_spin_weight_continued_1} and with parameter $\mathfrak m$ depeding on the column through an arbitrary sequence $\mathfrak m_0,\mathfrak m_1,\dots$, the statement of Theorem \ref{Theorem_6v_invariance_fused} remains valid.
\end{corollary}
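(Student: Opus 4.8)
The plan is to deduce Corollary \ref{Corollary_higher_spin_inv} from Theorem \ref{Theorem_6v_invariance_fused} purely by an analytic-continuation argument, treating the relevant probabilities as rational functions of the continued variable $\mathfrak m$ (i.e. of $q^{-M}$) in each column. First I would fix everything in Theorem \ref{Theorem_6v_invariance_fused} except the sequence of column parameters, and observe that the quantities being compared---the joint distribution of the vector $\bigl(\H^{\ges k_1}(\mathcal U_1),\dots,\H^{\ges k_n}(\mathcal U_n)\bigr)$, evaluated at any fixed tuple of integer values---are finite sums of products of vertex weights $W_{L,M_x}(z_x,q;\A,\B;\C,\D)$ over all admissible path configurations inside a large enough rectangle. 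Because of the boundary conditions, a vertex at $(x,y)$ can carry at most $yL$ paths of positive colors, so for a fixed observation window the configuration sum is finite and each term is, by \eqref{eq_higher_spin_weight}, a rational function of $q^{-M_x}$ for every column $x$; hence the whole joint distribution function is a rational function of the vector $(\mathfrak m_0,\mathfrak m_1,\dots)$ (only finitely many of these actually enter). Theorem \ref{Theorem_6v_invariance_fused} asserts that two such rational functions agree whenever $\mathfrak m_x=q^{-M_x}$ with $M_x\in\mathbb Z_{\ge 1}$ (and all probabilities nonnegative). Since $q\in(0,1)$, the set $\{q^{-M}:M\in\mathbb Z_{\ge 1}\}$ is an infinite subset of $\mathbb R$ accumulating at $+\infty$; a rational function of one variable vanishing on an infinite set vanishes identically, and by iterating column-by-column (or invoking that a rational function of several variables vanishing on a product of infinite sets is identically zero), the identity extends to all complex $\mathfrak m_x$ for which both sides are defined, i.e. away from the (finitely many) poles of the weights \eqref{eq_higher_spin_weight_continued_1}. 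This is exactly the content of Corollary \ref{Corollary_higher_spin_inv}.

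In carrying this out, the key steps in order would be: (1) set up the joint distribution as an explicit finite sum over path configurations in a window $[0,X]\times[1,Y]$ with $X,Y$ large enough to contain all the points $\mathcal U_i$, $\mathcal U'_i$ and their boundary cutoffs, and note that the window can be chosen the same for both sides; (2) invoke \eqref{eq_higher_spin_weight} to see that each $W_{L,M_x}$, after clearing denominators in $q^{-M_x}$, is a polynomial in $q^{-M_x}$, so that multiplying the claimed identity through by a suitable common denominator $\prod_x D_x(\mathfrak m_x)$ turns it into a polynomial identity $P(\mathfrak m_0,\mathfrak m_1,\dots)=0$; (3) check that the restrictions $|\A|,|\C|\le M$ become vacuous once $M\ge YL$, so the polynomial $P$ is genuinely the ``same'' expression for all large integer $M_x$ as for continued $\mathfrak m_x$, which is the point where the formula \eqref{eq_higher_spin_weight_continued_1} is literally the evaluation $q^{-M}\mapsto\mathfrak m$; (4) apply Theorem \ref{Theorem_6v_invariance_fused} to conclude $P=0$ on $\{q^{-M}:M\ge YL\}^{(\text{finitely many columns})}$, an infinite product set; (5) conclude $P\equiv 0$ by the vanishing principle for multivariate polynomials, and finally divide back by $\prod_x D_x(\mathfrak m_x)$ to recover the identity of distribution functions wherever the weights are finite. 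A short remark would confirm that the number of columns with $\mathfrak m_x\ne q^{-M_x}$ that matter is finite because configurations outside the window do not affect the observed height functions, so one only ever continues in finitely many variables at a time.

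The step I expect to be the main (though modest) obstacle is step (3): making precise that the ``restriction-free'' weight \eqref{eq_higher_spin_weight_continued_1} really is the analytic continuation of \eqref{eq_higher_spin_weight}, i.e. that the truncation conditions $0\le P_i\le\min(B_i,C_i)$ together with $|\A|,|\C|\le M$ do not interfere with viewing the sum as a single rational function of $q^{-M}$. One has to argue that for a fixed configuration the summation range over $P$ is determined by $\A,\B,\C,\D$ alone (not by $M$), and that the $q$-binomial coefficients and Pochhammer symbols $(\cdot;q)_{|\mu|}$ in \eqref{eq_Phi} are polynomials, respectively ratios of polynomials, in $q^{-M}$ with denominators that do not vanish at the continued values we care about---this is essentially the remark already made in the text that ``\eqref{eq_higher_spin_weight} is a rational function in $q^{-M}$'' and that the bound $yL$ makes the constraints irrelevant for large $M$. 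Once this is granted, everything else is the standard ``polynomial identity verified on a Zariski-dense set'' mechanism, and no new integrability input (no further Yang--Baxter or Lagrange interpolation) is needed beyond Theorem \ref{Theorem_6v_invariance_fused} itself. I would also flag, in a sentence, that nonnegativity of probabilities is used only to guarantee that Theorem \ref{Theorem_6v_invariance_fused} applies at the integer points $M_x$; it plays no role in the continued statement, which is a purely algebraic identity of rational functions.
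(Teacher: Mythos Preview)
Your proposal is correct and follows the same approach as the paper: the paper simply states that Corollary \ref{Corollary_higher_spin_inv} is ``a direct analytic continuation of the identity of Theorem \ref{Theorem_6v_invariance_fused}'' and refers to Remark \ref{Remark_analogy_binomial} (which notes that on a finite rectangle the state space is finite, so the identity is one between two finite sums). You have spelled out in detail exactly this analytic continuation argument---treating the joint distribution as a rational function of $(\mathfrak m_0,\mathfrak m_1,\dots)$, verifying the identity at the integer points $q^{-M_x}$, and invoking the standard vanishing principle---whereas the paper leaves all of this implicit in a single sentence.
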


The next step is to analytically continue in $q^L$. Let us start from the analysis of the $0$-th column, which is based on the following observation.
\begin{lemma} \label{Lemma_first_column}
 Take the weight of \eqref{eq_higher_spin_weight_continued_1}, assume that $\A$ has only colors no larger than $N$  and that $\B$ has $L$ paths of color $N$, i.e., $\B=(0,0,\dots,L)$. Then for each $0<q<1$ and $z \in\mathbb C$ we have
 \begin{multline}
 \label{eq_limit_first_column}
  \lim_{\mathfrak m \to 0} W_{L,\infty,\mathfrak m}(z/\mathfrak m,q;  \A, \B; \C, \D)\\=\begin{cases} \dfrac{(zq^L;q)_\infty  (q^{-L};q)_d}{(z;q)_\infty (q;q)_d} (zq^L)^d,& \D=(0,\dots,0,d),\, d\ge 0,\text{ and }  \A+\B=\C+\D,\\ 0,& \text{ otherwise.}\end{cases}
 \end{multline}
\end{lemma}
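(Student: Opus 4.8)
\textbf{Proof proposal for Lemma \ref{Lemma_first_column}.}

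The plan is to substitute $z \mapsto z/\mathfrak m$ directly into the explicit weight formula \eqref{eq_higher_spin_weight_continued_1} and carefully track the power of $\mathfrak m$ carried by each factor, then identify which configurations $(\C,\D)$ survive the limit $\mathfrak m\to 0$. First I would record that since $\B = (0,\dots,0,L)$, the summation variable $P = (P_1,\dots,P_N)$ in \eqref{eq_higher_spin_weight_continued_1} is forced to have $P_i = 0$ for $i < N$ and $0 \le P_N \le \min(L, C_N)$; thus the inner sum collapses to a one-parameter sum. The factor $\Phi(P,\B; q^{-L}/z, q^{-L})$ becomes $\Phi((0,\dots,0,P_N),(0,\dots,0,L); \mathfrak m q^{-L}/z, q^{-L})$ after the substitution, and using the definition \eqref{eq_Phi} this is a ratio of $q$-Pochhammer symbols times a $q$-binomial coefficient $\binom{L}{P_N}_q$, with an explicit power $(\mathfrak m q^{-L}/z)^{P_N}$ — wait, more carefully, $(v/u)^{|\lambda|}$ with $u = \mathfrak m q^{-L}/z$, $v = q^{-L}$ gives $(z/\mathfrak m)^{P_N}$, which blows up; this must be compensated by the prefactor $z^{|\D|-|\B|}\mathfrak m^{|\D|}$ with $z$ replaced by $z/\mathfrak m$, namely $(z/\mathfrak m)^{|\D|-L}\mathfrak m^{|\D|} = z^{|\D|-L}\mathfrak m^{L}$.

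Next I would do the bookkeeping of $\mathfrak m$-powers. After collecting the prefactor contribution $\mathfrak m^L$, the contribution $\mathfrak m^{-P_N}$ from the second $\Phi$, and the contributions from the first $\Phi\big(\C-P,\C+\D-P; \mathfrak m q^L z, \mathfrak m z\big)$ — whose $u,v$ arguments both carry a single power of $\mathfrak m$, so $(u;q)_{|\C-P|}$ contributes $\mathfrak m^{0}$ plus higher order, $(v/u;q)_{|\D|}$ contributes $\mathfrak m^0$, and $(v;q)_{|\C+\D-P|}$ likewise, while the power $(v/u)^{|\C-P|}$ is $\mathfrak m$-free — one finds the overall $\mathfrak m$-exponent of the $P_N$-term is $L - P_N$ up to the vanishing of $q$-Pochhammer factors $(u;q)_k = (1 + O(\mathfrak m))$, $(v;q)_k = (1 + O(\mathfrak m))$. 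The dominant term as $\mathfrak m\to 0$ is therefore $P_N = L$, which by $0 \le P_N \le \min(L,C_N)$ requires $C_N \ge L$, and then $\C + \D = \A + \B$ together with $|\A|$ bounded by $N$-colored content forces $\C - P$ to have all coordinates $\ge 0$ only if $\D$ is supported on color $N$ alone, i.e. $\D = (0,\dots,0,d)$; for any other shape of $\D$ the surviving power of $\mathfrak m$ is strictly positive and the limit is $0$. In the surviving case, setting $P_N = L$ and using $\lim_{\mathfrak m\to 0}(u;q)_k = 1$, $\lim_{\mathfrak m\to 0}(v/u;q)_k = \lim(1 - \mathfrak m^{-1}\cdot(\text{stuff}))$... here I must be more careful: $v/u = q^{-L}$ stays finite, so $(v/u;q)_{|\D|} = (q^{-L};q)_d$ survives, $(v;q)_{|\C+\D-P|} \to 1$, and $(q^{-L};q)_d / (q;q)_d$ together with the $q$-binomial from the second $\Phi$ (which is $\binom{L}{L}_q = 1$) and the residual power gives exactly the right-hand side of \eqref{eq_limit_first_column}. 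The leftover $(z q^L;q)_\infty/(z;q)_\infty$ factor appears from the residual $z$-dependent Pochhammer ratio combined with $q^{|\A|L}$ in the infinite limit; this is the one step where I expect to need the observation (already built into the model via our boundary conditions, and into \eqref{eq_higher_spin_weight_continued_1} being rational in $q^{-M}$) that makes the infinite products converge.

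The main obstacle will be the precise extraction of the leading $\mathfrak m$-coefficient: the factors $(u;q)_k$ and $(v;q)_k$ with $u,v \to 0$ each contribute $1 + O(\mathfrak m)$, but one must verify that no cancellation among the finitely many terms of the $P_N$-sum conspires to kill the would-be leading term, and that the sub-leading terms really are $O(\mathfrak m)$ uniformly. I would handle this by writing each $q$-Pochhammer as a product $\prod_{j=0}^{k-1}(1 - uq^j)$, expanding, and checking that the $P_N = L$ term has a nonzero $\mathfrak m^0$-coefficient equal to the claimed expression while all $P_N < L$ terms and all non-$N$-supported $\D$ contribute $\mathfrak m^{\ge 1}$. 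An alternative, cleaner route — which I would pursue if the direct expansion gets unwieldy — is to recognize $\lim_{\mathfrak m\to 0} W_{L,\infty,\mathfrak m}(z/\mathfrak m,q;\A,\B;\C,\D)$ as a known degeneration of the fused weight (the ``stochastic weight at a fully-packed boundary column''), for which the rank-one computation already appears in \cite[Section 12.3.4]{BW} and its color-merging extension follows from Remark \ref{Remark_color_merging}; matching normalizations then yields \eqref{eq_limit_first_column} without re-deriving the $q$-binomial identities by hand.
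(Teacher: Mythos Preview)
Your $\mathfrak m$-power bookkeeping is wrong, and the error kills the whole argument. After the substitution $z\mapsto z/\mathfrak m$ the arguments of the \emph{first} $\Phi$ become $\mathfrak m q^{L}(z/\mathfrak m)=q^{L}z$ and $\mathfrak m(z/\mathfrak m)=z$, so that factor carries no $\mathfrak m$ at all; more importantly, in the \emph{second} $\Phi$ you have accounted only for $(v/u)^{|\lambda|}=(z/\mathfrak m)^{P_N}$ but have overlooked the Pochhammer $(v/u;q)_{|\mu|-|\lambda|}=(z/\mathfrak m;q)_{L-P_N}$. That factor is \emph{not} $1+O(\mathfrak m)$: since $z/\mathfrak m\to\infty$ it behaves like $(-1)^{L-P_N}(z/\mathfrak m)^{L-P_N}q^{(L-P_N)(L-P_N-1)/2}$, contributing an additional $\mathfrak m^{-(L-P_N)}$. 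Combined with your $\mathfrak m^{L}$ from the prefactor and $\mathfrak m^{-P_N}$ from $(v/u)^{P_N}$, the net power of $\mathfrak m$ is
\[
L-P_N-(L-P_N)=0
\]
for \emph{every} $P_N$, not $L-P_N$. All terms of the $P_N$-sum survive the limit on equal footing, so there is no mechanism to isolate $P_N=L$, and with it your deduction that $\D$ must be supported on color $N$ collapses.

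What actually happens (and what the paper does) is that after the limit one is left with a genuine sum over $p=0,\dots,L$ whose summand involves $\Phi(\C-P,\C+\D-P;q^{L}z,z)$ times $(-1)^{L-p}q^{(L-p)(L-p-1)/2}\binom{L}{p}_q$. Evaluating this sum is the real work: for $L=1$ it is a short case check, but for $L>1$ one needs a ${}_3\phi_2$ transformation followed by $q$-Chu--Vandermonde (or the $\A\leftrightarrow\B$, $\C\leftrightarrow\D$ symmetry of the weights) to collapse it to the stated product. The vanishing for $\D$ not of the form $(0,\dots,0,d)$ is then deduced either from the $L=1$ computation lifted through fusion, or from stochasticity once the $\D=(0,\dots,0,d)$ weights are known to sum to $1$. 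Your proposed route does not reach any of this; the direct expansion you describe would, once the power count is corrected, simply reproduce the full sum that still needs to be evaluated.
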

\begin{remark}
 The sum (over $d=0,\dots,L$) of the probabilities in the right-hand side of \eqref{eq_limit_first_column} is $1$, as follows from the $q$-Binomial theorem. For $0<q<1$, $L=1,2,\dots$, and $z<0$ the expressions in \eqref{eq_limit_first_column} are non-negative, and hence they give a \emph{bona fide} probability distribution.
\end{remark}
\begin{proof}[Proof of Lemma \ref{Lemma_first_column}] We have
\begin{multline*}
  W_{L,\infty,\mathfrak m}(z/\mathfrak m,q;  \A, \B; \C, \D)= z^{|\D|-|\B|} q^{|\A|L} \mathfrak m^{|\B|}
 \\ \times \sum_{P} \Phi(\C-P, \C+\D-P; q^{L} z ,  z) \Phi(P,\B; \mathfrak m q^{-L}/z, q^{-L}).
\end{multline*}
Since $0\le P\le \B$, the summation goes over $P=(0,\dots,0,p)$ with $0\le p \le L$.  As $\mathfrak m\to 0$, we have
\begin{multline}
 \Phi(P,\B; \mathfrak m q^{-L}/z, q^{-L}) \sim  (-1)^{|\B|-|P|}\cdot \frac{ q^{0+1+\dots+(|\B|-|P|-1)}}{(q^{-L};q)_{|\B|}} \left(z/\mathfrak m\right)^{|\B|} q^{\sum_{i<j} (B_i-P_i) P_j} \prod_{i=1}^N {B_i\choose P_i}_q
 \\=  (-1)^{L-p} \frac{ q^{0+1+\dots+(L-p-1)}}{(q^{-L};q)_{L}} \left(z/\mathfrak m\right)^{L} {L\choose p}_q.
\end{multline}
Also
\begin{multline}
 \Phi(\C-P, \C+\D-P;  q^{L} z ,  z)\\=\frac{(q^{L} z;q)_{|\C|-p} (q^{-L};q)_{|\D|}}{(z;q)_{|\C|+|\D|-p}} \left(q^{-L}\right)^{|\C|-p} q^{\sum_{i<j} (D_i) (\C-P)_j} \prod_{i=1}^N {(\C+\D-P)_i\choose (\C-P)_i}_q.
\end{multline}

We now assume that $L=1$. In this case we get
\begin{multline*}
  \lim_{\mathfrak m\to 0} W_{1,\infty,\mathfrak m}(z/\mathfrak m,q;  \A, \B; \C, \D)= z^{|\D|-1} q^{|\A|} \\ \times \Biggl[
 -\frac{(q z;q)_{|\C|} (q^{-1};q)_{|\D|}}{(z;q)_{|\C|+|\D|}} \left(q^{-1}\right)^{|\C|} q^{\sum_{i<j} D_i C_j} \prod_{i=1}^N {C_i+D_i\choose C_i}_q  \frac{1}{(1-q^{-1})} z {1\choose 0}_q
 \\+\frac{(q  z;q)_{|\C|-1} (q^{-1};q)_{|\D|}}{(z;q)_{|\C|+|\D|-1}} \left(q^{-1}\right)^{|\C|-1} q^{\sum_{i<j} D_i (\C-(0,\dots,0,1))_j} \\ \times \prod_{i=1}^N {(\C+\D-(0,\dots,0,1))_i\choose (\C-(0,\dots,0,1))_i}_q \frac{1}{(1-q^{-1})} z {1\choose 1}_q\Biggr].
\end{multline*}
The factor $(q^{-1};q)_{|\D|}$ in both terms within brackets leads to vanishing of the expression unless $|\D|=0$ or $|\D|=1$. In the former case, we get
\begin{multline*}
  \lim_{\mathfrak m\to 0} W_{1,\infty,\mathfrak m}(z/\mathfrak m,q;  \A, \B; \C, 0)= z^{-1} q^{|\A|}\biggl[
  -\frac{(q z;q)_{|\C|} }{(z;q)_{|\C|}} \left(q^{-1}\right)^{|\C|}  \frac{1}{(1-q^{-1})} z
 \\ +\frac{(q  z;q)_{|\C|-1} }{(z;q)_{|\C|-1}} \left(q^{-1}\right)^{|\C|-1}  \frac{1}{(1-q^{-1})} z\biggr]
 =
 q^{|\A|+1-|\C|}\frac{1}{1-z}.
\end{multline*}
Due to the condition $\A+\B=\C+\D$, the power of $q$ vanishes, and the result matches \eqref{eq_limit_first_column}.

For the case $|\D|=1$ we have two subcases. Either $\D=(0,\dots,0,1)$, i.e., the outgoing path has color $N$, or it has smaller color. In the former case,
\begin{multline*}
  \lim_{\mathfrak m\to 0} W_{1,\infty,\mathfrak m}(z/\mathfrak m,q;  \A, \B; \C, (0,\dots,0,1))= \frac{z q^{|\A|-|\C|}}{1-z} \Biggl[
 -   {C_N+1 \choose C_N}_q
 + q   {C_N \choose C_N-1}_q   \Biggr]\\= \frac{z q^{|\A|-|\C|}}{1-z} \Biggl[
 -  \frac{1-q^{C_N+1}}{1-q}
 + q   \frac{1-q^{C_N}}{1-q}  \Biggr]
 = -\frac{z }{1-z},
\end{multline*}
which matches \eqref{eq_limit_first_column}. In remains to study the case when the outgoing path in $\D$ has color $u<N$. In principle, since the contributions of the first two cases sum up to $1$, if we know the positivity and stochasticity of the weights, then the third case must give zero contribution. Nevertheless, let us make the computation:
\begin{multline*}
  \lim_{\mathfrak m\to 0} W_{1,\infty,\mathfrak m}(z/\mathfrak m,q;  \A, \B; \C, (0,\dots,1,0,\dots,0))
 \\ =  \frac{z}{1-z} q^{|\A|-|\C|} \left[
  - q^{\sum_{j>u}  C_j} \cdot {C_u+1 \choose C_u}_q
 + q \cdot  q^{\sum_{j>u}  C_j -1} \cdot  {C_u+1\choose C_u}_q\right]=0 .
\end{multline*}

We proceed to the $L>1$ case,\footnote{An alternative way to perform the following computation is by using the symmetry of the weight $W$ under interchange $\A\leftrightarrow \B$, $\C\leftrightarrow \D$ with simultaneous adjustment of the parameters of the distribution, see \cite[Proposition C.1.3]{BW}. This way avoids the hypergeometric identities that we need to use in the present approach.} We start by taking $\D=(0,\dots,0,d)$. In this case we have
\begin{multline} \label{eq_x30}
  \lim_{\mathfrak m\to 0} W_{L,\infty,\mathfrak m}(z/\mathfrak m,q;  \A, \B; \C, (0,\dots,0,d))
 \\ = \frac{z^{d} q^{(d-L)L}   (q^{-L};q)_{d}}{(q^{-L};q)_{L} (q;q)_d}
  \sum_{p=0}^L(-1)^{L-p} q^{\frac{(L-p)(L-p-1)}{2}} q^{pL}  \frac{(q^{L} z;q)_{|\C|-p} (q^{C_N-p+1};q)_d (q^{L-p+1};q)_p }{(z;q)_{|\C|+d-p}(q;q)_p}.
\end{multline}
We would like to use hypergeometric identities and for that we transform the sum into more standard form:
\begin{multline}\label{eq_x29}
  \sum_{p=0}^L(-1)^{L-p} q^{\frac{(L-p)(L-p-1)}{2}} q^{pL}  \frac{(q^{L} z;q)_{|\C|-p} (q^{C_N-p+1};q)_d (q^{L-p+1};q)_p }{(z;q)_{|\C|+d-p}(q;q)_p}
  \\
  =
  \frac{(q^{L} z;q)_{|\C|} (q^{C_N+1};q)_{d}}{ (z;q)_{|\C|+d}}  \sum_{p=0}^L(-1)^{L-p} q^{\frac{L(L-1)}{2} +\frac{p(p+1)}{2}}   \frac{ (q^{C_N-p+1};q)_{p}  (q^{L-p+1};q)_p (zq^{|\C|+d-p};q)_{p} }{(q^{C_N-p+1+d};q)_{p} ( q^{L+|\C|-p} z;q)_{p} (q;q)_p}
\\   =
  \frac{(q^{L} z;q)_{|\C|} (q^{C_N+1};q)_{d}}{ (z;q)_{|\C|+d}} (-1)^{L} q^{\frac{L(L-1)}{2}}  \sum_{p=0}^L   q^p \frac{ (q^{-C_N};q)_{p}  (q^{-L};q)_p (z^{-1}q^{1-|\C|-d};q)_{p} }{(q^{-C_N-d};q)_{p} ( z^{-1}q^{1-L-|\C|} ;q)_{p} (q;q)_p},
\end{multline}
where in the last identity we used
\begin{multline*}
 (x q^{-p};q)_p=(1-x q^{-p})(1-x q^{1-p})\cdots (1-x q^{-1})\\ = (-1)^p x^{p} q^{-1-2-\dots-p} (1-q/x) (1-q^2/x)\cdots (1- q^p/x)= (-1)^p x^p q^{-p(p+1)/2} (q/x;q)_p.
\end{multline*}
For the last sum in \eqref{eq_x29} we use the transformation of $_3\phi_2$ given in \cite[(III.11)]{GR}:
$$
 _3\phi_2\left(q^{-n},b,c;d,e;q,q\right)=\frac{ (de/bc;q)_n}{(e;q)_n} \left(\frac{bc}{d}\right)^n \,_3\phi_2\left(q^{-n},\frac{d}{b},\frac{d}{c}; d, \frac{de}{bc};q,q\right)
$$
with $n:=L$, $b:=q^{-C_N}$, $c:=z^{-1}q^{1-|\C|-d}$, $d:=q^{-C_N-d}$, $e:=z^{-1}q^{1-L-|\C|}$. Hence, \eqref{eq_x29} becomes
\begin{multline*}
  (-1)^{L} q^{\frac{L(L-1)}{2}}  \left( z^{-1}q^{1-|\C|}\right)^L  \frac{(q^{L} z;q)_{|\C|} (q^{\C_N+1};q)_{d}(q^{-L};q)_L }{ (z;q)_{|\C|+d} (z^{-1}q^{1-L-|\C|};q)_L}  \\ \times \,_3\phi_2\left(q^{-L},q^{-d},z q^{-C_N-1+|\C|}; q^{-C_N-d}, q^{-L};q,q\right).
\end{multline*}
Since $q^{-L}$ appears twice in the parameters of the last $_3\phi_2$, we can remove it and replace $_3\phi_2$ with $_2\phi_1$, at which point we can use the $q$-Chu-Vandermonde identity \cite[(1.5.3)]{GR}
$$
 _2\phi_1(a,q^{-d};c;q;q)=\frac{(c/a;q)_d}{(c;q)_d} a^d.
$$
Therefore, \eqref{eq_x29} transforms into
\begin{multline}
\label{eq_x31}
  (-1)^{L} q^{\frac{L(L-1)}{2}}  \left( z^{-1}q^{1-|\C|}\right)^L  \\ \times \frac{(q^{L} z;q)_{|\C|} (q^{C_N+1};q)_{d}(q^{-L};q)_L }{ (z;q)_{|\C|+d} (z^{-1}q^{1-L-|\C|};q)_L} \frac{(z^{-1}q^{-d +1-|\C|};q)_d}{(q^{-C_N-d};q)_d} \left(z q^{-C_N-1+|\C|}\right)^d,
\end{multline}
and the limit in \eqref{eq_x30} is the last expression multiplied by
\begin{equation}
\label{eq_x32}
 \frac{z^{d} q^{(d-L)L}   (q^{-L};q)_{d}}{(q^{-L};q)_{L} (q;q)_d}.
\end{equation}
Simplifying the product of \eqref{eq_x31} and \eqref{eq_x32} we arrive at
$$
 \frac{(q^{-L};q)_d}{(z;q)_L (q;q)_d} (zq^L)^d,
$$
which matches \eqref{eq_limit_first_column}.

\smallskip

It now remains to show that only $\D$ of the form $(0,\dots,0,d)$ give non-zero limit in \eqref{eq_limit_first_column}. There should be a direct formulaic way to prove it, but instead we use a shortcut. We notice that for the $L=1$ case we proved  that only $\D=(0,\dots,0)$ and $\D=(0,\dots,0,1)$ lead to non-vanishing limits. The vertex weights for general $L$ are obtained from the $L=1$ weights by collapsing $L$ rows in the fusion procedure (cf.\ Theorem \ref{Theorem_fusion} and Section \ref{Section_fusion}) . Clearly, if no paths of colors $<N$ exit to the right before fusion, then this is still true for the fused rows, and this completes the proof.
\end{proof}

For the vertices in columns $1,2,\dots$ we do not need any limit transition. Instead we will simply set $z=1$ in \eqref{eq_higher_spin_weight_continued_1}. Then the two last arguments in the second $\Phi(\cdot)$ coincide, which leads to $\mu=\lambda$ in \eqref{eq_Phi} and then $\Phi(\cdot)=1$.\footnote{In this argument it is important to restrict the values for $\mathfrak m$ so that the first $\Phi(\cdot)$ factor in \eqref{eq_higher_spin_weight_continued_3} would not explode because of the denominator $(y;q)_{|\mu|}$ in its definition. Since $y$ in the last $q$-Pochammer symbol needs to be set to $\mathfrak m$, the choice $0<\mathfrak m<1$ works well.} Hence, the weight simplifies. After noting that $\C-\B=\A-\D$ and $\C+\D-\B=\A$, we can write
\begin{multline}\label{eq_higher_spin_weight_continued_2}
 W_{L,\infty,\mathfrak m}(1,q;  \A, \B; \C, \D)= q^{|\A|L} \mathfrak m^{|\D|} \Phi(\A-\D, \A; \mathfrak m q^{L} , \mathfrak m )
 \\ =  \mathfrak m^{|\D|}  q^{L \cdot  |\D|} \frac{(\mathfrak m q^{L} ;q)_{|\A-\D|} (  q^{-L};q)_{|\D|}}{( \mathfrak m ;q)_{|\A|}} q^{\sum_{i<j} D_i (A_j-D_j)} \prod_{i=1}^N {A_i\choose A_i-D_i}_q.
\end{multline}

The next step is to notice that both formulas \eqref{eq_limit_first_column} and \eqref{eq_higher_spin_weight_continued_2} are analytic (meromorphic) functions of the argument $q^{-L}$. Hence, we can replace $q^{-L}$ with a new complex number $\mathfrak l$. We are also going to ignore completely the paths on the vertical edges in column $0$, and only keep track of what is happening in the quadrant $\mathbb Z_{>0}\times \mathbb Z_{>0}$. The final description is as follows:

\begin{itemize}

\item We deal with configurations of colored paths on the edges joining  vertices in the integral quadrant $\{(x,y)\mid x,y\in\mathbb Z_{>0}\}$.
    Each lattice edge has finite (but allowed to be arbitrarily large) number of paths of different colors. There might be more than one path of each color and the colors are from $\{1,2,\dots\}$.
\item The local configurations at the vertices are sampled sequentially in the direction of growing $x$ and $y$. The distribution depends on the parameters $0<q<1$, $\mathfrak l$, $z$, and a sequence $\mathfrak m_x$, $x=1,2,\dots$.

\item At the bottom boundary, no paths of positive colors enter into the quadrant (i.e., into the vertices $(x,1)$, $x\in\mathbb Z_{>0}$).

\item Along the left boundary, the only paths entering into $(y,1)$, $y\in\mathbb Z_{>0}$, have color $y$. The number of such paths  $\mathfrak d=0,1,2\dots$, is distributed according to the law
\begin{equation}
\label{eq_incoming_prob}
   \mathrm{Prob}(\mathfrak d = d)=\frac{(z/\mathfrak l;q)_\infty}{(z;q)_\infty} \cdot   \frac{(\mathfrak l;q)_d}{(q;q)_d} \left(\frac{z}{\mathfrak l}\right)^d.
\end{equation}
\item At each vertex $(x,y)$, the configuration of incoming  from below paths $\A$ and from the left $\B$ is transformed into the outgoing paths going up $\C$ and going to the right $\D$. We use the notation $\A=(A_1,A_2,\dots)$, where $A_i$ stands for the number of paths of color $i$. We also set $|\A|=A_1+A_2+\dots$, and similarly for the other three groups. The transformation is done by the stochastic rule with probability of outgoing configuration $\C, \D$ given by
\begin{multline}\label{eq_higher_spin_weight_continued_3}
 \mathcal{W}_{\mathfrak l,\mathfrak m_x,q}( \A, \B; \C, \D)\\ =  \left(\frac{\mathfrak m_x}{\mathfrak l}\right)^{|\D|} \frac{\bigl(\mathfrak m_x/\mathfrak l ;q\bigr)_{|\A-\D|} ( \mathfrak l;q)_{|\D|}}{( \mathfrak m_x ;q)_{|\A|}} q^{\sum_{i<j} D_i (A_j-D_j)} \prod_{i=1}^\infty {A_i\choose A_i-D_i}_q,
\end{multline}
subject to the conservation of colors condition $\A+\B=\C+\D$.
\item The parameters $\mathfrak l$, $z$, $\mathfrak m_x$, $x=1,2,\dots$, are assumed to be chosen so that \eqref{eq_incoming_prob} and \eqref{eq_higher_spin_weight_continued_3} are non-negative and \eqref{eq_incoming_prob} is summable over $d=0,1,\dots$. For instance, this is the case when  $0<z,\mathfrak l,\mathfrak m_x, z/\mathfrak l, \mathfrak m_x/\mathfrak l<1$ for all $x=1,2,\dots$, but other choices are also possible.
\end{itemize}

One way to think about \eqref{eq_higher_spin_weight_continued_3} is that all paths entering into a vertex from the left deterministically turn up. On the other hand, the paths entering from below might either turn to the right or proceed straight up. In particular, \eqref{eq_higher_spin_weight_continued_3} implies that if $\A$ is empty, then so is $\D$. Hence, for our boundary conditions, no paths of positive colors lie below the diagonal $x=y$, so that the only non-trivial vertices in the system are $(x,y)$ with $x\le y$.

\begin{theorem} \label{Theorem_6v_invariance_infinite}
 In the above setting of the analytically continued fused colored model in quadrant with vertex weights \eqref{eq_higher_spin_weight_continued_3} and incoming probabilities \eqref{eq_incoming_prob}, choose a set of positive integers $k_j$ and a set of  points in the quadrant $\mathcal U_j$, $j=1,\dots,n$. Fix an index $1\le \iota\le n$ and an integer $\Delta>0$. Define
 $$
  k_j'=\begin{cases} k_j,& j\ne \iota,\\ k_\iota+\Delta, & j=\iota,\end{cases} \qquad \qquad \mathcal U'_j=\begin{cases}\mathcal U_j, & j\ne \iota, \\ \mathcal U_\iota + (0,\Delta), & j=\iota. \end{cases}
 $$
Suppose that
 $$
  0\le k_1\le k_2\le \dots\le k_n, \qquad 0\le k'_1\le k'_2\le \dots\le k'_n,
 $$
 $$\mathcal U_1,\dots,\mathcal U_{\iota-1} \succeq \mathcal U_\iota \succeq\mathcal U_{\iota+1},\dots, \mathcal U_n, \qquad \mathcal U'_1,\dots,\mathcal U'_{\iota-1} \succeq \mathcal U'_\iota \succeq \mathcal U'_{\iota+1},\dots, \mathcal U'_n.
 $$
 Then the distribution of the vector of the height functions
 \begin{equation}
 \label{eq_x33}
  \bigl(\H^{\ges k_1}(\mathcal U_1),\, \H^{\ges k_2}(\mathcal U_2),\, \dots, \H^{\ges k_n}(\mathcal U_n)\bigr)
 \end{equation}
 coincides with the distribution of the vector with shifted $\iota$-th coordinate
 \begin{equation}
 \label{eq_x34}
  \bigl(\H^{\ges k'_1}(\mathcal U'_1),\, \H^{\ges k'_2}(\mathcal U'_2),\, \dots, \H^{\ges k'_n}(\mathcal U'_n)\bigr).
 \end{equation}
\end{theorem}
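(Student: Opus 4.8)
The plan is to obtain Theorem~\ref{Theorem_6v_invariance_infinite} as an analytic continuation of Theorem~\ref{Theorem_6v_invariance_fused}, in exactly the same spirit as Corollary~\ref{Corollary_higher_spin_inv} was obtained from it. Recall that for a fixed rectangle the shift-invariance statement of Theorem~\ref{Theorem_6v_invariance_fused} (equivalently Corollary~\ref{Corollary_higher_spin_inv}) is an equality of two \emph{finite} sums, each summand being a product of finitely many vertex weights $W_{L,\infty,\mathfrak m}$ from \eqref{eq_higher_spin_weight_continued_1} together with the boundary weights. First I would restrict attention to a large enough rectangle $[1,X]\times[1,Y]$ that contains all the points $\mathcal U_j$, $\mathcal U'_j$; since the left-boundary input at row $y$ has color $y$ and there are at most $YL$ positive paths crossing any edge, only finitely many local configurations contribute, and the probabilities $\mathbb P(\H^{\ges k_1}(\mathcal U_1)=h_1,\dots)$ are finite sums. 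Both sides of the claimed identity \eqref{eq_x33}$=$\eqref{eq_x34} are thus rational functions of the quantities $q^{-M_x}=\mathfrak m_x^{-1}$ (already handled in Corollary~\ref{Corollary_higher_spin_inv}) and of $q^{-L}=\mathfrak l$, plus the boundary weight of \eqref{eq_limit_first_column}.

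The second step is to identify which objects are being analytically continued, and in which variable. The column-$0$ boundary weight was shown in Lemma~\ref{Lemma_first_column} to converge, after the substitution $z\mapsto z/\mathfrak m$ and the limit $\mathfrak m\to 0$, to the explicit expression
\[
\frac{(zq^L;q)_\infty (q^{-L};q)_d}{(z;q)_\infty (q;q)_d}(zq^L)^d,
\]
which is manifestly a rational function of $q^{-L}$ (the only appearance of $L$ beyond $q^{-L}=\mathfrak l$ is through $q^L=\mathfrak l^{-1}$ in convergent $q$-Pochhammer symbols). Likewise, the bulk weight \eqref{eq_higher_spin_weight_continued_2}, obtained from \eqref{eq_higher_spin_weight_continued_1} by setting $z=1$ (so that the second $\Phi$-factor trivializes), is a rational function of $q^{-L}$ as well; its analytic continuation is precisely \eqref{eq_higher_spin_weight_continued_3}, and dropping the column-$0$ vertical edges is harmless because by \eqref{eq_higher_spin_weight_continued_3} an empty $\A$ forces empty $\D$, so no positive path ever lives strictly below the diagonal. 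Thus on the rectangle, each probability appearing in \eqref{eq_x33} and in \eqref{eq_x34} is a rational function $R(\mathfrak l)$ of $\mathfrak l=q^{-L}$ (with coefficients depending rationally on the already-continued $\mathfrak m_x$, and on $q$, $z$). For every integer $L\ge 1$ with $q^{-L}=\mathfrak l$ these two rational functions agree by Theorem~\ref{Theorem_6v_invariance_fused} (applied with that $L$, after the $\mathfrak m$-continuation of Corollary~\ref{Corollary_higher_spin_inv}), and $\{q^{-L}:L\in\mathbb Z_{\ge 1}\}$ is an infinite set; hence the two rational functions coincide identically, and in particular they coincide at the values of $\mathfrak l$ for which \eqref{eq_incoming_prob} and \eqref{eq_higher_spin_weight_continued_3} give a bona fide probability distribution.

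Two bookkeeping points need care. First, one must check that the shift operation in Theorem~\ref{Theorem_6v_invariance_fused} — which for integer $L$ shifts the observation point by $L$ vertical steps and cyclically permutes the period-$L$ block of row rapidities — degenerates correctly: in the analytically continued model there is a \emph{single} ``fat'' row per lattice row, the shift of $\mathcal U_\iota$ by $\Delta$ corresponds to shifting by $\Delta$ fused rows, and the attendant rapidity swaps disappear because in the continued model all vertical rapidities in columns $\ge 1$ were set to $z=1$ and are hence equal. This is the reason the statement of Theorem~\ref{Theorem_6v_invariance_infinite} has no accompanying rapidity permutation, unlike Theorem~\ref{Theorem_6v_intro}. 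Second, one should make sure the intersection/ordering hypotheses $\mathcal U_1,\dots,\mathcal U_{\iota-1}\succeq\mathcal U_\iota\succeq\dots$ and the monotonicity $k_1\le\dots\le k_n$, $k'_1\le\dots\le k'_n$ pass unchanged through the fusion identification of Theorem~\ref{Theorem_fusion} and through the limit transitions — they do, since these conditions only involve the coarse lattice coordinates and the color cutoffs, which are preserved by color-merging (Remark~\ref{Remark_color_merging}) and by the $\mathfrak m\to 0$, $L\to\infty$ continuations.

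The main obstacle I anticipate is not conceptual but the verification that the relevant probabilities really are rational (not merely meromorphic with possible poles accumulating) in $\mathfrak l=q^{-L}$ \emph{uniformly over the finitely many summands}, and that the infinitely many interpolation points $q^{-L}$, $L\ge 1$, lie in the domain of analyticity. For a fixed rectangle the number of contributing local configurations is bounded, each vertex weight \eqref{eq_higher_spin_weight_continued_1}/\eqref{eq_higher_spin_weight_continued_2} is an explicit finite combination of $q$-Pochhammer symbols and $q$-binomials whose only $L$-dependence is through $q^{\pm L}$, and the boundary weight from Lemma~\ref{Lemma_first_column} is similarly explicit; so each summand is a genuine rational function of $q^{-L}$ with denominators among $(q^j;q)_\bullet$-type factors that do not vanish for $0<q<1$ at the points $q^{-L}$. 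Once this finiteness-and-rationality is pinned down, the identity-of-rational-functions argument closes the proof, exactly as in the passage from Theorem~\ref{Theorem_6v_invariance_fused} to Corollary~\ref{Corollary_higher_spin_inv}, and I would present the details by first stating a lemma that the rectangle-restricted multi-point distribution is a rational function of $(\mathfrak l,\mathfrak m_1,\dots,\mathfrak m_X)$, then invoking Corollary~\ref{Corollary_higher_spin_inv} and Lemma~\ref{Lemma_first_column} to conclude.
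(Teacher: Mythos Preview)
Your overall strategy matches the paper's: analytically continue Corollary~\ref{Corollary_higher_spin_inv} from integer values of $L$ to arbitrary $\mathfrak l$. However, there is a genuine gap in your execution, namely your claim that the relevant probabilities are \emph{rational} functions of $\mathfrak l=q^{-L}$. Two things break this. First, the incoming distribution \eqref{eq_incoming_prob} carries the factor $(z/\mathfrak l;q)_\infty$, an infinite product which is analytic but certainly not rational in $\mathfrak l$; your own parenthetical ``in convergent $q$-Pochhammer symbols'' already signals this. Second, for generic $\mathfrak l$ the number $\mathfrak d$ of incoming paths at each row is supported on all of $\mathbb Z_{\ge 0}$, so the probability of any fixed height function event is an \emph{infinite} sum over boundary configurations, not a finite one. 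Your bound ``at most $YL$ positive paths crossing any edge'' only holds at integer $L$, precisely the points you are trying to interpolate away from. Finally, even granting analyticity in $\mathfrak l$, your interpolation points $q^{-L}$ accumulate at infinity rather than at a finite point in the domain, so the identity-of-rational-functions argument does not close.

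The paper's proof repairs all of this by passing to the inverse variable $u:=\mathfrak l^{-1}$ and appealing to the identity theorem for holomorphic functions rather than rational interpolation. One checks that the probabilities appearing in \eqref{eq_x33} and \eqref{eq_x34} are holomorphic in $u$ in a complex neighborhood of $u=0$: the series \eqref{eq_incoming_prob} converges uniformly in $u$ there, the bulk weights \eqref{eq_higher_spin_weight_continued_3} are finite sums, and the limits as $u\to 0$ exist so that the singularity at $0$ is removable. The interpolation points become $u=q^L$, $L=1,2,\dots$, which accumulate at $0$ inside the domain of holomorphicity, and the identity theorem then gives equality of the two holomorphic functions throughout that neighborhood.
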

\begin{remark} \label{Remark_vertical_inhomogeneities}
  It is natural to expect that an addition of vertical inhomogenities, i.e., replacement of $\mathfrak l$ by $\mathfrak l_y$, should preserve the statement of Theorem \ref{Theorem_6v_invariance_infinite} with proper interchange of parameters as in Theorem \ref{Theorem_6v_intro}. However, our proofs do not allow to claim this generalization.
\end{remark}
\begin{proof}[Proof of Theorem \ref{Theorem_6v_invariance_infinite}] Introduce a complex variable $u:=\mathfrak l^{-1}$. The distributions of both vectors \eqref{eq_x33} and \eqref{eq_x34} (i.e., the probabilities that they attain certain values) are functions of $u$, which are holomorphic in a complex neighborhood of $u=0$. Indeed, while the positivity conditions fail for complex $u$, yet the formulas for the distribution of the number of incoming paths \eqref{eq_incoming_prob} are still converging uniformly in $u$, while the formulas for the vertex weights \eqref{eq_higher_spin_weight_continued_3} only involve finite sums. Hence, the distributions of the vectors \eqref{eq_x33}, \eqref{eq_x34} are represented as converging sums involving products of the expressions \eqref{eq_x33} and \eqref{eq_x34}. For small $u\ne 0$ the holomorphicity is clear from the definitions. On the other hand, as $u\to 0$ (equivalently, as $\mathfrak l\to\infty$) the formulas \eqref{eq_x33}, \eqref{eq_x34} remain bounded and, in fact, converge to a finite limit. Hence, the singularity at $u=0$ is removable, and the functions of interest are holomorphic there.

For $u=q^{L}$, $L=1,2,\dots$, the coincidence of the two holomorphic functions decribing the probability of attaining some value by \eqref{eq_x33} and \eqref{eq_x34} is the content of Corollary \ref{Corollary_higher_spin_inv}. Since $\{q^{L}\}_{L=1,2,\dots}$ has a limiting point $0$ and is, therefore, a uniqueness set for holomorphic (in $u$) functions, we conclude that the coincidence of the distributions of vectors \eqref{eq_x33} and \eqref{eq_x34} extends to all complex values of $u$, such that the sum (over $d$) in \eqref{eq_incoming_prob} remains uniformly convergent.
\end{proof}

\subsection{Auxiliary $q$-identities and limit transitions.}

This section collects some results on $q$-Pochhammer symbols and $q$-Binomial coefficients, which will be useful in our asymptotic analysis later on.

\begin{lemma}
\label{lemma_q_poch_conv}
 For any $a,b\in\mathbb R$ and complex--valued function $u(\cdot)$ defined in a neighborhood of $1$ and
  such that
$$
 \lim_{q\to 1} u(q)=u
$$
with $0<u<1$, we have
 $$
  \lim_{q\to 1} \frac{(q^a u(q);q)_{\infty}}{(q^b u(q);q)_{\infty}} = (1-u)^{b-a}.
 $$
\end{lemma}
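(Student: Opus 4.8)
The plan is to reduce the ratio of infinite $q$-Pochhammer symbols to a telescoping product and then pass to the limit termwise with a uniform bound. First I would write, using the definition $(x;q)_\infty = \prod_{k\ge 0}(1-xq^k)$,
\[
  \frac{(q^a u(q);q)_\infty}{(q^b u(q);q)_\infty}
  = \prod_{k\ge 0} \frac{1-q^{a+k}u(q)}{1-q^{b+k}u(q)}.
\]
For integer $a,b$ with, say, $b\ge a$, this product telescopes: all but finitely many factors cancel, leaving $\prod_{k=a}^{b-1}\bigl(1-q^k u(q)\bigr)$, which manifestly tends to $(1-u)^{b-a}$ as $q\to 1$ since $u(q)\to u$. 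The symmetric case $a\ge b$ is handled the same way (or by taking reciprocals), giving $(1-u)^{b-a}$ with a negative exponent. This disposes of all integer values of $a-b$ with an entirely elementary argument.

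For general real $a,b$ the product no longer telescopes, so I would instead take logarithms and estimate the series
\[
  \log \frac{(q^a u(q);q)_\infty}{(q^b u(q);q)_\infty}
  = \sum_{k\ge 0} \Bigl[ \log\bigl(1-q^{a+k}u(q)\bigr) - \log\bigl(1-q^{b+k}u(q)\bigr) \Bigr].
\]
The key step is to justify interchanging the limit $q\to 1$ with the infinite sum. Fix $q_0<1$ close enough to $1$ that $u(q)$ stays in a compact subinterval $[u_-,u_+]\subset(0,1)$ for $q\in[q_0,1)$; then $q^{a+k}u(q), q^{b+k}u(q)$ lie in $[0,\theta]$ for some $\theta<1$ uniformly in $q\in[q_0,1)$ and $k\ge 0$ (using that $q^a$ and $q^b$ are bounded on $[q_0,1]$). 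On $[0,\theta]$ the function $\log(1-x)$ is Lipschitz with constant $(1-\theta)^{-1}$, so each summand is bounded in absolute value by $(1-\theta)^{-1}|q^{a+k}-q^{b+k}|u_+ = (1-\theta)^{-1}u_+ q^k|q^a-q^b|$, hence by $C q_0^{\,k}$ for a constant $C$ independent of $q$ once $q\ge q_0$ — wait, more carefully, by $C q^k$ which is dominated by a fixed geometric series only after bounding $q^k\le$ something summable; since $q<1$ one can simply use $\sum_k q^k = (1-q)^{-1}$, which blows up, so the naive bound is not summable-uniformly. The clean fix is to bound the $k$-th term by $(1-\theta)^{-1}u_+|q^a-q^b|\,q^k$ and note $|q^a - q^b|\le C'(1-q)$ near $q=1$ (derivative bound), so the $k$-th term is $\le C'' (1-q) q^k$ and the whole tail $\sum_{k\ge K}$ is $\le C''(1-q)q^K/(1-q) = C'' q^K$, uniformly in $q$. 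This gives the uniform smallness of tails needed to interchange limit and sum. Then termwise, $\log(1-q^{a+k}u(q)) - \log(1-q^{b+k}u(q)) \to \log(1-u) - \log(1-u) = 0$ for every $k\ge 1$, while the $k=0$ term tends to $\log(1-u^{\,?})$... — actually each individual term tends to $0$, so the limit of the sum is $0$, which would give limit $1$, contradicting the integer case. The resolution is that the correct grouping is not termwise in $k$ but rather one must recognize the $q$-analogue structure: the right move is the substitution $k\mapsto$ shift, i.e. use the functional equation $(x;q)_\infty = (1-x)(xq;q)_\infty$ iterated, reducing to the integer case plus an error, or more efficiently invoke the known limit $(q;q)_\infty/((q^c;q)_\infty) \sim (1-q)^{1-c}\Gamma_q$-type asymptotics. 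Concretely I would use that $(q^a u;q)_\infty = \exp\bigl(-\sum_{n\ge 1}\frac{(q^a u)^n}{n(1-q^n)}\bigr)$, so the log-ratio is $-\sum_{n\ge 1}\frac{u^n(q^{an}-q^{bn})}{n(1-q^n)}$, and since $\frac{q^{an}-q^{bn}}{1-q^n}\to \frac{b-a}{n}\cdot$ (wait: $\frac{q^{an}-q^{bn}}{1-q^n} = \frac{q^{an}(1-q^{(b-a)n})}{1-q^n}\to \frac{(b-a)n}{n} = b-a$ by L'Hôpital at $q=1$), the sum converges to $-\sum_{n\ge 1}\frac{u^n(b-a)}{n} = (b-a)\log(1-u)$, giving the claimed answer $(1-u)^{b-a}$.

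\textbf{Main obstacle.} The only real issue is the uniform-convergence bookkeeping in the series $-\sum_{n\ge 1}\frac{u(q)^n(q^{an}-q^{bn})}{n(1-q^n)}$: one must check that $\frac{q^{an}-q^{bn}}{1-q^n}$ is bounded uniformly in $n\ge 1$ and $q$ near $1$ (it is, by $\max(|a|,|b|)+|b-a|$ or similar, using $|q^{an}-q^{bn}|\le |a-b|n q^{\min(a,b)n}\log(1/q)$ and $1-q^n \ge n(1-q)q^{n-1}$, giving a bound independent of $n$), so that the dominated-convergence / uniform-convergence interchange with $q\to 1$ is legitimate and yields $\lim_q \frac{q^{an}-q^{bn}}{1-q^n} = b-a$ inside the sum. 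Once that is in place the identity $\sum_{n\ge 1}u^n/n = -\log(1-u)$ finishes the proof. I would present the $\exp\bigl(-\sum \tfrac{x^n}{n(1-q^n)}\bigr)$ formula for $(x;q)_\infty$ (valid for $|x|<1$, which holds here since $0<u<1$ and $q^a,q^b$ are bounded near $q=1$), then the termwise limit with the stated uniform bound, and conclude.
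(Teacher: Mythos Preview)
Your final argument is correct. The paper itself gives no proof at all: it simply cites \cite[Theorem 10.2.4]{AAR} and stops. So there is nothing to compare against except to say that your eventual route---expand $\log(x;q)_\infty = -\sum_{n\ge 1} x^n/\bigl(n(1-q^n)\bigr)$, form the difference, observe $\dfrac{q^{an}-q^{bn}}{1-q^n}\to b-a$, and interchange limit and sum via a uniform geometric majorant---is one of the standard self-contained proofs of this fact, and is likely close in spirit to what the cited reference does.

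Two remarks on the write-up. First, your second attempt (termwise limit in the $k$-indexed product) does indeed fail, and you caught this correctly: each factor tends to $1$, so nothing can be read off term-by-term; the mass escapes to infinity in $k$. Your switch to the $n$-indexed power-series expansion is exactly the right fix, because there the relevant quotient $\tfrac{q^{an}-q^{bn}}{1-q^n}$ has a nontrivial limit. Second, the uniform bound you sketch in the ``Main obstacle'' paragraph is easy to make clean: the mean-value estimate gives $|q^{an}-q^{bn}|\le |a-b|\,n\,|\log q|\,q^{cn}$ for some $c$ between $a$ and $b$, while $1-q^n\ge n(1-q)q^{n-1}$; together with $|\log q|/(1-q)\to 1$ this shows $\bigl|\tfrac{q^{an}-q^{bn}}{1-q^n}\bigr|\le C$ uniformly in $n\ge 1$ and $q$ near $1$. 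Since $|u(q)|\le\theta<1$ for $q$ close enough to $1$ (this also handles the complex-valued $u(q)$ in the hypothesis), the $n$-th term is dominated by $C\theta^n/n$, and dominated convergence finishes. In a polished version I would drop the exploratory false starts and present only this argument.
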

\begin{proof}
 See \cite[Theorem 10.2.4]{AAR}.
\end{proof}

We define the $q$-Gamma function through
$$
\Gamma_q(x)=\frac{(q;q)_\infty}{(q^x;q)_\infty} (1-q)^{1-x}, \quad x\ne 0,-1,-2,\dots.
$$
\begin{lemma}
\label{lemma_q_Gamma_conv}
 For any $x\in\mathbb C\setminus\{0,-1,-2,\dots\}$ we have
$$
 \lim_{q\to 1}  \Gamma_q(x)= \Gamma(x).
$$
\end{lemma}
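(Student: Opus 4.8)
The statement is classical --- it is \cite[Theorem 10.3.4]{AAR} --- and I will indicate a self-contained argument. Directly from $\Gamma_q(x)=(1-q)^{1-x}(q;q)_\infty/(q^x;q)_\infty$ one reads off the $q$-functional equation
\[
\Gamma_q(x+1)=\frac{1-q^x}{1-q}\,\Gamma_q(x),\qquad \Gamma_q(1)=1,
\]
which, since $(1-q^x)/(1-q)\to x$ as $q\to 1$, degenerates to $\Gamma(x+1)=x\Gamma(x)$, $\Gamma(1)=1$. It therefore suffices to prove $\Gamma_q(x)\to\Gamma(x)$ for $\Re x>0$: for general $x\notin\{0,-1,-2,\dots\}$ one picks the integer $k\ge 1$ with $\Re x+k>0$, writes $\Gamma_q(x)=\Gamma_q(x+k)\big/\prod_{j=0}^{k-1}\frac{1-q^{x+j}}{1-q}$, and observes that every factor of the denominator tends to the nonzero limit $x+j$ (nonzero because $x\notin\{0,-1,-2,\dots\}$), so the identity for $x+k$ propagates to $x$.

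For the base case $\Re x>0$ the plan is a two-scale estimate. Write $q=e^{-\eps}$ and split the product $\Gamma_q(x)=(1-q)^{1-x}\prod_{n\ge 1}\frac{1-q^n}{1-q^{n+x-1}}$ at the index $N=N(\eps)=\lfloor c/\eps\rfloor$, with $c>0$ an arbitrary fixed constant. The tail $\prod_{n>N}\frac{1-q^n}{1-q^{n+x-1}}=(q^{N+1};q)_\infty/(q^{N+x};q)_\infty$ is handled by Lemma \ref{lemma_q_poch_conv} applied with the slowly varying choice $u(q)=q^{N(\eps)+1}\to e^{-c}$ (the proof of that lemma is insensitive to whether the exponents involved are real or complex), which yields the limit $(1-e^{-c})^{x-1}$. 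In the ``bulk'' $1\le n\le N$ one extracts the telescoping product $\prod_{n=1}^{N}\frac{n}{n+x-1}=N!\,\Gamma(x)/\Gamma(N+x)\sim\Gamma(x)\,N^{1-x}$ by Stirling; pairing its factor $N^{1-x}$ with the prefactor $(1-q)^{1-x}=\eps^{1-x}(1+o(1))$ makes the two $\log\eps$-divergences cancel (more precisely $(\eps N)^{1-x}\to c^{1-x}$), while the residual factors $\frac{(1-q^n)/(\eps n)}{(1-q^{n+x-1})/(\eps(n+x-1))}$ form, through their logarithms, a Riemann sum over the grid $\{\eps n\}$ of an elementary convergent integral, whose value is exactly what cancels the $c$-dependence produced by the tail. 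Combining the three pieces gives $\lim_{q\to 1}\Gamma_q(x)=\Gamma(x)$; alternatively one could use Jackson's $q$-integral representation of $\Gamma_q$ and dominated convergence on the geometric grid. Since all of these estimates are locally uniform in $x$, the convergence is in fact locally uniform on $\mathbb C\setminus\{0,-1,-2,\dots\}$.

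The only step demanding genuine care --- and the one I expect to occupy the bulk of a full write-up --- is the bulk Riemann-sum estimate together with the bookkeeping of the cancelling $\log\eps$ terms and the $c$-dependence; this is the $q$-analogue of recovering $\log\Gamma$ from a Binet-type integral and is entirely standard, so if one prefers to avoid it the lemma may simply be quoted from \cite[Theorem 10.3.4]{AAR}.
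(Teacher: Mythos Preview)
The paper's proof is a one-line citation to \cite[Corollary 10.3.4]{AAR} (you write ``Theorem'' rather than ``Corollary'', a trivial discrepancy), so your proposal already contains the paper's entire argument. The additional sketch you provide---reducing to $\Re x>0$ via the $q$-functional equation, then a two-scale split of the product at $N\sim c/\eps$---is a correct and standard outline; I checked that the three pieces (telescoping part $\sim\Gamma(x)c^{1-x}$, residual Riemann sum $\sim(c/(1-e^{-c}))^{x-1}$, and tail $\sim(1-e^{-c})^{x-1}$) indeed multiply to $\Gamma(x)$ with the $c$-dependence cancelling. You are honest that the Riemann-sum step and the extension of Lemma~\ref{lemma_q_poch_conv} to complex exponents require care, and those are the only places a referee would ask for more---but since the paper itself is content with the bare citation, your write-up already exceeds what is required.
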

\begin{proof}
 See \cite[Corollary 10.3.4]{AAR}.
\end{proof}

Recall the definition of dilogarithm:
$$
 Li_2(x)=\sum_{n=1}^{\infty} \frac{x^n}{n^2}.
$$
We have
$$
 \frac{\partial}{\partial x} Li_2(x)= -\frac{\ln(1-x)}{x}.
$$

\begin{lemma}  \label{Lemma_Poch_asymptotics}  For any\, $0<a<1$ as $q\to 1$ we have the following asymptotic expansion valid for each $K=1,2,\dots$:
\begin{equation}
 \ln (a;q)_\infty=\frac{1}{\ln q} Li_2(a)+\frac{1}{2}\ln(1-a) +\sum_{k=1}^{K} C_k(a) (\ln q)^{2k-1}+ o\left(\ln(q)^{2K-1}\right),
\end{equation}
where $C_k(a)$ is a polynomial in $a$ multiplied by $(a-1)^{1-2k}$, and the remainder $o(\cdot)$ is uniform as long as $a$ is bounded away from $1$.
\end{lemma}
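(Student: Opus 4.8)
The plan is to write the infinite $q$-Pochhammer symbol as a Riemann-type sum and apply the Euler--Maclaurin summation formula. First I would set $\eps=-\ln q>0$, so $q=e^{-\eps}$ and $\eps\to 0^+$ as $q\to 1^-$, and write
$$
\ln(a;q)_\infty=\sum_{n=0}^{\infty}\ln(1-aq^n)=\sum_{n=0}^{\infty}f(n\eps),\qquad f(x):=\ln(1-ae^{-x}).
$$
Since $0<a<1$, the argument $1-ae^{-x}$ stays in $[1-a,1)$ for $x\ge 0$, so $f$ is $C^\infty$ on $[0,\infty)$, and $f$ together with each of its derivatives decays exponentially as $x\to\infty$; these are exactly the hypotheses needed to run Euler--Maclaurin on the half-line.

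Applying Euler--Maclaurin to order $K+1$ (the boundary contributions at $+\infty$ vanishing by the decay) gives
$$
\sum_{n=0}^{\infty}f(n\eps)=\frac{1}{\eps}\int_0^{\infty}f(y)\,\dd y+\frac{f(0)}{2}-\sum_{j=1}^{K+1}\frac{B_{2j}}{(2j)!}\,\eps^{2j-1}f^{(2j-1)}(0)+R_{K+1}.
$$
Next I would evaluate the integral by the substitution $t=e^{-x}$:
$$
\int_0^{\infty}f(y)\,\dd y=\int_0^1\frac{\ln(1-at)}{t}\,\dd t=-\sum_{n\ge 1}\frac{a^n}{n^2}=-Li_2(a),
$$
so that, since $\eps^{-1}=-(\ln q)^{-1}$, the leading term is $\tfrac{1}{\ln q}Li_2(a)$, and $\tfrac12 f(0)=\tfrac12\ln(1-a)$, matching the first two claimed terms. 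Because $2j-1$ is odd, $\eps^{2j-1}=-(\ln q)^{2j-1}$, so the $j$-th correction term equals $\tfrac{B_{2j}}{(2j)!}(\ln q)^{2j-1}f^{(2j-1)}(0)$, and one sets $C_j(a):=\tfrac{B_{2j}}{(2j)!}f^{(2j-1)}(0)$.

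To confirm the stated structure of $C_k(a)$, I would show by induction that $f^{(k)}(x)=P_k(ae^{-x})/(1-ae^{-x})^k$ for a polynomial $P_k$ with $P_k(0)=0$ when $k\ge 1$: starting from $f'(x)=u/(1-u)$ with $u=ae^{-x}$, $u'=-u$, and using the recursion $P_{k+1}(u)=-u\big((1-u)P_k'(u)+kP_k(u)\big)$, which manifestly carries a factor of $u$. Evaluating at $x=0$ gives $f^{(2j-1)}(0)=P_{2j-1}(a)/(1-a)^{2j-1}=-P_{2j-1}(a)(a-1)^{1-2j}$, a polynomial in $a$ times $(a-1)^{1-2j}$, as required. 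For the remainder, the standard Euler--Maclaurin error estimate yields $|R_{K+1}|\le \tfrac{\|\widetilde B_{2K+2}\|_\infty}{(2K+2)!}\,\eps^{2K+1}\int_0^{\infty}|f^{(2K+2)}(y)|\,\dd y$, and the $(K+1)$-st correction term is likewise $O(\eps^{2K+1})$; both are $o(\eps^{2K-1})=o((\ln q)^{2K-1})$ and get absorbed into the error. Uniformity in $a$ (for $a$ bounded away from $1$, say $a\le 1-\delta$) follows from $|1-ae^{-x}|\ge\delta$ and $|P_m(u)|\le C_m|u|$ for $|u|\le 1$ (the latter because $P_m(0)=0$), which give $|f^{(m)}(y)|\le C_m\delta^{-m}e^{-y}$, an integrable bound depending only on $\delta$ and $m$.

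The main obstacle is not conceptual but technical: the careful justification of Euler--Maclaurin on the unbounded interval $[0,\infty)$ together with the uniform control of the remainder term in $a$; all the rest is bookkeeping. One could alternatively invoke a Mellin-transform argument or a known asymptotic expansion for $\ln(a;q)_\infty$, but the Euler--Maclaurin route is self-contained and delivers precisely the stated form, including the explicit shape of the coefficients $C_k(a)$.
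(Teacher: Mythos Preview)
Your Euler--Maclaurin argument is correct and complete: the decomposition $\ln(a;q)_\infty=\sum_{n\ge 0}f(n\eps)$ with $f(x)=\ln(1-ae^{-x})$, the evaluation of $\int_0^\infty f=-Li_2(a)$, the identification $C_j(a)=\tfrac{B_{2j}}{(2j)!}f^{(2j-1)}(0)$, and the inductive form $f^{(k)}=P_k(u)/(1-u)^k$ all check out, and the remainder control with the uniform-in-$a$ bound via $|f^{(m)}(y)|\le C_m\delta^{-m}e^{-y}$ is the right way to get the stated uniformity.

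The paper does not actually give a proof; it simply cites \cite[Corollary 10]{K} (Kirillov's dilogarithm identities paper). So your route is genuinely different in the sense that it is self-contained rather than a reference, and it has the advantage of making the structure of the coefficients $C_k(a)$ explicit (as Bernoulli-weighted derivatives of $\ln(1-a)$), which the paper only asserts. The cited reference likely proceeds along similar lines or via a Mellin-transform expansion; either way, the Euler--Maclaurin argument you outline is the standard and most direct derivation of this expansion.
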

\begin{proof}
 See \cite[Corollary 10]{K}.
\end{proof}
\begin{lemma} \label{Lemma_q_Binomial_split} For non-negative integers $M$, $N$, $K$, we have
\begin{equation}
 \sum_{k=0}^K q^{k(M-K+k)} {N \choose k}_q {M\choose K-k}_q=
 \sum_{k=0}^K q^{(N-k)(K-k)} {N \choose k}_q {M\choose K-k}_q= {N+M \choose K}_q.
\end{equation}
\end{lemma}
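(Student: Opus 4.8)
The identity is a form of the terminating $q$-Chu--Vandermonde summation, and I would prove it by comparing coefficients in a factorization of a finite $q$-product. The only ingredient needed is the terminating $q$-binomial theorem
$$\prod_{i=0}^{n-1}(1+q^i t) = \sum_{k=0}^{n} q^{\binom k2} {n\choose k}_q\, t^k,$$
valid for every non-negative integer $n$; this follows by a one-line induction on $n$ from the factor $(1+q^{n-1}t)$ together with the $q$-Pascal rule ${n\choose k}_q = {n-1\choose k}_q + q^{n-k}{n-1\choose k-1}_q$.

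First I would apply this with $n=N+M$, so that the coefficient of $t^K$ on the left equals $q^{\binom K2}{N+M\choose K}_q$. Then I would split the product into two blocks in the two possible ways. Writing $\prod_{i=0}^{N+M-1}(1+q^it)=\prod_{i=0}^{M-1}(1+q^it)\cdot\prod_{i=0}^{N-1}(1+q^{M+i}t)$ and expanding each factor (the second contributes $q^{\binom k2+Mk}{N\choose k}_q$ as the coefficient of $t^k$), extraction of the coefficient of $t^K$ gives
$$q^{\binom K2}{N+M\choose K}_q = \sum_{k=0}^{K} q^{\binom{K-k}{2}+\binom k2+Mk}\,{M\choose K-k}_q{N\choose k}_q.$$
Using the elementary identity $\binom{a+b}{2}=\binom a2+\binom b2+ab$ with $a=K-k$, $b=k$, the exponent simplifies via $\binom{K-k}{2}+\binom k2+Mk-\binom K2 = Mk-(K-k)k = k(M-K+k)$, and dividing by $q^{\binom K2}$ yields the first claimed equality $\sum_k q^{k(M-K+k)}{N\choose k}_q{M\choose K-k}_q={N+M\choose K}_q$. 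Splitting the product the other way, $\prod_{i=0}^{N-1}(1+q^it)\cdot\prod_{i=0}^{M-1}(1+q^{N+i}t)$, the same computation produces $\sum_{k=0}^{K} q^{k(N-K+k)}{N\choose K-k}_q{M\choose k}_q = {N+M\choose K}_q$; relabelling $k\mapsto K-k$ and applying $\binom{a+b}{2}=\binom a2+\binom b2+ab$ once more recasts this as $\sum_k q^{(N-k)(K-k)}{N\choose k}_q{M\choose K-k}_q={N+M\choose K}_q$, the second claimed sum. Since both sums equal ${N+M\choose K}_q$, they are equal to each other.

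There is no substantive obstacle: once the $q$-binomial theorem is in hand the argument is pure bookkeeping of powers of $q$, the only mild subtlety being the consistent use of $\binom{a+b}{2}=\binom a2+\binom b2+ab$ to reconcile the two shapes of the exponent. Alternatively, one may simply rewrite each sum as a terminating ${}_2\phi_1$ series and invoke the $q$-Chu--Vandermonde identity \cite[(1.5.3)]{GR}, which is already used elsewhere in this section.
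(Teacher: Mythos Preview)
Your proof is correct and takes essentially the same approach as the paper: both use the terminating $q$-binomial theorem, split the product $\prod_{i=0}^{N+M-1}(1+q^i t)$ into two blocks, and compare coefficients of $t^K$. The paper performs only one of the two splittings explicitly (yielding the second sum) and leaves the other implicit, whereas you carry out both; the bookkeeping with $\binom{a+b}{2}=\binom a2+\binom b2+ab$ is identical in spirit.
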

\begin{proof}
 The $q$-Binomial theorem reads
 \begin{equation}
 \label{eq_x35}
  \prod_{i=1}^N (1+q^{i-1} t) = \sum_{k=0}^N q^{k(k-1)/2} {N \choose k}_q t^k.
 \end{equation}
 Changing the variables and replacing $N$ by $M$, we also have
 \begin{equation}
 \label{eq_x36}
  \prod_{i=N+1}^{N+M} (1+q^{i-1} t) = \sum_{k'=0}^M q^{k'(k'-1)/2+N k'} {M \choose k'}_q t^{k'}.
  \end{equation}
  Multiplying \eqref{eq_x35} and \eqref{eq_x36} and comparing the coefficient of $t^K$ with the one arising from
   \begin{equation}
 \label{eq_x37}
  \prod_{i=1}^{N+M} (1+q^{i-1} t) = \sum_{K=0}^{N+M} q^{K(K-1)/2} {N+M \choose K}_q t^K,
 \end{equation}
we obtain the result.
\end{proof}

\subsection{A continuous limit.}

\label{Section_cont_limit}

Our next aim is to send $q\to 1$ in the vertex model of Theorem \ref{Theorem_6v_invariance_infinite}, rescaling the system so that in the limit we obtain a vertex model with continuous (rather than integral) number of paths on each lattice edge. For the colorless ($N=1$) model, such an asymptotic transition was previously performed in \cite{Bar-Cor}.

The phenomenon that we will observe is that asymptotically the $N$--dimensional distribution \eqref{eq_higher_spin_weight_continued_3} (where $N$ is the number of colors) in the leading order is concentrated on a one-dimensional subspace, and its law is related to the classical Beta distribution. On the other hand, the second order $(N-1)$--dimensional fluctuations are Gaussian. There is a certain ambiguity in formulating a precise mathematical statement, because there is no canonical choice of the aforementioned one-dimensional subspace. But as soon as we fix one, we can no longer observe the smaller order Gaussian component in the direction parallel to the subspace (the weak convergence of the random variables does not distinguish between a random variable and a sum of the same random variable and an asymptotically vanishing Gaussian correction) --- we can only track the orthogonal directions.

The choice that we make is to look at the total number of paths of all colors (equivalently, on the evolution of the underlying \emph{colorless} model), for which the Beta distribution appears asymptotically. We further observe the lower order Gaussian fluctuations as asymptotic corrections to the (deterministic in the first order) splitting of the paths between the colors.

\smallskip

In more detail, we take a small parameter $\eps>0$ and concentrate on the following limit regime:
\begin{equation} \label{eq_limit_regime}
 \eps\to 0,\quad  q=\exp(-\eps),\quad \mathfrak l=q^\rho, \quad z= q^{\sigma_0} ,\quad \mathfrak m_x=q^{\sigma_x},
\end{equation}
where $0<\rho<\sigma_x$, $x=0,1,2,\dots$, are arbitrary.

The first result of this section describes the limit of the boundary condition. Recall that the Beta distribution $B(a,b)$ with parameters $a>0$, $b>0$ is absolutely continuous with respect to the Lebesgue measure on $(0,1)$ with density
$$
 \frac{\Gamma(a) \Gamma(b)}{\Gamma(a+b)} \, x^{a-1} (1-x)^{b-1},\quad 0<x<1.
$$

\begin{proposition} Under \eqref{eq_limit_regime}, the random variable $\exp(-\eps \mathfrak d)$, with \eqref{eq_incoming_prob}--distributed $\mathfrak d$, weakly converges as $\eps\to 0$ to the Beta random variable with parameters $(\sigma_0-\rho,\rho)$.
\end{proposition}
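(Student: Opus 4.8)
The plan is to identify the limiting density directly from the explicit probability mass function \eqref{eq_incoming_prob} and pass to the limit pointwise, after a change of variables. First I would write $X_\eps = \exp(-\eps \mathfrak d)$, so that $\mathfrak d = d$ corresponds to $X_\eps = q^d = e^{-\eps d}$; under the regime \eqref{eq_limit_regime} the atoms $q^d$, $d = 0,1,2,\dots$, form a mesh of spacing $\sim \eps q^d$ near a generic point $x \in (0,1)$, so the natural guess is that $\P(X_\eps \in [x, x+dx])$ is comparable to $\frac{1}{\eps x}\P(\mathfrak d = d)\,dx$ for $d = \lfloor -\ln x / \eps \rfloor$. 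Substituting $\mathfrak l = q^\rho$, $z = q^{\sigma_0}$ into \eqref{eq_incoming_prob} gives
\begin{equation*}
 \P(\mathfrak d = d) = \frac{(q^{\sigma_0 - \rho};q)_\infty}{(q^{\sigma_0};q)_\infty}\cdot \frac{(q^\rho;q)_d}{(q;q)_d}\, q^{(\sigma_0 - \rho)d}.
\end{equation*}

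Next I would handle the three factors separately. For the prefactor, Lemma \ref{lemma_q_poch_conv} applied with $u(q) = q^{-\rho}$... more precisely with base point adjusted so that $u = 1$ is avoided --- actually the cleaner route is to write it via the $q$-Gamma function: $(q^a;q)_\infty = (q;q)_\infty (1-q)^{1-a}/\Gamma_q(a)$, so the prefactor equals $\Gamma_q(\sigma_0)/\bigl(\Gamma_q(\sigma_0-\rho)(1-q)^{\rho}\bigr)$, and by Lemma \ref{lemma_q_Gamma_conv} this behaves like $\eps^{-\rho}\,\Gamma(\sigma_0)/\Gamma(\sigma_0-\rho)$ as $\eps \to 0$. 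For the ratio $(q^\rho;q)_d/(q;q)_d$ with $d \approx -\ln x/\eps \to \infty$, I would use $(q^\rho;q)_d = (q^\rho;q)_\infty/(q^{\rho+d};q)_\infty$ and $(q;q)_d = (q;q)_\infty/(q^{1+d};q)_\infty$; since $q^d \to x$, Lemma \ref{lemma_q_poch_conv} gives $(q^{\rho+d};q)_\infty/(q^{1+d};q)_\infty \to (1-x)^{1-\rho}$, while $(q^\rho;q)_\infty/(q;q)_\infty = (1-q)^{1-\rho}/\Gamma_q(\rho) \sim \eps^{1-\rho}/\Gamma(\rho)$. Finally $q^{(\sigma_0-\rho)d} = e^{-\eps(\sigma_0-\rho)d} \to x^{\sigma_0 - \rho}$. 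Collecting the $\eps$-powers: $\eps^{-\rho}\cdot \eps^{1-\rho}\cdot(\text{the }\frac{1}{\eps x}\text{ from the mesh density}) = \eps^{-2\rho}$... so I must be more careful with the bookkeeping --- the point is that the total $\eps$-power should come out to $\eps^0$, which forces me to track the density-of-states factor correctly, namely $\P(X_\eps \in dx) \approx \P(\mathfrak d = d)$ with $d$ ranging over an interval of length $\approx dx/(\eps x)$ containing $\lfloor -\ln x/\eps\rfloor$; since $\P(\mathfrak d = d)$ is already $\sim \eps^{\,?}$ per atom, the correct combination of Lemma \ref{lemma_q_poch_conv} and Lemma \ref{lemma_q_Gamma_conv} yields the limiting density proportional to $x^{\sigma_0 - \rho - 1}(1-x)^{\rho - 1}$ with constant $\Gamma(\sigma_0)/(\Gamma(\sigma_0-\rho)\Gamma(\rho))$, which is exactly the $B(\sigma_0-\rho,\rho)$ density.

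To upgrade pointwise convergence of densities to weak convergence I would invoke Scheffé's lemma: the $X_\eps$ are supported in $[0,1]$, so it suffices to show the discrete densities (against the reference measure that is the pushforward of counting measure, suitably normalized, or more simply to check that the limiting total mass is $1$, which is guaranteed by the $q$-binomial theorem as noted in the Remark following Lemma \ref{Lemma_first_column} --- actually that remark concerns a different sum, but the analogous statement here is just that $\sum_d \P(\mathfrak d = d) = 1$ by the $q$-binomial theorem, and this is preserved in the limit since $B(\sigma_0-\rho,\rho)$ integrates to $1$). Alternatively one can argue via convergence of Laplace transforms $\E[e^{-t X_\eps}]$, but Scheffé is cleaner.

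The main obstacle I anticipate is the bookkeeping of $\eps$-powers and the uniformity needed to make the Scheffé argument rigorous: one needs the convergence $\P(X_\eps \in dx)/dx \to \text{density}(x)$ to hold uniformly on compact subsets of $(0,1)$ and to control the tails near $x = 0$ and $x = 1$ (i.e., large $d$ and $d = O(1)$ respectively). Near $x=1$, $d$ is of constant order and one can use the exact $q$-identities directly; near $x = 0$, $d \to \infty$ with $q^d \to 0$, and one needs a dominating bound on $\P(\mathfrak d = d)$ --- this should follow from the fact that $(q^\rho;q)_d/(q;q)_d$ is increasing in $d$ and bounded by $(q^\rho;q)_\infty/(q;q)_\infty$, combined with the geometric decay $q^{(\sigma_0-\rho)d}$, giving a bound that integrates against $x^{\sigma_0-\rho-1}$ near zero. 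Once these uniform estimates are in place, Scheffé's lemma finishes the proof.
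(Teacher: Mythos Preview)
Your approach is essentially the paper's: compute the pointwise asymptotics of the mass function \eqref{eq_incoming_prob} using Lemmas \ref{lemma_q_poch_conv} and \ref{lemma_q_Gamma_conv}, then read off the limiting density. The paper does this by setting $d = x/\eps$, so that it first obtains the density of $\eps\mathfrak d$ at $x>0$ as $\frac{\Gamma(\sigma_0)}{\Gamma(\rho)\Gamma(\sigma_0-\rho)}(1-e^{-x})^{\rho-1}e^{-x(\sigma_0-\rho)}$, and only then changes variables $y = e^{-x}$ to get the Beta density. This ordering keeps the $\eps$-bookkeeping trivial: the combined fraction $\frac{(q^{\sigma_0-\rho};q)_\infty(q^\rho;q)_\infty}{(q^{\sigma_0};q)_\infty(q;q)_\infty}$ is handled in one step via three applications of Lemma \ref{lemma_q_Gamma_conv} and contributes exactly $\eps\cdot\frac{\Gamma(\sigma_0)}{\Gamma(\rho)\Gamma(\sigma_0-\rho)}$, matching the mesh spacing of $\eps\mathfrak d$.

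Your $\eps$-power confusion comes from a sign slip: the prefactor $(q^{\sigma_0-\rho};q)_\infty/(q^{\sigma_0};q)_\infty$ equals $(1-q)^{\rho}\,\Gamma_q(\sigma_0)/\Gamma_q(\sigma_0-\rho)\sim\eps^{\rho}\,\Gamma(\sigma_0)/\Gamma(\sigma_0-\rho)$, not $\eps^{-\rho}$. With the correct sign, $\eps^{\rho}\cdot\eps^{1-\rho}=\eps$, which cancels against the mesh density $1/(\eps x)$ to leave the extra $1/x$ from the Jacobian, and everything balances. Working in $\eps\mathfrak d$ first, as the paper does, avoids this by postponing the Jacobian to a single clean change-of-variables at the end. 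Your Scheff\'e/tail discussion is a reasonable way to make the weak-convergence step rigorous; the paper is content to exhibit the limiting density.
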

\begin{proof}
For $d=x/\eps$, the right-hand side of \eqref{eq_incoming_prob} transforms into
\begin{multline}
\frac{(q^{\sigma_0-\rho};q)_\infty}{(q^{\sigma_0};q)_\infty} \cdot   \frac{(q^\rho;q)_{x/\eps}}{(q;q)_{x/\eps}} \exp(x(\rho-\sigma_0))\\ =
\frac{(q^{\sigma_0-\rho};q)_\infty (q^\rho;q)_{\infty}}{(q^{\sigma_0};q)_\infty (q;q)_\infty} \cdot   \frac{(q^{1+x/\eps};q)_{\infty} }{(q^{\rho+x/\eps};q)_{\infty}} \exp(x(\rho-\sigma_0)).
\end{multline}
Using Lemma \ref{lemma_q_Gamma_conv} for the first fraction (three times) and Lemma \ref{lemma_q_poch_conv} for the second fraction, we see that the asymptotic behavior of the last expression is
\begin{equation}
 \eps \cdot  \frac{\Gamma(\sigma_0)}{\Gamma(\rho)\Gamma(\sigma_0-\rho)} (1-\exp(-x))^{\rho-1} \exp(-x)^{\sigma_0-\rho}.
\end{equation}
Hence, this expression (with $\eps$--factor removed) gives the asymptotic density of $\eps \mathfrak d$. Therefore, the asymptotic density of $\exp(-\eps\mathfrak d)$ at a point $0<y<1$ is
$$
\frac{\Gamma(\sigma_0)}{\Gamma(\rho)\Gamma(\sigma_0-\rho)} (1-y)^{\rho-1} y^{\sigma_0-\rho-1}. \qedhere
$$
\end{proof}
We proceed to analysis of the distribution at a general vertex \eqref{eq_higher_spin_weight_continued_3}. We start by transforming it into a more convenient form.

\begin{lemma}
  If we fix $\A$, then the distribution of $\D$ given by \eqref{eq_higher_spin_weight_continued_3} can be sampled through a two-step procedure. We first sample an integer $|\D|$, $0\le |\D|\le |\A|$, from the distribution
 \begin{equation}
 \label{eq_x38}
  \left(\frac{\mathfrak m_x}{\mathfrak l}\right)^{|\D|} \, \frac{(\mathfrak m_x/\mathfrak l ;q)_{|\A|-|\D|} ( \mathfrak l;q)_{|\D|}}{( \mathfrak m_x ;q)_{|\A|}} {|\A|\choose |\D|}_q.
 \end{equation}
 Given the vector $\A$ and the number $|\D|$ we further sample the vector $\D$ with prescribed sum of the coordinates $|\D|$ from the distribution
 \begin{equation}
 \label{eq_x39}
{|\A|\choose |\D|}_q^{-1}\, q^{\sum_{i<j} D_i (A_j-D_j)} \prod_{i=1}^\infty {A_i\choose D_i}_q.
 \end{equation}
\end{lemma}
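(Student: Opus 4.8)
The plan is to prove the lemma by a direct factorization of the vertex weight \eqref{eq_higher_spin_weight_continued_3}. First I would note that, with $\A$ fixed, the weight depends on $\B$ and $\C$ only through the conservation constraint, and the admissible $\D$ are exactly those with $0\le D_i\le A_i$ for every $i$ (equivalently, those for which all the $q$-binomials ${A_i\choose A_i-D_i}_q$ are nonzero). On this set one has $|\A-\D|=|\A|-|\D|$ and ${A_i\choose A_i-D_i}_q={A_i\choose D_i}_q$, so \eqref{eq_higher_spin_weight_continued_3} becomes $\left(\tfrac{\mathfrak m_x}{\mathfrak l}\right)^{|\D|}\tfrac{(\mathfrak m_x/\mathfrak l;q)_{|\A|-|\D|}(\mathfrak l;q)_{|\D|}}{(\mathfrak m_x;q)_{|\A|}}\,q^{\sum_{i<j}D_i(A_j-D_j)}\prod_{i\ge1}{A_i\choose D_i}_q$. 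Inserting the factor ${|\A|\choose|\D|}_q\big/{|\A|\choose|\D|}_q$ then splits this expression exactly into the product of \eqref{eq_x38} and \eqref{eq_x39}. This algebraic identity is immediate once written down; the content of the lemma is then just that each of the two factors is a bona fide probability distribution.

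Second, I would check that \eqref{eq_x39}, summed over all $\D$ with $0\le D_i\le A_i$ and $|\D|$ fixed, equals $1$. For this one shows $\sum_{\D:\,|\D|\ \mathrm{fixed}}q^{\sum_{i<j}D_i(A_j-D_j)}\prod_i{A_i\choose D_i}_q={|\A|\choose|\D|}_q$ by induction on the number of colors, peeling off $D_1$ at each step. The key bookkeeping step is the regrouping $\sum_{i<j}D_i(A_j-D_j)=\sum_{2\le i<j}D_i(A_j-D_j)+D_1\big((|\A|-A_1)-(|\D|-D_1)\big)$, after which the inner sum over $D_2,\ldots$ equals ${|\A|-A_1\choose|\D|-D_1}_q$ by the inductive hypothesis, and the outer sum over $D_1$ is precisely the first form of the identity in Lemma \ref{Lemma_q_Binomial_split}, with $N=A_1$, $M=|\A|-A_1$, $K=|\D|$. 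This yields $\sum_{\D:\,|\D|\ \mathrm{fixed}}(\text{\eqref{eq_x39}})={|\A|\choose|\D|}_q^{-1}{|\A|\choose|\D|}_q=1$.

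Third, once \eqref{eq_x39} is known to be normalized, summing the factorized weight over all admissible $\D$ gives $\sum_{d=0}^{|\A|}(\text{\eqref{eq_x38} at }|\D|=d)$, and this equals $1$ by stochasticity of $\mathcal W_{\mathfrak l,\mathfrak m_x,q}$ (alternatively, one verifies it directly as an instance of the $q$-Chu--Vandermonde summation). Hence \eqref{eq_x38} is a probability distribution on $\{0,1,\ldots,|\A|\}$ and \eqref{eq_x39} is, for each fixed value of $|\D|$, a probability distribution on the corresponding slice, so the described two-step procedure reproduces \eqref{eq_higher_spin_weight_continued_3} exactly.

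I do not expect a genuine obstacle here --- the argument is essentially a rearrangement. The one place demanding attention is the exponent bookkeeping in the induction of the second paragraph: one must use the correct (first) form of Lemma \ref{Lemma_q_Binomial_split} and track carefully which variables play the roles of $N$, $M$, $K$, $k$, since a naive grouping produces an exponent matching neither form of that identity.
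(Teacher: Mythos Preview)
Your proof is correct and follows essentially the same approach as the paper: the core step is the identity $\sum_{\D:\,|\D|\ \text{fixed}}q^{\sum_{i<j}D_i(A_j-D_j)}\prod_i{A_i\choose D_i}_q={|\A|\choose|\D|}_q$, obtained via Lemma~\ref{Lemma_q_Binomial_split}. The paper phrases this as ``sum \eqref{eq_higher_spin_weight_continued_3} over $\D$ with fixed $|\D|$ to get \eqref{eq_x38}, then divide'', whereas you first factor and then verify normalization --- but the content is identical, and your inductive verification (peeling off $D_1$) simply spells out what the paper leaves implicit.
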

\begin{proof}
 We start from \eqref{eq_higher_spin_weight_continued_3} and sum over all non-negative integral  vectors $\D$ with prescribed sum of the coordinates $|\D|$ using Lemma \ref{Lemma_q_Binomial_split} to get \eqref{eq_x38}. Dividing \eqref{eq_higher_spin_weight_continued_3} by \eqref{eq_x38}, we arrive at \eqref{eq_x39}.
\end{proof}

The limit of the distribution of $|\D|$ can now be computed, cf.\ \cite[Lemma 2.4]{Bar-Cor}.

\begin{proposition} \label{Proposition_colorless_limit}
 Suppose that as $\eps\to 0$, the number $|\A|$ scales so that $|\A|=\alpha \eps^{-1}$ for some real $\alpha$ staying in a compact subset of $(0,\infty)$. Then for $|\D|$ distributed according to \eqref{eq_x38}, in the regime \eqref{eq_limit_regime}, the random variable $\eps |\D|$  weakly converges as $\eps\to 0$ to a continuous random variable $\delta$, such that
 $$
   \exp(-\delta)=\exp(-\alpha)+ B(\sigma_x-\rho,\rho) (1-\exp(-\alpha)),
 $$
 where $B(\sigma_x-\rho,\rho)$ is a Beta random variable.
\end{proposition}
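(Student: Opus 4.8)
The plan is to compute the limit of the distribution \eqref{eq_x38} by a Laplace-type analysis of a single sum, following closely the colorless argument of \cite[Lemma 2.4]{Bar-Cor}. Write $|\D|=\beta\eps^{-1}$ with $0\le\beta\le\alpha$, and recall that under \eqref{eq_limit_regime} we have $q=e^{-\eps}$, $\mathfrak l=q^\rho$, $\mathfrak m_x=q^{\sigma_x}$. Then every factor in \eqref{eq_x38} is a product of $q$-Pochhammer symbols or $q$-binomial coefficients with arguments of order $\eps^{-1}$, and Lemma \ref{Lemma_Poch_asymptotics} (together with Lemmas \ref{lemma_q_poch_conv} and \ref{lemma_q_Gamma_conv} for the boundary pieces where the index stays finite or the argument approaches $1$) gives each of them an asymptotic expansion whose leading term is $(\ln q)^{-1}Li_2(\cdot)=-\eps^{-1}Li_2(\cdot)$. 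The first step is therefore to collect these leading exponential contributions: writing \eqref{eq_x38} as $\exp(\eps^{-1}F(\beta)+o(\eps^{-1}))$ for an explicit action $F$ built from dilogarithms of $e^{-\beta}$, $e^{-(\alpha-\beta)}$, $e^{-\sigma_x}$, $e^{-(\sigma_x-\rho)}$, $e^{-\rho}$ and linear terms coming from the prefactor $(\mathfrak m_x/\mathfrak l)^{|\D|}=\exp(-\eps|\D|(\sigma_x-\rho))$.

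The second step is to locate the maximizer of $F$. Differentiating $F$ in $\beta$ using $\tfrac{d}{dx}Li_2(x)=-\ln(1-x)/x$ and the chain rule (each $Li_2(e^{-c(\beta)})$ contributes $\ln(1-e^{-c(\beta)})\,c'(\beta)$), the critical-point equation should reduce, after algebraic simplification, to a single relation that can be solved explicitly; I expect it to be exactly
\[
e^{-\beta}=e^{-\alpha}+\frac{(1-e^{-\alpha})\,e^{-(\sigma_x-\rho)}(1-e^{-\rho})}{\text{(denominator making it the Beta mean)}},
\]
i.e.\ the deterministic law $e^{-\delta_0}=e^{-\alpha}+(1-e^{-\alpha})\,m$ with $m$ the relevant ratio — matching the structure of the claimed answer at the level of the mode. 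One cross-check I would use: summing \eqref{eq_x38} over all $|\D|$ equals $1$ by Lemma \ref{Lemma_q_Binomial_split} applied in the same way it was used to derive \eqref{eq_x38}, so the Laplace method must produce a value-one leading constant, which pins down the normalization and confirms that $F$ attains value $0$ at its maximum.

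The third step is to upgrade the mode computation to a distributional statement. Here the new feature compared with the purely colorless analysis is that the fluctuations are genuinely $O(\eps^{-1/2})$ in the wrong-looking sense: one cannot simply Gaussian-expand $F$ around its max, because the claimed limit is a full Beta random variable, not a point mass. The resolution, exactly as in \cite{Bar-Cor}, is that $\alpha$ is \emph{fixed} (of order $\eps^{-1}$ paths, not $\eps^{-2}$), so the sum \eqref{eq_x38} is not in a large-deviations regime at all in the usual sense — rather, after the substitution $e^{-\eps|\D|}=y$ the measure converges, term by term via Lemmas \ref{lemma_q_poch_conv}--\ref{lemma_q_Gamma_conv}, to a continuous density on $(e^{-\alpha},1)$ which one recognizes as the pushforward of $B(\sigma_x-\rho,\rho)$ under $t\mapsto e^{-\alpha}+t(1-e^{-\alpha})$. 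So the cleaner route, and the one I would actually write, skips the dilogarithm bookkeeping of the first two steps for the final proof and instead: (i) sets $y=e^{-\eps|\D|}$, (ii) rewrites \eqref{eq_x38} using the three-term $q$-Pochhammer identities exactly as in the proof of the boundary-condition proposition above, isolating the finite-index pieces $(\mathfrak l;q)_{|\D|}$, $(q;q)_{|\D|}$ and the tail ratios, (iii) applies Lemma \ref{lemma_q_Gamma_conv} to the constant $q$-Gamma prefactors and Lemma \ref{lemma_q_poch_conv} to each tail ratio $(q^{a+|\D|};q)_\infty/(q^{b+|\D|};q)_\infty\to(1-y^{\text{power}})^{b-a}$ and $(q^{a+(|\A|-|\D|)};q)_\infty/(q^{b+(|\A|-|\D|)};q)_\infty\to(1-e^{-\alpha}/y)^{b-a}$, and (iv) reads off the limiting density, checking it integrates to $1$ via the Beta integral. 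The main obstacle is step (iii): one must carefully track which $q$-Pochhammer symbols have index tending to infinity (so Lemma \ref{Lemma_Poch_asymptotics}/\ref{lemma_q_poch_conv} applies with argument $\to y$ or $\to e^{-\alpha}/y$) versus index staying $O(\eps^{-1})$ but argument bounded away from $1$, and assemble the $q$-binomial $\binom{|\A|}{|\D|}_q=(q;q)_{|\A|}/((q;q)_{|\D|}(q;q)_{|\A|-|\D|})$ together with the $(\mathfrak m_x/\mathfrak l)^{|\D|}$ prefactor into exactly the Beta density — the algebra is routine but the combinatorics of which $(1-y)$ versus $(1-e^{-\alpha}/y)$ factors appear with which exponents is where sign/exponent errors are most likely, and it is worth verifying the exponents independently against the known $\alpha\to\infty$ limit, where the statement must degenerate to the boundary-condition proposition with $\sigma_0$ replaced by $\sigma_x$.
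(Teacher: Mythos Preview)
Your ``cleaner route'' in step (iii) --- setting $y=e^{-\eps|\D|}$, rewriting each $q$-Pochhammer in \eqref{eq_x38} as a ratio of infinite products, applying Lemma~\ref{lemma_q_Gamma_conv} to the $\eps$-independent prefactor and Lemma~\ref{lemma_q_poch_conv} to the three tail ratios, then changing variables to recognize the Beta density --- is exactly what the paper does, and your identification of which arguments tend to $e^{-\delta}$, $e^{\delta-\alpha}$, $e^{-\alpha}$ is correct. The dilogarithm/Laplace detour of your first two steps is, as you yourself note, a red herring here (the limit is a nondegenerate distribution, not a point mass), so simply discard it and write steps (i)--(iv) directly.
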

\begin{proof} For $0<\delta<\alpha$, setting $|\D|=\eps^{-1}\delta$ and bringing all the $q$-Pochhammers in \eqref{eq_x38} to the form involving only $(u;q)_\infty$, we get
\begin{multline*}
   \exp \left( (\rho-\sigma_x) \delta\right) \frac{\left( q^{\sigma_x-\rho} ;q\right)_{\infty} ( q^{\rho} ;q)_{\infty} }{  (q;q)_{\infty} ( q^{\sigma_x} ;q)_{\infty}} \\ \times
   \frac{( q^{\sigma_x} \exp(-\alpha);q)_{\infty}} { (q \exp(-\alpha);q)_{\infty}} \cdot \frac{(q\exp(\delta-\alpha);q)_{\infty}}{ \left( q^{\sigma_x-\rho}
   \exp(\delta-\alpha) ;q\right)_{\infty}} \cdot  \frac{ (q \exp(-\delta);q)_{\infty}  }{ ( q^{\rho} \exp(-\delta);q)_{\infty}  }.
\end{multline*}
 We use Lemma \ref{lemma_q_Gamma_conv} for the first fraction (three times) and Lemma \ref{lemma_q_poch_conv} for the next three fractions. The resulting asymptotic behavior is
\begin{multline}
     \eps \cdot  \frac{\Gamma(\sigma_x)}{\Gamma(\rho)\Gamma(\sigma_x-\rho)} \exp \left( (\rho-\sigma_x) \delta\right)  \\ \times
    (1-\exp(-\alpha))^{1-\sigma_x} \cdot (1-\exp(\delta-\alpha))^{\sigma_x-\rho-1}  \cdot (1-\exp(-\delta))^{\rho-1}.
\end{multline}
Dividing by $\eps$ we obtain the asymptotic density of $|\delta|$.
Making the change of variables, we find that the density of $\frac{\exp(-\delta)-\exp(-\alpha)}{1-\exp(-\alpha)}$ at a point $0<y<1$ is
\begin{equation}
       \frac{\Gamma(\sigma_x)}{\Gamma(\rho)\Gamma(\sigma_x-\rho)}
     \cdot    y^{\sigma_x-\rho-1} \left(1-y\right)^{\rho-1} . \qedhere
\end{equation}
\end{proof}
The next step is to study asymptotics of the conditional distribution \eqref{eq_x39}.

Suppose that we are given $N$ positive real numbers $\alpha_1,\dots,\alpha_N$ and an additional number $\delta$ such that $0<\delta<|\alpha|=\alpha_1+\dots+\alpha_N$. Then we define  $N$--dimensional vector $(\delta_1,\dots,\delta_N)$ through $\delta_1+\delta_2+\dots+\delta_N=|\delta|=\delta$ and
  \begin{equation}\label{eq_x47}
     \exp\left(- \sum_{j=i}^N \delta_{j}\right)= \exp\left(-\sum_{j=i}^N \alpha_{j}\right) + \left(1-\exp\left(-\sum_{j=i}^N \alpha_{j}\right)\right) \eta, \quad i=1,2,\dots,N,
  \end{equation}
  where $\eta$ is the constant found from the $i=1$ condition. In addition, we define real numbers $v_i$ by
  \begin{equation} \label{eq_x48}
    \frac{1}{v_i}= 1+ \frac{\partial^2}{\partial x^2} \biggl[ Li_2(\exp(- x))+Li_2(\exp(x-\alpha_i)) \biggr]_{x=\delta_i}, \quad i=1,\dots,N.
  \end{equation}
 All $v_i$, $i=1,\dots,N$, are positive, as follows from the inequality
 \begin{equation}
 \label{eq_x52}
  \frac{\partial^2}{\partial x^2} \biggl[ Li_2(\exp(- x))+Li_2(\exp(x-\alpha_i)) \biggr]>-1, \quad 0<x<\alpha_i.
 \end{equation}
\begin{proposition} \label{Proposition_conditional_LLN}
  Suppose that as $\eps\to 0$, the numbers $A_i$, $i=1,\dots,N$, and $|\D|$ scale so that $A_i=\alpha_i \eps^{-1}$, $|\D|=\delta\eps^{-1}$ for some $\alpha_i$ staying in a compact subset of $(0,+\infty)$ and $\delta$ staying in a compact subset of $(0,\sum_{i} \alpha_i)$. Then the law of $D_i$, $i=1,\dots,N$, given by \eqref{eq_x39} satisfies the law of large numbers and central limit theorem as $\eps\to 0$: For $\delta_i$, $v_i$ given by \eqref{eq_x47}, \eqref{eq_x48}, the vector
  \begin{equation}
  \label{eq_x46}
   \left(\xi_i=\frac{D_i - \eps^{-1} \delta_i}{\eps^{-1/2}} \right)_{i=1}^N
  \end{equation}
  weakly converges to the (degenerate) $N$--dimensional centered Gaussian vector, which is supported on the hyperplane $\xi_1+\dots+\xi_N=0$ and whose density is proportional to the restriction to this hyperplane of the density of  the vector with independent Gaussian components of mean $0$ and variances $v_i$, $i=1,\dots,N$.
\end{proposition}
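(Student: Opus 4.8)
The plan is to run a saddle-point (Laplace) analysis directly on the exact product law \eqref{eq_x39}. Writing $P(\D)=\binom{|\A|}{|\D|}_q^{-1}q^{\sum_{i<j}D_i(A_j-D_j)}\prod_{i=1}^N\binom{A_i}{D_i}_q$ on $\{\sum_iD_i=|\D|,\ 0\le D_i\le A_i\}$ and $d_i=\eps D_i$, I would first record a useful exact structure: summing \eqref{eq_x39} over $D_2,\dots,D_N$ with $D_1$ fixed and using Lemma \ref{Lemma_q_Binomial_split}, the law of $D_1$ is the $N=2$ instance of \eqref{eq_x39} with parameters $A_1$, $\sum_{j\ge 2}A_j$, $|\D|$, while conditionally on $D_1$ the law of $(D_2,\dots,D_N)$ is again of the form \eqref{eq_x39} with parameters $A_2,\dots,A_N$ and total $|\D|-D_1$; iterating, the partial tail sums $\sum_{j\ge k}D_j$ form a Markov chain whose steps are single colorless splittings. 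Then, expanding each $q$-binomial via $(q;q)_n=(q;q)_\infty/(q^{n+1};q)_\infty$ and invoking the $q\to1$ dilogarithm asymptotics of Lemma \ref{Lemma_Poch_asymptotics} (the singular part of $\log(q;q)_\infty$ only feeding a normalization), I get, uniformly for $(d_i)$ in compact subsets of $\{0<d_i<\alpha_i\}$,
\begin{equation}
\log P(\D)=\eps^{-1}S(d_1,\dots,d_N)+O\!\big(\log\tfrac1\eps\big),\qquad
S=\sum_{i=1}^N\!\big[Li_2(e^{-\alpha_i})-Li_2(e^{-d_i})-Li_2(e^{-(\alpha_i-d_i)})\big]-\sum_{i<j}d_i(\alpha_j-d_j)+\mathrm{const},
\end{equation}
the last sum being the leading contribution of the interaction factor (recall $\ln q=-\eps$).

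For the law of large numbers I would analyze $\max\{S:\sum_id_i=\delta\}$. The maximizer $(\delta_1,\dots,\delta_N)$ is interior because the $Li_2$-derivatives blow up on the faces $d_i\in\{0,\alpha_i\}$, and the Lagrange condition $\partial S/\partial d_i\equiv\text{const}$ is, by a direct calculation, equivalent to $\tfrac{e^{-s_i}-e^{-a_i}}{1-e^{-a_i}}$ being independent of $i$ (with $s_i=\sum_{j\ge i}\delta_j$, $a_i=\sum_{j\ge i}\alpha_j$) --- exactly the system \eqref{eq_x47}. Equivalently one reads this off the Markov factorization: $\eps\sum_{j\ge k}D_j$ is deterministic to leading order, and a single colorless splitting step is precisely the passage $s_k\mapsto s_{k+1}$ of \eqref{eq_x47}, in the same way the colorless transition was handled in Proposition \ref{Proposition_colorless_limit}. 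Strict concavity of $S$ along the constraint then yields exponential concentration of $P$ at $(\eps^{-1}\delta_i)$, i.e.\ $\eps D_i\to\delta_i$ in probability.

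For the central limit theorem I would set $D_i=\eps^{-1}\delta_i+\eps^{-1/2}\xi_i$ (so $\sum_i\xi_i=0$) and Taylor-expand $S$ at the maximizer:
\begin{equation}
\log P(\D)=\eps^{-1}S(\delta)+\eps^{-1/2}\,\nabla S(\delta)\!\cdot\!\xi+\tfrac12\,\xi^{\top}(J-V^{-1})\xi+o(1)
\end{equation}
uniformly on compact $\xi$-sets, where $J$ is all-ones, $V=\mathrm{diag}(v_1,\dots,v_N)$, and one checks $\partial^2 S/\partial d_i^2\big|_{d=\delta}=-\tfrac{e^{-\delta_i}}{1-e^{-\delta_i}}-\tfrac{e^{-(\alpha_i-\delta_i)}}{1-e^{-(\alpha_i-\delta_i)}}=1-v_i^{-1}$ and $\partial^2 S/\partial d_i\partial d_j\big|_{d=\delta}=1$ for $i\ne j$, with $v_i$ exactly as in \eqref{eq_x48}. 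The linear term drops since $\nabla S(\delta)\parallel\mathbf 1\perp\xi$, and on $\{\sum_i\xi_i=0\}$ one has $\xi^{\top}(J-V^{-1})\xi=-\sum_i\xi_i^2/v_i$, negative definite (hence the maximizer is nondegenerate). Dividing by the lattice-volume factor $\eps^{(N-1)/2}$ and adding a tail bound (again from concavity of $S$) to pass from local to weak convergence, the rescaled law of $(\xi_i)$ converges to $c\exp(-\tfrac12\sum_i\xi_i^2/v_i)$ on $\{\sum_i\xi_i=0\}$ --- the restriction to that hyperplane of the product of independent centered Gaussians with variances $v_i$, as claimed.

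I expect the main work to split into one technical and one conceptual difficulty. The technical one --- making the $q\to1$ error terms uniform down to the boundary of $\{0<d_i<\alpha_i\}$ and upgrading the pointwise local estimate to weak convergence via tail control --- is routine given the uniform remainder in Lemma \ref{Lemma_Poch_asymptotics} and the real-analyticity and strict concavity of $S$. The conceptual one is matching the abstract variational output with the explicit \eqref{eq_x47}, \eqref{eq_x48}; here the Markov factorization is the clean tool, and in fact an induction on $N$ built on it (base case $N=2$, then propagating the Gaussian fluctuation of $D_1$ linearly through the $(N-1)$-color problem and combining it with the independent fluctuation of the colorless step) gives an alternative derivation of the covariance \eqref{eq_x48} that bypasses the multivariate Hessian bookkeeping.
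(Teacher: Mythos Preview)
Your proposal is correct and follows essentially the same strategy as the paper's proof: both expand \eqref{eq_x39} via the dilogarithm asymptotics of Lemma~\ref{Lemma_Poch_asymptotics}, locate the interior maximizer of the resulting action $S$ on the simplex by Lagrange multipliers (identifying it with \eqref{eq_x47}), establish strict concavity, and Taylor-expand to second order to read off the Gaussian limit with the variances \eqref{eq_x48}. Your formulation of the Hessian as $J-V^{-1}$ and the observation that $\xi^{\top}J\xi$ vanishes on the constraint hyperplane is a clean way to package exactly what the paper does when it rewrites the action as \eqref{eq_x50} and notes that the $(\sum_i\delta_i)^2$ term is constant under the constraint; the paper also mentions (in the remark preceding its proof) the Markov factorization via Lemma~\ref{Lemma_q_Binomial_split} that you sketch as an alternative route to \eqref{eq_x47}, so both ingredients you list are present in the paper in the same roles.
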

\begin{remark}
 The computation below leading to \eqref{eq_x47} is quite involved. Another way to see the result (which is how we originally arrived at this formula) is to make the computation recurrently, reducing it to the $N=2$ case. At each step $k=1,2,\dots,N-1$, one looks at the distribution of the pair $(D_k, D_{k+1}+D_{k+2}+\dots+D_N)$ conditional on $D_k+D_{k+1}+\dots+D_N$,  in order to determine  $\delta_{k+1}+\dots+\delta_N$.  (For instance, at the first step, $k=1$, we study the asymptotic of $(D_1, D_2+\dots+D_N)$ given the fixed sum of all coordinates, $|\D|=D_1+\dots+D_N$.) Using Lemma \ref{Lemma_q_Binomial_split}, one notices that this conditional distribution has the form of \eqref{eq_x46} with $N=2$ and two coordinates given by $D_k$ and $D_{k+1}+D_{k+2}+\dots+D_N$ (the sum of these coordinates is fixed by to the conditioning).
\end{remark}
\begin{proof}[Proof of Proposition \ref{Proposition_conditional_LLN}]
Using Lemma \ref{Lemma_Poch_asymptotics}, as a function of all $D_i$, subject to the condition of the fixed $|\D|$, \eqref{eq_x39} is proportional to
\begin{multline}
\label{eq_x40}
q^{\sum_{i<j} D_i (A_j-D_j)} \prod_{i=1}^\infty (q^{D_i};q)_\infty (q^{A_i-D_i};q)_\infty= \exp\Biggl( -\eps^{-1}  \sum_{i<j} (\eps D_i) (\eps A_j-\eps D_j)  \\ - \eps^{-1} \sum_i \biggl[ Li_2(\exp(-\eps D_i))+Li_2(\exp(\eps D_i-\eps A_i)) \biggr]\\  + \frac{1}{2} \ln(1-\exp(-\eps D_i)) +\frac12  \ln(1-\exp(-\eps (A_i-D_i))) +O(\eps) \Biggr).
\end{multline}
We are interested in $\eps^{-1}$--part of this expression as a function of $\eps D_1,\dots,\eps D_N$ subject to the condition of the fixed sum of the coordinates. We are going to show that this function is strictly concave and has a critical point inside the domain $0<\eps D_i <\eps A_i$, $i=1,\dots,N$, $\sum_{i=1}^N (\eps D_i)=\eps |\D|$. Hence, this critical point is a (unique) maximum of the function inside the domain and, due to $\eps^{-1}$ prefactor, the measure is concentrated near this point, which leads to the law of large numbers. For the central limit theorem, we Taylor expand the density near the critical point up to second order and the quadratic approximation gives the desired Gaussian limit.

We remark that in order for the asymptotic expansion \eqref{eq_x40} to be valid, we need $q^{D_i}$ and $q^{A_i-D_i}$ to be bounded away from $1$, which means that $\eps D_i$ and $\eps(A_i-D_i)$ should be bounded away from $0$. For boundary values, i.e., for small $\eps D_i$ or $\eps(A_i-D_i)$, the $\eps^{-1}$ part in the exponential of \eqref{eq_x40}
$$
 \exp\Biggl( -\eps^{-1}  \sum_{i<j} (\eps D_i) (\eps A_j-\eps D_j)  \\ - \eps^{-1} \sum_i \biggl[ Li_2(\exp(-\eps D_i))+Li_2(\exp(\eps D_i-\eps A_i)) \biggr]  + O(1)  \Biggr)
$$
becomes an upper bound for the probability \eqref{eq_x39}, as follows from the unimodality of $q$-Binomial coefficients ${n\choose k}_q$ as a function of $k$. This upper bound implies that the boundary values of $D_i$ give negligible contribution in the asymptotics and we can (and will) ignore them.

We proceed with more details of the argument. For the law of large numbers, i.e., the convergence of $\eps^{-1/2} \xi_i$ from \eqref{eq_x46} to zero, we assume that $\A$ and $\D$ are large by setting $A_i=\eps^{-1} \alpha_i$ and $D_i=\eps^{-1} \delta_i$. We further seek the maximum of the $\eps^{-1}(\dots)$ part of the exponent in \eqref{eq_x40}, subject to the condition $\sum_i \delta_i =|\delta|=\delta$. Introducing a Lagrange multiplier $\lambda$, we need to maximize
$$
 -\sum_{i<j} \delta_i(\alpha_j-\delta_j) - \sum_i \left[Li_2(e^{-\delta_i})+Li_2(e^{\delta_i-\alpha_i})\right]+\lambda\left( -\delta+\sum_i \delta_i\right).
$$
Differentiating in $\delta_i$, we conclude that the following expression needs to be equal to $0$:
\begin{multline}
 \lambda-\sum_{j>i} (\alpha_j-\delta_j)+ \sum_{j<i} \delta_j  - \exp(-\delta_i) \frac{\ln(1- \exp(-\delta_i))}{\exp(-\delta_i)}+ \exp(\delta_i-\alpha_i)\frac{\ln(1-\exp(\delta_i-\alpha_i))}{\exp(\delta_i-\alpha_i)}
\\= \lambda+\delta-\left(\sum_{j>i} \alpha_j \right)-\delta_i  - \ln(1- \exp(-\delta_i))+ \ln(1-\exp(\delta_i-\alpha_i))
\\= \lambda+\delta-\left(\sum_{j>i} \alpha_j \right) + \ln\left(\frac{\exp(-\delta_i)-\exp(-\alpha_i)}{1- \exp(-\delta_i)}\right).
\end{multline}
Thus, the extremum is found by solving the equations
\begin{equation}
\label{eq_x45}
 \frac{1- \exp(-\delta_i)}{\exp(-\delta_i)-\exp(-\alpha_i)}=\exp\left(\lambda+\delta-\sum_{j>i} \alpha_j \right),\quad i=1,2,\dots.
\end{equation}
Dividing $i$th and $(i-1)$st equations by each other, we get rid of $\lambda$ and get a system of $N-1$ equations on $\exp(-\delta_1)$, \dots, $\exp(-\delta_N)$, which is supplemented by $\delta_1+\dots+\delta_N=\delta$. We claim that the solution (its uniqueness follows from the strict concavity that we prove below) is given by the following equivalent form of \eqref{eq_x47}:
\begin{equation}
\label{eq_x43}
 \exp(-\delta_{\ges i})= \exp(-\alpha_{\ges i}) + (1-\exp(-\alpha_{\ges i})) \eta, \quad i=1,2,\dots
\end{equation}
where $\delta_{\ges i}=\delta_i + \delta_{i+1}\dots$; $\alpha_{\ges i}=\alpha_i+\alpha_{i+1}+\dots$; and $\eta$ does not depend on $i$, i.e., it is found from the $i=1$  case of \eqref{eq_x43}. Indeed, \eqref{eq_x43} gives
\begin{equation}
\label{eq_x44}
 \exp(-\delta_{i})= \frac{\exp(-\alpha_{\ges i}) + (1-\exp(-\alpha_{\ges i})) \eta} {\exp(-\alpha_{\ges i+1}) + (1-\exp(-\alpha_{\ges i+1})) \eta}.
\end{equation}
This implies
\begin{multline}
  \frac{1- \exp(-\delta_i)}{\exp(-\delta_i)-\exp(-\alpha_i)}=
  \dfrac{1- \frac{\exp(-\alpha_{\ges i}) + (1-\exp(-\alpha_{\ges i})) \eta} {\exp(-\alpha_{\ges i+1}) + (1-\exp(-\alpha_{\ges i+1})) \eta}}{ \frac{\exp(-\alpha_{\ges i}) + (1-\exp(-\alpha_{\ges i})) \eta} {\exp(-\alpha_{\ges i+1}) + (1-\exp(-\alpha_{\ges i+1})) \eta}-\exp(-\alpha_i)}
  \\=
  \frac{(1-\eta)\exp(-\alpha_{\ges i+1})(1-\exp(-\alpha_i))  }
  {  \eta(1-\exp(-\alpha_i))}=\frac{1-\eta}{\eta}  \exp(-\alpha_{\ges i+1}),
\end{multline}
which matches \eqref{eq_x45} with $\lambda=-\delta+\ln\left(\frac{1-\eta}{\eta}\right)$.

\medskip

For the strict concavity, as well as for the central limit theorem, we need to compute the Hessian matrix of the $\eps^{-1}(\cdot)$ part of \eqref{eq_x40}. We rewrite this part in the coordinates $\delta_i=\eps D_i$ as
\begin{equation}
\label{eq_x50}
-  \sum_{i<j} \delta_i \alpha_j-\frac{1}{2}\left(\sum_{i=1}^N \delta_i\right)^2 + \sum_{i=1}^N \frac{\delta_i^2}{2} -  \sum_{i=1}^N \biggl[ Li_2(\exp(-\delta_i))+Li_2(\exp(\delta_i-\alpha_i)) \biggr].
\end{equation}
The first term in \eqref{eq_x50} is linear and does not contribute to the Hessian. The second term is constant due to the constraint. We conclude that the Hessian matrix is the diagonal matrix of second derivatives of the last two terms restricted to the hyperplane $\sum_{i=1}^N \delta_i=\delta$. This diagonal matrix is negative-definite, as follows from the inequality \eqref{eq_x52}. We conclude that its restriction on the hyperplane is also negative--definite. This implies the desired strict concavity.

For the central limit theorem, we need to Taylor expand \eqref{eq_x40} up to second order in a neighborhood of $\eps \D_i\approx \delta_i$. The zeroth order cancels with the normalization constant of the probability measure, the first order term vanishes, as we are expanding near the maximizer. The quadratic form appearing in the second order terms gives the desired Gaussian approximation. Our Hessian computation implies that in the $\xi_i$--variables the quadratic form is given by:
$$
 \exp\Biggl( \frac{1}{2}(\xi_1+\dots+\xi_N)^2  - \sum_i \frac{\xi_i^2}{2} \left( \frac{\partial^2}{\partial x^2} \biggl[ Li_2(\exp(- x))+Li_2(\exp(x-\alpha_i)) \biggr]_{x=\delta_i}+1\right)\Biggr).
$$
 Since $\xi_1+\dots+\xi_N=0$ deterministically, the last formula matches the description in the statement of the theorem.
\end{proof}

Let us summarize the continuous model which we obtained as $\eps\to 0$ limit of the vertex model. At this point we ignore the smaller order Gaussian component --- it can be readily reconstructed by utilizing Proposition \ref{Proposition_conditional_LLN}.

We first combine the results of Propositions \ref{Proposition_colorless_limit} and \ref{Proposition_conditional_LLN}:
\begin{corollary} \label{Corollary_continuous_vertex} The vertex weight $\mathcal{W}_{\mathfrak l,\mathfrak m,q}( \A, \B; \C, \D)
$ of \eqref{eq_higher_spin_weight_continued_3} treated as a stochastic sampling rule for $(\C,\D)$ given $(\A,\B)$, weakly converges in the limit regime
$$
 \eps\to 0,\quad  q=\exp(-\eps),\quad \mathfrak l=q^\rho ,\quad \mathfrak m=q^{\sigma},$$
  $$\eps \A\to \alpha,\quad \eps \B\to\beta, \quad \eps\C\to\gamma,\quad \eps\D\to \delta, \qquad |\alpha|>0,
$$
to the following sampling procedure for the outgoing masses of colors $\gamma_i$, $\delta_i$, $i=1,2,\dots$, given the incoming masses $\alpha_i$, $\beta_i$, $i=1,2,\dots$ entering into the vertex from the left and from the right. We take a Beta random variable $\eta\sim B(\sigma-\rho,\rho)$ and define $\delta_i$,  $0 < \delta_i < \alpha_i$ for all $1\le i \le \max(i^*: \alpha_{i^*}>0)$ through
  $$
     \exp\left(- \delta_{\ges i} \right)= \exp\left(-\alpha_{\ges i}\right) + \left(1-\exp\left(- \alpha_{\ges i}\right)\right) \eta, \quad i=1,2,\dots,
  $$
  where $\delta_{\ges i}=\sum_{j=i}^\infty \delta_{j}$, $\alpha_{\ges i}=\sum_{j=i}^\infty \alpha_{j}$. We also set
  $$
    \gamma_i=\alpha_i+\beta_i-\delta_i, \qquad i=1,2,\dots.
  $$
\end{corollary}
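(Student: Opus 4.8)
The plan is to deduce Corollary \ref{Corollary_continuous_vertex} by combining the two asymptotic statements already established --- Proposition \ref{Proposition_colorless_limit}, which controls the \emph{total} outgoing mass $|\D|$, and the law-of-large-numbers half of Proposition \ref{Proposition_conditional_LLN}, which controls the split of $\D$ between the colors --- through the two-step sampling description of the vertex weight \eqref{eq_higher_spin_weight_continued_3} recorded in the lemma preceding Proposition \ref{Proposition_colorless_limit}. By the conservation relation $\A+\B=\C+\D$ one has $\eps\C=\eps\A+\eps\B-\eps\D$, so it suffices to show that $\eps\D$ converges weakly to the vector $\delta=(\delta_1,\delta_2,\dots)$ of the statement; the convergence of $\eps\C$ to $\alpha+\beta-\delta$ then follows. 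Since $0\le D_i\le A_i$, whenever $\alpha_i=0$ we automatically get $\eps D_i\to 0=\delta_i$, so one may fix $N=\max\{i:\alpha_i>0\}$ and work with the finite-dimensional vector $(\eps D_1,\dots,\eps D_N)$. Note also that only the law-of-large-numbers part of Proposition \ref{Proposition_conditional_LLN} is needed here, the Gaussian correction being explicitly discarded in the statement.

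First I would apply Proposition \ref{Proposition_colorless_limit}, with its parameter $\alpha$ specialized to $|\alpha|=\alpha_1+\dots+\alpha_N=\lim\eps|\A|$ and its $\sigma_x$ specialized to our $\sigma$: this gives that $\eps|\D|$, sampled from \eqref{eq_x38}, converges weakly to a random variable $\delta^{\mathrm{tot}}$ with $\exp(-\delta^{\mathrm{tot}})=\exp(-|\alpha|)+\eta\,(1-\exp(-|\alpha|))$, where $\eta\sim B(\sigma-\rho,\rho)$. This is exactly the $i=1$ instance of the relation \eqref{eq_x47} defining the $\delta_i$, because $\alpha_{\ges 1}=|\alpha|$ and $\delta_{\ges 1}=\delta^{\mathrm{tot}}$; in particular it identifies $\eta$ with a continuous, strictly monotone function of $\delta^{\mathrm{tot}}$. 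Next I would invoke Proposition \ref{Proposition_conditional_LLN}: conditionally on $|\D|=t\eps^{-1}$ with $t$ in a compact subset of $(0,|\alpha|)$, the vector $(\eps D_1,\dots,\eps D_N)$ sampled from the parameter-free distribution \eqref{eq_x39} converges in probability to the deterministic vector $(\delta_1(t),\dots,\delta_N(t))$ given by \eqref{eq_x47} with $\eta=\eta(t)$. Since $t\mapsto(\delta_1(t),\dots,\delta_N(t))$ is smooth on $(0,|\alpha|)$, combining these two facts --- for instance via a Skorokhod coupling in which $\eps|\D|\to\delta^{\mathrm{tot}}$ almost surely, followed by the conditional convergence applied pathwise --- yields that $(\eps|\D|,\eps D_1,\dots,\eps D_N)$ converges weakly to $(\delta^{\mathrm{tot}},\delta_1(\delta^{\mathrm{tot}}),\dots,\delta_N(\delta^{\mathrm{tot}}))$. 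Projecting onto the last $N$ coordinates gives precisely the assertion of the corollary, with all the randomness carried by the single Beta variable $\eta$, and the $\beta_i$ entering only through $\gamma_i=\alpha_i+\beta_i-\delta_i$.

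The step I expect to be the main obstacle is this last combination of an \emph{unconditional} weak limit for $|\D|$ with a \emph{conditional} law of large numbers for $\D$: to run it cleanly one needs the convergence in Proposition \ref{Proposition_conditional_LLN} to be uniform over conditioning values $\eps|\D|$ in compact subsets of $(0,|\alpha|)$, and one must discard the boundary regime where $\eps|\D|$ approaches $0$ or $|\alpha|$. Both ingredients are available from the proof of Proposition \ref{Proposition_conditional_LLN} --- the dilogarithm asymptotics of Lemma \ref{Lemma_Poch_asymptotics} hold uniformly away from the boundary, and the unimodality upper bound for the $q$-binomial coefficients shows the boundary values of $|\D|$ carry asymptotically negligible probability, consistently with $\delta^{\mathrm{tot}}\in(0,|\alpha|)$ almost surely --- but assembling them into a rigorous tightness-and-identification argument is the delicate part. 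The remaining points, namely the reduction to finitely many colors, the smoothness of $t\mapsto\delta_i(t)$, and the passage from $\eps\D$ to $\eps\C$ via conservation, are routine.
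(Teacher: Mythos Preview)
Your proposal is correct and follows exactly the approach indicated in the paper, which simply states that the corollary is obtained by combining Propositions \ref{Proposition_colorless_limit} and \ref{Proposition_conditional_LLN} via the two-step sampling lemma. The paper does not spell out the details you flag concerning uniformity of the conditional law of large numbers and the boundary regime, but these are indeed available from the proof of Proposition \ref{Proposition_conditional_LLN} as you observe.
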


Hence, the resulting continuous model has the following self-contained description.

\begin{itemize}
 \item Each lattice edge of the positive quadrant has a random real-valued vector $(u_1,u_2,\dots)$ attached to it. Coordinate $u_i\ge 0$ is interpreted as the (real) \emph{mass} of color $i$.
 \item For edges entering the quadrant from below all masses are zero.
 \item For the edge entering the quadrant from the left at ordinate $y$ only the color $y$ is present. To find its mass, we sample (independently for each $y$) a Beta random variable $\eta\sim B(\sigma_0-\rho,\rho)$ and set the mass to be $-\ln(\eta)$.
  \item Given the incoming masses of colors $\alpha_i$, $\beta_i$, $i=1,2,\dots$ entering into the vertex $(x,y)$ from the bottom and from the left, we define the outgoing masses $\gamma_i$, $\delta_i$ by using  (independently over $x$ and $y$) a  Beta random variable $\eta\sim B(\sigma_x-\rho,\rho)$ and set $0 \le \delta_i \le \alpha_i$ through
  $$
     \exp\left(- \delta_{\ges i} \right)= \exp\left(-\alpha_{\ges i}\right) + \left(1-\exp\left(- \alpha_{\ges i}\right)\right) \eta, \quad i=1,2,\dots,
  $$
  where $\delta_{\ges i}=\sum_{j=i}^\infty \delta_{j}$, $\alpha_{\ges i}=\sum_{j=i}^\infty \alpha_{j}$. We also set
  $$
    \gamma_i=\alpha_i+\beta_i-\delta_i, \qquad i=1,2,\dots.
  $$
  \item The definition implies that only colors $\le y$ pass through a general vertex $(x,y)$, and no colors at all pass through $(x,y)$ with $x>y$.
\end{itemize}

We can also define the height functions $\H^{\ges i}(x,y)$, $i=1,2,\dots$ of the continuous model. As before, we assume that $x,y\in \frac{1}{2}+\mathbb Z_{\ge 0}$ in this definition. In words, $\H^{\ges i}(x,y)$ counts the total mass of paths of colors $\geqslant i$  below the point $(x,y)$. Equivalently, the height function is defined by setting $\H^{\ges i}(\frac12,\frac12)=0$ and
$$
\H^{\ges i}(x,y+1)-\H^{\ges i}(x,y)=\text{ total mass of  colors } \geqslant i\text{ at } (x,y+\tfrac12),
$$
$$
\H^{\ges i}(x+1,y)-\H^{\ges i}(x,y)=\text{ total mass of colors } \geqslant i\text{ at } (x+\tfrac12,y).
$$
Corollary \ref{Corollary_continuous_vertex} immediately implies the following statement.
\begin{corollary} Consider the analytically continued fused colored model in the quadrant, as in Theorem \ref{Theorem_6v_invariance_infinite}, in the limit regime $$
 \eps\to 0,\quad  q=\exp(-\eps),\quad \mathfrak l=q^\rho ,\quad \mathfrak m_x=q^{\sigma_x}, \quad x=0,1,2,\dots .$$
 Let $\H_\eps^{\ges i}(x,y)$ denote the height function of this discrete model. Then in finite-dimensional distributions
 $$
  \lim_{\eps \to 0} \eps \H_\eps^{\ges i}(x,y)= \H^{\ges i}(x,y),\quad x,y\in \tfrac{1}{2}+\mathbb Z_{\ge 0},
 $$
 where  $\H^{\ges i}(x,y)$ is the just defined height function of the continuous vertex model.
\end{corollary}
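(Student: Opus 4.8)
The statement follows from Corollary \ref{Corollary_continuous_vertex}, which carries out all of the analytic work, by a standard propagation of weak convergence through the Markovian sampling of the model; here is the plan.

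First I would reduce to a bounded region. Fix a finite collection of triples $(i_r,x_r,y_r)$, $r=1,\dots,R$, with $x_r,y_r\in\tfrac12+\mathbb Z_{\ge0}$, and set $Y=\lceil\max_r y_r\rceil$. By the very definition of the height functions, each $\H_\eps^{\ges i_r}(x_r,y_r)$ is a fixed finite linear function --- namely the sum of the color-$\ge i_r$ masses of the edges crossed by a monotone lattice path from $(\tfrac12,\tfrac12)$ to $(x_r,y_r)$ --- of the collection of all edge-mass vectors lying in the triangle $T=\{(x,y):1\le x\le y\le Y\}$; the same is true in the continuous model. Recall that in both models no mass occupies vertices with $x>y$ and only colors $\le Y$ ever appear inside $T$, so this collection is finite-dimensional. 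Hence it suffices to prove that the vector of all $\eps$-rescaled discrete edge-mass vectors in $T$ converges weakly, as $\eps\to0$, to the corresponding vector of the continuous model.

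Next I would run the propagation. In the discrete model the joint law of the edge-mass vectors in $T$ is assembled from: (i) the left-boundary inputs, sampled independently over rows $1,\dots,Y$ from \eqref{eq_incoming_prob} (the bottom inputs being deterministically $0$); and (ii) the vertex kernels \eqref{eq_higher_spin_weight_continued_3}, applied at the vertices of $T$ sequentially in the sampling order of increasing $x+y$. After multiplying all masses by $\eps$, the boundary inputs converge weakly, since $\eps\,\mathfrak d$ converges weakly to $-\ln\eta$ with $\eta\sim B(\sigma_0-\rho,\rho)$ (the first Proposition of Section \ref{Section_cont_limit}, establishing $\exp(-\eps\mathfrak d)\Rightarrow B(\sigma_0-\rho,\rho)$); and, crucially, each $\eps$-rescaled vertex kernel --- read as a stochastic sampling rule for the outgoing masses given the incoming ones --- converges \emph{continuously} to the continuous vertex rule, which is exactly the assertion of Corollary \ref{Corollary_continuous_vertex} (it is stated along arbitrary sequences of inputs $\eps\A\to\alpha$, $\eps\B\to\beta$). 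The degenerate case $|\alpha|=0$ excluded there is trivial, since then $\D=0$ almost surely both in \eqref{eq_higher_spin_weight_continued_3} and in the limit, so the kernel is the continuous deterministic map $(\gamma,\delta)=(\beta,0)$ in that case too. Now I would invoke the elementary principle that weak convergence is preserved under composition of Markov kernels that converge continuously to a kernel which is itself weakly continuous in its argument --- cleanly realized through a Skorokhod coupling on a single probability space, along which the $\eps$-rescaled edge masses then converge almost surely, recursively over the anti-diagonals of $T$. Applying this finitely many times, once per vertex of $T$ in the sampling order, gives the joint weak convergence of all edge-mass vectors, and hence of the height functions by the reduction above.

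The substantive input is entirely Corollary \ref{Corollary_continuous_vertex} (which in turn rests on the law of large numbers and central limit theorem of Propositions \ref{Proposition_colorless_limit} and \ref{Proposition_conditional_LLN}); the rest is bookkeeping, and I do not expect a genuine obstacle. The two points I would check with some care are: that the limiting continuous vertex rule is weakly continuous in the incoming masses $(\alpha,\beta)$ on the relevant domain --- which holds because it is an explicit continuous function of $(\alpha,\beta)$ and of an independent $\mathrm{Beta}$ variable off a Lebesgue-null set --- so that the Skorokhod-coupling recursion can be pushed through; and that the possibility of a zero boundary input at finite $\eps$ (the atom $\mathfrak d=0$ of \eqref{eq_incoming_prob}) causes no trouble, as it simply converges to the mass-zero configuration. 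Finiteness of the set of colors appearing inside $T$ removes any tightness issue.
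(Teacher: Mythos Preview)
Your proposal is correct and is precisely the standard fleshing-out of what the paper asserts in a single sentence: ``Corollary \ref{Corollary_continuous_vertex} immediately implies the following statement.'' You have simply made explicit the Markovian propagation (boundary inputs plus vertex-by-vertex kernel convergence via Skorokhod coupling) that the word ``immediately'' is meant to cover, and your handling of the degenerate case $|\alpha|=0$ and of the continuity of the limiting kernel is accurate.
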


\section{Directed polymers}

\label{Section_polymers}

\subsection{Beta polymer}

\label{Section_Beta}

Following \cite{Bar-Cor} (who considered the colorless case) we identify the model of Section \ref{Section_cont_limit} with random directed polymers. Another possible interpretation (which we do not pursue in this text) is that of a random walk in random environment.

Let us  present an alternative definition of the height function $\H^{\ges i}(x,y)$ of the continuous vertex model. For that, we start with a sequence $\eta(x,y)$, $x\in\mathbb Z_{\ge 0}$, $y\in\mathbb Z_{>0}$, of independent Beta random variables with distributions $B(\sigma_x-\rho,\rho)$.\footnote{In this construction one can also allow vertical inhomogeneities $\rho_y$, so that the Beta random variables have distributions $B(\sigma_x-\rho_y,\rho_y)$. One would expect, as in Remark \ref{Remark_vertical_inhomogeneities}, that the shift invariance still holds in such an extended setting, but we do not have a proof at this time.}
\begin{proposition} \label{Proposition_continuous_recurrence}
 $\H^{\ges i}(x,y)$ vanishes for $x\ge y$, while for $x<y$ it can be found by solving the following recurrence:
   \begin{multline} \label{eq_polymer_recurrence} \exp\bigl(-\H^{\ges i}(x,y)\bigr)=\eta\left(x-\tfrac12,y-\tfrac12\right) \exp\bigl(-\H^{\ges i}(x,y-1)\bigr)\\+\left(1-\eta\left(x-\tfrac12,y-\tfrac12\right)\right) \exp\bigl(-\H^{\ges i}(x-1,y-1)\bigr)\end{multline}
 with boundary conditions
$$ \H^{\ges i}\left(x,x\right)=0, \qquad  \H^{\ges i}\left(\tfrac{1}{2},y\right)= \sum_{u=i}^{y-1/2} \ln(-\eta(0,u)), \qquad x,y=\tfrac12, \tfrac32,\tfrac52,\dots.$$
\end{proposition}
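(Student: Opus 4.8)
The plan is to translate the local sampling rule of the continuous model (Corollary \ref{Corollary_continuous_vertex}) directly into the claimed recurrence, handling one color threshold $i$ at a time. The key structural observation is that at each vertex a single Beta variable $\eta_v$ governs the redistribution of the mass of colors $\ge i$ for \emph{every} threshold $i$ simultaneously; this is exactly why each colored height function $\H^{\ges i}$ ends up satisfying the same (colorless) Beta polymer recurrence. Throughout I would write $F^{\ges i}(x,y)=\exp(-\H^{\ges i}(x,y))$.

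First I would dispose of the vanishing statement and the two boundary conditions. The identity $\H^{\ges i}(x,y)=0$ for $x\ge y$ is a direct consequence of the last bullet in the description of the continuous model: no positive color can reach a vertex whose column exceeds its row, so along the staircase path from $(\tfrac12,\tfrac12)$ to $(x,y)$ that runs first rightward along the bottom row and then straight up, every edge crossed carries zero mass of every color; in particular $\H^{\ges i}(x,x)=0$, which is the diagonal boundary condition. For the left edge, moving straight up from $(\tfrac12,\tfrac12)$ to $(\tfrac12,y)$ crosses exactly the left-incoming edges of ordinates $1,\dots,y-\tfrac12$, the edge of ordinate $u$ carrying color $u$ with mass $-\ln\eta(0,u)$; keeping only the contributions of colors $\ge i$ gives $\H^{\ges i}(\tfrac12,y)=\sum_{u=i}^{y-1/2}\bigl(-\ln\eta(0,u)\bigr)$ (the $\ln(-\eta(0,u))$ in the statement is to be read as $-\ln\eta(0,u)$, as needed for the quantity to be real and to match the model).

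The main step is the recurrence for a point $(x,y)$ with $\tfrac32\le x<y$. Writing $(x,y)=(p+\tfrac12,m+\tfrac12)$ with $1\le p<m$, the three dual points $(x,y)$, $(x,y-1)$, $(x-1,y-1)$ are three of the four corners of the unit dual cell centered at the lattice vertex $v=(p,m)$, whose attached Beta variable is $\eta_v=\eta(p,m)=\eta(x-\tfrac12,y-\tfrac12)$. Using the increment rules of the continuous height function (a rightward step lowers $\H^{\ges i}$ by the mass of colors $\ge i$ on the vertical edge it crosses, an upward step raises it by the mass of colors $\ge i$ on the horizontal edge it crosses), the step $(x-1,y-1)\to(x,y-1)$ crosses the bottom (incoming) edge of $v$ and the step $(x,y-1)\to(x,y)$ crosses the right (outgoing) edge of $v$, so that $\H^{\ges i}(x,y-1)=\H^{\ges i}(x-1,y-1)-\alpha^{v}_{\ge i}$ and $\H^{\ges i}(x,y)=\H^{\ges i}(x,y-1)+\delta^{v}_{\ge i}$, where $\alpha^{v}_{\ge i}$ and $\delta^{v}_{\ge i}$ denote the total masses of colors $\ge i$ entering $v$ from below and leaving it to the right. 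Exponentiating and dividing the desired identity $F^{\ges i}(x,y)=\eta_v F^{\ges i}(x,y-1)+(1-\eta_v)F^{\ges i}(x-1,y-1)$ by $F^{\ges i}(x-1,y-1)$ turns it into $e^{-\delta^{v}_{\ge i}}=\eta_v+(1-\eta_v)e^{-\alpha^{v}_{\ge i}}$, which is exactly the sampling rule $\exp(-\delta^{v}_{\ge i})=\exp(-\alpha^{v}_{\ge i})+(1-\exp(-\alpha^{v}_{\ge i}))\eta_v$ of Corollary \ref{Corollary_continuous_vertex} summed over colors $\ge i$. For $x=\tfrac12$ the recurrence would reference a point outside the quadrant and is replaced by the left boundary value just computed; since the recurrence propagates values up and to the right from the union of the diagonal and the left edge, those boundary data together with the recurrence determine $\H^{\ges i}$ uniquely.

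There is no serious obstacle here; the only points requiring care are purely geometric: identifying the single lattice vertex shared by the three dual points appearing in the recurrence, fixing the signs in the increment rules (a rightward move \emph{decreases} the height), and matching the column index of $\eta_v$ to the Beta-parameter index $\sigma_p$. The one-line reduction to the vertex rule, together with the observation that this rule restricted to colors $\ge i$ is the same affine-in-$\eta_v$ map for every $i$, is the whole content of the statement.
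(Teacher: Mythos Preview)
Your proof is correct and follows essentially the same approach as the paper: both verify the boundary data directly, identify the vertex $v=(x-\tfrac12,y-\tfrac12)$, express the height increments as $\H^{\ges i}(x,y)=\H^{\ges i}(x,y-1)+\delta^v_{\ges i}$ and $\alpha^v_{\ges i}=\H^{\ges i}(x-1,y-1)-\H^{\ges i}(x,y-1)$, and then reduce the recurrence to the sampling rule $e^{-\delta_{\ges i}}=e^{-\alpha_{\ges i}}+(1-e^{-\alpha_{\ges i}})\eta_v$ of Corollary~\ref{Corollary_continuous_vertex}. The only cosmetic difference is that you divide the target identity by $F^{\ges i}(x-1,y-1)$ (and then by $e^{\alpha^v_{\ges i}}$) whereas the paper multiplies $e^{-\H^{\ges i}(x,y-1)}$ by the sampling rule and expands; these are the same manipulation.
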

\begin{proof}
 The boundary conditions are checked in a straighforward way. For the recurrence we notice that, by definition of the system:
 \begin{equation}
 \label{eq_x49}
   \H^{\ges i}(x,y)=\H^{\ges i}(x,y-1)+ \delta_{\ges i}(\alpha_1,\alpha_2,\dots),
 \end{equation}
 where $\delta_{\ges i}(\alpha_1,\alpha_2,\dots)$  stands for the mass of the colors $\geqslant i$ leaving the vertex $(x-\frac12,y-\frac12)$ to the right (this is a (random) function of masses $(\alpha_1,\alpha_2,\dots)$ entering from below). Corollary \ref{Corollary_continuous_vertex} reads
 $$
  \exp(-\delta_{\ges i})=\exp(-\alpha_{\ges i})+\bigl(1-\exp(-\alpha_{\ges i})\bigr) \eta\left(x-\tfrac12,y-\tfrac12\right).
 $$
 Exponentiating \eqref{eq_x49} and using $\alpha_{\ges i}=\H^{\ges i}(x-1,y-1)-\H^{\ges i}(x,y-1)$, we get
 \begin{multline}
  \exp\left(-\H^{\ges i}(x,y)\right)= \exp\left(-\H^{\ges i}(x,y-1)\right)   \Biggl( \exp(\H^{\ges i}(x,y-1)-\H^{\ges i}(x-1,y-1))\\ +\biggl(1-\exp(\H^{\ges i}(x,y-1)-\H^{\ges i}(x-1,y-1))\biggr) \eta\left(x-\tfrac12,y-\tfrac12\right) \Biggr)
  \\= \exp\bigl(-\H^{\ges i}(x,y-1)\bigr) \eta\left(x-\tfrac12,y-\tfrac12\right)\\ +\exp\bigl(-\H^{\ges i}(x-1,y-1)\bigr) \left(1-\eta\left(x-\tfrac12,y-\tfrac12\right)\right).\qedhere
 \end{multline}
\end{proof}

The recurrence \eqref{eq_polymer_recurrence} can be solved in the language of directed polymers:
\begin{itemize}
\item  We consider the grid $\mathbb Z_{\ge 0}\times \mathbb Z_{\ge 0}$ with diagonal and vertical edges, i.e., the edges link $(x,y)$ with $(x+1,y+1)$ and with $(x,y+1)$.
\item Each edge has a weight $w(\text{edge})$ assigned to it. For vertical edges $(x,y-1)\to (x,y)$ the weight is an independent Beta random variable $B_{xy}$ whose parameters are allowed to depend on $x$ via $B(\sigma_{x-1}-\rho,\rho)$ for a sequence of real numbers $\sigma_0,\sigma_1,\dots,$ and $\rho<\min_{x\ge 1}\sigma_{x-1}$, as in the previous section. For diagonal edges $(x-1,y-1)\to (x,y)$ we set $w(\text{edge})=1-B_{xy}$, cf.\ Figure \ref{Fig_polymer_square}.
\item For two points $(x',y')$, $(x,y)$ in $\mathbb Z_{\ge 0}\times \mathbb Z_{\ge 0}$ with $x\ge x'$ and $y\ge y'+(x-x')$, we define the partition function $\mathfrak Z^B_{(x',y')\to(x,y)}$ of the \emph{delayed} Beta--polymer  as a sum over all lattice paths  joining $(x',y')$ with $(x,y)$
\begin{equation}
\label{eq_x51}
 \mathfrak Z^B_{(x',y')\to(x,y)}= \sum_{(x',y')=\pi_0\to \pi_1 \to \dots \to \pi_{y-y'}=(x,y)} \,\, \prod_{k=f(\pi)}^{y-y'} w( \pi_{k-1}\to \pi_k),
\end{equation}
where each edge $\pi_k-\pi_{k-1}$ is either $(1,1)$ or $(1,0)$, and $f(\pi)=\min \{i: \pi_{i}-\pi_{i-1}=(1,0)\}$.
\end{itemize}
The empty  product in \eqref{eq_x51} is taken to be $1$, so that $\mathfrak Z_{(x,y)\to(x+k,y+k)}=1$, $k=0,1,2,\dots$. The appearance of $f(\pi)$  in \eqref{eq_x51} means that in the computation of the weight we \emph{ignore} the initial segment of diagonal edges starting from $(x',y')$ and only multiply the weights starting from the first vertical edge on the path. One can also interpret such a partition function as a point-to-half-line, rather than point-to-point polymer, see \cite{Bar-Cor}.

\begin{figure}[t]
\begin{center}
{\scalebox{1.5}{\includegraphics{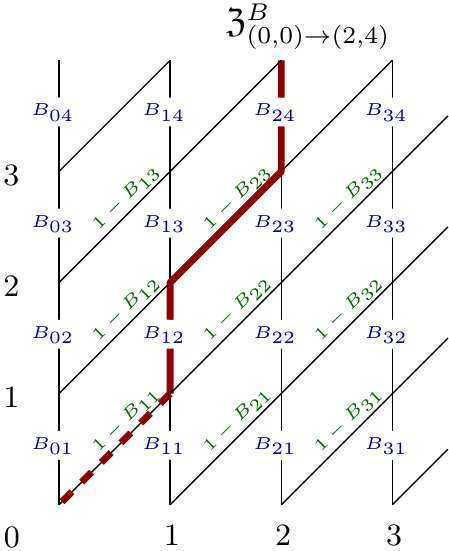}}}
 \caption{Continuous vertex model treated as a directed polymer in the quadrant. The first few diagonal edges (until the first up step) do not collect any weights.
 \label{Fig_polymer_square}}
\end{center}
\end{figure}

\begin{proposition}
\label{Proposition_vertex_model_as_polymer}
 The joint law of the Beta polymer delayed partition functions
 $$\bigl[\mathfrak Z^B_{(0,y')\to (x,y)}\bigr]_{x\ge 0,\, y\ge y' \ge 0}$$
 is the same as that of the exponentiated height functions of the continuous vertex model from Section \ref{Section_cont_limit}
 $$\left[\exp\left(-\H^{\ges (y'+1)}\left(x+\tfrac12,y+\tfrac12\right)\right)\right]_{x\ge 0,\, y\ge y'\ge 0}.$$
 \end{proposition}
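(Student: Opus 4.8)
The plan is to establish the identity recursively in $y$, by matching the recurrence \eqref{eq_polymer_recurrence} from Proposition \ref{Proposition_continuous_recurrence} against the standard last-passage-type recurrence for the delayed polymer partition function \eqref{eq_x51}. The starting point is to fix $y' \geq 0$ and to consider, as the color cutoff, $i = y'+1$; then by Proposition \ref{Proposition_continuous_recurrence} the exponentiated height function $Z(x,y) := \exp\bigl(-\H^{\ges(y'+1)}(x+\tfrac12,y+\tfrac12)\bigr)$ satisfies, for $x < y+1$,
\begin{equation*}
Z(x,y) = \eta(x,y)\, Z(x,y-1) + \bigl(1-\eta(x,y)\bigr)\, Z(x-1,y-1),
\end{equation*}
with $Z(x,y)=1$ on the diagonal $x = y$ (more precisely for $x \geq y$, using the vanishing of the height function there and $\exp(0)=1$), and with the left-boundary value $Z(0,y) = \exp\bigl(-\H^{\ges(y'+1)}(\tfrac12,y+\tfrac12)\bigr) = \prod_{u=y'+1}^{y} \bigl(-\eta(0,u-\tfrac12)\bigr)^{-1}\cdots$ — here I would match the precise indexing of the half-line collection of Beta weights $\eta(0,\cdot)$ to the vertical edges $(0,v-1)\to(0,v)$ used in the polymer.

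Second, I would write down the corresponding recurrence for $\mathfrak Z^B_{(0,y')\to(x,y)}$. Decomposing each path by its last step into $(x,y)$: if the last step is vertical, $(x,y-1)\to(x,y)$, it contributes the weight $B_{xy} = \eta(x-\tfrac12,y-\tfrac12)$ (with the appropriate coordinate shift to the continuous-model labelling) times $\mathfrak Z^B_{(0,y')\to(x,y-1)}$; if the last step is diagonal, $(x-1,y-1)\to(x,y)$, it contributes $1-B_{xy}$ times $\mathfrak Z^B_{(0,y')\to(x-1,y-1)}$ — with the caveat that the initial-diagonal-segment rule encoded by $f(\pi)$ must be handled correctly. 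The key observation is that ignoring the initial diagonal run is exactly what produces the value $1$ along the diagonal $x = y$: a path from $(0,y')$ that has not yet taken a vertical step has collected no weight, so $\mathfrak Z^B_{(0,y')\to(x, y'+x)} = 1$, matching $Z(x,x)$ after the coordinate shift. Similarly the point-to-half-line interpretation reproduces the left-boundary product. Thus $\mathfrak Z^B$ obeys the same recurrence, the same diagonal boundary data, and the same left-boundary data (with identical, i.e. coupled, Beta weights) as $Z$; since the recurrence together with the boundary data determines the whole array uniquely, the two arrays coincide pathwise, hence in joint law. Finally I would record that this argument works simultaneously for all $y' \geq 0$ because the continuous-model Beta variable attached to vertex $(x,y)$ does not depend on the color cutoff, so the \emph{same} environment $\{\eta(\cdot,\cdot)\}$ drives $\H^{\ges(y'+1)}$ for every $y'$, exactly as the same $\{B_{xy}\}$ drives every $\mathfrak Z^B_{(0,y')\to\cdot}$; this yields the joint law over the full range $x\geq 0$, $y \geq y' \geq 0$.

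The main obstacle I anticipate is purely bookkeeping rather than conceptual: getting the half-integer-versus-integer coordinate conventions and the indexing of the Beta weights to line up exactly, and in particular verifying that the $f(\pi)$ convention in \eqref{eq_x51} is the precise combinatorial shadow of the boundary condition $\H^{\ges i}(x,x)=0$ and of the behaviour of the continuous vertex model below the diagonal (no colors pass through $(x,y)$ with $x>y$, per Corollary \ref{Corollary_continuous_vertex}). Once the dictionary between the two coordinate systems is pinned down, the identity of the two recurrences is immediate from Corollary \ref{Corollary_continuous_vertex} / Proposition \ref{Proposition_continuous_recurrence} on one side and the elementary last-step decomposition on the other, and uniqueness of solutions to a first-order recurrence with prescribed boundary values closes the argument.
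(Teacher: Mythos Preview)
Your proposal is correct and follows essentially the same approach as the paper: identify the Beta variables $B_{xy}$ with the $\eta(x,y)$ of Proposition~\ref{Proposition_continuous_recurrence}, and observe that both $\mathfrak Z^B_{(0,y')\to(x,y)}$ and $\exp(-\H^{\ges(y'+1)}(x+\tfrac12,y+\tfrac12))$ satisfy the same first-order recurrence with the same boundary data, hence coincide pathwise. The paper's proof is just a two-sentence version of exactly this; your additional remarks about the $f(\pi)$ convention matching the diagonal boundary condition and about the single environment driving all color cutoffs simultaneously (yielding the joint law over all $y'$) are correct elaborations that the paper leaves implicit.
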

 \begin{proof}
   Let us identify the random variables $\eta(x,y)$ of Proposition \ref{Proposition_continuous_recurrence} with $B_{xy}$ in the definition of the polymer. Then the partition functions  $\mathfrak Z_{(0,y')\to (x,y)}$ satisfy the same recurrence in $(x,y)$ and boundary conditions as the corresponding exponentials in Proposition \ref{Proposition_continuous_recurrence}.
 \end{proof}

We remark that \cite{Bar-Cor} was using slightly different notations for the Beta polymer. Their polymers live on transposed and shifted by $(0,1)$ grid, as in Figure \ref{Fig_polymer_BC}.

\begin{figure}[t]
\begin{center}
{\scalebox{1.0}{\includegraphics{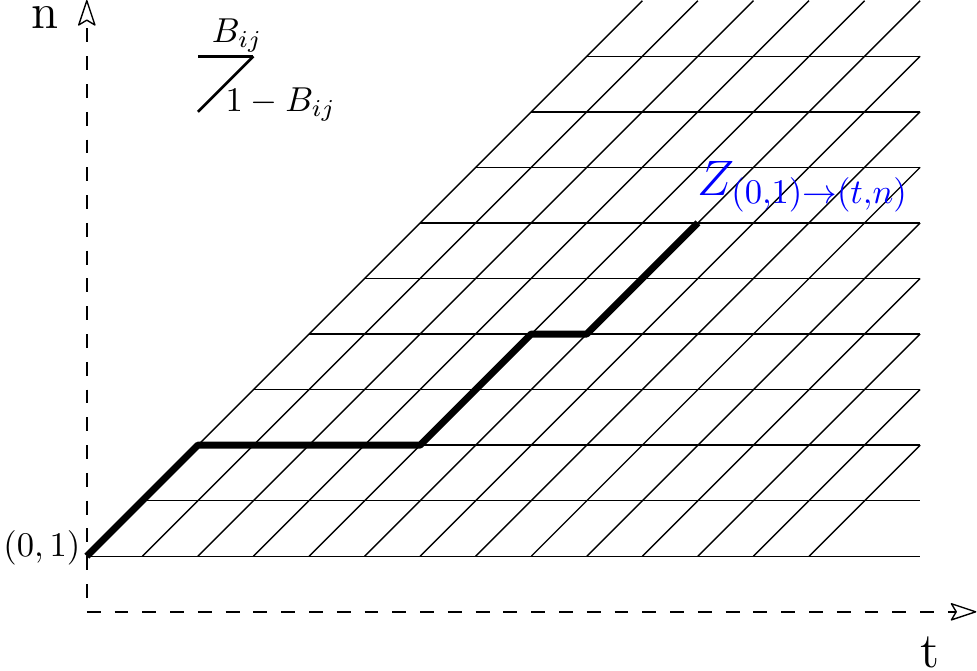}}}
 \caption{Another way to interpret the continuous vertex model as a directed polymer, following \cite{Bar-Cor}. For the computation of $Z_{(0,1)\to (t,n)}$ the weights along the $y=x+1$ line are all treated as $1$.
 \label{Fig_polymer_BC}}
\end{center}
\end{figure}

\medskip

We now restate the shift invariance in the language of polymers. Recall that for two points $\mathcal U=(u_1,u_2)$, $\mathcal V=(v_1,v_2)$, we write $\mathcal U\succeq \mathcal V$ if $u_1\le v_1$ and $u_2\ge v_2$.

\begin{theorem} \label{Theorem_Beta_polymer_invariance} For the Beta polymer of Proposition \ref{Proposition_vertex_model_as_polymer}, choose a set of positive integers $k_j$ and a set of  points in the quadrant $\mathcal U_j$, $j=1,\dots,n$. Fix an  index $1\le \iota \le n$ and an integer $\Delta>0$. Set
 $$
  k_j'=\begin{cases} k_j,& i\ne \iota,\\ k_\iota+\Delta, & j=\iota,\end{cases} \qquad \qquad \mathcal U'_j=\begin{cases}\mathcal U_j, & j\ne \iota \\ \mathcal U_\iota + (0,\Delta), & j=\iota. \end{cases}
 $$
Suppose that
 $$
  0\le k_1\le k_2\le \dots\le k_n, \qquad 0\le k'_1\le k'_2\le \dots\le k'_n,
 $$
 $$\mathcal U_1,\dots, \mathcal U_{\iota-1}\succeq \mathcal U_\iota \succeq \mathcal U_{\iota+1},\dots,\mathcal U_n, \qquad \mathcal U'_1,\dots, \mathcal U'_{\iota-1}\succeq \mathcal U'_\iota \succeq \mathcal U'_{\iota+1},\dots,\mathcal U'_n.
 $$
 Then the distribution of the vector of the polymer partition functions
 $$
  \bigl(\mathfrak Z^B_{(0,k_1)\to \mathcal U_1},\, \mathfrak Z^B_{(0,k_2)\to \mathcal U_2},\, \dots, \mathfrak Z^B_{(0,k_n)\to \mathcal U_n}\bigr)
 $$
 coincides with the distribution of the vector with shifted $\iota$-th coordinate
 $$
  \bigl(\mathfrak Z^B_{(0,k'_1)\to \mathcal U'_1},\, \mathfrak Z^B_{(0,k'_2)\to \mathcal U'_2},\, \dots, \mathfrak Z^B_{(0,k'_n)\to \mathcal U'_n}\bigr).
 $$
\end{theorem}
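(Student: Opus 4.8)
The plan is to deduce Theorem \ref{Theorem_Beta_polymer_invariance} from the shift-invariance for the analytically continued fused colored model, Theorem \ref{Theorem_6v_invariance_infinite}, by passing through the continuous ($\eps\to 0$) limit of Section \ref{Section_cont_limit} and the polymer identification of Proposition \ref{Proposition_vertex_model_as_polymer}. No new combinatorial input is needed; the argument is a chain of three reductions.

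First I would rewrite the claim in terms of height functions of the continuous vertex model of Section \ref{Section_cont_limit}. By Proposition \ref{Proposition_vertex_model_as_polymer}, for $\mathcal U_j=(x_j,y_j)$ one has the joint distributional identity $\mathfrak Z^B_{(0,k_j)\to\mathcal U_j}\stackrel{d}{=}\exp\bigl(-\H^{\ges(k_j+1)}(x_j+\tfrac12,y_j+\tfrac12)\bigr)$, and likewise for the primed data; under this correspondence the combined shift $\mathcal U'_\iota=\mathcal U_\iota+(0,\Delta)$, $k'_\iota=k_\iota+\Delta$ becomes precisely a shift of the observation point by $(0,\Delta)$ together with a shift of the cutoff by $\Delta$. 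Setting $m_j:=k_j+1$ and $\mathcal V_j:=\mathcal U_j+(\tfrac12,\tfrac12)$, the ordering hypotheses $0\le k_1\le\dots\le k_n$ and $\mathcal U_1,\dots,\mathcal U_{\iota-1}\succeq\mathcal U_\iota\succeq\mathcal U_{\iota+1},\dots,\mathcal U_n$ (and their primed analogues) translate verbatim into the hypotheses of Theorem \ref{Theorem_6v_invariance_infinite} for the cutoffs $(m_j)$ and points $(\mathcal V_j)$, since $\succeq$ is preserved under a common shift. Thus it suffices to prove, in the continuous vertex model, that
$$\bigl(\H^{\ges m_1}(\mathcal V_1),\dots,\H^{\ges m_n}(\mathcal V_n)\bigr)\stackrel{d}{=}\bigl(\H^{\ges m'_1}(\mathcal V'_1),\dots,\H^{\ges m'_n}(\mathcal V'_n)\bigr),$$
where $m'_j$, $\mathcal V'_j$ denote the shift of the $\iota$-th entry described above.

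Next I would realize this identity as the $\eps\to 0$ limit of the discrete one. Fix the scaling regime $q=\exp(-\eps)$, $\mathfrak l=q^{\rho}$, $\mathfrak m_x=q^{\sigma_x}$ with $0<\rho<\sigma_x$; this lies simultaneously in the positivity domain where Theorem \ref{Theorem_6v_invariance_infinite} applies and in the domain of the continuous limit. For each $\eps>0$, writing $\H^{\ges i}_\eps$ for the height functions of the analytically continued fused model, Theorem \ref{Theorem_6v_invariance_infinite} gives $\bigl(\H^{\ges m_1}_\eps(\mathcal V_1),\dots\bigr)\stackrel{d}{=}\bigl(\H^{\ges m'_1}_\eps(\mathcal V'_1),\dots\bigr)$; multiplying by $\eps$ and applying the continuous map $t\mapsto\exp(-t)$ coordinatewise preserves this. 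By the Corollary at the end of Section \ref{Section_cont_limit}, $\eps\H^{\ges i}_\eps\to\H^{\ges i}$ in finite-dimensional distributions, and since $\H^{\ges i}\ge 0$ the exponentials lie in $(0,1]$, so the identity survives the limit, yielding $\bigl(\exp(-\H^{\ges m_1}(\mathcal V_1)),\dots\bigr)\stackrel{d}{=}\bigl(\exp(-\H^{\ges m'_1}(\mathcal V'_1)),\dots\bigr)$. Combined with Proposition \ref{Proposition_vertex_model_as_polymer} this is exactly the asserted polymer identity.

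The argument is largely bookkeeping; the only points requiring care are (i) verifying that the chosen scaling regime lies in the intersection of the parameter domain of Theorem \ref{Theorem_6v_invariance_infinite} and that of the continuous limit, and (ii) justifying the interchange of the $\eps\to 0$ limit with the distributional identity — which is immediate once one observes that the relevant observables are bounded continuous functions of the finitely many height values whose joint laws converge. I do not anticipate a genuine obstacle here; the substantive work has already been done in establishing Theorem \ref{Theorem_6v_invariance_infinite} and the continuous limit. One caveat: the route does not cover the vertically inhomogeneous variant (Beta weights $B(\sigma_x-\rho_y,\rho_y)$) alluded to in the footnotes, because neither Theorem \ref{Theorem_6v_invariance_infinite} nor its discrete precursors are proved in that generality, so that extension would remain open.
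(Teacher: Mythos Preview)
Your proposal is correct and follows essentially the same route as the paper: start from Theorem \ref{Theorem_6v_invariance_infinite}, pass to the $\eps\to 0$ (equivalently $q\to 1$) limit in the regime \eqref{eq_limit_regime} to obtain the shift-invariance for the continuous vertex model, and then invoke Proposition \ref{Proposition_vertex_model_as_polymer} to translate into the polymer language. The paper's proof is a one-paragraph sketch of exactly this chain; your version simply spells out the bookkeeping (the index shift $m_j=k_j+1$, the half-integer offset of observation points, and the justification of the limit interchange) more explicitly.
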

\begin{remark}
 As in Theorems \ref{Theorem_6v_invariance_fused}, \ref{Theorem_6v_invariance_infinite}, one can expect the possibility of the extension of the theorem to the case of different $\rho_y$ (which need to be interchanged together with shifts). Also, it should be possible to move end-points of the polymer in the horizontal direction as well, as long as the intersection of the polymer trajectories is not impacted.
\end{remark}
\begin{proof}[Proof of Theorem \ref{Theorem_Beta_polymer_invariance}]
 We start from Theorem \ref{Theorem_6v_invariance_infinite} and send $q\to 1$ in the regime \eqref{eq_limit_regime}. As a result, we get a similar theorem for the continuous vertex model of Section \ref{Section_cont_limit}. Proposition \ref{Proposition_vertex_model_as_polymer} recasts the vertex model as a directed polymer. Theorem \ref{Theorem_Beta_polymer_invariance} is then a direct restatement of the $q\to 1$ limit of Theorem \ref{Theorem_6v_invariance_infinite}.
\end{proof}

\subsection{Gamma polymer}
\label{Section_Gamma}

In this and five subsequent sections we degenerate the Beta polymer to several other  probabilistic systems. From now on we only consider \emph{homogeneous case} when all $\sigma_x$ are the same. Inhomogeneous versions are certainly possible to consider, but we leave them out of the present text.

\begin{figure}[t]
\begin{center}
{\scalebox{1.3}{\includegraphics{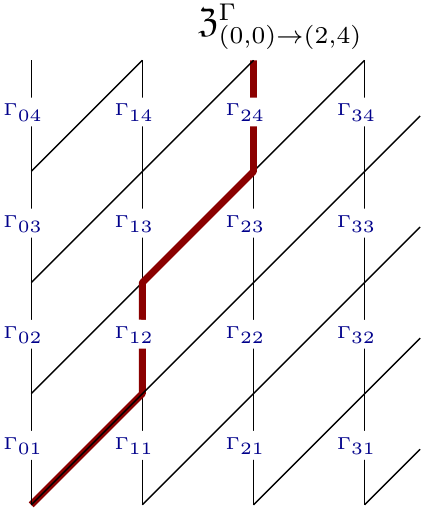}}}
 \caption{Directed polymer with independent Gamma weights $\Gamma_{ij}$ with parameter $\kappa$ on vertical edges.
 \label{Fig_Gamma_polymer}}
\end{center}
\end{figure}

Recall that a random variable $\xi$ has Gamma distribution with parameter $\kappa>0$ if its density with respect to the Lebesgue measure on the positive semiaxis is
$$
 \frac{1}{\Gamma(\kappa)} x^{\kappa-1} \exp(-x), \quad x>0.
$$
The Gamma distribution with parameter $\kappa$ can be obtained from the Beta distribution as the limit
$$
 \lim_{\eps \to 0} \eps^{-1}B(\kappa, \eps^{-1}).
$$
Hence, setting $\rho=\eps^{-1}$, $\sigma=\rho+\kappa$ and sending $\eps\to 0$, the polymer of the previous section converges to the following one:

\begin{itemize}
  \item We deal with square grid $\mathbb Z_{\ge 0}\times \mathbb Z_{\ge 0}$ with vertical and diagonal edges.
  \item Each vertical edge $(x,y-1)\to (x,y)$, $x\ge 0$, $y\ge 1$, is equipped with a weight $w(\text{edge})=\Gamma_{xy}$, which is an independent Gamma random variable with parameter $\kappa$.
  \item Each diagonal edge $(x-1,y-1)\to(x,y)$, $x,y\ge 1$, has the weight $w(\text{edge})=1$.
  \item For each $x\ge x'$, $y\ge y'+(x-x')$ we define a partition function of the polymer as the sum over lattice paths linking $(x',y')$ to $(x,y)$:
      \begin{equation}
      \label{eq_Gamma_polymer}
        \mathfrak Z^\Gamma_{(x',y')\to (x,y)}=\sum_{(x',y')=\pi_0\to\pi_1\to\dots\to \pi_{x-x'}=(x,y)} \, \prod_{k=1}^{y-y'} w( \pi_{k-1}\to \pi_k),
      \end{equation}
      where for each $k\ge 1$ the difference $\pi_k-\pi_{k-1}$ is either $(1,1)$ or $(0,1)$.
\end{itemize}
Note that, as opposed to the Beta polymer, we no longer need to deal with delayed partition functions; this is because diagonal edges already have weight $1$ and, therefore, changing weights of diagonal edges to $1$ does not change anything. See Figure \ref{Fig_Gamma_polymer} for an illustration. This polymer was studied under the name ``strict-weak polymer'' in \cite{CSS} and ``random polymer with gamma-distributed weights'' in \cite{OCO}.

By a straightforward limit transition, Theorem \ref{Theorem_Beta_polymer_invariance} leads to the  same shift invariance statement for the Gamma polymer \eqref{eq_Gamma_polymer}.

\subsection{O'Connell-Yor polymer}

\label{Section_Brownian_polymer}

Let us consider the Gamma polymer in a thin vertical rectangle. For that we take a large $L\to\infty$, set the parameter of the Gamma distributions to $\kappa=L$ and consider the following limit for the polymer of Section \ref{Section_Gamma}:
\begin{equation}
\label{eq_Gamma_limit}
 \exp\left(\tfrac{y-y'}{2}\right)\lim_{L\to\infty} \frac{\mathfrak Z^\Gamma_{(x', Ly')\to (x, Ly)}}{L^{L(y-y')}}=\exp\left(\tfrac{y-y'}{2}\right) \lim_{L\to\infty} \frac{\mathfrak Z^\Gamma_{(x', Ly')\to (x, Ly)}}{L^{L(y-y')-(x-x')} \cdot L^{x-x'}}.
\end{equation}
The factor $L^{L(y-y')-(x-x')}$ can be absorbed into the Gamma random factors which polymer collects (there are precisely $L(y-y')-(x-x')$ of those on each path from $(x,Ly)$ to $(x',Ly')$). Gamma random variables as their parameter $\kappa=L\to\infty$ have the following asymptotics:
$$
 \frac{\Gamma_{ij}}{L}= 1+ \frac{\eta_{ij}}{\sqrt L},
$$
where $\eta_{ij}$ are centered i.i.d.\ random variables, which asymptotically become standard Gaussians $N(0,1)$. (For integral $L$ this can be seen by identifying $\Gamma_{ij}$ with the sum of $L$ independent Gaussian distributions and using the Central Limit Theorem.) Hence, moving the noise into the exponent, we can write
$$
 \ln\left(\frac{\Gamma_{ij}}{L}\right)= \frac{\eta_{ij}}{\sqrt L} - \frac{\eta_{ij}^2}{2 L}+ o\left(\frac{1}{L}\right).
$$
At this point, we can use functional central limit theorem for the sums of logarithms of $\Gamma_{ij}$. The prefactor $L^{x'-x}$ leads to conversion of the (Riemann) sums into integrals, while the products themselves turn into the exponentials of increments of Brownian motions. For the term $-\frac{1}{2 L} \sum \eta_{ij}^2$ we can apply the Law of Large Numbers, resulting in the factor that precisely cancels the exponential prefactor in \eqref{eq_Gamma_limit}.

 The  $L\to\infty$ limit of \eqref{eq_Gamma_limit} is known as the \emph{O'Connell-Yor polymer} or semi-discrete Brownian directed polymer, first introduced in \cite{OCY}. We rename the variables $x$ and $y$ into $n$ and $t$, respectively, to match the standard notations.\footnote{Note, however, that usually the continuous coordinate $t$ is drawn in the horizontal direction. We direct it vertically, in order to keep all the shifts vertical, as was the case throughout the paper.} Here is the resulting object:

\begin{itemize}
  \item We deal with semi-discrete grid $\mathbb Z_{\ge 0}\times \mathbb R_{\ge 0}$ with coordinates $(n,t)$.
  \item Each vertical line has independent standard Brownian motion $B_n(t)$, $t\ge 0$, attached to it.
  \item For each $n\ge n' \ge 0$, $t\ge t'\ge 0$ we define a partition function of the polymer as the integral over monotone grid paths linking $(n',t')$ to $(n,t)$:
      $$
        \mathfrak Z^{OY}_{(n',t')\to (n,t)}=\int_{t'=t_0< t_1< \dots< t_{n-n'+1}=t} \exp\left[\sum_{i=0}^{n-n'} \bigl(B_{i+n'}(t_{i+1})-B_{i+n'}(t_i)\bigr) \right]   dt_1 \cdots dt_{n-n'}.
      $$
\end{itemize}

Recall that for $\mathcal U=(u_1,u_2)$, $\mathcal V=(v_1,v_2)$, we write $U\succeq V$ if $u_1\le v_1$ and $u_2\ge v_2$.

\begin{theorem} \label{Theorem_cont_polymer_invariance} In the above setting of the semi-discrete Brownian directed polymer in the quadrant, choose a set of positive reals $k_j$ and a set of  points in the quadrant $\mathcal U_j$, $j=1,\dots,n$. Fix an index $1\le \iota \le n$ and  $\Delta>0$. Set
 $$
  k_j'=\begin{cases} k_j,& i\ne \iota,\\ k_\iota+\Delta, & j=\iota,\end{cases} \qquad \qquad \mathcal U'_j=\begin{cases}\mathcal U_j, & j\ne \iota \\ \mathcal U_\iota + (0,\Delta), & j=\iota. \end{cases}
 $$
Suppose that
 $$
  0\le k_1\le k_2\le \dots\le k_n, \qquad 0\le k'_1\le k'_2\le \dots\le k'_n,
 $$
 $$\mathcal U_1,\dots \mathcal U_{\iota-1}\succeq\mathcal U_\iota\succeq\mathcal U_{\iota+1},\dots \mathcal U_n, \qquad \mathcal U'_1,\dots \mathcal U'_{\iota-1}\succeq\mathcal U'_\iota\succeq \mathcal U'_{\iota+1},\dots \mathcal U_n,
 $$
 Then the distribution of the vector of the polymer partition functions
 $$
  \bigl(\mathfrak Z^{OY}_{(0,k_1)\to \mathcal U_1},\, \mathfrak Z^{OY}_{(0,k_2)\to \mathcal U_2},\, \dots, \mathfrak Z^{OY}_{(0,k_n)\to \mathcal U_n}\bigr)
 $$
 coincides with the distribution of the vector with shifted $\iota$-th coordinate
 $$
  \bigl(\mathfrak Z^{OY}_{(0,k'_1)\to \mathcal U'_1},\, \mathfrak Z^{OY}_{(0,k'_2)\to \mathcal U'_2},\, \dots, \mathfrak Z^{OY}_{(0,k'_n)\to \mathcal U'_n}\bigr).
 $$
\end{theorem}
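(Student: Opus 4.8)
The plan is to obtain Theorem \ref{Theorem_cont_polymer_invariance} as the scaling limit $L\to\infty$ (Gamma parameter $\kappa=L$) of the shift-invariance for the Gamma polymer of Section \ref{Section_Gamma}, along the degeneration $\mathfrak Z^{\Gamma}_{(x',Ly')\to(x,Ly)}/L^{L(y-y')}$ of \eqref{eq_Gamma_limit}. The Gamma-polymer shift-invariance is available for free: as explained in Section \ref{Section_Gamma}, it is the $\eps\to0$ limit ($\rho=\eps^{-1}$, $\sigma=\rho+\kappa$) of Theorem \ref{Theorem_Beta_polymer_invariance}. Writing $\mathcal U_j=(n_j,t_j)$ and letting $\mathcal U_j^{(L)}=(n_j,\lfloor Lt_j\rfloor)$ (and similarly for the primed points), for each fixed $L$ this yields the identity
\begin{equation*}
\bigl(\mathfrak Z^{\Gamma}_{(0,\lceil Lk_j\rceil)\to \mathcal U_j^{(L)}}\bigr)_{j=1}^{n}
\stackrel{d}{=}
\bigl(\mathfrak Z^{\Gamma}_{(0,\lceil Lk'_j\rceil)\to (\mathcal U'_j)^{(L)}}\bigr)_{j=1}^{n};
\end{equation*}
one first checks that for $L$ large the rounding preserves the ordering hypotheses $\mathcal U_1,\dots,\mathcal U_{\iota-1}\succeq\mathcal U_\iota\succeq\mathcal U_{\iota+1},\dots,\mathcal U_n$, their primed analogues, and the admissibility $t_j\ge n_j$ needed for the partition functions to be nontrivial.

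Next I would multiply each coordinate of both sides by the deterministic normalization $L^{-L(t_j-k_j)}\exp(\tfrac12(t_j-k_j))$ and prove that the rescaled Gamma partition functions converge, jointly in $j$ and on a common probability space, to the O'Connell--Yor partition functions $\mathfrak Z^{OY}_{(0,k_j)\to\mathcal U_j}$. The mechanism is the one indicated in Section \ref{Section_Brownian_polymer}: with $\Gamma_{ij}/L=1+\eta_{ij}/\sqrt L$, $\eta_{ij}$ i.i.d.\ converging to $N(0,1)$, one has $\ln(\Gamma_{ij}/L)=\eta_{ij}/\sqrt L-\eta_{ij}^2/(2L)+o(1/L)$; the sums of the first term along the vertical runs of a path become exponentials of Brownian increments once the factor $L^{-n_j}$ converts the remaining Riemann sums into time integrals, while the total contribution of $-\tfrac{1}{2L}\eta_{ij}^2$ is deterministic by the law of large numbers and cancels the prefactor $\exp(\tfrac12(t_j-k_j))$. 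To make this simultaneous in $j$, I would couple, for each column $i$, the whole family $\{\Gamma_{ij}\}_j$ to a single Brownian motion $B_i$ (by a Koml\'os--Major--Tusn\'ady or Skorokhod-embedding construction) so that the corresponding rescaled walks converge to $B_i$ uniformly on compact time intervals almost surely; since each $\mathfrak Z^{OY}_{(0,k_j)\to\mathcal U_j}$ and its discrete prelimit are continuous functionals (finite-dimensional integrals of exponentials of increments over monotone grid paths) of the $B_i$ restricted to a common compact region, the whole $n$-dimensional vector converges almost surely. No ``delayed'' bookkeeping intervenes here, since the diagonal edges of the Gamma polymer already carry weight $1$; the only care is with the rounding of $Lk_j$ and $L\Delta$.

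Passing a distributional identity through this (almost sure, along the coupling) limit then gives the theorem. The step I expect to be the main obstacle is precisely the joint convergence: establishing the almost sure uniform convergence of the coupled rescaled walks on the relevant compact regions, together with enough uniform integrability and tightness to pass the full vectors to the limit --- including the shifted configuration, whose starting level $\lceil L(k_\iota+\Delta)\rceil$ and endpoint height $\lfloor L(t_\iota+\Delta)\rfloor$ both grow with $L$. The continuity of the semi-discrete polymer partition functions in the sup-norm topology of the driving Brownian paths, and the consequent stability of weak convergence, is the technical heart; the remaining content is the by-now-standard invariance-principle computation already carried out for the colorless strict-weak polymer.
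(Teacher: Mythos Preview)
Your proposal is correct and follows exactly the route the paper takes: the paper's entire proof is the single sentence ``a direct limit transition from the similar shift-invariance statement for Gamma polymer mentioned at the end of Section~\ref{Section_Gamma},'' invoking the $\kappa=L\to\infty$ degeneration \eqref{eq_Gamma_limit}. You have simply spelled out the mechanism (the KMT/Skorokhod coupling for joint convergence, the LLN cancellation of the $\exp((t-t')/2)$ prefactor) that the paper leaves implicit.
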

We refer to Figure \ref{Fig_Brownian_polymer} for an example. The proof of Theorem \ref{Theorem_cont_polymer_invariance} is a direct limit transition from the similar shift-invariance statement for Gamma polymer mentioned at the end of Section \ref{Section_Gamma}.

 Theorem \ref{Theorem_cont_polymer_invariance} deals with vertical shifts along the continuous coordinate of the polymer. Can we also shift in the discrete horizontal direction? The answer is positive, but we then need to deal with \emph{delayed} partition functions as we did for Beta polymer.

 For each $n\ge n' \ge 0$, $t\ge t'\ge 0$ we define the delayed partition function of the O'Connell--Yor polymer as the integral over monotone grid paths linking $(n',t')$ to $(n,t)$:
     $$
        \mathfrak Z^{OY;\text{delay}}_{(n',t')\to (n,t)}=\int_{t'=t_0< t_1< \dots< t_{n-n'+1}=t} \exp\left[\sum_{i=1}^{n-n'} \bigl(B_{i+n'}(t_{i+1})-B_{i+n'}(t_i)\bigr) \right]   dt_1 \cdots dt_{n-n'}.
      $$
      where we emphasize that the noise $B_{n'}(t_1)-B_{n'}(t_0)$ \emph{is not} being collected.

\begin{theorem} \label{Theorem_cont_polymer_invariance_delayed} In the  setting of the delayed semi-discrete Brownian directed polymer in the quadrant, choose a set of positive reals $k_j$ and a set of  points in the quadrant $\mathcal U_j$, $j=1,\dots,n$. Fix an index $1\le \iota \le n$ and  $\Delta=1,2,\dots$. Set
 $$
  k_j'=\begin{cases} k_j,& i\ne \iota,\\ k_\iota+\Delta, & j=\iota,\end{cases} \qquad \qquad \mathcal U'_j=\begin{cases}\mathcal U_j, & j\ne \iota \\ \mathcal U_\iota + (\Delta,0), & j=\iota. \end{cases}
 $$
Suppose that
 $$
  0\le k_1\le k_2\le \dots\le k_n, \qquad 0\le k'_1\le k'_2\le \dots\le k'_n,
 $$
 $$\mathcal U_1,\dots \mathcal U_{\iota-1}\preceq\mathcal U_\iota\preceq\mathcal U_{\iota+1},\dots \mathcal U_n, \qquad \mathcal U'_1,\dots \mathcal U'_{\iota-1}\preceq\mathcal U'_\iota\preceq \mathcal U'_{\iota+1},\dots \mathcal U_n,
 $$
 Then the distribution of the vector of the polymer partition functions
 $$
  \bigl(\mathfrak Z^{OY;\text{delay}}_{(k_1,0)\to \mathcal U_1},\, \mathfrak Z^{OY;\text{delay}}_{(k_2,0)\to \mathcal U_2},\, \dots, \mathfrak Z^{OY;\text{delay}}_{(k_n,0)\to \mathcal U_n}\bigr)
 $$
 coincides with the distribution of the vector with shifted $\iota$-th coordinate
 $$
  \bigl(\mathfrak Z^{OY;\text{delay}}_{(k'_1,0)\to \mathcal U'_1},\, \mathfrak Z^{OY;\text{delay}}_{(k'_2,0)\to \mathcal U'_2},\, \dots, \mathfrak Z^{OY;\text{delay}}_{(k'_n,0)\to \mathcal U'_n}\bigr).
 $$
\end{theorem}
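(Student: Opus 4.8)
The plan is to obtain Theorem \ref{Theorem_cont_polymer_invariance_delayed} as the \emph{transpose} of Theorem \ref{Theorem_cont_polymer_invariance}: it is the version of the shift-invariance in which one moves the polymer end-points along the \emph{discrete} lattice direction rather than the continuous one, and the ``delay'' (not collecting the initial Brownian increment) is forced for exactly the same reason that the Beta polymer of Section \ref{Section_Beta} must be taken delayed when its end-points are moved along the grid direction in which steps are not mandatory --- compare the first few diagonal edges in Figure \ref{Fig_polymer_square}. Accordingly, I would re-run the chain of degenerations of Sections \ref{Section_higher_spin}--\ref{Section_Brownian_polymer}, but seeded by the rightward-step form \eqref{main-result'} of the Shift Theorem (Remark \ref{rmk:reflect}) in place of its downward-step form \eqref{main-result}, equivalently by applying the reflection symmetry \eqref{reflect} of the vertex weights across the SW--NE diagonal throughout.

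First I would produce the reflected analogue of Theorem \ref{Theorem_6v_invariance_infinite}, in which the roles of rows and columns of the analytically-continued fused model are interchanged: the distinguished column $0$ carrying the densely-packed ``Beta'' boundary data \eqref{eq_incoming_prob} becomes a distinguished \emph{row}, and the height-function shift $\mathcal U_\iota \mapsto \mathcal U_\iota + (0,\Delta)$ is replaced by $\mathcal U_\iota \mapsto \mathcal U_\iota + (\Delta,0)$. This requires checking that the fusion procedure of Theorem \ref{Theorem_fusion} and the boundary analytic continuation of Section \ref{Section_analytic_continuation} go through with rows and columns exchanged; fusion itself is symmetric in the two lattice directions, and the continuation in $q^L$, $q^{M_x}$ (including the delicate $\mathfrak m\to 0$ limit of Lemma \ref{Lemma_first_column}) transcribes once one holds the column spin fixed and lets the row spins vary.

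Second I would pass to the limit $q\to 1$ in the regime \eqref{eq_limit_regime} as in Section \ref{Section_cont_limit}, obtaining the reflected continuous vertex model; the analogue of Proposition \ref{Proposition_vertex_model_as_polymer} identifies its exponentiated height functions with partition functions of a transposed delayed Beta polymer --- a polymer on $\mathbb Z_{\ge 0}^2$ with \emph{horizontal} and diagonal edges, started on the bottom axis, whose partition functions ignore an initial run of diagonal steps --- and the reflected Theorem \ref{Theorem_6v_invariance_infinite} yields its discrete-direction shift invariance. Then I would degenerate Beta $\to$ Gamma as in Section \ref{Section_Gamma} ($\rho=\eps^{-1}$, $\sigma=\rho+\kappa$, $\eps\to 0$) and finally Gamma $\to$ O'Connell--Yor as in Section \ref{Section_Brownian_polymer}, now taking a thin rectangle in the direction \emph{transverse} to the shift and applying the functional central limit theorem to the sums of $\log\Gamma_{ij}$ along that direction. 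This keeps the shifted direction discrete (so $\Delta\in\mathbb Z$) and turns the initial-step skip of the transposed Beta/Gamma polymer into the delay in $\mathfrak Z^{OY;\text{delay}}$; after relabeling coordinates one reads off Theorem \ref{Theorem_cont_polymer_invariance_delayed}.

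The main obstacle is the first step, since the asymmetry between rows and columns is genuinely built into the fused model of Section \ref{Section_higher_spin} (the spin $L$ is held fixed while the $M_x$ vary, and the left boundary is densely packed), so one must verify that nothing in the fusion and analytic-continuation arguments secretly used this choice of orientation. I expect no real obstruction: by the time one reaches the Gamma and O'Connell--Yor limits only the fully homogeneous model is needed, for which the reflected construction is symmetric to the original, and the caveat of Remark \ref{Remark_vertical_inhomogeneities} does not bite. A lower-tech alternative, which avoids re-examining fusion, is to first establish the discrete-direction delayed shift for the Beta polymer directly from the reflected form of Theorem \ref{Theorem_6v_invariance_infinite} and then push it through the Beta $\to$ Gamma $\to$ O'Connell--Yor limits exactly as above; in either case the remaining computations are routine adaptations of those already carried out in Sections \ref{Section_cont_limit}--\ref{Section_Brownian_polymer}.
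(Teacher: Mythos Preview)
Your approach is plausible but takes a substantially longer route than the paper. You propose to reflect the entire chain of arguments starting from the six-vertex Shift Theorem, re-deriving reflected versions of Theorems \ref{Theorem_6v_invariance_fused} and \ref{Theorem_6v_invariance_infinite} with rows and columns interchanged, and then repeating all the degenerations. This should work in the homogeneous case you ultimately need, though the re-verification of the boundary analytic continuation (the analogue of Lemma \ref{Lemma_first_column} for a distinguished row) is not entirely mechanical; note also that the reflection \eqref{reflect} conjugates colors $i \mapsto N-i$, so the rainbow boundary condition and the height functions $\H^{\ges m}$ must be relabeled accordingly.

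The paper avoids all of this by staying entirely within the polymer world. It does not go back to the vertex model or reflect anything: instead it reuses the already-proven Theorem \ref{Theorem_Beta_polymer_invariance} (vertical shifts for the delayed Beta polymer) but takes a \emph{different} Gamma limit, sending $1-B_{ij}$ rather than $B_{ij}$ to $\eps\cdot\Gamma$. This places the Gamma weights on the diagonal edges instead of the vertical ones, so the delay (ignoring the initial diagonal run) now genuinely matters. Then, in passing to O'Connell--Yor, the paper scales along the diagonal rather than vertically,
\[
\mathfrak Z^{OY;\text{delay}}_{(x',y')\to(x,y)}
=\exp\bigl(\tfrac{y-y'}{2}\bigr)\lim_{L\to\infty}
L^{-L(x-x')-(y-y')}\,
\mathfrak Z^{\Gamma;\text{delay}}_{(Lx',\,Lx'+y')\to (Lx,\,Lx+y)}.
\]
Under this scaling the former horizontal coordinate becomes continuous while the vertical offset from the diagonal stays discrete, so the original \emph{vertical} Beta-polymer shift is converted into a \emph{discrete}-direction shift of the delayed O'Connell--Yor polymer. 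Thus the same Beta result, read through two different limiting lenses, yields both Theorems \ref{Theorem_cont_polymer_invariance} and \ref{Theorem_cont_polymer_invariance_delayed}; your reflection step is simply not needed.
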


Note the reverse inequalities for the points $\mathcal U_j$, as compared to Theorem \ref{Theorem_cont_polymer_invariance}.
\begin{proof}[Proof of Theorem \ref{Theorem_cont_polymer_invariance_delayed}]
 We take another limit from the Beta polymer to a version of the Gamma polymer by taking a limit of $1-B_{ij}$ (instead of $B_{ij}$) to the Gamma random variables. The resulting polymer is like in Section \ref{Section_Gamma}, but the Gamma random variables are placed on diagonal rather than vertical edges of the grid. Note that for this modified Gamma polymer, we prove the shift-invariance only for the delayed partition functions $\mathfrak Z^{\Gamma;\text{delay}}$. We then consider the limit of this polymer as its end-points move far away, while staying in a finite neighborhood of the diagonal $x=y$:
 \begin{equation}
\label{eq_Gamma_limit_2}
 \mathfrak Z^{OY;\text{delay}}_{(x',y')\to(x,y)}= \exp\left(\tfrac{y-y'}{2}\right)\lim_{L\to\infty} \frac{\mathfrak Z^{\Gamma;\text{delay}}_{(Lx', Lx'+y')\to (Lx, Lx+y)}}{L^{L(x-x')+(y-y')}}.
\end{equation}
Note that the coordinate system $(x,y)$ is transposed to the one we used for $\mathfrak Z^{OY}$ above, i.e., the first rather than the second coordinate is now continuous. Taking the limits of Theorem \ref{Theorem_Beta_polymer_invariance} first to the shift-invariance for $\mathfrak Z^{\Gamma;\text{delay}}$ and then for $\mathfrak Z^{OY;\text{delay}}$ we arrive at the desired statement.
\end{proof}

\begin{remark}
 Is shift-invariance in the discrete (horizontal) direction also true for $\mathfrak Z^{OY}$? In other words, is it necessary to pass to the delayed partition functions $ \mathfrak Z^{OY;\text{delay}}$? Our proofs do not give any answer.
\end{remark}

\subsection{Brownian Last Passage Percolation}

\label{Section_BLPP}

For the next limit transition we consider large $t-t'$ in the definition of the semi-discrete Brownian directed polymer; more precisely, we multiply $t$ and $t'$ by $L$ and send $L\to\infty$. If we use the scale-invariance of the Brownian motion, we can write

\begin{multline}
\int_{Lt'=t_0< t_1< \dots< t_{n-n'+1}=Lt} \exp\left[\sum_{i=0}^{n-n'} \bigl(B_{i+n'}(t_{i+1})-B_{i+n'}(t_i)\bigr) \right]   dt_1 \cdots dt_{n-n'}
\\=L^{n-n'}\int_{t'=t_0< t_1< \dots< t_{n-n'+1}=t} \exp\left[\sqrt{L} \cdot \sum_{i=0}^{n-n'} \bigl(B_{i+n'}(t_{i+1})-B_{i+n'}(t_{i})\bigr) \right]   dt_1 \cdots dt_{n-n'}.
\end{multline}
Almost surely, the integrand is a continuous function of $t_1,\dots,t_{n-n'}$. Hence, as $L\to\infty$, the integral is dominated by the points where the exponent $\sum_{i=0}^{n-n'} \bigl(B_{i+n'}(t_{i+1})-B_{i+n'}(t_{i})\bigr)$ is maximized (cf.\, standard proofs of the convergence of $L_p$ norms to $L_\infty$ norm as $p\to\infty$). Rescaled logarithm of the integral converges (in law, jointly over finitely many $(t',n')$ and $(t,n)$) to the \emph{Brownian Last Passage time}, defined as follows:
\begin{itemize}
  \item We deal with semi-discrete grid $ \mathbb Z_{\ge 0}\times \mathbb R_{\ge 0}$ with coordinates $(n,t)$.
  \item Each vertical line has independent standard Brownian motion $B_n(t)$, $t\ge 0$, attached to it.
  \item For each $n\ge n' \ge 0$, $t\ge t'\ge 0$ we assign the passage time as the maximum over grid paths linking $(n',t')$ to $(n,t)$:
      \begin{equation}
      \label{eq_Brownian_passage_time}
        \mathfrak Z^{BLPP}_{(n',t')\to (n,t)}=\max_{t'=t_0< t_1< \dots< t_{n-n'+1}=t} \left[\sum_{i=0}^{n-n'} \bigl(B_{i+n'}(t_{i+1})-B_{i+n'}(t_{i})\bigr) \right] .
      \end{equation}
\end{itemize}

Direct limit of Theorem \ref{Theorem_cont_polymer_invariance} yields the same shift-invariance statement for the  passage times \eqref{eq_Brownian_passage_time}. If we introduce a delayed version of \eqref{eq_Brownian_passage_time}, in which we ignore the noise $B_{n'}(t_1)-B_{n'}(t_0)$, then Theorem \ref{Theorem_cont_polymer_invariance_delayed} also yields the same shift-invariance statement for the delayed Brownian Last Passage times.

\subsection{Continuous directed polymer and KPZ equation}
\label{Section_KPZ}

Another limiting object can be obtained from the O'Connell--Yor polymer by considering large $n-n'$. In this regime the $n$--indexed sequence of white noises (derivatives of the Brownian motions $B_n(t)$) turns into the two-dimensional white-noise. The polymer collecting such noises is known as the Continuum Directed Random Polymer, see, e.g., \cite{AKQ1}, \cite{Q_CDM}, and its logarithm can be identified with a solution to the Kardar--Parisi--Zhang stochastic partial differential equation.

 In more detail, let us introduce the normalized version of the semi-discrete Brownian polymer through
\begin{equation}
 \tilde {\mathfrak Z}^{OY}_{(n',t')\to (n,t)}= \frac{(n-n')!}{ (t-t')^{n-n'} } \cdot \exp\left(-\frac{t-t'}{2}\right)\cdot \mathfrak Z^{OY}_{(n',t')\to (n,t)}.
\end{equation}
The first factor is introduced to compensate for the volume of the simplex $\{t'=t_0<t_1<\dots<t_{n-n'}=t\}$ in the definition of the polymer, while the second one compensates the expectation of the exponential of the Brownian motions (which can be computed using the identity $\E \exp( \sqrt{c} \cdot \xi)= \exp( c/2 )$ for Gaussian $\xi\sim N(0,1)$). Thus, we have $\E \tilde {\mathfrak Z}^{OY}_{(n',t')\to (n,t)}=1$.

We have the following convergence result for the Brownian directed polymer:
\begin{equation}
\label{eq_convergence_to_KPZ}
 \lim_{L\to\infty} \left[\tilde {\mathfrak Z}^{OY}_{(0, y)\to (tL, t\sqrt{L}+ x) } \right] \cdot \frac{1}{\sqrt{2\pi t}} \exp\left(- \tfrac{(x-y)^2}{2 t}\right)= \mathcal Z^{(y)}(t,x),
\end{equation}
where $\mathcal Z^{(y)}(x,t)$ is the solution to the stochastic heat equation with multiplicative noise started at $t=0$ time from $\delta$--function initial condition at $y$:
\begin{equation}
\label{eq_multi_SHE}
 \mathcal Z^{(y)}_{t}=\tfrac12 \mathcal Z^{(y)}_{xx}+ \eta \mathcal Z^{(y)}, \quad t\ge 0, x\in\mathbb R;\quad \mathcal Z^{(y)}(0,x)=\delta(x-y).
\end{equation}
Here $\eta$ is the space-time $2d$ white noise (the same for each $y$). We remark that the literature (see \cite{AKQ2}, \cite{Nica}) typically states the convergence result \eqref{eq_convergence_to_KPZ} only for fixed $y$ (usually $y=0$), yet the technique extends to the joint convergence in law for finitely many $y$'s.\footnote{Let us follow the approach of \cite{Nica}. In that paper, the author introduces a coupling between the 2d white noise $\eta$ in the definition of $\mathcal Z^{(y)}(x,t)$ and Brownian motions $B_n(t)$ in the definition of the Brownian directed polymer. Using the Wiener chaos series expansion, \cite[Section 2.5]{Nica} then shows the convergence \eqref{eq_convergence_to_KPZ} for fixed values of $x$, $y$, and $t$ in the $L_2$ space of random variables on the probability space where the white noise $\eta$ is defined. The $L_2$--convergence implies the convergence of joint distributions, if we use the following abstract statement. If for random variables $X_n$, $Y_n$, $X$, $Y$ on the same probability space, we have $\lim_{n\to\infty}\E|X_n-X|^2=\lim_{n\to\infty}\E|Y_n-Y|^2=0$, then the distribution of the vector $(X_N,Y_N)$ converges as $n\to\infty$ to that of the vector $(X,Y)$, and similarly for $k>2$ dimensional vectors.}

The Feynman-Kac representation for the solution to \eqref{eq_multi_SHE} leads to its representation as an integral of exponentiated noise over paths of Brownian bridges, thus clarifying the convergence \eqref{eq_convergence_to_KPZ} and the name Continuum Directed Random Polymer for $\mathcal Z^{(y)}(x,t)$.

Computing (formally) the logarithm of $\mathcal Z$, one finds that $\mathcal H:=-\ln(\mathcal Z^{(y)})$ satisfies the Kardar--Parisi--Zhang (KPZ) stochastic partial differential equation:
\begin{equation}
 \mathcal H_t= \tfrac12 \mathcal H_{xx}-\tfrac12 (\mathcal H_x)^2-\eta.
\end{equation}
Taking the limit of Theorem \ref{Theorem_cont_polymer_invariance} we arrive at the following statement for $\mathcal Z^{(y)}$ (or, equivalently, for $\mathcal H$), cf.\ Figure \ref{Fig_KPZ_shift}:

\begin{theorem}[modulo making convergence in \eqref{eq_convergence_to_KPZ} joint in $x$'s and $y$'s] \label{Theorem_KPZ_invariance}
 Fix $t>0$, $\Delta>0$, $x,y\in\mathbb R$. In addition, choose a collection of points $(x_i,y_i)$, $i=1,\dots,n$, such that for each $i$ either $x_i<x$, $y_i>y+\Delta$ or $x_i>x+\Delta$, $y_i<y$. Then we have an identity in distribution:
 $$
   \bigl(\mathcal Z^{(y)}(t,x); \, \mathcal Z^{(y_i)}(t,x_i), i=1,\dots,n \bigr)\stackrel{d}{=}
   \bigl(\mathcal Z^{(y+\Delta)}(t,x+\Delta); \, \mathcal Z^{(y_i)}(t,x_i), i=1,\dots,n\bigr).
 $$
\end{theorem}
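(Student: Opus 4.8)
The plan is to obtain Theorem \ref{Theorem_KPZ_invariance} as a scaling limit of the shift-invariance for the O'Connell--Yor polymer (Theorem \ref{Theorem_cont_polymer_invariance}) via the convergence \eqref{eq_convergence_to_KPZ}. Throughout I write $p$ for the number of auxiliary observation points (the integer called $n$ in the statement), freeing the letter $n$ for the discrete coordinate of the semi-discrete polymer. Two harmless reductions come first. Both the stochastic heat equation \eqref{eq_multi_SHE} and the O'Connell--Yor polymer are translation-invariant in their spatial variable (the joint laws of $\{\mathcal Z^{(y_\bullet+c)}(t,x_\bullet+c)\}$ and of $\{\mathfrak Z^{OY}_{(n'_\bullet,t'_\bullet+c)\to(n_\bullet,t_\bullet+c)}\}$ do not depend on $c$), so I may assume $y$ and all the $y_i$ are as large as I wish, in particular positive. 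Also, I will take the limit $L\to\infty$ along values with $tL\in\mathbb Z_{\ge 0}$, or replace $tL$ by $\lfloor tL\rfloor$ everywhere, which changes nothing in the limit.

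\emph{Pre-limit identification.} Fix $L$ and set $\mathcal U_\iota=(tL,\,t\sqrt L+x)$, $k_\iota=y$, and $\mathcal U_i=(tL,\,t\sqrt L+x_i)$, $k_i=y_i$ for $i\ne\iota$. By \eqref{eq_convergence_to_KPZ}, upgraded to joint convergence in law over this finite family of points (the caveat in the statement; it follows from the $L_2$ Wiener-chaos convergence recalled near \eqref{eq_convergence_to_KPZ}, since $L_2$-convergence of each coordinate forces convergence in distribution of the whole vector), the vector $\bigl(\mathcal Z^{(y)}(t,x);\,\mathcal Z^{(y_i)}(t,x_i)\bigr)$ is the $L\to\infty$ limit in distribution of
$$
\Bigl(c_L(x-y)\,\mathfrak Z^{OY}_{(0,k_\iota)\to\mathcal U_\iota};\ c_L(x_i-y_i)\,\mathfrak Z^{OY}_{(0,k_i)\to\mathcal U_i}\Bigr),
\qquad
c_L(s)=\frac{(tL)!}{(t\sqrt L+s)^{tL}}\,e^{-(t\sqrt L+s)/2}\cdot\frac{1}{\sqrt{2\pi t}}\,e^{-s^2/(2t)} ,
$$
where $c_L$ is a deterministic factor depending on the observation point only through the difference $x_\bullet-y_\bullet$ (and through $t,L$). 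The crucial point is that $(x,y)\mapsto(x+\Delta,y+\Delta)$ leaves this difference unchanged, so $c_L$ is literally the same for the distinguished polymer before and after the shift.

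\emph{Applying the polymer shift theorem.} Order the auxiliary points so that those with $x_i>x+\Delta$, $y_i<y$ precede $\iota$ and those with $x_i<x$, $y_i>y+\Delta$ follow it (sorted among themselves by $y_i$). Since all endpoints $\mathcal U_\bullet$ lie on the vertical line $\{$first coordinate $=tL\}$, the order $\succeq$ between $\mathcal U_i$ and $\mathcal U_\iota$ (resp.\ $\mathcal U'_i$ and $\mathcal U'_\iota$) reduces to comparing second coordinates, i.e.\ to the sign of $x_i-x$ (resp.\ $x_i-x-\Delta$); combined with $k_i=y_i$ one checks directly that $0\le k_1\le\cdots\le k_p$, $0\le k'_1\le\cdots\le k'_p$, $\mathcal U_1,\dots,\mathcal U_{\iota-1}\succeq\mathcal U_\iota\succeq\mathcal U_{\iota+1},\dots,\mathcal U_p$, and the same for the primed data. (Equivalently, the two inequality regimes in the hypothesis are exactly what forces each auxiliary polymer trajectory to cross the distinguished one, both before and after the shift.) Theorem \ref{Theorem_cont_polymer_invariance} with index $\iota$ and shift $\Delta$ then gives
$$
\bigl(\mathfrak Z^{OY}_{(0,k_1)\to\mathcal U_1},\dots,\mathfrak Z^{OY}_{(0,k_p)\to\mathcal U_p}\bigr)
\stackrel{d}{=}
\bigl(\mathfrak Z^{OY}_{(0,k'_1)\to\mathcal U'_1},\dots,\mathfrak Z^{OY}_{(0,k'_p)\to\mathcal U'_p}\bigr).
$$
Multiply the $j$-th coordinate of each side by $c_L(x_j-y_j)$ for $j\ne\iota$ and by $c_L(x-y)$ for $j=\iota$: for $j\ne\iota$ the two sides carry the same polymer and the same factor, while for $j=\iota$ the polymers differ by the shift but carry the same factor $c_L(x-y)$. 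Letting $L\to\infty$ and invoking the convergence above, the two sides converge in distribution to $\bigl(\mathcal Z^{(y)}(t,x);\mathcal Z^{(y_i)}(t,x_i)\bigr)$ and $\bigl(\mathcal Z^{(y+\Delta)}(t,x+\Delta);\mathcal Z^{(y_i)}(t,x_i)\bigr)$, which is the claim.

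The only genuinely non-routine ingredient is the joint-in-$x$-and-$y$ upgrade of \eqref{eq_convergence_to_KPZ}, which is precisely the hypothesis flagged in the statement; everything else is the bookkeeping above, whose conceptual content is simply that shifting a KPZ observation point by $(\Delta,\Delta)$ corresponds, under \eqref{eq_convergence_to_KPZ}, to a common vertical shift of both endpoints of the associated semi-discrete polymer, carrying a normalizing constant invariant under that shift.
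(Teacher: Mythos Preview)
Your proof is correct and follows exactly the approach the paper indicates: obtain the KPZ shift-invariance as the $L\to\infty$ limit of Theorem \ref{Theorem_cont_polymer_invariance} via \eqref{eq_convergence_to_KPZ}, noting that all observation points collapse onto the single line $\{n=tL\}$ and that the normalizing constants depend only on $x_\bullet-y_\bullet$ and hence are unchanged by the diagonal shift. The paper itself gives no further detail than ``taking the limit of Theorem \ref{Theorem_cont_polymer_invariance}'', so you have simply filled in the bookkeeping it omits.
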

\begin{remark}
 While Theorem \ref{Theorem_cont_polymer_invariance} allowed for different time coordinates $t_i$, there is only a single $t$ in Theorem \ref{Theorem_KPZ_invariance}. This is because of the nature of rescaling in the limit transition \eqref{eq_convergence_to_KPZ}, which collapses all points $\mathcal U_i$ of Theorem \ref{Theorem_KPZ_invariance} onto a single line.
\end{remark}

\subsection{Airy sheet}

\label{Section_Airy_sheet}

We proceed to the universal object in the KPZ-universality class known as the Airy sheet. It was recently proven in \cite[Theorem 1.3]{DOV} that the Brownian Last Passage times admit the following limiting behavior:
\begin{equation}
\label{eq_Airy_sheet}
 \frac{\mathfrak Z_{(0,2x n^{2/3})\to (n,n+2 y n^{-1/3})}-2n -2n^{2/3}(y-x)+(x-y)^2n^{1/3}}{  n^{1/3}} \to \mathcal A(x,y), \quad x,y\in\mathbb R.
\end{equation}
The formula \eqref{eq_Airy_sheet} can be taken as the definition of the Airy sheet $\mathcal A(x,y)$. Let us mention a notational subtlety on whether to add $(x-y)^2 n^{1/3}$ in the definition, as we did. In keeping with the traditional usage of the stationary forms of the objects in the Airy$_2$ process and the Airy line ensemble, and also following \cite{CQR}, we chose the definition such that $\mathcal A(x,y)$ has the Tracy-Widom distribution for each $x,y\in\mathbb R$.
The article \cite{DOV} does not have the $(x-y)^2 n^{1/3}$ terms and proves that the whole two-dimensional field $\mathfrak Z_{(t',n')\to (t,n)}$ converges to a four-dimensional extension of the Airy sheet that the authors called the Directed Landscape\footnote{If we follow the notations of \cite{CQR} and add the parabola, as in \eqref{eq_Airy_sheet}, then the limiting object has been known under the name of space-time Airy sheet.}. Note, however, that our result deals only with the function of two variables $\mathcal A(x,y)$.

It is believed that the Airy sheet appears universally in scaling limits of directed percolation models. \cite{CQR} also argues that it should be related to the large time scaling limit of the KPZ equation (proving this rigorously remains an open problem at this time).

Nevertheless, taking the limit of the shift invariance statement for the Brownian Last Passage Percolation, we arrive at a similar statement for the Airy sheet.

\begin{theorem} \label{Theorem_Airy_invariance}
 Fix $t>0$, $\Delta>0$, $x,y\in\mathbb R$. In addition, choose a collection of points $(x_i,y_i)$, $i=1,\dots,n$, such that for each $i$ either $x_i<x$, $y_i>y+\Delta$ or $x_i>x+\Delta$, $y_i<y$. Then we have an identity in distribution:
 $$
   \bigl(\mathcal A(x,y); \, \mathcal A(x_i,y_i), i=1,\dots,n \bigr)\stackrel{d}{=}
   \bigl(\mathcal A(x+\Delta,y+\Delta); \, \mathcal A(x_i,y_i), i=1,\dots,n\bigr).
 $$
\end{theorem}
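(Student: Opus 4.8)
The plan is to obtain Theorem \ref{Theorem_Airy_invariance} as the scaling limit \eqref{eq_Airy_sheet} of the shift-invariance for Brownian last passage percolation --- the $t \to \infty$ limit of Theorem \ref{Theorem_cont_polymer_invariance} recorded at the end of Section \ref{Section_BLPP} --- in exactly the spirit in which Theorem \ref{Theorem_KPZ_invariance} was derived from Theorem \ref{Theorem_cont_polymer_invariance}. The one new ingredient is the convergence \eqref{eq_Airy_sheet} itself, which is the theorem of Dauvergne--Ortmann--Virag \cite{DOV}. Since \cite{DOV} proves convergence of the entire rescaled passage-time field to the directed landscape, it yields in particular the joint convergence in law, over the finite family of pairs $(w,z) \in \{(x,y),(x+\Delta,y+\Delta)\} \cup \{(x_i,y_i)\}_{i=1}^{n}$ occurring in the statement, of the recentered quantities in \eqref{eq_Airy_sheet} to the corresponding values $\mathcal A(w,z)$; the deterministic parabola term $(w-z)^2 n^{1/3}$ contributes continuously and does not disturb this.

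Fix a large integer $n$. Under the scaling \eqref{eq_Airy_sheet} the pair $(w,z)$ is represented by the Brownian LPP passage time from the source $(0, 2 w n^{2/3})$ to the sink $(n, n + 2 z n^{2/3})$; translating all of these points vertically by one large constant (harmless, by stationarity of Brownian increments) makes every $t$-coordinate positive, and for $n$ large the source $t$-coordinate lies below the sink $t$-coordinate so that the passage times are well defined. Set $k_i = 2 x_i n^{2/3}$ and $\mathcal U_i = (n, n + 2 y_i n^{2/3})$, so that the Airy-sheet shift $(x,y) \mapsto (x+\Delta, y+\Delta)$ becomes the simultaneous move $k_\iota \mapsto k_\iota + 2\Delta n^{2/3}$, $\mathcal U_\iota \mapsto \mathcal U_\iota + (0, 2\Delta n^{2/3})$ of a single source--sink pair --- precisely the vertical shift permitted by Theorem \ref{Theorem_cont_polymer_invariance}. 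Now relabel the points so that those with $x_i \le x$, $y_i \ge y+\Delta$ get indices $1,\dots,\iota-1$ and those with $x_i \ge x+\Delta$, $y_i \le y$ get indices $\iota+1,\dots,n$, ordering each block by the value of $x_i$. Then $k_1 \le \cdots \le k_n$ and $k'_1 \le \cdots \le k'_n$ hold, and, since all $\mathcal U_i$ share the same $n$-coordinate while $y_j \ge y+\Delta > y$ for $j < \iota$ and $y_j \le y < y+\Delta$ for $j > \iota$, also $\mathcal U_1,\dots,\mathcal U_{\iota-1} \succeq \mathcal U_\iota \succeq \mathcal U_{\iota+1},\dots,\mathcal U_n$ and its primed analogue hold. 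Thus Theorem \ref{Theorem_cont_polymer_invariance} applies and gives equality in distribution of the two prelimit vectors of Brownian LPP passage times, hence of the two vectors obtained after applying the identical affine recentering and rescaling of \eqref{eq_Airy_sheet}.

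It then suffices to send $n \to \infty$ and use the joint convergence of the first paragraph on both sides of the prelimit identity, whereupon the distributional equality passes to the limit and produces the assertion for $\mathcal A$. The main obstacle is not a new idea but a careful reconciliation of scalings: one must verify that the one-point Airy-sheet shift $(x,y)\mapsto(x+\Delta,y+\Delta)$ really does translate, under \eqref{eq_Airy_sheet}, into a \emph{common} translation of a single Brownian LPP source--sink pair (so that the already-established vertical shift-invariance of Theorem \ref{Theorem_cont_polymer_invariance} applies verbatim, with no need for the delayed variant of Section \ref{Section_Brownian_polymer} or an additional shear), and that the finite-dimensional convergence extracted from \cite{DOV} uses on the two sides of the identity exactly the same deterministic recentering, so that a single passage to the limit yields the desired equality.
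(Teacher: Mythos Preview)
Your proposal is correct and follows exactly the route the paper takes: obtain the Airy-sheet statement by passing to the scaling limit \eqref{eq_Airy_sheet} in the Brownian LPP shift-invariance recorded at the end of Section \ref{Section_BLPP}, using the joint convergence from \cite{DOV}. The paper itself gives no further details beyond ``taking the limit of the shift invariance statement for the Brownian Last Passage Percolation''; your explicit matching of the $(x,y)\mapsto(x+\Delta,y+\Delta)$ shift with a vertical translation of a single BLPP source--sink pair, and your verification of the ordering hypotheses of (the BLPP analogue of) Theorem \ref{Theorem_cont_polymer_invariance}, simply flesh out what the paper leaves implicit.
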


\subsection{Back to additive SHE}
\label{Section_back_to_SHE}

We end the discussion of degenerations by a remark that the small time limit of the KPZ equation is Gaussian and is given, after proper recentering and rescaling by the stochastic heat equation with additive noise, namely
$$
 \mathfrak H^{(y)}_t=\tfrac{1}{2} \mathfrak H^{(y)}_{xx}+ \frac{1}{\sqrt{2\pi t}} \exp\left(-\tfrac{(x-y)^2}{2t}\right) \cdot \eta, \qquad t\ge 0, x\in\mathbb R,\qquad \mathfrak H^{(y)}(0,x)=0,
$$
where $\eta$ is the $2d$ white noise, see \cite[Section 6.2]{ACQ} for the discussion on how the small time limit can be readily obtained from the Wiener chaos expansion. The shift-invariance for the last equation is essentially equivalent to the shift invariance for the colored SHE of Section \ref{Section_SHE} and can be proven directly in the same way.

\section{Appendix: fusion}

\label{Section_fusion}

In this section we explain the fusion procedure, which produces higher spin/higher rank vertex model from the colored six-vertex model. Our aim is to obtain the weight \eqref{eq_higher_spin_weight} as a result of the summation of the products of weights of all vertices in $M\times L$ rectangle (over all possible choices of such vertices) for the colored six-vertex model.



\subsection{Row-vertices}
We follow the notations of Section \ref{ssec:fundamental}.

The key combinatorial object in the fusion procedure is the {\it row-vertex}. It is obtained by horizontally concatenating $M$ of the $R$-vertices \eqref{R-vert}, and specializing the spectral parameters to a geometric progression $z,qz,q^2z,\dots,q^{M-1}z$. More specifically, for two fixed integers $b, d \in \{0,1,\dots,N\}$ and two vectors of integers $(a_1,\dots,a_M), (c_1,\dots,c_M) \in \{0,1,\dots,N\}^M$, we define
\begin{multline}
\label{R-row}
\index{R@$R_{z}((a_1,\dots,a_M),b; (c_1,\dots,c_M),d)$; row-vertices}
R_{z}\Big((a_1,\dots,a_M),b; (c_1,\dots,c_M),d \Big)
\\
=
\sum_{i_1 = 0}^{N}
\cdots
\sum_{i_{M-1} = 0}^{N}
R_{q^{M-1} z}(a_1,b;c_1,i_1)
R_{q^{M-2} z}(a_2,i_1;c_2,i_2)
\dots
R_{z}(a_M,i_{M-1};c_M,d),
\end{multline}
or graphically,
\begin{align}
\label{R-row-graph}
R_{z}\Big((a_1,\dots,a_M),b; (c_1,\dots,c_M),d \Big)
=
\tikz{0.9}{
\draw[lgray,line width=1.5pt,->] (-1,0) -- (6,0);
\draw[lgray,line width=1.5pt,->] (0,-1) -- (0,1);
\draw[lgray,line width=1.5pt,->] (1,-1) -- (1,1);
\draw[lgray,line width=1.5pt,->] (2,-1) -- (2,1);
\draw[lgray,line width=1.5pt,->] (3,-1) -- (3,1);
\draw[lgray,line width=1.5pt,->] (4,-1) -- (4,1);
\draw[lgray,line width=1.5pt,->] (5,-1) -- (5,1);
\node[left] at (-1,0) {\tiny $b$};\node[right] at (6,0) {\tiny $d$};
\node[below] at (0,-1) {\tiny $a_1$};\node[above] at (0,1) {\tiny $c_1$};
\node[below] at (1,-1) {\tiny $a_2$};\node[above] at (1,1) {\tiny $c_2$};
\node[below] at (3,-1) {\tiny $\cdots$};\node[above] at (3,1) {\tiny $\cdots$};
\node[below] at (5,-1) {\tiny $a_M$};\node[above] at (5,1) {\tiny $c_M$};
},
\end{align}
where the spectral parameters of the vertices are given by $q^{M-i} z$ (with $i$ increasing as one reads from left to right), and each internal horizontal edge is summed over all possible values in the set $\{0,1,\dots,N\}$.

\subsection{$M$-fused vertices}
\label{sec:M-fus}

\begin{defn}
Let $M \geq 1$ and consider a vector of nonnegative integers $(a_1,\dots,a_M) \in \{0,1,\dots,N\}^M$. From this we define another vector,
\begin{align*}
\mathcal{C}(a_1,\dots,a_M)
:=
(A_1,\dots,A_N),
\qquad
A_i = \#\{j : a_j = i \},
\quad
\forall\ 1 \leq i \leq N,
\end{align*}
which keeps track of the multiplicity of each color $1 \leq i \leq N$ within $(a_1,\dots,a_M)$.
\end{defn}

Let us fix two vectors $\A = (A_1,\dots,A_N)$ and $\C = (C_1,\dots,C_N)$ whose components are nonnegative integers, such that $|\A| \leq M$ and $|\C| \leq M$. We define the weight of an $M$-fused vertex as follows:
\begin{align}
\label{M-fus}
\mathcal{L}^{(M)}_{z}(\A,b; \C,d)
\index{L4@$\mathcal{L}^{(M)}_{z}(\A,b; \C,d)$; $M$-fused vertices}
=
\frac{1}{Z_q(M;\A)}
\sum_{\substack{
\mathcal{C}(a_1,\dots,a_M) = \A
\\
\mathcal{C}(c_1,\dots,c_M) = \C
}}
q^{{\rm inv}(a_1,\dots,a_M)}
R_{z}\Big((a_1,\dots,a_M),b; (c_1,\dots,c_M),d \Big),
\end{align}
where the summation is over all vectors of integers
$(a_1,\dots,a_M), (c_1,\dots,c_M) \in \{0,1,\dots,N\}^M$ such that $\mathcal{C}(a_1,\dots,a_M) = \A$ and $\mathcal{C}(c_1,\dots,c_M) = \C$. The exponent appearing in the sum is given by
\begin{align*}
{\rm inv}(a_1,\dots,a_M) = \#\{ i<j : a_i > a_j \},
\end{align*}
while the normalization takes the form of a $q$-multinomial coefficient:
\begin{align*}
\index{Z5@$Z_q(M;\I)$}
Z_q(M;\A)
=
\sum_{\mathcal{C}(a_1,\dots,a_M) = \A}
q^{{\rm inv}(a_1,\dots,a_M)}
=
\frac{(q;q)_M}{(q;q)_{A_0} (q;q)_{A_1} \dots (q;q)_{A_N}},
\quad
A_0 := M - \sum_{i=1}^{N} A_i.
\end{align*}
We shall represent the $M$-fused vertex \eqref{M-fus} pictorially as
\begin{align}
\label{L-graphical}
\mathcal{L}^{(M)}_{z}(\A,b; \C,d)
=
\tikz{0.9}{
\draw[lgray,line width=1.5pt,->] (-1,0) -- (1,0);
\draw[lgray,line width=4pt,->] (0,-1) -- (0,1);
\node[left] at (-1,0) {\tiny $b$};\node[right] at (1,0) {\tiny $d$};
\node[below] at (0,-1) {\tiny $\A$};\node[above] at (0,1) {\tiny $\C$};
},
\quad
b,d \in \{0,1,\dots,N\},
\quad
\A,\C \in \mathbb{N}^N,
\end{align}
where the spectral parameter of the vertex is equal to $z$.

Probabilistically, one should think that the distribution of incoming colors satisfies $q$--exhangeability\footnote{\cite[Proposition B.2.2]{BW} then yields $q$--exhangeability also for the the outgoing colors.}. Then \eqref{M-fus} computes the partition function of $M$ vertices under such distribution.

\subsection{Evaluation of $M$-fused vertices}

The following result is taken from \cite[Theorem B.4.1]{BW}.

\begin{theorem}
Fix two vectors $\A, \C \in \mathbb{N}^N$ such that $|\A| \leq M$, $|\C| \leq M$, and two integers $b,d \in \{0,1,\dots,N\}$. The $M$-fused vertex \eqref{M-fus} has the following explicit evaluation:
\begin{align}
\label{fus-wt}
\mathcal{L}^{(M)}_{z}(\A,b;\C,d)
=
\bm{1}_{(\A + \bm{e}_b = \C + \bm{e}_{d})}
\cdot
\frac{1}
{1-q^M z}
\cdot
\left\{
\begin{array}{ll}
(1-q^{A_d} z) q^{\As{d+1}{N}}, & \quad b = d,
\\ \\
(1-q^{A_d}) q^{\As{d+1}{N}}, & \quad b < d,
\\ \\
z (1-q^{A_d}) q^{\As{d+1}{N}}, & \quad b > d,
\end{array}
\right.
\end{align}
where $\As{d+1}{N}=A_{d+1}+A_{d+2}+\dots+A_N$. Or more compactly,
\begin{align}
\label{fus-compact}
\mathcal{L}^{(M)}_{z}(\A,b;\C,d)
=
\bm{1}_{(\A + \bm{e}_b = \C + \bm{e}_{d})}
\cdot
z^{\bm{1}_{b >d}}
\cdot
\frac{1-q^{A_{d}} z^{\bm{1}_{b=d}}}{1-q^M z}
\cdot
q^{\As{d+1}{N}},
\end{align}
where by agreement $A_0 = M - \sum_{i=1}^{N} A_i$.
\end{theorem}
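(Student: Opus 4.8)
The plan is to establish \eqref{fus-wt} by induction on the number $M$ of fused columns, peeling off one column of the row-vertex \eqref{R-row} at each step. For the base case $M=1$ one has $Z_q(1;\A)=1$, the inversion statistic is empty, and $\mathcal{L}^{(1)}_z(\A,b;\C,d)=R_z(a,b;c,d)$, where $a$ and $c$ are the single colors recorded by $\A$ and $\C$ (the value being $0$ when the corresponding composition vanishes). I would then verify \eqref{fus-wt} by inspection against the five non-vanishing vertex types of Figure \ref{fund-vert}: split into $b=d$, $b<d$, $b>d$, and within each case distinguish whether the bottom color equals $d$ (so $A_d=1$, $\As{d+1}{N}=0$) or not (so $A_d=0$ and $q^{\As{d+1}{N}}\in\{1,q\}$ according to the sign of $a-d$). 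In every instance the right-hand side of \eqref{fus-compact} reproduces the listed Boltzmann weight, using that $(1-q^{A_d}z)|_{A_d=0}=1-z$.

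For the inductive step, peel the leftmost column off \eqref{R-row}: it carries spectral parameter $q^{M-1}z$, and the remaining $M-1$ columns form a row-vertex of length $M-1$ with spectral parameter $z$. Substituting this into \eqref{M-fus} and summing over the colors $\beta$ and $\gamma$ on the bottom and top edges of the peeled column, two bookkeeping facts make the sum factor. First, ${\rm inv}(a_1,\dots,a_M)=\#\{j\ge 2:\beta>a_j\}+{\rm inv}(a_2,\dots,a_M)$ with $\#\{j\ge 2:\beta>a_j\}=A_{<\beta}:=\sum_{i<\beta}A_i$ depending only on $\beta$ and $\A$. Second, the $q$-multinomials obey $Z_q(M-1;\A-\bm{e}_\beta)/Z_q(M;\A)=(1-q^{A_\beta})/(1-q^M)$; here and below $A_0:=M-|\A|$ and $\bm{e}_0:=0$. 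This yields the recursion
\begin{align*}
\mathcal{L}^{(M)}_z(\A,b;\C,d)=\sum_{\beta,\gamma,e}\frac{q^{A_{<\beta}}(1-q^{A_\beta})}{1-q^M}\,R_{q^{M-1}z}(\beta,b;\gamma,e)\,\mathcal{L}^{(M-1)}_z(\A-\bm{e}_\beta,e;\C-\bm{e}_\gamma,d),
\end{align*}
in which color conservation at the peeled vertex forces $(\gamma,e)\in\{(\beta,b),(b,\beta)\}$, so the inner sum has at most two terms per $\beta$.

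Into this recursion I would substitute the weights of Figure \ref{fund-vert} for $R_{q^{M-1}z}$ and, by the inductive hypothesis, the closed form \eqref{fus-wt} (with $M$ replaced by $M-1$, hence denominator $1-q^{M-1}z$) for $\mathcal{L}^{(M-1)}_z$, then evaluate the sum over $\beta$ in each of the cases $b=d$, $b<d$, $b>d$. I expect this final summation to be the crux: the spurious factor $1-q^{M-1}z$ is \emph{not} canceled term-by-term but only after combining several contributions, so one must track how $A_d$, $\As{d+1}{N}$ and the conservation indicator change under $\A\mapsto\A-\bm{e}_\beta$, $\C\mapsto\C-\bm{e}_\gamma$, and group the $\beta$-ranges (relative to $b$ and $d$) appropriately. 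In each case the required identity reduces to an elementary $q$-binomial relation, the smallest nontrivial instance being $q(1-z)(1-q^2z)+q(1-q)^2z+(1-qz)^2=(1+q)(1-qz)^2$. A less bookkeeping-heavy alternative would be to expand \eqref{M-fus} directly, tracing the single horizontal path of color $b$ through the $M$ columns and organizing the sum by the positions and colors of its swaps with the vertical paths; the product of vertex weights then has a telescoping structure in the spectral parameters, and conservation pins down almost all of the remaining combinatorial data, so that the sum collapses to the claimed closed form.
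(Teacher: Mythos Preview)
The paper does not prove this theorem; it simply cites \cite[Theorem B.4.1]{BW} and moves on. So there is no argument in the paper to compare against.

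Your inductive plan is sound. The peel-off recursion you derive is correct: the decomposition of ${\rm inv}$, the $q$-multinomial ratio $Z_q(M-1;\A-\bm{e}_\beta)/Z_q(M;\A)=(1-q^{A_\beta})/(1-q^M)$, and the identification of the remaining $M-1$ columns (which indeed carry spectral parameters $q^{M-2}z,\dots,z$, exactly those of an $(M-1)$-fused row) all check out. The base case verification against Figure~\ref{fund-vert} is routine and works as you describe.

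What remains is genuinely just the case-by-case summation over $\beta$ after substituting \eqref{fus-compact} for $\mathcal{L}^{(M-1)}_z$ and the table weights for $R_{q^{M-1}z}$. You are right that the factor $1-q^{M-1}z$ does not cancel termwise; the cleanest way to organize it is to separate $\beta=b$, $\beta=d$ (when these differ), and then the three ranges $\beta<\min(b,d)$, $\min(b,d)<\beta<\max(b,d)$, $\beta>\max(b,d)$, noting that in each range the summand depends on $\beta$ only through $q^{A_{<\beta}}(1-q^{A_\beta})$, which telescopes to a single $q$-power. Your sample identity is the $M=2$, $N=1$ instance and is correct. This bookkeeping is tedious but elementary; there is no hidden obstruction.
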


\subsection{Column vertices}

We begin by defining {\it column vertices} by taking towers of height $L$ of the $M$-fused vertices \eqref{L-graphical}:
\begin{align}
\label{tower}
\mathcal{L}^{(M)}_{z}
\Big(
\A,(b_1,\dots,b_L);\C,(d_1,\dots,d_L)
\Big)
=
\tikz{0.9}{
\draw[lgray,line width=1.5pt,->] (-1,5) -- (1,5);
\draw[lgray,line width=1.5pt,->] (-1,4) -- (1,4);
\draw[lgray,line width=1.5pt,->] (-1,3) -- (1,3);
\draw[lgray,line width=1.5pt,->] (-1,2) -- (1,2);
\draw[lgray,line width=1.5pt,->] (-1,1) -- (1,1);
\draw[lgray,line width=1.5pt,->] (-1,0) -- (1,0);
\draw[lgray,line width=4pt,->] (0,-1) -- (0,6);
\node[left] at (-1,0) {\tiny $b_1$};\node[right] at (1,0) {\tiny $d_1$};
\node[left] at (-1,1) {\tiny $b_2$};\node[right] at (1,1) {\tiny $d_2$};
\node[left] at (-1,5) {\tiny $b_L$};\node[right] at (1,5) {\tiny $d_L$};
\node[below] at (0,-1) {\tiny $\A$};\node[above] at (0,6) {\tiny $\C$};
}
\end{align}
where the spectral parameters of the vertices are given by $q^{i-1} z$ (with $i$ increasing as one reads from bottom to top).

\subsection{$(L,M)$-fused vertices}

Now we fuse the horizontal lines, to produce $(L,M)$-fused vertices. Fix two more nonnegative integer vectors $\B = (B_1,\dots,B_N)$ and $\D = (D_1,\dots,D_N)$.

\begin{defn}
In a similar vein to Section \ref{sec:M-fus}, we define
\begin{align}
\label{MN-fus}
W_{L,M} (z;\A,\B;\C,\D)
=
\frac{1}{Z_q(L;\B)}
\sum_{\substack{
\mathcal{C}(b_1,\dots,b_L) = \B
\\
\mathcal{C}(d_1,\dots,d_L) = \D
}}
q^{{\rm inv}(b_L,\dots,b_1)}
\mathcal{L}^{(M)}_{1/z}
\Big(
\A,(b_1,\dots,b_L);\C,(d_1,\dots,d_L)
\Big),
\end{align}
where the normalization takes the form
\begin{align*}
Z_q(L;\B)
=
\sum_{\mathcal{C}(b_1,\dots,b_L) = \B}
q^{{\rm inv}(b_L,\dots,b_1)}
=
\frac{(q;q)_L}{(q;q)_{B_0} (q;q)_{B_1} \dots (q;q)_{B_N}},
\quad
B_0 := L - \sum_{i=1}^{N} B_i.
\end{align*}
Note that, in this definition, we reverse the order of the entries of the vector
$(b_1,\dots,b_L)$ before applying the {\rm inv} function.
\end{defn}

The main result of this section is Theorem \ref{thmq} below, which identifies \eqref{MN-fus} with \eqref{eq_higher_spin_weight}. For now we take \eqref{MN-fus} as the definition.

\subsection{Recursion relation}

We write a recursion for the $(L,M)$-fused vertices \eqref{MN-fus}, by peeling away the bottom-most vertex from the column \eqref{tower}:
\begin{multline}
\label{bot-rec}
W_{L,M} (z;\A,\B;\C,\D)
=\\
\frac{1}{1-q^L}
\sum_{i=0}^{N}
\sum_{j=0}^{N}
(1-q^{B_i})
q^{B_{i+1}}\ldots q^{B_N}
W_{1,M}(z;\A,\e_i;\A^{+-}_{ij},\e_j)
W_{L-1,M}(qz;\A^{+-}_{ij},\B^{-}_i;\C,\D^{-}_j)
\end{multline}
where we defined $\B_i^{-} = \B - \e_i$ and $\A_{ij}^{+-} = \A + \e_i - \e_j$ and $\e_i$ is the $i$th basis vector. Equivalently, in terms of the $M$-fused vertices \eqref{M-fus}, we have
\begin{multline}
\label{bot-rec2}
W_{L,M} (z;\A,\B;\C,\D)
=\\
\frac{1}{1-q^L}
\sum_{i=0}^{N}
\sum_{j=0}^{N}
(1-q^{B_i})
q^{B_{i+1}}\ldots q^{B_N}
\mathcal{L}^{(M)}_{1/z}(\A,i;\A^{+-}_{ij},j)
W_{L-1,M}(qz;\A^{+-}_{ij},\B^{-}_i;\C,\D^{-}_j).
\end{multline}

\subsection{Explicit formula for $(L,M)$-fused weights}

For any two vectors $\alpha = (\alpha_1,\dots,\alpha_N)$ and $\beta = (\beta_1,\dots,\beta_N)$, define the function
\begin{align*}
\phi(\alpha,\beta)
=
\sum_{1 \leq i<j \leq N}
\alpha_i \beta_j.
\end{align*}
For any two vectors $\lambda = (\lambda_1,\dots,\lambda_N)$ and $\mu = (\mu_1,\dots,\mu_N)$ such that $\lambda_i \leq \mu_i$ for all $1 \leq i \leq N$, define further
\begin{align}
\label{phi-def}
\Phi(\lambda,\mu;x,y)
=
q^{\phi(\mu-\lambda,\lambda)}
\left(y/x\right)^{|\lambda|}
\frac{(x;q)_{|\lambda|} (y/x;q)_{|\mu-\lambda|}}{(y;q)_{|\mu|}}
\prod_{i=1}^N \binom{\mu_i}{\lambda_i}_{q}.
\end{align}

\begin{prop}
Note the following recursive properties of the function $\Phi$:
\begin{align*}
\Phi(\lambda,\mu;x,qy)
=
q^{\lambda}
\frac{(1-y)(x-yq^{\mu-\lambda})}{(x-y)(1-yq^{\mu})}
\Phi(\lambda,\mu;x,y),
\\
\Phi(\lambda,\mu-\e_j;x,qy)
=
x
q^{\sum_{i \leq j} \lambda_i}
\frac{(1-y)(1-q^{\mu_j-\lambda_j})}{(x-y)(1-q^{\mu_j})}
\Phi(\lambda,\mu;x,y),
\end{align*}
where $j \geq 1$. Both of these are immediate from the definition \eqref{phi-def}.
\end{prop}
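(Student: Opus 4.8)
The plan is to verify both identities by direct substitution into the definition \eqref{phi-def} and simplification of the resulting quotients, exactly as the paper asserts. Throughout I read the symbols $q^{\lambda}$, $q^{\mu-\lambda}$, $q^{\mu}$ as $q^{|\lambda|}$, $q^{|\mu|-|\lambda|}$, $q^{|\mu|}$, which is consistent because $|\mu-\lambda|=|\mu|-|\lambda|$ under the standing assumption $\lambda_i\le\mu_i$. No convergence or positivity issues arise: both statements are rational-function identities in $x$, $y$, $q$.

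For the first identity I would replace $y$ by $qy$ in \eqref{phi-def} and form the ratio $\Phi(\lambda,\mu;x,qy)/\Phi(\lambda,\mu;x,y)$. The factors $q^{\phi(\mu-\lambda,\lambda)}$, $(x;q)_{|\lambda|}$ and $\prod_i\binom{\mu_i}{\lambda_i}_q$ are untouched, so only three pieces survive: the prefactor $(qy/x)^{|\lambda|}/(y/x)^{|\lambda|}=q^{|\lambda|}$; the Pochhammer ratio $(qy/x;q)_{|\mu-\lambda|}/(y/x;q)_{|\mu-\lambda|}$, which telescopes via $(a;q)_n=(1-a)(qa;q)_{n-1}$ to $(1-q^{|\mu-\lambda|}y/x)/(1-y/x)=(x-yq^{|\mu|-|\lambda|})/(x-y)$; and the denominator ratio $(y;q)_{|\mu|}/(qy;q)_{|\mu|}=(1-y)/(1-yq^{|\mu|})$. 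Multiplying these reproduces $q^{|\lambda|}(1-y)(x-yq^{|\mu|-|\lambda|})/\big((x-y)(1-yq^{|\mu|})\big)$, as claimed.

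For the second identity I would simultaneously replace $\mu$ by $\mu-\e_j$ and $y$ by $qy$, then take the ratio with $\Phi(\lambda,\mu;x,y)$; four contributions appear. First, since $\phi(\alpha,\beta)=\sum_{i<k}\alpha_i\beta_k$, lowering the $j$-th entry of the first argument by one subtracts $\sum_{k>j}\lambda_k$, so $q^{\phi(\mu-\e_j-\lambda,\lambda)-\phi(\mu-\lambda,\lambda)}=q^{-\sum_{k>j}\lambda_k}$; combined with the prefactor $q^{|\lambda|}$ from $(qy/x)^{|\lambda|}/(y/x)^{|\lambda|}$ this yields $q^{\sum_{i\le j}\lambda_i}$. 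Second, the numerator Pochhammer ratio $(qy/x;q)_{|\mu-\lambda|-1}/(y/x;q)_{|\mu-\lambda|}=1/(1-y/x)=x/(x-y)$. Third, the denominator ratio $(y;q)_{|\mu|}/(qy;q)_{|\mu|-1}=1-y$. Fourth, only the $j$-th $q$-binomial changes, and $\binom{\mu_j-1}{\lambda_j}_q/\binom{\mu_j}{\lambda_j}_q=(1-q^{\mu_j-\lambda_j})/(1-q^{\mu_j})$ directly from $\binom{n}{k}_q=(q;q)_n/\big((q;q)_k(q;q)_{n-k}\big)$. The product of the four factors is $x\,q^{\sum_{i\le j}\lambda_i}(1-y)(1-q^{\mu_j-\lambda_j})/\big((x-y)(1-q^{\mu_j})\big)$, which is the stated formula.

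There is essentially no obstacle here: the only points requiring care are the bookkeeping of the $\phi$-exponent under $\mu\mapsto\mu-\e_j$ and the consistent reading of $q^{\lambda},q^{\mu}$ as $q^{|\lambda|},q^{|\mu|}$; everything else is elementary telescoping of $q$-Pochhammer symbols and a one-line $q$-binomial ratio. Hence the two recursions follow immediately from \eqref{phi-def}.
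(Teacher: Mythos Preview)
Your verification is correct and follows exactly the approach the paper indicates: direct substitution into \eqref{phi-def} and telescoping the $q$-Pochhammer and $q$-binomial ratios. The paper gives no details beyond ``immediate from the definition,'' so your argument simply fills in the straightforward bookkeeping.
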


\begin{theorem}
\label{thmq}
Fix two integers $M,L \geq 1$, and four vectors $\A,\B,\C,\D$ such that $0 \leq |\A|, |\C| \leq M$ and $0 \leq |\B|, |\D| \leq L$. The $(L,M)$-fused weights are given by the following formula:
\begin{multline}
\label{wMN}
W_{L,M}(z;\A,\B;\C,\D)
=
\\
\bm{1}_{(\A + \B = \C + \D)}
\times
z^{\D-\B}
q^{\A L -\D M}
\times
\sum_{\Pp}
\Phi(\C-\Pp,\C+\D-\Pp;q^{L-M}z,q^{-M}z)
\Phi(\Pp,\B;q^{-L}/z,q^{-L})
\end{multline}
where the sum is taken over  vectors $\Pp = (P_1,\dots,P_N)$ such that $0 \leq P_i \leq \min(B_i,C_i)$ for all $1 \leq i \leq N$.
\end{theorem}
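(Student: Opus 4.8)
The plan is to prove Theorem \ref{thmq} by induction on $L$, using the recursion \eqref{bot-rec2} as the engine and the explicit $M$-fused weights \eqref{fus-compact} as the base ingredient. First I would verify the base case $L=1$: here the sum over $\Pp$ in \eqref{wMN} collapses, since $\Phi(\Pp,\B;q^{-1}/z,q^{-1})$ forces $\Pp \leq \B$ with $|\B|\le 1$, so there are only the terms $\Pp = 0$ and (when a color $i$ with $B_i=1$ has $C_i\ge 1$) $\Pp=\e_i$. Matching these two contributions against the three cases $b=d$, $b<d$, $b>d$ of \eqref{fus-compact} is a direct computation using the explicit form of $\Phi$ and the elementary $q$-binomial identities; this pins down the normalization and the power of $z$ and $q$.

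For the inductive step, I would assume \eqref{wMN} holds for $L-1$ and plug it, together with \eqref{fus-compact} for the single $M$-fused vertex $\mathcal{L}^{(M)}_{1/z}(\A,i;\A^{+-}_{ij},j)$, into the right-hand side of \eqref{bot-rec2}. This produces a double sum over $i,j \in \{0,\dots,N\}$ and an inner sum over the auxiliary vector $\Pp$ coming from the $W_{L-1,M}$ factor. The conservation indicators $\bm 1_{(\A+\e_i = \A^{+-}_{ij}+\e_j)}$ and $\bm 1_{(\A^{+-}_{ij}+\B^{-}_i = \C+\D^{-}_j)}$ together enforce $\A+\B = \C+\D$, consistent with the claimed prefactor. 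The heart of the argument is then to reorganize the remaining sum: one uses the two recursive properties of $\Phi$ stated just before Theorem \ref{thmq} to absorb the shift $z \mapsto qz$ (from the argument of $W_{L-1,M}$) and the shifts $\B \mapsto \B^{-}_i$, $\D \mapsto \D^{-}_j$, $\C+\D \mapsto \C+\D^{-}_j$ into the $\Phi$-factors evaluated at the ``unshifted'' arguments $q^{L-M}z, q^{-M}z, q^{-L}/z, q^{-L}$. After this the $i$ and $j$ summations decouple from the $\Pp$-summation up to some $q$-binomial weights, and what remains is a finite $q$-hypergeometric identity — precisely the kind of summation (a $q$-Chu--Vandermonde or a terminating ${}_3\phi_2$ evaluation, as in \cite[(1.5.3)]{GR}) that collapses the $i,j$-sums and reproduces a single $\Phi(\C-\Pp,\C+\D-\Pp;q^{L-M}z,q^{-M}z)$ with the shifted length parameter $L$ rather than $L-1$.

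I expect the main obstacle to be exactly the bookkeeping in this last reorganization: the recursion \eqref{bot-rec2} peels off a horizontal line whose spectral parameter becomes $1/z$ while the remaining tower carries $qz$, so all of the $q$-powers ($q^{\A L}$, $q^{-\D M}$, the factors $q^{B_{i+1}}\cdots q^{B_N}$, the $\phi$-exponents hidden in $\Phi$, and the $q^{\As{d+1}{N}}$ from \eqref{fus-compact}) must be tracked with care and shown to conspire into the clean exponent $q^{\A L - \D M}$ of \eqref{wMN}. Splitting the $i,j$ double sum according to the three cases $i=j$, $i<j$, $i>j$ (mirroring \eqref{fus-compact}) is the natural way to keep this manageable: in each case the weight $\mathcal{L}^{(M)}_{1/z}$ has a single monomial form, and the corresponding piece of the sum is a known terminating basic hypergeometric series. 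A minor side point is to confirm that the constraint $0\le P_i \le \min(B_i,C_i)$ is preserved under the shifts $\B \to \B^{-}_i$ and $\C \to \C$ (the latter is unchanged), and that the new summation variable of the length-$L$ formula is obtained from the length-$(L-1)$ one by an index shift; this follows from the support of the $q$-binomials appearing in $\Phi$. Once the $q$-hypergeometric evaluation is in hand, comparing coefficients yields \eqref{wMN} for $L$, completing the induction.

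Finally, I note that an alternative and arguably cleaner route avoids the hypergeometric identity entirely: one may instead invoke the $q$-exchangeability propagation of \cite[Proposition B.2.2]{BW} to argue that \eqref{MN-fus} really is the partition function of the $M\times L$ block of six-vertex weights under the $q$-exchangeable boundary input, and then compute that partition function directly by a second, ``columns-first'' fusion, using \eqref{fus-compact} and the dual evaluation in the other direction; the two ways of summing the same block of vertices must agree, and matching them produces \eqref{wMN} after elementary algebra. Whichever route is taken, the conservation law $\A+\B=\C+\D$ and the overall $z$- and $q$-prefactors are forced immediately, and the genuine content is the rational/hypergeometric identity relating the single sum over $\Pp$ to the iterated one-line recursion.
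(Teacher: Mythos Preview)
Your proposal matches the paper's proof: induction on $L$, with the $L=1$ base case checked directly and the inductive step reduced via \eqref{bot-rec2} to a summation identity in which, for each fixed $\Pp$, the double sum over $i,j$ collapses to $1$. The paper packages this last identity as a separate lemma (Lemma \ref{lem}) and proves it not via a named $q$-hypergeometric evaluation but by splitting into six cases (your three cases $i<j$, $i=j$, $i>j$, each further split according to whether $i=0$ or $j=0$, since color $0$ plays a special role) and observing that the partial sums over $i$ telescope; you should expect this rather than a clean ${}_3\phi_2$ or $q$-Chu--Vandermonde.
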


\subsection{Proof of Theorem \ref{thmq} for $L=1$}
 Because of the constraint $0 \leq |\B|, |\D| \leq L$, when $L=1$ the vectors $\B,\D$ must be of the form $\B = \e_i$ and $\D = \e_j$, for some $0 \leq i,j \leq N$.

In the case $\B = \e_0$ the summation in \eqref{wMN} reduces to a single term, namely $\Pp = \e_0$. After some simplifications, we find that
\begin{align*}
W_{1,M}(z;\A,\e_0;\C,\e_b)
&=
\bm{1}_{(\A = \C + \e_b)}
\Phi(\C,\C+\e_b;q^{1-M}z,q^{-M}z)
\Phi(\e_0,\e_0;q^{-1}/z,q^{-1})
z^{\theta_b} q^{\A-M\theta_b}
\\
&=
\bm{1}_{(\A = \C + \e_b)}
\frac{(q^{1-M}z;q)_{|\C|} (q^{-1};q)_{\theta_b}}{(q^{-M}z;q)_{|\C|+\theta_b}}
\binom{C_b + \theta_b}{C_b}_q
z^{\theta_b} q^{(\C_{[b+1,N]}-M+1)\theta_b},
\end{align*}
where we have introduced the notation
\begin{align*}
\theta_b = \left\{ \begin{array}{ll} 0, & b=0, \\ 1, & b \geq 1. \end{array} \right.
\end{align*}
Analysing separately the cases $b=0$ and $b \geq 1$ of the above equation, we see that
\begin{align*}
W_{1,M}(z;\A,\e_0;\C,\e_0)
&=
\bm{1}_{(\A = \C)}
\left( \frac{1-q^{\C-M}z}{1-q^{-M}z} \right)
=
\bm{1}_{(\A = \C)}
\left( \frac{q^{\A}-q^M/z}{1-q^M/z} \right)
\end{align*}
for $b=0$, and
\begin{align*}
W_{1,M}(z;\A,\e_0;\C,\e_b)
&=
\bm{1}_{(\A = \C+\e_b)}
\left(
\frac{q^{C_b+1}-1}{1-q^{-M}z}
\right)
z q^{\C_{[b+1,N]}-M}
=
\bm{1}_{(\A = \C+\e_b)}
\left(
\frac{1-q^{A_b}}{1-q^{M}/z}
\right)
q^{\A_{[b+1,N]}}
\end{align*}
for $b \geq 1$. The expressions obtained are in agreement with the weights $\mathcal{L}^{(M)}_{1/z}(\A,0;\C,0)$ and $\mathcal{L}^{(M)}_{1/z}(\A,0;\C,b)$ respectively, as is easily verified using \eqref{fus-wt}.

\smallskip

In the case $\B = \e_a$ for $1\leq a \leq N$, the summation over $\Pp$ is constrained to $\Pp \in \{\e_0,\e_a\}$, and we obtain
\begin{multline*}
W_{1,M}(z;\A,\e_a; \C,\e_b)
=
\bm{1}_{(\A+\e_a = \C+\e_b)}
\Big(
\Phi(\C,\C+\e_b; q^{1-M}z, q^{-M}z)
\Phi(\e_0,\e_a; q^{-1}/z, q^{-1})
+
\\
\Phi(\C-\e_a,\C-\e_a+\e_b; q^{1-M}z, q^{-M}z)
\Phi(\e_a,\e_a; q^{-1}/z, q^{-1})
\Big) z^{\theta_b-\theta_a} q^{\A-M\theta_b}.
\end{multline*}
This can then be divided into two distinct cases, namely (i) $a \not= b$ and (ii) $a=b$.

\smallskip

Using the definition \eqref{phi-def}, in case (i) one finds that
\begin{multline*}
W_{1,M}(z;\A,\e_a; \C,\e_b)
=
\bm{1}_{(\A+\e_a = \C+\e_b)}
\frac{(q^{-1};q)_{\theta_b}}{(1-q^{-1})}
\binom{C_b + \theta_b}{C_b}_q
\\
\left\{
\frac{(q^{1-M}z;q)_{|\C|}}{(q^{-M}z;q)_{|\C|+\theta_b}}
(1-z)
+
q^{-\theta_b \bm{1}_{b<a}}
\frac{(q^{1-M}z;q)_{|\C|-1}}{(q^{-M}z;q)_{|\C|-1+\theta_b}}
(qz-1)
\right\}
z^{\theta_b-1} q^{(\C_{[b+1,N]}-M+1)\theta_b-\theta_a}.
\end{multline*}
This can in turn be subdivided into three cases, (i1) $b=0$, (i2) $0\not=b < a$ and (i3) $a<b$. In case (i1), we calculate
\begin{multline*}
W_{1,M}(z;\A,\e_a; \C,\e_0)
=
\\
\bm{1}_{(\A+\e_a = \C)}
\frac{1}{(1-q^{-1})}
\left\{
\frac{1-q^{\C-M}z}{1-q^{-M}z}
(1-z)
+
\frac{1-q^{\C-1-M}z}{1-q^{-M}z}
(qz-1)
\right\}
z^{-1} q^{-1}
\\
=
\bm{1}_{(\A+\e_a = \C)}
\frac{1-q^{\C-M-1}}{1-q^{-M}z}
=
\bm{1}_{(\A+\e_a = \C)}
\left(\frac{1-q^{M-\A}}{1-q^M/z}\right)
z^{-1} q^{\A}.
\end{multline*}
In case (i2), we calculate
\begin{multline*}
W_{1,M}(z;\A,\e_a; \C,\e_b)
=
\bm{1}_{(\A+\e_a = \C+\e_b)}
\frac{1-q^{C_b+1}}{1-q}
\left\{
\frac{1-z}{1-q^{-M}z}
+
q^{-1}
\frac{qz-1}{1-q^{-M}z}
\right\}
q^{\C_{[b+1,N]-M}}
\\
=
\bm{1}_{(\A+\e_a = \C+\e_b)}
\left(
\frac{1-q^{C_b+1}}{1-q^{M}/z}
\right) z^{-1} q^{\C_{[b+1,N]}-1}
=
\bm{1}_{(\A+\e_a = \C+\e_b)}
\left(
\frac{1-q^{A_b}}{1-q^{M}/z}
\right) z^{-1} q^{\A_{[b+1,N]}}.
\end{multline*}
Both (i1) and (i2) agree with $\mathcal{L}^{(M)}_{1/z}(\A,a;\C,b)$ for $a>b$. In case (i3), we calculate
\begin{multline*}
W_{1,M}(z;\A,\e_a; \C,\e_b)
=
\bm{1}_{(\A+\e_a = \C+\e_b)}
\frac{1-q^{C_b+1}}{1-q}
\left\{
\frac{1-z}{1-q^{-M}z}
+
\frac{qz-1}{1-q^{-M}z}
\right\}
q^{\C_{[b+1,N]-M}}
\\
=
\bm{1}_{(\A+\e_a = \C+\e_b)}
\left(
\frac{1-q^{C_b+1}}{1-q^{M}/z}
\right) q^{\C_{[b+1,N]}}
=
\bm{1}_{(\A+\e_a = \C+\e_b)}
\left(
\frac{1-q^{A_b}}{1-q^{M}/z}
\right) q^{\A_{[b+1,N]}},
\end{multline*}
which agrees with $\mathcal{L}^{(M)}_{1/z}(\A,a;\C,b)$ for $a<b$.

\subsubsection{}

Similarly, in case (ii), we obtain
\begin{multline*}
W_{1,M}(z;\A,\e_a; \C,\e_a)
=
\bm{1}_{(\A = \C)}
\left\{
\binom{C_a + 1}{C_a}_q
\frac{(q^{1-M}z;q)_{|\C|}}{(q^{-M}z;q)_{|\C|+1}}
(1-z)
\right.
\\
+
\left.
\binom{C_a}{C_a-1}_q
\frac{(q^{1-M}z;q)_{|\C|-1}}{(q^{-M}z;q)_{|\C|}}
(qz-1)
\right\}
q^{\C_{[a+1,N]}-M}.
\end{multline*}
This we can readily simplify as
\begin{align*}
W_{1,M}(z;\A,\e_a; \C,\e_a)
&=
\frac{\bm{1}_{(\A = \C)}}{(1-q)(1-q^{-M}z)}
\Big\{
(1-q^{C_a+1})(1-z)
-
(1-q^{C_a})(1-qz)
\Big\}
q^{\C_{[a+1,N]}-M}
\\
&=
\bm{1}_{(\A = \C)}
\left(
\frac{q^{C_a}-z}{1-q^{-M}z}
\right)
q^{\C_{[a+1,N]}-M}
=
\bm{1}_{(\A = \C)}
\left(
\frac{1-q^{A_a}/z}{1-q^M/z}
\right)
q^{\A_{[a+1,N]}},
\end{align*}
in agreement with the form \eqref{fus-wt} of $\mathcal{L}^{(M)}_{1/z}(\A,a;\C,a)$.

\subsection{Proof of general case in Theorem \ref{thmq}}

So far we have proved that \eqref{wMN} holds for $L=1$. We now show that \eqref{wMN} obeys the recursion relation \eqref{bot-rec}, which is sufficient to show that \eqref{wMN} holds in general by induction on $L$. We can assume that
\begin{multline}
\label{W_MN-1}
W_{L-1,M}(qz;\A^{+-}_{ab},\B^{-}_a;\C,\D^{-}_b)
=
\delta_{\A + \B = \C + \D}
\times
z^{\theta_a-\theta_b}
q^{M \theta_b + L( \theta_a - \theta_b)-\C}
z^{\D - \B}
q^{\A L -\D M}
\\
\sum_{\Pp}
\Phi(\C-\Pp,\C+\D-\e_b-\Pp;q^{L-M}z,q^{-M+1} z)
\Phi(\Pp,\B-\e_a;q^{-L}/z,q^{-L+1})
\end{multline}
holds for some $L \geq 2$, where the summation is over all $\Pp$ such that $0 \leq P_i \leq \min(B_i,C_i)$ for all $i \not= a$, and $0 \leq P_a \leq \min(B_a-\theta_a,C_a)$.

Up to prefactors and slight modifications of the arguments of $\Phi$, this very closely resembles the form of $W_{L,M}(z;\A,\B;\C,\D)$. Substituting \eqref{W_MN-1} into \eqref{bot-rec2}, and requiring this to be equal to \eqref{wMN} leads to the identity stated in Lemma \ref{lem} below. The proof of that lemma completes the proof of Theorem \ref{thmq}.

\begin{lem}
\label{lem}
Let $\lambda = (\lambda_1,\dots,\lambda_N)$ and $\mu = (\mu_1,\dots,\mu_N)$ be two vectors of arbitrary variables, and write $|\lambda| = \sum_{i=1}^{N} \lambda_i$ and $|\mu| = \sum_{i=1}^{N} \mu_i$ for their sums. Let $x$ and $y$ be two further arbitrary parameters, and let $a$ be an integer satisfying $0 \leq a \leq N$. We define the following function:
\begin{align*}
\rho_a(\lambda,\mu; x,y)
=
\left\{
\begin{array}{ll}
\left( \dfrac{xq^{|\lambda|}-yq^{|\mu|}}{1-yq^{|\mu|}} \right)
\left( \dfrac{1-y}{x-y} \right),
&
a=0
\\ \\
x
q^{\sum_{i=1}^{a} \lambda_i}
\left( \dfrac{1-q^{\mu_a-\lambda_a}}{1-q^{\mu_a}} \right)
\left( \dfrac{1-y}{x-y} \right),
&
a \geq 1.
\end{array}
\right.
\end{align*}
Fix four vectors of variables $\A,\B,\C,\D$ such that $\A +\B = \C + \D$. Let $\Pp$ be a further vector of arbitrary variables. Fix two further parameters $M$ and $L$, and define $A_0 = M - \sum_{i=1}^{N} A_i$ and $B_0 = L - \sum_{i=1}^{N} B_i$. The following summation identity holds:
\begin{multline}
\label{miracle_sum}
\sum_{a=0}^{N}
\sum_{b=0}^{N}
\left(\frac{q^{-\C}}{1-q^L}\right)
z^{\theta_a-\theta_b}
q^{M \theta_b + L( \theta_a - \theta_b)}
(1-q^{B_a})
q^{B_{a+1}} \dots q^{B_N}
\mathcal{L}^{(M)}_{1/z}(\A,a;\A^{+-}_{ab},b)
\\
\times
\rho_a(\Pp,\B;q^{-L}/z,q^{-L})
\rho_b(\C-\Pp,\C+\D-\Pp;q^{L-M}z,q^{-M} z)
=
1.
\end{multline}
\end{lem}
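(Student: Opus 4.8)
The plan is to prove \eqref{miracle_sum} by unwinding the definition of the functions $\rho_a,\rho_b$ and then performing two nested $q$-hypergeometric summations. The key first step is the observation that the two recursive relations for $\Phi$ recorded just before Theorem~\ref{thmq} are precisely the statements $\rho_0(\lambda,\mu;x,y)=\Phi(\lambda,\mu;x,qy)/\Phi(\lambda,\mu;x,y)$ and $\rho_a(\lambda,\mu;x,y)=\Phi(\lambda,\mu-\e_a;x,qy)/\Phi(\lambda,\mu;x,y)$ for $a\ge 1$. Applying this to the two $\rho$-factors in \eqref{miracle_sum}, with $(\lambda,\mu;x,y)$ taken to be $(\Pp,\B;q^{-L}/z,q^{-L})$ for $\rho_a$ and $(\C-\Pp,\C+\D-\Pp;q^{L-M}z,q^{-M}z)$ for $\rho_b$, produces in every term the same nonzero denominator $\Phi(\Pp,\B;q^{-L}/z,q^{-L})\,\Phi(\C-\Pp,\C+\D-\Pp;q^{L-M}z,q^{-M}z)$, while the numerators become $\Phi(\Pp,\B-\e_a;q^{-L}/z,q^{-L+1})$ (for $a\ge 1$; for $a=0$ the vector $\B$ is unchanged and only the last argument is multiplied by $q$) and $\Phi(\C-\Pp,\C+\D-\Pp-\e_b;q^{L-M}z,q^{-M+1}z)$ (with the analogous convention at $b=0$). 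These are exactly the $\Phi$-factors appearing in the inductive hypothesis \eqref{W_MN-1} for $W_{L-1,M}$, so, after clearing the denominator, \eqref{miracle_sum} is literally the identity obtained by substituting \eqref{W_MN-1} into the recursion \eqref{bot-rec2} and demanding the result equal \eqref{wMN}; establishing it thus completes the induction on $L$ in Theorem~\ref{thmq}.

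With this reduction, the remaining task is an identity in the scalar data $z,q^{A_i},q^{B_i},q^{C_i},q^{D_i}$. I would substitute the closed form \eqref{fus-compact} of the fused vertex $\mathcal{L}^{(M)}_{1/z}(\A,a;\A^{+-}_{ab},b)$ (its conservation indicator is automatic here, since $\A^{+-}_{ab}=\A+\e_a-\e_b$), noting that it depends on the pair $(a,b)$ only through $\mathbf 1_{a>b}$, $\mathbf 1_{a=b}$ and through $A_b$, $A_{[b+1,N]}$; in particular, for each fixed $b$ it is constant on each of the three ranges $a<b$, $a=b$, $a>b$. This lets me carry out the $a$-summation first, splitting it into those three ranges: in each range $\mathcal{L}^{(M)}_{1/z}$ comes out of the sum, and what is left is a sum over $a$ of the product of $q^{B_{a+1}}\cdots q^{B_N}$, the $z$- and $q$-power prefactors, and $\Phi(\Pp,\B-\e_a;q^{-L}/z,q^{-L+1})$, which has exactly the shape of a $q$-Chu--Vandermonde sum (the same ${}_2\phi_1$ evaluation \cite[(1.5.3)]{GR} used in the proof of Lemma~\ref{Lemma_first_column}); it evaluates in closed form and reproduces the factor $\Phi(\Pp,\B;q^{-L}/z,q^{-L})$ together with a $\Pp$-independent constant. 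The identical manoeuvre disposes of the $b$-summation against $\Phi(\C-\Pp,\C+\D-\Pp-\e_b;q^{L-M}z,q^{-M+1}z)$, after which the accumulated constants are seen to collapse to $1$. An equivalent route is to encode each $\Phi$ through the $q$-binomial theorem in the form of Lemma~\ref{Lemma_q_Binomial_split} --- one generating function per color --- and recognise the $a$- and $b$-sums as coefficient extractions that telescope.

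The step I expect to be the real obstacle is none of these individually but their combination: reliably tracking the dense web of $q$-power and $z$-power prefactors --- $z^{\theta_a-\theta_b}$, $q^{M\theta_b+L(\theta_a-\theta_b)}$, $q^{B_{a+1}}\cdots q^{B_N}$, $q^{-\C}$, the $(1-q^L)^{-1}$, the $(1-q^M/z)^{-1}$ hidden in $\mathcal{L}^{(M)}_{1/z}$, and the $(x-y)^{-1}$-type factors inside $\rho_a,\rho_b$ and $\Phi$ --- through the three-way split on $a$, the two summations, and especially through the boundary colors $a=0$ and $b=0$, where $\rho_0$ contributes the ``extra'' terms that complete the $q$-Vandermonde sums. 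To keep the accounting honest I would first run the whole argument for $N=1$, where \eqref{miracle_sum} is a four-term sum that reduces directly to the $q$-Chu--Vandermonde identity and pins down every prefactor, and then repeat it for general $N$ with the color index carried through each Gaussian binomial $\binom{\mu_i}{\lambda_i}_q$.
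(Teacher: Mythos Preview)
Your opening observation is correct and insightful: the two recursive identities for $\Phi$ recorded just before Theorem~\ref{thmq} are exactly $\rho_a=\Phi(\lambda,\mu-\e_a;x,qy)/\Phi(\lambda,\mu;x,y)$ (with $\e_0=0$), and this is precisely how the paper arrives at \eqref{miracle_sum} from the recursion \eqref{bot-rec2} and \eqref{W_MN-1}. Your plan to sum over $a$ first (for fixed $b$), splitting by the sign of $a-b$, and then over $b$, is also the paper's plan.

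The one substantive error is the claimed mechanism for the inner $a$-sum: it is \emph{not} $q$-Chu--Vandermonde. Tracking the $a$-dependence of $(1-q^{B_a})q^{B_{a+1}}\cdots q^{B_N}\cdot\rho_a(\Pp,\B;q^{-L}/z,q^{-L})$ for $a\geq 1$, one finds (after pulling out $a$-independent factors) simply $q^{\sum_{i>a}(B_i-P_i)}(1-q^{B_a-P_a})=q^{\sum_{i>a}(B_i-P_i)}-q^{\sum_{i\geq a}(B_i-P_i)}$, a bare telescoping difference. The paper proceeds by writing the summand $S_{a,b}$ explicitly in six cases (according to whether $a,b$ equal $0$ and to the order of $a,b$) and telescoping the $a$-sum for each fixed $b$ to the $\Pp$-independent values $\tfrac{q^{|\D|}-q^L}{1-q^L}$ at $b=0$ and $\tfrac{q^{\D_{b+1,N}}(1-q^{D_b})}{1-q^L}$ at $b\geq 1$; the final $b$-sum then telescopes once more to $1$. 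In particular, after the full $a$-sum the $\Pp$-dependence from $\rho_b$ has already cancelled, so the $b$-sum is not ``the identical manoeuvre'' and involves no $\Phi$'s. Your anticipated obstacle --- tracking the prefactors through the six boundary cases --- is real, but it \emph{is} the whole proof; no hypergeometric identity beyond telescoping is needed, and the $N=1$ check is just a direct verification of a four-term sum.
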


\begin{proof}
The summation over $a$ and $b$ can be broken down into six cases. These are {\bf 1.} $a=b=0$, {\bf 2.} $ N \geq a > b = 0$, {\bf 3.} $N \geq a > b \geq 1$, {\bf 4.} $N \geq a = b \geq 1$, {\bf 5.} $1 \leq a < b \leq N$, {\bf 6.} $0 = a < b \leq N$. Let $S_{a,b}$ denote the summand of \eqref{miracle_sum}, \ie\ we write the left hand side as
\begin{align*}
\sum_{a=0}^{N}
\sum_{b=0}^{N}
S_{a,b}.
\end{align*}
One can write down the summand explicitly in each case. In what follows we use the notations $\B_a := \sum_{i=1}^{a}$ and $\B_{b,a} := \sum_{i=b}^{a} B_i$. The six cases read:

{\bf Case 1:}
\begin{align*}
S_{0,0} =
\frac{
(q^L - q^{\D})
(q^{\B}z - q^{\Pp})
(q^M - q^{\A} z)
}
{
(1-z)
(1-q^L)
(q^{M+\Pp} - q^{\C + \D}z)
}.
\end{align*}

{\bf Case 2:}
\begin{align*}
S_{a,0} =
z q^{\Pp_a - \B_a}
\frac{
(q^L - q^{\D})
(q^{M+\B} - q^{\A + \B})
(1- q^{B_a-P_a})
}
{
(1-z)
(1-q^L)
(q^{M+\Pp} - q^{\C + \D}z)
},
\quad
a \geq 1.
\end{align*}

{\bf Case 3:}
\begin{align*}
S_{a,b} =
q^{\Pp_{b,a}-\B_{b,a}+\D_{b,N}}
\frac{
(1-q^{D_b})
(q^{B_b - D_b} - q^{C_b})
(1-q^{B_a-P_a})
}
{
(1-z)
(1-q^L)
(q^{C_b+D_b} - q^{P_b})
},
\quad
a > b \geq 1.
\end{align*}

{\bf Case 4:}
\begin{align*}
S_{b,b} =
q^{P_b-B_b+\D_{b,N}}
\frac{
(1-q^{D_b})
(1-q^{B_b-P_b})
(q^{B_b-D_b} z -q^{C_b})
}
{
(1-z)
(1-q^L)
(q^{C_b+D_b} - q^{P_b})
},
\quad
b \geq 1.
\end{align*}

{\bf Case 5:}
\begin{align*}
S_{a,b} =
z q^{\B_{a,b} - \Pp_{a,b-1}+ \D_{b+1,N}}
\frac{
(1-q^{D_b})
(1-q^{C_b+D_b-B_b})
(q^{P_a-B_a}-1)
}
{
(1-z)
(1-q^L)
(q^{C_b+D_b} - q^{P_b})
},
\quad
1\leq a < b.
\end{align*}

{\bf Case 6:}
\begin{align*}
S_{0,b} =
q^{-\Pp_{b-1}-\B_{b,N} +\D_{b,N}}
\frac{
(1-q^{D_b})
(q^{B_b-D_b}-q^{C_b})
(q^{\B}z-q^{\Pp})
}
{
(1-z)
(1-q^L)
(q^{C_b+D_b} - q^{P_b})
},
\quad
b \geq 1.
\end{align*}
We can then perform some partial summations over the index $a$, keeping $b$ fixed. These sums telescope, and we easily find that (summing Case 1 and 2 terms)
\begin{align*}
S_{0,0}
+
\sum_{a=1}^{N} S_{a,0}
&=
\frac{
(q^L - q^{\D})
}
{
(1-z)
(1-q^L)
(q^{M+\Pp} - q^{\C + \D}z)
}
\Big(
(q^{\B}z - q^{\Pp})
(q^M - q^{\A} z)
+
z
(q^{M} - q^{\A})
(q^{\Pp}-q^{\B})
\Big)
\\
&=
\frac{
(q^L - q^{\D})(q^{\A+\B}z-q^{M+\Pp})
}
{
(1-q^L)
(q^{M+\Pp} - q^{\C + \D}z)
}
=
\frac{
(q^{\D}-q^L)
}
{
(1-q^L)
},
\end{align*}
and on the other hand (summing Case 3--6 terms)
\begin{multline*}
S_{0,b}
+
\sum_{a=1}^{b-1}
S_{a,b}
+
S_{b,b}
+
\sum_{a=b+1}^{N}
S_{a,b}
=
\\
\frac{
q^{\D_{b,N}}
(1-q^{D_b})
}
{
(1-z)
(1-q^L)
(q^{C_b+D_b} - q^{P_b})
}
\Big(
q^{-\Pp_{b-1}-\B_{b,N}}
(q^{B_b-D_b}-q^{C_b})
(q^{\B}z-q^{\Pp})
+
z
(q^{B_b-D_b}-q^{C_b})
(1-q^{\B_{b-1} - \Pp_{b-1}})
\\
+
(q^{P_b - B_b}-1)
(q^{B_b-D_b} z - q^{C_b})
+
q^{P_b-B_b}
(q^{B_b-D_b}-q^{C_b})
(q^{\Pp_{b+1,N}-\B_{b+1,N}}-1)
\Big)
=
\frac{q^{\D_{b+1,N}}(1-q^{D_b})}{(1-q^L)}.
\end{multline*}
Finally one can sum over all $0 \leq b \leq N$. The remaining terms then telescope to give $1$.
\end{proof}

\end{document}